\newtheorem{proposition}{Proposition}[section]
\newtheorem{lemma}[proposition]{Lemma}
\newtheorem{corollary}[proposition]{Corollary}
\newtheorem{theorem}{Theorem}[section]
\theoremstyle{definition}
\newtheorem{definition}{Definition}[section]
\newtheorem{remark}{Remark}[section]
\numberwithin{equation}{section}
\newcommand{\leftexp}[2]{{\vphantom{#2}}^{#1}{#2}}
\newcommand{\speed}{c_s}
\newcommand{\SecondFund}{k}
\newcommand{\Riemann}{Riem}
\newcommand{\Ric}{Ric}
\newcommand{\Sch}{Sch}
\newcommand{\gfour}{\mathbf{g}}
\newcommand{\hfour}{\mathbf{h}}
\newcommand{\Chfour}{\mathbf{\Gamma}}
\newcommand{\kfour}{\mathbf{k}}
\newcommand{\Ricfour}{\mathbf{Ric}}
\newcommand{\Riemfour}{\mathbf{Riem}}
\newcommand{\Rfour}{\mathbf{R}}
\newcommand{\Tfour}{\mathbf{T}}
\newcommand{\Sfour}{\mathbf{S}}
\newcommand{\Schfour}{\mathbf{Sch}}
\newcommand{\ufour}{\mathbf{u}}
\newcommand{\Nml}{\hat{\mathbf{N}}}
\newcommand{\longNml}{\mathbf{N}}
\newcommand{\Dfour}{\mathbf{D}}
\newcommand{\Wfour}{\mathbf{W}}
\newcommand{\newg}{G}
\newcommand{\newsec}{K}
\newcommand{\freenewsec}{\hat{K}}
\newcommand{\newp}{P}
\newcommand{\adjustednewp}{\hat{P}}
\newcommand{\newu}{U}
\newcommand{\newlap}{\Psi}
\newcommand{\dlap}{\Theta}
\newcommand{\gdet}{\mbox{det}g}
\newcommand{\gbigdet}{\mbox{det}\newg}
\newcommand{\metricenergy}[1]{\mathscr{E}_{(Metric);#1}}
\newcommand{\fluidenergy}[1]{\mathscr{E}_{(Fluid);#1}}
\newcommand{\totalenergy}[2]{\mathscr{E}_{(Total);#1;#2}}
\newcommand{\highnorm}[1]{\mathscr{H}_{(Frame-Total);#1}}
\newcommand{\lowkinnorm}[1]{\mathscr{C}_{(Frame-Kinetic);#1}}
\newcommand{\lowpotnorm}[1]{\mathscr{C}_{(Frame-Potential);#1}}
\newcommand{\smallparameter}{\theta}
\newcommand{\Eucfour}{\mathbf{E}}
\newcommand{\Euc}{E}
\newcommand{\ID}{I}
\begin{document}


\title{Stable Big Bang Formation in Near-FLRW Solutions to the Einstein-Scalar Field and Einstein-Stiff Fluid Systems}
\author{Igor Rodnianski$^{*}$
\and 
Jared Speck$^{**}$}

\thanks{$^*$Princeton University, Department of Mathematics, Fine Hall, Washington Road, Princeton, NJ 08544-1000, USA. \texttt{irod@math.princeton.edu}}

\thanks{$^{**}$Massachusetts Institute of Technology, Department of Mathematics, 77 Massachusetts Ave, Room 2-163, Cambridge, MA 02139-4307, USA. \texttt{jspeck@math.mit.edu}}

\thanks{$^{*}$ IR gratefully acknowledges support from NSF grant \# DMS-1001500.}	

\thanks{$^{**}$ JS gratefully acknowledges support from NSF grant \# DMS-1162211 and
from a Solomon Buchsbaum grant administered by the Massachusetts Institute of Technology.
}	

\begin{abstract}
	We prove a stable singularity formation result, without symmetry assumptions, 
	for solutions to the Einstein-scalar field and Einstein-stiff fluid systems. 
	Our results apply to small perturbations of the spatially flat Friedmann-Lema\^{\i}tre-Robertson-Walker 
	(FLRW) solution with topology $(0,\infty) \times \mathbb{T}^3.$  The FLRW solution models 
	a spatially uniform scalar-field/stiff fluid evolving in a spacetime 
	that expands towards the future
	and that has a ``Big Bang'' singularity at $\lbrace 0 \rbrace \times \mathbb{T}^3,$
	where its curvature blows up.  
	We place ``initial'' data on a Cauchy hypersurface $\Sigma_1'$ that 
	are close, as measured by a Sobolev norm,
	to the FLRW data induced on $\lbrace 1 \rbrace \times \mathbb{T}^3.$
	We then study the asymptotic behavior of the perturbed solution in the \emph{collapsing} direction
	and prove that its basic qualitative and quantitative features 
	closely resemble those of the FLRW solution.
	In particular, we construct constant mean curvature-transported spatial coordinates for the perturbed solution
	covering $(t,x) \in (0,1] \times \mathbb{T}^3$ and show that it also has a Big Bang
	at $\lbrace 0 \rbrace \times \mathbb{T}^3,$ where its curvature blows up.
	The blow-up confirms Penrose's Strong Cosmic Censorship hypothesis for the ``past-half'' of near-FLRW solutions.
	Furthermore, we show that the Einstein equations are dominated by kinetic 
	(time derivative) terms that induce approximately monotonic behavior near the Big Bang, and consequently,
	various time-rescaled components of the solution converge to functions
	of $x$ as $t \downarrow 0.$
	
	The most difficult aspect of the proof is showing that
	the solution exists for $(t,x) \in (0,1] \times \mathbb{T}^3,$
	and to this end, we derive a hierarchy of energy estimates
	that are allowed to mildly blow-up as $t \downarrow 0.$ 
	To close these estimates, 
	it is essential that we are able to rule out more singular energy blow-up,
	which is in turn tied to the most important ingredient in our analysis:
	an $L^2-$type \emph{energy approximate monotonicity inequality} that holds 
	for near-FLRW solutions.
  In the companion article 
	\cite{iRjS2014a}, we used the approximate monotonicity
	to prove a stability result for solutions to linearized versions of the equations.
	The present article shows that the linear stability result
	can be upgraded to control the nonlinear terms.
	
\bigskip

\noindent \textbf{Keywords}: constant mean curvature, energy currents, parabolic gauge, 
	spatial harmonic coordinates, stable blow-up, transported spatial coordinates
\bigskip

\noindent \textbf{Mathematics Subject Classification (2010)} Primary: 35A01; Secondary: 35L51, 35Q31, 35Q76, 83C05, 83C75, 83F05

\end{abstract}

\maketitle

\centerline{\today}


\section{Introduction} \label{S:INTRO}
In this article, we prove a stable singularity formation result for 
a class of solutions to the Einstein-scalar field and Einstein-stiff fluid systems. 
\begin{remark} \label{R:SCALARFIELD}
	It is well-known (see, for example, \cite{dC2007}) that the scalar field matter model can be viewed as a special subcase of the stiff 
	fluid model under the assumptions that the scalar field's gradient is timelike, that the
	fluid is irrotational, and that the metric dual of the fluid four velocity is exact up to normalization. 
	Thus, we can restrict our attention to the Einstein-stiff fluid system;
	all of the results that we derive for this system immediately imply corresponding results for the Einstein-scalar field system. 
\end{remark}
\noindent We derive our main results 
by exploiting a new form of approximate $L^2$-type monotonicity that holds 
for solutions close to the well-known Friedmann-Lema\^{\i}tre-Robertson-Walker (FLRW) solution
on $(0,\infty) \times \mathbb{T}^3$ (see \eqref{E:BACKGROUNDSOLUTION}). 
The FLRW solution expands towards the future and contains 
a Big Bang singularity along its past boundary $\lbrace 0 \rbrace \times \mathbb{T}^3,$
where its curvature blows up. In the companion article \cite{iRjS2014a}, we derived approximate monotonicity
and stability results for linearized versions of the related but simpler (see Remark~\ref{R:SCALARFIELD}) 
Einstein-scalar field system, where we placed linearized data along $\lbrace 1 \rbrace \times \mathbb{T}^3$
and studied the solutions in the singular direction $t \downarrow 0.$
More precisely, in \cite{iRjS2014a}, we studied 
solutions to a large family of linear equations formed by 
linearizing the Einstein-scalar field equations around the well-known
family of Kasner solutions (see \eqref{E:KASNER}), which are explicit
and depend only on $t.$ Exceptional cases aside, 
the Kasner solutions also have Big Bang singularities at $\lbrace 0 \rbrace \times \mathbb{T}^3,$
and in fact, the FLRW solution is the unique spatially isotropic example.
However, in \cite{iRjS2014a}, we obtained compellingly strong linear stability statements only for 
near-FLRW Kasner backgrounds. Roughly, in the present article, we show that those stability results 
are strong enough to control the nonlinear terms in the near-FLRW regime, 
which allows us to prove that the FLRW Big Bang singularity is nonlinearly stable.
It is natural to ask whether or not our results can be extended to 
show the stability of the Big Bang singularity for far-from-FLRW Kasner solutions.
We do not take a stance on what to expect. An essential first step towards an answer 
would be to carry out a detailed linearized stability analysis for from-from-FLRW Kasner backgrounds,
thus going beyond the regime addressed in detail in \cite{iRjS2014a}.

Our work here can be viewed as addressing various aspects of the following two related questions:

\begin{center}
	\begin{enumerate} 
		\item Which initial data have incomplete maximal globally hyperbolic developments?
		\item How do the incomplete solutions behave near their boundary? In particular, 
		does a true singularity, such as curvature blow-up, occur at the boundary? 
	\end{enumerate}
\end{center}

Our main results, which we summarize in Sect.~\ref{SS:MAINSUMMARY}, provide a detailed answer to the above questions for 
near-FLRW solutions.
Specifically, we provide a complete description of the past dynamics of solutions launched by 
data given on a spacelike hypersurface $\Sigma_1'$ 
that are $H^8-$close to the FLRW data induced on $\lbrace 1 \rbrace \times \mathbb{T}^3.$
We prove that the perturbed solutions ``develop'' a ``Big Bang'' singularity to the past of $\Sigma_1'.$
For simplicity, we restrict our attention to the case $\Sigma_1' = \mathbb{T}^3 := [-\pi,\pi]^3$
(with the ends identified). 

\subsection{The Einstein-stiff fluid equations and context}
The Einstein-stiff fluid equations are a special case of the 
the Euler-Einstein equations, which are often used in cosmology.
The role of the fluid, which is the most common matter model used in cosmology, is to model the average matter-energy content of the entire universe. 

\subsubsection{The equations}
\label{SSS:THEEQUATIONS}
Relative to an arbitrary coordinate system, the Einstein-stiff fluid equations are
\begin{subequations}
\begin{align}
	\Ricfour_{\mu \nu} - \frac{1}{2}\Rfour \gfour_{\mu \nu} & = \Tfour_{\mu \nu}, && (\mu,\nu = 0,1,2,3), \label{E:EINSTEININTRO} \\
	\Dfour_{\mu} \Tfour^{\mu \nu} & = 0, && (\nu = 0,1,2,3), & \label{E:DTIS0INTRO}
\end{align}
\end{subequations}
where $\Ricfour_{\mu \nu}$ denotes the Ricci tensor of $\gfour_{\mu \nu},$ $\Rfour$ denotes the scalar curvature of 
$\gfour_{\mu \nu},$ $\Dfour_{\mu}$ denotes the Levi-Civita connection of $\gfour_{\mu \nu},$ and 
$\Tfour_{\mu \nu} := \gfour_{\mu \alpha} \gfour_{\nu \beta} \Tfour^{\alpha \beta}$ denotes 
the metric dual of the energy-momentum tensor $\Tfour^{\mu \nu}$ of a perfect fluid:
\begin{align} \label{E:TFOURFLUID}
	\Tfour^{\mu \nu} & = (\rho + p) \ufour^{\mu} \ufour^{\nu} + p(\gfour^{-1})^{\mu \nu}.
\end{align}
In \eqref{E:TFOURFLUID}, $\rho$ denotes the fluid's proper energy density, $p$ denotes the pressure, and
$\ufour^{\mu}$ denotes the four-velocity, which is a future-directed vectorfield subject to the normalization condition
\begin{align}
	\gfour_{\alpha \beta} \ufour^{\alpha} \ufour^{\beta} & = - 1. \label{E:UNORMALIZED} 
\end{align}

The system \eqref{E:EINSTEININTRO}-\eqref{E:UNORMALIZED} is not closed because there are not enough fluid equations. In cosmology, the equations are often closed (see \cite[Ch. 5]{rW1984}) by assuming the \emph{equation of state} 
\begin{align} \label{E:EOS}
	p = \speed^2 \rho,
\end{align} 
where $0 \leq \speed \leq 1$ is a constant known as the \emph{speed of sound}. 
The \emph{stiff fluid} is the case $\speed = 1.$ This equation of state is unique among all those of
the form \eqref{E:EOS}. In particular, our main results show that its special properties regularize near-FLRW solutions in a neighborhood of their Big Bang singularities. By ``regularize,'' we mean that the singularity formation processes are controlled, stable, and approximately monotonic. See Sect.~\ref{SSS:OTHERMATTER} for additional details regarding the special properties of the stiff fluid.

\subsubsection{Connection to the Hawking-Penrose theorems and the Strong Cosmic Censorship hypothesis}
\label{SSS:HAWKINGPENROSE}
The famous cosmological singularity theorems of Penrose and Hawking 
(see e.g. \cite{sH1967}, \cite{rP1965}, and the discussion in \cite{sHrP1970}) 
guarantee that a large class of globally hyperbolic\footnote{Globally hyperbolic spacetimes are those
containing a Cauchy hypersurface.} spacetimes are necessarily ``singular'' in a certain weak sense. 
In fact, these theorems are able to answer question (1) posed above in a general setting. 
For example, Hawking's theorem (see \cite[Theorem 9.5.1]{rW1984}) guarantees that under certain assumptions verified in our problem of interest, all past-directed 
timelike geodesics are incomplete.\footnote{
The assumptions needed in Hawking's theorem are \textbf{i)} the matter model verifies the \emph{strong energy condition}, which is  $\left(\Tfour_{\mu \nu} - \frac{1}{2} (\gfour^{-1})^{\alpha \beta} \Tfour_{\alpha \beta} \Tfour_{\mu \nu} \right) \mathbf{X}^{\mu} \mathbf{X}^{\nu} \geq 0$ whenever $\mathbf{X}$ is timelike; \textbf{ii)} $\SecondFund_{\ a}^a < - C$ everywhere on the initial Cauchy hypersurface, where $\SecondFund$ is the second fundamental form of $\Sigma_t$ 
(see \eqref{E:SECONDFUNDDEF})
and $C > 0$ is a constant. More precisely, Hawking's theorem shows that no past-directed timelike curve can have length greater than $\frac{3}{C}.$ In our main results, we derive a slightly sharper version of this estimate that is tailored to the solutions addressed in this article (see inequality \eqref{E:LENGTHEST}).
}
However, the theorems do not reveal the nature of the incompleteness. In particular, they allow for both of the following possibilities: \textbf{i)} the spacetime is inextendible across the region where the geodesics terminate due to some kind of blow-up of an invariant quantity 
(this is what happens, for example, in the FLRW solution \eqref{E:BACKGROUNDSOLUTION}); or \textbf{ii)} the spacetime can be extended as a solution in a ``regular'' (but perhaps non-unique) fashion across the region where the geodesics terminate (this can happen when a Cauchy horizon develops, as in the case of the well-known Taub family of solutions to the Einstein-vacuum equations;
see \cite{jGjP2009}, \cite{aR2005b}, \cite{pCjI1993}). 
To the best of our knowledge, there are no prior results that are able to distinguish between these two scenarios for 
an open set of data (without symmetry assumptions). That is, there are no prior results that answer question (2) posed above. 
Our main results fully confirm the scenario \textbf{i)} for near-FLRW data due to curvature blow-up
at\footnote{As we describe below, the relevant time coordinate $t$ for the perturbed solution
has level sets with mean curvature $- (1/3) t^{-1}.$} 
$\lbrace t = 0 \rbrace.$
This shows in particular that the solutions cannot be extended beyond $\lbrace t = 0 \rbrace$
and yields a proof of Penrose's \emph{Strong Cosmic Censorship hypothesis}\footnote{
Roughly, this conjecture asserts that the maximal globally hyperbolic development of the data
should be inextendible as a ``regular" Lorentzian manifold.} 
\cite{rP1969}
in the ``collapsing half'' of the near-FLRW spacetimes.
The overall strategy of our proof is to provide an exhaustive picture of the collapsing half of the maximal globally hyperbolic development\footnote{We explain the
maximal globally hyperbolic development in more detail in Section \ref{SS:IVP}.} 
of the data including a description of the asymptotic behavior of the solution near its past boundary. 

\subsubsection{The main ideas behind the proof and the role of the approximate monotonicity}
\label{SSS:NEEDFORMONOTONICITY}
The main ideas behind our analysis are \textbf{i)} exploiting the approximate 
$L^2-$type monotonicity inequality 
mentioned above. When we derive energy estimates, the approximate 
monotonicity manifests
as the availability of some negative definite quadratic spacetime integrals that encourage decay towards the past and
that allow us to absorb various error integrals. The approximate\footnote{The monotonicity is approximate
in the sense that it yields negative definite spacetime integrals only for some of the solution variables.}  
monotonicity holds in certain regimes and seems to require the presence of certain kinds of matter such as the stiff fluid. 
Our derivation of the approximate monotonicity is based on a hierarchy of special structures 
in the Einstein-stiff fluid equations that are visible in \emph{constant mean curvature (CMC)-transported spatial coordinates}. 
More precisely, the key portion of the approximate monotonicity  
is available in part because of a 
favorably signed (in a subtle sense) linear term in the fluid equations that leads to control over the lapse 
(the lapse $n$ is defined in Sect.~\ref{SS:CHOICEOFVARIABLES}).  
We provide an overview of
the monotonicity in Sect.~\ref{SSS:KEYMONOTONICITYINTRO}, and
we provide the full details in Prop.~\ref{P:FUNDAMENTALENERGYINEQUALITY}.
In Sect.~\ref{SS:MONOTONICITYEXPLANATION}, we shed light on the
direction of the approximate monotonicity by recalling
an alternate one-parameter family of parabolic gauges for the lapse variable,
first introduced in \cite{iRjS2014a}.
For certain parameter choices, the approximate monotonicity is also visible in these gauges 
\emph{but the corresponding PDEs are locally well-posed only in the past direction.} 
The CMC gauge, which we use in the vast majority of our analysis, 
is formally a singular limit of the parabolic family. 
To the best of our knowledge, the idea of using parabolic gauges 
for the Einstein equations first appeared in the numerical relativity literature
in \cite{jBgDeS1996}, where the authors suggested that such gauges might
be suitable for studying the long-time evolution of solutions.
Readers may also consult \cite{cGjMG2006} for a discussion of local 
well-posedness for the Einstein equations under various gauge conditions involving a parabolic equation.
\textbf{ii)} We commute the equations with spatial derivatives and derive sharp pointwise and $L^2$ bounds for the inhomogeneous terms that arise. This analysis is delicate because many terms become singular at the Big Bang, 
and this can cause our $L^2-$type energies to blow-up. 
To close our estimates, \emph{we must prove that the energy blow-up rate is mild in the approach towards the Big Bang.} 
This is where we need the approximate monotonicity: 
without the negative definite integrals that it provides, 
we would be unable to absorb certain key error integrals.
Consequently, we would be able to show only that the
energies are bounded from above by the size of the data times 
$t^{-C}$ for a large constant $C,$
and such blow-up is too severe to allow us to close our bootstrap argument.
We must also derive the precise blow-up rates for the inhomogeneous terms
that cannot be absorbed into the negative definite spacetime integrals.
Some of the blow-up rates are dangerous yet
optimal in the sense that they are saturated
by the Kasner solutions \eqref{E:KASNER}. By ``dangerous,'' we mean that
the rates can cause mild energy blow-up and that
\emph{a slightly worse inhomogeneous term blow-up rate could lead to rapid energy growth, which would
completely destroy our proof.}
To bound the inhomogeneous terms, we establish an intricate hierarchy of decay/growth rates that 
distinguishes between the behavior of lower-order and higher-order derivatives near the Big Bang. 

 
\subsection{Details on the FLRW and Kasner solutions} \label{SS:SOLUTIONSWESTUDY}
The FLRW solution to \eqref{E:EINSTEININTRO}-\eqref{E:DTIS0INTRO} when $\speed = 1$ is
\begin{align} \label{E:BACKGROUNDSOLUTION}
	\widetilde{\gfour} & = - dt^2 + t^{2/3} \sum_{i=1}^3 (dx^i)^2, \qquad \widetilde{p} = \frac{1}{3} t^{-2}, \qquad \widetilde{\ufour}^{\mu} = \delta_0^{\mu}, \qquad (t,x) \in (0,\infty) \times \mathbb{T}^3,
\end{align}
where $\delta_{\mu}^{\nu}$ $(0 \leq \mu, \nu \leq 3)$ is the standard Kronecker delta. It is a spatially flat member of a family of dynamic spatially homogeneous solutions that were first discovered\footnote{
The solutions in \cite{aF1922en} had 
$\speed=0$ and positive spatial curvature (the spacetime manifolds that Friedmann considered were diffeomorphic to $\mathbb{R} \times \mathbb{S}^3$), 
but many other values of $\speed$ and other choices of spatial curvature have since been considered.} 
by Friedmann 
in 1922 \cite{aF1922en}. 
The family continues to strongly influence the way we think about the possible structure of our universe. These solutions model a spatially homogeneous, isotropic spacetime that contains a perfect fluid, and many members of the family (including \eqref{E:BACKGROUNDSOLUTION}) have a 
Big Bang singularity at $t = 0.$ It was Lema\^{\i}tre who first emphasized that many members of the family, if extrapolated backwards in time (from, say, $t=1$), predict the presence of a Big Bang \cite{gL1931}. The solutions were investigated in more detail in the 1930's by Robertson and Walker, and hence the family is often referred as the FLRW family. 

The FLRW solution \eqref{E:BACKGROUNDSOLUTION} is also a member of the generalized Kasner family\footnote{The Kasner family is often defined to contain only vacuum solutions.} (the vacuum Kasner solutions were discovered in 1921 \cite{eK1921}), a class of Bianchi type I solutions\footnote{For an overview of the Bianchi I class and other symmetry classes that we mention later,
readers may consult \cite{pCgGdP2010}.} 
that are spatially homogeneous but generally anisotropic. When the matter model is a perfect fluid verifying $p = \speed^2 \rho,$ the (generalized) Kasner solutions 
to the Euler-Einstein equations \eqref{E:EINSTEININTRO}-\eqref{E:UNORMALIZED} are
\begin{align} \label{E:KASNER}
	\gfour_{KAS} & = - dt^2 + \sum_{i=1}^3 t^{2q_i} (dx^i)^2, \qquad p = \bar{p} t^{-2}, \qquad \ufour^{\mu} = \delta_0^{\mu},
		\qquad (t,x) \in (0,\infty) \times \mathbb{T}^3,
\end{align}
where $\delta_{\nu}^{\mu}$ ($\mu, \nu = 0,1,2,3$) is the standard Kronecker delta,
the constants $q_i$ are called the \emph{Kasner exponents}, and $\bar{p} \geq 0$ is a constant denoting the pressure
at $t=1.$ 
The Kasner exponents are constrained by the relations
 \begin{subequations} 
\begin{align} 
	\sum_{i = 1}^3 q_i & = 1, 
	\label{E:KASNERTRACECONDITION} 
	\\
	\sum_{i=1}^3 q_i^2 & = 1 - 2 \speed^{-2} \bar{p}.
	\label{E:KASNERHAMILTONIANCONSTRAINT}
\end{align}
\end{subequations}
\eqref{E:KASNERTRACECONDITION} corresponds to a choice of time coordinate such that
$\SecondFund_{\ a}^a(t,x) \equiv - t^{-1},$ while \eqref{E:KASNERHAMILTONIANCONSTRAINT} is a consequence of 
$\SecondFund_{\ a}^a(t,x) \equiv - t^{-1}$ plus the
Hamiltonian constraint equation \eqref{E:GAUSSINTRO}, which is discussed below. 
Here and throughout, $\SecondFund_{\ j}^i$ denotes the (mixed) second fundamental form of the constant-time
hypersurface $\Sigma_t$ (see \eqref{E:SECONDFUNDDEF}).
It is important to note that it is not possible to have all three $q_i > 0$ in the absence of matter. The solution \eqref{E:BACKGROUNDSOLUTION} is a special case of \eqref{E:KASNER}, \eqref{E:KASNERTRACECONDITION}-\eqref{E:KASNERHAMILTONIANCONSTRAINT} in which 
$\speed = 1,$ $\bar{p} = 1/3,$ and $q_i = 1/3$ for $i=1,2,3.$ It is straightforward to compute that for
any Kasner solution \eqref{E:KASNER} with exponents verifying \eqref{E:KASNERTRACECONDITION} and \eqref{E:KASNERHAMILTONIANCONSTRAINT},
the corresponding Kretschmann scalar 
$|\Riemfour|_{\gfour}^2 := \Riemfour^{\alpha \beta \kappa \lambda} \Riemfour_{\alpha \beta \kappa \lambda}$ 
verifies
\begin{align}
	|\Riemfour_{KAS}|_{\gfour_{KAS}}^2 & = 4 t^{-4}
		\left\lbrace
			\sum_{i=1}^3 q_i^4
			+ \sum_{1 \leq i < j \leq 3} q_i^2 q_j^2
			+ \sum_{i=1}^3 q_i^2
			- 2 \sum_{i=1}^3 q_i^3
		\right\rbrace 
		\\
	& \geq 4 t^{-4} \sum_{1 \leq i < j \leq 3} q_i^2 q_j^2,
		\notag
\end{align}
where $\Riemfour_{KAS}$ denotes the Riemann tensor of $\gfour_{KAS}.$ In particular, unless two of the $q_i$ are equal to $0,$
it follows that $|\Riemfour_{KAS}|_{\gfour_{KAS}}^2$ blows up at the rate $t^{-4}$ as $t \downarrow 0.$ Hence, for
such Kasner solutions, $\lbrace t = 0 \rbrace$ is a true singularity 
beyond which one cannot regularly extend the solution.

The vacuum Kasner solutions (i.e., $\bar{p} = 0$) are famous for the controversial alleged role that they play in approximating highly oscillatory solutions to Einstein's equations near a singularity. Despite the controversy, it is known that the vacuum Kasner solutions are in fact intimately connected to the dynamics of certain \emph{spatially homogeneous} solutions to the Euler-Einstein system
with Bianchi IX symmetry \cite{hR2001}. Under the assumptions of spatial homogeneity and Bianchi IX symmetry, the Euler-Einstein equations reduce to a coupled system of ODEs whose solutions exhibit highly complicated, oscillatory behavior. Specifically, in \cite{hR2001}, Ringstr\"{o}m showed (among many other things) that under the equation of state 
\eqref{E:EOS} with $0 \leq \speed < 1,$ the limit points of generic (i.e., non-Taub) Bianchi IX solutions in the approach towards the singularity must be either \emph{vacuum} Bianchi type I (i.e., vacuum Kasner), vacuum Bianchi type $\mbox{VII}_0$, or
vacuum Bianchi type II.
In particular, Ringstr\"{o}m's work showed that a sub-stiff fluid has a negligible effect on the solutions near the singularity. Furthermore, he showed that almost all such solutions are oscillatory in the sense that there are at least three distinct limit points. Ringstr\"{o}m's work (see also \cite{hR2000}) established for the first time a rigorous picture of the global behavior of Misner's ``mixmaster" solutions\footnote{This is the name Misner gave to the spatially homogeneous vacuum solutions belonging to the symmetry class Bianchi IX.}  
\cite{cM1969}. 


In \emph{contrast} to the oscillatory behavior described above, the solutions that we study in this article behave in 
an approximately monotonic fashion near the singularity. 
As we have mentioned, the presence of the stiff fluid plays an essential role in driving the regular behavior near the singularity. Some of the heuristics connected to the mollifying effect induced by certain kinds of matter have been known for many years. Specifically, Belinskii and Khalatnikov \cite{vBiK1972} gave heuristic arguments suggesting that a scalar field should have a regularizing, oscillatory-suppressing effect on solutions near the singularity. In a later article \cite{jB1978}, Barrow argued that fluids verifying the equation of state $p = \speed^2 \rho$ should induce a similar effect if and only if $\speed = 1;$ he referred to the 
mollifying effect of a stiff fluid as \emph{quiescent cosmology}. Thus, our main result validates the quiescent cosmological picture for spacetimes launched by near-FLRW data. A similar effect was observed in the ODE solutions studied by Ringstr\"{o}m \cite{hR2001}, but only for the equation of state $p = \rho.$ In three spatial dimensions, the monotonicity is roughly associated with the following two conditions: \textbf{i)} all three $q_i > 0;$ $\textbf{ii)}$ the presence of a stiff fluid (with $p > 0$ everywhere), which, as mentioned above, has properties that distinguish it from all other perfect fluids. However, our main theorem only covers the case in which all three $q_i$ are near $1/3.$ In further contrast to the sub-stiff case, the stiff fluid plays an \emph{essential} role (albeit somewhat indirectly) in controlling the dynamics of the solution near the singularity.

\subsection{Initial value problem}
\label{SS:IVP}
It has been known since the fundamental result of Choquet-Bruhat \cite{CB1952} that the Einstein equations are effectively hyperbolic and can be decomposed into a system of constraint and evolution equations. However, because of their diffeomorphism invariance, the hyperbolic character of the equations becomes apparent only after one makes a choice of a gauge. 
As we have mentioned, we derive our main results in CMC-transported spatial coordinates,
which has been used in various settings
such as the derivation of breakdown criteria for solutions \cite{sKiR2010}, \cite{aS2011}, \cite{qW2012}. 
In this gauge, the Einstein-stiff fluid system comprises constraint equations, wave-like equations for the spatial metric components, first-order hyperbolic equations for the fluid variable components, and an elliptic equation for the lapse function $n.$ We present our choice of
solution variables and the corresponding PDEs that they verify in Sect.~\ref{S:EEINCMC}
(it is convenient to work with renormalized solution variables). 
We normalize the time coordinate $t$ such that data are placed at $t = 1$ 
and the mean curvature of the constant-time hypersurface $\Sigma_t$ is $-\frac{1}{3}t^{-1}.$ That is,
$\SecondFund_{\ a}^a(t,x) \equiv - t^{-1},$ where $\SecondFund_{\ j}^i$ is the (mixed) second fundamental form of $\Sigma_t$
(see \eqref{E:SECONDFUNDDEF}). 

Some spacetimes do not contain any CMC slices \cite{rB1988a}. To show that
near-FLRW spacetimes always contain one, 
we briefly use an alternate gauge for the Einstein equations. Specifically, we use a harmonic map gauge, which is closely connected to the well-known harmonic coordinate gauge (which is also known as the \emph{wave coordinate} gauge). In short, we first sketch a proof of local well-posedness for the Einstein-stiff fluid system in harmonic map gauge for near-FLRW data; this is a standard result - see Prop.~\ref{P:HARMONIC}. We then combine the estimates of the proposition with a modified version of a theorem of Bartnik to deduce the existence of a CMC hypersurface equipped with near-FLRW fields verifying the Einstein constraints; see Prop.~\ref{P:CMCEXISTS}
and Corollary \ref{C:NEARFLRWFIELDSONCMC}. Once we have produced a CMC slice, we immediately switch to CMC-transported spatial coordinates, which we use for all remaining analysis.

Before summarizing our main results and providing further context, we state some basic facts regarding the initial value problem for the Euler-Einstein equations. Our data consist of the three manifold $\Sigma_1' = \mathbb{T}^3$ 
equipped with the tensorfields 
$(\mathring{g}_{ij},\mathring{\SecondFund}_{ij},\mathring{p},\mathring{u}^i).$
Here, $\mathring{g}_{ij}$ is a Riemannian metric,
$\mathring{\SecondFund}_{ij}$ is a symmetric two-tensor, 
$\mathring{p}$ is a function, and
$\mathring{u}^i$ is a vectorfield. If 
$\Sigma_1'$ is a CMC hypersurface with $\mathring{\SecondFund}_{\ a}^a(x) \equiv -1,$ then we write 
``$\Sigma_1$'' instead of ``$\Sigma_1'.$'' A solution launched by the data consists of a spacetime $(\mathbf{M}, \gfour_{\mu \nu})$ 
and an embedding $\Sigma_1' \overset{\iota}{\hookrightarrow}\mathbf{M}$ such that 
$\iota(\Sigma_1')$ is a Cauchy hypersurface in $(\mathbf{M},\gfour_{\mu \nu}),$ and the following additional fields on $\mathbf{M}$: 
a function $p$ and a vectorfield $\ufour^{\mu}$ normalized by
$\gfour_{\alpha \beta}\ufour^{\alpha} \ufour^{\beta} = -1.$ The spacetime fields must verify the equations 
\eqref{E:EINSTEININTRO}-\eqref{E:EOS}. Furthermore, they must be such that $\iota^* \gfour = \mathring{g},$ $\iota^* \kfour = \mathring{\SecondFund},$ $\iota^* p = \mathring{p},$ $\iota^* \ufour = \mathring{u},$ where $\kfour$ is the second fundamental form of $\iota(\Sigma_1')$ 
and $\iota^*$ denotes pullback by $\iota.$ The relation $\iota^* \ufour = \mathring{u}$ denotes a slight abuse of notation; we mean that the one-form that is $\mathring{g}-$dual to $\mathring{u}$ is the pullback by $\iota$ of the one-form that is $\gfour-$dual to $\ufour.$ 
Throughout the article, we will often suppress the embedding and identify $\Sigma_1'$ with $\iota(\Sigma_1').$ 

It is well-known (see Lemma~\ref{L:CONSTRAINTS}) that the initial data are constrained by the \emph{Gauss} and \emph{Codazzi} equations, which take the following form for fluids verifying $p = \speed^2 \rho:$
\begin{subequations}
\begin{align}
	\mathring{R} 
	- \mathring{\SecondFund}_{\ b}^a \mathring{\SecondFund}_{\ a}^b
	+ (\mathring{\SecondFund}_{\ a}^a)^2 
	& = 2 \Tfour(\Nml,\Nml)|_{\Sigma_1'} = 
	2(1 + \speed^{-2}) \mathring{p} (1 + \mathring{u}_a \mathring{u}^a) 
	- 2 \mathring{p}, 
	\label{E:GAUSSINTRO} \\
	\nabla_a \mathring{\SecondFund}_{\ j}^a - \nabla_j \mathring{\SecondFund}_{\ a}^a  
		& = - \Tfour(\Nml,\frac{\partial}{\partial x^j})|_{\Sigma_1'}
		= (1 + \speed^{-2}) \mathring{p}(1 + \mathring{u}_a \mathring{u}^a)^{1/2} \mathring{u}_j. 
		\label{E:CODAZZIINTRO}
\end{align}
\end{subequations}
Above, $\Nml^{\mu}$ denotes the future-directed unit normal to $\Sigma_1'$ in $(\mathbf{M},\gfour_{\mu \nu}),$
$\Tfour(\Nml,\Nml) : = \Tfour_{\alpha \beta} \Nml^{\alpha} \Nml^{\beta},$
$\nabla_i$ denotes the Levi-Civita connection of $\mathring{g}_{ij},$ and $\mathring{R}$ denotes 
the scalar curvature of $\mathring{g}_{ij}.$ The above equations are sometimes referred to as
the \emph{Hamiltonian and momentum constraints}.

A well-known result of Choquet-Bruhat and Geroch \cite{cBgR1969} states that all sufficiently regular data
verifying the constraints launch a unique maximal solution to the Euler-Einstein equations 
\eqref{E:EINSTEININTRO}-\eqref{E:EOS}. This solution is called the \emph{maximal globally hyperbolic development of the data}. More precisely, this maximal solution is unique up to isometry in the class of \emph{globally hyperbolic spacetimes}, which are spacetimes containing a Cauchy hypersurface. 
The work \cite{cBgR1969} is an abstract existence result that does not provide any quantitative
information. Our main results provide, in the case $\speed = 1,$ quantitative information for the ``past-half'' of the maximal globally hyperbolic development of data that are sufficiently close to the FLRW data.

\subsection{Summary of the main results} \label{SS:MAINSUMMARY}
We now summarize our main results. 
See Prop.~\ref{P:CMCEXISTS} and Theorem~\ref{T:BIGBANG} for more detailed statements.

\begin{changemargin}{.25in}{.25in} 
\textbf{Summary of the Main Stable Big Bang Formation Theorem.} 
	The FLRW solution \eqref{E:BACKGROUNDSOLUTION} is a past-globally stable
	singular solution to the Einstein-stiff fluid system \eqref{E:EINSTEININTRO}-\eqref{E:UNORMALIZED}, $p = \rho.$
	More precisely, if the perturbed ``initial'' data 
	$\big(\mathring{g}, \mathring{\SecondFund}, \mathring{p}, \mathring{u} \big)$
	on the manifold $\Sigma_1' = \mathbb{T}^3$ verify the constraints \eqref{E:GAUSSINTRO}-\eqref{E:CODAZZIINTRO}
	and are $\epsilon^2$ close in $H^N$ ($N \geq 8,$ and $\partial_i \mathring{g}_{jk} \in H^N$) 
	to the FLRW data (at time $1$), then for sufficiently small $\epsilon,$
	the maximal globally hyperbolic development of the perturbed data contains a spacelike hypersurface $\Sigma_1$ that is near $\Sigma_1'$ 
	and that has constant 
	mean curvature equal to $-1/3.$	Furthermore, the past of $\Sigma_1$ is foliated by a family of spacelike hypersurfaces 
	$\Sigma_t,$ $t \in (0,1],$ 
	upon which the CMC condition $\SecondFund_{\ a}^a(t,x) \equiv - t^{-1}$ holds. 
	Relative to the time coordinate $t,$ the 
	perturbed solution exists on the manifold-with-boundary $(0,1] \times \mathbb{T}^3$ and remains close 
	to the FLRW solution. Specifically, there exist a collection of 
	transported spatial coordinates\footnote{Technically, the coordinate functions $(x^1,x^2,x^3)$ themselves 
	cannot be globally defined on $\mathbb{T}^3,$ 
	but the vectorfields $\partial_i = \frac{\partial}{\partial x^i}$
	and their dual one-forms $dx^i$ can globally defined on $\mathbb{T}^3$
	in a smooth fashion.}
	$(x^1,x^2,x^3)$ and a large constant $c > 0$ such that
	relative to these coordinates,
	the components of the spacetime metric $\gfour = - n^2 dt^2 + g_{ab} dx^a dx^b,$ 
	the spatial volume form factor $\sqrt{\gdet},$ 
	the components $\SecondFund_{\ j}^i = - \frac{1}{2} n^{-1} g^{ia} \partial_t g_{aj}$
	of the mixed second fundamental form of $\Sigma_t,$
	the pressure $p,$ the components $u^i$ of the $\gfour-$orthogonal projection of
	the four-velocity $\ufour$ onto $\Sigma_t,$ 
	the $\Sigma_t-$normal component $\gfour(\ufour, \Nml) = - (1 + g_{ab} u^a u^b)^{1/2},$
	the spacetime Riemann curvature tensor $\Riemfour,$
	and the spacetime Weyl curvature tensor $\Wfour$
	verify the following convergence estimates as $t \downarrow 0,$ where 
	$i,j = 1,2,3:$
	\begin{subequations}
	\begin{align}
	\left\| n - 1 \right\|_{C^0} 
		& \lesssim \epsilon t^{4/3 - c \sqrt{\epsilon}},
		\label{E:INTROLAPSEMLIMIT} 
		\\	
	\left\| t^{-1} \sqrt{\gdet} - \upsilon_{Bang} \right \|_{C^0} 
		& \lesssim \epsilon t^{4/3 - c \sqrt{\epsilon}},
		\label{E:INTROVOLFORMLIMIT} 
		\\
	\left\| g_{ia} \left[\mbox{exp} \left(2 \ln t \newsec_{Bang} \right) \right]_{\ j}^a
		- M_{ij}^{Bang} \right\|_{C^0} & \lesssim \epsilon t^{4/3 - c \sqrt{\epsilon}}, 
		 \label{E:INTROLIMITINGMETRICBEHAVIOR} 
		 \\
	\left\| t \SecondFund_{\ j}^i - (\newsec_{Bang})_{\ j}^i \right \|_{C^0}  
		& \lesssim \epsilon t^{4/3 - c \sqrt{\epsilon}}, \label{E:INTROKLIMIT} \\	
	\left\| t^2 p - \newp_{Bang} \right\|_{C^0} & \lesssim \epsilon t^{4/3 - c \sqrt{\epsilon}},
		\label{E:INTROPLIMIT} 
		\\
	\left\| u^i \right \|_{C^0} & \lesssim \epsilon t^{1/3 - c \sqrt{\epsilon}},
		\label{E:INTROULIMIT} 
		\\
	\left \| \gfour(\ufour, \Nml) + 1  \right \|_{C^0} & \lesssim \epsilon t^{4/3 - c \sqrt{\epsilon}},
		\label{E:INTROUNORMALLIMIT} 
		\\
	\left \|t^4 |\Riemfour|_{\gfour}^2 - F_{Bang} \right \|_{C^0} & \lesssim \epsilon t^{4/3 - c \sqrt{\epsilon}}, 
		\label{E:INTROSPACETIMEKRETSCHMANNBLOWUP} 
		\\
		F_{Bang} & := \Big\lbrace 2 (\newsec_{\ b}^a \newsec_{\ a}^b)^2 + 4 \newsec_{\ b}^a \newsec_{\ a}^b
		+ 2 \newsec_{\ b}^a \newsec_{\ c}^b \newsec_{\ d}^c \newsec_{\ a}^d
		+ 8 \newsec_{\ b}^a \newsec_{\ c}^b \newsec_{\ a}^c \Big\rbrace|_{\newsec = \newsec_{Bang}}, 
			\\
		\left\|t^4 |\Wfour|_{\gfour}^2 - (F_{Bang} - \frac{20}{3} \newp_{Bang}^2) \right\|_{C^0} 
		& \lesssim \epsilon t^{2/3 - c \sqrt{\epsilon}}. \label{E:INTROSPACETIMEWEYLBLOWUPINTRO}
	\end{align}
	\end{subequations}
	
	Above, $\upsilon_{Bang},$ $M^{Bang},$ $\newsec_{Bang},$
	$\newp_{Bang},$ $F_{Bang},$
	and $(F_{Bang} - \frac{20}{3} \newp_{Bang}^2)$ are limiting fields on $\mathbb{T}^3$ that are $C \epsilon$
	close in the $C^0$ norm to the corresponding FLRW fields, which are 
	respectively $1,$ $\Euc,$ $- \frac{1}{3} \ID,$ $\frac{1}{3},$ $\frac{20}{27},$ and $0.$
	Here, $\Euc_{ij} = \mbox{diag}(1,1,1)$ denotes the standard Euclidean metric on $\mathbb{T}^3$ 
	and $\ID_{\ j}^i = \mbox{diag}(1,1,1)$ denotes the identity. Furthermore,
	$\mbox{exp} \left(2 \ln t \newsec_{Bang} \right)$ denotes the standard
	matrix exponential of $2 \ln t \newsec_{Bang},$ where $\newsec_{Bang}$ is viewed as
	a $3 \times 3$ matrix with components $(\newsec_{Bang})_{\ j}^i.$
	The FLRW solution has the form \eqref{E:BACKGROUNDSOLUTION} relative to the 
	coordinates $(t,x^1,x^2,x^3).$ In addition, the limiting fields verify the following relations:
	\begin{subequations}
	\begin{align} 
	(\newsec_{Bang})_{\ a}^a & = - 1, 
		\label{E:INTROLIMITINGKTRACE} \\
		2 \newp_{Bang} + (\newsec_{Bang})_{\ b}^a (\newsec_{Bang})_{\ a}^b & = 1.
		\label{E:INTROLIMITINGFIELDCONSTRAINT}
	\end{align}
	\end{subequations}
	
	The top order Sobolev norm $\highnorm{N}$ (see Def.~\ref{D:NORMS}), which measures the deviation of the
	perturbed renormalized solution variables' \emph{components} from the 
	corresponding renormalized FLRW solution variables' \emph{components}, 
	verifies the following bound for $t \in (0,1]:$
	\begin{align} \label{E:HIGHNORMBLOWUPINTRO}
		\highnorm{N}(t) \leq \epsilon t^{- c \sqrt{\epsilon}}.
	\end{align}
	
	Furthermore, the spacetime $\big((0,1] \times \mathbb{T}^3, \gfour \big)$ is past-timelike geodesically incomplete
	and inextendible beyond $t = 0.$ As $t \downarrow 0,$ the $3-$volume of the constant-time hypersurfaces $\Sigma_t$ 
	collapses to $0,$ the pressure $p$ blows-up like $t^{-2},$ and the spacetime Kretschmann scalar $|\Riemfour|_{\gfour}^2$ 
	blows-up like $t^{-4}.$ Finally, the curvature singularity at $t=0$ is dominated by
	the Ricci components of the Riemann curvature tensor (the ratio $\frac{|\Wfour|_{\gfour}^2}{|\Riemfour|_{\gfour}^2}$
	remains order $\epsilon$ throughout the evolution).
	
\end{changemargin}

\hfill $\qed$

\begin{remark} \label{R:NODECAYTOFLRW}
	Note that the perturbed solutions do not generally converge to the FLRW solution
	as $t \downarrow 0,$ as is shown by near-FLRW isotropic Kasner solutions.
\end{remark}

\begin{remark} \label{R:INFINITEDIMENSIONAL}
	The estimates \eqref{E:INTROVOLFORMLIMIT}-\eqref{E:INTROPLIMIT} allow for
	an \emph{infinite dimensional family} of possible ``end states'' at the Big Bang $\lbrace t = 0 \rbrace.$
	For example, the limiting field $\newp_{Bang}$ 
	can in principle be any member of an \emph{open} set of functions.
\end{remark}

\begin{remark} \label{R:VTD}
	The proof of the convergence results
	is based on showing that the lower-order time derivative terms dominate the lower-order spatial
	derivative terms for $t$ near $0.$ That is, for $t$ near $0,$
	the Einstein equations are well-approximated by
	truncated equations formed by discarding spatial derivatives.
	Similar behavior had been observed for the Einstein-vacuum
	equations in the polarized Gowdy\footnote{
	The Gowdy solutions are a special subclass of the $\mathbb{T}^2-$symmetric solutions; they are 
	characterized by the vanishing of the \emph{twist constants} 
	$(\gfour^{-1})^{\mu \mu'} \pmb{\epsilon}_{\alpha \beta \mu \nu} \mathbf{X}^{\alpha} \mathbf{Y}^{\beta} \Dfour_{\mu'} \mathbf{X}^{\nu}$
	and 
	$(\gfour^{-1})^{\mu \mu'} \pmb{\epsilon}_{\alpha \beta \mu \nu} \mathbf{X}^{\alpha} \mathbf{Y}^{\beta} \Dfour_{\mu'} \mathbf{Y}^{\nu},$
	where $\pmb{\epsilon}$ is the volume form of $\gfour$ and $\mathbf{X}$ and $\mathbf{Y}$ are the Killing fields corresponding to the two 	symmetries. The 
	polarized Gowdy solutions are defined to be those Gowdy solutions such that $\gfour_{\alpha \beta}\mathbf{X}^{\alpha} 	\mathbf{Y}^{\beta} = 0.$
	} 
	class by Isenberg and Moncrief \cite{jIvM1990}.
	In general relativity, 
	going back to the work \cite{dEeLrS1972},
	the truncated equations are
	sometimes called the \emph{velocity term dominated} (VTD) equations.
\end{remark}

\subsection{Additional connections to previous work}
\label{SS:MORECONNECTIONS}
We now discuss some additional connections between 
the present work and prior results.

\subsubsection{Global solutions without symmetry assumptions} \label{SSS:GLOBALSOLUTIONSINGR}
There are only a modest number of prior results in which solutions to the Einstein equations
corresponding to an open set of data (without symmetry assumptions) have been understood in full 
detail. By ``full detail,'' we mean that the basic qualitative and quantitative features of
the data's maximal globally hyperbolic development (or at least a past or future half of them) 
have been exposed. The existing examples can roughly be grouped into two classes. The first
class was birthed by Christodoulou-Klainerman's groundbreaking proof of the stability of 
Minkowski spacetime \cite{dCsK1993} as a solution to the Einstein-vacuum equations. We remark that 
the corresponding near-Minkowski spacetime manifolds are diffeomorphic to $\mathbb{R}^4$ and are therefore not cosmological. 
The main theorem roughly states that a class of  asymptotically flat near-vacuum-Euclidean initial data sets launch maximal globally hyperbolic developments that are geodesically complete and that look in the large like the Minkowski spacetime. 
A second proof, which relied on wave coordinates and allowed for the presence of a (small) scalar field, 
was given by Lindblad-Rodnianski in \cite{hLiR2005,hLiR2010}. Various extensions of these results can be 
found in \cite{lBnZ2009,jL2009,jS2010b}. The analysis in all of these proofs is 
based on the \emph{dispersive} decay properties of solutions to wave-like equations on $\mathbb{R}^{1+3},$ 
together with careful studies of the special structure of the nonlinearities present in the 
gauges that were employed.

The second class concerns the future stability of a class of cosmological solutions to Einstein's equations when a positive
cosmological constant $\Lambda$ (or alternatively, a suitable matter model that generates a similar effect) is added to the equations. 
Specifically, one includes the additional term $\Lambda \gfour_{\mu \nu}$ on the left-hand side of \eqref{E:EINSTEININTRO}.
This additional term creates accelerated expansion in certain solutions, which can in turn stabilize them.
The second class was brought into existence by Friedrich's work on the stability of the de Sitter spacetime \cite{hF1986a}, which 
is a solution to the Einstein-vacuum equations with $\Lambda > 0.$
Related future-stability results for solutions featuring various matter models can be found in 
\cite{hF1991,cLjK2013,hR2008,hR2009,iRjS2013,jS2012,jS2013}. The 
analysis in \cite{hF1986a,hF1991,cLjK2013} is based on applications of the conformal method, which was developed by Friedrich. This method is discussed in a bit more detail in Sect.~\ref{SSS:WEYLCURVATUREHYP}. In contrast, 
in \cite{hR2008,hR2009,iRjS2013,jS2012,jS2013} the analysis is based on the \emph{dissipative} nature of the kinds wave equations that are generated by expanding spacetimes in a well-chosen harmonic-type coordinate system. That is, the spacetime expansion 
and the term $\Lambda \gfour_{\mu \nu}$ lead to the presence of friction-like terms in the PDEs, and the friction induces monotonicity in the solutions. The analysis behind our main results has more features in common with this framework than with the analysis of the 
stability of Minkowski spacetime or the conformal method, though there are some key novel features in the present work.
In particular, we stress that our main results are not based on dispersive effects, but rather on estimates related to
monotonicity and time integrability.

\subsubsection{Global solutions based on a different form of monotonicity} \label{SSS:GLOBALMONOTONICITY}
Fisher and Moncrief discovered a form of monotonicity, quite different than 
the one in this article, which holds for Einstein-vacuum solutions in some regimes.
More precisely, they constructed a reduced Hamiltonian description of the 
Einstein-vacuum equations \cite{aFvM1994,aFvM2000a,aFvM2000b,aFvM2001,aFvM1997,aFvM2002}
that applied to a class of solutions foliated by CMC hypersurfaces $\Sigma_t.$
The Hamiltonian is the volume functional 
of the $\Sigma_t,$ and they showed 
that it is a monotonic quantity for solutions to the reduced equations.
Andersson and Moncrief used this monotonicity 
to prove a global stability result \cite{lAvM2004}
(see also \cite{mR2009}) that does not fit neatly into either of  
the two classes described in Section \ref{SSS:GLOBALSOLUTIONSINGR}. 
Specifically, they proved a global stability
result in the expanding direction for a compactified version of \emph{vacuum} FLRW-type solutions 
whose spatial slices are hyperboloidal (i.e., they have constant negative sectional curvature).
They showed that the perturbed spacetimes are future geodesically complete and, in $3$ spatial
dimensions, that they decay towards the background solution. 
In addition to using the monotonicity of Fisher-Moncrief, their
proof also relied, in the case of $3$ spatial dimensions, on Mostow's rigidity theorem.

Andersson and Moncrief performed their analysis in CMC-spatial harmonic coordinates. 
They imposed the spatial harmonic coordinate condition \cite{lAvM2003} to ``reduce'' 
the Ricci tensor $R_{ij}$ of the first fundamental form
$g$ (of $\Sigma_t$) to an elliptic operator acting on $g.$ 
That is, in spatial harmonic coordinates, 
$R_{ij} = - \frac{1}{2} g^{ab} \partial_a \partial_b g_{ij} + f_{ij}(g,\partial g).$
The spatial harmonic coordinate condition, though it may have advantages in certain contexts,
introduces additional complications into the analysis. The complications arise from the necessity of including a non-zero 
$\Sigma_t-$tangent
``shift vector'' $X^i$ in the spacetime metric $\gfour:$ $\gfour = - n^2 dt^2 + g_{ab}(dx^a + X^a dt)(dx^b + X^b dt).$ To 
enforce the spatial harmonic coordinate condition, the components
$X^i$ must verify a system of elliptic PDEs that are coupled to the other solution variables.

As we discuss in Theorem~\ref{T:LOCAL}, the spatial harmonic coordinate condition is not necessary 
for proving a local well-posedness result; one can instead use transported spatial coordinates. 
In transported spatial coordinates, the additional terms appearing in the expression for the $R_{ij}$ are of the form $\frac{1}{2}(\partial_i \Gamma_j + \partial_j \Gamma_i),$ where $\Gamma_i$ is a contracted Christoffel symbol of the $3-$metric $g.$
In the main energy identity that one encounters during the derivation of a priori $L^2$ estimates for $\SecondFund_{\ j}^i$ and 
$\partial_i g_{jk},$ additional terms are generated by the contracted Christoffel symbols and are roughly of the form 
$\int_{\Sigma_t} g^{ij} \SecondFund_{\ i}^a \partial_a \Gamma_j \, dx.$ Since $\Gamma_i = \Gamma_i(g,\partial g),$
this spatial integral appears to depend on too many derivatives of $g$ to close a local well-posedness argument. However, after integration by parts, this integral can be replaced with $- \int_{\Sigma_t} g^{ij} (\partial_a \SecondFund_{\ i}^a) \Gamma_j \, dx$ plus lower-order terms, and the constraint equation \eqref{E:CODAZZIINTRO} allows us to replace $\partial_a \SecondFund_{\ i}^a$ with lower-order terms. \emph{The energy estimates for the nonlinear system therefore close.} 
Remarkably, as we described in \cite{iRjS2014a},
we have not seen this observation made in the literature. However, a complete proof of local well-posedness requires that one derive estimates for a linearized version of the equations. Linearization may destroy some of the structure of the system, which may invalidate the energy estimate procedure just described. In Theorem~\ref{T:LOCAL}, we recall a standard way of circumventing this difficulty while still using transported spatial coordinates.

When deriving energy estimates for the Einstein-stiff fluid system, we use a differential analog of the integration by parts argument described in the previous paragraph. Specifically, we account for this integration by parts by including the terms $- t^{1/3} \big[1 + t^{4/3} \newlap\big] (\newg^{-1})^{ij} \dot{\Gamma}_a \dot{\freenewsec}_{\ i}^a$ and $-t^{1/3} \big[1 + t^{4/3} \newlap\big] (\newg^{-1})^{ia} \dot{\Gamma}_a \dot{\freenewsec}_{\ i}^j$ on the right-hand side of the definition \eqref{E:METRICCURRENTJ} for $\dot{\mathbf{J}}_{(metric)}^j.$ See Sect.~\ref{SSS:KEYMONOTONICITYINTRO} for a detailed discussion of the role played by the spacetime vectorfields $\dot{\mathbf{J}}_{(metric)}^{\mu}$ in our proof of stable singularity formation. Given these observations, it would be interesting to see if the future stability results of \cite{lAvM2004} can be proved directly in CMC-transported spatial coordinates.

\subsubsection{Prescribed asymptotics and Fuchsian methods near the singularity} \label{SSS:PRESCRIBEDASYMPTOTICS}
It is generally difficult to obtain a detailed picture of the asymptotics of a solution
launched by Cauchy data, especially near a singularity. 
An alternative approach is to \emph{prescribe} the asymptotic behavior 
and to then try to \emph{construct} solutions that have the given asymptotics. This 
can be viewed as a form of ``putting data on the singularity.'' 
In the interest of obtaining a picture of the behavior of ``general'' solutions,
it is desirable to show that one can carry out such a procedure for a family of prescribed asymptotics
that depend on the ``maximum number'' of degrees of freedom in the Einstein initial data. 
However, even if one can achieve the maximum number, 
one should be careful in interpreting the results: it may be that the map
from the space of asymptotics to the space of solutions is highly degenerate; \emph{what appears to be a 
``general class of solutions'' from the point of view of function counting could in principle fail 
to be a large class from other more physically relevant points of view.} In particular, it could
happen that the ``general class of solutions'' that one constructs in this fashion is,
for example, nowhere dense (relative to a reasonable topology on the function spaces).

In \cite{lAaR2001}, Andersson and Rendall carried out a prescribed asymptotics-type construction for
solutions to the Einstein-scalar field and the Einstein-stiff fluid systems. They constructed a family of solutions that
are well-approximated by solutions to a VTD (see Remark \ref{R:VTD}) system,
and one can view the VTD solutions as the prescribed asymptotics.
They formed the VTD system by simply discarding all spatial derivative terms in the Einstein-matter equations.
The solutions in \cite{lAaR2001} 
have a Big Bang-type singularity, and a neighborhood of the singularity 
can be covered by Gaussian coordinates 
such that the singularity is synchronized at $\lbrace t = 0 \rbrace.$ 
The family depends on the same number of free functions as do the data for the general space of solutions, and no symmetry assumptions were made. However, the construction only produced solutions that are spatially analytic. 
From the physical point of view (and in particular from the point of view of finite speed of propagation), the analyticity restriction 
is undesirable, for analytic functions on a connected domain are completely determined by their behavior at a single point. 
In fact, the results derived in this article were partially inspired by \cite{lAaR2001}, where they were stated as open problems. The basic idea of the proof in \cite{lAaR2001} was to first construct a large family of spatially analytic solutions to the VTD system, and to then expand a spatially analytic
solution to the Einstein-matter equation as a VTD solution plus error terms. The error terms were shown to verify a system of Fuchsian PDEs, a general theory of which (based on the earlier ideas of \cite{mBcG1977}) has been developed by Kichesnassamy 
(see e.g. \cite{sKi1996a}, \cite{sKi1996b}, \cite{sKi1996c}). Roughly speaking, a Fuchsian PDE is one of the form
\begin{align} \label{E:FUCHSIAN}
	t \partial_t u + A(x)u = F(t,x,u,\partial_x u).
\end{align}
Above, $u$ is the array of unknowns and $A(x)$ is a matrix-valued function that has to verify certain technical conditions. In \cite{lAaR2001}, Andersson-Rendall used a slight extension of a Fuchsian existence theorem proved in \cite{sKaR1998} to conclude that the Fuchsian system verified by the error terms has a unique solution that is analytic in $x$ and that tends to $0$ as $t \downarrow 0.$ In particular, they showed that the solution to Einstein's equations converges to a solution of the VTD system (which does not depend on spatial derivatives).
We also remark that prior to \cite{lAaR2001}, similar results had been derived in the absence of matter under various symmetry assumptions \cite{sKjI1999}, \cite{sKaR1998}, including a result of Rendall that did not require the assumption of spatial analyticity \cite{aR2000b}. An alternative proof of the latter result invoking the use of second order Fuchsian techniques has recently been provided in \cite{fBpL2010c}. The 
results of \cite{lAaR2001} were extended to higher dimensions and other matter models in \cite{tDmHrAmW2002}.
Related results have been obtained in \cite{kA2000a}, \cite{yCBjIvM2004}, \cite{fS2002}.

\subsubsection{Weyl curvature hypothesis and isotropic singularities} \label{SSS:WEYLCURVATUREHYP}
Another approach to studying Big Bang-type singularities involves devising a 
formulation of Einstein's equations that allows one to solve a Cauchy problem
with data given on the singular hypersurface $\lbrace t = 0 \rbrace$ itself; see e.g. 
\cite{kApT1999a}, \cite{cCpN1998}, \cite{rN1993a}, \cite{rN1993b}, 
\cite{pT1990}, \cite{pT1991}, \cite{pT2002}. 
The spacetimes launched by this procedure are said to contain a \emph{conformal singularity} or
an \emph{isotropic singularity}. This approach
is motivated by Penrose's \emph{Weyl curvature hypothesis} \cite{rP1979}, which posits that in cosmological solutions,
the Weyl tensor should tend to zero as the initial singularity is approached. 
In the present work, we show that for near-FLRW solutions, 
the blow-up of the spacetime Riemann curvature is dominated by the Ricci
components, with the Weyl components making only a tiny correction. More precisely, 
even though $|\Ricfour|_{\gfour}^2$ and $|\Wfour|_{\gfour}^2$
are both allowed to diverge as $t \downarrow 0,$ we show that
\begin{align} \label{E:WEYLSMALLINTRO}
	\sup_{t \in (0,1]} 
	\left|\frac{|\Wfour|_{\gfour}^2}
		{|\Ricfour|_{\gfour}^2}\right|
		 \lesssim \epsilon,
\end{align}
where $\epsilon^2$ is the size of the deviation of the perturbed initial data from the FLRW data. 
We note that the estimate \eqref{E:WEYLSMALLINTRO} is almost saturated by 
near-FLRW Kasner solutions (which verify \eqref{E:WEYLSMALLINTRO} with $\epsilon$ replaced by $\epsilon^2$).

The established framework for studying isotropic singularities is essentially an extension of Friedrich's 
\emph{conformal method} (see e.g. \cite{hF1986}, \cite{hF2002}). Roughly speaking, this 
corresponds to studying a rescaled metric 
\begin{align}
	\gfour = \Omega^2 \hat{\gfour},
\end{align}
where $\gfour$ is the physical spacetime metric of interest, $\Omega$ is a conformal scaling factor, and
$\hat{\gfour}$ is the rescaled ``unphysical'' spacetime metric; analogous rescalings are carried out for the other field variables.
In the context of this article, one may roughly think of $\lbrace \Omega = 0 \rbrace$ as corresponding to a Big Bang singularity.
The main point is that even though $\gfour$ degenerates along $\lbrace \Omega = 0 \rbrace,$ it may be possible that
$\hat{\gfour}$ remains a regular Lorentzian metric. For example, through the change of variables $\tau = \frac{3}{2} t^{2/3},$ the FLRW metric \eqref{E:BACKGROUNDSOLUTION} can be seen to be conformally equivalent to the standard Minkowski metric on $(0, \infty) \times \mathbb{T}^3:$
\begin{align}
	\widetilde{\gfour} =  \frac{2}{3} \tau \overbrace{\big\lbrace -d \tau^2 + \sum_{i=1}^3 (dx^i)^2 \big\rbrace}^{\mbox{regular when $\tau = 0$}}.
\end{align}
Thus, if Einstein's equations are reformulated in terms of
$\Omega$ and $\hat{\gfour}$ (plus some gauge conditions that select a choice of $\Omega$), one may hope to prove that
$\hat{\gfour}$ remains a regular Lorentzian metric through $\lbrace \Omega = 0 \rbrace.$ When
$\hat{\gfour}$ does remain regular, one can deduce sharp information
about the behavior of $\gfour$ up to the set $\lbrace \Omega = 0 \rbrace.$  
The conformal method has proven to be fruitful for studying matter models such as Maxwell fields, Yang-Mills fields, and perfect fluids with the equation of state $p = (1/3) \rho.$ An important common feature of these matter models,
which seems to be necessary for applying the conformal method, is that they have trace-free energy-momentum tensors. For these matter models, when a positive cosmological constant is included in the Einstein equations, the conformal method 
has been applied to derive global future stability results \cite{hF1991}, \cite{cLjK2013} for a class of rapidly expanding 
``de-Sitter-like'' half-spacetimes. 

In contrast, for the kinds of half-spacetimes we are considering, the conformal method does not seem to allow one to deduce a 
true stability result. On the one hand, the aforementioned works \cite{rN1993a}, \cite{rN1993b}, \cite{pT1990}, \cite{pT1991}, \cite{pT2002} show that one can locally solve the ``singular Cauchy problem'' for the rescaled variables 
$\hat{\gfour}$ etc. with suitable ``conformal initial data" given at the singular hypersurface; the solution is obtained by applying the Fuchsian techniques described above. On the other hand,
there are fewer degrees of freedom in the conformal data compared to the full Einstein data (see, for example, the discussion in 
\cite[Section 6.1]{aR2005b}). Thus, the map from the conformal data to the space of full solutions is far from onto. In fact, even the near-FLRW Kasner solutions \eqref{E:KASNER} do not exhibit a nice conformal structure near their singularities.

\subsection{Choice of coordinates and field variables for analyzing the Einstein-stiff fluid system} \label{SS:CHOICEOFVARIABLES}
As we mentioned above, our approximate monotonicity inequalities are visible
relative to CMC-transported spatial coordinates, which we describe in detail in Sect.~\ref{S:EEINCMC}.
In these coordinates, the spacetime metric $\gfour$ is decomposed into the lapse function $n$ and the Riemannian $3-$metric $g$ on
$\Sigma_t$ as follows:
\begin{align}
	\gfour & = - n^2 dt^2 + g_{ab} dx^a dx^b. \label{E:GFOURDECOMPINTRO} 
\end{align}
Above, $\lbrace dx^i \rbrace_{i=1}^3$ are the one-forms corresponding to local
coordinates on $\mathbb{T}^3$ and $t$ is a time function on $\mathbf{M}.$ The coordinates are constructed such that
$\SecondFund_{\ a}^a \equiv -t^{-1}$ 
along $\Sigma_t := \lbrace (s,x) \in (0,1] \times \mathbb{T}^3 \ | \ s = t \rbrace,$ where 
$\SecondFund_{\ j}^i = - \frac{1}{2} n^{-1} g^{ia} \partial_t g_{aj}$ is the mixed second fundamental form of $\Sigma_t.$
In order to enforce this condition, the lapse must verify the following elliptic PDE (see \eqref{E:LAPSE})
for stiff fluid matter:
\begin{align} \label{E:LAPSEINTRO}
	g^{ab}\nabla_a \nabla_b (n - 1) 
		& = (n - 1) \Big\lbrace R + (\SecondFund_{\ a}^a)^2 - 2p u_a u^a \Big\rbrace 
			+ R - 2p u_a u^a,
\end{align}
where $\nabla$ is the Levi-Civita connection of $g$ and
$R$ is its scalar curvature. The fluid velocity is decomposed as
\begin{align} \label{E:UFOURDECOMPINTRO}
	\ufour = (1 + u_a u^a)^{1/2} \Nml + u^a \partial_a,
\end{align}
where $\Nml = n^{-1} \partial_t$ is the future-directed normal to $\Sigma_t.$ 
The factor $(1 + u_a u^a)^{1/2}$ is a consequence of the normalization condition \eqref{E:UNORMALIZED}. 

Our most important gauge choice is the CMC time coordinate and the corresponding lapse PDE \eqref{E:LAPSEINTRO}:
since Einstein's equations are fundamentally hyperbolic, \emph{the only conceivable way of synchronizing the singularity across spatial slices 
is to construct a time coordinate by invoking a gauge that involves an infinite speed of propagation}, such as the elliptic PDE \eqref{E:LAPSEINTRO}. The important issue of constructing an ``initial'' hypersurface of constant mean curvature $-1/3$ is addressed in Prop.~\ref{P:CMCEXISTS}. Alternatively, 
singularity synchronization could be achieved with the help of the parabolic lapse gauges described in Sect.~\ref{SS:MONOTONICITYEXPLANATION},
although we do not use these gauges in the present article.

To analyze perturbed solutions, it is convenient to 
introduce the following renormalized solution variables, 
which factor out the $t-$behavior of the FLRW solution.

\begin{definition}[\textbf{Renormalized solution variables}] \label{D:RESCALEDVAR}
We define ($i,j,k = 1,2,3$)
\begin{subequations}
\begin{align}
	\newg_{ij} &:= t^{-2/3} g_{ij}, & (\newg^{-1})^{ij} & = t^{2/3} g^{ij}, & \sqrt{\gbigdet} & = t^{-1} \sqrt{\gdet}, \\
	\upgamma_{j \ k}^{\ i} & := g^{ai}\partial_j g_{ak} = (\newg^{-1})^{ai}\partial_j \newg_{ak}, &&&& \label{E:LITTLEGAMMA} \\	
	\freenewsec_{\ j}^i &:= t \SecondFund_{\ j}^i + \frac{1}{3} \ID_{\ j}^i, &&&& \\
	\newu^i &:= t^{-1/3} u^i, & \adjustednewp & := t^2 p - \frac{1}{3}, && \\
	\newlap &:= t^{-4/3} (n - 1), &\dlap_i & :=  t^{-2/3} \partial_i n. &&
\end{align}
\end{subequations}
Above, $\ID_{\ j}^i = \mbox{diag}(1,1,1)$ denotes the identity transformation.
\end{definition}


\begin{remark}
	We will never implicitly lower and raise
	indices with the renormalized metric $\newg$ and its inverse $\newg^{-1};$ 
	we will explicitly indicate all factors of $\newg$ or $\newg^{-1}.$
\end{remark}

Note that the CMC condition $\SecondFund_{\ a}^a(t,x) \equiv - t^{-1}$ is equivalent to $\freenewsec_{\ a}^a = 0.$
All of the variables except for $\sqrt{\gbigdet}$ and $\upgamma$ are simply time-rescaled/shifted 
versions of the original solution variables.
We introduced the volume form variable $\sqrt{\gbigdet}$ because it satisfies an evolution equation that
has a very favorable structure compared to the evolution equations verified by the $\newg_{ij}.$  
As we will see, the quantities $n - 1$ and $\partial_i n$ have different asymptotic (in time) properties as $t \downarrow 0.$ 
This difference is important, and thus 
we have chosen to replace the lapse with two variables 
$\newlap$ and $\dlap_i.$ We introduced the variable $\upgamma_{j \ k}^{\ i}$
as a new unknown in place of 
$\partial_i g_{jk}.$ As we will see, the evolution equation verified by
$\upgamma_{j \ k}^{\ i}$ has a favorable structure. 
Even though $\upgamma_{j \ k}^{\ i}$ roughly has the structure of a connection coefficient and
is therefore not an invariant quantity, for the purposes of analysis, 
we choose to view $\upgamma_{j \ k}^{\ i}$ as a tensorfield that 
happens to have the form $\upgamma_{j \ k}^{\ i} = (\newg^{-1})^{ai}\partial_j \newg_{ak}$ 
relative to our CMC-transported coordinates.

For the FLRW solution \eqref{E:BACKGROUNDSOLUTION}, 
we have $\newg_{ij} = \Euc_{ij},$ $(\newg^{-1})^{ij} = (\Euc^{-1})^{ij},$ $\sqrt{\gbigdet} = 1,$ 
$\freenewsec_{\ j}^i = 0,$ $\newu^i = 0,$ $\adjustednewp = 0,$ $\newlap = 0,$ and $\dlap_i = 0,$ where $\Euc_{ij} = \mbox{diag}(1,1,1)$ denotes the standard Euclidean metric on $\mathbb{T}^3.$

We now provide a brief preview of the behavior of the perturbed solutions: 
for perturbed data,
the renormalized variable components $\freenewsec_{\ j}^i,$ $\sqrt{\gbigdet},$ and $\adjustednewp$
remain uniformly close to the corresponding FLRW components, 
while the remaining variable components are allowed to deviate at worst like 
$\mbox{small amplitude} \times t^{- c \sqrt{\epsilon}}$ as $t \downarrow 0.$ 
Similar estimates hold for the lower-order derivatives of the solution variables.


\subsection{A summary of the analysis}
We now summarize the main ideas behind the proof of our main results.

\subsubsection{The top level picture}
\label{SSS:TOPLEVELPICTURE}
The proof is based on a long bootstrap argument that yields a priori estimates for the total solution energies
\begin{align} \label{E:TOTALENERGYINTRO}
	\totalenergy{\smallparameter_*}{M}^2 & := \smallparameter_* \metricenergy{M}^2 + \fluidenergy{M}^2,
\end{align}
which shows in particular that they remain finite for $t \in (0,1].$ From the a priori estimates and a 
standard continuation principle (see Theorem~\ref{T:LOCAL}), 
we conclude that the perturbed solution exists on the slab $(0,1] \times \mathbb{T}^3.$
We stress that \emph{deriving a priori estimates for the $\totalenergy{\smallparameter_*}{M}$ 
is the main step in the proof of stable singularity formation.}
Above, $\metricenergy{M}$ is a metric energy, $\fluidenergy{M}$ is a fluid energy, $\smallparameter_* > 0$
is a small positive constant that we choose in Sect.~\ref{S:FUNDAMENATLENERGYINEQUALITIES}, and $0 \leq M \leq N.$
Here and throughout most of the article, $N \geq 8$ denotes an integer representing 
the number of derivatives we need to close our estimates.

The details of the energies \eqref{E:TOTALENERGYINTRO} 
(see Def.~\ref{D:METRICANDFLUIDENERGIES}) do not concern us at the present; 
they are order $M$
$(0 \leq M \leq N)$ Sobolev-type energies that naturally arise from integration by parts identities, 
and $\metricenergy{M}$ and $\fluidenergy{M}$ respectively control the derivatives of the metric and the fluid. 
In order to derive a priori estimates for the energies, we make bootstrap assumptions for three solution-controlling norms:
\[
	\highnorm{N}(t), 
	\lowkinnorm{N-3}(t), 
	\mbox{and \ }
	\lowpotnorm{N-4}(t).
\] 
The norms control the distance of the renormalized solution variables of Def.~\ref{D:RESCALEDVAR}
from their FLRW background values \emph{with various $t-$weights hiding in the definitions of the norms.}
See Sect.~\ref{S:NORMSANDCURRENTS} for their definitions.

The norms $\highnorm{M}(t)$ are built out of Sobolev norms 
of the \emph{components} of the renormalized solution variables relative to the transported 
spatial coordinate frame. We stress that the 
$\highnorm{M}(t)$ are distinct from the $\totalenergy{\smallparameter_*}{M}(t).$ 
We introduce the norms $\highnorm{M}(t)$ because our energies $\totalenergy{\smallparameter_*}{M}(t)$
are quasilinear and their control over the frame components can
degenerate as the renormalized metric $\newg(t,x)$ degenerates. 
Hence, for the purposes of analysis and Sobolev embedding, it is convenient to work with 
norms $\highnorm{M}(t),$ whose coerciveness is solution-independent. 
The main reason that we choose to measure the size of the solution variables' frame components is:
our derivation of strong estimates (see Sect.~\ref{SSS:STRONGESTIMATESINTRO}) for the lower-order 
derivatives is based on an analysis of frame components.
Another reason is that our proof of the existence of the limiting profiles
$M_{ij}^{Bang}(x),$ $(\newsec_{Bang})_{\ j}^i(x),$ etc.
(see Sect.~\ref{SS:MAINSUMMARY} and Theorem~\ref{T:BIGBANG})
is also based on an analysis of frame components.
We connect the coerciveness of 
the $\totalenergy{\smallparameter_*}{M}$ to the coerciveness of 
the $\highnorm{M}$ in Sect.~\ref{S:COMPARISON}. 

The norms $\lowkinnorm{N-3}(t)$ and $\lowpotnorm{N-4}(t)$ are built out of $C^M-$type norms of the 
\emph{components} of various renormalized solution variables relative to the transported spatial coordinate frame. 
The norm $\lowkinnorm{N-3}(t)$ controls the ``kinetic'' variables and 
the norm $\lowpotnorm{N-4}(t)$ controls the ``potential'' variables 
(see Sect.~\ref{SS:KINETICTERMSDOMINATE} for additional discussion concerning the kinetic and potential variables).
We introduce $\lowkinnorm{N-3}(t)$ and $\lowpotnorm{N-4}(t)$ in order to take advantage of the strong estimates 
verified by the solution's lower-order derivatives. 
More precisely, the norms $\lowkinnorm{N-3}(t)$ and $\lowpotnorm{N-4}(t)$
control the renormalized solution variables' components with stronger $t-$weights
than the $t-$weights afforded by the Sobolev norm $\highnorm{N}.$
This leads to better control over the lower-order derivatives compared to the higher-order derivatives,
a fact that plays a key role in the proof of our main stable singularity 
formation theorem. We discuss this issue in more detail in Sect.~\ref{SSS:STRONGESTIMATESINTRO}.

To derive a priori estimates for  the
$\totalenergy{\smallparameter_*}{M},$ we also need to measure
the pointwise $|\cdot|_{\newg}$ norms 
(see Def.~\ref{D:POINTWISENORMS})
of various tensorfields. The 
reason is that the energies $\totalenergy{\smallparameter_*}{M},$
which are the quantities that we will be able to estimate via integration by parts, 
control square integrals of $|\cdot|_{\newg}.$ In particular, we use the  
norms $|\cdot|_{\newg}$ during our proof of Prop.~\ref{P:POINTWISEESTIMATES}. 
In this proposition, we derive pointwise bounds for the $|\cdot|_{\newg}$ norms of the
inhomogeneous terms appearing in the $\partial_{\vec{I}}-$commuted equations. 
This is a crucially important preliminary step in our derivation of a priori estimates for 
$\totalenergy{\smallparameter_*}{M}$ because square integrals of the $|\cdot|_{\newg}$ norms of the inhomogeneous terms 
drive the evolution of the $\totalenergy{\smallparameter_*}{M}.$

We now state the norm bootstrap assumptions that we use in our proof of stable singularity formation. 
We assume that on a time interval $(T,1]$ of existence, the following norm bounds hold:
\begin{subequations}
\begin{align} 
	\highnorm{N}(t) & \leq \epsilon t^{-\upsigma}, && t \in (T,1], 
		\label{E:HIGHBOOTINTRO} \\
	\lowkinnorm{N-3}(t) & \leq 1, && t \in (T,1], 
		\label{E:KINBOOTINTRO} \\
	\lowpotnorm{N-4}(t) & \leq t^{-\upsigma}, && t \in (T,1]. \label{E:POTBOOTINTRO}
\end{align}
\end{subequations}
Above, $\epsilon$ and $\upsigma$ are small positive numbers whose smallness we adjust throughout our analysis.
Our main task is to show how to derive strict improvements of \eqref{E:HIGHBOOTINTRO}-\eqref{E:POTBOOTINTRO}
under a near-FLRW assumption on the data (given at $t = 1$).

The main step in deriving the improvements is to obtain a coupled system of 
integral inequalities for the $\totalenergy{\smallparameter_*}{M},$ a task that we accomplish in
in Prop.~\ref{P:ENERGYINTEGRALINEQUALITIES}. In simplified form, the inequalities read\footnote{
One slight technical difficulty, which we do not discuss in detail here, is that the energies 
$\totalenergy{\smallparameter_*}{M}^2$ do not directly control the 
quantities $\sum_{1 \leq |\vec{I}| \leq M} \| | \partial_{\vec{I}} \newg |_{\newg} \|_{L^2}^2 
+ \| | \partial_{\vec{I}} \newg^{-1} |_{\newg} \|_{L^2}^2.$
To control these quantities, we derive another hierarchy of inequalities (see Prop.~\ref{P:SOBFORG})
that is coupled to the hierarchy \eqref{E:ENERGYINTEGRALINEQUALITIESINTRO}.
}
\begin{align} \label{E:ENERGYINTEGRALINEQUALITIESINTRO}
		\totalenergy{\smallparameter_*}{M}^2(t) & \leq C_N \overbrace{\highnorm{M}^2(1)}^{\mbox{``data''}} \\
		& + c_N \epsilon \int_{s=t}^{s=1} s^{-1} \totalenergy{\smallparameter_*}{M}^2(s) \, ds
			\underbrace{+ C_N \epsilon \int_{s=t}^{s=1} s^{-1 - c_N \sqrt{\epsilon}} \totalenergy{\smallparameter_*}{M-1}^2(s) \, ds}_{
			\mbox{absent if $M = 0$}}
			+ \cdots. \notag
\end{align}
Above, $c_N$ and $C_N$ are positive constants and $\epsilon$ is the small positive number featured in
the bootstrap assumptions \eqref{E:HIGHBOOTINTRO}-\eqref{E:POTBOOTINTRO}. 
In Corollary \ref{C:ENERGYINTEGRALINEQUALITIES}, we analyze the hierarchy and derive 
the main a priori estimates by a Gronwall argument.
Let us restate some of the estimates that appear in the proof of the corollary: 
if $\totalenergy{\smallparameter_*}{M}(1) \lesssim \epsilon^2$ for $0 \leq M \leq N$
and $\epsilon$ is sufficiently small,
then the following inequalities hold on any slab $(T,1] \times \mathbb{T}^3$ of existence:
\begin{align} \label{E:MAINTOTALENERGYESTIMATEMINRO}
		\totalenergy{\smallparameter_*}{M}^2(t) & \leq C \epsilon^4 t^{- c_M \sqrt{\epsilon}}.
\end{align}
With the help of the comparison estimates of Prop.~\ref{P:COMPARISON} and a few additional estimates, 
inequality \eqref{E:MAINTOTALENERGYESTIMATEMINRO} allows us to deduce an improvement of the main Sobolev norm bootstrap assumption \eqref{E:HIGHBOOTINTRO} when the data are sufficiently small. As we have mentioned, 
inequality \eqref{E:MAINTOTALENERGYESTIMATEMINRO} allows for mild energy blow-up
as $t \downarrow 0.$ Because of the presence of the two integrals on the right-hand side of \eqref{E:ENERGYINTEGRALINEQUALITIESINTRO}, 
our methods force us to accept this loss. 

We now discuss some important aspects of the structure of inequality \eqref{E:ENERGYINTEGRALINEQUALITIESINTRO}. 
The factor $\epsilon s^{-1}$ in the integral
$C_N \epsilon \int_{s=t}^{s=1} s^{-1} \totalenergy{\smallparameter_*}{M}^2(s) \, ds$ is the most delicate term.
It arises from a family of \emph{borderline cubic error integrals} 
such as the one \eqref{E:BORDERLINECUBICTERM},
which leads to the presence of an error integral of the form
\begin{align} \label{E:SAMPLEBORDERLINEERRORINTEGRAL}
		\int_{s=t}^{s=1} 
			s^{-1} 
			\| |\freenewsec|_{\newg} \|_{L^{\infty}}
			\totalenergy{\smallparameter_*}{M}^2(s)
		\, ds.
\end{align}
To bound the integral \eqref{E:SAMPLEBORDERLINEERRORINTEGRAL} by
$\leq C_N \epsilon \int_{s=t}^{s=1} s^{-1} \totalenergy{\smallparameter_*}{M}^2(s) \, ds,$
we derive the following key estimate (for components):
\begin{align} \label{E:SECONDFUNDIMPROVEDINTRO}
	\Big|\freenewsec_{\ j}^i \Big| \lesssim \epsilon.
\end{align}
Note that \eqref{E:SECONDFUNDIMPROVEDINTRO} yields
$|\freenewsec|_{\newg}^2 = \freenewsec_{\ b}^a \freenewsec_{\ a}^b \lesssim \epsilon^2$ as desired.
A slightly worse bound in \eqref{E:SECONDFUNDIMPROVEDINTRO},
such as $\epsilon s^{- c \epsilon},$ would 
lead to the integral
$C_N \epsilon \int_{s=t}^{s=1} s^{-1 - c \epsilon} \totalenergy{\smallparameter_*}{M}^2(s) \, ds$
on the right-hand side of \eqref{E:ENERGYINTEGRALINEQUALITIESINTRO}. This integral would
completely destroy the validity of the estimate
\eqref{E:MAINTOTALENERGYESTIMATEMINRO} and invalidate all of our main results. 
The bound \eqref{E:SECONDFUNDIMPROVEDINTRO} is available because
the lower-order derivatives of the solution obey strong $C^M$ estimates,
as we describe in Sect.~\ref{SSS:STRONGESTIMATESINTRO}.
Similar remarks apply to the integrals
$C_N \epsilon \int_{s=t}^{s=1} s^{-1 - c_N \sqrt{\epsilon}} \totalenergy{\smallparameter_*}{M-1}^2(s) \, ds.$
Here, we have conceded a slightly worse loss of $\epsilon s^{-1 - c_N \sqrt{\epsilon}},$ but this concession
is allowed because the integral depends on a lower-order energy (which, during an inductive bootstrap argument, would already have been suitably bounded). 
Again, the availability of the non-fatal factor $\epsilon s^{-1 - c_N \sqrt{\epsilon}}$ is a consequence of the strong
estimates obeyed by the lower-order derivatives. 



\subsubsection{Strong $C^M$ estimates for the lower-order derivatives} \label{SSS:STRONGESTIMATESINTRO}
We now outline our derivation of strong $C^M$ estimates for the lower-order derivatives of the solution.
By ``strong,'' we mean that they are stronger than the estimates afforded by the bootstrap assumptions and 
Sobolev embedding. Note that in Sect.~\ref{SSS:TOPLEVELPICTURE}, we explained why
these strong estimates are an essential ingredient in our derivation of the a priori energy estimates.
We derive the strong estimates by taking advantage of the special structure of the Einstein equations
in CMC-transported spatial coordinates. They incur a loss in derivatives
because to derive them, we fix the spatial point $x$ and treat evolution equations as ODEs 
\emph{with small sources that depend on the higher-order derivatives}.
We now discuss the main ideas behind deriving the strong estimates; see
Prop.~\ref{P:STRONGPOINTWISE} for the details.
We first discuss the components $\freenewsec_{\ j}^i.$ The Sobolev norm bootstrap assumption \eqref{E:HIGHBOOTINTRO} guarantees 
only the inadequate bound $\Big|\freenewsec_{\ j}^i \Big| \lesssim \epsilon t^{- \upsigma}.$
Based on the presence of the error integral \eqref{E:SAMPLEBORDERLINEERRORINTEGRAL}, this bound
would lead to the following error integral on the right-hand side of \eqref{E:ENERGYINTEGRALINEQUALITIESINTRO}:
$c_N \epsilon \int_{s=t}^{s=1} s^{-1 - \upsigma} \totalenergy{\smallparameter_*}{M}^2(s) \, ds.$
As we described in Sect.~\ref{SSS:TOPLEVELPICTURE}, such an error integral
is damaging and would destroy the viability of our proof.
We now sketch a proof of how to derive the critically important improved estimate
$\Big|\freenewsec_{\ j}^i \Big| \lesssim \epsilon$ stated in \eqref{E:SECONDFUNDIMPROVEDINTRO}.
To derive this bound, one can insert the bootstrap assumptions 
\eqref{E:HIGHBOOTINTRO}-\eqref{E:POTBOOTINTRO}
into the evolution equation \eqref{E:RESCALEDKEVOLUTION} and deduce
\begin{align} \label{E:PARTIALTIMPROVEDKINTRO}
	\Big|\partial_t \freenewsec_{\ j}^i\Big| & \lesssim \epsilon t^{-1/3 - Z \upsigma},
\end{align}
where $Z$ is a positive integer that depends only on the number of terms in the products on the right-hand side
of \eqref{E:RESCALEDKEVOLUTION}. More precisely, to derive \eqref{E:PARTIALTIMPROVEDKINTRO}, we roughly use the following strategy. We 
isolate $\partial_t \freenewsec_{\ j}^i$ using the evolution equation \eqref{E:RESCALEDKEVOLUTION}
and put all remaining terms on the right-hand side.
We bound these remaining terms by using the bootstrap assumptions \eqref{E:HIGHBOOTINTRO}-\eqref{E:POTBOOTINTRO} + Sobolev embedding. The bootstrap assumptions imply that in a given product, all renormalized field variable frame components except for at most one can be bounded in $C^0$ by $t^{-\upsigma};$ this is where we use the lower-order bootstrap assumptions 
\eqref{E:KINBOOTINTRO}-\eqref{E:POTBOOTINTRO}. 
As a consequence of the Sobolev norm bootstrap assumption \eqref{E:HIGHBOOTINTRO}, the
possible exceptional term in the product is bounded by at worst $\epsilon t^{-2/3 - \upsigma}.$ 
This additional factor $t^{-2/3}$ comes from the ``potential terms'' $\upgamma$ and $\newu$ appearing
(implicitly) in the norm bootstrap assumption \eqref{E:HIGHBOOTINTRO}; these terms appear on the left-hand side of \eqref{E:HIGHBOOTINTRO} weighted with a factor $t^{2/3}.$ The factors of $t^{2/3}$ emerge naturally from the structure of our energy estimates, which we discuss in Sect.~\ref{SSS:KEYMONOTONICITYINTRO} (see in particular the powers of $t$
in equations \eqref{E:METRICJCOERCIVEINTRO}- \eqref{E:FLUIDJCOERCIVEINTRO}). 
Using this strategy, we can more or less directly infer the power of 
$\epsilon t^{-1/3 - Z \upsigma}$ on the right-hand side of \eqref{E:PARTIALTIMPROVEDKINTRO} by examining the evolution equation \eqref{E:RESCALEDKEVOLUTION}, where powers of $t$ explicitly appear. Roughly speaking, the ``worst'' power of $t$ explicitly appearing in equation \eqref{E:RESCALEDKEVOLUTION} is $t^{1/3},$ while the above argument has just shown that the bootstrap assumptions allow us to estimate any product of renormalized field variable frame components in $C^0$ by at worst $\epsilon t^{-2/3 - Z \upsigma}.$ Thus, in total, the worst source term for 
$\partial_t \freenewsec_{\ j}^i$ can be bounded by $\epsilon t^{1/3} t^{-2/3 - Z \upsigma},$ which yields \eqref{E:PARTIALTIMPROVEDKINTRO}. We now integrate \eqref{E:PARTIALTIMPROVEDKINTRO}
from $t$ to $1,$ use the integrability of $t^{-1/3 - Z \upsigma}$ over the interval $(0,1)$ (for sufficiently small $\upsigma$),
and use the small-data assumption $\Big|\freenewsec_{\ j}^i\Big|(1,x) \lesssim \epsilon,$ to deduce the desired strong estimate 
\eqref{E:SECONDFUNDIMPROVEDINTRO}.

The remaining strong estimates for the non-lapse variables in Prop.~\ref{P:STRONGPOINTWISE} 
are equally important and can be derived with a similar strategy,
but we must prove them in a viable order; the proofs of some of the strong estimates require the availability 
other ones that must be proved independently. Hence, the proofs reveal some effective partial dynamic decoupling of the 
lower-order derivatives of some of the solution variables.
As a last step in the proof of the proposition, we 
use the maximum principal
to derive the estimates for the lapse variables; 
see Sect.~\ref{SSS:LAPSEINTRO}.

\subsection{The renormalized lapse variables - governed by two elliptic PDEs} \label{SSS:LAPSEINTRO}
We now highlight some interesting and important issues that arise in our analysis of the renormalized lapse variables
$\newlap$ and $\dlap_i$ (see Def.~\ref{D:RESCALEDVAR}). 
These issues are relevant for the derivation of strong estimates for the lower-order
lapse derivatives and also for the derivation of our main energy estimates. The main point is that the elliptic equation verified by $\newlap$ can be expressed in two different ways; see equations \eqref{E:LAPSERESCALEDELLIPTIC} and \eqref{E:LAPSELOWERDERIVATIVES}.
The equivalence (for lower-order derivatives) of these two expressions is a consequence of the Hamiltonian constraint equation \eqref{E:RINTERMSOFKPANDU}, which connects the scalar curvature $R$ of $g$ to $\freenewsec$ and the fluid variables. 

The second lapse equation \eqref{E:LAPSELOWERDERIVATIVES} involves inhomogeneous terms that are of a favorable size,
but that depend on \emph{one derivative} of $\upgamma$ (because $R$ depends on one derivative of $\upgamma$). The first lapse equation 
\eqref{E:LAPSERESCALEDELLIPTIC} involves homogeneous terms that are of a dangerous size $\sim t^{- 4/3}.$ However, these terms have an extra degree of differentiability compared to the inhomogeneous terms appearing in the second lapse equation. In view of these remarks, we adopt the following basic strategy: whenever we want to derive estimates for the lower-order derivatives of the lapse variables, we use the second lapse equation; the source terms are much smaller in this equation. However, in order to close the top order energy estimates, we are forbidden from using the favorable second equation because of its dependence on one derivative of $\upgamma;$ there would be too many derivatives on $\upgamma$ to close the estimates. Thus, for these top order derivatives, we are forced to use the first equation. We are thus compelled to prove estimates that are just good enough to allow us to control the top order derivatives of the lapse despite 
the presence of the dangerous source terms.

These estimates reflect a tension that is enforced by the Hamiltonian constraint \eqref{E:RINTERMSOFKPANDU}. On the one hand, if one
expresses $R$ in terms of $g$ and its spatial derivatives, then our strong estimates show that 
$R$ is an order $\epsilon t^{-(2/3_+)}$ term. On the other hand, the kinetic terms (which are defined in Sect.~\ref{SS:KINETICTERMSDOMINATE}) 
in \eqref{E:RINTERMSOFKPANDU} suggest that $R$ ``wants to be'' an order $\epsilon t^{-(2_+)}$ term. For the lower-order spatial derivatives of $R,$ our strong estimates guarantee that the former estimates hold, which means that there must be severe cancellation among
the kinetic terms in \eqref{E:RINTERMSOFKPANDU}. For the top order spatial derivatives of $R,$ the 
expression in terms of $g$ and its spatial derivatives is not in $L^2,$ and 
thus the kinetic terms in \eqref{E:RINTERMSOFKPANDU} dictate the top order $L^2$ behavior of the lapse.
A major aspect of our analysis is that we constantly have to battle this kind of tension.

\subsection{Dominance of the kinetic terms and VTD behavior} \label{SS:KINETICTERMSDOMINATE}
It is convenient to group the solution variables into two classes, namely the ``kinetic terms'' and the ``potential terms.''
By kinetic terms, we mean the variables $\freenewsec_{\ j}^i$ and $\adjustednewp,$ while by potential terms, we mean 
$\upgamma_{j \ k}^{\ i}$ and $\newu^i.$ Moreover, since the variable $\sqrt{\gbigdet}$ has analytic properties 
in common with $\freenewsec_{\ j}^i$ and $\adjustednewp,$ we will also refer to it as a kinetic variable. Similarly,
since $\newg_{ij},$ $(\newg^{-1})^{ij},$ $\newlap,$ and $\dlap_i$ have analytic properties in common with $\upgamma_{j \ k}^{\ i}$ and $\newu^i,$
we will also refer to these variables as potential variables. Our analysis 
(more precisely, the strong $C^M$ estimates discussed in Sect.~\ref{SSS:STRONGESTIMATESINTRO}) roughly shows that in many cases, 
the kinetic terms are the dominant influence in the Einstein-stiff fluid equations near the singularity. 
As an example, we discuss the evolution equation \eqref{E:PARTIALTKCMC}, which we express as
\begin{align} \label{E:KINVSPOTEXAMPLE}
	\partial_t (\SecondFund_{\ j}^i) = - \frac{1}{t} \SecondFund_{\ j}^i 
		+ \mbox{linear-in-potential terms}.
\end{align}
The strong estimates of Sect.~\ref{S:STRONGESTIMATES} 
can be used to show that the potential terms on the right-hand side of \eqref{E:KINVSPOTEXAMPLE}
are $\lesssim \epsilon t^{-2/3 - c \sqrt{\epsilon}}$ while the kinetic term
$- \frac{1}{t} \SecondFund_{\ j}^i$ is of the much larger order $t^{-2}.$

Moreover, consider (for example) inequality \eqref{E:INTROKLIMIT},  
which shows that the time-rescaled field
$t \SecondFund_{\ j}^i$ converges to a time-rescaled-near-FLRW field
$(\newsec_{Bang})_{\ j}^i(x)$ as $t \downarrow 0.$ 
Another way to view this convergence is as follows: 
$\SecondFund_{\ j}^i(t,x)$ is asymptotic to a field
$t^{-1}(\newsec_{Bang})_{\ j}^i(x)$ that verifies an $x-$parameterized ODE. This $x-$parameterized ODE is 
obtained by simply throwing away the potential terms from equation \eqref{E:KINVSPOTEXAMPLE}. 
This is an example of the VTD behavior mentioned in Remark \ref{R:VTD}

\subsubsection{The main energy integral inequality and the key approximate monotonicity inequality} \label{SSS:KEYMONOTONICITYINTRO}
We now discuss the derivation of the integral inequality \eqref{E:FUNDAMENTALENERGYINEQUALITY} for
the total energies $\totalenergy{\smallparameter_*}{M}^2,$ which is
one of the main ingredients in our derivation of the energy hierarchy \eqref{E:ENERGYINTEGRALINEQUALITIESINTRO}. 
The derivation of \eqref{E:FUNDAMENTALENERGYINEQUALITY} is essentially an elaborate integration by parts inequality that takes into
account the following key structures: \textbf{i)} the availability of the strong $C^M$ estimates for the lower-order derivatives; \textbf{ii)} the partially decoupled nature of the metric and fluid energy estimates; 
and most importantly, \textbf{iii)} \textbf{the unexpected availability of two additional favorably signed spacetime integrals that control the lapse variables.} Because of its importance, we quickly summarize here (and describe in more detail below)
the main idea behind point \textbf{iii)}. The key observation is that in deriving
energy estimates for the $\partial_{\vec{I}}$ commuted fluid quantities, we encounter the following error 
integral (see \eqref{E:KEYTERMINTRO}):
\begin{align} \label{E:FIRSTKEYTERMINTRO}
	- \frac{2}{3} t^{1/3} \int_{\mathbb{T}^3} \left(\partial_{\vec{I}} \adjustednewp \right)
		\left(\partial_{\vec{I}} \newlap \right) \, dx.
\end{align}
Amazingly, we are able to derive identities showing that \eqref{E:FIRSTKEYTERMINTRO} has a good sign 
towards the past and in fact yields signed integrals that provide control of $t-$weighted versions of
the lapse quantities
$\| \partial_{\vec{I}} \newlap \|_{L^2}$
and $\| |\partial_{\vec{I}} \dlap|_{\newg} \|_{L^2}.$
Moreover, as we shall see, we need the signed integrals to absorb other 
error integrals that would otherwise spoil our estimates.

Because our integration by parts arguments are computationally involved, we have chosen to organize the calculations using the framework of \emph{energy currents}. These currents allow us to derive $L^2-$type identities for the derivatives $\partial_{\vec{I}}$ 
of the renormalized field variables, where $\vec{I}$ is a spatial derivative multi-index. We introduce the abbreviated notation  
$\dot{\freenewsec}_{\ j}^i := \partial_{\vec{I}} \freenewsec_{\ j}^i,$ 
$\dot{\upgamma}_{j \ k}^{\ i} := \partial_{\vec{I}} \upgamma_{j \ k}^{\ i},$  
$\dot{\newlap} := \partial_{\vec{I}} \newlap,$ 
$\dot{\dlap}_i := \partial_{\vec{I}} \dlap_i,$
$\dot{\adjustednewp} := \partial_{\vec{I}} \adjustednewp,$
$\dot{\newu}^i := \partial_{\vec{I}} \newu^i.$
The quantities $\dot{\freenewsec},$ etc. are known as \emph{variations}. 
Note that the notation $\cdot$ in the variations has nothing to do with time derivatives.
The energy currents are spacetime vectorfields 
$\dot{\mathbf{J}}_{(Metric)}^{\mu}[(\dot{\freenewsec},\dot{\upgamma}) , (\dot{\freenewsec},\dot{\upgamma})]$ and
$\dot{\mathbf{J}}_{(Fluid)}^{\mu}[(\dot{\adjustednewp},\dot{\newu}) , (\dot{\adjustednewp},\dot{\newu})]$ 
that depend quadratically on their arguments $[\cdot,\cdot],$ on the solution variables $\newg_{ij},$ $\newp,$ and $\newu^i,$
and in the case of $\dot{\mathbf{J}}_{(Metric)}^j,$ also on $\dot{\dlap}.$
We give their precise definitions in Def.~\ref{D:METRICCURRENT} and Def.~\ref{D:FLUIDEOV}. Roughly speaking, these 
currents exist and have useful properties because the evolution equations are hyperbolic. 
A general framework addressing the availability 
of and properties of energy currents was provided by Christodoulou \cite{dC2000} (see also \cite{dC2007} regarding the existence/use of currents for the relativistic Euler equations in Eulerian variables). However, instead of using the general framework, 
we have simply derived the currents by hand in this article. 
 
The most important analytic properties of the currents are the following:
\begin{enumerate}
	\item Under our bootstrap assumptions, the following positivity properties are verified by the quadratic forms
		$\dot{\mathbf{J}}_{(Metric)}^0[\cdot, \cdot]$ and $\dot{\mathbf{J}}_{(Fluid)}^0[\cdot, \cdot]:$
		\begin{subequations}
		\begin{align}
			\dot{\mathbf{J}}_{(Metric)}^0[(\dot{\freenewsec},\dot{\upgamma}) , (\dot{\freenewsec},\dot{\upgamma})] 
				& =  |\dot{\freenewsec}|_{\newg}^2 + \frac{1}{4} t^{4/3} |\dot{\upgamma}|_{\newg}^2, 
			\label{E:METRICJCOERCIVEINTRO} \\
			\dot{\mathbf{J}}_{(Fluid)}^0[(\dot{\adjustednewp},\dot{\newu}) , (\dot{\adjustednewp},\dot{\newu})] 
				& \approx \frac{1}{2} \dot{\adjustednewp}^2 
				+ \frac{2}{9} t^{4/3} |\dot{\newu}|_{\newg}^2. \label{E:FLUIDJCOERCIVEINTRO}
	\end{align}
	\end{subequations}
	\item By using the Einstein-stiff fluid equations for substitution,  
	$\partial_{\mu} \big(\dot{\mathbf{J}}_{(Metric)}^{\mu}[(\dot{\freenewsec},\dot{\upgamma}) , (\dot{\freenewsec},\dot{\upgamma})] \big)$ 
	and \\
	$\partial_{\mu} \big(\dot{\mathbf{J}}_{(Fluid)}^{\mu}[(\dot{\adjustednewp},\dot{\newu}) , (\dot{\adjustednewp},\dot{\newu})] \big)$
	can be expressed in terms of inhomogeneous terms that depend on the variations themselves, 
	\emph{but not on} their spacetime derivatives
	$\partial_{\nu} (\dot{\freenewsec},\dot{\upgamma}),$ $\partial_{\nu} (\dot{\adjustednewp},\dot{\newu}),$
	or $\partial_{\nu} (\dot{\newlap},\dot{\dlap}).$
	\end{enumerate}
	We provide precise expressions for 
	$\partial_{\mu} \big(\dot{\mathbf{J}}_{(Metric)}^{\mu}[(\dot{\freenewsec},\dot{\upgamma}) , (\dot{\freenewsec},\dot{\upgamma})] \big)$ 
	and
	$\partial_{\mu} \big(\dot{\mathbf{J}}_{(Fluid)}^{\mu}[(\dot{\adjustednewp},\dot{\newu}) , (\dot{\adjustednewp},\dot{\newu})] \big)$ 
	in \eqref{E:DIVMETRICJ} and \eqref{E:DIVFLUIDJ}. These two equations are simply differential versions of integration by parts 
	identities. The energies $\metricenergy{M}$ and $\fluidenergy{M}$ appearing in \eqref{E:TOTALENERGYINTRO}
	are constructed by integrating the $0$ (i.e., normal) component of the
	currents over the hypersurfaces $\Sigma_t:$
	\begin{subequations}
	\begin{align}
		\metricenergy{M}^2(t) 
		&:= \sum_{|\vec{I}| \leq M}\int_{\Sigma_t}
		\dot{\mathbf{J}}_{(Metric)}^0[\partial_{\vec{I}} (\freenewsec, \upgamma), \partial_{\vec{I}} (\freenewsec, \upgamma)]  \, dx, 
		\label{E:METRICENERGYINTRO} \\
	\fluidenergy{M}^2(t) 
		&:= \sum_{|\vec{I}| \leq M}\int_{\Sigma_t}
		\dot{\mathbf{J}}_{(Fluid)}^0[\partial_{\vec{I}} (\adjustednewp, \newu), \partial_{\vec{I}} (\adjustednewp, \newu)]  \, dx.
		\label{E:FLUIDENERGYINTRO}
\end{align}
\end{subequations}
By applying the divergence theorem, we will obtain \emph{separate} a priori integral inequalities
for the energies $\metricenergy{M}(t)$ and $\fluidenergy{M}(t);$ see \eqref{E:METRICENERGYMINTEGRALINEQUALITY} and 
\eqref{E:FLUIDENERGYMINTEGRALINEQUALITY}. We remark that the inhomogeneous terms that arise in commuting the equations with $\partial_{\vec{I}}$ 
(for $|\vec{I}| \leq M$) are present on the right-hand sides of these inequalities. The starting points for 
the a priori estimates are the following identities, valid for $t \in (0,1]:$
\begin{subequations}
\begin{align}
	\metricenergy{M}^2(t) & = \metricenergy{M}^2(1) 
		- \sum_{|\vec{I}| \leq M} \int_{s = t}^1 
		\partial_{\mu} \big(\dot{\mathbf{J}}_{(Metric)}^{\mu}
			[\partial_{\vec{I}} (\freenewsec, \upgamma), \partial_{\vec{I}} (\freenewsec, \upgamma)] \big) \, dx, 
		\label{E:METRICDIVINTRO} \\
	\fluidenergy{M}^2(t) & = \fluidenergy{M}^2(1) 
		- \sum_{|\vec{I}| \leq M} \int_{s = t}^1 
		\partial_{\mu} \big(\dot{\mathbf{J}}_{(Fluid)}^{\mu}
			[\partial_{\vec{I}} (\adjustednewp, \newu), \partial_{\vec{I}} (\adjustednewp, \newu)] \big) \, dx,
		\label{E:FLUIDDIVINTRO}
\end{align}
\end{subequations}
which follow from the divergence theorem. The challenge is to use the equations \eqref{E:DIVMETRICJ} and \eqref{E:DIVFLUIDJ}, the strong estimates for the lower-order derivatives, and the structure of the inhomogeneous terms in the 
Einstein-stiff fluid equations to estimate the right-hand sides of 
\eqref{E:METRICDIVINTRO}-\eqref{E:FLUIDDIVINTRO} back in terms of $	\metricenergy{M}$ and $\fluidenergy{M};$
this is exactly what we accomplish in the energy inequality hierarchies \eqref{E:ENERGYINTEGRALINEQUALITIESINTRO}.

We now discuss the delicate issues that arise in combining the metric and fluid energy integral inequalities into the 
fundamental total energy integral inequality \eqref{E:FUNDAMENTALENERGYINEQUALITY}. 
We first discuss the metric energy integral inequality \eqref{E:METRICENERGYMINTEGRALINEQUALITY}. This is 
a mostly standard energy integral inequality that arises from carefully analyzing the terms in the divergence 
identity \eqref{E:DIVMETRICJ} for 
$\partial_{\mu} \big(\dot{\mathbf{J}}_{(Metric)}^{\mu}[(\dot{\freenewsec},\dot{\upgamma}) , (\dot{\freenewsec},\dot{\upgamma})] \big).$
The positive spacetime integral
on the left-hand side of \eqref{E:METRICENERGYMINTEGRALINEQUALITY}, which provides control of
$|\dot{\upgamma}|_{\newg}^2,$ arises when the time derivative $\partial_t$ hits the $t^{4/3}$ factor in the 
product $\frac{1}{4} t^{4/3} (\newg^{-1})^{ab} (\newg^{-1})^{ef} \newg_{ij} \dot{\upgamma}_{e \ a}^{\ i} \dot{\upgamma}_{f \ b}^{\ j}$
from the expression \eqref{E:METRICCURRENT0} for $\dot{\mathbf{J}}_{(Metric)}^0[(\dot{\freenewsec},\dot{\upgamma}) , (\dot{\freenewsec},\dot{\upgamma})] .$ There are cross terms of the form $|\dot{\upgamma}|_{\newg}|\dot{\newu}|_{\newg}$ and $|\dot{\upgamma}|_{\newg}|\dot{\dlap}|_{\newg}$ that arise in the derivation of \eqref{E:METRICENERGYMINTEGRALINEQUALITY}, but these terms can be respectively bounded by $C^{-1}|\dot{\upgamma}|_{\newg}^2 + C|\dot{\newu}|_{\newg}^2$ and $C^{-1}|\dot{\upgamma}|_{\newg}^2 + C|\dot{\dlap}|_{\newg}^2.$ For large enough $C,$ the $C^{-1}|\dot{\upgamma}|_{\newg}^2$ terms can be absorbed into the positive spacetime integral on the left-hand side of \eqref{E:METRICENERGYMINTEGRALINEQUALITY}, while the other two terms generate quadratic fluid and lapse terms \emph{that appear with dangerous large constants}; we will soon address how we handle these large constants.

The derivation of the fluid energy integral inequality \eqref{E:FLUIDENERGYMINTEGRALINEQUALITY} is much more delicate
and is at the heart of our derivation of the approximate $L^2$ monotonicity inequality.
We first discuss the more standard features of the inequality.
The positive spacetime integral involving $|\partial_{\vec{I}} \newu|_{\newg}^2$ $=|\dot{\newu}|_{\newg}^2$
on the left-hand side of \eqref{E:FLUIDENERGYMINTEGRALINEQUALITY}
arises when the time derivative $\partial_t$ hits the $t^{4/3}$ factor in the product 
$2 t^{4/3} \big[\adjustednewp + \frac{1}{3} \big]^2 \newg_{ef} \dot{\newu}^e \dot{\newu}^f$ 
from the expression \eqref{E:FLUIDCURRENT0} for $\dot{\mathbf{J}}_{(Fluid)}^0[(\dot{\adjustednewp},\dot{\newu}) , (\dot{\adjustednewp},\dot{\newu})] .$ 
This is completely analogous to our analysis of $\partial_{\mu} \big(\dot{\mathbf{J}}_{(Metric)}^{\mu}[(\dot{\freenewsec},\dot{\upgamma}) , (\dot{\freenewsec},\dot{\upgamma})] \big).$ The subtle feature is the origin
of the positive spacetime integrals on the left-hand side of \eqref{E:FLUIDENERGYMINTEGRALINEQUALITY} 
involving the lapse variables. 
These spacetime integrals arise from a subtle analysis of the spacetime integral corresponding to the 
$- \frac{2}{3} t^{1/3} \dot{\adjustednewp} \dot{\newlap}$ term on the right-hand side of the expression \eqref{E:DIVFLUIDJ}
for $\partial_{\mu} \big(\dot{\mathbf{J}}_{(Fluid)}^{\mu}[(\dot{\adjustednewp},\dot{\newu}) , (\dot{\adjustednewp},\dot{\newu})] \big).$
The spatial integral corresponding to this term is 
\begin{align} \label{E:KEYTERMINTRO}
	- \frac{2}{3} t^{1/3} \int_{\mathbb{T}^3} \left(\partial_{\vec{I}} \adjustednewp \right)
		\left(\partial_{\vec{I}} \newlap \right) \, dx.
\end{align}
\emph{The miracle is the following:} we can use version 1 of the commuted lapse equation, namely \eqref{AE:LAPSEICOMMUTED}, to replace
the term $\frac{-2}{3} \partial_{\vec{I}} \adjustednewp$ with
$\frac{-1}{3} \mathcal{L} \partial_{\vec{I}} \newlap,$ plus some error terms (the signs and the size of the constant factors is crucially important). 
Here, $\mathcal{L}$ is the negative definite elliptic operator defined in \eqref{E:LDEF}; we have $\mathcal{L} = t^{4/3} (\newg^{-1})^{ab} \partial_a \partial_b - 1 + \mbox{error terms}.$ We remark that this step requires the combined use of some of the special structure of the Einstein equations in our gauge, for in deriving the commuted lapse equation \eqref{AE:LAPSEICOMMUTED}, 
we used the Hamiltonian constraint and the constant mean curvature condition. That is, the fact that we can replace $\frac{-2}{3} \partial_{\vec{I}} \adjustednewp$ with $\frac{-1}{3} \mathcal{L} \partial_{\vec{I}} \newlap$ is a consequence of the original lapse equation \eqref{E:LAPSEINTRO}, the constant mean curvature condition, and the Hamiltonian constraint \eqref{E:RINTERMSOFKPANDU}. We then integrate by parts in \eqref{E:KEYTERMINTRO} (after making the replacement), which, up to some additional error terms, generates two \emph{negative} spacetime integrals on the \emph{right}-hand side of \eqref{E:FLUIDDIVINTRO}. One of the integrals controls $|\partial_{\vec{I}} \newlap|^2,$ while the other controls $|\partial_{\vec{I}} \dlap|_{\newg}^2.$ These integrals are multiplied by certain constants that
are of great importance. We bring these two integrals over to the left-hand side of inequality \eqref{E:FUNDAMENTALENERGYINEQUALITY}, which 
in total results in the presence of the three spacetime integrals involving \emph{positive} constants. The reason that the size of the constants is important is the presence of the cross term $- 4 t \big[\adjustednewp + \frac{1}{3} \big]^2 \dot{\newu}^a \dot{\dlap}_a \approx
- \frac{4}{9} t \dot{\newu}^a \dot{\dlap}_a$ on the right-hand side of the expression \eqref{E:DIVFLUIDJ} 
for $\partial_{\mu} \big(\dot{\mathbf{J}}_{(Fluid)}^{\mu}[(\dot{\adjustednewp},\dot{\newu}) , (\dot{\adjustednewp},\dot{\newu})] \big).$
It turns out that the constants
available are large enough such that the quadratic integral corresponding to this term can be completely soaked up into the positive integrals on the left-hand side of \eqref{E:FUNDAMENTALENERGYINEQUALITY} with a bit of room to spare. 
Furthermore, there are \emph{no integrals on the right-hand side of the fluid energy integral inequality \eqref{E:FLUIDENERGYMINTEGRALINEQUALITY} involving the quadratic top order term $|\partial_{\vec{I}} \upgamma|_{\newg}^2.$} 
This structure will play an absolutely essential role when we combine the metric and fluid energies.

In order to combine the metric and fluid energy integral inequalities \eqref{E:METRICENERGYMINTEGRALINEQUALITY} and 
\eqref{E:FLUIDENERGYMINTEGRALINEQUALITY} into the main energy integral inequality \eqref{E:FUNDAMENTALENERGYINEQUALITY}, we simply
add a small positive multiple $\smallparameter$ of the 
metric energy inequality \eqref{E:METRICENERGYMINTEGRALINEQUALITY} to the fluid energy inequality \eqref{E:FUNDAMENTALENERGYINEQUALITY}.
If $\smallparameter = \smallparameter_*$ is sufficiently small, then the integrals corresponding to the dangerous terms
$C\smallparameter |\dot{\newu}|_{\newg}^2$ and $C \smallparameter |\dot{\dlap}|_{\newg}^2$ present on the \emph{right-hand} side of the metric estimate can be soaked up into the \emph{left-hand} side of the fluid estimate. In this manner, we have therefore eliminated all of the unfavorably signed top order pure quadratic terms with large constants. This is the content of the main energy integral inequality \eqref{E:METRICENERGYMINTEGRALINEQUALITY}; this is the aforementioned 
``approximate monotonicity'' inequality. 

Note that we have not yet discussed the following key issue connected to the derivation of the energy hierarchy \eqref{E:ENERGYINTEGRALINEQUALITIESINTRO}:
that of bounding the spacetime integrals appearing (implicitly, in the divergence of the currents) 
on the right-hand sides of \eqref{E:METRICDIVINTRO}-\eqref{E:FLUIDDIVINTRO} that arise from the inhomogeneous terms in the $\partial_{\vec{I}}-$commuted equations. More precisely, to close our estimates,
we have to bound these integrals in terms of the energies $\totalenergy{\smallparameter_*}{M}(t).$ 
In Sect.~\ref{SS:INHOMOGENEOUSTERMINTEGRALS}, we provide a brief overview of this analysis.

The favorably signed lapse spacetime integrals described in this section somewhat remind of 
Guo's work \cite{yG1998}, in which he proved small-data global existence
for irrotational solutions to the Euler-Poisson system in $3$ spatial dimensions. 
Guo's result is far from obvious, for without the coupling to the Poisson equation,
small-data irrotational Euler solutions in $3$ spatial dimensions  
can blow-up in finite time \cite{dC2007}, \cite{tS1985}. 
Roughly, the reasons that small-data blow-up occurs
in solutions to the irrotational Euler wave equation 
are that the nonlinearities do not verify the null condition
and that solutions to the corresponding
linearized wave equation decay at the non-integrable rate $(1 + t)^{-1}.$
The main idea behind Guo's proof
was his observation that 
linearizing the wave equation
verified by the velocity $u$  
in the irrotational Euler-Poisson equations
leads to a \emph{favorable linear-in-$u$ term}. That is, the linearized equation 
is a Klein-Gordon equation with a decay-producing mass term. Since 
solutions to Klein-Gordon equations in $3$ spatial dimensions decay at the integrable rate $(1 + t)^{-3/2}$
(see \cite{wS1989}), Guo was able to exploit this property
to prove his small-data global existence result.
The effect generated by the favorable linear-in-$u$ term is in rough analogy 
with the availability of the coercive lapse integrals discussed above.

\subsubsection{The inhomogeneous term integrals} \label{SS:INHOMOGENEOUSTERMINTEGRALS}
In order to derive the energy inequality hierarchy \eqref{E:ENERGYINTEGRALINEQUALITIESINTRO}, we 
have to bound the spacetime integrals appearing (implicitly, in the divergence of the currents) 
on the right-hand sides of \eqref{E:METRICDIVINTRO}-\eqref{E:FLUIDDIVINTRO}
in terms of the energies $\totalenergy{\smallparameter_*}{M}(t).$ 
Most of these integrals are 
generated by the many inhomogeneous terms that appear on the right-hand side of
the $\partial_{\vec{I}}-$commuted equations. We carry out this analysis Sect.~\ref{S:POINTWISEINHOMOGENEOUS} 
and Sect.~\ref{S:LAPSEANDINHOMEINTEGRALESTIMATES}. 
The crux of it is our derivation of pointwise bounds for the $|\cdot|_{\newg}$ norms of the inhomogeneous terms;
see Prop.~\ref{P:POINTWISEESTIMATES}. Following this, we can easily
estimate the spacetime integrals by squaring the pointwise bounds and integrating.
To facilitate the analysis, we divide many of the inhomogeneous terms into two classes: 
``junk terms,'' whose integrals are easy to estimate, and ``borderline'' terms, whose integrals have to be treated with care; see e.g. 
our labeling of the inhomogeneous terms in 
equations \eqref{AE:METRICGAMMACOMMUTED}-\eqref{AE:SECONDFUNDCOMMUTED}.

The borderline terms generate some of the spacetime integrals in \eqref{E:ENERGYINTEGRALINEQUALITIESINTRO} involving 
the dangerous factors $C_N \epsilon s^{-1}$ and $C_N \epsilon s^{-1 - c_N \sqrt{\epsilon}}.$ The 
challenge is to show that the dangerous factors are not worse than this.
The main idea behind our analysis 
is to bound products $\sum_{|\vec{I}_1| + |\vec{I}_2| \leq |\vec{I}|} 
|\partial_{\vec{I}_1} v |_{\newg}  |\partial_{\vec{I}_2} w |_{\newg}$ by using the 
strong $C_{\newg}^M$ estimates of Prop.~\ref{P:STRONGPOINTWISE}
to control the term with the least number of derivatives on it in $L^{\infty}.$ In the cases
where $|\vec{I}_1| = 0$ or $|\vec{I}_2| = 0,$ the corresponding spacetime integral is principal order in terms
of its place in the hierarchy. It is therefore especially important that we have the best possible
$L^{\infty}$ estimates in these cases, and estimates such as \eqref{E:SECONDFUNDUPGRADEPOINTWISEGNORM} 
with $M=0$ (which is implied by \eqref{E:SECONDFUNDIMPROVEDINTRO}) play a distinguished role.

\subsubsection{Other matter models} \label{SSS:OTHERMATTER}
We now explain what distinguishes the stiff fluid equation of state 
from others of the form $p = \speed^2 \rho.$ The main point is that for a general equation of state,
the elliptic lapse PDE contains a pure kinetic term proportional to $p - \rho,$
that is, it is of the form $g^{ab} \nabla_a \nabla_b (n - 1) = \frac{3}{2} (p - \rho) + \cdots;$
see \eqref{E:LAPSE}. Clearly this term vanishes only in the case $\speed = 1.$
If present, this term would dominate the behavior of the lapse 
and preclude our ability to derive the strong estimates for it at the lower orders 
(as described in Sect.~\ref{SSS:LAPSEINTRO}).
Our entire proof would therefore break down.
Similar remarks hold for the evolution equation verified by the components $\SecondFund_{\ j}^i.$
It would be interesting to characterize those matter models for which
the relevant pure kinetic terms are absent. 
For such matter models, it may be possible to prove a theorem analogous to
our main stable singularity formation theorem.

\subsection{Approximate monotonicity via parabolic lapse gauges} \label{SS:MONOTONICITYEXPLANATION}
In \cite{iRjS2014a}, we constructed a one-parameter family of gauges in which
the approximate monotonicity is also visible. The gauge condition is given by 
replacing the CMC condition
$\SecondFund_{\ a}^a(t,x) = - t^{-1}$ with
\begin{align} \label{E:PARABOLICGAUGE}
	\uplambda^{-1}(n-1) 
	= t \SecondFund_{\ a}^a + 1,
\end{align}
where $\uplambda \neq 0$ is a real number. In the case of the scalar field
matter model, we derived the approximate monotonicity
whenever $\uplambda \geq 3.$ In addition to imposing 
\eqref{E:PARABOLICGAUGE}, we also used transported spatial coordinates and decomposed  
$\gfour = - n^2 dt^2 + g_{ab} dx^a dx^b$ as in \eqref{E:GFOURDECOMPINTRO}.
Note that the case $\uplambda = \infty$
formally corresponds to the CMC condition.
Under the gauge \eqref{E:PARABOLICGAUGE} with transported spatial coordinate, 
the Einstein-stiff fluid equations
look much as they do in CMC case.
The most significant change is that the elliptic CMC equation \eqref{E:LAPSEINTRO}
is replaced with the following parabolic lapse equation,
which for $\uplambda > 0$ is locally well posed only the past direction:
\begin{align} \label{E:PARABOLICLAPSEEQN}
	\uplambda^{-1} \frac{1}{t} \partial_t (n-1)
	+
	g^{ab}\nabla_a \nabla_b (n - 1) 
	& = (n - 1) 
				\left\lbrace 
					1 - \uplambda^{-1} 
					+ R 
					- 2p u_a u^a 
				\right\rbrace 
				\\
	& \ \ 
			+  \uplambda^{-1}(\uplambda - 2) \frac{1}{t^2} (n-1)^2
			+  \uplambda^{-2} \frac{1}{t^2} (n-1)^3
			+ R 
			- 2p u_a u^a.
		\notag
\end{align}
Based on the linearized stability results of \cite{iRjS2014a}, we expect that the main results 
of the present article could also be derived in the parabolic lapse gauge;
we do not pursue this issue in detail here. 

\begin{remark}
	An advantage of the parabolic gauges is that one does not have to construct 
	a CMC hypersurface.
\end{remark}



\subsection{Paper outline} 

\begin{itemize}
	\item In Sect.~\ref{S:NOTATION}, we introduce some notation and conventions that we use throughout the article.
	\item In Sect.~\ref{S:EEINCMC}, we state the Einstein-stiff fluid equations relative to 
		CMC-transported spatial coordinates.
	\item In Sect.~\ref{S:RESCALEDVARIABLES}, we introduce renormalized solution variables.
		We then state the PDEs that are verified by the renormalized variables. The system is
		equivalent to the system from Sect.~\ref{S:EEINCMC}.
	\item In Sect.~\ref{S:NORMSANDCURRENTS}, we introduce the norms that we use to study the renormalized
		solution variables. We also introduce the equations of variation, which is another name for the equations
		verified by the $\partial_{\vec{I}}-$differentiated variables. Finally, we introduce the metric and fluid energy current vectorfields.
		In Sect.~\ref{S:FUNDAMENATLENERGYINEQUALITIES}, we use these currents in the divergence theorem to derive our
		fundamental integration by parts integral inequalities for the renormalized solution variables and their spatial derivatives.
	\item In Sect.~\ref{S:BOOTSTRAPANDENERGYDEF}, we introduce the norm bootstrap assumptions that we use in the proof of our main 	
		stable singularity formation theorem. We then define the metric and fluid energies that we use in service of our analysis of the 
		norms.
		The currents of Sect.~\ref{S:NORMSANDCURRENTS} play a fundamental role in the definitions of the energies.
		Finally, we deduce the coercive properties of the energies.
	\item In Sect.~\ref{S:STRONGESTIMATES}, we use the bootstrap assumptions to derive strong $C^M$ estimates for the lower-order
		derivatives of the renormalized solution variables.
	\item In Sect.~\ref{S:LAPSEKEYLINEAR}, we provide a preliminary $L^2$ analysis of the lapse variables. 
		In particular, we prove a key proposition that shows that a certain quadratic term, which arises in the 
		divergence identity for the fluid energy current, leads to $L^2-$control over the renormalized lapse variables.
	\item In Sect.~\ref{S:FUNDAMENATLENERGYINEQUALITIES}, we derive our
		fundamental integration-by-parts-based energy integral inequalities for the solution. These estimates provide
		preliminary integral inequalities for the energies. However, the inhomogeneous term integrals, 
		which arise from the inhomogeneous terms in the $\partial_{\vec{I}}-$commuted equations, 
		are not estimated in this section.
	\item In Sect.~\ref{S:POINTWISEINHOMOGENEOUS}, we use the strong estimates of Sect.~\ref{S:STRONGESTIMATES}
		to derive suitable pointwise bounds for the inhomogeneous terms appearing in the 
		$\partial_{\vec{I}}-$commuted equations.
	\item In Sect.~\ref{S:LAPSEANDINHOMEINTEGRALESTIMATES}, we use the estimates of Sect.~\ref{S:LAPSEKEYLINEAR}
		and Sect.~\ref{S:POINTWISEINHOMOGENEOUS} to bound the $L^2$ norms of the renormalized
		lapse variables and the inhomogeneous terms by the energies.
	\item In Sect.~\ref{S:COMPARISON}, we compare the coerciveness of the solution Sobolev norms to the coerciveness of the energies.
	\item In Sect.~\ref{S:FUNDAMENTALAPRIORI}, we combine the estimates of the previous sections
		in order to derive our fundamental a priori estimates for the norms of the renormalized variables.
	\item In Sect.~\ref{S:LWPANDCMC}, we discuss local well-posedness and continuation criteria for the Einstein equations.
		We also show the existence of a CMC hypersurface in the spacetimes under consideration.
	\item In Sect.~\ref{S:STABLESINGULARITY}, we prove our main theorem showing past-stable Big Bang formation
		in spacetimes launched by near-FLRW data.
	\item In Appendix \ref{A:METRICANDCURVATUREID}, we provide some basic metric and curvature relations.
	\item In Appendix \ref{A:CMCDERIVATION}, we provide some additional details concerning the equations that
		we stated in Sect.~\ref{S:EEINCMC}.
	\item In Appendix \ref{A:RESCALEDEQNS}, we provide some additional details concerning the equations 
		for the renormalized variables that we stated in Sect.~\ref{S:RESCALEDVARIABLES}.
	\item In Appendix \ref{A:COMMUTED}, we provide the precise form of the
		$\partial_{\vec{I}}-$commuted equations.
\end{itemize}

\section{Notation and Conventions} \label{S:NOTATION}

In this section, we summarize some notation and conventions that we use throughout the article.

\subsection{Foliations}

The spacetime manifolds $\mathbf{M}$ of interest will be equipped with a time function $t$
that partitions certain regions $\mathbf{V} \subset \mathbf{M}$ 
into spacelike hypersurfaces of constant time: 
$\mathbf{V} = (T,1] \times \mathbb{T}^3 = \cup_{t \in (T,1]} \Sigma_t.$ The
$\Sigma_t$ are often CMC hypersurfaces. The level sets of $t$ 
are denoted by $\Sigma_t:$
\begin{align}
	\Sigma_t:= \lbrace (s,x) \in \mathbf{V} \ | \ s = t \rbrace.
\end{align}

\subsection{Metrics}
Most (but not all) of the article concerns spacetime metrics $\gfour$ of the form $\gfour = - n^2 dt^2 + g_{ab} dx^a dx^b.$
$n(t,x)$ is the lapse function, and $g_{ij}(t,x)$ is a Riemannian metric on $\Sigma_t.$

\subsection{Indices and determinants} \label{SS:INDICES}
Greek ``spacetime'' indices $\alpha, \beta, \cdots$ take on the values $0,1,2,3,$ while Latin ``spatial'' indices $a,b,\cdots$ 
take on the values $1,2,3.$ Repeated indices are summed over (from $0$ to $3$ if they are Greek, and from $1$ to $3$ if they are Latin). Spatial indices are lowered and raised with the Riemannian $3-$metric $g_{ij}$ and its inverse $g^{ij}.$ We never implicitly lower and raise
indices with the renormalized metric $\newg$ defined in Def.~\ref{D:RESCALEDVAR}; we always explicitly indicate the factors of
$\newg$ and $\newg^{-1}$ whenever the renormalized metric is involved in lowering or raising.

We use the notation 
\begin{align}
	\gdet 
\end{align}
to denote the determinant of the $3 \times 3$ matrix $g_{ij}.$

\subsection{Spacetime tensorfields and $\Sigma_t-$tangent tensorfields}

We denote spacetime tensorfields $\Tfour_{\nu_1 \cdots \nu_n}^{\ \ \ \ \ \ \ \ \mu_1 \cdots \mu_m}$ in bold font. 
We denote the $\gfour-$orthogonal projection of $\Tfour_{\nu_1 \cdots \nu_n}^{\ \ \ \ \ \ \ \ \mu_1 \cdots \mu_m}$ 
onto the constant-time hypersurfaces $\Sigma_t$ in non-bold font: 
$T_{b_1 \cdots b_n}^{\ \ \ \ \ \ \ \ a_1 \cdots a_m}.$ 
We also denote general $\Sigma_t-$tangent tensorfields in non-bold font.

\subsection{Coordinate systems and differential operators} \label{SS:COORDINATES}
We often work in a fixed standard local coordinate system $(x^1,x^2,x^3)$ on $\mathbb{T}^3.$ The vectorfields 
$\partial_j := \frac{\partial}{\partial x^{j}}$ are globally well-defined even though the coordinates themselves are not. 
Hence, in a slight abuse of notation, we use $\lbrace \partial_1, \partial_2, \partial_3 \rbrace$ to denote the globally defined vectorfield frame. The corresponding co-frame is denoted by $\lbrace dx^1, dx^2, dx^3 \rbrace.$ 
The spatial coordinates can be extended to a local coordinate system $(x^0,x^1,x^2,x^3)$ on manifolds-with-boundary of the form 
$(T,1] \times \mathbb{T}^3,$ and we often write $t$ instead of $x^0.$ The precise manner in which this is carried out is explained
at the beginning of Sect.~\ref{S:EEINCMC}. The corresponding vectorfield frame on $(T,1] \times \mathbb{T}^3$ 
is $\lbrace \partial_0, \partial_1, \partial_2, \partial_3 \rbrace,$ and the corresponding dual frame is
$\lbrace dx^0, dx^1, dx^2, dx^3 \rbrace.$  Relative to this frame, the FLRW metric $\widetilde{\gfour}$ is of the form 
\eqref{E:BACKGROUNDSOLUTION}. The symbol $\partial_{\mu}$ denotes the frame derivative $\frac{\partial}{\partial x^{\mu}},$ and we often write $\partial_t$ instead of $\partial_0$ and $dt$ instead of $dx^0.$ Most of our estimates and equations are 
stated relative to the frame
$\big\lbrace \partial_{\mu} \big\rbrace_{\mu = 0,1,2,3}$ and the dual frame $\big\lbrace dx^{\mu} \big\rbrace_{\mu = 0,1,2,3}.$

We use the notation $\partial f$ to denote the \emph{spatial coordinate} gradient of the function $f.$ Similarly,
if $\dlap$ is a $\Sigma_t-$ tangent one-form, then $\partial \dlap$ denotes the $\Sigma_t-$tangent type $\binom{0}{2}$
tensorfield with components $\partial_i \dlap_j$ relative to the frame described above.

If $\vec{I} = (n_1,n_2,n_3)$ is a triple of non-negative integers, then we define the \emph{spatial} multi-index coordinate differential operator $\partial_{\vec{I}}$ by $\partial_{\vec{I}} := \partial_1^{n_1} \partial_2^{n_2} \partial_3^{n_3}.$ The notation 
$|\vec{I}| := n_1 + n_2 + n_3$ denotes the order of $\vec{I}.$ 

Throughout, $\Dfour$ denotes the Levi-Civita connection of $\gfour.$ We write
\begin{align} \label{E:SPACETIMECOV}
	\Dfour_{\nu} \Tfour_{\nu_1 \cdots \nu_n}^{\ \ \ \ \ \ \ \mu_1 \cdots \mu_m} = 
		\partial_{\nu} \Tfour_{\nu_1 \cdots \nu_n}^{\ \ \ \ \ \ \ \mu_1 \cdots \mu_m} + 
		\sum_{r=1}^m \Chfour_{\nu \ \alpha}^{\ \mu_r} \Tfour_{\nu_1 \cdots \nu_n}^{\ \ \ \ \ \ \ \mu_1 \cdots \mu_{r-1} \alpha \mu_{r+1} 
			\cdots \mu_m} - 
		\sum_{r=1}^n \Chfour_{\nu \ \nu_{r}}^{\ \alpha} 
		\Tfour_{\nu_1 \cdots \nu_{r-1} \alpha \nu_{r+1} \cdots \nu_n}^{\ \ \ \ \ \ \ \ \ \ \ \ \ \ \ \ \ \ \ \ \ \ \mu_1 \cdots \mu_m}
\end{align} 
to denote a component of the covariant derivative of a tensorfield $\Tfour$ 
(with components $\Tfour_{\nu_1 \cdots \nu_n}^{\ \ \ \ \ \ \ \ \mu_1 \cdots \mu_m}$) defined on
$\mathbf{M}.$
The Christoffel symbols of $\gfour,$ which we denote by $\Chfour_{\mu \ \nu}^{\ \alpha},$ are defined in 
\eqref{E:FOURCHRISTOFFEL}.

We use similar notation to denote the covariant derivative of a $\Sigma_t-$tangent tensorfield $T$ 
(with components $T_{b_1 \cdots b_n}^{\ \ \ \ \ \ \ a_1 \cdots a_m}$) with respect to the Levi-Civita connection $\nabla$ 
of the Riemannian metric $g.$ The Christoffel symbols of $g,$ which we denote by $\Gamma_{j \ k}^{\ i},$
are defined in \eqref{E:THREECHRISTOFFEL}.

\subsection{Commutators and Lie brackets} \label{SS:COMMUTATORS}
Given two operators $A$ and $B,$
\begin{align} 
	\left[A , B\right]
\end{align}
denotes the operator commutator $A B - B A.$

If $\mathbf{X}$ and $\mathbf{Y}$ are two vectorfields, then
\begin{align}
	\mathcal{L}_{\mathbf{X}}\mathbf{Y} = [\mathbf{X}, \mathbf{Y}] 
\end{align}
denotes the Lie derivative of $\mathbf{Y}$ with respect to $\mathbf{X}.$
Relative to an arbitrary coordinate system,
\begin{align} \label{E:LIEBRACKETCOORDINATES}
	[\mathbf{X}, \mathbf{Y}]^{\mu} = \mathbf{X}^{\alpha} \partial_{\alpha} \mathbf{Y}^{\mu} 
		- \mathbf{Y}^{\alpha} \partial_{\alpha} \mathbf{X}^{\mu}.
\end{align}

\subsection{$L^2$ norms} \label{SS:L2NORMS}

All of our Sobolev norms are built out of the (spatial) $L^2$ norms of scalar quantities 
(which may be the components of a tensorfield). If $f$ 
is a function defined on the hypersurface $\Sigma_t,$ then we define the standard $L^2$ norm $\big\| f \big\|_{L^2}$ over $\Sigma_t$ as follows:
\begin{align} \label{E:SOBOLEVNORMDF}
	\big\| f \big\|_{L^2} = \big\| f \big\|_{L^2}(t) := \bigg(\int_{\mathbb{T}^3} \big| f(t,x^1,x^2,x^3) \big|^2 dx \bigg)^{1/2}.
\end{align}
Above, the notation $``\int_{\mathbb{T}^3} f \, dx"$ denotes the integral of $f$ over $\mathbb{T}^3$ with respect to the measure corresponding to the volume form of the \emph{standard Euclidean metric $\Euc$} on $\mathbb{T}^3,$ which has the components
$\Euc_{ij} = \mbox{diag}(1,1,1)$ relative to the frame defined in Sect.~\ref{SS:COORDINATES}.

\subsection{Constants} \label{SS:RUNNINGCONSTANTS}
We use $C,c,$ etc. to denote positive numerical constants that are free to vary from line to line. We allow $C,c$ to depend on $N,$
but we always choose $C,$ $c$ so that they are independent of all functions that are sufficiently close to 
the FLRW solution. We sometimes use notation such as $C_N$ when we want to explicitly indicate that $C$ depends on $N.$ 
We use symbols such as $C_*,$ $c_*$ etc., to denote constants that play a distinguished role in the analysis. If $A$ and $B$ are two quantities, then we often write 
\begin{align}
	A \lesssim B
\end{align}
whenever there exists a constant $C > 0$ such that $A \leq C B.$ 
Furthermore, if $A \lesssim B$ and $B \lesssim A,$ then we 
often write
\begin{align}
	A \approx B.
\end{align}

\section{Constant Mean Curvature-Transported Spatial Coordinates} \label{S:EEINCMC}

In this section, we provide the Einstein-stiff fluid equations relative to CMC-transported spatial coordinates. 
For additional details concerning the derivation of the equations starting from the system
\eqref{E:EINSTEININTRO}-\eqref{E:EOS} (with $\speed = 1$), see Appendix \ref{A:CMCDERIVATION}. 

Before stating the equations, we first define the variables that play a role in the standard CMC-transported spatial coordinates formulation. We begin by discussing the spatial coordinates. We assume that $(\mathbf{M},\gfour)$ is a cosmological spacetime 
containing a region $\mathbf{V}$ that is foliated by spacelike hypersurfaces $\Sigma_t,$ where $t \in (T,1]$ is a time function. In this article, $\Sigma_t = \mathbb{T}^3,$ i.e., $\mathbf{V} = (T,1] \times \mathbb{T}^3.$ 
We will soon impose the condition that the $\Sigma_t$ are CMC hypersurfaces. The existence of CMC hypersurfaces in the spacetimes of interest is guaranteed by Prop.~\ref{P:CMCEXISTS}. Let $(x^1,x^2,x^3)$ denote local coordinates on
a neighborhood $\mathcal{U} \subset \Sigma_1.$ We can extend these spatial coordinates to a
spacetime coordinate system $(t,x^1,x^2,x^3)$ on $(T,1] \times \mathcal{U} \subset \mathbf{V}$ by requiring that
$- \Nml x^i = 0$ for $i = 1,2,3.$ Here, $\Nml$ is the future-directed normal to $\Sigma_t,$ and we are slightly
abusing notation by using the symbol ``$x^i$'' to denote both the transported coordinate function and also its 
restriction to $\Sigma_1.$ This provides us with a coordinate system on $(T,1] \times \mathcal{U} \subset \mathbf{V}.$
On $(T,1] \times \mathcal{U},$ $\gfour$ can be uniquely decomposed into a lapse function $n$ and a Riemannian $3-$metric $g$ 
on $\Sigma_t$ as follows:
\begin{subequations}
\begin{align}
	\gfour & = - n^2 dt \otimes dt + g_{ab} dx^a \otimes dx^b, \label{E:GFOURDECOMP} \\
	\gfour^{-1} & = - n^{-2} \partial_t \otimes \partial_t 
		+ g^{ab} \partial_a \otimes \partial_b. \label{E:GINVERSEFOURDECOMP}
\end{align}	
\end{subequations}
Above, $g^{ij}$ denotes the inverse of $g_{ij}.$ The future-directed normal to $\Sigma_t$ is 
\begin{align}
	\Nml & = n^{-1} \partial_t.
\end{align}
We denote the Levi-Civita connection of $\gfour$ by $\Dfour$ and that of $g$ by $\nabla.$

We assume that there is a stiff fluid present in $\mathbf{V}.$ The fluid's four-velocity can be decomposed as
\begin{align} \label{E:UNORMAL}
	\ufour = (1 + u_a u^a)^{1/2} \Nml + u^a \partial_a,
\end{align}
where the factor $(1 + u_a u^a)^{1/2}$ enforces the normalization condition \eqref{E:UNORMALIZED}. The energy-momentum tensor
\eqref{E:TFOURFLUID} of the stiff fluid can be decomposed (with the indices ``downstairs'') as
\begin{align}
	\Tfour = \Tfour(\Nml,\Nml) \Nml_{\flat} \otimes \Nml_{\flat} 
		- \Tfour(\Nml,\partial_a) \left(\Nml_{\flat} \otimes dx^a + dx^a \otimes \Nml_{\flat} \right) 
		+ T_{ab} dx^a \otimes dx^b,
\end{align}
where $(\Nml_{\flat})_{\mu} := \gfour_{\mu \alpha} \Nml^{\alpha}$ is the metric dual of $\Nml,$
\begin{subequations}
\begin{align}
	\Tfour(\Nml,\Nml) & = p + 2p (1 + u_a u^a), 
		\label{E:TFOURNORMAL} \\
	\Tfour(\Nml,\partial_i) & = - 2p (1 + u_a u^a)^{1/2} u_i, 
		\label{E:TFOURNORMALSPATIAL} \\
	T_{ij} & = 2p u_i u_j + p g_{ij}. \label{E:TFOURSPATIAL}
\end{align}
\end{subequations}

The second fundamental form $\SecondFund$ of $\Sigma_t$ is defined by requiring that following relation 
holds for all vectorfields $X,Y$ tangent to $\Sigma_t:$
\begin{align} \label{E:SECONDFUNDDEF}
	\gfour(\Dfour_X \Nml, Y) = - \SecondFund(X,Y).
\end{align}
It is a standard fact that $\SecondFund$ is symmetric:
\begin{align}
	 \SecondFund(X,Y) =  \SecondFund(Y,X).
\end{align}
For such $X,Y,$ the action of the spacetime connection $\Dfour$ can be decomposed into
the action of $\nabla$ and $\SecondFund$ as follows:
\begin{align} \label{E:DDECOMP}
	\Dfour_X Y = \nabla_X Y - \SecondFund(X,Y)\Nml.
\end{align}

\begin{remark} \label{R:ALWAYSMIXED}
	When analyzing the components of $\SecondFund,$ \textbf{we will always assume that it is written in mixed form 
	as $\SecondFund_{\ j}^i$ with the first index upstairs and the second one downstairs.} This convention is absolutely essential for 
	some of our analysis; in the problem of interest to us, the evolution and constraint equations verified by the components 
	$\SecondFund_{\ j}^i$ have a more favorable structure than the corresponding equations verified by $\SecondFund_{ij}.$
\end{remark}

Throughout the vast majority of our analysis, we normalize the CMC hypersurfaces $\Sigma_t$ as follows:
\begin{align} \label{E:CMCCONDITION}
	\SecondFund_{\ a}^a & = - \frac{1}{t},&& t \in (0,1].
\end{align}
In order for \eqref{E:CMCCONDITION} to hold, the lapse has to verify the elliptic equation \eqref{E:LAPSE}.

A slightly inconvenient fact is the following: $\mathbb{T}^3$ cannot be covered by a single coordinate chart.
One coordinate chart can cover all but a measure $0$ set, but it takes several to cover all of $\mathbb{T}^3.$ 
Hence, we need more than one coordinate chart to carry out our analysis. This motivates the following definition.

\begin{definition}[\textbf{Standard atlas, standard charts, standard coordinates, and Euclidean metric}] \label{D:STANDARDATLAS}
We fix a finite collection of charts $\lbrace (\mathcal{U}_m, (x_m^1,x_m^2,x_m^3)) \rbrace_{m=1}^{M}$ that cover
$\mathbb{T}^3.$ We require that the transition maps $x_{m_1} \circ x_{m_2}^{-1}$ are \emph{translations} defined on subsets of 
$\mathbb{R}^3.$ We refer to this collection as \emph{the standard atlas} on $\mathbb{T}^3$ with \emph{standard charts} and
\emph{standard coordinates}. 

We then fix the Euclidean metric $\Euc$ on $\mathbb{T}^3.$ We can view $\Euc$ either as 
tensorfield inherent to $\mathbb{T}^3$ or as a spacetime tensorfield
that is defined along the hypersurface $\Sigma_1 \simeq \mathbb{T}^3$ and that 
has vanishing components in the direction $\Nml.$
From the former point of view, $\Euc$ has components 
$\Euc_{ij} = \mbox{diag}(1,1,1)$ relative to any of the standard charts. 
\end{definition}

Using the above construction for transporting coordinates and the standard atlas, we can construct 
\emph{transported spatial coordinates} that, together with $t,$ 
cover the spacetime region $\mathbf{V} = (T,1] \times \mathbb{T}^3.$ 
Each spacetime chart is of the form $((T,1] \times \mathcal{U}_m, (t,x_m^1,x_m^2,x_m^3)).$ We refer to this collection as the \emph{the standard atlas} on $(T,1] \times \mathbb{T}^3$ with \emph{standard charts}. 

The Euclidean metric can be extended to each leaf $\Sigma_t,$ $t \in (T,1],$ by requiring that $\underline{\mathcal{L}}_{-\Nml} \Euc = 0$
and that any contraction of $\Euc$ with $\Nml$ vanishes. Here, $\underline{\mathcal{L}}_{-\Nml} \Euc$ denotes the restriction of the spacetime tensorfield $\mathcal{L}_{-\Nml} \Euc$ to $\Sigma_t.$ Given any tensorfield $T$ tangent to the $\Sigma_t,$ \emph{the components of $T$ relative to the coordinate frames on the overlapping regions $\lbrace t \rbrace \times \mathcal{U}_{m_1} \cap \lbrace t \rbrace \times \mathcal{U}_{m_2}$ are independent of the chart}. Hence, one can morally carry out the analysis as if there were only a single chart.

We never need to directly refer to the spatial coordinates, but rather only the components of tensorfields relative to the
coordinate frames and also coordinate partial derivatives of these components. Hence, there
is an alternative way to think about the above construction that is useful for our ensuing analysis. 
One can imagine that we have fixed a \emph{globally defined smooth holonomic frame field} $\lbrace e_{(1)}, e_{(2)}, e_{(3)} \rbrace$ on $\Sigma_1$ that is orthonormal with respect to the Euclidean metric. The corresponding dual frame field, which we denote by $\lbrace \theta^{(1)}, \theta^{(2)}, \theta^{(3)} \rbrace,$ verifies $\theta^{(i)}\big(e_{(j)} \big) = \delta_{j}^i,$ where $\delta_{j}^i$ is the Kronecker delta. Relative to \emph{any} of the standard charts on $\Sigma_1,$ we have $e_{(i)} = \partial_i,$ $\theta^{(i)} = dx^i.$ 
The frame $\lbrace e_{(1)}, e_{(2)}, e_{(3)} \rbrace,$ can be extended to each leaf $\Sigma_t,$ $t \in (T,1],$ by requiring that
$\mathcal{L}_{-\mathbf{N}} e_{(i)} = 0,$ $(i=1,2,3).$ Here, $\mathbf{N}$ is a renormalized version of $\Nml$ such that
$\mathbf{N} t = 1$ (i.e., $\mathbf{N} = \frac{\partial}{\partial t}$ relative to any local transported coordinate system).
Similarly, the dual frame $\lbrace \theta^{(1)}, \theta^{(2)}, \theta^{(3)} \rbrace$ can be extended to each leaf $\Sigma_t,$ $t \in (T,1],$ by requiring that $\underline{\mathcal{L}}_{-\mathbf{N}} \theta^{(i)} = 0,$ $(i=1,2,3).$

Any type $\binom{s}{r}$ tensorfield $T$ that is tangent to the $\Sigma_t$ can be decomposed as 
$T = T_{m_1 \cdots m_r}^{\ \ \ \ \ \ \ \ n_1 \cdots n_s} e_{(n_1)} \otimes \cdots \otimes e_{(n_s)} \otimes \theta^{(m_1)} \otimes \cdots \otimes \theta^{(m_r)}.$ The components $T_{m_1 \cdots m_r}^{\ \ \ \ \ \ \ \ n_1 \cdots n_s}(t,x)$ are globally defined functions on
$\Sigma_t.$ We can then view the Einstein-stiff fluid equations in CMC-transported spatial coordinates, which are presented just below, as PDES in the components of various tensorfields. Furthermore, we can view our estimates as estimates of these components and their 
frame derivatives $e_{(a)} T_{m_1 \cdots m_r}^{\ \ \ \ \ \ \ \ n_1 \cdots n_s} 
:= e_{(a)}^b \partial_b T_{m_1 \cdots m_r}^{\ \ \ \ \ \ \ \ n_1 \cdots n_s}.$

Having established the above conventions, we now state the equations.

\begin{proposition}[\textbf{The Einstein-stiff fluid equations in CMC-transported spatial coordinates}]
In CMC-transported spatial coordinates normalized by $\SecondFund_{\ a}^a = - \frac{1}{t},$ 
the Einstein-stiff fluid system consists of the following equations.

The \textbf{constraint equations} verified by $g_{ij}, \SecondFund_{\ j}^i, p,$ and $u^i$ are:
\begin{subequations}
\begin{align}
		R - \SecondFund_{\ b}^a \SecondFund_{\ a}^b + \underbrace{(\SecondFund_{\ a}^a)^2}_{t^{-2}} 
		& = \overbrace{2 p + 4p u_a u^a}^{2 \Tfour(\Nml,\Nml)}, \label{E:HAMILTONIAN} \\
		\nabla_a \SecondFund_{\ i}^a - \underbrace{\nabla_i \SecondFund_{\ a}^a}_0 & = 
		\underbrace{2p (1 + u_a u^a)^{1/2} u_i}_{- \Tfour(\Nml,\partial_i)}. \label{E:MOMENTUM}
\end{align}
\end{subequations}

The \textbf{evolution equations} verified by $g_{ij}$ and $\SecondFund_{\ j}^i$ are:
\begin{subequations}
\begin{align}
	\partial_t g_{ij} & = - 2 n g_{ia}\SecondFund_{\ j}^a, \label{E:PARTIALTGCMC} \\
	\partial_t (\SecondFund_{\ j}^i) & = - g^{ia} \nabla_a \nabla_j n
		+ n \Big\lbrace R_{\ j}^i + \underbrace{\SecondFund_{\ a}^a}_{-t^{-1}} \SecondFund_{\ j}^i 
			\underbrace{- 2p u^i u_j}_{- T_{\ j}^i + (1/2)\ID_{\ j}^i \Tfour} 
			\Big\rbrace,  \label{E:PARTIALTKCMC}
\end{align}
\end{subequations}
where $R$ denotes the scalar curvature of $g_{ij},$ $R_{\ j}^i$ denotes the Ricci curvature of $g_{ij}$
(a precise expression is given in Lemma~\ref{L:RICCIDECOMP}),
and $\ID_{\ j}^i = \mbox{diag}(1,1,1)$ denotes the identity transformation.


The \textbf{stiff fluid equations} (i.e., the Euler equations with $p = \rho$) are:
\begin{subequations}
\begin{align}
	(1 + u_a u^a)^{1/2} \partial_t p + n u^a \nabla_a p
		& + 2p \Big\lbrace (1 + u_a u^a)^{-1/2} u_b \partial_t u^b + n \nabla_a u^a \Big\rbrace  
			\label{E:EULERPCMC} \\
		& = 2p \Big\lbrace -\frac{n}{t} (1 + u_a u^a)^{1/2}
			+ n (1 + u_a u^a)^{-1/2} \SecondFund_{\ f}^e u_e u^f
			- u^a \nabla_a n \Big\rbrace \notag, \\
		2p \Big\lbrace (1 + u_a u^a)^{1/2} \partial_t u^j + n u^a \nabla_a u^j \Big\rbrace
		& + (1 + u_a u^a)^{1/2} u^j \partial_t p + n(g^{ja} + u^j u^a)\nabla_a p 
			\label{E:EULERUCMC} \\
		& = 4 n p (1 + u_a u^a)^{1/2} \SecondFund_{\ b}^j  u^b - 2p (1 + u_a u^a) g^{jb} \nabla_b n.
		\notag
\end{align}
\end{subequations}

The \textbf{lapse equation} (for a general perfect fluid\footnote{We state the lapse equation in the case of a general
perfect fluid because we referred to this equation earlier in the article. However, our main results only
apply in the case of the stiff fluid equation of state $p = \rho.$}) is:
\begin{align} \label{E:LAPSE}
	g^{ab} \nabla_a \nabla_b (n - 1) 
		& = (n - 1) \Big\lbrace R + \underbrace{(\SecondFund_{\ a}^a)^2}_{t^{-2}} - (\rho + p)u_a u^a + \frac{3}{2} (p - \rho) 
		\Big\rbrace \\
	& \ \ + R - (\rho + p)u_a u^a + \frac{3}{2} (p - \rho)
			+ \underbrace{(\SecondFund_{\ a}^a)^2 - \partial_t (\SecondFund_{\ a}^a)}_{0}. \notag
\end{align}

\end{proposition}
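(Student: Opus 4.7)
The plan is to carry out a standard $3+1$ ADM decomposition of the Einstein-stiff fluid system \eqref{E:EINSTEININTRO}-\eqref{E:UNORMALIZED} adapted to the metric splitting \eqref{E:GFOURDECOMP}-\eqref{E:GINVERSEFOURDECOMP} and then specialize to the CMC-transported gauge. First, using the relation \eqref{E:DDECOMP} between $\Dfour$ and $\nabla$ together with the identity $\Dfour_X \Nml = -\SecondFund(X,\cdot)^\sharp$ for $\Sigma_t$-tangent $X$, I would compute the Gauss and Codazzi equations, which express certain contractions of $\Riemfour$ in terms of the intrinsic curvature of $g$ and $\SecondFund$. Contracting \eqref{E:EINSTEININTRO} twice with $\Nml$ gives the Hamiltonian constraint, which with the matter decomposition \eqref{E:TFOURNORMAL} and the CMC condition $\SecondFund_{\ a}^a = -1/t$ yields \eqref{E:HAMILTONIAN}. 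Contracting once with $\Nml$ and once with $\partial_i$ and using \eqref{E:TFOURNORMALSPATIAL} gives the Codazzi/momentum equation \eqref{E:MOMENTUM}, where the term $\nabla_i \SecondFund_{\ a}^a$ vanishes because the trace is a spatial constant under the CMC gauge.

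For the metric evolution equation \eqref{E:PARTIALTGCMC}, I would use the definition \eqref{E:SECONDFUNDDEF} applied to the coordinate frame $\partial_i$ together with the fact that $[\Nml, \partial_i]$ is $\Sigma_t$-tangent (since the spatial coordinates are transported along $\Nml$, so $\mathcal{L}_{-\Nml} \partial_i = 0$ in suitable sense), yielding $\partial_t g_{ij} = -2n \SecondFund_{ij}$, which when written with one index raised gives \eqref{E:PARTIALTGCMC}. For \eqref{E:PARTIALTKCMC}, I would compute $\mathcal{L}_{\partial_t} \SecondFund$ by expressing it through $\Dfour$, then use the trace-reversed Einstein equation $\Ricfour_{\mu\nu} = \Tfour_{\mu\nu} - \frac{1}{2}\gfour_{\mu\nu}(\gfour^{-1})^{\alpha\beta}\Tfour_{\alpha\beta}$ to isolate $\Ricfour(\Nml,\partial_i)$ in terms of matter, and combine with the Gauss equation relating the spacetime and spatial Ricci. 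The Hessian term $-g^{ia}\nabla_a\nabla_j n$ arises from the $\Dfour \Dfour n$ piece of $\mathcal{L}_\Nml \SecondFund$, and the matter source $-2pu^i u_j$ follows from \eqref{E:TFOURSPATIAL} after trace-reversing.

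The stiff fluid equations \eqref{E:EULERPCMC}-\eqref{E:EULERUCMC} come from projecting $\Dfour_\mu \Tfour^{\mu\nu} = 0$ using \eqref{E:TFOURFLUID}, $p = \rho$, and the decomposition \eqref{E:UNORMAL}. Concretely, the projection along $\ufour$ (continuity equation) combined with normalization \eqref{E:UNORMALIZED} gives \eqref{E:EULERPCMC}; projecting orthogonal to $\ufour$ via the spatial components of $\Dfour_\mu \Tfour^{\mu j} = 0$ gives \eqref{E:EULERUCMC}. The Christoffel symbols of $\gfour$ that arise in $\Dfour_\mu$ are rewritten via the standard ADM formulas in terms of $n$, $\nabla$, and $\SecondFund$; in particular the $\partial_t u^i$ terms pick up kinematic contributions from the shape of $\Sigma_t$ encoded in $\SecondFund_{\ f}^e$ and from the lapse gradient $\nabla_a n$.

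Finally, for the lapse equation \eqref{E:LAPSE}, I would take the trace of \eqref{E:PARTIALTKCMC}, giving $\partial_t \SecondFund_{\ a}^a = -g^{ab}\nabla_a\nabla_b n + n\{R + (\SecondFund_{\ a}^a)^2 - \Tfour(\Nml,\Nml) + (1/2)(\gfour^{-1})^{\alpha\beta}\Tfour_{\alpha\beta}\cdot 3\}$ after using the Gauss equation to rewrite the spacetime Ricci scalar contribution. The CMC condition forces $\partial_t \SecondFund_{\ a}^a = t^{-2} = (\SecondFund_{\ a}^a)^2$, so the last underbraced term in \eqref{E:LAPSE} vanishes. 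Substituting the matter quantities with $p - \rho$ for general equation of state (and noting $\rho+p = 2p$, $p-\rho = 0$ in the stiff case) and adding and subtracting $1$ from $n$ (using the homogeneous piece of the Hamiltonian constraint to cancel the $R$ and $(\SecondFund_{\ a}^a)^2$ constants in the FLRW background) produces the stated form. The main technical nuisance, rather than an obstacle, is bookkeeping of index positions (keeping $\SecondFund$ in mixed form per Remark~\ref{R:ALWAYSMIXED}) and carefully tracking the trace-reversal of matter terms so that the structural cancellation enforced by the CMC gauge is manifest; the full routine computation is deferred to Appendix~\ref{A:CMCDERIVATION}.
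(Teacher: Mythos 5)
Your proposal follows the same route as the paper: Gauss--Codazzi/ADM decomposition for the constraints and evolution equations (the paper cites Wald and standard references), projections of $\Dfour_\mu \Tfour^{\mu\nu}=0$ along and orthogonal to $\ufour$ for the fluid equations, and the trace of the $\SecondFund$-evolution equation plus the CMC condition $\partial_t(\SecondFund_{\ a}^a)=(\SecondFund_{\ a}^a)^2=t^{-2}$ for the lapse. One small misstatement: deriving \eqref{E:LAPSE} does not invoke the Hamiltonian constraint at all --- the split $n=(n-1)+1$ is purely algebraic and the only cancellation is the geometric identity $(\SecondFund_{\ a}^a)^2-\partial_t(\SecondFund_{\ a}^a)=0$ under the CMC gauge; the Hamiltonian constraint enters only later, when $R$ is eliminated in the renormalized lapse equation \eqref{E:LAPSERESCALEDELLIPTIC}.
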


\hfill $\qed$

\section{The Equations Verified by the Renormalized Variables} \label{S:RESCALEDVARIABLES}
In this section, we reformulate the Einstein-stiff fluid CMC-transported spatial coordinate equations in terms of 
the renormalized variables of Def.~\ref{D:RESCALEDVAR}.
We decompose the resulting equations into main terms, borderline error terms that must be handled with care,
and junk error terms that are easy to control.
In Appendix \ref{A:RESCALEDEQNS}, we provide
a more detailed derivation of the equations;
here, we only state them.
The two main merits of working with the renormalized variables are
\textbf{i)} they make the time dependence of the FLRW background solution explicit and thus help
us to identity order $1$ and approximately order $1$ quantities in the study of perturbations; 
\textbf{ii)} they yield equations with a favorable structure. 

\subsection{Constraint equations for the renormalized variables}

\begin{proposition}[\textbf{The renormalized constraints}] \label{P:CCONSTRAINTSREFORM}
In terms of the renormalized variables of Def.~\ref{D:RESCALEDVAR}, the constraint equations
\eqref{E:HAMILTONIAN}-\eqref{E:MOMENTUM} can be expressed as follows:
\begin{subequations}
\begin{align}
	R & = 2 t^{-2} \adjustednewp 
		 + t^{-2} \leftexp{(Border)}{\mathfrak{H}} + t^{-2/3} \leftexp{(Junk)}{\mathfrak{H}}, 
		\label{E:RINTERMSOFKPANDU} \\
	\partial_a \freenewsec_{\ i}^a & = 
			\frac{2}{3} \newg_{ia} \newu^a  
			+ \leftexp{(Border)}{\mathfrak{M}}_i +  t^{4/3} \leftexp{(Junk)}{\mathfrak{M}}_i,
			\label{E:MOMENTUMCONSTRAINT}
\end{align}
where $R$ denotes the Ricci curvature of the non-rescaled metric $g,$
and the error terms $\leftexp{(Border)}{\mathfrak{H}},$ $\leftexp{(Junk)}{\mathfrak{H}},$
$\leftexp{(Border)}{\mathfrak{M}}_i,$ and $\leftexp{(Junk)}{\mathfrak{M}}_i$ 
are defined in \eqref{AE:HAMILTONIANERRORBORDER}, \eqref{AE:HAMILTONIANERRORJUNK},
\eqref{AE:MOMENTUMERRORBORDERLINE}, and \eqref{AE:MOMENTUMERRORJUNK}.

Furthermore, the following alternative version of \eqref{E:MOMENTUMCONSTRAINT} holds:
\begin{align} 
	(\newg^{-1})^{ia} \partial_a \freenewsec_{\ i}^j 
	& = \frac{2}{3} \newu^j 
		+ \leftexp{(Border)}{\widetilde{\mathfrak{M}}}^j + t^{4/3} \leftexp{(Junk)}{\widetilde{\mathfrak{M}}}^j, 
		\label{E:MOMENTUMCONSTRAINTRAISED}
\end{align}
\end{subequations}
where the error terms $\leftexp{(Border)}{\widetilde{\mathfrak{M}}}^j$ and $\leftexp{(Junk)}{\widetilde{\mathfrak{M}}}^j$ are defined in \eqref{AE:MOMENTUMALTERRORBORDERLINE} and \eqref{AE:MOMENTUMALTERRORJUNK}.

\end{proposition}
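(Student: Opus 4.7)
The proof is essentially a bookkeeping computation: substitute the renormalized variables from Definition~\ref{D:RESCALEDVAR} into the original constraints \eqref{E:HAMILTONIAN}--\eqref{E:MOMENTUM}, simplify using the CMC condition $\freenewsec_{\ a}^a = 0$, and then sort the resulting terms by their explicit $t$-weights into a ``main'' piece plus borderline and junk errors. The plan is to carry out the three identities in order, reusing a few preliminary algebraic facts throughout.

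First, for the Hamiltonian constraint \eqref{E:HAMILTONIAN}, I would insert the relations $\SecondFund_{\ j}^i = t^{-1}(\freenewsec_{\ j}^i - \tfrac{1}{3}\ID_{\ j}^i)$ and $p = t^{-2}(\adjustednewp + \tfrac{1}{3})$. Using $\freenewsec_{\ a}^a = 0$, the term $-\SecondFund_{\ b}^a \SecondFund_{\ a}^b$ produces $-t^{-2}\freenewsec_{\ b}^a\freenewsec_{\ a}^b - \tfrac{1}{3}t^{-2}$, and this $-\tfrac{1}{3}t^{-2}$ combines with $(\SecondFund_{\ a}^a)^2 = t^{-2}$ on the left and with the $\tfrac{2}{3}t^{-2}$ piece coming from $2p$ on the right in such a way that the leading $t^{-2}$ constants cancel, leaving $2t^{-2}\adjustednewp$ as the main term. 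The quadratic-in-$\freenewsec$ piece is the borderline contribution (still scaling like $t^{-2}$), while the $4pu_au^a$ term on the right, after substituting $u_au^a = t^{4/3}\newg_{ab}\newu^a\newu^b$, becomes $t^{-2/3}$-scaled and is the junk. This is exactly the decomposition \eqref{E:RINTERMSOFKPANDU}.

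Second, for the momentum constraint \eqref{E:MOMENTUM}, the key preliminary observation is that the Christoffel symbols of $g = t^{2/3}\newg$ coincide with those of $\newg$, since the conformal factor depends only on $t$; hence $\nabla$ acts identically on spatial tensor fields whether one uses $g$ or $\newg$. Using $\SecondFund_{\ j}^i = t^{-1}(\freenewsec_{\ j}^i - \tfrac{1}{3}\ID_{\ j}^i)$ together with $\nabla \ID = 0$ gives $\nabla_a\SecondFund_{\ i}^a = t^{-1}\nabla_a\freenewsec_{\ i}^a$, and the right side becomes, after substitution, $2t^{-1}(\adjustednewp + \tfrac{1}{3})(1+t^{4/3}\newg_{ab}\newu^a\newu^b)^{1/2}\newg_{ia}\newu^a$. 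Multiplying through by $t$, Taylor-expanding the square root, and rewriting $\nabla_a = \partial_a + \Gamma$-terms, I read off $\tfrac{2}{3}\newg_{ia}\newu^a$ as the main term, with the $2\adjustednewp\newg_{ia}\newu^a$ piece and the Christoffel corrections grouped into $\leftexp{(Border)}{\mathfrak{M}}_i$, and everything coming from the $(1+\cdots)^{1/2}-1$ expansion absorbed into $t^{4/3}\leftexp{(Junk)}{\mathfrak{M}}_i$ after factoring out the explicit $t^{4/3}$.

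Third, for the raised-index version \eqref{E:MOMENTUMCONSTRAINTRAISED}, I would raise the free index in \eqref{E:MOMENTUM} by $g^{ij}$, exploit metric compatibility and the symmetry $\SecondFund^{ij} = \SecondFund^{ji}$ (equivalently $\newg_{ia}\freenewsec_{\ j}^a = \newg_{ja}\freenewsec_{\ i}^a$, which in turn follows from symmetry of $\SecondFund_{ij}$ together with cancellation of the identity pieces) to write $g^{ij}\nabla_a\SecondFund_{\ i}^a = g^{ai}\nabla_a\SecondFund_{\ i}^j$. The same substitutions as above then yield $(\newg^{-1})^{ia}\nabla_a\freenewsec_{\ i}^j = 2(\adjustednewp + \tfrac{1}{3})(1+t^{4/3}|\newu|_{\newg}^2)^{1/2}\newu^j$, and expanding gives the main term $\tfrac{2}{3}\newu^j$ plus the analogous borderline and junk errors. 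The main obstacle is not analytical but organizational: one must make the split between borderline and junk errors in a way that is consistent with the $t$-weights used in later estimates. Given the explicit definitions of the error terms in the appendix, this is just a matter of grouping terms by their explicit $t$-powers, and no further subtlety is required.
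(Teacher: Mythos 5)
Your proposal is correct and follows essentially the same route as the paper's Appendix C derivation: substitute the renormalized variables of Definition~\ref{D:RESCALEDVAR}, use the CMC relation $\freenewsec_{\ a}^a = 0$ to cancel the leading $t^{-2}$ constants in the Hamiltonian constraint, expand $\nabla_a$ in terms of $\partial_a$ and $\upgamma$-contractions for the momentum constraint, and isolate the $(1+t^{4/3}\newg_{ab}\newu^a\newu^b)^{1/2}-1$ factor to extract the $t^{4/3}$-weighted junk terms. Your observation that the Christoffel symbols of $g$ and $\newg$ coincide, and your use of the symmetry $\SecondFund^{ij}=\SecondFund^{ji}$ to pass to the raised-index version, are just explicit statements of facts the paper uses implicitly (the latter via the identity preceding \eqref{AE:MOMENTUMCONSTRAINTRAISED}).
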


\hfill $\qed$

\subsection{The elliptic equations verified by the renormalized lapse variables}

The following negative-definite linear elliptic operator plays a fundamental role in our analysis of 
the lapse.

\begin{definition} [\textbf{The elliptic operator $\mathcal{L}$}] \label{D:LAPSEOP}
\begin{subequations}
\begin{align} \label{E:LDEF}
	\mathcal{L} & := t^{4/3} (\newg^{-1})^{ab} \partial_a \partial_b 
		- (1 + f), \\
	f & := 2 \adjustednewp 
			+ \freenewsec_{\ b}^a \freenewsec_{\ a}^b
		+ 2 t^{4/3} \adjustednewp \newg_{ab} \newu^a \newu^b
		+ \frac{2}{3} t^{4/3} \newg_{ab} \newu^a \newu^b. 
		\label{E:ELLIPTICOPERATORJUNKTERM}
\end{align}
\end{subequations}

Alternatively, with the help of equations \eqref{E:RICCIDECOMP} and
\eqref{AE:RINTERMSOFKPANDU}, $\mathcal{L}$ can be expressed as follows:
\begin{subequations}
\begin{align} \label{E:LWITHFTILDEDEF}
	\mathcal{L} & = t^{4/3} (\newg^{-1})^{ab} \partial_a \partial_b 
		- (1 + \widetilde{f}), \\
	\widetilde{f} & :=  
			- \frac{1}{2} t^{4/3} (\newg^{-1})^{ef} \partial_e \upgamma_{f \ a}^{\ a}  
			+ t^{4/3} (\newg^{-1})^{ab} \partial_a \Gamma_b 
			+ t^{4/3}  \leftexp{(Ricci)}{\triangle}_{\ a}^a 
			\label{E:TILDEELLIPTICOPERATORJUNKTERM} 
			\\
			& \ \ - 2 t^{4/3} \adjustednewp \newg_{ab} \newu^a \newu^b
		 - \frac{2}{3} t^{4/3} \newg_{ab} \newu^a \newu^b, \notag
\end{align}
\end{subequations}
where the error term $\leftexp{(Ricci)}{\triangle}_{\ j}^i$ is defined in \eqref{E:RICCIERROR}.

\end{definition}

\begin{proposition} [\textbf{The equations verified by the renormalized lapse variable}] \label{P:LAPSEALT}
Assume the stiff fluid equation of state $p = \rho.$
In terms of the renormalized variables of Def.~\ref{D:RESCALEDVAR}, the lapse equation \eqref{E:LAPSE}
can be expressed in the following two forms:
\begin{subequations}
\begin{align} \label{E:LAPSERESCALEDELLIPTIC}
	\mathcal{L} \newlap 
	& = 2 t^{- 4/3} \adjustednewp 
			+ t^{- 4/3} \leftexp{(Border)}{\mathfrak{N}} 
			+ \leftexp{(Junk)}{\mathfrak{N}}, 
		\\
	\mathcal{L} \newlap & = \leftexp{(Border)}{\widetilde{\mathfrak{N}}} + t^{2/3} \leftexp{(Junk)}{\widetilde{\mathfrak{N}}}, 
	\label{E:LAPSELOWERDERIVATIVES}
\end{align}
\end{subequations}
where the error terms $\leftexp{(Border)}{\mathfrak{N}},$ $\leftexp{(Junk)}{\mathfrak{N}},$ $\leftexp{(Border)}{\widetilde{\mathfrak{N}}},$
and $\leftexp{(Junk)}{\widetilde{\mathfrak{N}}}$ are defined in \eqref{AE:LAPSENBORDERLINE}, \eqref{AE:LAPSENJUNK},
\eqref{AE:LAPSENTILDEBORDERLINE}, and \eqref{AE:LAPSENTILDEJUNK}.
\end{proposition}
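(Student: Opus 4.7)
The proof is a direct substitution, broken into two related calculations. I would first specialize the lapse equation \eqref{E:LAPSE} to $p = \rho$, which kills the $\frac{3}{2}(p - \rho)$ pieces and turns $\rho + p$ into $2p$, and invoke the CMC normalization $\SecondFund_{\ a}^a = -t^{-1}$ to annihilate the closing bracket $(\SecondFund_{\ a}^a)^2 - \partial_t(\SecondFund_{\ a}^a)$. This reduces \eqref{E:LAPSE} to $g^{ab}\nabla_a \nabla_b (n - 1) = (n-1)\{R + t^{-2} - 2p u_a u^a\} + R - 2p u_a u^a$. The decisive preparatory step is then to recognize that in renormalized variables the bracketed coefficient factors cleanly. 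Using the Hamiltonian constraint \eqref{E:HAMILTONIAN} to eliminate $R$, the decomposition $\SecondFund_{\ j}^i = t^{-1}(\freenewsec_{\ j}^i - \tfrac{1}{3}\ID_{\ j}^i)$ together with the CMC trace condition $\freenewsec_{\ a}^a = 0$, and the substitutions $p = t^{-2}(\adjustednewp + \tfrac{1}{3})$ and $u_a u^a = t^{4/3}\newg_{ab}\newu^a \newu^b$, the algebra collapses into $R - 2p u_a u^a = t^{-2} f$ with $f$ exactly as defined in \eqref{E:ELLIPTICOPERATORJUNKTERM}; consequently $R + t^{-2} - 2p u_a u^a = t^{-2}(1 + f)$.

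To reach \eqref{E:LAPSERESCALEDELLIPTIC}, I would substitute $n - 1 = t^{4/3}\newlap$ and use $g^{ab} = t^{-2/3}(\newg^{-1})^{ab}$ along with $\nabla_a \nabla_b \newlap = \partial_a \partial_b \newlap - \Gamma_{a\ b}^{\ c}\partial_c \newlap$ to recast the lapse equation as
\begin{align*}
  t^{2/3}(\newg^{-1})^{ab}\bigl[\partial_a \partial_b \newlap - \Gamma_{a\ b}^{\ c}\partial_c \newlap\bigr] = t^{-2/3}(1 + f)\newlap + t^{-2} f.
\end{align*}
Multiplying by $t^{2/3}$, subtracting $(1+f)\newlap$ from both sides, and identifying the resulting left-hand side as $\mathcal{L}\newlap + t^{4/3}(\newg^{-1})^{ab}\Gamma_{a\ b}^{\ c}\partial_c \newlap$, I obtain $\mathcal{L}\newlap = t^{-4/3} f + t^{4/3}(\newg^{-1})^{ab}\Gamma_{a\ b}^{\ c}\partial_c \newlap$. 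Expanding $f$ via \eqref{E:ELLIPTICOPERATORJUNKTERM} then isolates the announced leading term $2t^{-4/3}\adjustednewp$, sends the borderline piece $t^{-4/3}\freenewsec_{\ b}^a \freenewsec_{\ a}^b$ into $\leftexp{(Border)}{\mathfrak{N}}$, and absorbs the two quadratic-in-$\newu$ terms together with the connection term $t^{4/3}(\newg^{-1})^{ab}\Gamma_{a\ b}^{\ c}\partial_c \newlap$ into $\leftexp{(Junk)}{\mathfrak{N}}$.

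For the second form \eqref{E:LAPSELOWERDERIVATIVES}, I would invoke the alternate representation \eqref{E:LWITHFTILDEDEF} of the same operator $\mathcal{L}$. Its equivalence with the first representation is itself a consequence of the Hamiltonian constraint: inserting the Ricci-scalar decomposition into the Hamiltonian form of the constraint and solving for $2\adjustednewp$ rewrites the singular leading piece of $f$ as a combination of $-\tfrac{1}{2}t^{4/3}(\newg^{-1})^{ef}\partial_e \upgamma_{f\ a}^{\ a}$, $t^{4/3}(\newg^{-1})^{ab}\partial_a \Gamma_b$, the Ricci error $t^{4/3}\leftexp{(Ricci)}{\triangle}_{\ a}^a$, and quadratic fluid corrections—precisely the combination appearing in $\widetilde{f}$. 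Substituting this replacement into the identity $\mathcal{L}\newlap = t^{-4/3} f + \cdots$ derived above eliminates the singular $2t^{-4/3}\adjustednewp$ at the price of terms that depend on one extra spatial derivative of $\upgamma$ but carry much better powers of $t$; sorting them by their $t$-behavior as $t \downarrow 0$ produces $\leftexp{(Border)}{\widetilde{\mathfrak{N}}}$ and $t^{2/3}\leftexp{(Junk)}{\widetilde{\mathfrak{N}}}$. I do not anticipate any analytic obstacle—the main labor is bookkeeping, namely confirming that each cross-term falls into the correct error class and that all signs and scalar prefactors agree with the explicit definitions of the error terms given in Appendix \ref{A:RESCALEDEQNS}.
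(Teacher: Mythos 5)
Your proposal is correct and uses the same basic ingredients as the paper's proof: specializing to the stiff-fluid equation of state, invoking the CMC normalization, expanding the covariant Laplacian with the Christoffel identities, substituting the renormalized variables, and using the Hamiltonian constraint together with the Ricci decomposition of Lemma~\ref{L:RICCIDECOMP}. The only substantive difference is order of derivation: you obtain \eqref{E:LAPSERESCALEDELLIPTIC} first by eliminating $R$ via the Hamiltonian constraint, and then recover \eqref{E:LAPSELOWERDERIVATIVES} by effectively undoing that substitution (solving the constraint for $2\adjustednewp$ and re-inserting the Ricci decomposition). The paper goes the other way: it first derives \eqref{E:LAPSELOWERDERIVATIVES} directly and self-containedly, using only the Ricci decomposition to rewrite $R$ (no constraint needed), and then obtains \eqref{E:LAPSERESCALEDELLIPTIC} by replacing the two occurrences of $R$ in \eqref{E:LAPSE} with the right-hand side of \eqref{E:RINTERMSOFKPANDU}. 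The paper's order is a little cleaner since it avoids applying the Hamiltonian constraint only to subsequently reverse it, but the two routes are algebraically equivalent and yield identical error terms, so your argument is sound.
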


\hfill $\qed$

\subsection{Evolution equations for the renormalized variables}

\subsubsection{Evolution equation for the renormalized volume form factor}

\begin{lemma} [\textbf{Evolution equation for} $\sqrt{\gbigdet}$] \label{L:RESCALEDVOLFORMEVOLUTION}
The renormalized volume form factor $\sqrt{\gbigdet}$ 
of Def.~\ref{D:RESCALEDVAR}
verifies the following evolution equation:
\begin{align} \label{E:LOGVOLFORMEVOLUTION}
	\partial_t \ln \sqrt{\gbigdet} & = t^{1/3} \newlap.
\end{align}

\end{lemma}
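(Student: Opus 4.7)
The plan is to chain together three standard ingredients: the metric evolution equation \eqref{E:PARTIALTGCMC}, Jacobi's formula for the time derivative of a determinant, and the CMC normalization \eqref{E:CMCCONDITION}, and then convert to the renormalized variables of Def.~\ref{D:RESCALEDVAR}.

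First, I would recall Jacobi's formula in the form
\begin{align*}
\partial_t \ln \sqrt{\gdet} & = \tfrac{1}{2} g^{ab} \partial_t g_{ab}.
\end{align*}
Substituting the metric evolution equation \eqref{E:PARTIALTGCMC}, namely $\partial_t g_{ab} = -2n g_{ac} \SecondFund_{\ b}^c$, and contracting with $g^{ab}$ yields
\begin{align*}
\partial_t \ln \sqrt{\gdet} & = -n \SecondFund_{\ a}^a.
\end{align*}

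Next, I would impose the CMC condition \eqref{E:CMCCONDITION}, $\SecondFund_{\ a}^a = -t^{-1}$, to obtain $\partial_t \ln \sqrt{\gdet} = n/t$. Finally, using the definition $\sqrt{\gbigdet} = t^{-1}\sqrt{\gdet}$, so that $\ln \sqrt{\gbigdet} = -\ln t + \ln \sqrt{\gdet}$, and the renormalization $n - 1 = t^{4/3}\newlap$ from Def.~\ref{D:RESCALEDVAR}, I would compute
\begin{align*}
\partial_t \ln \sqrt{\gbigdet} & = -\tfrac{1}{t} + \tfrac{n}{t} = \tfrac{n-1}{t} = t^{1/3} \newlap,
\end{align*}
which is the claimed identity.

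There is no real obstacle here; the computation is direct once the CMC condition has eliminated the $\SecondFund_{\ a}^a$ term and thereby made the background $t$-dependence of $\sqrt{\gdet}$ explicit. The only point worth flagging is conceptual rather than technical: the identity explains why we introduced $\sqrt{\gbigdet}$ in place of $\sqrt{\gdet}$ in the first place, since its logarithmic derivative is driven purely by the renormalized lapse $\newlap$ (which measures the deviation of $n$ from its FLRW value $1$), with no surviving $\freenewsec$ contribution. This clean structure, with a right-hand side that is small and carries the favorable factor $t^{1/3}$, is what will later allow us to treat $\sqrt{\gbigdet}$ as a kinetic variable in the sense discussed in Sect.~\ref{SS:KINETICTERMSDOMINATE}.
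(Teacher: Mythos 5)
Your proof is correct and follows essentially the same route as the paper's derivation in Appendix~\ref{A:RESCALEDEQNS}: Jacobi's formula, substitution of the metric evolution equation \eqref{E:PARTIALTGCMC}, the CMC normalization, and the change to renormalized variables. The concluding remark on the structural significance of the favorable $t^{1/3}\newlap$ right-hand side is a helpful observation but not part of the proof proper.
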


\hfill $\qed$

\subsubsection{Evolution equations for the renormalized metric variables}

\begin{proposition} [\textbf{The renormalized metric evolution equations}] \label{P:METRICEVOLUTIONREFORMULATED}
The renormalized metric $\newg_{ij}$ and its inverse $(\newg^{-1})^{ij}$ 
of Def.~\ref{D:RESCALEDVAR}
verify the following evolution equations:
\begin{subequations}
\begin{align} \label{E:GEVOLUTION}
	\partial_t \newg_{ij}
	& = - 2t^{-1} \newg_{ia} \freenewsec_{\ j}^a
		+ t^{1/3} \leftexp{(Junk)}{\mathfrak{G}}_{ij},
		\\
 \partial_t (\newg^{-1})^{ij} & = 2 t^{-1} (\newg^{-1})^{ia} \freenewsec_{\ a}^j 
 		+ t^{1/3} \leftexp{(Junk)}{\widetilde{\mathfrak{G}}}^{ij},
		\label{E:GINVERSEEVOLUTION}
\end{align}
\end{subequations}
where the error terms $\leftexp{(Junk)}{\mathfrak{G}}_{ij}$ and $\leftexp{(Junk)}{\widetilde{\mathfrak{G}}}^{ij}$ are 
defined in \eqref{E:METRICEVOLUTIONJUNK}-\eqref{E:INVERSEMETRICEVOLUTIONJUNK}.

Furthermore, the quantities $\upgamma_{e \ i}^{\ b}$ and $\freenewsec_{\ j}^i$
verify the following evolution equations:
\begin{subequations}
\begin{align}
	\partial_t \upgamma_{e \ i}^{\ b} 
	& = - 2 t^{-1} \big[1 + t^{4/3} \newlap\big] \partial_e \freenewsec_{\ i}^b 
		+ \frac{2}{3}t^{-1/3} \dlap_e \ID_{\ i}^b 	
		\label{E:LITTLEGAMMAEVOLUTIONDETGFORM} \\
	& \ \ + t^{-1} \leftexp{(Border)}{\mathfrak{g}}_{e \ i}^{\ b}
		+ t^{1/3} \leftexp{(Junk)}{\mathfrak{g}}_{e \ i}^{\ b}, \notag \\
	\partial_t \freenewsec_{\ j}^i 
	& = - \frac{1}{2} t^{1/3} 
		\big[1 + t^{4/3} \newlap\big] (\newg^{-1})^{ef} \partial_e \upgamma_{f \ j}^{\ i}
			\label{E:RESCALEDKEVOLUTION} \\
	& \ \ + \frac{1}{2} t^{1/3} \big[1 + t^{4/3} \newlap\big] 
		 (\newg^{-1})^{ia} \partial_a \Big( \underbrace{\newg_{jb} (\newg^{-1})^{ef} \upgamma_{e \ f}^{\ b} 
				- \frac{1}{2} \upgamma_{j \ b}^{\ b}}_{\Gamma_j} \Big)  \notag \\
	& \ \  + \frac{1}{2} t^{1/3} \big[1 + t^{4/3} \newlap\big]  
		(\newg^{-1})^{ia}\partial_j \Big( \underbrace{\newg_{ab} (\newg^{-1})^{ef} \upgamma_{e \ f}^{\ b} 
				- \frac{1}{2} \upgamma_{a \ b}^{\ b}}_{\Gamma_a} \Big) \notag \\
	& \ \ - t (\newg^{-1})^{ia} \partial_a \dlap_j 
		+ \frac{1}{3} t^{1/3} \newlap \ID_{\ j}^i
		+ t^{1/3} \leftexp{(Junk)}{\mathfrak{K}}_{\ j}^i, \notag
\end{align}
\end{subequations}
where the error terms $\leftexp{(Border)}{\mathfrak{g}}_{e \ i}^{\ b},$
$\leftexp{(Junk)}{\mathfrak{g}}_{e \ i}^{\ b},$ 
and $\leftexp{(Junk)}{\mathfrak{K}}_{\ j}^i$ are 
defined in \eqref{AE:GAMMAEVOLUTIONERROR}, 
\eqref{AE:JUNKGAMMAEVOLUTIONERROR},
and \eqref{AE:KEVOLUTIONERROR}. Above, 
$\Gamma_j$ denotes a contracted Christoffel symbol of the renormalized metric $\newg_{ij}.$

\end{proposition}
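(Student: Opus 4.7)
The plan is to derive each of the four evolution equations by substituting the rescalings from Definition~\ref{D:RESCALEDVAR} into the corresponding CMC-transported equations \eqref{E:PARTIALTGCMC}--\eqref{E:PARTIALTKCMC} from Section~\ref{S:EEINCMC}, applying the product rule in $t$, and then isolating the principal terms displayed on the right-hand sides. The bookkeeping is dictated by the identities $n = 1 + t^{4/3}\newlap$, $\partial_i n = t^{2/3}\dlap_i$, $\SecondFund_{\ j}^i = t^{-1}\freenewsec_{\ j}^i - \frac{1}{3t}\ID_{\ j}^i$, $p = t^{-2}(\adjustednewp + \frac{1}{3})$, and $u^i = t^{1/3}\newu^i$, each of which produces an explicit power of $t$ that has to be tracked carefully so that every remaining error is either borderline ($t^{-1}$ coefficient in front of quadratic renormalized quantities) or junk ($t^{1/3}$ coefficient).

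For \eqref{E:GEVOLUTION} I would compute $\partial_t \newg_{ij} = -\tfrac{2}{3}t^{-1}\newg_{ij} + t^{-2/3}\partial_t g_{ij}$ and plug in \eqref{E:PARTIALTGCMC}; the $-\tfrac{2}{3}t^{-1}\newg_{ij}$ piece exactly cancels against the trace portion $-\tfrac{1}{3}\ID$ inside $\freenewsec$, leaving the stated principal term $-2t^{-1}\newg_{ia}\freenewsec_{\ j}^a$ plus the junk $t^{1/3}\newlap$ factor that I would bundle into $\leftexp{(Junk)}{\mathfrak{G}}_{ij}$. The inverse-metric evolution \eqref{E:GINVERSEEVOLUTION} then follows immediately by differentiating the identity $\newg_{ab}(\newg^{-1})^{bj} = \delta_a^j$. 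For \eqref{E:LITTLEGAMMAEVOLUTIONDETGFORM} I would differentiate the definition $\upgamma_{e\ i}^{\ b} = (\newg^{-1})^{ab}\partial_e \newg_{ai}$ in $t$, commute $\partial_t$ past $\partial_e$, and substitute the evolutions for $\newg$ and $\newg^{-1}$ just derived; the principal terms on the right-hand side of \eqref{E:LITTLEGAMMAEVOLUTIONDETGFORM} come from $\partial_e$ hitting $-2t^{-1}n\freenewsec$, while the $\tfrac{2}{3}t^{-1/3}\dlap_e \ID_{\ i}^b$ term arises from $\partial_e$ landing on $n = 1 + t^{4/3}\newlap$ against the trace of $\SecondFund$.

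The main obstacle is \eqref{E:RESCALEDKEVOLUTION}. Here I would start from $\partial_t \freenewsec_{\ j}^i = \SecondFund_{\ j}^i + t\,\partial_t \SecondFund_{\ j}^i$ and substitute \eqref{E:PARTIALTKCMC}. The trace relation $n\SecondFund_{\ a}^a \SecondFund_{\ j}^i = -nt^{-1}\SecondFund_{\ j}^i$ cancels the extraneous $\SecondFund_{\ j}^i$ coming from differentiating $t\SecondFund_{\ j}^i$, modulo a residue proportional to $t^{-2/3}\newlap\,\freenewsec$ that gets placed into $\leftexp{(Junk)}{\mathfrak{K}}_{\ j}^i$. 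The delicate step is extracting from $nR_{\ j}^i$ exactly the three displayed principal terms: the reduced wave piece $-\tfrac{1}{2}t^{1/3}[1+t^{4/3}\newlap](\newg^{-1})^{ef}\partial_e \upgamma_{f\ j}^{\ i}$ together with the two contracted-Christoffel-symbol contributions $\tfrac{1}{2}t^{1/3}[1+t^{4/3}\newlap](\newg^{-1})^{ia}(\partial_a \Gamma_j + \partial_j \Gamma_a)$. To do this I would invoke the decomposition $R_{\ j}^i = -\tfrac{1}{2}g^{ia}g^{ef}\partial_e \upgamma_{f\ a}^{\ ?} + \tfrac{1}{2}g^{ia}(\partial_a \Gamma_j + \partial_j \Gamma_a) + (\text{quadratic in } \upgamma)$ promised in Lemma~\ref{L:RICCIDECOMP}, rescale $g^{ij} = t^{-2/3}(\newg^{-1})^{ij}$ to pick up the $t^{1/3}$ prefactor after combining with the overall factor of $t$ from $t\,\partial_t$, and observe that the quadratic-in-$\upgamma$ remainder is of junk size. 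The remaining term $-tg^{ia}\nabla_a\nabla_j n$ produces $-t(\newg^{-1})^{ia}\partial_a \dlap_j$ once the Christoffel contractions of $\nabla\nabla n$ are absorbed into $\leftexp{(Junk)}{\mathfrak{K}}_{\ j}^i$, while the scalar-trace $\tfrac{1}{3}t^{1/3}\newlap\,\ID_{\ j}^i$ emerges from the portion of $\SecondFund_{\ j}^i + t\partial_t\SecondFund_{\ j}^i$ involving $-\tfrac{1}{3t}n\,\ID$; the stiff-fluid contribution $-2tnp\,u^i u_j$ is order $t^{1/3}$ in the renormalized variables and lands entirely in $\leftexp{(Junk)}{\mathfrak{K}}_{\ j}^i$. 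The principal difficulty is thus algebraic: ensuring that every $t^{-1}$ or worse term either matches one of the four listed principal expressions or cancels outright, so that the leftover residue truly decomposes as borderline $t^{-1}$ plus junk $t^{1/3}$.
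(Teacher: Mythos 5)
Your proposal is correct and mirrors the paper's own proof (Appendix C): substitute the rescalings of Definition~\ref{D:RESCALEDVAR} into \eqref{E:PARTIALTGCMC}--\eqref{E:PARTIALTKCMC}, use the identity $\partial_t(\newg^{-1})^{ij} = -(\newg^{-1})^{ia}(\newg^{-1})^{jb}\partial_t\newg_{ab}$ for the inverse, differentiate the definition of $\upgamma$ for the third equation, and for the last equation multiply the $\SecondFund$-evolution by $t$, invoke the CMC trace relation, and substitute the Ricci decomposition of Lemma~\ref{L:RICCIDECOMP}. One small arithmetic slip: the residue from $(1-n)\SecondFund_{\ j}^i$ in the $\freenewsec$ equation scales like $t^{1/3}\newlap\,\freenewsec$ (not $t^{-2/3}\newlap\,\freenewsec$), since $n-1 = t^{4/3}\newlap$ and $\SecondFund \sim t^{-1}$; and the ``trace portion'' you refer to lives in $\SecondFund$, not in $\freenewsec$ (which is trace-free by construction). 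Neither affects where the terms land in the junk/borderline decomposition.
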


\hfill $\qed$



\subsubsection{Evolution equations for the renormalized stiff fluid variables}

\begin{proposition}  [\textbf{The renormalized stiff fluid evolution equations}] \label{P:EULERALT}
In terms of the renormalized variables of Def.~\ref{D:RESCALEDVAR}, the stiff fluid equations 
(i.e., the Euler equations with $\speed = 1$) can be decomposed as follows:
\begin{subequations}
\begin{align}
	\partial_t \adjustednewp 
		& + 2 t^{1/3} \big[1 + t^{4/3} \newlap\big]\big[1 + t^{4/3} \newg_{ab} \newu^a \newu^b\big]^{1/2} 
			\big[\adjustednewp + \frac{1}{3} \big] 
			\partial_c \newu^c \label{E:RESCALEDPEVOLUTION} \\
		& - 2 t^{5/3} \frac{\big[1 + t^{4/3} \newlap\big] \big[\adjustednewp + \frac{1}{3} \big]}{\big[1 + t^{4/3} \newg_{ab} \newu^a 
			\newu^b\big]^{1/2}} \newg_{ef} \newu^e \newu^c \partial_c \newu^f	\notag \\
		& = - \frac{2}{3} t^{1/3} \newlap 
			+ t^{1/3} \leftexp{(Junk)}{\mathfrak{P}}, \notag 
\end{align}
\begin{align}
	\partial_t \newu^j 
		& - t^{1/3} \big[1 + t^{4/3} \newlap\big] \big[1 + t^{4/3} \newg_{ab} \newu^a \newu^b\big]^{1/2} \newu^j \partial_c \newu^c 
			\label{E:RESCALEDUEVOLUTION} \\
		& + t^{5/3} \frac{\big[1 + t^{4/3} \newlap\big]}{\big[1 + t^{4/3} \newg_{ab} \newu^a \newu^b\big]^{1/2}} 
			\newg_{ef} \newu^j \newu^e \newu^c \partial_c \newu^f \notag \\
	& + t^{1/3} \frac{\big[1 + t^{4/3} \newlap\big] \newu^c \partial_c \newu^j}{\big[1 + t^{4/3} \newg_{ab} \newu^a \newu^b\big]^{1/2}} 
		\notag \\
	& + t^{-1} \frac{\big[1 + t^{4/3} \newlap\big] \left\lbrace (\newg^{-1})^{jc} + t^{4/3} \newu^j \newu^c \right\rbrace \partial_c 
			\adjustednewp}{2\big[1 + t^{4/3} \newg_{ab} \newu^a \newu^b\big]^{1/2} 
			\big[\adjustednewp + \frac{1}{3} \big]} \notag \\
	& = - t^{-1/3} (\newg^{-1})^{j a} \dlap_a 
		+ t^{-1} \leftexp{(Border)}{\mathfrak{U}}^j + t^{1/3} \leftexp{(Junk)}{\mathfrak{U}}^j, \notag 
\end{align}
\end{subequations}
where the error terms $\leftexp{(Junk)}{\mathfrak{P}},$ $\leftexp{(Border)}{\mathfrak{U}}^j,$ and $\leftexp{(Junk)}{\mathfrak{U}}^j$ are are defined in 
\eqref{AE:PERROR}, \eqref{AE:UJBORDERLINE}, and \eqref{AE:UJJUNK}.

\end{proposition}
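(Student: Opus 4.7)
The proposition is established by substitution and rearrangement starting from the Euler equations \eqref{E:EULERPCMC}--\eqref{E:EULERUCMC}. I substitute
\[
p = t^{-2}\bigl(\adjustednewp + \tfrac{1}{3}\bigr), \quad u^j = t^{1/3}\newu^j, \quad n = 1 + t^{4/3}\newlap, \quad \SecondFund_{\ j}^i = t^{-1}\freenewsec_{\ j}^i - \tfrac{1}{3}t^{-1}\ID_{\ j}^i,
\]
together with $g_{ij} = t^{2/3}\newg_{ij}$, $\partial_i n = t^{2/3}\dlap_i$, $u_a u^a = t^{4/3}\newg_{ab}\newu^a\newu^b$, and the chain-rule identities
\[
\partial_t p = t^{-2}\partial_t\adjustednewp - 2t^{-3}\bigl(\adjustednewp + \tfrac{1}{3}\bigr), \qquad \partial_t u^j = t^{1/3}\partial_t \newu^j + \tfrac{1}{3}t^{-2/3}\newu^j.
\]
Covariant derivatives of scalars are replaced by partial derivatives, and $\nabla_a u^a$ is expanded as $\partial_a u^a + \Gamma_{a\ b}^{\ a}u^b$ with the Christoffel symbol re-expressed through $\upgamma_{e\ i}^{\ b}$ and $\newg$. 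I then isolate $\partial_t\adjustednewp$ and $\partial_t\newu^j$ on the left by dividing through by $(1+u_au^a)^{1/2}$ and $2p(1+u_au^a)^{1/2}$ respectively, both of which are bounded away from zero in the regime of interest.

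The essential analytic input is a cancellation of $t$-singular background contributions, reflecting the fact that FLRW ($\adjustednewp \equiv 0$, $\newu^j \equiv 0$, $\newlap \equiv 0$) is an exact solution. For the pressure equation, the chain-rule piece $-2t^{-3}(\adjustednewp + \tfrac{1}{3})(1+t^{4/3}|\newu|_\newg^2)^{1/2}$ produced by $\partial_t p$ and the source $-2p(n/t)(1+u_au^a)^{1/2}$ differ only by the factor $1 + t^{4/3}\newlap$, so their difference is of order $t^{-5/3}\newlap(\adjustednewp + \tfrac{1}{3})$; after multiplying by $t^2$ to solve for $\partial_t\adjustednewp$, this yields exactly $-\tfrac{2}{3}t^{1/3}\newlap$ plus higher-order corrections absorbed into $t^{1/3}\leftexp{(Junk)}{\mathfrak{P}}$. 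For the velocity equation, the analogous cancellation is three-way: the chain-rule piece from $\partial_t u^j$ through $2p(1+|u|^2)^{1/2}\partial_t u^j$, the chain-rule piece from $\partial_t p$ through $(1+|u|^2)^{1/2}u^j\partial_t p$, and the trace part of $\SecondFund$ in $4np(1+|u|^2)^{1/2}\SecondFund_{\ b}^j u^b$ contribute coefficients $+\tfrac{2}{9}$, $-\tfrac{2}{3}$, and $-\tfrac{4}{9}$ respectively to the background $t^{-8/3}\newu^j$ singularity (the last from the right-hand side), and these sum to zero once the right-hand-side contribution is transported to the left. What survives from the $\SecondFund$-source is the trace-free part $\freenewsec_{\ b}^j\newu^b$, which still carries a $t^{-1}$ prefactor and is consequently collected into $\leftexp{(Border)}{\mathfrak{U}}^j$ --- accounting for the $t^{-1}$-weight of this borderline error in the statement.

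With the background singularities cancelled, the remaining bookkeeping is direct. The $2pn\nabla_a u^a$ term on the left of \eqref{E:EULERPCMC} becomes the principal divergence term $2t^{1/3}[1+t^{4/3}\newlap][1+t^{4/3}|\newu|_\newg^2]^{1/2}[\adjustednewp + \tfrac{1}{3}]\partial_c\newu^c$, and the $2p(1+|u|^2)^{-1/2}u_b\partial_t u^b$ piece supplies the nonlinear $t^{5/3}$ expression after substitution. For the velocity equation, $nu^a\nabla_a u^j$ gives the two transport-type expressions on the left, $n(g^{ja}+u^ju^a)\nabla_a p$ produces the $t^{-1}$-weighted pressure gradient (using $\nabla_a p = t^{-2}\partial_a\adjustednewp$ and $g^{-1} = t^{-2/3}\newg^{-1}$), and $-2p(1+|u|^2)g^{jb}\nabla_b n$ collapses to $-t^{-1/3}(\newg^{-1})^{ja}\dlap_a$. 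Everything else carries an explicit $t^{1/3}$ prefactor together with at least one small factor, and is absorbed into $t^{1/3}\leftexp{(Junk)}{\mathfrak{P}}$ or $t^{1/3}\leftexp{(Junk)}{\mathfrak{U}}^j$, in agreement with the explicit appendix formulas. The sole delicate step --- and the only genuine obstacle beyond the tedium of the algebra --- is the three-way cancellation for the velocity equation, since misidentifying any of the three coefficients would corrupt the classification of the $\freenewsec\cdot\newu$ piece as borderline-rather-than-principal and propagate into the energy hierarchy of Section \ref{SSS:KEYMONOTONICITYINTRO}.
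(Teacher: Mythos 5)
Your proposal follows the same general strategy as the paper (substitute renormalized variables, track the singular $t$-powers, verify that the FLRW background singularities cancel), and your coefficient bookkeeping for the $t^{-8/3}\newu^j$ cancellation in the velocity equation is correct: the two left-hand contributions ($+\tfrac{2}{9}$ from $2p(1+|u|^2)^{1/2}\partial_t u^j$ and $-\tfrac{2}{3}$ from $(1+|u|^2)^{1/2}u^j\partial_t p$) and the right-hand $-\tfrac{4}{9}$ from the trace part of $4np(1+|u|^2)^{1/2}\SecondFund_{\ b}^j u^b$ do indeed balance.

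However, there is a genuine gap in the step you describe as ``I then isolate $\partial_t\adjustednewp$ and $\partial_t\newu^j$ on the left by dividing through by $(1+u_au^a)^{1/2}$ and $2p(1+u_au^a)^{1/2}$.'' Scalar division does not accomplish this isolation, because the system \eqref{E:EULERPCMC}--\eqref{E:EULERUCMC} couples the time derivatives: \eqref{E:EULERPCMC} contains both $\partial_t p$ and $2p(1+|u|^2)^{-1/2}u_b\partial_t u^b$, while \eqref{E:EULERUCMC} contains both $\partial_t u^j$ and $(1+|u|^2)^{1/2}u^j\partial_t p$. To obtain the stated form, in which only $\partial_t\adjustednewp$ appears in the first renormalized equation and only $\partial_t\newu^j$ in the second, one must first algebraically decouple the time derivatives. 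The paper does this explicitly: it contracts \eqref{AE:EULERUCMC} with $u_j$ and rescales to produce the identity \eqref{AE:EULERCONTRACTEDID} for $(1+u_au^a)^{-1/2}u_b\partial_t u^b$, substitutes that into \eqref{AE:EULERPCMC} to obtain the decoupled pressure equation \eqref{AE:EULERFIRST}, and only then multiplies by $t^2$ and applies the chain rule. Subsequently it uses \eqref{AE:EULERFIRST} in reverse, substituting it for $\partial_t p$ inside \eqref{AE:EULERUCMC}, before multiplying by $t^{-1/3}$ and chain-ruling. Your phrase ``after substitution'' signals awareness that something must be substituted, but neither the order of these substitutions nor the fact that each one injects fresh spatial-derivative and $\SecondFund$-dependent terms into the other equation is addressed. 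Those injected terms must be accounted for in the cancellation check (for instance, the $-\tfrac12(1+|u|^2)^{-1/2}u_bu^b\partial_t\ln p$ and $2n(1+|u|^2)^{-1/2}\SecondFund_{\ f}^e u_eu^f$ pieces of \eqref{AE:EULERCONTRACTEDID} enter the pressure equation after substitution), so the decoupling step is not separable from the singularity bookkeeping you carry out afterward. Filling this gap requires deriving the contraction identity and performing both substitutions, exactly as the paper does in passing from \eqref{AE:EULERPCMC}--\eqref{AE:EULERUCMC} to \eqref{AE:EULERSECOND} and \eqref{AE:EULERUCMCSECOND}.
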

\hfill $\qed$

\subsection{The commuted renormalized equations}
\label{SS:COMMUTEDRENORMALIZEDBRIEF}
To complete our analysis, we must commute the equations of Sect.~\ref{S:RESCALEDVARIABLES} 
with the differential operators $\partial_{\vec{I}};$ 
see the notation defined in Sect.~\ref{SS:COORDINATES}. 
The commuted equations are straightforward to derive but lengthy to state.
Hence, to avoid impeding the flow of the paper, we have relegated 
this material to Appendix \ref{A:COMMUTED}.

\section{Norms, Equations of Variation, and Energy Currents} \label{S:NORMSANDCURRENTS}
In this section, we introduce the $C^M$ norms and Sobolev norms that
play a fundamental role in our analysis of the renormalized solution variables.
Next, we introduce the equations of variation, which are the PDEs verified by the 
$\partial_{\vec{I}}-$differentiated quantities.
We then define the metric and fluid energy currents, which are vectorfields that depend
quadratically on the $\partial_{\vec{I}}-$differentiated quantities. The currents will be used in
Sect.~\ref{S:FUNDAMENATLENERGYINEQUALITIES} via the divergence theorem to derive energy integral inequalities 
for the $\partial_{\vec{I}}-$differentiated quantities. 
Finally, for use in Sect.~\ref{S:FUNDAMENATLENERGYINEQUALITIES}, 
given a solution to the equations of variation, we compute the divergences of the corresponding energy currents.

\subsection{Norms}  \label{SS:NORMS}

We will derive strong $C^M$ estimates for the lower-order derivatives of the solution variables by analyzing 
their components relative to the transported spatial coordinate frame. However, in order to close our energy estimates, 
we will also need to measure the size of tensors by using geometric norms corresponding to the metric $\newg.$ 
We therefore introduce the following definitions.

\begin{definition}  [\textbf{Pointwise norms}] \label{D:POINTWISENORMS}
	Let $T$ be a $\Sigma_t-$tangent tensor with components $T_{b_1 \cdots b_n}^{\ \ \ \ \ \ \ \ a_1 \cdots a_m}$
	relative to our standard atlas (see Def.~\ref{D:STANDARDATLAS}) on $\mathbb{T}^3.$ 
	Then $|T|_{Frame}$ denotes a norm of the \emph{components} of $T$ relative to the standard atlas:
	\begin{subequations}
	\begin{align}
		|T|_{Frame}^2 :=  \sum_{a_1 = 1}^3 \cdots \sum_{a_m = 1}^3 \sum_{b_1 = 1}^3 \sum_{b_n = 1}^3  
			\left|T_{b_1 \cdots b_n}^{\ \ \ \ \ \ \ \ a_1 \cdots a_m} \right|^2.
	\end{align}
	
	$|T|_{\newg}$ denotes the $\newg-$norm of $T,$ where $\newg$ is the renormalized spatial metric:
	\begin{align} \label{E:NEWGNORM}
		|T|_{\newg}^2 :=  
		  \newg_{a_1 a_1'} \cdots \newg_{a_m a_m'}
		  (\newg^{-1})^{b_1 b_1'} \cdots (\newg^{-1})^{b_n b_n'} 
			T_{b_1 \cdots b_n}^{\ \ \ \ \ \ \ \ a_1 \cdots a_m}
			T_{b_1' \cdots b_n'}^{\ \ \ \ \ \ \ \ a_1' \cdots a_m'}.
	\end{align}
	\end{subequations}
	
	In proving Theorem~\ref{T:BIGBANG}, we also use the norm 
	$|\cdot|_g,$ which is defined like \eqref{E:NEWGNORM} but with $g$ in place of $\newg.$
		
	
	
	Similarly, if $\Tfour$ is a spacetime tensor with components
	$\Tfour_{\beta_1 \cdots \beta_n}^{\ \ \ \ \ \ \ \ \ \alpha_1 \cdots \alpha_m},$ then we define the $\gfour$ ``norm'' of $\Tfour$ by
	\begin{align} \label{E:TSPACETIMEGFOURNORM}
		|\Tfour|_{\gfour}^2 :=  
			\gfour_{\alpha_1 \alpha_1'} \cdots \gfour_{\alpha_m \alpha_m'} 
			(\gfour^{-1})^{\beta_1 \beta_1'} \cdots (\gfour^{-1})^{\beta_n \beta_n'}
			\Tfour_{\beta_1 \cdots \beta_n}^{\ \ \ \ \ \ \ \ \ \alpha_1 \cdots \alpha_m} 
			\Tfour_{\beta_1' \cdots \beta_n'}^{\ \ \ \ \ \ \ \ \ \alpha_1' \cdots \alpha_m'}.
	\end{align}
	Note that the quantity \eqref{E:TSPACETIMEGFOURNORM} can be non-positive since $\gfour$ is Lorentzian.
	 
\end{definition}

Our main bootstrap assumptions will concern Sobolev norms of the \emph{components} of the solution variables 
and also $C^M-$type norms of the lower-order derivatives of their \emph{components}. Our derivation of energy estimates
will involve similar norms but with geometric $\newg-$type norms in place of the frame component norms.
We will make use of the following norms.

\begin{definition} [\textbf{$H^M$ and $C^M$ norms}]  \label{D:CMNORMS}
	Let $T$ be a $\Sigma_t-$tangent tensor with components $T_{b_1 \cdots b_n}^{\ \ \ \ \ \ \ \ a_1 \cdots a_m}$
	relative to our standard atlas (see Def.~\ref{D:STANDARDATLAS}) on $\mathbb{T}^3.$ Let $\newg$ be the renormalized spatial metric.
	We define
\begin{subequations}
\begin{align}
	\| T \|_{H_{Frame}^M} = \| T \|_{H_{Frame}^M}(t) & := 
		\sum_{|\vec{I}| \leq M} \left\| \left|\partial_{\vec{I}} T (t,\cdot) \right|_{Frame} \right \|_{L^2}, \\
	\| T \|_{H_{\newg}^M} = \| T \|_{H_{\newg}^M}(t) & := 
		\sum_{|\vec{I}| \leq M} \left\| \left|\partial_{\vec{I}} T (t,\cdot) \right|_{\newg} \right \|_{L^2}, \\
	\| T \|_{C_{Frame}^M} = \| T \|_{C_{Frame}^M}(t) & := 
		\sum_{|\vec{I}| \leq M} \sup_{x \in \mathbb{T}^3} \left|\partial_{\vec{I}} T(t,x) \right|_{Frame}, \\
	\| T \|_{C_{\newg}^M} = \| T \|_{C_{\newg}^M}(t) & := \sum_{|\vec{I}| \leq M} \sup_{x \in \mathbb{T}^3} \left| \partial_{\vec{I}} 
		T(t,x) \right|_{\newg}, 
		\label{E:CGMFRAMENORMDEF}
\end{align}
\end{subequations}
where $\partial_{\vec{I}} T$ is defined to be the tensorfield with components
\begin{align} \label{E:PARTIALIT}
	(\partial_{\vec{I}} T)_{b_1 \cdots b_n}^{\ \ \ \ \ \ \ \ a_1 \cdots a_m} 
	:= \partial_{\vec{I}} (T_{b_1 \cdots b_n}^{\ \ \ \ \ \ \ \ a_1 \cdots a_m}).
\end{align}
\end{definition}

\begin{remark} \label{R:INDICESLOCATION}
Note that $\partial_{\vec{I}}$ does not commute with the lowering and raising of indices of $T$ with
$g$ and $g^{-1}.$ Thus, in \eqref{E:PARTIALIT}, the location of the indices in the tensors $T$ of interest 
is understood to have been established by Def.~\ref{D:RESCALEDVAR}.
\end{remark}

\begin{remark}
	For scalar-valued tensors $T,$ there is no point in writing the subscript on the norms and we simply write, 
	for example, $|T|,$ $\| T \|_{C^M},$ or $\| T \|_{H^M}.$
\end{remark}

\begin{definition}[\textbf{Norms for the solution}] \label{D:NORMS}

The specific norms featured in our near-FLRW bootstrap assumptions are defined as follows:
	\begin{subequations}
\begin{align}
	\highnorm{M}(t) & := 
		\underbrace{\sum_{1 \leq |\vec{I}| \leq M} \left\| \left| \partial_{\vec{I}} \newg \right|_{Frame} \right\|_{L^2}}_{\mbox{absent when $M = 0$}} 
		+ \underbrace{\sum_{1 \leq |\vec{I}| \leq M} \left\| \left| \partial_{\vec{I}} \newg^{-1} \right|_{Frame} \right\|_{L^2}}_{\mbox{absent when $M = 0$}}  
		\label{E:HIGHNORM} \\ 
		& \ \ + \left\| \freenewsec \right\|_{H_{Frame}^M} 
			+ t^{2/3} \| \upgamma  \|_{H_{Frame}^M} 
			\notag  \\
		& \ \ + t^{2/3} \| \newlap  \|_{H^{M-1}}
			+ \| \dlap  \|_{H_{Frame}^{M-2}}
			+ t^{4/3} \| \newlap  \|_{H^M}
			+ t^{2/3} \| \dlap  \|_{H_{Frame}^{M-1}}
			\notag \\
		& \ \ + t^2 \| \newlap \|_{H^{M+1}} 
				+ t^{4/3} \| \dlap  \|_{H_{Frame}^M} 
				+ t^{8/3} \| \newlap \|_{H^{M+2}} 
				+ t^2 \| \dlap \|_{H_{Frame}^{M+1}}
			\notag \\
		& \ \ + \left\| \adjustednewp  \right\|_{H^M}
			+ t^{2/3} \left\| \newu  \right\|_{H_{Frame}^M},
		 \notag \\
	\lowkinnorm{M}(t) & := \left\| \freenewsec \right\|_{C_{Frame}^M}  
		+ \left\| \adjustednewp  \right\|_{C^M}, 
			\label{E:LOWKINNORM} \\
	\lowpotnorm{M}(t) & := \left\| \newg - \Euc \right\|_{C_{Frame}^{M+1}} 
		+ \left\| \newg^{-1} - \Euc^{-1} \right\|_{C_{Frame}^{M+1}} 
		\label{E:LOWPOTNORM} \\
		& + \ \ \| \upgamma \|_{C_{Frame}^M} 
			\notag \\
		& \ \ + \| \newlap  \|_{C_{Frame}^{M-1}} 
			+ t^{-2/3} \| \dlap  \|_{C_{Frame}^{M-2}} 
			 \notag \\
		& \ \ + \left\| \newu  \right\|_{C_{Frame}^M}.
			\notag  
\end{align}
\end{subequations}
We remark that the Euclidean metric $\Euc$ appearing on the right-hand side of \eqref{E:LOWPOTNORM}
was constructed in Sect.~\ref{S:EEINCMC}.
It is understood that Sobolev norms are omitted from the above formulas when their order is negative.

\end{definition}

\subsection{The metric equations of variation and the metric energy currents}

Our fundamental integration by parts identities for the $\partial_{\vec{I}}-$differentiated quantities are computationally involved. To facilitate the presentation, it is convenient to introduce the following notation for the differentiated quantities,
also known as \emph{variations}.
\begin{definition}[\textbf{Variations}]
\label{D:VARIATIONS}
\begin{subequations}
\begin{align}
	\dot{\freenewsec}_{\ j}^i & := \partial_{\vec{I}} \freenewsec_{\ j}^i, 
	&&
	\dot{\upgamma}_{j \ k}^{\ i} := \partial_{\vec{I}} \upgamma_{j \ k}^{\ i}, 
		\label{E:METVARTN} \\
	\dot{\Gamma}_j & := \leftexp{(\vec{I})}{\Gamma_j} = \newg_{jb} (\newg^{-1})^{ef} \partial_{\vec{I}} \upgamma_{e \ f}^{\ b} 
		- \frac{1}{2} \partial_{\vec{I}} \upgamma_{j \ b}^{\ b}, &&
		\\
	\dot{\newlap} & := \partial_{\vec{I}} \newlap, 
	&& \dot{\dlap}_i := \partial_{\vec{I}} \dlap_i, 
		\\
	\dot{\adjustednewp} & :=  \partial_{\vec{I}} \adjustednewp, 
	&& \dot{\newu}^j := \partial_{\vec{I}} \newu^j. \label{E:PANDUVARTN}
\end{align}
\end{subequations}
Note that the notation $\cdot$ is unrelated to time derivatives.
\end{definition}

We also introduce notation for some of the inhomogeneous terms in the commuted equations.

\begin{definition}[\textbf{Shorthand notation for some inhomogeneous terms}]
\label{D:INHOMSHORTHAND}
We use the following notation for the inhomogeneous terms in equations
\eqref{AE:MOMENTUMCONSTRAINTCOMMUTED}, \eqref{AE:RAISEDMOMENTUMCONSTRAINTCOMMUTED}, 
\eqref{AE:METRICGAMMACOMMUTED}, \eqref{AE:SECONDFUNDCOMMUTED},
\eqref{AE:PARTIALTPCOMMUTED}, and \eqref{AE:PARTIALTUJCOMMUTED}:
\begin{subequations}
\begin{align}
	\dot{\mathfrak{M}}_i & := \leftexp{(\vec{I});(Border)}{\mathfrak{M}}_i 
	 	+ t^{4/3} \leftexp{(\vec{I});(Junk)}{\mathfrak{M}}_i, 
	 	\label{E:SHORTHANDDOTMATHFRAKM} \\
	\dot{\widetilde{\mathfrak{M}}}^j & := \leftexp{(\vec{I});(Border)}{\widetilde{\mathfrak{M}}}^j
		+ t^{4/3} \leftexp{(\vec{I});(Junk)}{\widetilde{\mathfrak{M}}}^j, 
			\\
	\dot{\mathfrak{g}}_{j \ k}^{\ i} &:= t^{-1} \leftexp{(\vec{I});(Border)}{\mathfrak{g}_{j \ k}^{\ i}}
		+ t^{1/3} \leftexp{(\vec{I});(Junk)}{\mathfrak{g}_{j \ k}^{\ i}}, \\
	\dot{\mathfrak{K}}_{\ j}^i & := t^{1/3} \leftexp{(\vec{I});(Junk)}{\mathfrak{K}}_{\ j}^i, 
	\label{E:SHORTHANDDOTMATHFRAKK}
		\\
	\dot{\mathfrak{P}} &:= t^{1/3} \leftexp{(\vec{I});(Junk)}{\mathfrak{P}}, \\
	\dot{\mathfrak{U}}^j & :=	t^{-1} \leftexp{(\vec{I});(Border)}{\mathfrak{U}}^j 
	+ t^{1/3} \leftexp{(\vec{I});(Junk)}{\mathfrak{U}}^j.
\end{align}
\end{subequations}
\end{definition}

The next definition captures the essential structure of the commuted constraint equations 
\eqref{AE:MOMENTUMCONSTRAINTCOMMUTED}-\eqref{AE:RAISEDMOMENTUMCONSTRAINTCOMMUTED},
the commuted metric evolution equations \eqref{AE:METRICGAMMACOMMUTED}-\eqref{AE:SECONDFUNDCOMMUTED}, 
and also the CMC condition 
\begin{align}
	\partial_{\vec{I}} \freenewsec_{\ a}^a = 0.	
\end{align}

\begin{definition} [\textbf{The metric equations of variation}] \label{D:METRICEOV}
We define the metric equations of variation to be the following system of equations:
\begin{subequations}
\begin{align}
	\partial_t \dot{\upgamma}_{e \ i}^{\ b} & = - 2 t^{-1} \big[1 + t^{4/3} \newlap\big] \partial_e \dot{\freenewsec}_{\ i}^b 
		+ \frac{2}{3} t^{-1/3} \dot{\dlap}_e \ID_{\ i}^b 
		\label{E:EOVUPGAMMA} \\
	& \ \ + \dot{\mathfrak{g}}_{e \ i}^{\ b},  
		\notag \\
	\partial_t \dot{\freenewsec}_{\ j}^i & = - \frac{1}{2} t^{1/3} \big[1 + t^{4/3} \newlap\big] (\newg^{-1})^{ef} 
		\partial_e \dot{\upgamma}_{f \ j}^{\ i} 
		\label{E:EOVK} \\
	& \ \ + \frac{1}{2} t^{1/3} \big[1 + t^{4/3} \newlap\big] 
		 (\newg^{-1})^{ia} \partial_a \dot{\Gamma}_j \notag \\
	& \ \  + \frac{1}{2} t^{1/3} \big[1 + t^{4/3} \newlap\big]  
		(\newg^{-1})^{ia} \partial_j \dot{\Gamma}_a \notag \\
	& \ \ - t (\newg^{-1})^{ia} \partial_a \dot{\dlap}_j
		+ \frac{1}{3}t^{1/3} A \ID_{\ j}^i \notag \\ 
	& \ \ + \dot{\mathfrak{K}}_{\ j}^i, \notag
\end{align}
\end{subequations}
subject to the constraints
\begin{subequations}
\begin{align}
	\partial_a \dot{\freenewsec}_{\ i}^a & = \frac{2}{3} \newg_{ia}\dot{\newu}^a + \dot{\mathfrak{M}}_i,
		\label{E:EOVM1} \\
	(\newg^{-1})^{ia} \partial_a \dot{\freenewsec}_{\ i}^j & = \frac{2}{3} \dot{\newu}^j 
		+ \dot{\widetilde{\mathfrak{M}}}^j, \label{E:EOVM2}
\end{align}
\end{subequations}
and the CMC condition
\begin{align} \label{E:CMCVARIATION}
	\dot{\freenewsec}_{\ a}^a = 0.
\end{align}
\end{definition}

\begin{remark} \label{E:DANGEROUSTERMCANCELS}
	The precise form of the term $\frac{1}{3}t^{1/3} A \ID_{\ j}^i$ from the right-hand side of	
	equation \eqref{E:EOVK} is not important. Specifically, this term is the next-to-last
	term on the right-hand side of \eqref{AE:SECONDFUNDCOMMUTED}. What matters is that it is proportional to the identity.
	As we discuss at the end of the proof of Lemma~\ref{L:DIVMETRICJ}, this term completely cancels out of our 
	metric energy current divergence identity \eqref{E:DIVMETRICJ}. 
	This cancellation is absolutely essential for the proof of our main stable singularity formation theorem, for otherwise, 
	our fundamental energy integral inequality \eqref{E:FUNDAMENTALENERGYINEQUALITY} 
	would involve a top order quadratic integral that we would have no means of controlling.
\end{remark}

We now introduce our metric energy currents $\dot{\mathbf{J}}_{(Metric)}^{\mu}[\cdot , \cdot].$ 
In Sect.~\ref{S:FUNDAMENATLENERGYINEQUALITIES}, we will apply the divergence theorem to these vectorfields 
in the region in between $\Sigma_t$ and $\Sigma_1$ in order to derive energy identities for the metric variations.
The currents provide a convenient way of bookkeeping during integration by parts.
To the best of our knowledge, it has not previously been noticed that it is possible to derive such energy identities
relative to CMC-transported spatial coordinates; see the discussion near the end of Sect.~\ref{SSS:GLOBALSOLUTIONSINGR}.

\begin{definition} [\textbf{Metric energy current}] \label{D:METRICCURRENT}
To the metric variations $(\dot{\freenewsec}_{\ j}^i, \dot{\upgamma}_{j \ k}^{\ i})_{1 \leq i,j,k \leq 3},$ 
we associate the following spacetime vectorfield, which we refer to as a metric energy current (where $j = 1,2,3$):
\begin{subequations}
	\begin{align}
		\dot{\mathbf{J}}_{(Metric)}^0[(\dot{\freenewsec}, \dot{\upgamma}), (\dot{\freenewsec}, \dot{\upgamma})]  
		& := (\newg^{-1})^{ab} \newg_{ij} \dot{\freenewsec}_{\ a}^i \dot{\freenewsec}_{\ b}^j
			+ \frac{1}{4} t^{4/3} (\newg^{-1})^{ab} 
				(\newg^{-1})^{ef} \newg_{ij} \dot{\upgamma}_{e \ a}^{\ i} \dot{\upgamma}_{f \ b}^{\ j}, 
				\label{E:METRICCURRENT0} \\
		\dot{\mathbf{J}}_{(Metric)}^j[(\dot{\freenewsec}, \dot{\upgamma}), (\dot{\freenewsec}, \dot{\upgamma})]  
		& := t^{1/3} \big[1 + t^{4/3} \newlap\big] (\newg^{-1})^{ab}  
			(\newg^{-1})^{jf} \newg_{ic} \dot{\freenewsec}_{\ a}^i \dot{\upgamma}_{f \ b}^{\ c} 
			+ 2 t (\newg^{-1})^{ia} \dot{\dlap}_a \dot{\freenewsec}_{\ i}^j  
			\label{E:METRICCURRENTJ} \\
		& \ \ - t^{1/3} \big[1 + t^{4/3} \newlap\big] (\newg^{-1})^{ij} \dot{\Gamma}_a \dot{\freenewsec}_{\ i}^a 
			- t^{1/3} \big[1 + t^{4/3} \newlap\big] (\newg^{-1})^{ia} \dot{\Gamma}_a \dot{\freenewsec}_{\ i}^j. \notag
\end{align}	
\end{subequations}

\end{definition}
Note that $\dot{\mathbf{J}}_{(Metric)}^0[\cdot , \cdot]$ can be viewed as a positive definite quadratic form in
$(\dot{\freenewsec}, \dot{\upgamma}).$ This property will result in coercive metric energies for the $\partial_{\vec{I}}-$differentiated 
quantities. The next lemma shows that for solutions to the metric equations of variation,
$\partial_{\mu}\left(\dot{\mathbf{J}}_{(Metric)}^{\mu}[\cdot,\cdot] \right)$ 
can be expressed in terms of quantities that do not depend on the derivatives of the variations $(\dot{\freenewsec}, \dot{\upgamma}).$ This property of 
$\dot{\mathbf{J}}_{(Metric)}^{\mu}[\cdot , \cdot]$ is, of course, an essential ingredient in our derivation of
energy estimates.

\begin{lemma} [\textbf{Differential identity for the metric energy current}] \label{L:DIVMETRICJ}
For a solution $(\dot{\freenewsec}_{\ j}^i, \dot{\upgamma}_{j \ k}^{\ i})_{1 \leq i,j,k \leq 3}$
to \eqref{E:EOVUPGAMMA}-\eqref{E:CMCVARIATION}, we have the following
spacetime coordinate divergence identity:
\begin{align}
	\partial_t \left( \dot{\mathbf{J}}_{(Metric)}^0[(\dot{\freenewsec}, \dot{\upgamma}), (\dot{\freenewsec}, \dot{\upgamma})] \right)
	+ & \partial_j \left( \dot{\mathbf{J}}_{(Metric)}^j[(\dot{\freenewsec}, \dot{\upgamma}), (\dot{\freenewsec}, \dot{\upgamma})] \right)
		\label{E:DIVMETRICJ} \\
	& = \frac{1}{3} t^{1/3} (\newg^{-1})^{ab} (\newg^{-1})^{ef} \newg_{ij}  
		\dot{\upgamma}_{e \ a}^{\ i} \dot{\upgamma}_{f \ b}^{\ j}  
	\notag \\
	& \ \ + \frac{1}{3} t (\newg^{-1})^{ab} \dot{\dlap}_a \dot{\upgamma}_{b \ c}^{\ c} \notag \\
	& \ \ + \frac{4}{3} t \dot{\newu}^a \dot{\dlap}_a 
			\notag \\
	& \ \ - \frac{4}{3} t^{1/3} \dot{\newu}^a \dot{\Gamma}_a \notag \\
	& \ \ + 2 t (\newg^{-1})^{ab} \dot{\mathfrak{M}}_a \dot{\dlap}_b \notag \\
	& \ \ - t^{1/3} \big[1 + t^{4/3} \newlap \big] \dot{\widetilde{\mathfrak{M}}}^a \dot{\Gamma}_a 
			- t^{1/3} \big[1 + t^{4/3} \newlap \big] (\newg^{-1})^{ab} \dot{\mathfrak{M}}_a \dot{\Gamma}_b \notag \\
	& \ \ + \frac{1}{2} t^{4/3} (\newg^{-1})^{ab} \newg_{ij} (\newg^{-1})^{ef} \dot{\mathfrak{g}}_{e \ a}^{\ i} 
		\dot{\upgamma}_{f \ b}^{\ j}
		+ 2 (\newg^{-1})^{ab} \newg_{ij} \dot{\mathfrak{K}}_{\ a}^i \dot{\freenewsec}_{\ b}^j \notag \\
	& \ \ + \triangle_{\dot{\mathbf{J}}_{(Metric);(Border)}[(\dot{\freenewsec}, \dot{\upgamma}), (\dot{\freenewsec}, \dot{\upgamma})]} 
		+ \triangle_{\dot{\mathbf{J}}_{(Metric);(Junk)}[(\dot{\freenewsec}, \dot{\upgamma}), (\dot{\freenewsec}, \dot{\upgamma})]}, \notag
\end{align}
where
\begin{subequations}
\begin{align}
	\triangle_{\dot{\mathbf{J}}_{(Metric);(Border)}[(\dot{\freenewsec}, \dot{\upgamma}), (\dot{\freenewsec}, \dot{\upgamma})]}
	& := 2 t^{-1} \big[1 + t^{4/3} \newlap \big]
		(\newg^{-1})^{ac} \newg_{ij} \freenewsec_{\ c}^b
		\dot{\freenewsec}_{\ a}^i \dot{\freenewsec}_{\ b}^j \label{E:METRICCURRENTERRORBORDERLINE} \\
	& \ \ - 2 t^{-1} \big[1 + t^{4/3} \newlap \big]
			(\newg^{-1})^{ab} \newg_{ic} \freenewsec_{\ j}^c 
			\dot{\freenewsec}_{\ a}^i \dot{\freenewsec}_{\ b}^j \notag \\
	& \ \ + \frac{1}{2} t^{1/3} \big[1 + t^{4/3} \newlap \big] (\newg^{-1})^{ab} (\newg^{-1})^{cf} \newg_{ij}
			\freenewsec_{\ c}^e \dot{\upgamma}_{e \ a}^{\ i} \dot{\upgamma}_{f \ b}^{\ j}   
			\notag \\
	& \ \ + \frac{1}{2} t^{1/3} \big[1 + t^{4/3} \newlap \big] (\newg^{-1})^{cb} (\newg^{-1})^{ef} \newg_{ij} 
			\freenewsec_{\ c}^a \dot{\upgamma}_{e \ a}^{\ i} \dot{\upgamma}_{f \ b}^{\ j} \notag \\
	& \ \ - \frac{1}{2} t^{1/3} \big[1 + t^{4/3} \newlap \big] (\newg^{-1})^{ab} (\newg^{-1})^{ef} \newg_{cj}
				\freenewsec_{\ i}^c \dot{\upgamma}_{e \ a}^{\ i} \dot{\upgamma}_{f \ b}^{\ j}, \notag \\
\triangle_{\dot{\mathbf{J}}_{(Metric);(Junk)}[(\dot{\freenewsec}, \dot{\upgamma}), (\dot{\freenewsec}, \dot{\upgamma})]} 
	& := - \frac{1}{6} t^{5/3} \newlap (\newg^{-1})^{ab} (\newg^{-1})^{ef} \newg_{ij} 
		\dot{\upgamma}_{e \ a}^{\ i} \dot{\upgamma}_{f \ b}^{\ j} \label{E:METRICCURRENTERRORJUNK}  	\\
	& \ \ + t^{1/3} \left[\partial_j\left\lbrace \big[1 + t^{4/3} \newlap \big] (\newg^{-1})^{ab}  
			(\newg^{-1})^{jf} \newg_{ic} \right\rbrace \right] \dot{\freenewsec}_{\ a}^i \dot{\upgamma}_{f \ b}^{\ c}  \notag \\
	& \ \ + 2 t \left[\partial_j (\newg^{-1})^{ia} \right] 
		\dot{\freenewsec}_{\ i}^j \dot{\dlap}_a \notag \\
	& \ \ - t^{1/3} \left[\partial_j \left\lbrace \big[1 + t^{4/3} \newlap \big] (\newg^{-1})^{ij} \right\rbrace
		\right] \dot{\Gamma}_a \dot{\freenewsec}_{\ i}^a \notag \\
	& \ \ - t^{1/3} \left[\partial_j \left\lbrace \big[1 + t^{4/3} \newlap \big] (\newg^{-1})^{ia} \right\rbrace\right] 
			\dot{\Gamma}_a \dot{\freenewsec}_{\ i}^j. \notag
\end{align}
\end{subequations}

\end{lemma}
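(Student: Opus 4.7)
My plan is to prove \eqref{E:DIVMETRICJ} by direct computation: expand $\partial_t \dot{\mathbf{J}}^0_{(Metric)} + \partial_j \dot{\mathbf{J}}^j_{(Metric)}$ via the Leibniz rule, substitute the equations of variation \eqref{E:EOVUPGAMMA}-\eqref{E:EOVK} to eliminate every $\partial_t \dot{\freenewsec}$ and $\partial_t \dot{\upgamma}$, substitute the metric evolution equations \eqref{E:GEVOLUTION}-\eqref{E:GINVERSEEVOLUTION} to eliminate every $\partial_t \newg$ and $\partial_t \newg^{-1}$, and finally use the momentum constraints \eqref{E:EOVM1}-\eqref{E:EOVM2} and the CMC condition \eqref{E:CMCVARIATION} to rewrite the residual spatial divergences of $\dot{\freenewsec}$. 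Everything that does not match a named term on the right-hand side of \eqref{E:DIVMETRICJ} is absorbed into $\triangle_{\dot{\mathbf{J}}_{(Metric);(Border)}}$ or $\triangle_{\dot{\mathbf{J}}_{(Metric);(Junk)}}$.

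The principal structure to verify is the cancellation of top-order spatial derivatives. The coercive integrand $\tfrac{1}{3} t^{1/3} (\newg^{-1})^{ab}(\newg^{-1})^{ef} \newg_{ij} \dot{\upgamma}^{\ i}_{e\ a} \dot{\upgamma}^{\ j}_{f\ b}$ arises solely from $\partial_t$ hitting the explicit $t^{4/3}$ factor in the second summand of $\dot{\mathbf{J}}^0_{(Metric)}$. The four principal spatial-derivative sources in \eqref{E:EOVK}, namely those built from $\partial_e \dot{\upgamma}^{\ i}_{f\ j}$, $\partial_a \dot{\Gamma}_j$, $\partial_j \dot{\Gamma}_a$, and $\partial_a \dot{\dlap}_j$, are contracted against $2(\newg^{-1})^{ab}\newg_{ij} \dot{\freenewsec}^j_{\ b}$ from the first summand of $\dot{\mathbf{J}}^0_{(Metric)}$; after $\partial_j$ in $\dot{\mathbf{J}}^j_{(Metric)}$ falls by Leibniz on the corresponding $\dot{\upgamma}$, $\dot{\Gamma}$, $\dot{\Gamma}$, $\dot{\dlap}$ factors, those terms cancel pairwise and leave only divergences of $\dot{\freenewsec}$ of exactly the shapes $\partial_a \dot{\freenewsec}^a_{\ i}$ and $(\newg^{-1})^{ia}\partial_a \dot{\freenewsec}^j_{\ i}$ addressed by the constraints \eqref{E:EOVM1}-\eqref{E:EOVM2}. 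Substituting those constraints produces the mixed $\dot{\newu}$-terms $\tfrac{4}{3}t \dot{\newu}^a \dot{\dlap}_a$ and $-\tfrac{4}{3}t^{1/3}\dot{\newu}^a \dot{\Gamma}_a$ together with the $\dot{\mathfrak{M}}$- and $\dot{\widetilde{\mathfrak{M}}}$-error terms listed on the right-hand side. Symmetrically, the source $-2 t^{-1}[1+t^{4/3}\newlap] \partial_e \dot{\freenewsec}^b_{\ i}$ in \eqref{E:EOVUPGAMMA}, contracted against $\tfrac{1}{2}t^{4/3}(\newg^{-1})^{ea}(\newg^{-1})^{bf}\newg_{ij}\dot{\upgamma}^{\ j}_{f\ a}$, is killed by the residual $\partial_j$-on-$\dot{\upgamma}$ divergence term, while the source $\tfrac{2}{3}t^{-1/3}\dot{\dlap}_e \ID^b_{\ i}$ produces, after the index contraction, precisely $\tfrac{1}{3}t(\newg^{-1})^{ab}\dot{\dlap}_a \dot{\upgamma}^{\ c}_{b\ c}$.

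The crucial algebraic cancellation, already flagged in Remark \ref{E:DANGEROUSTERMCANCELS}, is that the dangerous source $\tfrac{1}{3}t^{1/3}A\ID^i_{\ j}$ in \eqref{E:EOVK} is pure trace; contracted against $2(\newg^{-1})^{ab}\newg_{ij}\dot{\freenewsec}^j_{\ b}$ it becomes $\tfrac{2}{3}t^{1/3}A\dot{\freenewsec}^a_{\ a}$, which vanishes by the CMC variation condition \eqref{E:CMCVARIATION}. Without this the identity would contain a top-order uncontrollable quadratic term. The remaining substitutions split into three bookkeeping classes: the $\freenewsec$-linear pieces of $\partial_t \newg$ and $\partial_t \newg^{-1}$ acting on the two quadratic forms inside $\dot{\mathbf{J}}^0_{(Metric)}$ assemble into the five summands of \eqref{E:METRICCURRENTERRORBORDERLINE}; the junk pieces $\leftexp{(Junk)}{\mathfrak{G}}$ and $\leftexp{(Junk)}{\widetilde{\mathfrak{G}}}$, the $\tfrac{1}{6}t^{5/3}\newlap$ remainder from differentiating the product $t^{4/3}[1+t^{4/3}\newlap]$, and the $\partial_j$ derivatives falling on the metric/lapse coefficients inside $\dot{\mathbf{J}}^j_{(Metric)}$ assemble into \eqref{E:METRICCURRENTERRORJUNK}; and the inhomogeneities $\dot{\mathfrak{g}}$, $\dot{\mathfrak{K}}$ in the equations of variation contribute the two explicit terms $\tfrac{1}{2}t^{4/3}(\newg^{-1})^{ab}\newg_{ij}(\newg^{-1})^{ef}\dot{\mathfrak{g}}^{\ i}_{e\ a}\dot{\upgamma}^{\ j}_{f\ b} + 2(\newg^{-1})^{ab}\newg_{ij}\dot{\mathfrak{K}}^i_{\ a}\dot{\freenewsec}^j_{\ b}$.

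The main obstacle is not analytic but combinatorial: each of the five quadratic forms inside $\triangle_{\dot{\mathbf{J}}_{(Metric);(Border)}}$ must be obtained from a specific factor of $\newg^{-1}$ or $\newg$ inside $\dot{\mathbf{J}}^0_{(Metric)}$ being time-differentiated, and to recognize the displayed symmetric forms one must freely relabel dummy indices and alternate between \eqref{E:GEVOLUTION} and \eqref{E:GINVERSEEVOLUTION}. Once the bookkeeping is organized this way, the identity \eqref{E:DIVMETRICJ} follows by collection.
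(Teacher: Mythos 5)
Your proposal is correct and takes essentially the same route as the paper's proof: Leibniz-expand $\partial_t \dot{\mathbf{J}}^0_{(Metric)} + \partial_j \dot{\mathbf{J}}^j_{(Metric)}$, substitute the equations of variation and the renormalized metric evolution equations, identify the coercive term as arising from $\partial_t$ on the explicit $t^{4/3}$, verify the pairwise cancellation of the top-order spatial-derivative pieces against the spatial divergence of the current, use the constraint variations to convert the residual divergences of $\dot{\freenewsec}$ into $\dot{\newu}$, $\dot{\mathfrak{M}}$, and $\dot{\widetilde{\mathfrak{M}}}$ terms, and invoke the CMC variation to kill the dangerous trace term. One small bookkeeping slip: the term $-\frac{1}{6} t^{5/3} \newlap (\newg^{-1})^{ab} (\newg^{-1})^{ef} \newg_{ij} \dot{\upgamma}_{e\ a}^{\ i} \dot{\upgamma}_{f\ b}^{\ j}$ does not come from differentiating a product $t^{4/3}\big[1 + t^{4/3}\newlap\big]$ (there is no lapse factor in $\dot{\mathbf{J}}^0_{(Metric)}$); it arises from the pure $\newlap$-trace pieces $\pm\frac{2}{3}\newlap\newg^{\pm 1}$ of $\leftexp{(Junk)}{\mathfrak{G}}$ and $\leftexp{(Junk)}{\widetilde{\mathfrak{G}}}$ acting on the three metric factors in the $\dot{\upgamma}$-quadratic part of $\dot{\mathbf{J}}^0_{(Metric)}$, with the $\newlap\freenewsec$ pieces of those same junk terms combining with the leading $-2t^{-1}\newg\freenewsec$ parts to produce the $\big[1+t^{4/3}\newlap\big]$ factors visible in $\triangle_{\dot{\mathbf{J}}_{(Metric);(Border)}}$. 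This does not affect the validity of your argument.
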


\begin{remark}
	The first two products from the right-hand side of \eqref{E:METRICCURRENTERRORBORDERLINE} cancel when
	$\dot{\freenewsec}_{\ j}^i := \freenewsec_{\ j}^i$ (this claim follows from the symmetry property $\SecondFund_{ij} = \SecondFund_{ji}$), 
	but they do not generally cancel when
	$\dot{\freenewsec}_{\ j}^i := \partial_{\vec{I}} \freenewsec_{\ j}^i.$
\end{remark}

\begin{proof}
	The derivation of \eqref{E:DIVMETRICJ} involves a series of tedious computations. We first
	discuss the case when the coordinate derivatives hit the coefficients
	of the variations. First, we remark that the error term 
	$\triangle_{\dot{\mathbf{J}}_{(Metric);(Junk)}[(\dot{\freenewsec}, \dot{\upgamma}), (\dot{\freenewsec}, \dot{\upgamma})]}$ 
	contains all of the terms that are generated when the spatial derivatives $\partial_j$ hit the coefficients
	of the variations in \eqref{E:METRICCURRENTJ}. 
	
	In contrast, when $\partial_t$ hits the coefficients of the
	variations in \eqref{E:METRICCURRENT0}, many important terms are generated. 
	Specifically, when $\partial_t$ falls on the $t^{4/3}$ factor in \eqref{E:METRICCURRENT0}, 
	this generates the
	$\frac{1}{3} t^{1/3} (\newg^{-1})^{ab} 
	(\newg^{-1})^{ef} \newg_{ij} \dot{\upgamma}_{e \ a}^{\ i} \dot{\upgamma}_{f \ b}^{\ j}$ 
	term on the right-hand side of \eqref{E:DIVMETRICJ}. When $\partial_t$ falls on any of 
	the factors of $\newg$ or $\newg^{-1}$ in \eqref{E:METRICCURRENT0}, we use equations
	\eqref{E:GEVOLUTION} and \eqref{E:GINVERSEEVOLUTION} to substitute for
	$\partial_t \newg$ and $\partial_t \newg^{-1}.$ We place the resulting products involving
	$\freenewsec$ on the right-hand side of \eqref{E:METRICCURRENTERRORBORDERLINE} as part of
	$\triangle_{\dot{\mathbf{J}}_{(Metric);(Border)}[(\dot{\freenewsec}, \dot{\upgamma}), (\dot{\freenewsec}, \dot{\upgamma})]},$
	while we place the remaining terms on the right-hand side of \eqref{E:METRICCURRENTERRORJUNK} 
	as part of $\triangle_{\dot{\mathbf{J}}_{(Metric);(Junk)}[(\dot{\freenewsec}, \dot{\upgamma}), (\dot{\freenewsec}, \dot{\upgamma})]}.$
	
	All remaining terms on the right-hand side of \eqref{E:DIVMETRICJ} are generated when
	the spacetime derivatives $\partial_t$ and $\partial_j$ fall on $(\dot{\freenewsec}, \dot{\upgamma}).$ 
	More precisely, we use the equations of variation 
	\eqref{E:EOVUPGAMMA}-\eqref{E:EOVK}, \eqref{E:EOVM1}-\eqref{E:EOVM2}, and \eqref{E:CMCVARIATION}
	to replace the derivatives of $(\dot{\freenewsec}, \dot{\upgamma})$ with the terms on the right-hand side of 
	\eqref{E:EOVUPGAMMA}-\eqref{E:EOVK}, \eqref{E:EOVM1}-\eqref{E:EOVM2}, and \eqref{E:CMCVARIATION}; 
	we omit the tedious but straightforward
	calculations that correspond to this replacement. However, we do note three important cancellations. First, 
	the term $2 t (\newg^{-1})^{ia} (\partial_j \dot{\dlap}_a) \dot{\freenewsec}_{\ i}^j$ generated by the spatial divergence of 
	$\dot{\mathbf{J}}_{(Metric)}^j$ is canceled by the product arising from the term 
	$- t (\newg^{-1})^{ia} \partial_a \dot{\dlap}_j$ on the right-hand side of \eqref{E:EOVK}; 
	this product appears when $\partial_t$ falls on the $\dot{\freenewsec}$ factors in the term 
	$(\newg^{-1})^{ab} \newg_{ij} \dot{\freenewsec}_{\ a}^i \dot{\freenewsec}_{\ b}^j$ 
	from the right-hand side of \eqref{E:METRICCURRENT0}
	and \eqref{E:EOVK} is used for substitution.
	
	Next, the terms $- t^{1/3} \big[1 + t^{4/3} \newlap\big] (\newg^{-1})^{ij} (\partial_j \dot{\Gamma}_a) \dot{\freenewsec}_{\ 
	i}^a$ and $- t^{1/3} \big[1 + t^{4/3} \newlap\big] (\newg^{-1})^{ia} (\partial_j \dot{\Gamma}_a) \dot{\freenewsec}_{\ i}^j$
	generated by the spatial divergence of $\dot{\mathbf{J}}_{(Metric)}^j$ are canceled by the products arising from the terms
	$\frac{1}{2} t^{1/3} \big[1 + t^{4/3} \newlap\big] (\newg^{-1})^{ia} \partial_a \dot{\Gamma}_j$
	and $\frac{1}{2} t^{1/3} \big[1 + t^{4/3} \newlap\big] (\newg^{-1})^{ia} \partial_j \dot{\Gamma}_a $
	on the right-hand side of \eqref{E:EOVK}; these products appear when $\partial_t$ falls on the $\dot{\freenewsec}$ factors in the term 
	$(\newg^{-1})^{ab} \newg_{ij} \dot{\freenewsec}_{\ a}^i \dot{\freenewsec}_{\ b}^j$ from the right-hand side of \eqref{E:METRICCURRENT0}
	and \eqref{E:EOVK} is used for substitution.
	
	The final cancellation we discuss is the one mentioned in Remark~\ref{E:DANGEROUSTERMCANCELS}. The cancellation is connected
	to the next-to-last term on the right-hand side of \eqref{E:EOVK}. This term enters into the right-hand side of 
	\eqref{E:DIVMETRICJ} when $\partial_t$ falls on the $\dot{\freenewsec}$ factors in the term 
	$(\newg^{-1})^{ab} \newg_{ij} \dot{\freenewsec}_{\ a}^i \dot{\freenewsec}_{\ b}^j$ 
	from the right-hand side of \eqref{E:METRICCURRENT0} and \eqref{E:EOVK} is used for substitution. The
	resulting product is of the form $\frac{2}{3}t^{1/3} A (\newg^{-1})^{ab} \newg_{ij} \ID_{\ a}^i \dot{\freenewsec}_{\ b}^j.$ Thanks to
	equation \eqref{E:CMCVARIATION}, this term vanishes.
	
\end{proof}

\subsection{The fluid equations of variation and the fluid energy currents}

In this section, we extend the discussion of the previous section to apply
to the fluid variables.

The next definition captures the essential structure of the commuted fluid equations
\eqref{AE:PARTIALTPCOMMUTED}-\eqref{AE:PARTIALTUJCOMMUTED}.

\begin{definition} [\textbf{The fluid equations of variation}] \label{D:FLUIDEOV}

We define the fluid equations of variation to be the following system of equations:
\begin{subequations}
\begin{align}
	\partial_t \dot{\adjustednewp} 
		& + 2 t^{1/3} \big[1 + t^{4/3} \newlap\big] \big[1 + t^{4/3} \newg_{ab} \newu^a \newu^b\big]^{1/2} 
		\Big[\adjustednewp + \frac{1}{3} \Big] \partial_c \dot{\newu}^c \label{E:EOVP} \\
		& - 2 t^{5/3} \frac{\big[1 + t^{4/3} \newlap\big] \Big[\adjustednewp + \frac{1}{3} \Big]}{\big[1 + t^{4/3} \newg_{ab} \newu^a 
			\newu^b\big]^{1/2}} \newg_{ef} \newu^e \newu^c \partial_c \dot{\newu}^f	
		 \notag \\
		& = - \frac{2}{3} t^{1/3} \dot{\newlap}
			+  \dot{\mathfrak{P}}, \notag 
\end{align}
\begin{align}
	\partial_t \dot{\newu}^j 
		& - t^{1/3} \big[1 + t^{4/3} \newlap\big] \big[1 + t^{4/3} \newg_{ab} \newu^a \newu^b\big]^{1/2} \newu^j \partial_c \dot{\newu}^c 
			 \label{E:EOVU} \\
		& + t^{5/3} \frac{\big[1 + t^{4/3} \newlap\big]}{\big[1 + t^{4/3} \newg_{ab} \newu^a \newu^b\big]^{1/2}} 
			\newg_{ef} \newu^j \newu^e \newu^c \partial_c \dot{\newu}^f \notag \\
		& + t^{1/3} \frac{\big[1 + t^{4/3} \newlap\big] \newu^c \partial_c \dot{\newu}^j}{\big[1 + t^{4/3} \newg_{ab} \newu^a 
			\newu^b\big]^{1/2}} \notag \\
		& +  t^{-1} \frac{\big[1 + t^{4/3} \newlap\big] \left\lbrace (\newg^{-1})^{jc} + t^{4/3} \newu^j \newu^c \right\rbrace 
			\partial_c \dot{\adjustednewp}}{2\big[1 + t^{4/3} \newg_{ab} \newu^a \newu^b\big]^{1/2} \Big[\adjustednewp + \frac{1}{3} \Big]}  \notag \\
		& = - t^{-1/3}(\newg^{-1})^{ja} \dot{\dlap}_a
			+ \dot{\mathfrak{U}}^j. \notag 
\end{align}
\end{subequations}

\end{definition}

We now introduce our fluid energy currents $\dot{\mathbf{J}}_{(Fluid)}^{\mu}[\cdot , \cdot].$ 
These are the fluid analogs of the metric energy currents from Def.~\ref{D:METRICCURRENT}. 
Roughly speaking, these currents exist because the Euler equations
are hyperbolic and derivable from a Lagrangian. The energy current framework in the context of relativistic
fluid mechanics was first introduced by Christodoulou in \cite{dC2000} and \cite{dC2007}. This framework has
been applied by the second author in various contexts connected to relativistic fluid mechanics; 
see \cites{jS2012,jS2008b,jS2013,jS2008a,jSrS2011}. 

\begin{definition} [\textbf{Fluid energy current}] \label{D:FLUIDCURRENT}
To given fluid variations $(\dot{\adjustednewp},\dot{\newu}^1,\dot{\newu}^2,\dot{\newu}^3),$ we associate the following spacetime vectorfield, which we refer to as a fluid energy current (where $j = 1,2,3$):
	\begin{subequations}
	\begin{align}
		\dot{\mathbf{J}}_{(Fluid)}^0[(\dot{\adjustednewp},\dot{\newu}), (\dot{\adjustednewp},\dot{\newu})] 
			& := \frac{1}{2} \dot{\adjustednewp}^2 
			+ 2 t^{4/3} \frac{\Big[\adjustednewp + \frac{1}{3} \Big]}{\big[1 + t^{4/3} \newg_{ab} \newu^a \newu^b\big]} 
				\newg_{ef} \newu^e \dot{\newu}^f \dot{\adjustednewp} 
				\label{E:FLUIDCURRENT0} \\
			& \ \ + 2 \Big[\adjustednewp + \frac{1}{3} \Big]^2 \left\lbrace t^{4/3} \newg_{ef} \dot{\newu}^e \dot{\newu}^f 
				- \frac{\big[t^{4/3} \newg_{ef} \newu^e \dot{\newu}^f \big]^2}
					{\big[1 + t^{4/3} \newg_{ab} \newu^a \newu^b\big]} \right\rbrace, \notag \\
		\dot{\mathbf{J}}_{(Fluid)}^j[(\dot{\adjustednewp},\dot{\newu}), (\dot{\adjustednewp},\dot{\newu})] 
			& := t^{1/3} \frac{\big[1 + t^{4/3} \newlap\big] \newu^j}{2\big[1 + t^{4/3} \newg_{ab} \newu^a \newu^b\big]^{1/2}} \dot{\adjustednewp}^2 
			+ 2 t^{1/3} \frac{\big[1 + t^{4/3} \newlap\big] \Big[\adjustednewp + \frac{1}{3} \Big]}{\big[1 + t^{4/3} \newg_{ab} \newu^a 
				\newu^b\big]^{1/2}} \dot{\newu}^j \dot{\adjustednewp}  
				\label{E:FLUIDCURRENTJ} \\
			& \ \ + 2 t^{1/3} \frac{\big[1 + t^{4/3} \newlap\big] \Big[\adjustednewp + \frac{1}{3} \Big]^2 \newu^j}
				{\big[1 + t^{4/3} \newg_{ab} \newu^a \newu^b\big]^{1/2}} 
				\left\lbrace t^{4/3} \newg_{ef} \dot{\newu}^e \dot{\newu}^f
					- \frac{\big[t^{4/3} \newg_{ef} \newu^e \dot{\newu}^f\big]^2}
						{\big[1 + t^{4/3} \newg_{ab} \newu^a \newu^b\big]}  \right\rbrace. \notag
	\end{align}
	\end{subequations}
\end{definition}
We note that $\dot{\mathbf{J}}_{(Fluid)}^0[\cdot , \cdot]$ can be viewed as positive definite quadratic form in $(\dot{\adjustednewp},\dot{\newu})$ 
(this is clearly the case whenever $|\newu|_{\newg}$ is sufficiently small, but it is also true when $|\newu|_{\newg}$ is large - see e.g. the discussion in \cite{dC2007}). This property will result in coercive fluid energies (see Lemma~\ref{L:COERCIVITYOFTHEENERGIES}).

The next lemma shows that for solutions to the fluid equations of variation,
the divergence of $\dot{\mathbf{J}}_{(Fluid)}^{\mu}[(\dot{\adjustednewp},\dot{\newu}), (\dot{\adjustednewp},\dot{\newu})]$ can be expressed in terms of
quantities that do not depend on the derivatives of the variations $(\dot{\adjustednewp},\dot{\newu}).$ 
The lemma is the fluid analog of Lemma~\ref{L:DIVMETRICJ}, and equation \eqref{E:DIVFLUIDJ} from the lemma
is analogous to \cite[Equation (1.41)]{dC2007}.

\begin{lemma} [\textbf{Differential identity for the fluid energy current}] \label{L:DIVFLUIDJ}
For a solution $(\dot{\adjustednewp},\dot{\newu}^1,\dot{\newu}^2,\dot{\newu}^3)$ of
\eqref{E:EOVP}-\eqref{E:EOVU}, the spacetime coordinate divergence of 
$\dot{\mathbf{J}}_{(Fluid)}[(\dot{\adjustednewp},\dot{\newu}), (\dot{\adjustednewp},\dot{\newu})]$ 
can be expressed as follows:
\begin{align} \label{E:DIVFLUIDJ}
	\partial_t\left( \dot{\mathbf{J}}_{(Fluid)}^0[(\dot{\adjustednewp},\dot{\newu}), (\dot{\adjustednewp},\dot{\newu})] \right)
	+ & \partial_j \left( \dot{\mathbf{J}}_{(Fluid)}^j[(\dot{\adjustednewp},\dot{\newu}), (\dot{\adjustednewp},\dot{\newu})] \right) 
	\\
	& = - \frac{2}{3} t^{1/3} \dot{\adjustednewp} \dot{\newlap} 
		\notag \\
	& \ \ - 4 t \Big[\adjustednewp + \frac{1}{3} \Big]^2 \dot{\newu}^a \dot{\dlap}_a 
		\notag \\
	& \ \ + \frac{8}{3} t^{1/3} \Big[\adjustednewp + \frac{1}{3} \Big]^2 \newg_{ab} \dot{\newu}^a \dot{\newu}^b  \notag \\
	& \ \ - 4 t^{1/3} \big[1 + t^{4/3} \newlap\big] \Big[\adjustednewp + \frac{1}{3} \Big]^2 \newg_{ia} 
		 \freenewsec_{\ j}^a \dot{\newu}^i \dot{\newu}^j
		 \notag \\
	& \ \ + \dot{\adjustednewp} \dot{\mathfrak{P}} 
		+ 4 t^{4/3} \Big[\adjustednewp + \frac{1}{3} \Big]^2 \newg_{ab} \dot{\newu}^a \dot{\mathfrak{U}}^b 
		\notag \\
	& \ \ + 2 t^{4/3} \frac{\Big[\adjustednewp + \frac{1}{3} \Big]}{\big[1 + t^{4/3} \newg_{ab} \newu^a \newu^b \big]} 
		\dot{\adjustednewp} \newg_{ef} \newu^e \dot{\mathfrak{U}}^f 
		\notag \\
	& \ \ + 2 t^{4/3} \frac{\Big[\adjustednewp + \frac{1}{3} \Big]}
			{\big[1 + t^{4/3} \newg_{ab} \newu^a \newu^b \big]} 
			\newg_{ef} \newu^e \dot{\newu}^f \dot{\mathfrak{P}}
		\notag \\
	& \ \ - 4 t^{8/3} \frac{\Big[\adjustednewp + \frac{1}{3} \Big]^2}
			{\big[1 + t^{4/3} \newg_{ab} \newu^a \newu^b \big]} \newg_{ef} \newu^e 
			\dot{\newu}^f \newg_{ij} \newu^i \dot{\mathfrak{U}}^j \notag \\
	& \ \ + \sum_{l=1}^3 \triangle_{\dot{\mathbf{J}}_{(Fluid);(Junk)_l}[(\dot{\adjustednewp},\dot{\newu}), (\dot{\adjustednewp},\dot{\newu})]}, \notag
\end{align}
where the error terms $\triangle_{\dot{\mathbf{J}}_{(Fluid);(Junk)_l}[(\dot{\adjustednewp},\dot{\newu}), (\dot{\adjustednewp},\dot{\newu})]}$ are
\begin{subequations}
\begin{align}
		\triangle_{\dot{\mathbf{J}}_{(Fluid);(Junk)_1}[(\dot{\adjustednewp},\dot{\newu}), (\dot{\adjustednewp},\dot{\newu})]}
		& := 2 \left\lbrace \partial_t \left(t^{4/3} \frac{\Big[\adjustednewp + \frac{1}{3} \Big]}
			{\big[1 + t^{4/3} \newg_{ab} \newu^a \newu^b\big]} \newg_{ef} \newu^e \right) \right\rbrace  \dot{\newu}^f \dot{\adjustednewp} 
			\label{E:FLUIDCURRENTJUNK1} \\
		& \ \ + 2 t^{4/3} \Big\lbrace \partial_t \Big(\Big[\adjustednewp + \frac{1}{3} \Big]^2 \Big) \Big\rbrace 
			\newg_{ef} \dot{\newu}^e \dot{\newu}^f  \notag \\
		& \ \ - 2 t^{8/3} \Big\lbrace \partial_t \Big(\Big[\adjustednewp + \frac{1}{3} \Big]^2 \Big) \Big\rbrace 
			\left\lbrace\frac{\big[\newg_{ef} \newu^e \dot{\newu}^f\big]^2}{\big[1 + t^{4/3} \newg_{ab} \newu^a \newu^b\big]} 
			 \right\rbrace \notag \\
		& \ \ - 4 t^{4/3} \Big[\adjustednewp + \frac{1}{3} \Big]^2 
			\left\lbrace \frac{\big[\newg_{cd} \newu^c \dot{\newu}^d \big] 
				 \big[\partial_t (t^{4/3} \newg_{ef} \newu^e) \big] \dot{\newu^f}} {\big[1 + t^{4/3} \newg_{ab} \newu^a \newu^b\big]}
				\right\rbrace \notag \\
		& \ \ + 2 t^{8/3} \Big[\adjustednewp + \frac{1}{3} \Big]^2 
			\left\lbrace \frac{\big[\newg_{cd} \newu^c \dot{\newu}^d \big]^2}
			{\big[1 + t^{4/3} \newg_{ab} \newu^a \newu^b \big]^2} \partial_t \big[t^{4/3} \newg_{ef} \newu^e \newu^f \big]
		\right\rbrace, \notag
		\end{align}
		\begin{align}
			\triangle_{\dot{\mathbf{J}}_{(Fluid);(Junk)_2}[(\dot{\adjustednewp},\dot{\newu}), (\dot{\adjustednewp},\dot{\newu})]} 
			& := t^{1/3} \left\lbrace \partial_j \left( \frac{\big[1 + t^{4/3} \newlap\big] \newu^j}
				{2\big[1 + t^{4/3} \newg_{ab} \newu^a \newu^b\big]^{1/2}} \right) \right\rbrace
				\dot{\adjustednewp}^2  \\
			& \ \ + 2 t^{1/3} \left\lbrace \partial_j \left( \frac{\big[1 + t^{4/3} \newlap\big] \Big[\adjustednewp + \frac{1}{3} \Big]}
				{\big[1 + t^{4/3} \newg_{ab} \newu^a \newu^b\big]^{1/2}} \right) \right\rbrace
				\dot{\newu}^j \dot{\adjustednewp} \notag \\
		& \ \ + 2 t^{5/3} \left\lbrace \partial_j \left( \frac{\big[1 + t^{4/3} \newlap\big] \Big[\adjustednewp + \frac{1}{3} \Big]^2 \newu^j}
				{\big[1 + t^{4/3} \newg_{ab} \newu^a \newu^b\big]^{1/2}} \right) \right\rbrace
				\left\lbrace \newg_{ef} \dot{\newu}^e \dot{\newu}^f
					- \frac{t^{4/3} \big[\newg_{ef} \newu^e \dot{\newu}^f\big]^2}
					{\big[1 + t^{4/3} \newg_{ab} \newu^a \newu^b\big]} 
					\right\rbrace \notag \\
		& \ \ + 2 t^{5/3} \frac{\big[1 + t^{4/3} \newlap\big] \Big[\adjustednewp + \frac{1}{3} \Big]^2 \newu^j}
				{\big[1 + t^{4/3} \newg_{ab} \newu^a \newu^b\big]^{1/2}} 
				(\partial_j \newg_{ef}) \dot{\newu^e} \dot{\newu^f} \notag \\
		& \ \ -4 t^3 \frac{\big[1 + t^{4/3} \newlap\big] \Big[\adjustednewp + \frac{1}{3} \Big]^2 \newu^j}
					{\big[1 + t^{4/3} \newg_{ab} \newu^a \newu^b\big]^{1/2}} 
				\left\lbrace \frac{\big[\newg_{cd} \newu^c \dot{\newu}^d\big] 
				 	\big[\partial_j (\newg_{ef} \newu^e) \big] \dot{\newu^f}} {\big[1 + t^{4/3} \newg_{ab} \newu^a \newu^b\big]}
					\right\rbrace \notag \\
		& \ \ + 2 t^{13/3} \frac{\big[1 + t^{4/3} \newlap\big] \Big[\adjustednewp + \frac{1}{3} \Big]^2 \newu^j}
					{\big[1 + t^{4/3} \newg_{ab} \newu^a \newu^b\big]^{1/2}} 
				\left\lbrace \frac{\big[\newg_{cd} \newu^c \dot{\newu}^d \big]^2}
					{\big[1 + t^{4/3} \newg_{ab} \newu^a \newu^b\big]^2} 
					\partial_j \big[\newg_{ef} \newu^e \newu^f \big]
					\right\rbrace, \notag 
\end{align}
\begin{align} \label{E:FLUIDCURRENTJUNK3}
		\triangle_{\dot{\mathbf{J}}_{(Fluid);(Junk)_3}[(\dot{\adjustednewp},\dot{\newu}), (\dot{\adjustednewp},\dot{\newu})]}  
		& := \frac{4}{3} t^{5/3} \newlap \Big[\adjustednewp + \frac{1}{3} \Big]^2 \newg_{ab} \dot{\newu}^a \dot{\newu}^b
			- \frac{4}{3} t^{5/3} \frac{\Big[\adjustednewp + \frac{1}{3} \Big]}
			{\big[1 + t^{4/3} \newg_{ab} \newu^a \newu^b \big]} 
			\newg_{ef} \newu^e \dot{\newu}^f \dot{\newlap} \\
	& \ \ - 2 t \frac{\Big[\adjustednewp + \frac{1}{3} \Big]}{\big[1 + t^{4/3} \newg_{ab} \newu^a \newu^b \big]} \dot{\adjustednewp} 
		\newu^c \dot{\dlap}_c
		+ 4 t^{7/3} \frac{\Big[\adjustednewp + \frac{1}{3} \Big]^2}{\big[1 + t^{4/3} \newg_{ab} \newu^a \newu^b \big]} 
			\newg_{ef} \newu^e \dot{\newu}^f \newu^c \dot{\dlap}_c. \notag 
\end{align}

\end{subequations}

\end{lemma}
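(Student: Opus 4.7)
The plan is to mimic the strategy employed in the proof of Lemma~\ref{L:DIVMETRICJ}: compute $\partial_t \dot{\mathbf{J}}_{(Fluid)}^0 + \partial_j \dot{\mathbf{J}}_{(Fluid)}^j$ directly from the definitions \eqref{E:FLUIDCURRENT0}--\eqref{E:FLUIDCURRENTJ} via the Leibniz rule, then bookkeep the terms according to whether a given derivative falls on a variation or on a background coefficient. When a derivative lands on a variation, the equations of variation \eqref{E:EOVP}--\eqref{E:EOVU} are invoked to trade the derivative for the right-hand side of those equations. When a derivative lands on a coefficient, I either keep the resulting product as an explicit term on the right-hand side of \eqref{E:DIVFLUIDJ} or absorb it into one of the junk remainders $\triangle_{\dot{\mathbf{J}}_{(Fluid);(Junk)_l}}$.

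The explicit terms on the right-hand side of \eqref{E:DIVFLUIDJ} arise from predictable sources. The crucial lapse term $-\tfrac{2}{3}t^{1/3}\dot{\adjustednewp}\dot{\newlap}$ comes from multiplying the inhomogeneity $-\tfrac{2}{3}t^{1/3}\dot{\newlap}$ in \eqref{E:EOVP} against $\dot{\adjustednewp}$ through the contribution $\dot{\adjustednewp}\,\partial_t\dot{\adjustednewp}$ in $\partial_t\dot{\mathbf{J}}_{(Fluid)}^0$; in the same way the inhomogeneity $-t^{-1/3}(\newg^{-1})^{ja}\dot{\dlap}_a$ of \eqref{E:EOVU} generates the term $-4t\big[\adjustednewp+\tfrac{1}{3}\big]^2 \dot{\newu}^a \dot{\dlap}_a$ after contraction with the coefficient of $\dot{\newu}^j$ in $\dot{\mathbf{J}}_{(Fluid)}^0$. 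The positive quadratic $\tfrac{8}{3}t^{1/3}[\adjustednewp+\tfrac{1}{3}]^2 \newg_{ab}\dot{\newu}^a\dot{\newu}^b$ is generated when $\partial_t$ strikes the explicit factor $t^{4/3}$ in $2t^{4/3}[\adjustednewp+\tfrac{1}{3}]^2 \newg_{ef}\dot{\newu}^e \dot{\newu}^f$; the term $-4t^{1/3}\big[1+t^{4/3}\newlap\big]\big[\adjustednewp+\tfrac{1}{3}\big]^2 \newg_{ia}\freenewsec_{\ j}^{a}\dot{\newu}^i\dot{\newu}^j$ is produced when $\partial_t$ hits the $\newg_{ef}$ factor in the same product and \eqref{E:GEVOLUTION} is used to substitute for $\partial_t \newg_{ef}$, keeping only its principal $\freenewsec$-piece. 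The $\dot{\mathfrak{P}}$- and $\dot{\mathfrak{U}}^j$-dependent terms on the right-hand side of \eqref{E:DIVFLUIDJ} arise mechanically from the inhomogeneities in \eqref{E:EOVP}--\eqref{E:EOVU}, paired with the coefficients of $\dot{\adjustednewp}$ and $\dot{\newu}^j$ in $\dot{\mathbf{J}}_{(Fluid)}^0.$ Everything else is swept into $\triangle_{\dot{\mathbf{J}}_{(Fluid);(Junk)_1}}$--$\triangle_{\dot{\mathbf{J}}_{(Fluid);(Junk)_3}}$: the first collects the products obtained when $\partial_t$ strikes the various coefficients multiplying $\dot{\adjustednewp}^2$, $\dot{\adjustednewp}\dot{\newu}^f$, and $\dot{\newu}^e\dot{\newu}^f$ in $\dot{\mathbf{J}}_{(Fluid)}^0$; the second collects the analogous products when $\partial_j$ strikes the coefficients of $\dot{\mathbf{J}}_{(Fluid)}^j$; the third collects the lower-order cross terms generated when the expansions of the factors $\big[1+t^{4/3}\newlap\big]$ inside the equations of variation are combined with other coefficients.

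The key structural feature -- and the reason the identity closes without top-order remainders on variations -- is that the top-order principal terms $\dot{\adjustednewp}\partial_c \dot{\newu}^c$, $\dot{\newu}^j \partial_j \dot{\adjustednewp}$, $\dot{\adjustednewp}\partial_c \dot{\newu}^c$ and $\newu^e\newu^c\dot{\adjustednewp}\partial_c\dot{\newu}^f$ produced from $\partial_t \dot{\mathbf{J}}_{(Fluid)}^0$ via \eqref{E:EOVP}--\eqref{E:EOVU} cancel exactly against the corresponding terms coming from $\partial_j\dot{\mathbf{J}}_{(Fluid)}^j$ in which $\partial_j$ strikes a variation. This cancellation is the fluid analog of the three cancellations flagged in the proof of Lemma~\ref{L:DIVMETRICJ}, and it is precisely what selected the particular form of $\dot{\mathbf{J}}_{(Fluid)}^{\mu}$ in Def.~\ref{D:FLUIDCURRENT} (i.e., the current acts as a symmetrizer for the symmetric hyperbolic structure carried by the Euler equations expressed in renormalized variables, in the spirit of \cite{dC2000,dC2007}).

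The main obstacle is purely algebraic bookkeeping: the ubiquitous factors $\big[1+t^{4/3}\newg_{ab}\newu^a\newu^b\big]^{\pm 1/2}$ and $\big[\adjustednewp+\tfrac{1}{3}\big]^k$ appear throughout both the current and the equations of variation, and one must carefully match terms across the many Leibniz expansions to verify both the top-order cancellations and the precise matching of constants (e.g., the coefficient $\tfrac{8}{3}$ and the coefficient $-4$ in the explicit right-hand side terms). I expect to organize the computation by grouping terms according to their degree of homogeneity in $\dot{\adjustednewp}$ and $\dot{\newu}$ (pure $\dot{\adjustednewp}^2$, mixed $\dot{\adjustednewp}\dot{\newu}$, and pure $\dot{\newu}\dot{\newu}$) and verifying the identity within each group separately, which makes the otherwise lengthy calculation tractable.
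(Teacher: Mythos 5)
Your proposal is correct and follows essentially the same path as the paper's proof: apply the Leibniz rule, use the equations of variation \eqref{E:EOVP}--\eqref{E:EOVU} when a derivative falls on a variation, and sort the coefficient-derivative terms into the explicit right-hand side and the three junk remainders, with the top-order principal-symbol cancellation implicit in the fact that the current was built as a symmetrizer. The only slight imprecision is your description of $\triangle_{\dot{\mathbf{J}}_{(Fluid);(Junk)_3}}$: its entries are not ``expansions of $\big[1+t^{4/3}\newlap\big]$'' but rather the extra $\newg_{ef}$-piece from substituting \eqref{E:GEVOLUTION} for $\partial_t\newg_{ef}$ together with the cross terms produced when the inhomogeneities $-\tfrac{2}{3}t^{1/3}\dot{\newlap}$ and $-t^{-1/3}(\newg^{-1})^{ja}\dot{\dlap}_a$ pair with the mixed ($\newu$-weighted) coefficients of $\dot{\mathbf{J}}_{(Fluid)}^0$ -- a bookkeeping detail that does not change the argument.
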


\begin{proof}
	The error term $\triangle_{\dot{\mathbf{J}}_{(Fluid);(Junk)_1}[(\dot{\adjustednewp},\dot{\newu}), (\dot{\adjustednewp},\dot{\newu})]}$ 
	contains almost all of the
	terms that are generated when $\partial_t$ hits the coefficients of the variations in \eqref{E:FLUIDCURRENT0}. Three
	exceptional terms of this type are singled out and are not included in 
	$\triangle_{\dot{\mathbf{J}}_{(Fluid);(Junk)_1}[(\dot{\adjustednewp},\dot{\newu}), (\dot{\adjustednewp},\dot{\newu})]}.$ The first exceptional term 
	arises when $\partial_t$ falls on the $t^{4/3}$ factor in the product
	$2 \Big[\adjustednewp + \frac{1}{3} \Big]^2 t^{4/3} \newg_{ef} \dot{\newu}^e \dot{\newu}^f$ from \eqref{E:FLUIDCURRENT0}. This generates the 
	$\frac{8}{3} t^{1/3} \Big[\adjustednewp + \frac{1}{3} \Big]^2 \newg_{ab} \dot{\newu}^a \dot{\newu}^b$ term on the right-hand side of 
	\eqref{E:DIVFLUIDJ}. The second and third exceptional terms arise when
	$\partial_t$ falls on the $\newg_{ef}$ factor in the product 
	$2 \Big[\adjustednewp + \frac{1}{3} \Big]^2 t^{4/3} \newg_{ef} \dot{\newu}^e \dot{\newu}^f$ 
	from \eqref{E:FLUIDCURRENT0}. We use equation \eqref{E:GEVOLUTION} to substitute for $\partial_t \newg_{ef}$
	and place one of the resulting two terms, namely 
	$- 4 t^{1/3} \big[1 + t^{4/3} \newlap\big] \Big[\adjustednewp + \frac{1}{3} \Big]^2 
	\newg_{ia} \freenewsec_{\ j}^a  \dot{\newu}^i 
	\dot{\newu}^j,$ on the right-hand side of \eqref{E:DIVFLUIDJ}. The other resulting term 
	$\frac{4}{3} t^{5/3} \newlap \Big[\adjustednewp + \frac{1}{3} \Big]^2 
	\newg_{ab} \dot{\newu}^a \dot{\newu}^b$ is placed on the right-hand side of \eqref{E:FLUIDCURRENTJUNK3} as
	part of $\triangle_{\dot{\mathbf{J}}_{(Fluid);(Junk)_3}[(\dot{\adjustednewp},\dot{\newu}), (\dot{\adjustednewp},\dot{\newu})]}.$
	
	The error term $\triangle_{\dot{\mathbf{J}}_{(Fluid);(Junk)_2}[(\dot{\adjustednewp},\dot{\newu}), (\dot{\adjustednewp},\dot{\newu})]}$ contains 
	precisely the terms that are generated when $\partial_j$ hits the coefficients of the variations in \eqref{E:FLUIDCURRENTJ}.
	
	The remaining terms on the right-hand sides of \eqref{E:DIVFLUIDJ} and \eqref{E:FLUIDCURRENTJUNK3} are generated when the 
	spacetime derivatives $\partial_t$ and $\partial_j$ fall on $(\dot{\adjustednewp},\dot{\newu}).$
	 More precisely, we use the equations of variation 
	\eqref{E:EOVP}-\eqref{E:EOVU} to replace the derivatives of
	$(\dot{\adjustednewp},\dot{\newu})$ with the terms on the right-hand side of \eqref{E:EOVP}-\eqref{E:EOVU}; we omit the tedious 
	calculations that correspond to this replacement.
	
\end{proof}

\section{Bootstrap Assumptions, Energy Definitions, and Energy Coerciveness} \label{S:BOOTSTRAPANDENERGYDEF}

In this section, we state our bootstrap assumptions for the solution norms. We also define 
the metric and fluid energies and provide a simple lemma that reveals
their coerciveness properties.

\subsection{Bootstrap assumptions}

Recall that our solution norms are defined in \eqref{E:HIGHNORM}-\eqref{E:LOWPOTNORM}.
Our proof of stable singularity formation 
is based on the following bootstrap assumptions,
which we assume on a time interval $t \in (T,1]:$
\begin{subequations}
\begin{align} 
	\highnorm{N}(t) & \leq \epsilon t^{-\upsigma}, && t \in (T,1], 
		\label{E:HIGHBOOT} \\
	\lowkinnorm{N-3}(t) & \leq 1, && t \in (T,1], 
		\label{E:KINBOOT} \\
	\lowpotnorm{N-4}(t) & \leq t^{-\upsigma}, && t \in (T,1]. \label{E:POTBOOT}
\end{align}
\end{subequations}
Above, $\epsilon$ and $\upsigma$ are small positive constants whose smallness will
be adjusted throughout the course of our analysis.

\subsection{Definitions of the energies}
The main idea of our proof of stable singularity formation is to derive \emph{strict} 
improvements of the bootstrap assumptions \eqref{E:HIGHBOOT}-\eqref{E:POTBOOT}
under near-FLRW assumptions on the data (given at $t=1$). In order to derive these improvements, we will apply integration by parts along the hypersurfaces $\Sigma_t$ to the $\partial_{\vec{I}}-$commuted equations, which
are specific instances of the equations of variation. Equivalently, we will apply the divergence theorem using the energy
currents introduced in the previous section. The energies are the coercive geometric quantities that will
naturally emerge from the integration by parts identities. In this section, we define the energies. In Section
\ref{S:COMPARISON}, we will connect the energies to the norms $\highnorm{M}(t)$ and 
the quantities
$\sum_{1 \leq |\vec{I}| \leq M} \| | \partial_{\vec{I}} \newg |_{\newg} \|_{L^2}^2
	+ \| | \partial_{\vec{I}} \newg^{-1} |_{\newg} \|_{L^2}^2.$

\begin{definition}[\textbf{Metric and fluid energies}] \label{D:METRICANDFLUIDENERGIES}
Let $\Big(\freenewsec_{\ j}^i, \upgamma_{j \ k}^{\ i} \Big)_{1 \leq i,j,k \leq 3}$ be the array of renormalized metric variables, and let 
$\Big(\adjustednewp, \newu^i \Big)_{1 \leq i \leq 3}$ be the array of renormalized fluid variables. 
We define the metric energies $\metricenergy{M}(t) \geq 0$ and the fluid energies $\fluidenergy{M}(t) \geq 0$ by
\begin{subequations}
\begin{align}
		\metricenergy{M}^2(t) 
		&:= \sum_{|\vec{I}| \leq M}\int_{\Sigma_{\tau}} 
		\dot{\mathbf{J}}_{(Metric)}^0[\partial_{\vec{I}} (\freenewsec, \upgamma), \partial_{\vec{I}} (\freenewsec, \upgamma)]  \, dx, 
		\label{E:METRICENERGY} \\
	\fluidenergy{M}^2(t) 
		&:= \sum_{|\vec{I}| \leq M}\int_{\Sigma_{\tau}} 
		\dot{\mathbf{J}}_{(Fluid)}^0[\partial_{\vec{I}} (\adjustednewp, \newu), \partial_{\vec{I}} (\adjustednewp, \newu)]  \, dx,
		\label{E:FLUIDENERGY}
\end{align}
where $\dot{\mathbf{J}}_{(Metric)}^0[\partial_{\vec{I}} (\freenewsec, \upgamma), \partial_{\vec{I}} (\freenewsec, \upgamma)]$ and
$\dot{\mathbf{J}}_{(Fluid)}^0[\partial_{\vec{I}} (\adjustednewp, \newu), \partial_{\vec{I}} (\adjustednewp, \newu)]$ are defined in
\eqref{E:METRICCURRENT0} and \eqref{E:FLUIDCURRENT0}.
\end{subequations}

\end{definition}

\begin{remark}	
	Note that the energies $\metricenergy{M}^2(t)$ do not directly control the quantities
	 $\sum_{1 \leq |\vec{I}| \leq M} \| | \partial_{\vec{I}} \newg |_{\newg} \|_{L^2}^2 
	 + \| | \partial_{\vec{I}} \newg^{-1} |_{\newg} \|_{L^2}^2.$ We will derive separate estimates
	 to control these latter quantities (see Prop.~\ref{P:SOBFORG}).
\end{remark}

\subsection{Coerciveness of the energies}

For the background FLRW solution, we have $\metricenergy{M}(t) \equiv 0$
and $\fluidenergy{M}(t) \equiv 0.$ In the next lemma, we explicitly quantify the coercive nature of the energies in terms of the
$\| \cdot \|_{H_{\newg}^M}$ norms of the solution variables (which are defined in Def.~\ref{D:CMNORMS}).
\begin{lemma} [\textbf{Coerciveness of the metric and fluid energies}] \label{L:COERCIVITYOFTHEENERGIES}
Assume that
\begin{align}
	\left\| \adjustednewp \right\|_{C^0} & \leq \epsilon, 
	&& 
	t^{2/3} \left\| \newu \right\|_{C_{\newg}^0} \leq \epsilon.
\end{align}

There exists a constant $\epsilon_0 > 0$ such that if $\epsilon \leq \epsilon_0,$ 
then the following coerciveness estimates hold:
\begin{subequations}
\begin{align}
	\metricenergy{M} & \approx \left\| \freenewsec \right\|_{H_{\newg}^M}
		+ t^{2/3} \| \upgamma \|_{H_{\newg}^M}, \\
	\fluidenergy{M} & \approx \left\| \adjustednewp \right\|_{H^M} 
		+ t^{2/3} \| \newu \|_{H_{\newg}^M}.
\end{align}
\end{subequations}

\end{lemma}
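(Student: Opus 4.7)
The plan is to observe that each of the two energies is, up to Cauchy–Schwarz-type manipulations, a direct pointwise quadratic form in the variations, and then to integrate over $\Sigma_t$ and sum over multi-indices.

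For the metric energy this is essentially immediate. By the definition \eqref{E:METRICCURRENT0}, for any variation $(\dot{\freenewsec},\dot{\upgamma})$ the integrand is
\begin{align*}
	\dot{\mathbf{J}}_{(Metric)}^0[(\dot{\freenewsec},\dot{\upgamma}),(\dot{\freenewsec},\dot{\upgamma})]
	= |\dot{\freenewsec}|_{\newg}^2 + \tfrac{1}{4} t^{4/3} |\dot{\upgamma}|_{\newg}^2,
\end{align*}
where I have used the definition \eqref{E:NEWGNORM} of $|\cdot|_{\newg}$. Thus I would set $\dot{\freenewsec} = \partial_{\vec{I}} \freenewsec$, $\dot{\upgamma} = \partial_{\vec{I}} \upgamma$, integrate over $\Sigma_t$, sum over $|\vec{I}|\leq M$, and invoke the elementary equivalence $a^2 + b^2 \approx (a+b)^2$ to conclude the first estimate. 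No smallness is needed for this step.

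For the fluid energy, first I would observe that under the hypotheses $\|\adjustednewp\|_{C^0}\leq \epsilon$ and $t^{2/3}\|\newu\|_{C^0_{\newg}} \leq \epsilon$, the scalar coefficients appearing in \eqref{E:FLUIDCURRENT0} are almost constant: $[\adjustednewp + 1/3]^2 \approx 1/9$ and $[1 + t^{4/3}\newg_{ab}\newu^a\newu^b] \approx 1$, the errors being $O(\epsilon)$. Next, I would handle the ``null'' combination
\begin{align*}
	t^{4/3}\newg_{ef}\dot{\newu}^e \dot{\newu}^f - \frac{[t^{4/3}\newg_{ef}\newu^e \dot{\newu}^f]^2}{[1 + t^{4/3}\newg_{ab}\newu^a\newu^b]}
\end{align*}
by applying Cauchy–Schwarz to the second term, which bounds it by $\frac{t^{4/3}|\newu|_{\newg}^2}{1+t^{4/3}|\newu|_{\newg}^2} \cdot t^{4/3}|\dot{\newu}|_{\newg}^2 \leq C\epsilon^2 \cdot t^{4/3}|\dot{\newu}|_{\newg}^2$; hence the entire bracket is bounded from below by $(1 - C\epsilon^2) t^{4/3} |\dot{\newu}|_{\newg}^2$. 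The cross term $2 t^{4/3} \frac{[\adjustednewp + 1/3]}{[1 + t^{4/3}\newg_{ab}\newu^a\newu^b]} \newg_{ef}\newu^e \dot{\newu}^f \dot{\adjustednewp}$ is bounded in absolute value by $C \epsilon t^{2/3} |\dot{\newu}|_{\newg} |\dot{\adjustednewp}| \leq C\epsilon(\dot{\adjustednewp}^2 + t^{4/3}|\dot{\newu}|_{\newg}^2)$ using again $t^{2/3}|\newu|_{\newg}\leq \epsilon$. Combining these pointwise bounds, for $\epsilon\leq \epsilon_0$ small enough one obtains
\begin{align*}
	\dot{\mathbf{J}}_{(Fluid)}^0[(\dot{\adjustednewp},\dot{\newu}),(\dot{\adjustednewp},\dot{\newu})] \approx \dot{\adjustednewp}^2 + t^{4/3} |\dot{\newu}|_{\newg}^2.
\end{align*}
Setting $\dot{\adjustednewp} = \partial_{\vec{I}}\adjustednewp$, $\dot{\newu} = \partial_{\vec{I}}\newu$, integrating over $\Sigma_t$, and summing over $|\vec{I}|\leq M$ yields the second equivalence.

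The only mildly subtle point is the handling of the null structure in the fluid current; everything else is a straightforward Cauchy–Schwarz absorption made possible by the smallness of $\|\adjustednewp\|_{C^0}$ and $t^{2/3}\|\newu\|_{C^0_{\newg}}$. I do not expect any serious obstacle.
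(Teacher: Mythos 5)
Your proof is correct and it is essentially the paper's approach, simply written out in detail: the paper's own proof is a one-line statement that the lemma is a "straightforward consequence" of the definitions of the energies, the expressions \eqref{E:METRICCURRENT0} and \eqref{E:FLUIDCURRENT0}, and the $\newg$-Cauchy-Schwarz inequality, and you have carried out precisely those Cauchy–Schwarz absorptions. One trivial note: the hypothesis you invoke in the fluid part should read $t^{2/3}\|\newu\|_{C^0_{\newg}}\leq\epsilon$ rather than $t^{2/3}\|\newu\|_{C^0}\leq\epsilon$, but this is only a notational slip and does not affect the argument.
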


\begin{remark}
	In Prop.~\ref{P:STRONGPOINTWISE},
	we show that the hypotheses of Lemma~\ref{L:COERCIVITYOFTHEENERGIES} hold for near-FLRW solutions.
\end{remark}

\begin{proof}
	Lemma~\ref{L:COERCIVITYOFTHEENERGIES} is a straightforward consequence of Def.~\ref{D:METRICANDFLUIDENERGIES},
	the expressions \eqref{E:METRICCURRENT0} and \eqref{E:FLUIDCURRENT0} for
	$\dot{\mathbf{J}}_{(Metric)}^0[\partial_{\vec{I}} (\freenewsec, \upgamma), \partial_{\vec{I}} (\freenewsec, \upgamma)] $ and
	$\dot{\mathbf{J}}_{(Fluid)}^0[\partial_{\vec{I}} (\adjustednewp, \newu), \partial_{\vec{I}} (\adjustednewp, \newu)],$ and the 
	$\newg-$Cauchy-Schwarz inequality.
	
\end{proof}

\section{Strong Estimates for the lower-order Derivatives} \label{S:STRONGESTIMATES}
In this section, we use the bootstrap assumptions of Sect.~\ref{S:BOOTSTRAPANDENERGYDEF} to derive strong estimates for the
$C_{\newg}^M$ norms of the lower-order derivatives of the renormalized solution variables. The strong estimates
provide additional information beyond that provided by the bootstrap assumptions and Sobolev embedding.
This additional information plays an essential role in the proof of our main stable singularity formation theorem.
Our derivation of the strong estimates relies on the special structure of the equations verified by the renormalized variables.
In particular, we exhibit and exploit an effective partial dynamical decoupling of various renormalized solution variables.
As an intermediate step, we derive estimates for the \emph{components} of the renormalized solution variables relative to the transported
spatial coordinate frame. Interestingly, in some cases, we are able to prove better estimates for the
$C_{Frame}^M$ norms than we are able to prove for the more invariant $C_{\newg}^M$ norms
[compare the estimates \eqref{E:SECONDFUNDUPGRADEPOINTWISE} and \eqref{E:SECONDFUNDUPGRADEPOINTWISEGNORM} in the cases $1 \leq M \leq N - 3$].
Roughly speaking, we derive the strong estimates by treating the evolution equations verified by the components of the 
lower-order derivatives as ODEs with source terms that have a favorable $t-$weighted structure. The sources depend on higher-order derivatives
and hence our estimates incur some loss in derivatives. We collect together these estimates in Prop.~\ref{P:STRONGPOINTWISE}. 
We stress that the order in which we derive the strong estimates in the proof of the proposition is important.

\begin{proposition}[\textbf{Strong estimates for the lower-order derivatives}]
\label{P:STRONGPOINTWISE}
Assume that on the spacetime slab $(T,1] \times \mathbb{T}^3,$ the renormalized variables 
$\big(\newg_{ij}, (\newg^{-1})^{ij}, \upgamma_{j \ k}^{\ i}, \freenewsec_{\ j}^i, \newlap, \dlap_i, \adjustednewp, \newu^i \big)$ 
verify the constraint equations \eqref{E:RINTERMSOFKPANDU}-\eqref{E:MOMENTUMCONSTRAINTRAISED}, 
the lapse equations \eqref{E:LAPSERESCALEDELLIPTIC}-\eqref{E:LAPSELOWERDERIVATIVES}, 
and the evolution equations 
\eqref{E:GEVOLUTION}-\eqref{E:GINVERSEEVOLUTION},
\eqref{E:LITTLEGAMMAEVOLUTIONDETGFORM}-\eqref{E:RESCALEDKEVOLUTION},
and \eqref{E:RESCALEDPEVOLUTION}-\eqref{E:RESCALEDUEVOLUTION}.
Assume that for some integer $N \geq 8,$ the bootstrap assumptions \eqref{E:HIGHBOOT}-\eqref{E:POTBOOT} hold for $t \in (T,1].$
Assume further that the initial renormalized metric verifies the near-Euclidean condition $\| \newg - \Euc \|_{C_{Frame}^0}(1) \leq \epsilon.$
Then there exist a small constant $\upsigma_N > 0$ and an integer $Z_N > 0$ depending on $N$ but
\textbf{not depending on the other constants} such that if $\epsilon \leq \upsigma \leq \upsigma_N,$ 
then the following estimates also hold on $(T,1]$ for the renormalized metric $\newg,$ its inverse $\newg^{-1},$ and
their derivatives:
\begin{subequations}
\begin{align}
	\left \| \partial_t \big[\newg - \Euc \big] \right \|_{C_{Frame}^M} 
		& \lesssim  \epsilon t^{-1} \| \newg - \Euc \|_{C_{Frame}^M} + \epsilon t^{-1}, &&  (M \leq N - 3),
		\label{E:PARTIALTGCOMMUTEDSTRONGPOINTWISE} 
			\\
	\left \| \partial_t \big[\newg^{-1} - \Euc^{-1} \big] \right \|_{C_{Frame}^M} 
		& \lesssim  \epsilon t^{-1} \left \| \newg^{-1} - \Euc^{-1} \right \|_{C_{Frame}^M} + \epsilon t^{-1}, &&  (M \leq N - 3), 
			\label{E:PARTIALTGINVERSECOMMUTEDSTRONGPOINTWISE} \\
	\| \newg - \Euc \|_{C_{Frame}^M} + \| \newg - \Euc \|_{C_{\newg}^M}
		& \lesssim \epsilon t^{-c \epsilon}, &&  (M \leq N - 3), 
			\label{E:GCOMMUTEDSTRONGPOINTWISE} \\
		\| \newg^{-1} - \Euc^{-1} \|_{C_{Frame}^M} + \| \newg^{-1} - \Euc^{-1} \|_{C_{\newg}^M}
			& \lesssim \epsilon t^{-c \epsilon}, && (M \leq N - 3). 
			\label{E:GINVERSECOMMUTEDSTRONGPOINTWISE} 
\end{align}
\end{subequations}
Above, $\Euc_{ij} = \mbox{diag}(1,1,1)$ denotes the standard Euclidean metric on $\mathbb{T}^3,$
and the norms $\| \cdot \|_{C_{Frame}^M}$ and $\| \cdot \|_{C_{\newg}^M}$ are defined in Def.~\ref{D:CMNORMS}.

The following estimates also hold on $(T,1]$ for the renormalized trace-free second fundamental form $\freenewsec$
and its derivatives:
\begin{subequations}
		\begin{align}
		\left \| \partial_t \freenewsec \right \|_{C_{Frame}^M}
		& \lesssim \epsilon t^{-1/3 - Z_N \upsigma}, && (M \leq N - 3),
			\label{E:PARTIALTSECONDFUNDPOINTWISE} \\
		\left \| \freenewsec  \right \|_{C_{Frame}^M} & \lesssim \epsilon, && (M \leq N - 3),
			\label{E:SECONDFUNDUPGRADEPOINTWISE} \\
		\left \| \freenewsec \right \|_{C_{\newg}^M} 
		& \lesssim \left\lbrace \begin{array}{ll} 
		\epsilon, & (M = 0), \\
	  \epsilon t^{- c \epsilon}, & (1 \leq M \leq N - 3).
		\end{array} \right.	 \label{E:SECONDFUNDUPGRADEPOINTWISEGNORM}
		\end{align}
\end{subequations}

The following estimates also hold on $(T,1]$ for $\upgamma$ and its derivatives:
\begin{subequations}
\begin{align}
		\| \partial_t  \upgamma \|_{C_{Frame}^M}
		& \lesssim \epsilon t^{-1} \| \upgamma \|_{C_{Frame}^M}
			+ \epsilon t^{-1}, && (M \leq N - 4), 
		\label{E:PARTIALTLITTLEGAMMACOMMUTEDPOINTWISE} \\
	\| \upgamma \|_{C_{Frame}^M} + \| \upgamma \|_{C_{\newg}^M}
			& \lesssim \epsilon t^{-c \epsilon}, && (M \leq N - 4).
		\label{E:LITTLEGAMMACOMMUTEDSTRONGPOINTWISE}
\end{align}
\end{subequations}

The following estimates also hold on $(T,1]$ for the renormalized pressure $\adjustednewp$ and its derivatives:
\begin{subequations}
	\begin{align}
		\left\| \partial_t \adjustednewp \right \|_{C^M} 
		& \lesssim \epsilon t^{-1/3 - Z_N \upsigma}, && (M \leq N - 3),	\label{E:PARTIALTPCOMMUTEDPOINTWISE} \\
		\left\| \adjustednewp \right\|_{C^M} 
		& \lesssim \epsilon, && (M \leq N - 3).  \label{E:PCOMMUTEDSTRONGPOINTWISE} 
	\end{align}
\end{subequations}

The following estimates also hold on $(T,1]$ for the renormalized velocity $\newu$ and its derivatives:
\begin{subequations}
	\begin{align}	
		\| \partial_t \newu \|_{C_{Frame}^M}
		& \lesssim \epsilon t^{-1} \| \newu \|_{C_{Frame}^M}
			+ \epsilon t^{-1 - c \epsilon}, && (M \leq N - 4), \label{E:PARTIALTUCOMMUTEDPOINTWISE} \\
		\| \newu \|_{C_{Frame}^M} + \| \newu \|_{C_{\newg}^M}
			& \lesssim \sqrt{\epsilon} t^{-c \sqrt{\epsilon}}, && (M \leq N - 4).
			\label{E:UCOMMUTEDSTRONGPOINTWISE}
	\end{align}
	\end{subequations}
	
	The following estimates also hold on $(T,1]$ for the renormalized lapse $\newlap,$ the renormalized lapse gradient
	$\dlap,$ and their derivatives:
	\begin{subequations}	
	\begin{align}
		\| \newlap \|_{C^M} 
		& \lesssim \epsilon t^{- c \sqrt{\epsilon}}, && (M \leq N - 5),
			\label{E:LAPSECOMMUTEDSTRONGPOINTWISE}  \\
		\| \dlap \|_{C_{Frame}^M}
		+ \| \dlap \|_{C_{\newg}^M} & \lesssim \epsilon t^{2/3 - c \sqrt{\epsilon}}, && (M \leq N - 6),
		\label{E:GNORMLAPSECOMMUTEDSTRONGPOINTWISE}
	\end{align}	
	\end{subequations}
	\begin{align}
		\| \partial_t \newlap \|_{C^0} 
			& \lesssim \epsilon t^{-1 - c \sqrt{\epsilon}}.
			\label{E:PARTIALTLAPSESTRONGPOINTWISE}
	\end{align}
	
\end{proposition}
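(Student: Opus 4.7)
The plan is to derive these estimates in a carefully chosen order, exploiting an effective partial dynamical decoupling among the renormalized variables: at each stage we fix a spatial point $x$ and treat the relevant evolution equation from Sect.~\ref{S:RESCALEDVARIABLES} as an ODE in $t$, using the bootstrap assumptions \eqref{E:HIGHBOOT}-\eqref{E:POTBOOT} plus already-proved estimates to control the source. The key heuristic is the one sketched in Sect.~\ref{SSS:STRONGESTIMATESINTRO}: in any product of renormalized frame components, the assumption \eqref{E:KINBOOT}-\eqref{E:POTBOOT} lets us put all but one factor in $L^\infty$ with loss at worst $t^{-\upsigma}$ (after accounting for the explicit $t^{2/3}$ weights on $\upgamma$ and $\newu$ built into $\lowpotnorm{N-4}$), while the one exceptional factor costs at most $\epsilon t^{-2/3-\upsigma}$ from Sobolev embedding applied to \eqref{E:HIGHBOOT}. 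Combining this with the \emph{explicit} powers of $t$ in each evolution equation will yield a pointwise bound on $\partial_t (\text{variable})$ whose time integral from $t$ to $1$ is convergent as $t\downarrow 0$, provided $\upsigma$ is small.

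I would execute the argument in the following order. \textbf{Step 1 ($\freenewsec$).} From \eqref{E:RESCALEDKEVOLUTION} the worst explicit prefactor is $t^{1/3}$, so the above heuristic gives \eqref{E:PARTIALTSECONDFUNDPOINTWISE}; time-integrating from $1$ and using the near-FLRW data $|\freenewsec|(1,x)\lesssim\epsilon$ yields the crucial uniform bound \eqref{E:SECONDFUNDUPGRADEPOINTWISE}. \textbf{Step 2 ($\adjustednewp$).} The same procedure applied to \eqref{E:RESCALEDPEVOLUTION} produces \eqref{E:PARTIALTPCOMMUTEDPOINTWISE} and \eqref{E:PCOMMUTEDSTRONGPOINTWISE}; here one must isolate $\partial_t\adjustednewp$ algebraically, which is fine because the coefficient $[\adjustednewp+1/3]$ is bounded below by \eqref{E:PCOMMUTEDSTRONGPOINTWISE} at the previous order. \textbf{Step 3 ($\newg$, $\newg^{-1}$).} Feeding \eqref{E:SECONDFUNDUPGRADEPOINTWISE} into \eqref{E:GEVOLUTION}-\eqref{E:GINVERSEEVOLUTION} gives \eqref{E:PARTIALTGCOMMUTEDSTRONGPOINTWISE}-\eqref{E:PARTIALTGINVERSECOMMUTEDSTRONGPOINTWISE}; a Gronwall argument in $t$ together with the initial near-Euclidean hypothesis delivers \eqref{E:GCOMMUTEDSTRONGPOINTWISE}-\eqref{E:GINVERSECOMMUTEDSTRONGPOINTWISE}, and the $|\cdot|_{\newg}$ bounds follow since $\newg$ is uniformly close to $\Euc$. \textbf{Step 4 ($\upgamma$).} Using \eqref{E:LITTLEGAMMAEVOLUTIONDETGFORM}, in which the principal source is $-2t^{-1}\partial_e\freenewsec^b_{\ i}$ and $\dlap$, one applies the already obtained estimate for $\freenewsec$ at one higher order (absorbed into Sobolev embedding on $\highnorm{N}$) and the lapse gradient estimate from Step 6 (which is derived independently of $\upgamma$) to deduce \eqref{E:PARTIALTLITTLEGAMMACOMMUTEDPOINTWISE}, then Gronwall. \textbf{Step 5 ($\newu$).} Equation \eqref{E:RESCALEDUEVOLUTION} is algebraically solved for $\partial_t \newu^j$; the dangerous term is $t^{-1/3}(\newg^{-1})^{ja}\dlap_a$, which by the lapse estimate \eqref{E:GNORMLAPSECOMMUTEDSTRONGPOINTWISE} is $\epsilon t^{1/3 - c\sqrt\epsilon}$, leading to \eqref{E:PARTIALTUCOMMUTEDPOINTWISE} and, after Gronwall, the $\sqrt\epsilon$-type bound \eqref{E:UCOMMUTEDSTRONGPOINTWISE}.

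\textbf{Step 6 (lapse).} For $\newlap$ and $\dlap$ I would use the favorable elliptic equation \eqref{E:LAPSELOWERDERIVATIVES}, whose right-hand side at these orders is controlled by one derivative of $\upgamma$, $\freenewsec$, $\newu$, $\adjustednewp$ and is of size $\epsilon t^{-c\sqrt\epsilon}$ by Steps 1--5. Since the operator $\mathcal{L}$ is uniformly elliptic with a strictly negative zeroth-order coefficient (by \eqref{E:ELLIPTICOPERATORJUNKTERM} plus Steps 1-2), the maximum principle on $\mathbb{T}^3$ gives $\|\newlap\|_{C^0}\lesssim \epsilon t^{-c\sqrt\epsilon}$, and standard elliptic regularity commuted with $\partial_{\vec I}$ yields \eqref{E:LAPSECOMMUTEDSTRONGPOINTWISE}-\eqref{E:GNORMLAPSECOMMUTEDSTRONGPOINTWISE}, costing a few derivatives (hence the drop to $N-5$, $N-6$). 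The bound \eqref{E:PARTIALTLAPSESTRONGPOINTWISE} on $\partial_t\newlap$ is then obtained by differentiating \eqref{E:LAPSELOWERDERIVATIVES} in $t$, using the already-controlled time derivatives of $\newg^{-1}$, $\freenewsec$, $\upgamma$, $\newu$, $\adjustednewp$ produced in Steps 1--5, and re-applying the maximum principle.

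The main obstacle, to my mind, is two-fold. First, there is a subtle \emph{circularity} to disentangle: the estimates for $\upgamma$ use the lapse estimates, while the naive version of the lapse equation \eqref{E:LAPSERESCALEDELLIPTIC} involves dangerous $t^{-4/3}$ factors; it is essential to use the alternative form \eqref{E:LAPSELOWERDERIVATIVES}, whose derivation rested on substituting the Hamiltonian constraint and so ``trades'' the dangerous factor for one extra spatial derivative --- a trade that is admissible here because we are at an order well below $N$. The second and genuinely delicate point is that the bound \eqref{E:SECONDFUNDUPGRADEPOINTWISE} must be \emph{uniform in $t$} (no $t^{-c\epsilon}$ loss at $M=0$); this is what the whole paper ultimately hinges on, since a slightly worse $\epsilon t^{-c\epsilon}$ bound would, via the borderline integral \eqref{E:SAMPLEBORDERLINEERRORINTEGRAL}, destroy the energy hierarchy \eqref{E:ENERGYINTEGRALINEQUALITIESINTRO}. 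The uniformity is extracted by observing that in Step 1 the exponent $-1/3 - Z_N\upsigma$ is strictly greater than $-1$ for small $\upsigma$, so $\int_t^1 s^{-1/3-Z_N\upsigma}\,ds$ is bounded as $t\downarrow 0$ and the initial smallness of $\freenewsec$ propagates without any $t^{-c\epsilon}$ loss; an analogous argument handles $\adjustednewp$.
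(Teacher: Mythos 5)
Your proposed order of derivation is essentially the one the paper follows, and the core mechanism — treating the evolution equations as $x$-parametrized ODEs with small, time-integrable sources, and extracting uniformity in $t$ for $\freenewsec$ and $\adjustednewp$ from the convergence of $\int_t^1 s^{-1/3-Z_N\upsigma}\,ds$ — is also correct. However, there are two gaps.

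First, there is a genuine circularity in your dependency chain. In Steps 4 and 5 you invoke the improved lapse estimate \eqref{E:GNORMLAPSECOMMUTEDSTRONGPOINTWISE} from Step 6 to control the $\dlap$ terms in the $\upgamma$ and $\newu$ evolution equations. But Step 6 cannot be carried out independently of Step 4: the favorable lapse equation \eqref{E:LAPSELOWERDERIVATIVES}, which is the only version of the lapse equation usable at these orders, has a source $\leftexp{(Border)}{\widetilde{\mathfrak{N}}}$ defined in \eqref{AE:LAPSENTILDEBORDERLINE} that depends on first derivatives of $\upgamma$ (the terms $(\newg^{-1})^{ef}\partial_e\upgamma_{f\ a}^{\ a}$, $(\newg^{-1})^{ab}\partial_a\Gamma_b$, $\leftexp{(Ricci)}{\triangle}_{\ a}^a$). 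To bound these by $\epsilon t^{-c\sqrt{\epsilon}}$ and then apply the maximum principle, you need the strong $\upgamma$ estimate \eqref{E:LITTLEGAMMACOMMUTEDSTRONGPOINTWISE} from Step 4. The fix is to observe that the strong lapse estimate is not actually needed in Steps 4 and 5: the terms $\frac{2}{3}t^{-1/3}\dlap_e\ID_{\ i}^b$ in \eqref{E:LITTLEGAMMAEVOLUTIONDETGFORM} and $-t^{-1/3}(\newg^{-1})^{ja}\dlap_a$ in \eqref{E:RESCALEDUEVOLUTION} carry the explicit weight $t^{-1/3}$, so the weaker bootstrap bound $\|\dlap\|_{H^{N-2}_{Frame}}\lesssim\epsilon t^{-\upsigma}$ built into \eqref{E:HIGHBOOT} already produces an integrable source $\epsilon t^{-1/3-\upsigma}$. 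This is why the restriction $M\leq N-4$ appears: it ensures $|\vec{I}|\leq N-4$ so that $|\partial_{\vec{I}}\dlap_e|$ falls under the bootstrap Sobolev norm.

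Second, your Step 3 remark that the $|\cdot|_{\newg}$ bounds ``follow since $\newg$ is uniformly close to $\Euc$'' does not suffice for the $M=0$ case of \eqref{E:SECONDFUNDUPGRADEPOINTWISEGNORM}, which is precisely the estimate you single out at the end as the linchpin of the whole paper. A comparison of $|\cdot|_{\newg}$ with $|\cdot|_{Frame}$ via \eqref{E:GCOMMUTEDSTRONGPOINTWISE}-\eqref{E:GINVERSECOMMUTEDSTRONGPOINTWISE} costs a factor $t^{-c\epsilon}$, which would turn the borderline error integral \eqref{E:SAMPLEBORDERLINEERRORINTEGRAL} into a fatal $s^{-1-c\epsilon}$. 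The uniformity at $M=0$ requires a separate algebraic observation: because $\SecondFund_{ij}=\SecondFund_{ji}$ and $\freenewsec$ is in mixed form, one has the exact identity $|\freenewsec|_{\newg}^2 = (\newg^{-1})^{ab}\newg_{ij}\freenewsec_{\ a}^i\freenewsec_{\ b}^j = \freenewsec_{\ b}^a\freenewsec_{\ a}^b$, in which all factors of $\newg$ and $\newg^{-1}$ cancel. Without this the $M=0$ conclusion does not follow from your argument.
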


\begin{proof}
	We first discuss the top level strategy. Recall that the norms $\| \cdot \|_{C_{Frame}^M}$
	are defined in Def.~\ref{D:CMNORMS}.
	To prove the desired estimates, we will have to bound quantities
	that are schematically of the form
	\begin{align} \label{E:SAMPLEEST}
		\left\| F(t;t^{A_1}v_1, t^{A_2}v_2,\cdots, t^{A_l} v_l) \prod_{a=1}^l (\partial_{\vec{I}_a}v_a) \right\|_{C_{Frame}^0},
	\end{align}	
	where for $1 \leq a \leq l,$
	\begin{align}
		v_a \in \left\lbrace 
			\newg, (\newg^{-1}), 
			\freenewsec, 
			\upgamma, 
			\adjustednewp, 
			\newu, 
			\newlap, 
			\dlap 
			\right\rbrace
	\end{align}
	is a renormalized solution tensor, 
	the $A_a$ are positive constants, and $F$ is a smooth scalar-valued function 
	of its arguments that, by virtue of the bootstrap assumptions, will verify 
	\begin{align}
		\| F(t;t^{A_1}v_1, t^{A_2}v_2,\cdots, t^{A_l} v_l) \|_{C^0} \lesssim 1. 
	\end{align}
	For the multi-indices $\vec{I}$ under consideration, the
	bootstrap assumptions \eqref{E:HIGHBOOT}-\eqref{E:POTBOOT} allow us to estimate the quantity \eqref{E:SAMPLEEST} 
	using the following strategy:
	\begin{itemize}
		\item If $|\vec{I}_a|$ is small, then at worst we have $\|\partial_{\vec{I}_a} v_a \|_{C_{Frame}^0} \lesssim t^{- \upsigma}.$
		\item If $|\vec{I}_a|$ is large (but still within the range we are considering in this proof), then at worst we have 
			$\|\partial_{\vec{I}_a} v_a\|_{C_{Frame}^0} 
			\lesssim \epsilon t^{-2/3 - \upsigma}.$ To derive this estimate, we used Sobolev embedding and the norm bootstrap 
			assumption \eqref{E:HIGHBOOT}. More precisely, we will often make use of the Sobolev embedding
			result $H_{Frame}^{M+2}(\mathbb{T}^3) \hookrightarrow C_{Frame}^M(\mathbb{T}^3)$ without explicitly mentioning it.
		\item The integer $N$ has been chosen to be large enough so that in any product, 
			\emph{there is only one term with a ``large'' index}.
	\end{itemize}
	
	Consequently, following this strategy, we deduce the following crude bound:
	\begin{align} \label{E:REPLINFINITYBOUND}
		\left\|F(t^{A_1}v_1, t^{A_2}v_2,\cdots, t^{A_l} v_l) \prod_{a=1}^l (\partial_{\vec{I}_a}v_a) \right\|_{C_{Frame}^0} 
		\lesssim  \prod_{a=1}^l \left\| \partial_{\vec{I}_a}v_a \right\|_{C_{Frame}^0}
		\lesssim \epsilon t^{- 2/3 - Z \upsigma},
	\end{align}
	where 
	\begin{align}
		Z \leq l
	\end{align}
	is a non-negative integer (which we view as a constant that is free to vary from line to line) that counts the number of factors in the 
	product $ \prod_{a=1}^l (\partial_{\vec{I}_a}v_a)$ that contribute a $t^{- \upsigma}$ factor. Clearly $Z$ is
	\textbf{independent of $\epsilon$ and $\upsigma.$} 
	
	For the majority of the quadratic and higher-order terms that we will encounter, the estimate \eqref{E:REPLINFINITYBOUND} will
	suffice for our purposes. However, when we are deriving the strong estimates for $\newg,$ $\newg^{-1},$
	$\upgamma,$ $\newu,$ 
	$\newlap,$ and $\dlap,$ some of the quadratic terms
	will not be bounded with the crude estimate \eqref{E:REPLINFINITYBOUND} but will instead be bounded using the
	already-established strong estimates for other quantities. \emph{That is, we will derive the strong estimates in a particular order, and
	the order is essential.} Some of the linear terms that we will encounter can also be treated with the crude estimate 
	\eqref{E:REPLINFINITYBOUND},
	while other linear terms will require special care. 
		
	We begin the detailed proof of the proposition by noting that the crude	estimate \eqref{E:REPLINFINITYBOUND} 
	implies the following bounds for the junk terms appearing on the right-hand side of equations
	\eqref{AE:GEVOLUTIONCOMMUTED}-\eqref{AE:GINVERSEEVOLUTIONCOMMUTED},
	\eqref{AE:METRICGAMMACOMMUTED}-\eqref{AE:SECONDFUNDCOMMUTED}, 
	and \eqref{AE:PARTIALTPCOMMUTED}-\eqref{AE:PARTIALTUJCOMMUTED}:
	\begin{align} \label{E:STRONGPOINTWISEJUNKNORMS}
		& \left\| \leftexp{(\vec{I});(Junk)}{\mathfrak{G}} \right \|_{C_{Frame}^{N-3}} 
			+ \left\| \leftexp{(\vec{I});(Junk)}{\widetilde{\mathfrak{G}}} \right \|_{C_{Frame}^{N-3}}  
			+ \left\| \leftexp{(\vec{I});(Junk)}{\mathfrak{g}} \right \|_{C_{Frame}^{N-3}} 
			\\
		& \ \ + \left\| \leftexp{(\vec{I});(Junk)}{\mathfrak{K}} \right \|_{C_{Frame}^{N-3}} 
			+ \left\| \leftexp{(\vec{I});(Junk)}{\mathfrak{P}} \right \|_{C^{N-3}} 
			+ \left\| \leftexp{(\vec{I});(Junk)}{\mathfrak{U}} \right \|_{C_{Frame}^{N-3}} 
			\lesssim \epsilon t^{-2/3 - Z \upsigma}.
			\notag
	\end{align}

	\noindent \textbf{Proof of \eqref{E:PARTIALTSECONDFUNDPOINTWISE}, \eqref{E:SECONDFUNDUPGRADEPOINTWISE}, 
	\eqref{E:SECONDFUNDUPGRADEPOINTWISEGNORM} in the case $M=0$ only,
	\eqref{E:PARTIALTPCOMMUTEDPOINTWISE}, and \eqref{E:PCOMMUTEDSTRONGPOINTWISE}}: To derive \eqref{E:PARTIALTSECONDFUNDPOINTWISE}, we let 
	$|\vec{I}| \leq M \leq N - 3$ be a multi-index. We have to estimate the terms on the right-hand side of 
	the evolution equation \eqref{AE:SECONDFUNDCOMMUTED} for $\partial_{\vec{I}} \freenewsec_{\ j}^i.$
	The term $\leftexp{(\vec{I});(Junk)}{\mathfrak{K}}_{\ j}^i$ has already been suitably bounded in \eqref{E:STRONGPOINTWISEJUNKNORMS}.
	The remaining terms, some of which are linear, can be bounded in a similar fashion with the help of  
	the bootstrap assumptions \eqref{E:HIGHBOOT}-\eqref{E:POTBOOT}
	(see equation \eqref{E:ICONTRACTEDGAMMA} for the definition of $\leftexp{(\vec{I})}{\Gamma_a}$):
	\begin{align} \label{E:PARTIALTKSTRONGBOUNDSPROOF}
		& t^{1/3} \left\| \big[1 + t^{4/3} \newlap\big] (\newg^{-1})^{ef} \partial_e 
		\partial_{\vec{I}} \upgamma_{f \ j}^{\ i} \right\|_{C^0}
		+ t^{1/3} \left\| \big[1 + t^{4/3} \newlap\big] 
			(\newg^{-1})^{ia} \partial_a \leftexp{(\vec{I})}{\Gamma_j} \right\|_{C^0} \\
		& \ \ + t^{1/3} \left\| \big[1 + t^{4/3} \newlap\big]  
			(\newg^{-1})^{ia}\partial_j \leftexp{(\vec{I})}{\Gamma_a} \right\|_{C^0} 
			+ t \left\| (\newg^{-1})^{ia} \partial_a \partial_{\vec{I}} \dlap_j \right\|_{C^0} 
			\notag \\
		& \ \ + t^{1/3} \left\| \partial_{\vec{I}} \newlap \ID_{\ j}^i	\right\|_{C^0}	
		\lesssim \epsilon t^{-1/3 - Z \upsigma}. \notag
	\end{align}	
	The one term that requires some special attention is the next-to-last term on the left-hand side of \eqref{E:PARTIALTKSTRONGBOUNDSPROOF}.
	The bootstrap assumptions and Sobolev embedding imply that 
	$\left\| (\newg^{-1})^{ia} \partial_a \partial_{\vec{I}} \dlap_j \right\|_{C^0}
	\lesssim t^{-\upsigma} \left\| \dlap \right\|_{H_{Frame}^N} \lesssim \epsilon t^{-4/3 - 2 \upsigma}.$ Hence, we truly need
	the factor $t$ that multiplies this term in order to reach the desired conclusion. We have thus shown \eqref{E:PARTIALTSECONDFUNDPOINTWISE}.
	
	Inequality \eqref{E:SECONDFUNDUPGRADEPOINTWISE} follows from integrating \eqref{E:PARTIALTSECONDFUNDPOINTWISE} in time
	when $M \leq N - 3:$
	\begin{align} \label{E:KSTRONGPROOF}
		\left\| \freenewsec \right \|_{C_{Frame}^M}(t)
		& \lesssim
		\left\| \freenewsec \right \|_{C_{Frame}^M}(1)
		+ \epsilon \int_{s=t}^1 s^{-1/3 -Z \upsigma} \, ds 
			\lesssim \epsilon, 
	\end{align}
	where we used the small-data estimate
	$ \left\| \freenewsec \right \|_{C_{Frame}^{N-3}}(1) \leq C \epsilon,$
	and we have assumed that $\upsigma$ is small enough for $t^{-1/3 -Z \upsigma}$ to be integrable over the 
	interval $t \in (0,1].$
	
	To derive \eqref{E:SECONDFUNDUPGRADEPOINTWISEGNORM}	in the case $M = 0,$  
	we use the symmetry property $\SecondFund_{ij} = \SecondFund_{ji}$ to deduce
	\begin{align} \label{E:IMPORTANTSECONDFUNDINEQUALITY}
			\left| \freenewsec \right|_{\newg}^2
				:= (\newg^{-1})^{ab} \newg_{ij} \freenewsec_{\ a}^i \freenewsec_{\ b}^j
				& = \left| \freenewsec_{\ b}^a \right| 
					\left| \freenewsec_{\ a}^b \right|
					\lesssim \left\| \freenewsec \right \|_{C_{Frame}^0}^2.
	\end{align}
	The desired estimate \eqref{E:SECONDFUNDUPGRADEPOINTWISEGNORM} then follows from 
	\eqref{E:IMPORTANTSECONDFUNDINEQUALITY} and \eqref{E:SECONDFUNDUPGRADEPOINTWISE}.
	
	Inequalities \eqref{E:PARTIALTPCOMMUTEDPOINTWISE} and \eqref{E:PCOMMUTEDSTRONGPOINTWISE} can be derived in a similar
	fashion with the help of the evolution equation \eqref{AE:PARTIALTPCOMMUTED} for $\partial_{\vec{I}} \newp;$ 
	we omit the details.
	\\
	
	\noindent \textbf{Proof of \eqref{E:PARTIALTGCOMMUTEDSTRONGPOINTWISE}-\eqref{E:GINVERSECOMMUTEDSTRONGPOINTWISE}
	and \eqref{E:SECONDFUNDUPGRADEPOINTWISEGNORM} in the cases $1 \leq M \leq N - 3$}:
	To derive \eqref{E:PARTIALTGCOMMUTEDSTRONGPOINTWISE}, we let $|\vec{I}| \leq M \leq N - 3$
	be a multi-index. We then use equation \eqref{AE:GEVOLUTIONCOMMUTED} to derive an evolution equation 
	of the form $\partial_t \partial_{\vec{I}} \big[\newg_{ij} - \Euc_{ij} \big] = \cdots.$ Next, we
	use the bootstrap assumptions \eqref{E:HIGHBOOT}-\eqref{E:POTBOOT} and the 
	strong estimate \eqref{E:SECONDFUNDUPGRADEPOINTWISE} to 
	bound the right-hand side of the evolution equation in the norm $C_{Frame}^0,$
	which easily leads to the desired estimate \eqref{E:PARTIALTGCOMMUTEDSTRONGPOINTWISE}.
	Inequality \eqref{E:PARTIALTGINVERSECOMMUTEDSTRONGPOINTWISE} can be derived in 
	a similar fashion with the help of the evolution equation \eqref{AE:GINVERSEEVOLUTIONCOMMUTED}.
	We stress that for our upcoming estimates, it is essential that the coefficient of 
	$\left \| \newg - \Euc  \right \|_{C_{Frame}^M}$
	on the right-hand side of \eqref{E:PARTIALTGCOMMUTEDSTRONGPOINTWISE} is precisely
	$\epsilon t^{-1},$ and not a worse power of $t.$ The same remark applies to inequalities 
	\eqref{E:PARTIALTGINVERSECOMMUTEDSTRONGPOINTWISE},
	\eqref{E:PARTIALTLITTLEGAMMACOMMUTEDPOINTWISE},
	and \eqref{E:PARTIALTUCOMMUTEDPOINTWISE}.
	
	To derive the estimate for $\| \newg - \Euc \|_{C_{Frame}^M}$ in \eqref{E:GCOMMUTEDSTRONGPOINTWISE}, 
	we integrate \eqref{E:PARTIALTGCOMMUTEDSTRONGPOINTWISE} in time and use the small-data estimate
	$\left\| \newg - \Euc \right \|_{C_{Frame}^{N-3}}(1) \leq C \epsilon$ to deduce the following
	inequality, which is valid for $M \leq N - 3:$
	\begin{align} \label{E:GIMRPROVEDGRONWALLREADY}
		\left\| \newg - \Euc  \right \|_{C_{Frame}^M}(t)
		& \leq \left\| \newg - \Euc \right \|_{C_{Frame}^M}(1)
			+ C \epsilon \int_{s = t}^1 s^{-1} \, ds 
			+ c \epsilon \int_{s = t}^1 s^{-1} \left\| \newg - \Euc \right \|_{C_{Frame}^M}(s) \, ds \\
		& \leq C \epsilon (1 + |\ln t|) + c \epsilon \int_{s = t}^1 s^{-1} \left\| \newg - \Euc \right \|_{C_{Frame}^M}(s) \, ds. \notag
	\end{align}
	Applying Gronwall's inequality to \eqref{E:GIMRPROVEDGRONWALLREADY}, we deduce that
	\begin{align}
		\left\| \newg - \Euc  \right \|_{C_{Frame}^M}(t) & \leq C \epsilon (1 + |\ln t|) t^{- c \epsilon}.
	\end{align}
	We have thus bounded $\left\| \newg - \Euc  \right \|_{C_{Frame}^M}$ by the right-hand side of \eqref{E:GCOMMUTEDSTRONGPOINTWISE} 
	as desired. Similarly, $\left\| \newg^{-1} - \Euc^{-1} \right \|_{C_{Frame}^M}$ can be bounded by the right-hand side of
	\eqref{E:GINVERSECOMMUTEDSTRONGPOINTWISE} with the help of inequality \eqref{E:PARTIALTGINVERSECOMMUTEDSTRONGPOINTWISE}.
	Finally, from the definition \eqref{E:CGMFRAMENORMDEF} of the norm $\| \cdot \|_{C_{\newg}^M},$ we deduce that 
	\begin{align}
		\left\| \newg - \Euc \right \|_{C_{\newg}^M} & \lesssim \left\| \newg^{-1} \right \|_{C_{Frame}^0} \left\| \newg - \Euc \right 
		\|_{C_{Frame}^M}, \\ 
		\left\| \newg^{-1} - \Euc^{-1} \right \|_{C_{\newg}^M} & \lesssim \left \| \newg \right \|_{C_{Frame}^0} 
		\left\| \newg^{-1} - \Euc^{-1} \right \|_{C_{Frame}^M}.
	\end{align}	
	Therefore, $\left\| \newg - \Euc \right \|_{C_{\newg}^M}$ and 
	$\left\| \newg^{-1} - \Euc^{-1} \right \|_{C_{\newg}^M}$
	can each be bounded by $C \epsilon (1 + |\ln t|) t^{- c \epsilon}$ by using the already-established bounds for 
	$\left\| \newg - \Euc \right \|_{C_{Frame}^M}$ and $\left\| \newg^{-1} - \Euc^{-1} \right \|_{C_{Frame}^M}.$
	
	Similarly, the estimate \eqref{E:SECONDFUNDUPGRADEPOINTWISEGNORM}	in the cases $1 \leq M \leq N - 3$
	follows from the estimate \eqref{E:SECONDFUNDUPGRADEPOINTWISE}
	and the already-established bounds \eqref{E:GCOMMUTEDSTRONGPOINTWISE} and \eqref{E:GINVERSECOMMUTEDSTRONGPOINTWISE} for 
	$\left \|\newg - \Euc \right \|_{C_{Frame}^0}$ and $\left \|\newg^{-1} - \Euc^{-1} \right\|_{C_{Frame}^0}.$
	\\
	
	\noindent \textbf{Proof of \eqref{E:PARTIALTLITTLEGAMMACOMMUTEDPOINTWISE}-\eqref{E:LITTLEGAMMACOMMUTEDSTRONGPOINTWISE}}:	To derive 
	\eqref{E:PARTIALTLITTLEGAMMACOMMUTEDPOINTWISE}, we let $|\vec{I}| \leq M \leq N-4$ be a multi-index. We 
	will use the bootstrap assumptions \eqref{E:HIGHBOOT}-\eqref{E:POTBOOT} and the strong estimate
	\eqref{E:SECONDFUNDUPGRADEPOINTWISE} to estimate the terms appearing on the right-hand side of 
	the evolution equation \eqref{AE:METRICGAMMACOMMUTED} for $\partial_{\vec{I}} \upgamma_{e \ i}^{\ b}.$ 
	The first two terms on the right-hand side of \eqref{AE:METRICGAMMACOMMUTED} are bounded by
	\begin{align}
		t^{-1} \left|\big[1 + t^{4/3} \newlap\big] \partial_e \partial_{\vec{I}} \freenewsec_{\ i}^b  \right| & \lesssim \epsilon t^{-1}, \\
		t^{-1/3}\left| \partial_{\vec{I}} \dlap_e \ID_{\ i}^b \right| & \lesssim \epsilon t^{-1/3 - \upsigma}. 
			\label{E:DLAPBOUNDFORGAMMAEVOLUTION}
	\end{align}
	Note in particular that in deriving \eqref{E:DLAPBOUNDFORGAMMAEVOLUTION}, we used the
	assumption $|\vec{I}| \leq N - 4$ in order to bound 
	$|\partial_{\vec{I}} \dlap_e| \lesssim \| \dlap \|_{H_{Frame}^{N-2}} \lesssim \epsilon t^{-\upsigma}.$
	Using similar reasoning, we bound the $t^{-1} \leftexp{(\vec{I});(Border)}{\mathfrak{g}_{e \ i}^{\ b}}$ term on the right-hand side of
	\eqref{AE:METRICGAMMACOMMUTED} by
	\begin{align}
		t^{-1} \left| \leftexp{(\vec{I});(Border)}{\mathfrak{g}_{e \ i}^{\ b}} \right| 
			& \lesssim \epsilon t^{-1} \| \upgamma \|_{C_{Frame}^M} + \epsilon t^{-1/3 - \upsigma}.
	\end{align}
	The term $\leftexp{(\vec{I});(Junk)}{\mathfrak{g}_{e \ i}^{\ b}}$ on the right-hand side of \eqref{AE:METRICGAMMACOMMUTED}
	has been already been suitably bounded in \eqref{E:STRONGPOINTWISEJUNKNORMS}.
	Combining these estimates, we have thus shown \eqref{E:PARTIALTLITTLEGAMMACOMMUTEDPOINTWISE}.
	
	Inequality \eqref{E:LITTLEGAMMACOMMUTEDSTRONGPOINTWISE} for $\| \upgamma \|_{C_{Frame}^M}$ 
	(when $M \leq N - 4$)  
	then follows from integrating \eqref{E:PARTIALTLITTLEGAMMACOMMUTEDPOINTWISE} in time,
	using the small-data estimate $\| \upgamma \|_{C_{Frame}^{N-4}}(1) \leq C \epsilon,$ and applying Gronwall's inequality 
	(as in our proof of \eqref{E:GCOMMUTEDSTRONGPOINTWISE}). 

	To obtain the desired bound for $\| \upgamma \|_{C_{\newg}^M},$ we first use
	the definition \eqref{E:CGMFRAMENORMDEF} of the norm $\| \cdot \|_{C_{\newg}^M}$ to deduce 
	\begin{align} \label{E:GAMMAGNORMINTERMSOFFRAMENORM}
		\| \upgamma \|_{C_{\newg}^M} 
		\lesssim \| \newg^{-1} \|_{C_{Frame}^0} \| \newg \|_{C_{Frame}^0}^{1/2} 
		\| \upgamma \|_{C_{Frame}^M}.
	\end{align}
	The desired bound for $\| \upgamma \|_{C_{\newg}^M}$ in \eqref{E:LITTLEGAMMACOMMUTEDSTRONGPOINTWISE}
	then follows from  \eqref{E:GAMMAGNORMINTERMSOFFRAMENORM},
	the already-established bound for $\| \upgamma \|_{C_{Frame}^M},$
	and the strong estimates \eqref{E:GCOMMUTEDSTRONGPOINTWISE} and \eqref{E:GINVERSECOMMUTEDSTRONGPOINTWISE}
	for $\| \newg - \Euc \|_{C_{Frame}^0}$ and $\| \newg^{-1} - \Euc^{-1} \|_{C_{Frame}^0}.$
	\\
	

	\noindent \textbf{Proof of \eqref{E:PARTIALTUCOMMUTEDPOINTWISE}-\eqref{E:UCOMMUTEDSTRONGPOINTWISE}}: 
	Let $|\vec{I}| \leq M \leq N - 4$ be a multi-index.
	To deduce the estimate \eqref{E:PARTIALTUCOMMUTEDPOINTWISE}, we have to estimate the terms in the evolution equation \eqref{AE:PARTIALTUJCOMMUTED}
	for $\partial_{\vec{I}} \newu^j.$
	The $\leftexp{(\vec{I});(Junk)}{\mathfrak{U}}^j$ term on the right-hand side of equation \eqref{AE:PARTIALTUJCOMMUTED} was suitably 
	bounded in \eqref{E:STRONGPOINTWISEJUNKNORMS}.	Using the bootstrap assumptions \eqref{E:HIGHBOOT}-\eqref{E:POTBOOT},
	we bound the term $- t^{-1/3}(\newg^{-1})^{ja} \partial_{\vec{I}} \dlap_a$ on the right-hand side by
	\begin{align}
		t^{-1/3} \left|(\newg^{-1})^{ja} \partial_{\vec{I}} \dlap_a \right| & \lesssim \epsilon t^{-1/3 - 2\upsigma}.
	\end{align}
	
	To estimate the term $t^{-1} \leftexp{(\vec{I});(Border)}{\mathfrak{U}}^j$ on the right-hand side of \eqref{AE:PARTIALTUJCOMMUTED},
	we use the bootstrap assumptions \eqref{E:HIGHBOOT}-\eqref{E:POTBOOT} and the strong estimates
	\eqref{E:GCOMMUTEDSTRONGPOINTWISE}, \eqref{E:GINVERSECOMMUTEDSTRONGPOINTWISE},
	\eqref{E:SECONDFUNDUPGRADEPOINTWISE}, \eqref{E:LITTLEGAMMACOMMUTEDSTRONGPOINTWISE},
	and \eqref{E:PCOMMUTEDSTRONGPOINTWISE} to deduce
	\begin{align}
		t^{-1} \left|\leftexp{(\vec{I});(Border)}{\mathfrak{U}}^j \right| 
			& \lesssim \epsilon t^{-1} \| \newu \|_{C_{Frame}^M} + \epsilon^2 t^{-1-c \epsilon},
	\end{align} 
	 which is clearly bounded by the right-hand side of \eqref{E:PARTIALTUCOMMUTEDPOINTWISE}.
	 
	We similarly estimate the last four terms on the \emph{left}-hand side of \eqref{AE:PARTIALTUJCOMMUTED}, thus
	arriving at the following inequalities:
	\begin{align}
		t^{1/3} \left|\big[1 + t^{4/3} \newlap\big] \big[1 + t^{4/3} \newg_{ab} \newu^a \newu^b\big]^{1/2} \newu^j \partial_c 
			\partial_{\vec{I}} \newu^c \right|
		& \lesssim \epsilon^2 t^{-1/3 - Z \upsigma}, \label{E:UJCOMMUTEDFIRSTTERMLEFTHAND} \\
		t^{5/3} \left| \frac{\big[1 + t^{4/3} \newlap\big]}{\big[1 + t^{4/3} \newg_{ab} \newu^a \newu^b\big]^{1/2}} 
			\newg_{cf}\newu^c  \newu^j \newu^e \partial_e \partial_{\vec{I}} \newu^f \right| 
		& \lesssim \epsilon^2 t^{1 - Z \upsigma}, \label{E:UJCOMMUTEDSECONDTERMLEFTHAND} \\	
			t^{1/3} \left| \frac{\big[1 + t^{4/3} \newlap\big] \newu^c \partial_c \partial_{\vec{I}} \newu^j}
			{\big[1 + t^{4/3} \newg_{ab} \newu^a \newu^b\big]^{1/2}} 
			\right|
		& \lesssim \epsilon^2 t^{-1/3 - Z \upsigma}, \label{E:UJCOMMUTEDTHIRDTERMLEFTHAND} \\	
			t^{-1} \left| \frac{\big[1 + t^{4/3} \newlap\big]
				\left\lbrace (\newg^{-1})^{jc} + t^{4/3} \newu^j \newu^c \right\rbrace \partial_c \partial_{\vec{I}} 	
				\adjustednewp}{2\big[1 + t^{4/3} \newg_{ab} \newu^a \newu^b\big]^{1/2} 
			\big[\adjustednewp + \frac{1}{3} \big]} \right|
		& \lesssim \epsilon t^{-1 - c \epsilon}. \label{E:UJCOMMUTEDFOURTHTERMLEFTHAND}
	\end{align}
	Note in particular that \eqref{E:UJCOMMUTEDFOURTHTERMLEFTHAND} involves a linear term, which yields only a single power of 
	$\epsilon$ on the right-hand side of \eqref{E:UJCOMMUTEDFOURTHTERMLEFTHAND}. Combining the above estimates, we arrive at inequality 
	\eqref{E:PARTIALTUCOMMUTEDPOINTWISE}.
	
	To prove \eqref{E:UCOMMUTEDSTRONGPOINTWISE}, we integrate inequality \eqref{E:PARTIALTUCOMMUTEDPOINTWISE}
	in time, use the trivial estimate $s^{-1 - c \epsilon} \leq s^{-1 - c \sqrt{\epsilon}}$
	for $s \in (0,1],$ and use the small-data estimate
	$\| \newu \|_{C_{Frame}^{N-4}}(1) \leq C \epsilon,$
	thereby arriving at the following inequality, which is valid for 
	$M \leq N-4:$ 
	\begin{align} \label{E:UMGRONWALLREADY}
		\| \newu \|_{C_{Frame}^M}(t)
		& \leq 	\| \newu \|_{C_{Frame}^M}(1) 
		+ C \epsilon \int_{s = t}^1  s^{-1 - c \sqrt{\epsilon}} \, ds
		+ c \epsilon \int_{s = t}^1 s^{-1} \| \newu \|_{C_{Frame}^M}(s) \, ds
		 \\
		& \leq C \sqrt{\epsilon} t^{-1 - c \sqrt{\epsilon}}
			+ c \epsilon \int_{s = t}^1  \| \newu \|_{C_{Frame}^M}(s) \, ds.
			\notag
	\end{align}
	Applying Gronwall's inequality to \eqref{E:UMGRONWALLREADY}, we deduce 
	\begin{align}  \label{E:UCOMMUTEDSTRONGPOINTWISEPROOF}
		\| \newu \|_{C_{Frame}^M} & \leq  C \sqrt{\epsilon} t^{-c \sqrt{\epsilon}}. 
	\end{align}
	We have thus bounded the term $\| \newu \|_{C_{Frame}^M}$ on the left-hand side of \eqref{E:UCOMMUTEDSTRONGPOINTWISE}
	by the right-hand side of \eqref{E:UCOMMUTEDSTRONGPOINTWISE}. 
	
	To obtain the desired bound for the term $\| \newu \|_{C_{\newg}^M},$ we first
	use the definition \eqref{E:CGMFRAMENORMDEF} of the norm $\| \cdot \|_{C_{\newg}^M}$ to deduce the inequality
	\begin{align} \label{E:GNORMUINTERMSOFFRAME}
		\| \newu \|_{C_{\newg}^M} 
		\leq 
		\| \newg \|_{C_{Frame}^0}^{1/2}
		\| \newu \|_{C_{Frame}^M}.
	\end{align}
	We then insert the bound \eqref{E:UCOMMUTEDSTRONGPOINTWISE} for $\| \newu \|_{C_{Frame}^M}$ 
	and the strong estimate \eqref{E:GCOMMUTEDSTRONGPOINTWISE} for $\| \newg - \Euc \|_{C_{Frame}^0}$ 
	into \eqref{E:GNORMUINTERMSOFFRAME}, which yields the desired estimate for 
	$\| \newu \|_{C_{\newg}^M}$ in \eqref{E:UCOMMUTEDSTRONGPOINTWISE}. 
	\\
	
	\noindent \textbf{Proof of \eqref{E:LAPSECOMMUTEDSTRONGPOINTWISE}-\eqref{E:GNORMLAPSECOMMUTEDSTRONGPOINTWISE}
		and \eqref{E:PARTIALTLAPSESTRONGPOINTWISE}}:
	To prove \eqref{E:LAPSECOMMUTEDSTRONGPOINTWISE}, we will apply the maximum principle
	to the elliptic PDE \eqref{AE:ALTERNATELAPSEICOMMUTEDLOWER} verified by $\partial_{\vec{I}} \newlap.$ 
	To this end, we first bound the inhomogeneous terms $\leftexp{(\vec{I});(Border)}{\widetilde{\mathfrak{N}}}$
	and $\leftexp{(\vec{I});(Junk)}{\widetilde{\mathfrak{N}}}$ on the right-hand side of \eqref{AE:ALTERNATELAPSEICOMMUTEDLOWER} 
	\emph{using only the strong estimates 
	\eqref{E:GCOMMUTEDSTRONGPOINTWISE}, 
	\eqref{E:GINVERSECOMMUTEDSTRONGPOINTWISE},
	\eqref{E:SECONDFUNDUPGRADEPOINTWISE},
	\eqref{E:LITTLEGAMMACOMMUTEDSTRONGPOINTWISE},
	\eqref{E:PCOMMUTEDSTRONGPOINTWISE}, 
	\eqref{E:UCOMMUTEDSTRONGPOINTWISE}, 
	together with the bootstrap assumptions $t^{2/3} \| \newlap \|_{H^{N-1}} + \| \dlap \|_{H_{Frame}^{N-2}} \lesssim \epsilon t^{- \upsigma}.$} 
	This results in the following bounds for $M \leq N - 5:$
	\begin{align} \label{E:WIDETILDENESTIMATEBORDER}
		\sum_{|\vec{I}| \leq M} \left\| \leftexp{(\vec{I});(Border)}{\widetilde{\mathfrak{N}}} \right\|_{C^0}
		& \lesssim \epsilon t^{- c \sqrt{\epsilon}},	\\
		\sum_{|\vec{I}| \leq M} \left\| \leftexp{(\vec{I});(Junk)}{\widetilde{\mathfrak{N}}} \right\|_{C^0}
		& \lesssim \epsilon t^{- \upsigma - c \sqrt{\epsilon}}. 
		\label{E:WIDETILDENESTIMATEJUNK}
	\end{align} 
	
	From similar reasoning, it follows that the term $\widetilde{f}$ from \eqref{E:TILDEELLIPTICOPERATORJUNKTERM} can be bounded 
	by
	\begin{align} \label{E:WIDETILDEFBOUND}
		\| \widetilde{f} \|_{C^0} & \lesssim \epsilon.
	\end{align}
	
	Then by examining equation \eqref{AE:ALTERNATELAPSEICOMMUTEDLOWER}
	and applying the maximum principle for the operator $t^{4/3} (\newg^{-1})^{ab} \partial_a \partial_b,$ 
	we see that at a maximum point for $\partial_{\vec{I}} \newlap,$ we must have
	$(1 + \widetilde{f})\partial_{\vec{I}} \newlap \leq - \leftexp{(\vec{I});(Border)}{\widetilde{\mathfrak{N}}} 
		- t^{2/3} \leftexp{(\vec{I});(Junk)}{\widetilde{\mathfrak{N}}}.$ Similarly, at a 
	minimum, we have $(1 + \widetilde{f}) \partial_{\vec{I}} \newlap \geq - \leftexp{(\vec{I});(Border)}{\widetilde{\mathfrak{N}}}
	- t^{2/3} \leftexp{(\vec{I});(Junk)}{\widetilde{\mathfrak{N}}}.$
	Thus, 
	\begin{align} \label{E:LAPSEMAXIMUMPRINCIPLEBOUND}
		\| \newlap \|_{C^M} \lesssim
			\sum_{|\vec{I}| \leq M} \left\| \leftexp{(\vec{I});(Border)}{\widetilde{\mathfrak{N}}}
				+ t^{2/3} \leftexp{(\vec{I});(Junk)}{\widetilde{\mathfrak{N}}} \right\|_{C^0}.
	\end{align}
	Combining \eqref{E:WIDETILDENESTIMATEBORDER}-\eqref{E:WIDETILDENESTIMATEJUNK} and \eqref{E:LAPSEMAXIMUMPRINCIPLEBOUND}, 
	we deduce that the following estimate holds for $M \leq N - 5:$
	\begin{align} \label{E:NEWLAPBOUNDEDPROOF}
		\| \newlap \|_{C^M} & \leq \epsilon t^{- c \sqrt{\epsilon}}.
	\end{align}
	Also taking into account \eqref{E:GINVERSECOMMUTEDSTRONGPOINTWISE} and the fact that 
	$\dlap = t^{2/3} \partial \newlap,$ we have proved \eqref{E:LAPSECOMMUTEDSTRONGPOINTWISE} and
	\eqref{E:GNORMLAPSECOMMUTEDSTRONGPOINTWISE} for the quantity $\| \dlap \|_{C_{Frame}^M}.$
	To deduce the desired bound for \eqref{E:GNORMLAPSECOMMUTEDSTRONGPOINTWISE} $\| \dlap \|_{C_{\newg}^M},$
	we first use the definition \eqref{E:CGMFRAMENORMDEF} of the norm $\| \cdot \|_{C_{\newg}^M}$ to deduce the inequality
	\begin{align} \label{E:GNORMDLAPINTERMSOFFRAMEDLAP}
		\| \dlap \|_{C_{\newg}^M} \leq  \| \newg^{-1} \|_{C_{Frame}^0}^{1/2} \| \dlap \|_{C_{Frame}^M}.
	\end{align}
	We then insert the bound \eqref{E:GNORMLAPSECOMMUTEDSTRONGPOINTWISE} for $\| \dlap \|_{C_{Frame}^M}$
	and the strong estimate \eqref{E:GINVERSECOMMUTEDSTRONGPOINTWISE} for $\| \newg^{-1} - \Euc^{-1} \|_{C_{Frame}^0}$ 
	into \eqref{E:GNORMDLAPINTERMSOFFRAMEDLAP}, which yields the desired estimate for $\| \dlap \|_{C_{\newg}^M}.$ 
	We remark that the restriction $M \leq N - 5$ comes from the 
	terms in $\leftexp{(\vec{I});(Border)}{\widetilde{\mathfrak{N}}}$
	that depend on $|\vec{I}| + 1$ derivatives of $\upgamma.$ These terms can be estimated (in part) using the strong estimate
	\eqref{E:LITTLEGAMMACOMMUTEDSTRONGPOINTWISE} whenever $|\vec{I}| \leq N - 5.$
	 
	 The proof of \eqref{E:PARTIALTLAPSESTRONGPOINTWISE} is similar to the proof of \eqref{E:LAPSECOMMUTEDSTRONGPOINTWISE},
	 so we only give partial details. Upon commuting \eqref{AE:ALTERNATELAPSEICOMMUTEDLOWER} with $\partial_t,$ we see that
	 the quantity $\partial_t \newlap$ verifies an elliptic PDE of the form 
	 \eqref{AE:ALTERNATELAPSEICOMMUTEDLOWER}, where
	 the differential operator $\partial_{\vec{I}}$ is replaced with $\partial_t.$ The previously proven 
	 estimates imply that at the points of maximum and minimum for $\partial_t \newlap$
	 (where $\partial_t \dlap_i = \frac{2}{3}t^{-1/3} \partial_i \newlap$),
	 the inhomogeneous terms in the elliptic PDE can be bounded in $C^0$ by 
	 $\lesssim \epsilon t^{-1 - c \sqrt{\epsilon}}.$ 
	 The desired estimate \eqref{E:PARTIALTLAPSESTRONGPOINTWISE} thus follows in the same way that \eqref{E:NEWLAPBOUNDEDPROOF} follows from 
	 \eqref{E:LAPSEMAXIMUMPRINCIPLEBOUND}.
	 
\end{proof}

\section{Preliminary Sobolev Estimates for the Lapse and the Key Coercive Quadratic Integral} \label{S:LAPSEKEYLINEAR}

In this section, we use the strong estimates of Prop.~\ref{P:STRONGPOINTWISE}
to derive some preliminary Sobolev estimates for the lapse variables $\newlap$ and $\dlap_i.$ The first lemma
and the corresponding corollary provide standard elliptic $L^2-$type bounds for these variables in terms of the 
inhomogeneous terms in the PDEs that they satisfy. 

In contrast, the estimates of Prop.~\ref{P:KEYSIGNED}
are subtle and rely on the special structure of the Einstein equations in CMC-transported spatial coordinates. 
In Prop.~\ref{P:KEYSIGNED}, we analyze the spatial integral of the term $- \frac{2}{3} t^{1/3} \dot{\adjustednewp} \dot{\newlap}$
on the right-hand side of the expression \eqref{E:DIVFLUIDJ} for the divergence of the fluid energy current. This integral
will appear in Sect.~\ref{S:FUNDAMENATLENERGYINEQUALITIES} during our derivation of our main energy integral inequalities for the fluid.
In order to bound this quadratic integral, we use the lapse + Hamiltonian constraint equation \eqref{E:LAPSERESCALEDELLIPTIC} to substitute
for $\dot{\adjustednewp}.$ After integrating by parts, \textbf{we will discover that the resulting integral identity 
is coercive (with a good sign ``towards the past'') in the lapse variables.} This key estimate, which will lead to suitable 
control for $\newlap$ and $\dlap_i,$ is one of the main reasons that we are able to prove our main stable singularity formation theorem.
We think of this estimate as ``extra control of the lapse that comes for free from the fluid estimates''
because of the special structure of the equations.

We now derive the first lemma and its corollary.

\begin{lemma} [\textbf{Negative definiteness of the operator $\mathcal{L}$}] \label{L:NEGATIVEDEFINITE}
	Assume that the hypotheses and conclusions of Prop.~\ref{P:STRONGPOINTWISE} hold on the spacetime slab $(T,1] \times \mathbb{T}^3.$
	Then there exist a small constant $\upsigma_N > 0$ 
	and a large constant $C > 0$ 
	such that if $\epsilon \leq \upsigma \leq \upsigma_N,$ 
	then the operator 
	$\mathcal{L} := t^{4/3} (\newg^{-1})^{ab} \partial_a \partial_b - (1 + f)$  
	defined in \eqref{E:LDEF} is negative definite in the following sense for $t \in (T,1]:$
	\begin{align} \label{E:LNEGDEF}
		\int_{\Sigma_t} 
			\big(\partial_{\vec{I}} \newlap \big) \mathcal{L} \partial_{\vec{I}} \newlap  \, dx 
		& \leq - (1 - C \epsilon) \int_{\Sigma_t}  \Big|\partial_{\vec{I}} \dlap \big|_{\newg}^2 \, dx 
			- (1 - C \epsilon) \int_{\Sigma_t} \Big|\partial_{\vec{I}} \newlap \Big|^2 \, dx.
	\end{align}
	
\end{lemma}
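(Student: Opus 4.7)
The plan is to integrate by parts in the leading-order term and then absorb every error term into the two coercive integrals on the right-hand side using the strong $C_{\newg}^M$ estimates of Prop.~\ref{P:STRONGPOINTWISE}. The key identity I would exploit is the relation $\partial_i \partial_{\vec{I}} \newlap = t^{-2/3} \partial_{\vec{I}} \dlap_i$, which follows from Def.~\ref{D:RESCALEDVAR} (recall $\newlap = t^{-4/3}(n-1)$ and $\dlap_i = t^{-2/3} \partial_i n$) and the fact that $\partial_{\vec{I}}$ involves only spatial derivatives. This identity converts the Dirichlet-type integral produced by integration by parts directly into $\int_{\Sigma_t} |\partial_{\vec{I}} \dlap|_{\newg}^2 \, dx$ with the correct coefficient.

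Concretely, I would split
\begin{align*}
    \int_{\Sigma_t} (\partial_{\vec{I}} \newlap) \mathcal{L} \partial_{\vec{I}} \newlap \, dx
    = \int_{\Sigma_t} t^{4/3} (\newg^{-1})^{ab} (\partial_{\vec{I}} \newlap) \partial_a \partial_b \partial_{\vec{I}} \newlap \, dx
    - \int_{\Sigma_t} (1+f) (\partial_{\vec{I}} \newlap)^2 \, dx.
\end{align*}
Since $\Sigma_t = \mathbb{T}^3$ has no boundary, I would integrate by parts in the first integral to obtain
\begin{align*}
    -\int_{\Sigma_t} t^{4/3} (\newg^{-1})^{ab} (\partial_a \partial_{\vec{I}} \newlap)(\partial_b \partial_{\vec{I}} \newlap) \, dx
    - \int_{\Sigma_t} t^{4/3} (\partial_{\vec{I}} \newlap) [\partial_a (\newg^{-1})^{ab}] \partial_b \partial_{\vec{I}} \newlap \, dx,
\end{align*}
and rewrite the first piece, using the identity above, as $-\int_{\Sigma_t} |\partial_{\vec{I}} \dlap|_{\newg}^2 \, dx$.

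There are then two error contributions to handle. The cubic term involving $\partial_a(\newg^{-1})^{ab}$ can be rewritten, via the same identity, as a $t^{2/3}$-weighted integral of $(\partial_{\vec{I}} \newlap) [\partial_a(\newg^{-1})^{ab}] (\partial_{\vec{I}} \dlap)_b$; by \eqref{E:GINVERSECOMMUTEDSTRONGPOINTWISE} we have $\|\partial \newg^{-1}\|_{C_{Frame}^0} \lesssim \epsilon t^{-c\epsilon}$, and Cauchy--Schwarz (plus the comparability of $|\cdot|_{Frame}$ and $|\cdot|_{\newg}$, which is a consequence of \eqref{E:GCOMMUTEDSTRONGPOINTWISE}--\eqref{E:GINVERSECOMMUTEDSTRONGPOINTWISE}) then controls this error by $C\epsilon \int_{\Sigma_t}(\partial_{\vec{I}}\newlap)^2 \, dx + C\epsilon \int_{\Sigma_t} |\partial_{\vec{I}}\dlap|_{\newg}^2 \, dx$ after using $t^{2/3 - c\epsilon} \leq 1$. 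For the contribution of $f$, I would use \eqref{E:ELLIPTICOPERATORJUNKTERM} together with the strong estimates \eqref{E:SECONDFUNDUPGRADEPOINTWISEGNORM} (giving $|\freenewsec_{\ b}^a \freenewsec_{\ a}^b| \lesssim \epsilon^2$), \eqref{E:PCOMMUTEDSTRONGPOINTWISE} (giving $|\adjustednewp| \lesssim \epsilon$), and \eqref{E:UCOMMUTEDSTRONGPOINTWISE} (giving $t^{4/3}|\newu|_{\newg}^2 \lesssim \epsilon t^{4/3 - c\sqrt{\epsilon}} \leq \epsilon$) to conclude $\|f\|_{C^0} \lesssim \epsilon$, so the $-(1+f)$ term yields the desired $(1 - C\epsilon)$ coefficient on $\int_{\Sigma_t}(\partial_{\vec{I}}\newlap)^2 \, dx$.

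I do not anticipate a serious obstacle here: the argument is essentially the textbook energy estimate for a strictly elliptic operator on a compact manifold, and all the analytic input needed (the pointwise smallness of $f$ and of $\partial\newg^{-1}$) is already encoded in the strong estimates of Prop.~\ref{P:STRONGPOINTWISE}. The only mild subtlety is ensuring that the constant in the cross term is genuinely small rather than order one, and this is why I would apply Young's inequality with a carefully chosen weight so that the coefficients of both $\int (\partial_{\vec{I}}\newlap)^2\, dx$ and $\int |\partial_{\vec{I}}\dlap|_{\newg}^2\, dx$ pick up only an $O(\epsilon)$ perturbation, matching the $(1 - C\epsilon)$ structure stated in \eqref{E:LNEGDEF}.
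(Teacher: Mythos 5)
Your argument is correct and follows essentially the same route as the paper's own proof: integrate by parts on the torus, use $\partial_i \partial_{\vec{I}} \newlap = t^{-2/3}\partial_{\vec{I}}\dlap_i$ to identify the Dirichlet integral with $\int |\partial_{\vec{I}}\dlap|_{\newg}^2$, bound $\|f\|_{C^0} \lesssim \epsilon$ via the strong estimates of Prop.~\ref{P:STRONGPOINTWISE}, and absorb the $\partial_a(\newg^{-1})^{ab}$ cross term with a weighted Cauchy--Schwarz. No gaps.
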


\begin{proof}
	We first integrate by parts in the integral on the left-hand side of \eqref{E:LNEGDEF} to obtain
	\begin{align}  \label{E:LAPSENEGDEFID}
		\int_{\Sigma_t} 
			\big(\partial_{\vec{I}} \newlap \big) \mathcal{L} \partial_{\vec{I}} \newlap  \, dx 
		& = -  t^{4/3} \int_{\Sigma_t}  \Big|\partial_{\vec{I}} \partial \newlap \big|_{\newg}^2 \, dx 
			-  \int_{\Sigma_t} \Big|\partial_{\vec{I}} \newlap \Big|^2 \, dx \\
		& \ \ - \int_{\Sigma_t} f \Big|\partial_{\vec{I}} \newlap \Big|^2 \, dx 
			- t^{4/3} \int_{\Sigma_t} \left\lbrace \partial_a (\newg^{-1})^{ab} \right\rbrace
			\Big(\partial_{\vec{I}} \newlap \Big) \Big(\partial_b \partial_{\vec{I}} \newlap \Big) \, dx. 
			\notag
	\end{align}
	The term $f$ from \eqref{E:ELLIPTICOPERATORJUNKTERM} can be bounded as follows by using
	\eqref{E:GCOMMUTEDSTRONGPOINTWISE},
	\eqref{E:SECONDFUNDUPGRADEPOINTWISE}, 
	\eqref{E:PCOMMUTEDSTRONGPOINTWISE}, and \eqref{E:UCOMMUTEDSTRONGPOINTWISE}:
	\begin{align} \label{E:LAPSEOPERATORJUNKTERMBOUND}
		|f| & \leq C \epsilon.
	\end{align}
	From the $\newg-$Cauchy-Schwarz inequality and the estimates  
	\eqref{E:GINVERSECOMMUTEDSTRONGPOINTWISE} and \eqref{E:LAPSEOPERATORJUNKTERMBOUND}, it follows that the magnitude of the last three integrals
	on the right-hand side of \eqref{E:LAPSENEGDEFID} can be bounded by
	\begin{align} \label{E:LAPSEIBPJUNKESTIMATE}
			\lesssim \epsilon \left\| \partial_{\vec{I}} \newlap \right\|_{L^2}^2
			+ \epsilon t^{4/3} \left\| \Big| \partial_{\vec{I}} \partial \newlap \Big|_{\newg} \right\|_{L^2}^2.
	\end{align}
	The desired estimate \eqref{E:LNEGDEF} follows from \eqref{E:LAPSENEGDEFID}, \eqref{E:LAPSEIBPJUNKESTIMATE}, and
	the fact that $\partial_i \newlap = t^{-2/3} \dlap_i.$
		
\end{proof}

\begin{corollary}[\textbf{Preliminary bound for the lapse}] \label{C:PRELIMINARYLAPSEBOUND}
	Assume that the hypotheses and conclusions of Prop.~\ref{P:STRONGPOINTWISE} hold on the spacetime slab $(T,1] \times \mathbb{T}^3.$
	In particular, assume that $\newlap$ verifies the lapse equations \eqref{E:LAPSERESCALEDELLIPTIC}-\eqref{E:LAPSELOWERDERIVATIVES}, 
	and let $t^{-4/3} \leftexp{(\vec{I});(Border)}{\mathfrak{N}} + \leftexp{(\vec{I});(Junk)}{\mathfrak{N}}$
	and $\leftexp{(\vec{I});(Border)}{\widetilde{\mathfrak{N}}} + t^{2/3} \leftexp{(\vec{I});(Junk)}{\widetilde{\mathfrak{N}}}$
	be the inhomogeneous terms from the right-hand sides of the $\partial_{\vec{I}}-$commuted lapse equations 
	\eqref{AE:LAPSEICOMMUTED} and \eqref{AE:ALTERNATELAPSEICOMMUTEDLOWER}. 
	Then there exists a small constant $\upsigma_N > 0$  
	such that if $\epsilon \leq \upsigma \leq \upsigma_N,$ 
	then the following estimates hold for $t \in (T,1]:$
	\begin{subequations}
	\begin{align} 
		\left\| \partial_{\vec{I}} \newlap \right\|_{L^2}
		+ \left\| \Big| \partial_{\vec{I}} \dlap \Big|_{\newg} \right\|_{L^2}
		+ t^{2/3} \left\| \Big| \partial_{\vec{I}} \partial \dlap \Big|_{\newg} \right\|_{L^2}
		& \lesssim t^{-4/3} \left\| \partial_{\vec{I}} \adjustednewp \right\|_{L^2} & &
			\label{E:LAPSEICOMMUTEDINITIALELLIPTICESTIMATE} \\
		& \ \ + t^{-4/3} \left\| \leftexp{(\vec{I});(Border)}{\mathfrak{N}} \right\|_{L^2}
			+ \left\| \leftexp{(\vec{I});(Junk)}{\mathfrak{N}} \right\|_{L^2}, & & (|\vec{I}| \leq N),
		\notag \\
	\left\| \partial_{\vec{I}} \newlap \right\|_{L^2}
		+ \left\| \Big| \partial_{\vec{I}} \dlap \Big|_{\newg} \right\|_{L^2}
		& \lesssim  \left\| \leftexp{(\vec{I});(Border)}{\widetilde{\mathfrak{N}}} \right\|_{L^2}
			+ 	t^{2/3} \left\| \leftexp{(\vec{I});(Junk)}{\widetilde{\mathfrak{N}}} \right\|_{L^2}, & & (|\vec{I}| \leq N - 1). 
		\label{E:ALTERNATELAPSEICOMMUTEDINITIALELLIPTICESTIMATE}
	\end{align} 
	\end{subequations}
		
\end{corollary}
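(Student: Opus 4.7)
The plan is to prove both inequalities by the standard $L^2$ energy method for the elliptic operator $\mathcal{L}$. For each commuted lapse equation I would test against $\partial_{\vec{I}}\newlap$ and integrate over $\Sigma_t \simeq \mathbb{T}^3$. Lemma~\ref{L:NEGATIVEDEFINITE} then supplies the coercive left-hand side $(1-C\epsilon)\bigl(\|\partial_{\vec{I}}\newlap\|_{L^2}^2 + \||\partial_{\vec{I}}\dlap|_{\newg}\|_{L^2}^2\bigr)$, and Cauchy--Schwarz applied to the inhomogeneous right-hand side, followed by absorption of $\|\partial_{\vec{I}}\newlap\|_{L^2}$, yields the basic bound $\|\partial_{\vec{I}}\newlap\|_{L^2} + \||\partial_{\vec{I}}\dlap|_{\newg}\|_{L^2} \lesssim \|F\|_{L^2}$, where $F$ denotes the inhomogeneous term of the PDE in question.

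For \eqref{E:LAPSEICOMMUTEDINITIALELLIPTICESTIMATE}, applying this directly to \eqref{AE:LAPSEICOMMUTED} produces the $\|\partial_{\vec{I}}\newlap\|_{L^2} + \||\partial_{\vec{I}}\dlap|_{\newg}\|_{L^2}$ piece with $F = 2t^{-4/3}\partial_{\vec{I}}\adjustednewp + t^{-4/3}\leftexp{(\vec{I});(Border)}{\mathfrak{N}} + \leftexp{(\vec{I});(Junk)}{\mathfrak{N}}$ for $|\vec{I}| \leq N$ (the commutators between $\partial_{\vec{I}}$ and $\mathcal{L}$ have already been absorbed into the border and junk terms by construction). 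To upgrade this to the second-derivative piece $t^{2/3}\||\partial_{\vec{I}}\partial\dlap|_{\newg}\|_{L^2} = t^{4/3}\||\partial^2\partial_{\vec{I}}\newlap|_{\newg}\|_{L^2}$ (using $\dlap = t^{2/3}\partial\newlap$), I would rewrite \eqref{AE:LAPSEICOMMUTED} as $t^{4/3}(\newg^{-1})^{ab}\partial_a\partial_b\partial_{\vec{I}}\newlap = (1+f)\partial_{\vec{I}}\newlap + F$, square both sides, integrate, and integrate by parts twice on the left. The resulting $H^2$-type elliptic estimate $t^{4/3}\||\partial^2\partial_{\vec{I}}\newlap|_{\newg}\|_{L^2} \lesssim \|F\|_{L^2} + \|\partial_{\vec{I}}\newlap\|_{L^2}$ combines with the basic $L^2$ bound to give \eqref{E:LAPSEICOMMUTEDINITIALELLIPTICESTIMATE}.

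The second estimate \eqref{E:ALTERNATELAPSEICOMMUTEDINITIALELLIPTICESTIMATE} is obtained in exactly the same way from the alternate commuted equation \eqref{AE:ALTERNATELAPSEICOMMUTEDLOWER}, whose right-hand side is $\leftexp{(\vec{I});(Border)}{\widetilde{\mathfrak{N}}} + t^{2/3}\leftexp{(\vec{I});(Junk)}{\widetilde{\mathfrak{N}}}$; no second-derivative bound is required here, so only the basic testing step is needed. The restriction $|\vec{I}| \leq N - 1$ is forced because the alternate form \eqref{E:LAPSELOWERDERIVATIVES} already contains one spatial derivative of $\upgamma$ (hidden inside $\leftexp{(Border)}{\widetilde{\mathfrak{N}}}$ via the Hamiltonian-constraint substitution for the scalar curvature), so commuting $N$ times would require $\partial^{N+1}\upgamma$, which is not controlled by $\highnorm{N}$. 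The main technical obstacle is the $H^2$-regularity step in the proof of \eqref{E:LAPSEICOMMUTEDINITIALELLIPTICESTIMATE}: one must show that $\||\partial^2\partial_{\vec{I}}\newlap|_{\newg}\|_{L^2}^2 \lesssim \|(\newg^{-1})^{ab}\partial_a\partial_b\partial_{\vec{I}}\newlap\|_{L^2}^2 + (\text{lower order})$, which follows from integration by parts once one observes that the error terms generated by $\partial(\newg^{-1})$ are controlled by $\|\partial\newg^{-1}\|_{C_{Frame}^0} \lesssim \epsilon t^{-c\epsilon}$ from Prop.~\ref{P:STRONGPOINTWISE} and are absorbed using the already-established basic $L^2$ bound.
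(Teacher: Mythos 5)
Your proposal is correct and follows essentially the same route as the paper: testing \eqref{AE:LAPSEICOMMUTED} and \eqref{AE:ALTERNATELAPSEICOMMUTEDLOWER} against $\partial_{\vec{I}}\newlap$, invoking Lemma~\ref{L:NEGATIVEDEFINITE} for coercivity, absorbing the $\|\partial_{\vec{I}}\newlap\|_{L^2}^2$ piece produced by Cauchy--Schwarz, and then deriving the top-order bound $t^{4/3}\||\partial^2\partial_{\vec{I}}\newlap|_{\newg}\|_{L^2}$ by isolating $t^{4/3}(\newg^{-1})^{ab}\partial_a\partial_b\partial_{\vec{I}}\newlap$ from the PDE, squaring and integrating, and handling the $\partial(\newg^{-1})$-commutator terms from the integration by parts via \eqref{E:GINVERSECOMMUTEDSTRONGPOINTWISE}. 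Your explanation of the $|\vec{I}|\le N-1$ restriction in \eqref{E:ALTERNATELAPSEICOMMUTEDINITIALELLIPTICESTIMATE} (one extra derivative on $\upgamma$ hidden in $\leftexp{(Border)}{\widetilde{\mathfrak{N}}}$) also matches the paper's reasoning.
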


\begin{proof}
	Using equation \eqref{AE:LAPSEICOMMUTED} and Cauchy-Schwarz, 
	we estimate the left-hand side of \eqref{E:LNEGDEF} by
	\begin{align} \label{E:LAPSEICOMMUTEDINHOML2ESTIMATE}
		\left|\int_{\Sigma_t}
			\big(\partial_{\vec{I}} \newlap \big) \mathcal{L} \partial_{\vec{I}} \newlap \, dx \right|
		& \leq \frac{1}{2} \left\| \partial_{\vec{I}} \newlap \right\|_{L^2}^2 
			+ C t^{-8/3} \left\| \partial_{\vec{I}} \adjustednewp \right\|_{L^2}^2
			\\
		& \ \ + C t^{-8/3} \left\| \leftexp{(\vec{I});(Border)}{\mathfrak{N}} \right\|_{L^2}^2
		+ C \left\| \leftexp{(\vec{I});(Junk)}{\mathfrak{N}} \right\|_{L^2}^2. \notag
	\end{align}
	The desired estimate for the first two terms
	on the left-hand side of \eqref{E:LAPSEICOMMUTEDINITIALELLIPTICESTIMATE}
	now follows from \eqref{E:LNEGDEF}
	and \eqref{E:LAPSEICOMMUTEDINHOML2ESTIMATE}. The estimate
	\eqref{E:ALTERNATELAPSEICOMMUTEDINITIALELLIPTICESTIMATE} follows similarly with
	the help of equation \eqref{AE:ALTERNATELAPSEICOMMUTEDLOWER}. 
	
	To derive the estimate \eqref{E:LAPSEICOMMUTEDINITIALELLIPTICESTIMATE}
	for $\left\| \Big| \partial_{\vec{I}} \partial \dlap \Big|_{\newg} \right\|_{L^2},$ 
	we first use integration by parts to deduce
	\begin{align} \label{E:LAPSETOPDERIVATIVEIBPIDENTITY}
		  \int_{\Sigma_t} &	(\newg^{-1})^{ab} (\newg^{-1})^{cd}
			(\partial_a \partial_c \partial_{\vec{I}} \newlap)
			(\partial_b \partial_d \partial_{\vec{I}} \newlap)  \, dx 
			\\
		& =  \int_{\Sigma_t} \Big| (\newg^{-1})^{ab} 
				\partial_a \partial_b \partial_{\vec{I}} \newlap \Big|^2
				\, dx 
				\notag \\
			& \ \ +  \int_{\Sigma_t} \left(\partial_b \left\lbrace (\newg^{-1})^{ab} (\newg^{-1})^{cd}
					\right\rbrace \right) (\partial_a \partial_{\vec{I}} \newlap) 
					(\partial_c \partial_d \partial_{\vec{I}} \newlap) \, dx 
					\notag \\
			& \ \ -  \int_{\Sigma_t} \left( \partial_c \left\lbrace (\newg^{-1})^{ab} (\newg^{-1})^{cd} \right\rbrace \right) 
				(\partial_a \partial_{\vec{I}} \newlap) (\partial_b \partial_d \partial_{\vec{I}} \newlap) \, dx. 
				\notag
	\end{align}
	From \eqref{E:LAPSETOPDERIVATIVEIBPIDENTITY}, the estimate 
	\eqref{E:GINVERSECOMMUTEDSTRONGPOINTWISE}, and $\newg-$Cauchy-Schwarz, it follows that
	\begin{align} \label{E:LAPSETOPDERIVATIVEIBPFIRSTESTIMATE}
		\int_{\Sigma_t} (\newg^{-1})^{ab} (\newg^{-1})^{cd}
			(\partial_a \partial_c \partial_{\vec{I}} \newlap)
			(\partial_b \partial_d \partial_{\vec{I}} \newlap)  \, dx
			& \leq C \int_{\Sigma_t} \Big| (\newg^{-1})^{ab} 
				\partial_a \partial_b \partial_{\vec{I}} \newlap \Big|^2
				\, dx  \\
			& \ \ + C \epsilon t^{- c \epsilon} \int_{\Sigma_t} (\newg^{-1})^{ab}
						(\partial_a \partial_{\vec{I}}\newlap) 
						(\partial_b \partial_{\vec{I}}\newlap) 
						\, dx. \notag
	\end{align}
	The second integral on the right-hand side of \eqref{E:LAPSETOPDERIVATIVEIBPFIRSTESTIMATE} has already 
	been suitably bounded. To deduce the desired estimate for $ \partial_{\vec{I}} \partial \dlap,$ 
	it remains to estimate the first integral on the right-hand side of 
	\eqref{E:LAPSETOPDERIVATIVEIBPFIRSTESTIMATE}. To this end, we first use equation \eqref{AE:LAPSEICOMMUTED}
	to deduce
	\begin{align} \label{E:LAPSETOPDERIVATIVEISOLATED}
		t^{4/3} (\newg^{-1})^{ab} \partial_a \partial_b \partial_{\vec{I}} \newlap
		& = (1 + f) \partial_{\vec{I}} \newlap 
			+ 2 t^{-4/3} \partial_{\vec{I}} \adjustednewp
			+ t^{-4/3} \leftexp{(\vec{I});(Border)}{\mathfrak{N}} 
			+ \leftexp{(\vec{I});(Junk)}{\mathfrak{N}}.
	\end{align}
	We now square both sides of \eqref{E:LAPSETOPDERIVATIVEISOLATED},
	integrate over $\mathbb{T}^3,$
	and use \eqref{E:LAPSEOPERATORJUNKTERMBOUND} to deduce
	\begin{align} \label{E:LAPSETOPDERIVATIVECAUCHYSCHWARZ}
		t^{8/3} & \int_{\Sigma_t} \Big| (\newg^{-1})^{ab} \partial_a \partial_b \partial_{\vec{I}} \newlap \Big|^2
			\, dx \\
		& \leq C \int_{\Sigma_t} \Big|\partial_{\vec{I}} \newlap \Big|^2 \, dx 
			+ C t^{-8/3} \int_{\Sigma_t} \Big|\partial_{\vec{I}} \adjustednewp \Big|^2 \, dx
			\notag \\
		& \ \ + C t^{-8/3} \int_{\Sigma_t} \Big|\leftexp{(\vec{I});(Border)}{\mathfrak{N}}\Big|^2 \, dx
			+ C \int_{\Sigma_t} \Big|\leftexp{(\vec{I});(Junk)}{\mathfrak{N}}\Big|^2 \, dx.
			\notag
	\end{align}
	We now combine \eqref{E:LAPSETOPDERIVATIVEIBPFIRSTESTIMATE}, \eqref{E:LAPSETOPDERIVATIVECAUCHYSCHWARZ} and the previously proven 
	estimates for the first two terms on the left-hand side of \eqref{E:LAPSEICOMMUTEDINITIALELLIPTICESTIMATE} to deduce
	the desired estimate for
	 $t^{2/3} \left\| \Big| \partial_{\vec{I}} \partial \dlap \Big|_{\newg} \right\|_{L^2}
	 = t^{4/3} \left\| \Big| \partial_{\vec{I}} \partial \partial \newlap \Big|_{\newg} \right\|_{L^2}:$
	\begin{align}
			t^{8/3} \int_{\Sigma_t} (\newg^{-1})^{ab} (\newg^{-1})^{cd}
			(\partial_a \partial_c \partial_{\vec{I}} \newlap)
			(\partial_b \partial_d \partial_{\vec{I}} \newlap)  \, dx
		& \leq C t^{-8/3} \int_{\Sigma_t} \Big|\partial_{\vec{I}} \adjustednewp \Big|^2 \, dx
				\notag \\
		& \ \ + C t^{-8/3} \int_{\Sigma_t} \Big|\leftexp{(\vec{I});(Border)}{\mathfrak{N}}\Big|^2 \, dx
			+ C \int_{\Sigma_t} \Big|\leftexp{(\vec{I});(Junk)}{\mathfrak{N}}\Big|^2 \, dx.
			\notag
	\end{align}
	
\end{proof}

We now provide the key proposition that will enable us to control the lapse variables.

\begin{proposition} [\textbf{The key coercive quadratic integral}] \label{P:KEYSIGNED}
	Assume that the hypotheses and conclusions of Prop.~\ref{P:STRONGPOINTWISE} hold on the spacetime slab $(T,1] \times \mathbb{T}^3.$
	In particular, assume that the lapse equation \eqref{E:LAPSERESCALEDELLIPTIC}
	(which was derived under the assumption that the Hamiltonian constraint \eqref{E:RINTERMSOFKPANDU} holds)
	is verified on the same slab. Then there exist 
	a small constant $\upsigma_N > 0$ 
	and a large constant $C > 0$ 
	such that if $\epsilon \leq \upsigma \leq \upsigma_N$ 
	and $\upalpha > 0$ is any positive constant,
	then the following estimate holds for $t \in (T,1]:$
	\begin{align} \label{E:KEYSIGNEDICOMMUTEDINTEGRALESTIMATE}
		- 2 t^{1/3} & \int_{\Sigma_t} 
			\big(\partial_{\vec{I}} \newlap \big) \big(\partial_{\vec{I}} \adjustednewp \big) \, dx
			\\
	& \geq (1 - C \epsilon) t^{5/3} \int_{\Sigma_t} \Big|\partial_{\vec{I}} \dlap \big|_{\newg}^2 \, dx  
		+ (1 - C \epsilon - \upalpha) t^{5/3} \int_{\Sigma_t} \big|\partial_{\vec{I}} \newlap \big|^2 \, dx 
		\notag \\
	& \ \ - \upalpha^{-1} t^{-1} \int_{\Sigma_t} \Big|\leftexp{(\vec{I});(Border)}{\mathfrak{N}} \Big|^2  \, dx
			- \upalpha^{-1} t^{5/3} \int_{\Sigma_t}  \Big|\leftexp{(\vec{I});(Junk)}{\mathfrak{N}} \Big|^2 \, dx.
		\notag
		\end{align}
\end{proposition}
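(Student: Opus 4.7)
The strategy is to use the commuted lapse/Hamiltonian equation to trade the factor $\partial_{\vec{I}} \adjustednewp$ in the integrand for $\mathcal{L}\partial_{\vec{I}} \newlap$ (plus inhomogeneous corrections), after which the coerciveness of the integral is an immediate consequence of Lemma~\ref{L:NEGATIVEDEFINITE}. More precisely, I would first commute equation \eqref{E:LAPSERESCALEDELLIPTIC} with $\partial_{\vec{I}}$ to obtain the pointwise identity
\begin{align*}
	2 \partial_{\vec{I}} \adjustednewp
	= t^{4/3} \, \mathcal{L} \partial_{\vec{I}} \newlap
	- \leftexp{(\vec{I});(Border)}{\mathfrak{N}}
	- t^{4/3} \leftexp{(\vec{I});(Junk)}{\mathfrak{N}},
\end{align*}
multiply both sides by $-t^{1/3}\partial_{\vec{I}} \newlap$, and integrate over $\Sigma_t$ to obtain
\begin{align*}
	-2 t^{1/3} \int_{\Sigma_t} (\partial_{\vec{I}} \newlap)(\partial_{\vec{I}} \adjustednewp) \, dx
	& = -t^{5/3} \int_{\Sigma_t} (\partial_{\vec{I}} \newlap) \, \mathcal{L} \partial_{\vec{I}} \newlap \, dx \\
	& \quad + t^{1/3} \int_{\Sigma_t} (\partial_{\vec{I}} \newlap) \leftexp{(\vec{I});(Border)}{\mathfrak{N}} \, dx
	+ t^{5/3} \int_{\Sigma_t} (\partial_{\vec{I}} \newlap) \leftexp{(\vec{I});(Junk)}{\mathfrak{N}} \, dx.
\end{align*}

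For the first integral on the right, I would invoke Lemma~\ref{L:NEGATIVEDEFINITE} (with a small adjustment of $\upsigma_N$ if needed) to conclude
\begin{align*}
	-t^{5/3}\int_{\Sigma_t} (\partial_{\vec{I}} \newlap) \mathcal{L} \partial_{\vec{I}} \newlap \, dx
	\geq (1 - C\epsilon) t^{5/3} \int_{\Sigma_t} \bigl|\partial_{\vec{I}} \dlap \bigr|_{\newg}^2 \, dx
	+ (1 - C\epsilon) t^{5/3} \int_{\Sigma_t} \bigl|\partial_{\vec{I}} \newlap \bigr|^2 \, dx,
\end{align*}
which produces precisely the two positive spacetime integrals appearing on the right-hand side of \eqref{E:KEYSIGNEDICOMMUTEDINTEGRALESTIMATE}. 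This is the heart of the argument and is where the signs conspire favorably: the negative definiteness of $\mathcal{L}$ (towards the past) is converted into coercive control of both $\dlap$ and $\newlap$.

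For the remaining two integrals (the inhomogeneous contributions), I would apply Young's inequality with parameter $\upalpha/2$, splitting the integrand as $t^{1/3}(\partial_{\vec{I}} \newlap) \leftexp{(\vec{I});(Border)}{\mathfrak{N}} = \bigl(t^{5/6}\partial_{\vec{I}} \newlap\bigr) \bigl(t^{-1/2}\leftexp{(\vec{I});(Border)}{\mathfrak{N}}\bigr)$ in the first case and grouping the factor $t^{5/3}$ symmetrically in the second case. This yields
\begin{align*}
	\left| t^{1/3} \int_{\Sigma_t} (\partial_{\vec{I}} \newlap) \leftexp{(\vec{I});(Border)}{\mathfrak{N}} \, dx \right|
	& \leq \frac{\upalpha}{2} t^{5/3} \int_{\Sigma_t} \bigl|\partial_{\vec{I}} \newlap \bigr|^2 \, dx
		+ \frac{1}{2\upalpha} t^{-1} \int_{\Sigma_t} \bigl|\leftexp{(\vec{I});(Border)}{\mathfrak{N}}\bigr|^2 \, dx, \\
	\left| t^{5/3} \int_{\Sigma_t} (\partial_{\vec{I}} \newlap) \leftexp{(\vec{I});(Junk)}{\mathfrak{N}} \, dx \right|
	& \leq \frac{\upalpha}{2} t^{5/3} \int_{\Sigma_t} \bigl|\partial_{\vec{I}} \newlap \bigr|^2 \, dx
		+ \frac{1}{2\upalpha} t^{5/3} \int_{\Sigma_t} \bigl|\leftexp{(\vec{I});(Junk)}{\mathfrak{N}}\bigr|^2 \, dx,
\end{align*}
which, after the two $\tfrac{\upalpha}{2}$ contributions are absorbed into the coefficient $(1 - C\epsilon)$ of $\int |\partial_{\vec{I}} \newlap|^2 \, dx$ (shrinking it to $(1 - C\epsilon - \upalpha)$), produces the stated inequality.

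The main conceptual point — and the only nontrivial step — is the substitution in the first paragraph, since it relies crucially on the Hamiltonian constraint and the CMC condition both being built into equation \eqref{E:LAPSERESCALEDELLIPTIC}; once that substitution is made, the remainder is straightforward Young's inequality bookkeeping. The restrictions on $\upsigma_N$ are those required by Lemma~\ref{L:NEGATIVEDEFINITE}, which in turn inherit them from Prop.~\ref{P:STRONGPOINTWISE}.
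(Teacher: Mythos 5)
Your proposal is correct and follows the same route as the paper's proof: substitute for $\partial_{\vec{I}}\adjustednewp$ using the $\partial_{\vec{I}}$-commuted version of the lapse-plus-Hamiltonian-constraint equation \eqref{E:LAPSERESCALEDELLIPTIC} (i.e., \eqref{AE:LAPSEICOMMUTED}), invoke Lemma~\ref{L:NEGATIVEDEFINITE} for the $-t^{5/3}\int (\partial_{\vec{I}}\newlap)\,\mathcal{L}\partial_{\vec{I}}\newlap\,dx$ term, and dispose of the inhomogeneous cross terms by Cauchy--Schwarz/Young. Your bookkeeping with the Young parameter $\upalpha/2$ even yields the slightly sharper constant $\tfrac{1}{2\upalpha}$ on the error integrals, which of course suffices for the stated $\upalpha^{-1}$ bound.
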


\begin{proof}
	We multiply the commuted equation \eqref{AE:LAPSEICOMMUTED} by $t^{5/3} \partial_{\vec{I}} \newlap$
	and use the Cauchy-Schwarz inequality for integrals 
	to deduce that
	\begin{align} \label{E:KEYLAPSEICOMMUTEDERROR}
		- 2 t^{1/3} \int_{\Sigma_t} 
			\big(\partial_{\vec{I}} \newlap \big) \big(\partial_{\vec{I}} \adjustednewp \big) \, dx 
		& \geq - t^{5/3} \int_{\Sigma_t} \big(\partial_{\vec{I}} \newlap \big) \mathcal{L} \partial_{\vec{I}} \newlap \, dx	\\
		& \ \ - \upalpha t^{5/3} \int_{\Sigma_t} |\partial_{\vec{I}} \newlap|^2 \, dx \notag \\
		& \ \ - \upalpha^{-1} t^{-1} \int_{\Sigma_t} \Big|\leftexp{(\vec{I});(Border)}{\mathfrak{N}} \Big|^2  \, dx
			- \upalpha^{-1} t^{5/3} \int_{\Sigma_t} \Big|\leftexp{(\vec{I});(Junk)}{\mathfrak{N}} \Big|^2 \, dx.
			\notag
	\end{align}
	The desired estimate \eqref{E:KEYSIGNEDICOMMUTEDINTEGRALESTIMATE} now follows easily from \eqref{E:KEYLAPSEICOMMUTEDERROR}
	and inequality \eqref{E:LNEGDEF}.

\end{proof}

\begin{remark}
	The integral on the left-hand side of \eqref{E:KEYSIGNEDICOMMUTEDINTEGRALESTIMATE} 
	arises in our main energy identity for the fluid (see the proof of Lemma~\ref{L:FLUIDLAPSEENERGYID}).	
	If we had tried to crudely bound this integral in absolute value by bounding 
	 $\partial_{\vec{I}} \adjustednewp$
	 in terms of the fluid energy and by using elliptic estimates to bound $\newlap$
	 in terms of the metric and fluid energies, then our top order energy estimates would not close.
	 Hence, we truly need the special structure revealed by Prop.~\ref{P:KEYSIGNED}.
	 One reason that the estimates would not close without Prop.~\ref{P:KEYSIGNED} 
	 is that the first term on the right-hand side of
	 \eqref{E:LAPSEICOMMUTEDINITIALELLIPTICESTIMATE} comes with a large (implicit) constant that would lead
	 to a damaging top-order energy integral. 
	 Another reason is that we need the positive
	 terms on the right-hand side of \eqref{E:KEYSIGNEDICOMMUTEDINTEGRALESTIMATE} in order to counter
	 some other dangerous top-order quadratic integrals (see the proof of Prop.~\ref{P:FUNDAMENTALENERGYINEQUALITY}).
\end{remark}

\section{The Fundamental Energy Integral Inequalities} \label{S:FUNDAMENATLENERGYINEQUALITIES}

In this section, we derive our fundamental energy integral inequalities for the near-FLRW solutions. The main ingredients are
the divergence identities for the metric and fluid currents provided in \eqref{E:DIVMETRICJ} and \eqref{E:DIVFLUIDJ},
the strong estimates of Prop.~\ref{P:STRONGPOINTWISE},
and the key estimate proved in Prop.~\ref{P:KEYSIGNED}, which will provide $L^2$ control of the lapse variables.

\subsection{The fundamental energy integral inequalities}

We begin by defining a family of energies for the metric + fluid solutions. Our fundamental energy 
integral inequality involves a member of this family.

\begin{definition}[\textbf{Total  metric + fluid energies}] \label{D:TOTALENERGY}
	Let $M \geq 0$ be an integer, and let $\metricenergy{M}(t)$ and $\fluidenergy{M}(t)$
	be the metric and fluid energies defined in \eqref{E:METRICENERGY} and \eqref{E:FLUIDENERGY}.
	For each real number $\smallparameter > 0,$ 
	we define $\totalenergy{\smallparameter}{M}(t) \geq 0$ by
	\begin{align} \label{E:TOTALENERGY}
		\totalenergy{\smallparameter}{M}^2 & := \smallparameter \metricenergy{M}^2 + \fluidenergy{M}^2.
	\end{align}

\end{definition}

Our main goal in this section is show that there exists a real number
$\smallparameter_* > 0$ such that $\totalenergy{\smallparameter_*}{M}^2(t)$
verifies a useful a priori integral inequality. The main result is contained in the 
next proposition.

\begin{proposition} [\textbf{The fundamental integral inequality for the total energies}] \label{P:FUNDAMENTALENERGYINEQUALITY}
Assume that the hypotheses and conclusions of Prop.~\ref{P:STRONGPOINTWISE} hold on the spacetime slab $(T,1] \times \mathbb{T}^3.$ 
Let $\totalenergy{\smallparameter}{M}(t)$ be the total solution energy defined in Def.~\ref{D:TOTALENERGY}. Then there exist 
a small positive constant $\smallparameter_* > 0$
and a small constant $\upsigma_N > 0$ such that if
$\epsilon \leq \upsigma \leq \upsigma_N$ and $0 \leq M \leq N,$
then $\totalenergy{\smallparameter_*}{M}(t)$ verifies the following integral inequality for $t \in (T,1]:$
\begin{align} \label{E:FUNDAMENTALENERGYINEQUALITY}
	\totalenergy{\smallparameter_*}{M}^2(t) 
	& + \int_{s=t}^{s=1} s^{1/3} 
		\left\| \upgamma \right\|_{H_{\newg}^M}^2 \, d s
		+ \int_{s=t}^{s=1} s^{1/3} \left\|  \newu \right\|_{H_{\newg}^M}^2  \, d s  \\
	& \ \ + \int_{s=t}^{s=1} s^{5/3} \left\| \newlap \right\|_{H^M}^2  \, d s
		+ \int_{s=t}^{s=1} s^{5/3} \left\| \dlap \right\|_{H_{\newg}^M}^2 \, d s \notag \\
	& \lesssim  \totalenergy{\smallparameter_*}{M}^2(1)
	\notag \\
	& \ \ + \int_{s=t}^{s=1} s^{-1/3} \left\| \freenewsec \right\|_{H_{\newg}^M}^2 
		\, d s 
		+ \int_{s=t}^{s=1} s^{-1/3} \left\| \adjustednewp \right\|_{H^M}^2 
		\, d s \notag \\
	& \ \ + \sum_{|\vec{I}| \leq M} \int_{s = t}^{s = 1} s^{5/3}   
		\left\| \leftexp{(\vec{I});(Junk)}{\mathfrak{N}} \right\|_{L^2}^2 \, ds \notag \\
	& \ \ + \sum_{|\vec{I}| \leq M} \int_{s = t}^{s = 1} s
			\left\| \left| \leftexp{(\vec{I});(Junk)}{\mathfrak{K}} \right|_{\newg} \right\|_{L^2}^2 \, d s	
			+ \sum_{|\vec{I}| \leq M} \int_{s = t}^{s = 1} 
			s^3 \left\| \left| \leftexp{(\vec{I});(Junk)}{\mathfrak{g}} \right|_{\newg} \right\|_{L^2}^2 \, ds 
			\notag \\
	& \ \ + \sum_{|\vec{I}| \leq M} \int_{s = t}^{s = 1} 
			s \left\|  \leftexp{(\vec{I});(Junk)}{\mathfrak{P}} \right\|_{L^2}^2\, ds 
			\notag \\
	& \ \ + \sum_{|\vec{I}| \leq M} \int_{s = t}^{s = 1} 
			 	s^3 \left\|  \left| \leftexp{(\vec{I});(Junk)}{\mathfrak{M}} \right|_{\newg} \right\|_{L^2}^2  \, d s	
		+ \sum_{|\vec{I}| \leq M} \int_{s = t}^{s = 1}
			s^3 \left\|  \left| \leftexp{(\vec{I});(Junk)}{\widetilde{\mathfrak{M}}} \right|_{\newg} \right\|_{L^2}^2 \, ds 
			\notag \\
	& \ \ + \sum_{|\vec{I}| \leq M} \int_{s = t}^{s = 1}
			s^3 \left\| \left| \leftexp{(\vec{I});(Junk)}{\mathfrak{U}} \right|_{\newg} \right\|_{L^2}^2 \, ds 
			\notag \\
	& \ \ + \epsilon \int_{s=t}^{s=1} 
		s^{-1} \left\| \freenewsec \right\|_{H_{\newg}^M}^2 \, d s  
		\notag \\
	& \ \ + \sum_{|\vec{I}| \leq M} \int_{s = t}^{s = 1} 
			s^{-1} \left\| \leftexp{(\vec{I});(Border)}{\mathfrak{N}} \right\|_{L^2}^2 \, ds 
			\notag \\
	& \ \ + \sum_{|\vec{I}| \leq M} \int_{s = t}^{s = 1} 
			s^{1/3} \left\| \left| \leftexp{(\vec{I});(Border)}{\mathfrak{g}} \right|_{\newg} \right\|_{L^2}^2 \, ds 
			\notag \\
	& \ \ + \sum_{|\vec{I}| \leq M} \int_{s = t}^{s = 1}
				s^{1/3} \left\|  \left| \leftexp{(\vec{I});(Border)}{\mathfrak{M}} \right|_{\newg} \right\|_{L^2}^2 \, ds 
		+ \sum_{|\vec{I}| \leq M} \int_{s = t}^{s = 1}
			s^{1/3} \left\|  \left| \leftexp{(\vec{I});(Border)}{\widetilde{\mathfrak{M}}} \right|_{\newg} \right\|_{L^2}^2 \, ds 
			\notag \\
	& \ \ +  \sum_{|\vec{I}| \leq M} 
			\int_{s = t}^{s = 1} 
				s^{1/3} \left\| \left| \leftexp{(\vec{I});(Border)}{\mathfrak{U}} \right|_{\newg} \right\|_{L^2}^2 \, ds. 
				\notag 
\end{align}

\end{proposition}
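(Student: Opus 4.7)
The plan is to apply the divergence theorem to the metric and fluid energy currents $\dot{\mathbf{J}}_{(Metric)}^\mu$ and $\dot{\mathbf{J}}_{(Fluid)}^\mu$ on the spacetime slab $[t,1]\times \mathbb{T}^3$, separately for each multi-index $|\vec{I}|\leq M$. Combining the definitions \eqref{E:METRICENERGY}--\eqref{E:FLUIDENERGY} with the divergence identities \eqref{E:DIVMETRICJ} and \eqref{E:DIVFLUIDJ} expresses $\metricenergy{M}^2(t)$ and $\fluidenergy{M}^2(t)$ in terms of their values at $t=1$ minus the spacetime integrals of the explicit right-hand sides of those divergence identities. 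I would then estimate each resulting spacetime integral: integrals with favorable signs are brought to the left, cross terms are split by weighted $\newg$-Cauchy-Schwarz into a small piece that is re-absorbed plus top-order cross-quadratics carrying large constants, borderline terms are bounded in $L^\infty$ by $C\epsilon$ using the strong estimate \eqref{E:SECONDFUNDUPGRADEPOINTWISEGNORM} at $M=0$, and the inhomogeneous pieces $\leftexp{(\vec I);(\cdot)}{\mathfrak{M}},\, \leftexp{(\vec I);(\cdot)}{\widetilde{\mathfrak{M}}},\, \leftexp{(\vec I);(\cdot)}{\mathfrak{g}},\, \leftexp{(\vec I);(\cdot)}{\mathfrak{K}},\, \leftexp{(\vec I);(\cdot)}{\mathfrak{P}},\, \leftexp{(\vec I);(\cdot)}{\mathfrak{U}},\, \leftexp{(\vec I);(\cdot)}{\mathfrak{N}}$ are peeled off to produce the error integrals that appear on the right of \eqref{E:FUNDAMENTALENERGYINEQUALITY}.

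For the metric piece, the term $\tfrac{1}{3}t^{1/3}(\newg^{-1})^{ab}(\newg^{-1})^{ef}\newg_{ij}\dot\upgamma_{e\ a}^{\ i}\dot\upgamma_{f\ b}^{\ j}$ in \eqref{E:DIVMETRICJ}, which arises when $\partial_t$ hits the $t^{4/3}$ factor in \eqref{E:METRICCURRENT0}, supplies the coercive left-hand integral $\tfrac{1}{3}\int_{s=t}^1 s^{1/3}\|\upgamma\|_{H_{\newg}^M}^2\,ds$. The cross terms $\tfrac{1}{3}t\dot\dlap_a \dot\upgamma_{b\ c}^{\ c}$, $\tfrac{4}{3}t\dot{\newu}^a \dot\dlap_a$, and $-\tfrac{4}{3}t^{1/3}\dot{\newu}^a \dot\Gamma_a$ are split by weighted Cauchy-Schwarz into a small multiple of the coercive $\dot\upgamma$ integral (absorbed) plus quadratic integrals in $|\dot{\newu}|_{\newg}^2$ and $|\dot\dlap|_{\newg}^2$ with large constants, which are retained for the combination step. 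The borderline error $\triangle_{\dot{\mathbf{J}}_{(Metric);(Border)}}$, being proportional to $\freenewsec$, contributes at most $\epsilon \int_{s=t}^1 s^{-1}\|\freenewsec\|_{H_{\newg}^M}^2\,ds$ plus an $\epsilon s^{1/3}\|\upgamma\|_{H_{\newg}^M}^2$ integral absorbable into the coercive one. For the fluid piece, the same $t^{4/3}$-derivative mechanism together with the strong estimate \eqref{E:PCOMMUTEDSTRONGPOINTWISE} produces the coercive term $\tfrac{8}{3}t^{1/3}[\adjustednewp+\tfrac{1}{3}]^2 \newg_{ab}\dot{\newu}^a \dot{\newu}^b \approx \tfrac{8}{27}t^{1/3}|\dot{\newu}|_{\newg}^2$, and the decisive step is the treatment of $-\tfrac{2}{3}t^{1/3}\dot\adjustednewp\,\dot\newlap$ from \eqref{E:DIVFLUIDJ}: integrating over $\Sigma_s$ and invoking Prop.~\ref{P:KEYSIGNED} with a small $\upalpha > 0$ converts this spatial integral into the coercive lapse integrals $(1-C\epsilon)\int s^{5/3}\|\dlap\|_{H_{\newg}^M}^2\,ds$ and $(1-C\epsilon-\upalpha)\int s^{5/3}\|\newlap\|_{H^M}^2\,ds$ on the left, plus the $\leftexp{(\vec I);(Border)}{\mathfrak{N}}$ and $\leftexp{(\vec I);(Junk)}{\mathfrak{N}}$ error integrals on the right. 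The fluid cross term $-4t[\adjustednewp+\tfrac{1}{3}]^2 \dot{\newu}^a\dot\dlap_a \approx -\tfrac{4}{9}t \dot{\newu}^a \dot\dlap_a$ is handled by the Young-type bound $|t\dot{\newu}\dot\dlap|\leq \eta s^{5/3}|\dot\dlap|_{\newg}^2 + (4\eta)^{-1}s^{1/3}|\dot{\newu}|_{\newg}^2$, both of whose pieces are absorbed into the coercive lapse and velocity integrals provided $\eta$ is chosen small enough relative to $\tfrac{8}{27}$ and the constant from Prop.~\ref{P:KEYSIGNED}.

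Finally, setting $\totalenergy{\smallparameter}{M}^2 = \smallparameter \metricenergy{M}^2 + \fluidenergy{M}^2$ and choosing $\smallparameter = \smallparameter_*$ small enough, the unabsorbed $|\dot{\newu}|_{\newg}^2$ and $|\dot\dlap|_{\newg}^2$ integrals produced by the metric cross-term splittings enter multiplied by $\smallparameter_*$ and are absorbed into the positive room remaining in the fluid-side coercive integrals after the fluid cross term has been swallowed. Summing over $|\vec{I}|\leq M$ and relabeling the remaining error integrals yields \eqref{E:FUNDAMENTALENERGYINEQUALITY}. The main obstacle is the coefficient book-keeping in this last step: the strictly positive constants $\tfrac{8}{27}$ (from the $t^{1/3}|\dot{\newu}|^2$ integral) and $\approx 1$ (from Prop.~\ref{P:KEYSIGNED}) must \emph{first} absorb the fluid cross term via $\eta$ with room to spare, and \emph{then} still have enough strictly positive room left over to absorb $\smallparameter_*$ times the fixed, possibly large constants generated by the metric cross-term splittings. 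Because those metric constants are independent of $\smallparameter_*$ and $\epsilon$, while the quantities $1-C\epsilon$ and $1-C\epsilon-\upalpha$ in Prop.~\ref{P:KEYSIGNED} can be kept arbitrarily close to $1$ by choosing $\epsilon,\upsigma,\upalpha$ small, the tuning does succeed; verifying this numerology carefully, with all cross terms simultaneously under control, is the delicate part of the proof.
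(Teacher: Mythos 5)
Your plan follows the paper's route exactly: derive a preliminary metric energy inequality and a preliminary fluid+lapse energy inequality via the divergence theorem applied to the two energy currents, extract coercive spacetime integrals from the $\partial_t$ of the $t^{4/3}$ weights and from Prop.~\ref{P:KEYSIGNED}, split cross terms by Cauchy--Schwarz, control borderline cubic terms with the strong $C^0$ estimate \eqref{E:SECONDFUNDUPGRADEPOINTWISEGNORM}, and then add $\smallparameter_*$ times the metric inequality to the fluid inequality with $\smallparameter_*$ small enough that the large velocity and lapse-gradient integrals on the metric side are absorbed into the fluid side's coercive terms. This is precisely the decomposition into Lemma~\ref{L:METRICENERGYINTEGRALINEQUALITY}, Lemma~\ref{L:FLUIDLAPSEENERGYID}, and Sect.~\ref{SS:ENERGYINEQUALITYPROOF}.

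One quantitative slip worth fixing: when you handle the fluid cross term with the Young-type bound $|t\dot{\newu}^a\dot{\dlap}_a| \leq \eta\, s^{5/3}|\dot{\dlap}|_{\newg}^2 + (4\eta)^{-1} s^{1/3}|\dot{\newu}|_{\newg}^2$, you write that $\eta$ should be chosen ``small enough.'' In fact $\eta$ must lie in a two-sided window: taking $\eta$ too small shrinks the $\dlap$ contribution but blows up the $(4\eta)^{-1}$ coefficient on $|\dot{\newu}|^2$ beyond the available coercive constant $\frac{8}{27}t^{1/3}$. The viable constraint (with cross-term prefactor $\approx \frac{4}{9}$) is roughly $\frac{3}{8} < \eta < \frac{3}{4}$; the necessary and sufficient discriminant condition is $\left(\frac{4}{9}\right)^2 < 4 \cdot \frac{8}{27} \cdot \frac{1}{3}$, i.e.\ $\frac{16}{81}<\frac{32}{81}$. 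The paper sidesteps the tuning entirely by using the balanced inequality $\frac{4}{9}|ab| \leq \frac{2}{9}(a^2+b^2)$ (that is, $\eta=1$ after normalization), yielding the explicit leftover coefficients $\frac{1}{9}-C\sqrt{\epsilon}$, $\frac{1}{6}-C\sqrt{\epsilon}$, and $\frac{1}{27}-C\sqrt{\epsilon}$ on the left of \eqref{E:FLUIDENERGYMINTEGRALINEQUALITY}. Your closing remark that the numerology works because the cross constant is quantitatively dominated by the product of the coercive constants is correct, but it should be stated as a window condition rather than a one-sided smallness condition on $\eta$.
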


\begin{remark}
	Note that inequality \eqref{E:FUNDAMENTALENERGYINEQUALITY} involves the norms 
	$|\cdot|_{\newg}.$ These are the norms that we can access via integration by parts and hence these norms play an essential
	role in our analysis.
\end{remark}

The proof of Prop.~\ref{P:FUNDAMENTALENERGYINEQUALITY} is located in Sect.~\ref{SS:ENERGYINEQUALITYPROOF}. In the next two sections, we separately derive preliminary inequalities for the metric and fluid energies; we will combine these separate estimates in a suitable fashion to deduce \eqref{E:FUNDAMENTALENERGYINEQUALITY}.

\subsubsection{Preliminary metric energy inequalities}

\begin{lemma} [\textbf{Preliminary energy inequalities for the metric}] \label{L:METRICENERGYINTEGRALINEQUALITY}
Assume the hypotheses of Prop.~\ref{P:FUNDAMENTALENERGYINEQUALITY}, and let
$\metricenergy{M}(t)$ be the metric energy defined in \eqref{E:METRICENERGY}.
Then there exist a small constant $\upsigma_N > 0$
and a large constant $C > 0$ 
such that if $\epsilon \leq \upsigma \leq \upsigma_N$
and $0 \leq M \leq N,$ then $\metricenergy{M}(t)$
verifies the following integral inequality for $t \in (T,1]:$
\begin{align} \label{E:METRICENERGYMINTEGRALINEQUALITY}
	\metricenergy{M}^2(t) & + \left(\frac{1}{6} - C \sqrt{\epsilon} \right)
			\sum_{|\vec{I}| \leq M} \int_{s=t}^{s=1} s^{1/3} \int_{\Sigma_s} 
			\left|\partial_{\vec{I}} \upgamma \right|_{\newg}^2 \, dx \, ds 
			\\
	& \leq \metricenergy{M}^2(1)
			+ C \sum_{|\vec{I}| \leq M} \int_{s=t}^{s=1} s^{-1/3} \int_{\Sigma_s}	
				\left| \partial_{\vec{I}} \freenewsec \right|_{\newg}^2 \, dx \, ds 
			\notag \\ 
	& \ \ + C \epsilon \sum_{|\vec{I}| \leq M} \int_{s=t}^{s=1} s^{-1} \int_{\Sigma_s}	
				\left| \partial_{\vec{I}} \freenewsec \right|_{\newg}^2 \, dx \, ds 
			\notag \\ 
	& \ \ + C \sum_{|\vec{I}| \leq M} \int_{s=t}^{s=1} s^{1/3} \int_{\Sigma_s} 
		\left| \partial_{\vec{I}} \newu \right|_{\newg}^2 \, dx \, ds 
		\notag \\
	& \ \ + C \sum_{|\vec{I}| \leq M} \int_{s=t}^{s=1} s^{5/3} \int_{\Sigma_s} 
		\left|\partial_{\vec{I}} \dlap \right|_{\newg}^2 \, dx \, ds 
		\notag \\
	& \ \ + C \sum_{|\vec{I}| \leq M} \int_{s=t}^{s=1} s^{1/3} \int_{\Sigma_s}  
		 \left| \leftexp{(\vec{I});(Border)}{\mathfrak{M}} \right|_{\newg}^2  \, dx \, ds
		\notag \\
	& \ \ + C \sum_{|\vec{I}| \leq M} \int_{s=t}^{s=1} s^3 \int_{\Sigma_s}  
		\left| \leftexp{(\vec{I});(Junk)}{\mathfrak{M}} \right|_{\newg}^2 \, dx \, ds
		\notag \\
	& \ \ + C \sum_{|\vec{I}| \leq M} \int_{s=t}^{s=1} s^{1/3} \int_{\Sigma_s}  
		\left| \leftexp{(\vec{I});(Border)}{\widetilde{\mathfrak{M}}} \right|_{\newg}^2 \, dx \, ds
		\notag \\
	& \ \ + C \sum_{|\vec{I}| \leq M} \int_{s=t}^{s=1} s^3 \int_{\Sigma_s}  
		\left| \leftexp{(\vec{I});(Junk)}{\widetilde{\mathfrak{M}}} \right|_{\newg}^2 \, dx \, ds 
		\notag \\
	& \ \ + C \sum_{|\vec{I}| \leq M} 
		\int_{s=t}^{s=1} s^{1/3} \int_{\Sigma_s} 
		\left| \leftexp{(\vec{I});(Border)}{\mathfrak{g}} \right|_{\newg}^2 \, dx \, ds
		\notag \\
	& \ \ + C \sum_{|\vec{I}| \leq M} 
		\int_{s=t}^{s=1} s^3 \int_{\Sigma_s} 
		\left| \leftexp{(\vec{I});(Junk)}{\mathfrak{g}} \right|_{\newg}^2 \, dx \, ds
		\notag \\
	& \ \ + C \sum_{|\vec{I}| \leq M} 
		\int_{s=t}^{s=1} s \int_{\Sigma_s} 
		\left| \leftexp{(\vec{I});(Junk)}{\mathfrak{K}} \right|_{\newg}^2 \, dx \, ds.
		\notag 
\end{align}

\end{lemma}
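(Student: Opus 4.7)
The plan is to derive \eqref{E:METRICENERGYMINTEGRALINEQUALITY} by applying the divergence theorem to the metric energy current and then carefully analyzing the resulting integrand via Lemma~\ref{L:DIVMETRICJ}. Specifically, for each multi-index $|\vec{I}| \leq M$, I would apply the divergence theorem to the vectorfield $\dot{\mathbf{J}}_{(Metric)}^{\mu}[\partial_{\vec{I}} (\freenewsec, \upgamma), \partial_{\vec{I}} (\freenewsec, \upgamma)]$ on the spacetime slab $[t,1] \times \mathbb{T}^3$. Since $\mathbb{T}^3$ has no boundary, the spatial components of the divergence integrate to zero in the spatial direction, and the coerciveness property \eqref{E:METRICCURRENT0} of $\dot{\mathbf{J}}_{(Metric)}^0$ turns the boundary contributions at $s=1$ and $s=t$ into $\metricenergy{M}^2(1)$ and $\metricenergy{M}^2(t)$ respectively, producing
\[
\metricenergy{M}^2(t) \;=\; \metricenergy{M}^2(1) \;-\; \sum_{|\vec{I}| \leq M} \int_{s=t}^{s=1} \!\! \int_{\Sigma_s} \partial_{\mu}\!\left( \dot{\mathbf{J}}_{(Metric)}^{\mu}[\partial_{\vec{I}}(\freenewsec, \upgamma), \partial_{\vec{I}}(\freenewsec, \upgamma)] \right) \, dx \, ds.
\]
I would then invoke \eqref{E:DIVMETRICJ} to substitute for the integrand.

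The key positive structure comes from the first term on the right-hand side of \eqref{E:DIVMETRICJ}, namely $\frac{1}{3} s^{1/3} (\newg^{-1})^{ab} (\newg^{-1})^{ef} \newg_{ij} \dot{\upgamma}_{e\ a}^{\ i} \dot{\upgamma}_{f\ b}^{\ j} = \frac{1}{3} s^{1/3} |\dot{\upgamma}|_{\newg}^2$, which, after moving to the left-hand side, yields the coercive spacetime integral featured in \eqref{E:METRICENERGYMINTEGRALINEQUALITY}. Every other term in \eqref{E:DIVMETRICJ} and in the error expressions \eqref{E:METRICCURRENTERRORBORDERLINE}--\eqref{E:METRICCURRENTERRORJUNK} will then need to be bounded by Cauchy-Schwarz and Young's inequality, with weights chosen so that each factor appears with the $t$-weight dictated by the target integrals on the right-hand side of \eqref{E:METRICENERGYMINTEGRALINEQUALITY}. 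For instance, the cross term $\frac{4}{3} s \, \dot{\newu}^a \dot{\dlap}_a$ is handled by the split $s^{1/3} |\dot{\newu}|_{\newg}^2 + s^{5/3} |\dot{\dlap}|_{\newg}^2$; the term $-\frac{4}{3} s^{1/3} \dot{\newu}^a \dot{\Gamma}_a$ is split as $\delta s^{1/3} |\dot{\upgamma}|_{\newg}^2 + C\delta^{-1} s^{1/3} |\dot{\newu}|_{\newg}^2$; the inhomogeneous products $\dot{\mathfrak{M}} \dot{\dlap}$, $\dot{\widetilde{\mathfrak{M}}} \dot{\Gamma}$, $\dot{\mathfrak{g}}\dot{\upgamma}$, and $\dot{\mathfrak{K}} \dot{\freenewsec}$ are treated similarly, separating the border and junk parts of $\dot{\mathfrak{M}}, \dot{\widetilde{\mathfrak{M}}}, \dot{\mathfrak{g}}$ in accordance with their $t$-weights fixed in Def.~\ref{D:INHOMSHORTHAND}.

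For the current-error expressions $\triangle_{\dot{\mathbf{J}}_{(Metric);(Border)}}$ and $\triangle_{\dot{\mathbf{J}}_{(Metric);(Junk)}}$, I would exploit the strong estimates of Prop.~\ref{P:STRONGPOINTWISE}. The borderline expressions in \eqref{E:METRICCURRENTERRORBORDERLINE} each carry a $\freenewsec$ factor, which \eqref{E:SECONDFUNDUPGRADEPOINTWISE}--\eqref{E:SECONDFUNDUPGRADEPOINTWISEGNORM} bound in $L^\infty$ by $C\epsilon$; the terms quadratic in $\dot{\upgamma}$ are therefore of order $\epsilon s^{1/3} |\dot{\upgamma}|_{\newg}^2$ and get absorbed into the coercive spacetime integral, while the terms quadratic in $\dot{\freenewsec}$ are of order $\epsilon s^{-1} |\dot{\freenewsec}|_{\newg}^2$ and feed directly into the $C\epsilon \int s^{-1} |\dot{\freenewsec}|_{\newg}^2$ integral on the right-hand side. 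For the junk expressions in \eqref{E:METRICCURRENTERRORJUNK}, the coefficient derivatives are controlled by $\| \newg \|_{C_{Frame}^1}$, $\| \newg^{-1} \|_{C_{Frame}^1}$, and $\| \newlap \|_{C^0}$, which by the strong estimates are bounded by $1 + C\epsilon t^{-c\sqrt{\epsilon}}$, so Young's inequality with an $s^{1/3}/s^{5/3}$ split places these contributions into the same families of integrals already present on the right-hand side.

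The main obstacle is bookkeeping: one must verify that the cumulative Young's inequality absorptions into the coercive $\int s^{1/3} |\dot{\upgamma}|_{\newg}^2$ integral leave behind a coefficient of at least $\tfrac{1}{6} - C\sqrt{\epsilon}$. To achieve this, I would reserve a fixed fraction (say $\tfrac{1}{6}$) of the good integral purely for absorptions in which the small parameter is $\epsilon$ or $\sqrt{\epsilon}$ (coming from $\freenewsec$, $\newlap$, or coefficient derivatives), and treat all cross terms $|\dot{\upgamma}|_{\newg}|\dot{X}|_{\newg}$ involving $\dot{X} \in \{\dot{\newu}, \dot{\dlap}, \dot{\Gamma}_{\cdot(\text{border})}, \dot{\widetilde{\mathfrak{M}}}, \dot{\mathfrak{g}}\}$ by splitting them into $\delta s^{1/3} |\dot{\upgamma}|_{\newg}^2$ plus a $\delta^{-1}$-weighted companion that goes to the right-hand side, for a fixed numerical $\delta$ small enough that the total absorption does not exceed $\tfrac{1}{6}$. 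Combining everything and invoking the equivalence $|\dot{\Gamma}|_{\newg} \lesssim |\dot{\upgamma}|_{\newg}$ (which in turn uses the $C_{\newg}^0$ bounds on $\newg$, $\newg^{-1}$ from Prop.~\ref{P:STRONGPOINTWISE}) yields exactly the inequality \eqref{E:METRICENERGYMINTEGRALINEQUALITY}.
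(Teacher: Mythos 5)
Your proposal follows essentially the same route as the paper: apply the divergence theorem to the metric current, invoke Lemma~\ref{L:DIVMETRICJ}, isolate the coercive $\frac{1}{3}s^{1/3}|\dot{\upgamma}|_{\newg}^2$ integral, and dispose of every remaining term by $\newg$-Cauchy-Schwarz/Young splits with $t$-weights matched to the target integrals, using $\|\freenewsec\|_{C_{\newg}^0}\lesssim\epsilon$ and the other strong estimates to get smallness in the absorptions into the good integral. The paper's proof implements exactly this scheme; the only cosmetic differences are notational (your $\delta$ versus the paper's $\upbeta$) and your explicit mention of $|\dot{\Gamma}|_{\newg}\lesssim|\dot{\upgamma}|_{\newg}$, which the paper leaves implicit.
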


\begin{proof}
	By the divergence theorem, we have that
	\begin{align}  \label{E:DIVTHMMETRIC}
		\metricenergy{M}^2(t) - \metricenergy{M}^2(1) & = 
			- \sum_{|\vec{I}| \leq M} \int_{s=t}^1 
			\int_{\Sigma_s} \partial_{\mu} \left( \dot{\mathbf{J}}_{(Metric)}^{\mu}
				[\partial_{\vec{I}} (\freenewsec, \upgamma), \partial_{\vec{I}} (\freenewsec, \upgamma)] \right) \, dx \, ds.
	\end{align}
	Our goal therefore is to estimate the right-hand side of \eqref{E:DIVTHMMETRIC} using the expression \eqref{E:DIVMETRICJ}. 
	The expression \eqref{E:DIVMETRICJ} is valid because the differentiated metric variables $\partial_{\vec{I}} (\freenewsec, \upgamma)$ 
	appearing in \eqref{E:DIVTHMMETRIC} verify the $\partial_{\vec{I}}-$commuted
	metric equations of Sect.~\ref{SSS:COMMUTEDMETRICRENORM}, 
	which are specific instances of the metric equations of variation.
	
	We recall that the variations $\dot{\freenewsec}$ etc. are defined in Def.~\ref{D:VARIATIONS}. 
	We first discuss the integrals corresponding to the term $\frac{1}{3} t^{1/3} (\newg^{-1})^{ab} 
	(\newg^{-1})^{ef} \newg_{ij} \dot{\upgamma}_{e \ a}^{\ i} \dot{\upgamma}_{f \ b}^{\ j}$ on the right-hand side of \eqref{E:DIVMETRICJ}. 
	The corresponding integrals appearing on the right-hand side of \eqref{E:DIVTHMMETRIC} are therefore
	\begin{align} \label{E:METRICGOODSPATIALINTEGRAL}
		- \frac{1}{3} \int_{s=t}^{s=1} s^{1/3} \int_{\Sigma_s}  (\newg^{-1})^{ab} (\newg^{-1})^{ef} \newg_{ij}  
		\big(\partial_{\vec{I}} \upgamma_{e \ a}^{\ i} \big) \big(\partial_{\vec{I}}\upgamma_{f \ b}^{\ j} \big) \,dx \, ds.
	\end{align}
	We then move the integrals \eqref{E:METRICGOODSPATIALINTEGRAL} over to the left-hand side of \eqref{E:METRICENERGYMINTEGRALINEQUALITY}. 
	We note that at this stage in the proof, these integrals are multiplied by a $+ \frac{1}{3}$ factor
	rather than the final $+ \frac{1}{6}$ factor that appears on the left-hand side of \eqref{E:METRICENERGYMINTEGRALINEQUALITY}.
	
	We next discuss the integrals corresponding to the term 
	$\frac{1}{3} t (\newg^{-1})^{ab} \dot{\dlap}_a \dot{\upgamma}_{b \ c}^{\ c}$ on the right-hand side of \eqref{E:DIVMETRICJ}. 
	We first note the following simple pointwise inequality, which is valid for any constant $\upbeta > 0:$
	\begin{align}
		\frac{1}{3} t \left| (\newg^{-1})^{ab} \dot{\dlap}_a \dot{\upgamma}_{b \ c}^{\ c} \right| 
			\leq \frac{\upbeta}{3}t^{1/3} |\dot{\upgamma}|_{\newg}^2 + t^{5/3} \frac{\upbeta^{-1}}{3} 
			|\dot{\dlap}|_{\newg}^2.
	\end{align}
	The corresponding integrals are therefore bounded in magnitude by
	\begin{align} \label{E:FLUIDCURRENTMAININTEGRALS}
		  \frac{\upbeta}{3} \int_{s=t}^{s=1} s^{1/3} \int_{\Sigma_s} \left| \partial_{\vec{I}} \upgamma \right|_{\newg}^2 \, dx \, ds
		+ \frac{\upbeta^{-1}}{3}  \int_{s=t}^{s=1} s^{5/3} \int_{\Sigma_s} \left| \partial_{\vec{I}} \dlap \right|_{\newg}^2 \, dx \, ds.
	\end{align}
	If $\upbeta$ is small enough, then the first integral in 
	\eqref{E:FLUIDCURRENTMAININTEGRALS} can be absorbed into the left-hand side of \eqref{E:METRICENERGYMINTEGRALINEQUALITY}, 
	while the second integral
	is clearly bounded by the right-hand side of \eqref{E:METRICENERGYMINTEGRALINEQUALITY}.
	The absorbing reduces the aforementioned $+ \frac{1}{3}$ factor (by a small amount if $\upbeta$ is small).
	The spacetime integrals corresponding to the terms 
	\begin{align*}
		& - \frac{4}{3} t^{1/3} \dot{\newu}^a \dot{\Gamma}_a, 
		\qquad - t^{1/3} \big[1 + t^{4/3} \newlap \big] \dot{\widetilde{\mathfrak{M}}}^a \dot{\Gamma}_a,
		\\
		& - t^{1/3} \big[1 + t^{4/3} \newlap \big] (\newg^{-1})^{ab} \dot{\mathfrak{M}}_a \dot{\Gamma}_b,
		\qquad
		\frac{1}{2} t^{4/3} (\newg^{-1})^{ab} \newg_{ij} (\newg^{-1})^{ef} \dot{\mathfrak{g}}_{e \ a}^{\ i} 
		\dot{\upgamma}_{f \ b}^{\ j}
	\end{align*}
	on the right-hand side of \eqref{E:DIVMETRICJ}
	can be treated similarly.  We recall that the inhomogeneous terms $\dot{\mathfrak{M}}$ etc.
	are defined in Def.~\ref{D:INHOMSHORTHAND}.
	The $\dot{\mathfrak{M}}$ terms result in the presence of the inhomogeneous 
	term integrals such as 
	\begin{align*}
		C \sum_{|\vec{I}| \leq M} \int_{s=t}^{s=1} s^{1/3} \int_{\Sigma_s} \left| 
		\leftexp{(\vec{I});(Border)}{\mathfrak{M}} \right|_{\newg}^2 \, dx \, ds
	\end{align*}	
	on the right-hand side of 
	\eqref{E:METRICENERGYMINTEGRALINEQUALITY}. After all of the absorbing, the positive integrals 
	on the left-hand side of \eqref{E:METRICENERGYMINTEGRALINEQUALITY}
	appear with their ``final'' constant factor $+ \frac{1}{6}.$
	
	The same reasoning allows us to bound the spacetime integrals corresponding to the terms
	\begin{align*}
		& \frac{4}{3} t \dot{\newu}^a \dot{\dlap}_a, 
		\qquad
		2 t (\newg^{-1})^{ab} \dot{\mathfrak{M}}_a \dot{\dlap}_b, 
		\qquad
		2 (\newg^{-1})^{ab} \newg_{ij} \dot{\mathfrak{K}}_{\ a}^i \dot{\freenewsec}_{\ b}^j 
	\end{align*}
	appearing on the right-hand side of \eqref{E:DIVMETRICJ}, except that we don't
	absorb any of the integrals.
	
	It remains for us to discuss how to bound the integrals corresponding to the terms 
	$\triangle_{\dot{\mathbf{J}}_{(Metric);(Border)}[(\dot{\freenewsec}, \dot{\upgamma}), (\dot{\freenewsec}, \dot{\upgamma})]}$ 
	and $\triangle_{\dot{\mathbf{J}}_{(Metric);(Junk)}[(\dot{\freenewsec}, \dot{\upgamma}), (\dot{\freenewsec}, \dot{\upgamma})]}$
	on the right-hand side of \eqref{E:DIVMETRICJ}. We first discuss the first two terms on the right-hand side of
	the expression \eqref{E:METRICCURRENTERRORBORDERLINE} for
	$\triangle_{\dot{\mathbf{J}}_{(Metric);(Border)}[(\dot{\freenewsec}, \dot{\upgamma}), (\dot{\freenewsec}, \dot{\upgamma})]}.$ These two 
	terms can be handled similarly, so we will only carefully analyze the first term. We use the $\newg-$Cauchy-Schwarz inequality and
	estimates \eqref{E:SECONDFUNDUPGRADEPOINTWISEGNORM} (in the case $M = 0$)
	and \eqref{E:LAPSECOMMUTEDSTRONGPOINTWISE} to deduce that
	\begin{align} \label{E:BORDERLINECUBICTERM}
		2 t^{-1} \Big|\big[1 + t^{4/3} \newlap \big]
		(\newg^{-1})^{ac} \newg_{ij} \freenewsec_{\ c}^b 
		\dot{\freenewsec}_{\ a}^i \dot{\freenewsec}_{\ b}^j \Big| \lesssim 
		\epsilon t^{-1} |\dot{\freenewsec}|_{\newg}^2.
	\end{align}
	Therefore, the corresponding spacetime integrals are bounded by
	the integrals 
	\begin{align}
		C \epsilon \sum_{|\vec{I}| \leq M} \int_{s=t}^{s=1} s^{-1} \int_{\Sigma_s}	
		\left| \partial_{\vec{I}} \freenewsec \right|_{\newg}^2 \, dx \, ds
	\end{align}
	on the right-hand side 
	of \eqref{E:METRICENERGYMINTEGRALINEQUALITY}. 
	Similarly, the integrals corresponding to the last three terms on the right-hand side of
	the expression \eqref{E:METRICCURRENTERRORBORDERLINE} can each be bounded by
	\begin{align} \label{E:METRICCURRENTBORDERLINESOAKABLE}
			C \epsilon \sum_{|\vec{I}| \leq M} \int_{s=t}^{s=1} s^{1/3} \int_{\Sigma_s} \left|\partial_{\vec{I}} \upgamma \right|_{\newg}^2 \, dx  \, ds.
	\end{align}
	When $\epsilon$ is sufficiently small, these integrals can be absorbed into the
	positive integral on the left-hand side of \eqref{E:METRICENERGYMINTEGRALINEQUALITY}.
	
	Finally, we discuss how to bound the integrals corresponding to the term
	$\triangle_{\dot{\mathbf{J}}_{(Metric);(Junk)}[(\dot{\freenewsec}, \dot{\upgamma}), (\dot{\freenewsec}, \dot{\upgamma})]}$
	on the right-hand side of \eqref{E:DIVMETRICJ}. These terms all make negligible contributions
	to the dynamics. To proceed, we use the $\newg-$Cauchy-Schwarz inequality and the
	estimates \eqref{E:GCOMMUTEDSTRONGPOINTWISE},
	\eqref{E:GINVERSECOMMUTEDSTRONGPOINTWISE}, and \eqref{E:LAPSECOMMUTEDSTRONGPOINTWISE} to deduce that
	\begin{align} \label{E:METRICJUNKFIRSTESTIMATE}
		\left|\triangle_{\dot{\mathbf{J}}_{(Metric);(Junk)}[(\dot{\freenewsec}, \dot{\upgamma}), (\dot{\freenewsec}, \dot{\upgamma})]} \right|
		& \lesssim \sqrt{\epsilon} t^{-1/3} |\dot{\freenewsec}|_{\newg}^2
			+ \sqrt{\epsilon} t^{1 - c \sqrt{\epsilon}} |\dot{\upgamma}|_{\newg}^2
			+ \sqrt{\epsilon} t^{7/3 - c \sqrt{\epsilon}} |\dot{\dlap}|_{\newg}^2.
	\end{align}
	We now integrate inequality \eqref{E:METRICJUNKFIRSTESTIMATE} over $[t,1) \times \mathbb{T}^3.$ 
	The integral corresponding to the term $|\dot{\upgamma}|_{\newg}^2$ can be absorbed into the
	positive integral on the left-hand side of \eqref{E:METRICENERGYMINTEGRALINEQUALITY} when 
	$\epsilon$ is sufficiently small. The integrals corresponding to the terms
	$|\dot{\freenewsec}|_{\newg}^2$ and $|\dot{\dlap}|_{\newg}^2$ are clearly bounded by the right-hand
	side of \eqref{E:METRICENERGYMINTEGRALINEQUALITY} as desired. 
	
	
	 
\end{proof}

\subsubsection{Preliminary fluid and lapse energy inequalities}

We now derive preliminary energy inequalities for the fluid. Thanks to Prop.~\ref{P:KEYSIGNED}, our estimates will also lead to 
control over the lapse variables. 

\begin{lemma}[\textbf{Preliminary energy inequalities for the fluid and lapse}] \label{L:FLUIDLAPSEENERGYID}
Assume the hypotheses of Prop.~\ref{P:FUNDAMENTALENERGYINEQUALITY}, and let
$\fluidenergy{M}(t)$ be the fluid energy defined in \eqref{E:FLUIDENERGY}.
Then there exist a small constant $\upsigma_N > 0$ 
and a large constant $C > 0$ 
such that if $\epsilon \leq \upsigma \leq \upsigma_N$
and $0 \leq M \leq N,$ then $\fluidenergy{M}(t)$ verifies the
following integral inequality for $t \in (T,1]:$
\begin{align} \label{E:FLUIDENERGYMINTEGRALINEQUALITY}
	\fluidenergy{M}^2(t) & + \left(\frac{1}{9} - C \sqrt{\epsilon} \right) \sum_{|\vec{I}| \leq M} \int_{s = t}^{s = 1} s^{5/3} \int_{\Sigma_s}  
	  	\left| \partial_{\vec{I}} \dlap \right|_{\newg}^2 \, dx \, ds \\
	 & + \left(\frac{1}{6} - C \sqrt{\epsilon} \right) \sum_{|\vec{I}| \leq M} \int_{s = t}^{s = 1} s^{5/3} \int_{\Sigma_s} 
			\left| \partial_{\vec{I}} \newlap \right|^2 \, dx \, ds \notag \\
	& + \left(\frac{1}{27} - C \sqrt{\epsilon} \right) \sum_{|\vec{I}| \leq M} \int_{s = t}^{s = 1} s^{1/3} \int_{\Sigma_s} 
	  		\left| \partial_{\vec{I}} \newu \right|_{\newg}^2 \, dx \, ds \notag \\
	& \leq \fluidenergy{M}^2(1)
		\notag \\
	& \ \ + C \sum_{|\vec{I}| \leq M} \int_{s=t}^{s=1} s^{-1/3} 
		\left| \partial_{\vec{I}} \adjustednewp \right|^2 
		\, dx \, ds \notag \\ 
	 & \ \ + C \sum_{|\vec{I}| \leq M} 
			\int_{s = t}^{s = 1} s^{-1} \int_{\Sigma_s} 
			\left| \leftexp{(\vec{I});(Border)}{\mathfrak{N}} \right|^2 \, dx \, ds  \notag \\
	& \ \ + C \sum_{|\vec{I}| \leq M} 
			\int_{s = t}^{s = 1} s^{5/3} \int_{\Sigma_s} \left| \leftexp{(\vec{I});(Junk)}{\mathfrak{N}} \right|^2 \, dx \, ds \notag \\
	& \ \ + C \sum_{|\vec{I}| \leq M} \int_{s = t}^{s = 1} s \int_{\Sigma_s} \left| \leftexp{(\vec{I});(Junk)}{\mathfrak{P}} \right|^2 \, dx \, ds 
		\notag \\
	& \ \ + C \sum_{|\vec{I}| \leq M} \int_{s = t}^{s = 1} s^{-1} \int_{\Sigma_s} \left| \leftexp{(\vec{I});(Border)}{\mathfrak{U}} \right|_{\newg}^2 \, dx \, ds 
		\notag \\
	& \ \ + C \sum_{|\vec{I}| \leq M} \int_{s = t}^{s = 1} s^3 \int_{\Sigma_s} \left| \leftexp{(\vec{I});(Junk)}{\mathfrak{U}} \right|_{\newg}^2 \, dx \, ds. 
		\notag 
\end{align}
\end{lemma}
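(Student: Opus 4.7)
The plan is to proceed in parallel with the proof of Lemma~\ref{L:METRICENERGYINTEGRALINEQUALITY}, starting from the divergence theorem identity
\begin{align*}
\fluidenergy{M}^2(t) - \fluidenergy{M}^2(1) = - \sum_{|\vec{I}| \leq M} \int_{s=t}^1 \int_{\Sigma_s} \partial_{\mu} \left(\dot{\mathbf{J}}_{(Fluid)}^{\mu}[\partial_{\vec{I}}(\adjustednewp,\newu),\partial_{\vec{I}}(\adjustednewp,\newu)] \right) dx\, ds,
\end{align*}
and then substituting the explicit divergence formula \eqref{E:DIVFLUIDJ} from Lemma~\ref{L:DIVFLUIDJ} (applicable because the commuted fluid equations of Sect.~\ref{SS:COMMUTEDRENORMALIZEDBRIEF} are specific instances of the fluid equations of variation). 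The terms on the right-hand side of \eqref{E:DIVFLUIDJ} will be grouped into three categories: principal coercive terms, the ``dangerous'' cross term, and error terms.

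The three principal coercive contributions, which will end up on the left-hand side with the indicated positive coefficients, come from: (i) the term $\frac{8}{3}s^{1/3}[\adjustednewp+\frac{1}{3}]^2 \newg_{ab} \dot{\newu}^a \dot{\newu}^b$, which after using the strong estimate \eqref{E:PCOMMUTEDSTRONGPOINTWISE} (so that $[\adjustednewp+\frac{1}{3}]^2 \approx \frac{1}{9}$) yields a coercive $s^{1/3}|\partial_{\vec{I}} \newu|_{\newg}^2$ integral with prefactor close to $\frac{8}{27}$; and (ii) the lapse--pressure interaction term $-\frac{2}{3} s^{1/3} \dot{\adjustednewp} \dot{\newlap}$, to which I apply Prop.~\ref{P:KEYSIGNED} (the key coercive quadratic integral inequality) with a parameter $\upalpha>0$ to be chosen, in order to convert the resulting spacetime integral into the coercive lapse integrals $\int s^{5/3} |\partial_{\vec{I}} \dlap|_{\newg}^2$ and $\int s^{5/3} |\partial_{\vec{I}} \newlap|^2$ with prefactors approximately $\frac{1}{3}$, at the cost of the $\leftexp{(\vec{I});(Border)}{\mathfrak{N}}$ and $\leftexp{(\vec{I});(Junk)}{\mathfrak{N}}$ inhomogeneous integrals on the right-hand side. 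This is the only place the ``approximate monotonicity'' mechanism is used in the fluid estimate.

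The dangerous cross term $-4 s [\adjustednewp + \frac{1}{3}]^2 \dot{\newu}^a \dot{\dlap}_a \approx -\frac{4}{9} s\, \dot{\newu}^a \dot{\dlap}_a$ is split via a weighted Cauchy-Schwarz inequality $|\dot{\newu}^a \dot{\dlap}_a| \leq \frac{\upbeta}{2} s^{-2/3} |\dot{\newu}|_{\newg}^2 + \frac{1}{2\upbeta} s^{2/3} |\dot{\dlap}|_{\newg}^2$ for a free parameter $\upbeta > 0$, so that both contributions can be soaked into the coercive integrals produced by steps (i) and (ii). The remaining ``border'' term $-4 s^{1/3} [1 + s^{4/3} \newlap][\adjustednewp+\frac{1}{3}]^2 \newg_{ia} \freenewsec_{\ j}^a \dot{\newu}^i \dot{\newu}^j$ is directly absorbed using the strong estimate \eqref{E:SECONDFUNDUPGRADEPOINTWISEGNORM} (which gives $|\freenewsec|_{\newg} \lesssim \epsilon$), contributing only a $C\epsilon \cdot s^{1/3} |\dot{\newu}|_{\newg}^2$ penalty. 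The crux of the whole argument will be choosing $\upalpha$ and $\upbeta$ so that the three positive left-hand side coefficients $\frac{1}{9} - C\sqrt{\epsilon}$, $\frac{1}{6} - C\sqrt{\epsilon}$, $\frac{1}{27} - C\sqrt{\epsilon}$ remain strictly positive after all absorption; this is the main obstacle, and it succeeds only because the $\approx \frac{1}{9}$ prefactor on the $\dot{\newu}\dot{\dlap}$ cross term is small relative to the $\approx \frac{8}{27}$ and $\approx \frac{1}{3}$ coercive prefactors produced by steps (i) and (ii), giving enough room for an admissible balance.

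The remaining inhomogeneous terms $\dot{\adjustednewp} \dot{\mathfrak{P}}$, $4 s^{4/3} [\adjustednewp+\frac{1}{3}]^2 \newg_{ab} \dot{\newu}^a \dot{\mathfrak{U}}^b$, $2s^{4/3} \frac{[\adjustednewp+\frac{1}{3}]}{[\cdots]} \dot{\adjustednewp} \newg_{ef} \newu^e \dot{\mathfrak{U}}^f$, and the analogous terms on the right-hand side of \eqref{E:DIVFLUIDJ}, will be handled by weighted Cauchy-Schwarz inequalities with weights calibrated so that the quadratic-in-variations pieces get absorbed into the coercive left-hand side integrals (or into the $\int s^{-1/3} |\partial_{\vec{I}} \adjustednewp|^2$ integral on the right), while the remainders become precisely the inhomogeneous integrals listed on the right-hand side of \eqref{E:FLUIDENERGYMINTEGRALINEQUALITY}, after expanding $\dot{\mathfrak{P}} = s^{1/3} \leftexp{(\vec{I});(Junk)}{\mathfrak{P}}$ and $\dot{\mathfrak{U}}^j = s^{-1} \leftexp{(\vec{I});(Border)}{\mathfrak{U}}^j + s^{1/3} \leftexp{(\vec{I});(Junk)}{\mathfrak{U}}^j$ according to Def.~\ref{D:INHOMSHORTHAND}, and using the strong $L^\infty$ estimate $|\newu|_{\newg} \lesssim \sqrt{\epsilon} s^{-c\sqrt{\epsilon}}$ from \eqref{E:UCOMMUTEDSTRONGPOINTWISE} to make the $\newu$-weighted contributions small. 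Finally, the three junk divergence corrections $\triangle_{\dot{\mathbf{J}}_{(Fluid);(Junk)_l}}[\cdot,\cdot]$ in \eqref{E:FLUIDCURRENTJUNK1}-\eqref{E:FLUIDCURRENTJUNK3}, whose coefficients carry factors of $\partial_t$ or $\partial_j$ falling on $\adjustednewp$, $\newu$, $\newlap$, or $\newg$, are each bounded pointwise using the strong estimates \eqref{E:PARTIALTGCOMMUTEDSTRONGPOINTWISE}-\eqref{E:PARTIALTLAPSESTRONGPOINTWISE}; these produce only $\sqrt{\epsilon}$-weighted quadratic contributions that are absorbable into the coercive left-hand side integrals, up to the $C\sqrt{\epsilon}$ corrections already accommodated in the lemma's stated coefficients.
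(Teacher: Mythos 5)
Your proposal is correct and follows essentially the same route as the paper's proof: divergence theorem, substitution of the divergence formula \eqref{E:DIVFLUIDJ}, Prop.~\ref{P:KEYSIGNED} applied to the $-\frac{2}{3}t^{1/3}\dot{\adjustednewp}\dot{\newlap}$ term, absorption of the $\dot{\newu}\cdot\dot{\dlap}$ cross term, and treatment of the remaining inhomogeneous and junk terms via strong estimates and weighted Cauchy--Schwarz. One small imprecision: the cross term carries prefactor $\approx\frac{4}{9}$ rather than $\approx\frac{1}{9}$, and the balance succeeds because $\frac{4}{9} < 2\sqrt{\frac{8}{27}\cdot\frac{1}{3}}$; the paper simply takes your $\upbeta = 1$ (giving $\frac{2}{9}$ to each side), and $\upalpha=\frac{1}{2}$, which produces exactly the constants $\frac{1}{9}$, $\frac{1}{6}$, $\frac{1}{27}$ in the statement.
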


\begin{proof}
	By the divergence theorem, we have that
	\begin{align}  \label{E:DIVTHMFLUID}
		\fluidenergy{M}^2(t) - \fluidenergy{M}^2(1) & = 
			- \sum_{|\vec{I}| \leq M} \int_{s=t}^1 
			\int_{\Sigma_s} \partial_{\mu} \left( \dot{\mathbf{J}}_{(Fluid)}^{\mu}
				[\partial_{\vec{I}} (\adjustednewp, \newu), \partial_{\vec{I}} (\adjustednewp, \newu)] \right) 
			\, dx \, ds.
	\end{align}
	Our goal therefore is to estimate the right-hand side of \eqref{E:DIVTHMFLUID} using the expression \eqref{E:DIVFLUIDJ}.
	The expression \eqref{E:DIVFLUIDJ} is valid because the differentiated fluid variables 
	$\partial_{\vec{I}} (\adjustednewp, \newu)$ 
	appearing in \eqref{E:DIVTHMFLUID} verify the $\partial_{\vec{I}}-$commuted
	fluid equations of Sect.~\ref{SSS:COMMUTEDFLUIDRENORM}, 
	which are specific instances of the fluid equations of variation.
	
	We first estimate the integrals corresponding to the first three terms
	on the right-hand side of \eqref{E:DIVFLUIDJ}. We recall that the variations $\dot{\freenewsec}$ etc. are defined 
	in Def.~\ref{D:VARIATIONS}. The
	corresponding integrals on the right-hand side of \eqref{E:DIVTHMFLUID} are
	\begin{align} \label{E:TOPORDERFLUIDINTEGRALS}
		& \frac{2}{3} \int_{s=t}^{s=1} s^{1/3} \int_{\Sigma_s} \big(\partial_{\vec{I}} \adjustednewp \big) \big(\partial_{\vec{I}} \newlap 
			\big) \, dx \, ds \\
		& \ \ + 4 \int_{s=t}^{s=1} s \int_{\Sigma_s} \Big[\adjustednewp + \frac{1}{3} \Big]^2 \big(\partial_{\vec{I}} \newu^a \big) 
				\big(\partial_{\vec{I}} \dlap_a \big)  \, dx \, ds
		 		\notag \\
		& \ \ - \frac{8}{3} \int_{s=t}^{s=1} s^{1/3} \int_{\Sigma_s} \Big[\adjustednewp + \frac{1}{3} \Big]^2 \newg_{ab} 
				\big(\partial_{\vec{I}} \newu^a \big) \big(\partial_{\vec{I}} \newu^b \big) \, dx \, ds. \notag
	\end{align}
	Applying the crucially important Prop.~\ref{P:KEYSIGNED} to the first spatial integral, and using
	the estimate \eqref{E:PCOMMUTEDSTRONGPOINTWISE},
	the algebraic estimate $\frac{4}{9} \left|\big(\partial_{\vec{I}} \newu^a \big) \big(\partial_{\vec{I}} \dlap_a \big)\right| 
	\leq \frac{2}{9} \left( \big|\partial_{\vec{I}} \newu \big|_{\newg}^2 + \big|\partial_{\vec{I}} \dlap \big|_{\newg}^2 \right),$
	and Cauchy-Schwarz, we can bound the sum of the integrals in \eqref{E:TOPORDERFLUIDINTEGRALS} by
\begin{align}	 \label{E:TOPORDERFLUIDINTEGRALSESTIMATED}
 \leq & - \frac{1}{3}\left(1 - \epsilon - \frac{2}{3}\right) 
 	\int_{s=t}^{s=1} s^{5/3} \int_{\Sigma_s} \Big|\partial_{\vec{I}} \dlap \big|_{\newg}^2 	\, dx \, ds  
 	 \\
	& - \frac{1}{3}\left(1 - C \epsilon - \upalpha\right) \int_{s=t}^{s=1} s^{5/3} \int_{\Sigma_s} \big|\partial_{\vec{I}} \newlap \big|^2 \, dx \, ds 
		\notag \\
	& - \left(\frac{8}{27} - \frac{2}{9} - C \epsilon \right) \int_{s=t}^{s=1} s^{1/3} \int_{\Sigma_s} 
		\big|\partial_{\vec{I}} \newu \big|_{\newg}^2 \, dx \, ds \notag \\
	& + \frac{1}{3} \upalpha^{-1} \int_{s=t}^{s=1} s^{-1} \int_{\Sigma_s} \Big|\leftexp{(\vec{I});(Border)}{\mathfrak{N}} \Big|^2  \, dx
		\, ds
		+ \frac{1}{3} \upalpha^{-1} \int_{s=t}^{s=1} s^{5/3} \int_{\Sigma_s}  \Big|\leftexp{(\vec{I});(Junk)}{\mathfrak{N}} \Big|^2 \, dx
		\, ds. 
		\notag
\end{align}
Setting $\upalpha = 1/2$ in \eqref{E:TOPORDERFLUIDINTEGRALSESTIMATED}, we have produced the first three terms on the
left-hand side of \eqref{E:FLUIDENERGYMINTEGRALINEQUALITY} [where at this stage in the proof, the integral 
$\int_{s=t}^{s=1} s^{1/3} \int_{\Sigma_s} \big|\partial_{\vec{I}} \newu \big|_{\newg}^2 \, dx \, ds$
is multiplied by the factor $\frac{2}{27}$  
rather than the factor $\frac{1}{27}$ that appears on the left-hand side of \eqref{E:FLUIDENERGYMINTEGRALINEQUALITY}], 
as well as the integrals involving $\Big|\leftexp{(\vec{I});(Border)}{\mathfrak{N}} \Big|^2$
and $\Big|\leftexp{(\vec{I});(Junk)}{\mathfrak{N}} \Big|^2$ on the right-hand side. 

To bound the integrals corresponding to the 
$- 4 t^{1/3} \big[1 + t^{4/3} \newlap\big] \Big[\adjustednewp + \frac{1}{3} \Big]^2 \newg_{ia} \freenewsec_{\ j}^a \dot{\newu}^i \dot{\newu}^j$ 
term on the right-hand side of
\eqref{E:DIVFLUIDJ}, we use the $\newg-$Cauchy-Schwarz inequality and  
the estimates \eqref{E:SECONDFUNDUPGRADEPOINTWISEGNORM} (in the case $M = 0$),
\eqref{E:PCOMMUTEDSTRONGPOINTWISE}, and \eqref{E:LAPSECOMMUTEDSTRONGPOINTWISE} to deduce
\begin{align}
		\int_{s=t}^{s=1} s^{1/3} \int_{\Sigma_s} \left|\big[1 + t^{4/3} \newlap\big] \Big[\adjustednewp + \frac{1}{3} \Big]^2 
			\newg_{ia} \freenewsec_{\ j}^a  \big(\partial_{\vec{I}} \newu^i \big) 
		 	\big( \partial_{\vec{I}} \newu^j \big) \right| \, dx \, ds
		 & \lesssim \epsilon \int_{s=t}^{s=1} s^{1/3} \int_{\Sigma_s} \left| \partial_{\vec{I}} \newu \right|_{\newg}^2 \, dx \, ds.
\end{align}
Thus, when $\epsilon$ is small, we can absorb these integrals into the left-hand side of \eqref{E:FLUIDENERGYMINTEGRALINEQUALITY}.

To bound the integrals corresponding to the term $\dot{\adjustednewp} \dot{\mathfrak{P}}$ on the right-hand side of \eqref{E:DIVFLUIDJ}
(where $\dot{\mathfrak{P}} = t^{1/3} \leftexp{(\vec{I});(Junk)}{\mathfrak{P}}$), we simply use Cauchy-Schwarz to deduce
\begin{align} \label{E:FLUIDCURRENTDOTPPINHOMINTEGRALESTIMATE}
	\int_{s=t}^{s=1} s^{1/3} \left|\int_{\Sigma_s} \big(\partial_{\vec{I}} \adjustednewp \big) 
			\leftexp{(\vec{I});(Junk)}{\mathfrak{P}}\right| \, dx \, ds
		& \leq \int_{s=t}^{s=1} s^{-1/3} \int_{\Sigma_s} \left| \partial_{\vec{I}} \adjustednewp \right|^2 \, dx \, ds
			\notag \\
		& \ \ + \int_{s=t}^{s=1} s \int_{\Sigma_s} \left| \leftexp{(\vec{I});(Junk)}{\mathfrak{P}} \right|^2 \, dx \, ds. 
		\notag
\end{align}
Clearly, the right-hand side of \eqref{E:FLUIDCURRENTDOTPPINHOMINTEGRALESTIMATE} is bounded by the right-hand side of 
\eqref{E:FLUIDENERGYMINTEGRALINEQUALITY} as desired.

To bound the integrals corresponding to the term
$4 t^{4/3} \Big[\adjustednewp + \frac{1}{3} \Big]^2 \newg_{ab} \dot{\newu}^a \dot{\mathfrak{U}}^b$
on the right-hand side of \eqref{E:DIVFLUIDJ} 
(where $\dot{\mathfrak{U}} = t^{-1} \leftexp{(\vec{I});(Border)}{\mathfrak{U}}^j + t^{1/3} \leftexp{(\vec{I});(Junk)}{\mathfrak{U}}^j$), 
we first use the estimate \eqref{E:PCOMMUTEDSTRONGPOINTWISE} to deduce that there exists a constant $C > 0$ such that 
for any constant $\upbeta > 0,$ we have
\begin{align}
		t^{4/3} \left| \Big[\adjustednewp + \frac{1}{3} \Big]^2 \newg_{ab} \dot{\newu}^a \dot{\mathfrak{U}}^b \right|
		& \leq C \upbeta t^{1/3} |\dot{\newu}|_{\newg}^2 + C \upbeta^{-1} t^{7/3} |\dot{\mathfrak{U}}|_{\newg}^2.
\end{align}
Thus, the corresponding integrals can be bounded by 
\begin{align} \label{E:FLUIDCURRENTUINHOMOGENEOUSCROSSTERM}
	4 \int_{s=t}^{s=1} s^{4/3} & \int_{\Sigma_s}  
		\left| \Big[\adjustednewp + \frac{1}{3} \Big]^2 \newg_{ab} \big(\partial_{\vec{I}} \newu^a 
		\big) \left(s^{-1} \leftexp{(\vec{I});(Border)}{\mathfrak{U}}^b 
			+ s^{1/3} \leftexp{(\vec{I});(Junk)}{\mathfrak{U}}^b \right) \right|
		\, dx \, ds \\
	& \leq C \upbeta \int_{s=t}^{s=1} s^{1/3} \int_{\Sigma_s} \left| \partial_{\vec{I}} \newu \right|_{\newg}^2 \, dx \, ds
		\notag \\
	& \ \ + C \upbeta^{-1} \int_{s=t}^{s=1} s^{1/3} \int_{\Sigma_s} \left| \leftexp{(\vec{I});(Border)}{\mathfrak{U}} \right|_{\newg}^2 
			\, dx \, ds
		+ C \upbeta^{-1} \int_{s=t}^{s=1} s^3 \int_{\Sigma_s} \left| \leftexp{(\vec{I});(Junk)}{\mathfrak{U}} \right|_{\newg}^2 \, dx
			\, ds. \notag
\end{align}		
If $\upbeta$ is chosen so that $C \upbeta < \frac{1}{27},$ then we can absorb the first integral on the right-hand side
of \eqref{E:FLUIDCURRENTUINHOMOGENEOUSCROSSTERM} into the left-hand side of \eqref{E:FLUIDENERGYMINTEGRALINEQUALITY}, 
which reduces the constant in front of the third integral to its listed value of $\frac{1}{27}.$ Furthermore, the second
and third integrals on the right-hand side of \eqref{E:FLUIDCURRENTUINHOMOGENEOUSCROSSTERM} are clearly bounded by
the right-hand side of \eqref{E:FLUIDENERGYMINTEGRALINEQUALITY} as desired.

The integrals corresponding to the next three terms on the right-hand side of \eqref{E:DIVFLUIDJ}
are easy to estimate because of the large powers of $t$ available. To estimate the integral corresponding to the term 
$2 t^{4/3} \frac{\Big[\adjustednewp + \frac{1}{3} \Big]}{\big[1 + t^{4/3} \newg_{ab} \newu^a \newu^b \big]} \dot{\adjustednewp} \newg_{ef} \newu^e \dot{\mathfrak{U}}^f,$ from the right-hand side of \eqref{E:DIVFLUIDJ}, we use the estimates 
\eqref{E:PCOMMUTEDSTRONGPOINTWISE} and \eqref{E:UCOMMUTEDSTRONGPOINTWISE} to deduce that
\begin{align} \label{E:JDOTFLUIDEASYTERMALGEBRAIC}
	t^{4/3} \left| \frac{\Big[\adjustednewp + \frac{1}{3} \Big]}{\big[1 + t^{4/3} \newg_{ab} \newu^a \newu^b \big]} 
		\dot{\adjustednewp} \newg_{ef} \newu^e \dot{\mathfrak{U}}^f \right| 
	& \lesssim t^{1/3 - c \sqrt{\epsilon}} |\dot{\adjustednewp}|^2 + \epsilon t^{7/3} 
	|\dot{\mathfrak{U}}|_{\newg}^2. 
\end{align}
From \eqref{E:JDOTFLUIDEASYTERMALGEBRAIC}, it follows that the corresponding integrals can be bounded by
\begin{align} \label{E:JDOTFLUIDEASYINTEGRALS}
	& \lesssim \int_{s=t}^{s=1} s^{1/3 - c \sqrt{\epsilon}} \int_{\Sigma_s} 
		\left| \partial_{\vec{I}} \adjustednewp \right|^2 \, dx \, ds \\
	& \ \ + \epsilon \int_{s=t}^{s=1} s^{1/3} \int_{\Sigma_s} 
			\left| \leftexp{(\vec{I})}{\leftexp{(Border)}{\mathfrak{U}}} \right|_{\newg}^2 \, dx \, ds 
		+ \epsilon \int_{s=t}^{s=1} s^3 \int_{\Sigma_s} 
			\left| \leftexp{(\vec{I})}{\leftexp{(Junk)}{\mathfrak{U}}} \right|_{\newg}^2 \, dx \, ds. \notag
\end{align}
Clearly, the integrals on the right-hand side of
\eqref{E:JDOTFLUIDEASYINTEGRALS} are bounded by the right-hand side of \eqref{E:FLUIDENERGYMINTEGRALINEQUALITY}
as desired. The integrals corresponding to the terms
\begin{align*}
	&  2 t^{4/3} \frac{\Big[\adjustednewp + \frac{1}{3} \Big]}{\big[1 + t^{4/3} \newg_{ab} \newu^a \newu^b \big]} 
		\newg_{ef} \newu^e \dot{\newu}^f \dot{\mathfrak{P}}, \\
	& - 4 t^{8/3} \frac{\Big[\adjustednewp + \frac{1}{3} \Big]^2}
		{\big[1 + t^{4/3} \newg_{ab} \newu^a \newu^b \big]} \newg_{ef} \newu^e 
		\dot{\newu}^f \newg_{ij} \newu^i \dot{\mathfrak{U}}^j
\end{align*}	
from the right-hand side of \eqref{E:DIVFLUIDJ} can similarly be bounded with the help of the estimates \eqref{E:PCOMMUTEDSTRONGPOINTWISE} and \eqref{E:UCOMMUTEDSTRONGPOINTWISE}; we omit the straightforward details.

	It remains to bound the integrals corresponding to the terms 
	$\triangle_{\dot{\mathbf{J}}_{(Fluid);(Junk)_l}[(\dot{\adjustednewp},\dot{\newu}), (\dot{\adjustednewp},\dot{\newu})]}$ from the right-hand 
	side of \eqref{E:DIVFLUIDJ}. These terms make only very minor contributions to the dynamics. 
	To proceed, we insert the strong estimates of Prop.~\ref{P:STRONGPOINTWISE} into 
	the expressions \eqref{E:FLUIDCURRENTJUNK1}-\eqref{E:FLUIDCURRENTJUNK3}
	and use the $\newg-$Cauchy-Schwarz inequality plus simple estimates of the form $ab \lesssim \upzeta^{-1}a^2 + \upzeta b^2$ to deduce that
	\begin{align} \label{E:JFLUIDJUNKFIRSTESTIMATE}
		\sum_{l=1}^3 \left| \triangle_{\dot{\mathbf{J}}_{(Fluid);(Junk)_l}[(\dot{\adjustednewp},\dot{\newu}), (\dot{\adjustednewp},\dot{\newu})]} \right|
		& \lesssim \sqrt{\epsilon}
		\left\lbrace t^{1 - c \sqrt{\epsilon}} |\dot{U}|_{\newg}^2 
			+ t^{-1/3} |\dot{\adjustednewp}|^2 
			+ t^{7/3 - c \sqrt{\epsilon}} |\dot{\newlap}|^2 
			+ t^{7/3 - c \sqrt{\epsilon}} |\dot{\dlap}|_{\newg}^2 \right\rbrace. 		
	\end{align}
	We now integrate \eqref{E:JFLUIDJUNKFIRSTESTIMATE} over $[t,1) \times \mathbb{T}^3.$
	For sufficiently small $\epsilon,$ the integrals corresponding to the terms
	$|\dot{U}|_{\newg}^2,$ $|\dot{\newlap}|^2,$ and $|\dot{\dlap}|_{\newg}^2$ can be absorbed
	into the positive integrals on the left-hand side of \eqref{E:FLUIDENERGYMINTEGRALINEQUALITY}.
	The integral corresponding to $|\dot{\adjustednewp}|^2$ is clearly bounded by the right-hand
	side of \eqref{E:FLUIDENERGYMINTEGRALINEQUALITY} as desired.
	
	
	
	\end{proof}

\subsection{Proof of Prop.~\ref{P:FUNDAMENTALENERGYINEQUALITY}} \label{SS:ENERGYINEQUALITYPROOF}
Recall that the total energies are
\begin{align}
	\totalenergy{\smallparameter}{M}^2 & :=  \smallparameter \metricenergy{M}^2 + \fluidenergy{M}.
\end{align}
We simply add $\smallparameter$ times inequality \eqref{E:METRICENERGYMINTEGRALINEQUALITY} to
the inequality \eqref{E:FLUIDENERGYMINTEGRALINEQUALITY}. If $\smallparameter = \smallparameter_* > 0$ is chosen to be
sufficiently small, then $\smallparameter_*$ times the third and fourth integrals on the right-hand side of \eqref{E:METRICENERGYMINTEGRALINEQUALITY} can be absorbed into the positive integrals on the left-hand side of \eqref{E:FLUIDENERGYMINTEGRALINEQUALITY}. The desired inequality \eqref{E:FUNDAMENTALENERGYINEQUALITY} thus follows.

\hfill $\qed$

\section{Pointwise Bounds for the Inhomogeneous Terms} \label{S:POINTWISEINHOMOGENEOUS}

In this section, we derive pointwise bounds for the $|\cdot|_{\newg}$ norms of the inhomogeneous terms 
$\leftexp{(\vec{I});(Junk)}{\mathfrak{N}},$ $\leftexp{(\vec{I});(Border)}{\mathfrak{N}},$ 
$\leftexp{(\vec{I});(Junk)}{\mathfrak{g}},$ $\leftexp{(\vec{I});(Border)}{\mathfrak{g}},$
etc., 
appearing on the right-hand sides of the $\partial_{\vec{I}}-$commuted equations.
The right-hand sides of the bounds feature the $\partial_{\vec{I}}-$differentiated renormalized solution variables 
multiplied by various $t-$weights, 
some of which are of crucial importance. Because these same inhomogeneous terms also appear in the fundamental energy integral inequality 
\eqref{E:FUNDAMENTALENERGYINEQUALITY}, these bounds are an extremely important ingredient in deriving the a priori energy and norm estimates of 
Sect.~\ref{S:FUNDAMENTALAPRIORI}. 
The main tools used in the proof are the strong estimates afforded by Prop.~\ref{P:STRONGPOINTWISE}. 



\begin{proposition}[\textbf{Pointwise bounds for the inhomogeneous terms}] \label{P:POINTWISEESTIMATES}
Assume that the hypotheses and conclusions of Prop.~\ref{P:STRONGPOINTWISE} hold on the spacetime slab $(T,1] \times \mathbb{T}^3.$
Then there exist a small constant $\upsigma_N > 0$ and a large constant $c > 0$ and such that if $\epsilon \leq \upsigma \leq \upsigma_N$ and $0 \leq M \leq N,$
then the following pointwise bounds are verified by the junk inhomogeneous terms appearing
in the commuted equations \eqref{AE:MOMENTUMCONSTRAINTCOMMUTED}-\eqref{AE:RAISEDMOMENTUMCONSTRAINTCOMMUTED},
\eqref{AE:METRICGAMMACOMMUTED}-\eqref{AE:SECONDFUNDCOMMUTED}, 
and \eqref{AE:PARTIALTPCOMMUTED}-\eqref{AE:PARTIALTUJCOMMUTED} on $(T,1] \times \mathbb{T}^3:$
\begin{align} \label{E:POINTWISEJUNKNORMS}
		\sum_{|\vec{I}| \leq M} 
		\left| \leftexp{(\vec{I});(Junk)}{\mathfrak{M}} \right|_{\newg} 
				+ \left| \leftexp{(\vec{I});(Junk)}{\widetilde{\mathfrak{M}}} \right|_{\newg} &
				\\
		+ \sum_{|\vec{I}| \leq M} \left| \leftexp{(\vec{I});(Junk)}{\mathfrak{K}} \right|_{\newg}
				+ \left| \leftexp{(\vec{I});(Junk)}{\mathfrak{g}} \right|_{\newg} &
				\notag \\
		+ \sum_{|\vec{I}| \leq M}  \left| \leftexp{(\vec{I});(Junk)}{\mathfrak{P}} \right| 
				+ \left| \leftexp{(\vec{I});(Junk)}{\mathfrak{U}} \right|_{\newg} &
		\lesssim  \underbrace{\sqrt{\epsilon} t^{- c \sqrt{\epsilon}} \sum_{1 \leq |\vec{I}| \leq M} \left| \partial_{\vec{I}} \newg \right|_{\newg}}_{
				\mbox{absent if $M = 0$}} 
			+ \underbrace{\sqrt{\epsilon} t^{- c \sqrt{\epsilon}} \sum_{1 \leq |\vec{I}| \leq M} \left| \partial_{\vec{I}} \newg^{-1} \right|_{\newg}}_{
				\mbox{absent if $M = 0$}}
			\notag \\
		& \ \ \sqrt{\epsilon} t^{- c \sqrt{\epsilon}} \sum_{|\vec{I}| \leq M} \left| \partial_{\vec{I}} \Big(
			\freenewsec, \upgamma, \adjustednewp, \newu, \newlap, \dlap \Big) \right|_{\newg}.
			\notag 
		\end{align}
	
	In addition, the following estimates for the junk inhomogeneous terms appearing in 
	equations \eqref{AE:GEVOLUTIONCOMMUTED}-\eqref{AE:GINVERSEEVOLUTIONCOMMUTED} 
	hold on $(T,1] \times \mathbb{T}^3$ for $1 \leq M \leq N:$
	\begin{subequations}
	\begin{align} \label{E:GJUNKNORM}
		\sum_{1 \leq |\vec{I}| \leq M} \left| \leftexp{(\vec{I});(Junk)}{\mathfrak{G}} \right|_{\newg}
		& \lesssim \sqrt{\epsilon} t^{- c \sqrt{\epsilon}} \sum_{1 \leq |\vec{I}| \leq M} \left|\partial_{\vec{I}} \newg \right|_{\newg}
			\\
		& \ \ +  \sqrt{\epsilon} t^{- c \sqrt{\epsilon}} \sum_{|\vec{I}| \leq M} \left|\partial_{\vec{I}} \freenewsec \right|_{\newg}
			\notag \\
		& \ \ + \sum_{|\vec{I}| = M} \left|\partial_{\vec{I}} \newlap \right|
			\notag 
			\\
		& \ \ + \sqrt{\epsilon} t^{- c \sqrt{\epsilon}} \sum_{|\vec{J}| \leq M - 1} \left|\partial_{\vec{J}} \newlap \right|,
			\notag \\
		\sum_{1 \leq |\vec{I}| \leq M} \left| \leftexp{(\vec{I});(Junk)}{\widetilde{\mathfrak{G}}} \right|_{\newg}
		& \lesssim \sqrt{\epsilon} t^{- c \sqrt{\epsilon}} \sum_{1 \leq |\vec{I}| \leq M} \left|\partial_{\vec{I}} \newg^{-1} \right|_{\newg}
			\label{E:GINVERSEJUNKNORM} \\
		& \ \ +  \sqrt{\epsilon} t^{- c \sqrt{\epsilon}} \sum_{|\vec{I}| \leq M} \left|\partial_{\vec{I}} \freenewsec \right|_{\newg}
			\notag \\
		& \ \ + \sum_{|\vec{I}| = M} \left|\partial_{\vec{I}} \newlap \right|
			\notag 
			\\
		& \ \ + \sqrt{\epsilon} t^{- c \sqrt{\epsilon}} \sum_{|\vec{J}| \leq M - 1} \left|\partial_{\vec{J}} \newlap \right|.
			\notag 		
	\end{align}
	\end{subequations}

	The following estimates for the borderline inhomogeneous terms appearing in 
	the commuted equations \eqref{AE:GEVOLUTIONCOMMUTED}-\eqref{AE:GINVERSEEVOLUTIONCOMMUTED} 
	hold on $(T,1] \times \mathbb{T}^3$ for $1 \leq M \leq N:$
	\begin{subequations}
	\begin{align} \label{E:GBORDERNORM}
		\sum_{|\vec{I}| \leq M} \left| \leftexp{(\vec{I});(Border)}{\mathfrak{G}} \right|_{\newg}
		& \ \ \lesssim \underbrace{\sqrt{\epsilon} \sum_{|\vec{I}| = M} \Big|\partial_{\vec{I}} \newg \Big|_{\newg}}_{\mbox{borderline, principal order}}
				+ \underbrace{\sqrt{\epsilon} t^{- c \sqrt{\epsilon}} \sum_{1 \leq |\vec{J}| \leq M - 1} \Big|\partial_{\vec{J}} \newg \Big|_{\newg}}_{\mbox{
				borderline, below principal order}}
				\\
		& + \underbrace{\sum_{|\vec{I}| = M} \left|\partial_{\vec{I}} \freenewsec \right|_{\newg}}_{\mbox{borderline, principal order}}
			+ \underbrace{\sqrt{\epsilon} t^{- c \sqrt{\epsilon}} \sum_{|\vec{J}| \leq M - 1} 
				\left|\partial_{\vec{J}} \freenewsec \right|_{\newg}}_{\mbox{borderline, below principal order}},
				\notag \\	
		\sum_{|\vec{I}| \leq M} \left| \leftexp{(\vec{I});(Border)}{\widetilde{\mathfrak{G}}} \right|_{\newg}
			& \ \ \lesssim \underbrace{\sqrt{\epsilon} \sum_{|\vec{I}| = M} \Big|\partial_{\vec{I}} \newg^{-1} \Big|_{\newg}}_{\mbox{borderline, principal order}}
				+ \underbrace{\sqrt{\epsilon} t^{- c \sqrt{\epsilon}} \sum_{1 \leq |\vec{J}| \leq M - 1} \Big|\partial_{\vec{J}} \newg^{-1} 
					\Big|_{\newg}}_{\mbox{borderline, below principal order}}
				\label{E:GINVERSEBORDERNORM} \\
		& + \underbrace{\sum_{|\vec{I}| = M} \left|\partial_{\vec{I}} \freenewsec \right|_{\newg}}_{\mbox{borderline, principal order}}
			+ \underbrace{\sqrt{\epsilon} t^{- c \sqrt{\epsilon}} \sum_{|\vec{J}| \leq M - 1} 
				\left|\partial_{\vec{J}} \freenewsec \right|_{\newg}}_{\mbox{borderline, below principal order}}.
				\notag
	\end{align}
	\end{subequations}

	In addition, the following estimates for the junk inhomogeneous terms appearing in 
	the commuted equation \eqref{AE:LAPSEICOMMUTED} hold on $(T,1] \times \mathbb{T}^3$
	for $0 \leq M \leq N:$
	\begin{align} \label{E:NIJUNK}
		\sum_{|\vec{I}| \leq M} \left| \leftexp{(\vec{I});(Junk)}{\mathfrak{N}} \right|
			& \lesssim  \underbrace{\sqrt{\epsilon} t^{- c \sqrt{\epsilon}} \sum_{1 \leq |\vec{I}| \leq M} \Big|\partial_{\vec{I}} \newg \Big|_{\newg}}_{\mbox{
				absent if $M=0$}}
				+ \underbrace{\sqrt{\epsilon} t^{- c \sqrt{\epsilon}} \sum_{1 \leq |\vec{I}| \leq M} \Big|\partial_{\vec{I}} \newg^{-1} \Big|_{\newg}}_{\mbox{
					absent if $M=0$}}
			\\
		& \ \ \sqrt{\epsilon} t^{- c \sqrt{\epsilon}} \sum_{|\vec{I}| \leq M} 
					\left|\partial_{\vec{I}} \freenewsec \right|_{\newg}  
				+ \sqrt{\epsilon} \sum_{|\vec{I}| \leq M} \Big| \partial_{\vec{I}} \upgamma \Big|_{\newg} 
					\notag \\
			& \ \ + \sqrt{\epsilon} \sum_{|\vec{J}| \leq M - 1} \Big|\partial_{\vec{J}} \newlap \Big|
				+ \sqrt{\epsilon} \sum_{|\vec{I}| \leq M} \left|\partial_{\vec{I}} \dlap \right|_{\newg}
				\notag \\
			& \ \ + \sqrt{\epsilon} t^{- c \sqrt{\epsilon}} \sum_{|\vec{I}| \leq M} \Big| \partial_{\vec{I}} \adjustednewp \Big|
				+ \sqrt{\epsilon} t^{- c \sqrt{\epsilon}} 
				\sum_{|\vec{I}| \leq M} \Big| \partial_{\vec{I}} \newu \Big|_{\newg}
			\notag \\
			& \ \ +  \sqrt{\epsilon} t^{- c \sqrt{\epsilon}} \sum_{|\vec{J}| \leq M-1} \Big| \partial_{\vec{J}} \upgamma \Big|_{\newg}.
				\notag 
	\end{align}
	
	The following estimates for the borderline inhomogeneous terms appearing in 
	the commuted equation \eqref{AE:LAPSEICOMMUTED} hold on $(T,1] \times \mathbb{T}^3$
	for $0 \leq M \leq N:$
	\begin{align} \label{E:NIBORDER}
		\sum_{|\vec{I}| \leq M} 
			\Big| \leftexp{(\vec{I});(Border)}{\mathfrak{N}} \Big|
		& \lesssim \underbrace{\sqrt{\epsilon} \sum_{|\vec{I}| = M} 
			\left|\partial_{\vec{I}} \freenewsec \right|_{\newg}}_{\mbox{borderline, principal order}}
			+ \underbrace{\sqrt{\epsilon} t^{- c \sqrt{\epsilon}} \sum_{|\vec{J}| \leq M - 1} 
				\left|\partial_{\vec{J}} \freenewsec \right|_{\newg}}_{\mbox{borderline, below principal order}}.
	\end{align}
	
	The following estimates for the junk inhomogeneous terms appearing in 
	the commuted equation \eqref{AE:ALTERNATELAPSEICOMMUTEDLOWER} hold on $(T,1] \times \mathbb{T}^3$
	for $0 \leq M \leq N - 1:$ 
	\begin{align} \label{E:WIDETILDENJUNKIPOINTWISE}
	\sum_{|\vec{I}| \leq M} \Big| \leftexp{(\vec{I});(Junk)}{\widetilde{\mathfrak{N}}} \Big| 
	& \lesssim \sqrt{\epsilon} t^{2/3 -c \sqrt{\epsilon}} \sum_{|\vec{H}| = M + 1} \Big| \partial_{\vec{H}} \upgamma \Big|_{\newg} 
		\\
	& \ \ + \underbrace{\sqrt{\epsilon} t^{- c \sqrt{\epsilon}} \sum_{1 \leq |\vec{I}| \leq M} \left| \partial_{\vec{I}} \newg \right|_{\newg}}_{
		\mbox{absent if $M = 0$}} 
		+ \underbrace{\sqrt{\epsilon} t^{- c \sqrt{\epsilon}} \sum_{1 \leq |\vec{I}| \leq M} \left| \partial_{\vec{I}} \newg^{-1} \right|_{\newg}}_{
			\mbox{absent if $M = 0$}}
		\notag \\
	& \ \ + \sqrt{\epsilon} t^{- c \sqrt{\epsilon}} \sum_{|\vec{I}| \leq M} \left| \partial_{\vec{I}} \Big(
			\freenewsec, \upgamma, \adjustednewp, \newu, \newlap, \dlap \Big) \right|_{\newg}.
		\notag
\end{align}

	The following estimates for the borderline inhomogeneous terms appearing in 
	the commuted equation \eqref{AE:ALTERNATELAPSEICOMMUTEDLOWER} hold on $(T,1] \times \mathbb{T}^3$
	for $0 \leq M \leq N - 1:$
	\begin{align}
	\sum_{|\vec{I}| \leq M} \Big| \leftexp{(\vec{I});(Border)}{\widetilde{\mathfrak{N}}} \Big|
		& \lesssim  \sum_{|\vec{H}| = M + 1} \Big| \partial_{\vec{H}} \upgamma \Big|_{\newg}
			\label{E:WIDETILDENIPOINTWISEBORDER} \\
		&\ \ + \underbrace{\sqrt{\epsilon} t^{- c \sqrt{\epsilon}} \sum_{1 \leq |\vec{I}| \leq M} \Big|\partial_{\vec{I}} \newg \Big|_{\newg}}_{
			\mbox{absent if $M=0$}} 
			+ 
			\underbrace{\sqrt{\epsilon} t^{- c \sqrt{\epsilon}} \sum_{1 \leq |\vec{I}| \leq M} \Big|\partial_{\vec{I}} \newg^{-1} \Big|_{\newg}}_{
				\mbox{absent if $M=0$}} 
				\notag \\
		& \ \ + \sqrt{\epsilon} t^{- c \sqrt{\epsilon}} \sum_{|\vec{I}| \leq M} \Big| \partial_{\vec{I}} \upgamma \Big|_{\newg}
			 \notag \\
		& \ \ + \sqrt{\epsilon} t^{- c \sqrt{\epsilon}} \sum_{|\vec{I}| \leq M} 
			\Big| \partial_{\vec{I}} \adjustednewp \Big|
			+ \sqrt{\epsilon} t^{- c \sqrt{\epsilon}} \sum_{|\vec{I}| \leq M} \Big| \partial_{\vec{I}} \newu \Big|_{\newg}.
			\notag 
	\end{align}

	Finally, the following estimates for the borderline inhomogeneous terms
	appearing in the commuted equations 
	\eqref{AE:MOMENTUMCONSTRAINTCOMMUTED}-\eqref{AE:RAISEDMOMENTUMCONSTRAINTCOMMUTED},
	\eqref{AE:METRICGAMMACOMMUTED}-\eqref{AE:SECONDFUNDCOMMUTED}, and 
	\eqref{AE:PARTIALTPCOMMUTED}-\eqref{AE:PARTIALTUJCOMMUTED} hold on $(T,1] \times \mathbb{T}^3$
	for $0 \leq M \leq N:$
	\begin{subequations}
	\begin{align}
		\sum_{|\vec{I}| \leq M} \Big| \leftexp{(\vec{I});(Border)}{\mathfrak{g}} \Big|_{\newg}
		& \lesssim  \sqrt{\epsilon} t^{- c \sqrt{\epsilon}} \sum_{|\vec{I}| \leq M} 
			 \left|\partial_{\vec{I}} \freenewsec \right|_{\newg}
			+ \underbrace{\sqrt{\epsilon} \sum_{|\vec{I}| = M} \Big| \partial_{\vec{I}} \upgamma 	
				\Big|_{\newg}}_{\mbox{borderline, principal order}} 
			\label{E:GAMMAIPOINTWISEGNORMBORDER}  \\
		& + \underbrace{\sqrt{\epsilon} t^{2/3} \sum_{|\vec{I}| = M} \Big| \partial_{\vec{I}} \dlap \Big|_{\newg}}_{\mbox{borderline, principal order}} 
				\notag \\
		& \ \  + \underbrace{\sqrt{\epsilon} t^{- c \sqrt{\epsilon}} \sum_{|\vec{J}| \leq M - 1} 
			\Big| \partial_{\vec{J}} \upgamma \Big|_{\newg}}_{\mbox{borderline, below principal order}}
			\notag \\
		& \ \ + \sqrt{\epsilon} t^{2/3 - c \sqrt{\epsilon}} \sum_{|\vec{J}| \leq M - 1} \left|\partial_{\vec{J}} \dlap \right|_{\newg},
			\notag 
	\end{align}
	\begin{align}   \label{E:UIPOINTWISEGNORM}
		\sum_{|\vec{I}| \leq M} \Big| \leftexp{(\vec{I});(Border)}{\mathfrak{U}} \Big|_{\newg}
		& \lesssim  \underbrace{\sqrt{\epsilon} t^{- c \sqrt{\epsilon}} \sum_{1 \leq |\vec{I}| \leq M} \Big|\partial_{\vec{I}} \newg \Big|_{\newg}}_{
			\mbox{absent if $M=0$}}
			+ \underbrace{\sqrt{\epsilon} t^{- c \sqrt{\epsilon}} \sum_{1 \leq |\vec{I}| \leq M} \Big|\partial_{\vec{I}} \newg^{-1} \Big|_{\newg}}_{
			\mbox{absent if $M=0$}}
			\\
		& \ \ + \sqrt{\epsilon} t^{- c \sqrt{\epsilon}} \sum_{|\vec{I}| \leq M} 
				\left|\partial_{\vec{I}} \freenewsec \right|_{\newg}
			\notag \\
		& \ \ + \sqrt{\epsilon} t^{- c \sqrt{\epsilon}} \sum_{|\vec{I}| \leq M} 
			\Big| \partial_{\vec{I}} \adjustednewp \Big|
			+ \underbrace{\sqrt{\epsilon} \sum_{|\vec{I}| = M} \Big| \partial_{\vec{I}} \newu \Big|_{\newg}}_{\mbox{borderline, 
				principal order}} \notag \\
		& \ \ + \underbrace{\sqrt{\epsilon} t^{- c \sqrt{\epsilon}} 
			\sum_{|\vec{J}| \leq M - 1} \Big| \partial_{\vec{J}} \newu \Big|_{\newg}}_{\mbox{borderline, below principal order}},
			\notag 
	\end{align}	
	\begin{align}
		\sum_{|\vec{I}| \leq M} \left| \leftexp{(\vec{I});(Border)}{\mathfrak{M}} \right|_{\newg} 
		& + \left| \leftexp{(\vec{I});(Border)}{\widetilde{\mathfrak{M}}} \right|_{\newg}
		 	\label{E:MIPOINTWISEGNORM} \\
		& \lesssim  \underbrace{\sqrt{\epsilon} t^{- c \sqrt{\epsilon}} \sum_{1 \leq |\vec{I}| \leq M} \Big|\partial_{\vec{I}} \newg \Big|_{\newg}}_{
			\mbox{absent if $M=0$}}
			+ \underbrace{\sqrt{\epsilon} t^{- c \sqrt{\epsilon}} \sum_{1 \leq |\vec{I}| \leq M} \Big|\partial_{\vec{I}} \newg^{-1} \Big|_{\newg}}_{
			\mbox{absent if $M=0$}}
			\notag \\
		& \ \ + \sqrt{\epsilon} t^{- c \sqrt{\epsilon}} \sum_{|\vec{I}| \leq M} 
			\left|\partial_{\vec{I}} \freenewsec \right|_{\newg}
			+ \underbrace{\sqrt{\epsilon} \sum_{|\vec{I}| = M} \Big| \partial_{\vec{I}} \upgamma 
			\Big|_{\newg}}_{\mbox{borderline, principal order}} \notag \\
		& \ \ + \sqrt{\epsilon} t^{- c \sqrt{\epsilon}} \sum_{|\vec{I}| \leq M} \Big| \partial_{\vec{I}} \adjustednewp \Big|
			+ \underbrace{\sqrt{\epsilon} \sum_{|\vec{I}| = M} \Big| \partial_{\vec{I}} \newu \Big|_{\newg}}_{\mbox{borderline, principal order}}
			\notag \\
		& \ \ + \underbrace{\sqrt{\epsilon} t^{- c \sqrt{\epsilon}} \sum_{|\vec{J}| \leq M - 1} 
				\Big| \partial_{\vec{J}} \upgamma \Big|_{\newg}}_{\mbox{borderline, below principal order}} 
			+ \underbrace{\sqrt{\epsilon} t^{- c \sqrt{\epsilon}} 
				\sum_{|\vec{J}| \leq M - 1} \Big| \partial_{\vec{J}} \newu \Big|_{\newg}}_{\mbox{borderline, below principal 
				order}}. \notag 
	\end{align}
	\end{subequations}

\end{proposition}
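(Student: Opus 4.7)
The plan is to establish each bound by substituting the explicit expressions for the inhomogeneous terms from Appendix~\ref{A:COMMUTED} into the left-hand side, applying the Leibniz rule to distribute $\partial_{\vec I}$ across the resulting products, and then controlling each resulting product via the strong $C^M_{\newg}$ estimates of Prop.~\ref{P:STRONGPOINTWISE} applied to the factors carrying few derivatives. Because the inhomogeneous terms are polynomial expressions (with smooth coefficients in $\newg,\newg^{-1},\newu$) in the renormalized variables, each commutator $\partial_{\vec I}$ of such a product expands into a finite sum of terms of the form $F(t;\text{vars}) \prod_{a=1}^{k}\partial_{\vec I_a}v_a$ with $v_a \in \{\newg,\newg^{-1},\freenewsec,\upgamma,\adjustednewp,\newu,\newlap,\dlap\}$ and $|\vec I_1|+\cdots+|\vec I_k|\leq M$. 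Since $N\geq 8$ and $M\leq N$, in each such product at most one index $|\vec I_a|$ can exceed $M/2$, so at least $k-1$ of the factors can be placed in $L^{\infty}$ using either the low-order bootstrap assumptions \eqref{E:KINBOOT}--\eqref{E:POTBOOT} or, crucially, the sharper $C^M_{\newg}$ estimates of Prop.~\ref{P:STRONGPOINTWISE}.

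The $t$-weights on the right-hand sides are then produced by collecting the explicit powers of $t$ that appear in the definitions of each inhomogeneous term in Appendix~\ref{A:COMMUTED}, together with the $t$-weight losses inherited from the strong estimates (at worst $t^{-c\sqrt{\epsilon}}$) and the $t^{2/3}$ factors that relate $\dlap$ to $\partial\newlap$. The $\sqrt{\epsilon}$ prefactor is obtained because every product contributing to a junk or borderline term contains at least one factor that vanishes identically on the FLRW background; that factor therefore carries a smallness gain of at least $\sqrt{\epsilon}$ through the strong estimates \eqref{E:GCOMMUTEDSTRONGPOINTWISE}--\eqref{E:UCOMMUTEDSTRONGPOINTWISE} (with the square-root loss originating in \eqref{E:UCOMMUTEDSTRONGPOINTWISE}). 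The splitting into principal-order borderline terms, below-principal-order borderline terms, and junk terms simply tracks which of the variables $\newg$, $\newg^{-1}$, $\freenewsec$, $\upgamma$, $\newu$, $\newlap$, $\dlap$ absorbs all $M$ derivatives: if that variable lies outside the regime $M\leq N-3$ (or $N-4$, or $N-6$, depending on the variable) where Prop.~\ref{P:STRONGPOINTWISE} provides an $L^{\infty}$ bound, it must be recorded as a top-order factor on the right-hand side — this yields the principal-order borderline contributions in \eqref{E:GBORDERNORM}, \eqref{E:GINVERSEBORDERNORM}, \eqref{E:NIBORDER}, \eqref{E:WIDETILDENIPOINTWISEBORDER}, \eqref{E:GAMMAIPOINTWISEGNORMBORDER}, \eqref{E:UIPOINTWISEGNORM}, and \eqref{E:MIPOINTWISEGNORM}.

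The main obstacle will be the lapse inhomogeneity bounds \eqref{E:NIJUNK}--\eqref{E:NIBORDER} and \eqref{E:WIDETILDENJUNKIPOINTWISE}--\eqref{E:WIDETILDENIPOINTWISEBORDER}. The borderline term $\leftexp{(\vec I);(Border)}{\mathfrak{N}}$ must be shown to depend on $\freenewsec$ at principal order only and \emph{not} on $\upgamma$, which is forced by the specific algebraic structure of the definition \eqref{AE:LAPSENBORDERLINE}; this structure in turn reflects the elimination of the scalar curvature $R$ via the Hamiltonian constraint \eqref{E:RINTERMSOFKPANDU} when passing from \eqref{E:LAPSE} to \eqref{E:LAPSERESCALEDELLIPTIC}. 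Dually, the alternate form \eqref{E:LAPSELOWERDERIVATIVES} substitutes $R$ back in, leading to the $\upgamma$-dependence on the right-hand side of \eqref{E:WIDETILDENIPOINTWISEBORDER} and explaining why that estimate is restricted to $M\leq N-1$. This is precisely the tension described in Sect.~\ref{SSS:LAPSEINTRO}: at top order the Hamiltonian constraint forces the lapse to inherit its $L^2$ behavior from the kinetic variable $\freenewsec$, while at lower orders we obtain the stronger behavior dictated by $\upgamma$. Once this structural point is correctly set up and the definitions from Appendix~\ref{A:COMMUTED} are expanded, the remainder of the proof is a lengthy but routine case-by-case verification using the Leibniz-plus-strong-estimates procedure outlined above.
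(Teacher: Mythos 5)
Your overall strategy matches the paper's: expand each inhomogeneous term using the Leibniz rule into sums of products $F(t;\cdots)\prod_a \partial_{\vec I_a}v_a$, place all but one factor in $L^\infty$ via the strong $C^M_{\newg}$ estimates of Prop.~\ref{P:STRONGPOINTWISE}, and track the $t$-weights and $\epsilon$-powers from the definitions. Your identification of the structural issue behind the two lapse equations (Hamiltonian constraint substitution removes $\upgamma$-dependence at the price of recording $\freenewsec$ at top order) is also correct, and the $\sqrt{\epsilon}$-origin from \eqref{E:UCOMMUTEDSTRONGPOINTWISE} is right.

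However, the key combinatorial justification in your argument is wrong, and the gap is precisely the \emph{exceptional case} that the paper singles out. You claim that ``at most one index $|\vec I_a|$ can exceed $M/2$, so at least $k-1$ of the factors can be placed in $L^\infty$.'' But the strong estimates for the lapse variables hold only in a \emph{much smaller} range than $M/2$: Prop.~\ref{P:STRONGPOINTWISE} gives $\| \newlap \|_{C^M}$ only for $M \leq N-5$ and $\| \dlap \|_{C^M_{\newg}}$ only for $M \leq N-6$. So for $N=8$ and $M=N=8$, a commutator product such as $\big(\partial_{\vec J}(t^{4/3}\newlap)\big)\big(\partial_{\vec I/\vec J}\partial_e\upgamma\big)$ with $|\vec J|=4$ produces a $\newlap$-factor at index $4$ (outside the range $\leq N-5=3$) together with a $\upgamma$-factor at index $5$ (outside the range $\leq N-4=4$). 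Both factors are outside the strong-estimate regime, yet $|\vec J|=4$ does \emph{not} exceed $M/2=4$; your criterion would declare this factor safe when in fact Prop.~\ref{P:STRONGPOINTWISE} does not control it. This is exactly the scenario the paper isolates under the name ``exceptional cases involving $t^{4/3}\newlap$ and $t^{2/3}\dlap$.''

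The paper closes this gap by a separate argument you do not supply: it observes that $\newlap$ and $\dlap$ appear in the inhomogeneous terms \emph{only} with favorable $t$-weights ($t^{4/3}$, $t^{2/3}$), and the high-order Sobolev bootstrap $\highnorm{N}$ (not the low-order $C^M$ bootstraps you invoke) bounds the weighted quantities $t^{4/3}\|\newlap\|_{C^{N-3}}$ and $t^{2/3}\|\partial\dlap\|_{C^{N-5}_{\newg}}$ after Sobolev embedding, at the price of a $t^{-Z\upsigma}$ loss that is compensated by the weights. To repair your argument, you would need to replace the $M/2$ counting by the variable-specific regimes from Prop.~\ref{P:STRONGPOINTWISE}, isolate the products where two factors escape those regimes (which can only happen when one of them is $\newlap$ or $\dlap$), and there invoke $\highnorm{N}$ together with the explicit $t^{4/3}$ or $t^{2/3}$ prefactors in the equations.
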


\begin{remark}
	In the above estimates, sums of the form $\sum_{|\vec{J}| \leq M - 1}$ 
	are understood to be absent when $M = 0.$ Furthermore, sums of the form $\sum_{1 \leq |\vec{J}| \leq M - 1}$ 
	are understood to be absent when $M=1.$ 
\end{remark}

\begin{remark}
	Some of the above powers of $\epsilon$ stated above are non-optimal.
\end{remark}

\begin{proof}
	
We first discuss the top level strategy. Much like in \eqref{E:SAMPLEEST}, we will be deriving pointwise estimates
for the $|\cdot|_{\newg}$ norm of tensorial products. In particular, we will estimating quantities of the form
\begin{align} \label{E:SAMPLEESTPOINTWISE}
	\left| F(t;t^{A_1}v_1, t^{A_2}v_2,\cdots, t^{A_l} v_l) \prod_{a=1}^l (\partial_{\vec{I}_a}v_a) \right|_{\newg}, 
\end{align}	
	where for $1 \leq a \leq l,$
	\begin{align}
		v_a \in \left\lbrace 
			\newg, (\newg^{-1}), 
			\freenewsec, 
			\upgamma, 
			\adjustednewp, 
			\newu, 
			\newlap, 
			\dlap 
			\right\rbrace
	\end{align}
	is a renormalized solution tensor,
	the $A_a$ are non-negative constants for $1 \leq a \leq l$
	(the $t^{A_a}$ appear explicitly in the equations), and $F$ is a smooth scalar-valued function 
	of its arguments that, by virtue of the bootstrap assumptions, will verify 
	\begin{align} \label{E:FHARMLESS}
		\| F(t;t^{A_1}v_1, t^{A_2}v_2,\cdots, t^{A_l} v_l) \|_{C^0} \lesssim 1. 
	\end{align}	
	Aside from a few exceptional cases involving the quantities $t^{4/3} \newlap$ and $t^{2/3} \dlap,$ 
	(the exceptional cases are easy to estimate and are handled later in the proof), 
	the estimates in Prop.~\ref{P:POINTWISEESTIMATES} are derived using the following strategy,
	which is a slight modification of the strategy used in our proof of Prop.~\ref{P:STRONGPOINTWISE}:
	\begin{itemize}
		\item We bound terms using the norms $|\cdot|_{\newg}$ and $\| \cdot \|_{C_{\newg}^M}$ instead of the frame component 
		norms used in the proof of Prop.~\ref{P:STRONGPOINTWISE}.
		\item In the non-exceptional cases, there is at most one ``large index'' factor $\partial_{\vec{I}_{a_{Large}}} v_{a_{Large}}$ in \eqref{E:SAMPLEESTPOINTWISE} 
			with so many derivatives on it that the strong estimates of Prop.~\ref{P:STRONGPOINTWISE} don't apply.
			If there are no large index factors, then choose one of $v's$ and designate it as ``$v_{a_{Large}}.$''
		\item For $a \neq a_{Large},$ we bound $\partial_{\vec{I}_a }v_a$ in $\| \cdot \|_{C_{\newg}^0}$ using 
			the strong estimates for the norms $\| \cdot \|_{C_{\newg}^M}$ proved in Prop.~\ref{P:STRONGPOINTWISE}.
			The proposition implies that these quantities can be bounded by at worst
			$\| \partial_{\vec{I}_{a \neq a_{Large}}} v_{a \neq a_{Large}} \|_{C_{\newg}^0} \lesssim \sqrt{\epsilon} t^{- c \sqrt{\epsilon}},$ 
			or in some important cases (depending on $\vec{I}_{a \neq a_{Large}}$ and the field variable $v_{a \neq a_{Large}}$) by 
			$\| \partial_{\vec{I}_{a \neq a_{Large}}} v_{a \neq a_{Large}} \|_{C_{\newg}^0} \lesssim \epsilon.$ 
		\item Since $|\newg|_{\newg} = |\newg^{-1}|_{\newg} = \sqrt{3},$ the quantities $|\newg|_{\newg}$ and 
			$|\newg^{-1}|_{\newg}$ never appear explicitly in our estimates.
		\item In the non-exceptional cases, this strategy allows us to 
			apply the $\newg-$Cauchy-Schwarz inequality 
			and to bound all quadratic and higher-order products
			of the form \eqref{E:SAMPLEESTPOINTWISE} by 
			\begin{align} \label{E:REPPOINTWISEBOUND}
				\left| F(t;t^{A_1}v_1,t^{A_2}v_2,\cdots, t^{A_l} v_l) \prod_{a=1}^l (\partial_{\vec{I}_a}v_a) \right|_{\newg} 
					& \lesssim \sqrt{\epsilon} t^{-B} \left|\partial_{\vec{I}_{a_{Large}}} v_{a_{Large}} \right|_{\newg},
			\end{align}
			where $B$ is either $0$ or at worst $c \sqrt{\epsilon}.$
			The right-hand side of \eqref{E:REPPOINTWISEBOUND}
			then appears as one of the terms on the right-hand side of the inequalities of Prop.~\ref{P:POINTWISEESTIMATES}.			
			\item In a few cases, the product to be estimated is multiplied by an overall factor $t^A$ for some $A > 0;$
			this factor of $t^A$ is incorporated into the bounds on right-hand side of the inequalities of Prop.~\ref{P:POINTWISEESTIMATES}.
			\item In many cases, the bound \eqref{E:REPPOINTWISEBOUND} is non-optimal with respect to the powers of $\epsilon$ involved.
			However, for simplicity, we will often be content with such a non-optimal bound when it 
			is strong enough to allow us to prove our main stable singularity formation theorem.
		\end{itemize}

The majority of the products that we have to bound are ``non-borderline:''
\begin{itemize}
	\item For these terms, the loss of $t^{-B}$ is harmless in the following sense: the estimate \eqref{E:REPPOINTWISEBOUND}
		leads to a ``non-borderline'' integral in inequality \eqref{E:GCOMMUTEDSOB}
		or the energy integral inequality \eqref{E:ENERGYINTEGRALINEQUALITIES}.
		For these terms, it would be possible to derive the main a priori norm estimate 
		\eqref{E:MAINHIGHNORMESTIMATE} even with a significantly worse (i.e., larger) value of $B$
		in \eqref{E:REPPOINTWISEBOUND}.
\end{itemize}

On the other hand, there are also ``borderline'' terms, which we treat as follows: 
\begin{itemize}
	\item For the principal order (i.e., order $M$) borderline terms, \emph{we must derive estimates with $B = 0$
	in \eqref{E:REPPOINTWISEBOUND}.} 
		Otherwise, our proof of the main a priori norm estimate \eqref{E:MAINHIGHNORMESTIMATE} would break down. 
	\item For the lower-order (i.e., order $\leq M-1$) borderline terms, we derive estimates with 
		$B = c \sqrt{\epsilon},$ where $c > 0$ is a constant. Such an estimate is good enough to allow our proof of
		the main a priori norm estimate \eqref{E:MAINHIGHNORMESTIMATE} to go through. 
\end{itemize}

\noindent \textbf{Proof of \eqref{E:POINTWISEJUNKNORMS}}: 
First, by inspecting the equation \eqref{AE:SECONDFUNDCOMMUTEDERRORTERMS} for $\leftexp{(\vec{I});(Junk)}{\mathfrak{K}}$
(and similarly for the other terms on the left-hand side of \eqref{E:POINTWISEJUNKNORMS}), we see that 
the lapse gradient variable $\dlap$ always appears in products that feature the weight $t^{2/3}.$
The vast majority of the terms appearing in $\leftexp{(\vec{I});(Junk)}{\mathfrak{K}},$ etc. 
(all of which are quadratic) are non-exceptional and can be bounded 
by using \eqref{E:REPPOINTWISEBOUND} with $B = c \sqrt{\epsilon}.$ 
There are a few exceptional products 
involving $t^{4/3} \newlap$ and $t^{2/3} \dlap.$ 
In these exceptional cases, it is possible to have a product such that two of the factors 
(one of which is $t^{4/3} \partial_{\vec{I}_a} \psi$ or $t^{2/3} \partial_{\vec{I}_a} \dlap$) 
have too many derivatives to bound them by the strong estimates of Prop.~\ref{P:STRONGPOINTWISE}.
Two examples are the products 
$- \frac{1}{2} \left[\partial_{\vec{I}}, \big[1 + t^{4/3} \newlap \big] (\newg^{-1})^{ef}  \right] \partial_e \upgamma_{f \ j}^{\ i}$
and $- t^{2/3} \left[\partial_{\vec{I}}, (\newg^{-1})^{ia} \right] \partial_a \dlap_j$
on the right-hand side of \eqref{AE:SECONDFUNDCOMMUTEDERRORTERMS}. To estimate such products, we first note that we have the inequalities
$t^{4/3} \| \newlap \|_{C^{N-3}} + t^{2/3} \| \partial \dlap \|_{C_{\newg}^{N-5}} \lesssim \epsilon t^{2/3 - Z \upsigma} \highnorm{N} \lesssim 
\epsilon t^{2/3 - Z \upsigma},$ where $Z>0$ is an integer. 
These inequalities follow from the bootstrap assumptions \eqref{E:HIGHBOOT}-\eqref{E:POTBOOT} and Sobolev embedding.
The remaining factors in the product (i.e., the non-lapse factors) can be bounded in $|\cdot|_{\newg}$ norm by at worst 
$t^{-c\sqrt{\epsilon}} \left|\partial_{\vec{I}_{a_{Large}}} v_{a_{Large}} \right|_{\newg}$
(much like in \eqref{E:REPPOINTWISEBOUND}). It follows that in total, 
the exceptional products are bounded by 
$\epsilon t^{2/3 - Z \upsigma - c \sqrt{\epsilon}}\left|\partial_{\vec{I}_{a_{Large}}} v_{a_{Large}} \right|_{\newg},$
where $v_{a_{Large}}$ is a non-lapse variable. This is a far better bound than needed, and
inequality \eqref{E:POINTWISEJUNKNORMS} thus follows.

\ \\
\noindent \textbf{Proof of \eqref{E:GJUNKNORM}-\eqref{E:GINVERSEJUNKNORM}}:
The proof is essentially the same as the proof of \eqref{E:POINTWISEJUNKNORMS}, and there are no exceptional products. 
However, some of the terms that occur when all derivatives fall on $\newlap$ are linear and
we bound them by the term $\sum_{|\vec{I}| = M} \left|\partial_{\vec{I}} \newlap \right|$
on the right-hand sides of \eqref{E:GJUNKNORM}-\eqref{E:GINVERSEJUNKNORM}. All of the other terms that
occur when all derivatives fall on $\newlap$ are quadratic and involve the factor $\freenewsec;$ for these products, 
we use the strong estimate \eqref{E:SECONDFUNDUPGRADEPOINTWISEGNORM} 
to also bound them by $\sum_{|\vec{I}| = M} \left|\partial_{\vec{I}} \newlap \right|$ (without a loss of $t^{-c \sqrt{\epsilon}}$).
The remaining terms to be estimated are quadratic. In these cases, we simply allow the loss 
$B = c \sqrt{\epsilon}$ in \eqref{E:REPPOINTWISEBOUND}, and the desired bounds \eqref{E:GJUNKNORM}-\eqref{E:GINVERSEJUNKNORM} follow.  
\ \\

\noindent \textbf{Proof of \eqref{E:GBORDERNORM}-\eqref{E:GINVERSEBORDERNORM}}:
We only prove \eqref{E:GBORDERNORM} since the proof of \eqref{E:GINVERSEBORDERNORM} is nearly identical.
We need to pointwise bound the $|\cdot|_{\newg}$ norm of the right-hand side of \eqref{E:METRICEVOLUTIONBORDERCOMMUTED}. 
That is, we have to bound tensors of the form 
\begin{align} \label{E:GBORDERTENSOR}
	T_{ij} & = \Big(\partial_{\vec{I}_1} \newg_{ia}  \Big) 
		\Big(\partial_{\vec{I}_2} \freenewsec_{\ j}^a \Big),
\end{align}
where $|\vec{I}_1| + |\vec{I}_2| \leq M.$ If $|\vec{I}_1| = M$ and $|\vec{I}_2| = 0,$
then the strong estimate \eqref{E:SECONDFUNDUPGRADEPOINTWISEGNORM} implies the bound
\begin{align}
	|T|_{\newg} & \lesssim 
		\underbrace{\epsilon \sum_{|\vec{I}| = M} \Big| \partial_{\vec{I}} \newg \Big|_{\newg}}_{\mbox{borderline, principal order}},
\end{align}
which is $\lesssim$ the right-hand side of \eqref{E:GBORDERNORM}. Similarly,
if $|\vec{I}_1| \leq M - 1$ and $|\vec{I}_2| \leq N - 3,$
then the strong estimate \eqref{E:SECONDFUNDUPGRADEPOINTWISEGNORM} implies the bound
\begin{align}
	|T|_{\newg} & \lesssim 
		\underbrace{\epsilon t^{- c \sqrt{\epsilon}} \sum_{|\vec{J}| \leq M - 1} \Big| \partial_{\vec{J}} \newg \Big|_{\newg}}_{\mbox{borderline, below principal 	
			order}},
\end{align}
which is $\lesssim$ the right-hand side of \eqref{E:GBORDERNORM}. If $|\vec{I}_1| = 0$ and $|\vec{I}_2| = M,$
then 
\begin{align}
	|T|_{\newg} & \lesssim 
		\underbrace{\sum_{|\vec{I}| = M} \Big| \partial_{\vec{I}} \freenewsec \Big|_{\newg}}_{\mbox{borderline, principal order}},
\end{align}
which is $\lesssim$ the right-hand side of \eqref{E:GBORDERNORM}. Finally, if
$|\vec{I}_1| \leq N - 4$ and $|\vec{I}_2| \leq M - 1,$
then the strong estimate \eqref{E:GCOMMUTEDSTRONGPOINTWISE} implies the bound
\begin{align}
	|T|_{\newg} & \lesssim 
		\underbrace{\epsilon t^{- c \sqrt{\epsilon}} \sum_{|\vec{J}| \leq M - 1} \Big| \partial_{\vec{I}} \freenewsec \Big|_{\newg}}_{\mbox{borderline, below 
		principal order}},
\end{align}
which is $\lesssim$ the right-hand side of \eqref{E:GBORDERNORM}.

\ \\

\noindent \textbf{Proof of \eqref{E:NIJUNK}}:
To prove \eqref{E:NIJUNK}, we have to estimate the term $\leftexp{(\vec{I});(Junk)}{\mathfrak{N}}$
from equation \eqref{AE:LAPSEICOMMUTEDINHOMOGENEOUSTERMJUNK}. 

\emph{Step 1: Bound for the $\partial_{\vec{I}} \leftexp{(Junk)}{\mathfrak{N}}$ term from \eqref{AE:LAPSEICOMMUTEDINHOMOGENEOUSTERMJUNK}.}
The term $\leftexp{(Junk)}{\mathfrak{N}}$ is defined in \eqref{AE:LAPSENJUNK}. 
To bound $\partial_{\vec{I}}$ of the first product on the right-hand
side of \eqref{AE:LAPSENJUNK}, we have to pointwise bound e.g. possibly exceptional products of the form
\begin{align} \label{E:EXCEPTIONALTERMI}
	T & = t^{2/3} \Big(\partial_{\vec{I}_1} (\newg^{-1})^{ef} \Big) 
		\Big(\partial_{\vec{I}_2} \upgamma_{e \ f}^{\ b} \Big) 
		\Big(\partial_{\vec{I}_3} \dlap_b \Big),
\end{align}
where $|\vec{I}_1| + |\vec{I}_2| + |\vec{I}_3| \leq M.$ 
Exceptional products [in which more than one factor in \eqref{E:EXCEPTIONALTERMI}
has too many derivatives to apply Prop.~\ref{P:STRONGPOINTWISE}] 
can occur if $|\vec{I}_3| \leq N-4.$ 
To handle these cases, we use the estimate
$t^{2/3} \| \dlap \|_{C_{\newg}^{N-4}} \lesssim  \epsilon t^{2/3 - Z \upsigma},$ 
which can be proved in the same way as the estimate 
$t^{2/3} \| \partial \dlap \|_{C_{\newg}^{N-5}} \lesssim \epsilon t^{2/3 - Z \upsigma}$
used in the proof of \eqref{E:POINTWISEJUNKNORMS}.
Also using the strong estimate \eqref{E:GINVERSECOMMUTEDSTRONGPOINTWISE}, 
we deduce that when $|\vec{I}_3| \leq N - 4,$ we have
\begin{align} \label{E:EXCEPTIONALTERMIBOUNDED}
	|T| & \lesssim \sqrt{\epsilon} t^{2/3 - Z \upsigma - c \sqrt{\epsilon}} 
		\sum_{|\vec{I}| \leq M} \Big| \partial_{\vec{I}} \upgamma \Big|_{\newg}.
\end{align}
Clearly \eqref{E:EXCEPTIONALTERMIBOUNDED} implies that $|T|$ is $\lesssim$ the right-hand side of \eqref{E:NIJUNK} as desired.
In the remaining (non-exceptional) cases of \eqref{E:EXCEPTIONALTERMI}, we can simply bound
$|T|$ by using the estimate \eqref{E:REPPOINTWISEBOUND} and then taking into account the factor $t^{2/3}$ in \eqref{E:EXCEPTIONALTERMI}; 
the desired bound follows.

To bound $\partial_{\vec{I}}$ of the second product on the right-hand
side of \eqref{AE:LAPSENJUNK}, we have to pointwise bound (non-exceptional) terms of the form
\begin{align} \label{E:NONEXCEPTIONALTERM}
	T & = \Big(\partial_{\vec{I}_1} \adjustednewp \Big) 
		\Big(\partial_{\vec{I}_2} \newg_{ab} \Big) 
		\Big(\partial_{\vec{I}_3} \newu^a \Big)
		\Big(\partial_{\vec{I}_4} \newu^b \Big),
\end{align}
where $|\vec{I}_1| + |\vec{I}_2| + |\vec{I}_3| + |\vec{I}_4| \leq M.$
In all cases, we simply allow the loss $B = c \sqrt{\epsilon}$ in \eqref{E:REPPOINTWISEBOUND}, 
and the desired bound for the term \eqref{E:NONEXCEPTIONALTERM} follows. 
A similar argument applies to the last product on the right-hand
side of \eqref{AE:LAPSENJUNK}.

\emph{Step 2: Bound for the commutator terms in \eqref{AE:LAPSEICOMMUTEDINHOMOGENEOUSTERMJUNK}.} 
The commutator term $t^{2/3} \left[ \partial_{\vec{I}} , (\newg^{-1})^{ab} \right] \partial_a \dlap_b,$
which involves some exceptional terms, can be bounded by using an argument similar
to the one we used to bound \eqref{E:EXCEPTIONALTERMI}.

To bound the $\left[ \partial_{\vec{I}} , \freenewsec_{\ b}^a \freenewsec_{\ a}^b \right] \newlap$
term from the right-hand side of \eqref{AE:LAPSEICOMMUTEDINHOMOGENEOUSTERMJUNK}, we have to
pointwise bound products of the form
\begin{align} \label{E:KKLAPTERM}
	T & = \Big(\partial_{\vec{I}_1}\freenewsec_{\ b}^a \Big) 
		\Big(\partial_{\vec{I}_2} \freenewsec_{\ a}^b \Big) 
		\Big(\partial_{\vec{I}_3} \newlap \Big),
\end{align}
where $1 \leq |\vec{I}_1| + |\vec{I}_2| + |\vec{I}_3| \leq M$ and $|\vec{I}_3| \leq M - 1.$
If $|\vec{I}_3| \leq N - 5,$ then we use the strong estimates 
\eqref{E:SECONDFUNDUPGRADEPOINTWISEGNORM} and \eqref{E:LAPSECOMMUTEDSTRONGPOINTWISE}
to derive the bound
\begin{align} \label{E:KKLAPBOUNDED}
	|T| & \lesssim \sqrt{\epsilon} t^{- c \sqrt{\epsilon}} 
		\sum_{|\vec{I}| \leq M} \left|\partial_{\vec{I}} \freenewsec \right|_{\newg}.
\end{align}
The bound \eqref{E:KKLAPBOUNDED} implies that $|T|$ is $\lesssim$ the right-hand side of \eqref{E:NIJUNK} as desired.
In all other cases, we use the \emph{frame component norm} estimates
from \eqref{E:SECONDFUNDUPGRADEPOINTWISE} to derive the desired bound
\begin{align} \label{E:KKLAPBOUNDEDII}
	|T| & \lesssim \sqrt{\epsilon}  \sum_{|\vec{I}| \leq M-1} \Big| \partial_{\vec{I}} \newlap \Big|.
\end{align}

The commutator term $2 \left[ \partial_{\vec{I}} , \adjustednewp  \right] \newlap$
on the right-hand side of \eqref{AE:LAPSEICOMMUTEDINHOMOGENEOUSTERMJUNK} can similarly
be bounded with the help of the strong estimate \eqref{E:PCOMMUTEDSTRONGPOINTWISE}.

The last two terms on the right-hand side of \eqref{AE:LAPSEICOMMUTEDINHOMOGENEOUSTERMJUNK} are
non-exceptional and easy to bound. We simply allow the loss $B = c \sqrt{\epsilon}$ in \eqref{E:REPPOINTWISEBOUND} and then use the fact
that these terms are multiplied by the factor $t^{4/3}.$

\ \\

\noindent \textbf{Proof of \eqref{E:NIBORDER}}:
To prove \eqref{E:NIBORDER}, we have to pointwise bound the term $\leftexp{(\vec{I});(Border)}{\mathfrak{N}}$ from equation 
\eqref{AE:LAPSEICOMMUTEDINHOMOGENEOUSTERMBORDERLINE}. That is, we have to bound products of the form
\begin{align} \label{E:TNREPTERM}
	T & = \Big(\partial_{\vec{I}_1} \freenewsec_{\ b}^a  \Big)
		\Big(\partial_{\vec{I}_2} \freenewsec_{\ a}^b \Big),
\end{align}
where $|\vec{I}_1| + |\vec{I}_2| \leq M.$ Principal order borderline terms occur when either $|\vec{I}_1| = M$ or $|\vec{I}_2| = M.$
In either case, one of the factors occurs with no derivatives, and the estimate \eqref{E:SECONDFUNDUPGRADEPOINTWISEGNORM} allows us
to bound the $|\cdot|_{\newg}$ norm of this factor by $\epsilon.$ Thus, we have
\begin{align}
	|T| & \lesssim \underbrace{\epsilon \sum_{|\vec{I}| = M} 
			\left|\partial_{\vec{I}} \freenewsec \right|_{\newg}}_{\mbox{borderline, principal order}}
\end{align}
in this case. In the remaining cases of \eqref{E:TNREPTERM}, we have $1 \leq |\vec{I}_1|, |\vec{I}_2| \leq M - 1.$
In these cases, we allow the loss $B = c \sqrt{\epsilon}$ in \eqref{E:REPPOINTWISEBOUND}, and the desired bound follows.
\ \\

\noindent \textbf{Proof of \eqref{E:WIDETILDENJUNKIPOINTWISE}}:
We have to pointwise bound the term $\leftexp{(\vec{I});(Junk)}{\widetilde{\mathfrak{N}}}$ 
from equation \eqref{AE:LAPSETILDEICOMMUTEDINHOMOGENEOUSTERMJUNK}. 
The proof of \eqref{E:WIDETILDENJUNKIPOINTWISE} is essentially the same as the proof of \eqref{E:POINTWISEJUNKNORMS}.
The only new feature is that there are products on the right-hand side of \eqref{AE:LAPSETILDEICOMMUTEDINHOMOGENEOUSTERMJUNK}
that depend on $|\vec{I}| + 1$ derivatives of $\upgamma,$ and
these products are multiplied by the factor $t^{2/3}.$

\ \\

\noindent \textbf{Proof of \eqref{E:WIDETILDENIPOINTWISEBORDER}}:
We have to bound the term $\leftexp{(\vec{I});(Border)}{\widetilde{\mathfrak{N}}}$ 
from equation \eqref{AE:LAPSETILDEICOMMUTEDINHOMOGENEOUSTERMBORDERLINE}.
That is, we have to bound the products that arise when $\partial_{\vec{I}}$ 
is applied to the right-hand side of \eqref{AE:LAPSENTILDEBORDERLINE}
and $|\vec{I}| \leq M \leq N - 1.$ No exceptional products occur. 
If all derivatives fall on $\partial \upgamma,$ 
then the resulting terms are each $\lesssim \sum_{|\vec{H}| = M + 1} \Big| \partial_{\vec{H}} \upgamma \Big|_{\newg},$ 
which is $\lesssim$ the right-hand side of \eqref{E:WIDETILDENIPOINTWISEBORDER} as desired. Otherwise, we allow
the loss $B = c \sqrt{\epsilon}$ in \eqref{E:REPPOINTWISEBOUND}, and the desired bound follows.

\ \\

\noindent \textbf{Proof of \eqref{E:GAMMAIPOINTWISEGNORMBORDER}:}
We have to bound the term $\partial_{\vec{I}} \leftexp{(Border)}{\mathfrak{g}}_{e \ i}^{\ b}$ from the right-hand side of equation \eqref{AE:METRICGAMMACOMMUTEDERRORTERMS}. That is, we have to bound the products that arise when $\partial_{\vec{I}}$ is applied to the right-hand side of \eqref{AE:GAMMAEVOLUTIONERROR}. The first two products on the right-hand side of \eqref{AE:GAMMAEVOLUTIONERROR} can be estimated in the same way. By the product rule, to bound these terms, we must estimate the $|\cdot|_{\newg}$ pointwise norm of tensors $T_{e \ i}^{\ b}$ of the form
\begin{align} \label{E:GREPTERM}
	T_{e \ i}^{\ b} = \Big(\partial_{\vec{I_1}} \freenewsec_{\ a}^b \Big)
		\Big(\partial_{\vec{I_2}} \upgamma_{e \ i}^{\ a} \Big),
\end{align}
where $|\vec{I}_1| + |\vec{I}_2| \leq M.$ 
If $|\vec{I}_2| = M,$ principal order borderline terms are generated. However, in this case, $|\vec{I_1}| = 0,$ and the 
strong estimate \eqref{E:SECONDFUNDUPGRADEPOINTWISEGNORM} implies that
\begin{align} \label{E:TPRINCIPALGESTIMATE}
	|T|_{\newg} & \underbrace{\lesssim \epsilon \sum_{|\vec{I}| = M} 
		\Big| \partial_{\vec{I}} \upgamma \Big|_{\newg}}_{\mbox{borderline, principal order}}.
\end{align}
In the remaining cases, we have $|\vec{I}_2| \leq M - 1.$ In these cases, we allow
the loss $B = c \sqrt{\epsilon}$ in \eqref{E:REPPOINTWISEBOUND}, and the desired bound follows. 

To bound $\partial_{\vec{I}}$ of the last product on the right-hand side of \eqref{AE:GAMMAEVOLUTIONERROR}, we have to 
estimate the $|\cdot|_{\newg}$ norm of
\begin{align} \label{E:GREPTERMLAST}
	T_{e \ a}^{\ b} = 
		t^{2/3} \Big(\partial_{\vec{I_1}} \dlap_e \Big) 
		\Big(\partial_{\vec{I_2}} \freenewsec_{\ a}^b \Big),
\end{align}
where $|\vec{I}_1| + |\vec{I}_2| \leq M.$ Principal order borderline terms occur when $|\vec{I_1}| = M$
and $|\vec{I_2}| = 0.$ Again using \eqref{E:SECONDFUNDUPGRADEPOINTWISEGNORM}, we see that when $|\vec{I}_1| = M,$
\begin{align} \label{E:TPRINCIPALGESTIMATEII}
	|T|_{\newg} 
		& \underbrace{\lesssim \epsilon t^{2/3} \sum_{|\vec{I}| = M} 
		\Big| \partial_{\vec{I}} \dlap \Big|_{\newg}}_{\mbox{borderline, principal order}}.
\end{align}
In the remaining cases of \eqref{E:GREPTERMLAST}, we have $|\vec{I}_1| \leq M - 1.$ In these cases, we allow
the loss $B = c \sqrt{\epsilon}$ in \eqref{E:REPPOINTWISEBOUND} and account for the factor $t^{2/3}$ in \eqref{E:GREPTERMLAST}, 
and the desired bound follows.

\ \\

\noindent \textbf{Proof of \eqref{E:UIPOINTWISEGNORM}}
To prove \eqref{E:UIPOINTWISEGNORM}, we have to bound the $|\cdot|_{\newg}$ norm of the quantity
$\leftexp{(\vec{I});(Border)}{\mathfrak{U}}^j$ defined in \eqref{AE:UJICOMMUTEDERRORBORDERLINE}, where $|\vec{I}| \leq M.$

\emph{Step 1: Bound for the $\partial_{\vec{I}} \leftexp{(Border)}{\mathfrak{U}}^j$ term from \eqref{AE:UJICOMMUTEDERRORBORDERLINE}.} 
To derive the desired bound, we must apply $\partial_{\vec{I}}$ to the product of terms on the right-hand side of \eqref{AE:UJBORDERLINE} and estimate the $|\cdot|_{\newg}$ norm of the resulting terms. More precisely,
we have to estimate the $|\cdot|_{\newg}$ norm of
\begin{align} \label{E:TUREPFIRST}
	T^j & = \Big(\partial_{\vec{I}_1} \freenewsec_{\ a}^j \Big)
		\Big(\partial_{\vec{I}_2} \newu^a\Big),
\end{align}
where $|\vec{I}_1| + |\vec{I}_2| \leq M.$ 
Principal order borderline terms occur when $|\vec{I_1}| = 0$ and $|\vec{I_2}| = M.$ From the strong estimate
\eqref{E:SECONDFUNDUPGRADEPOINTWISEGNORM}, it follows that when $|\vec{I}_2| = M,$ we have
\begin{align} \label{E:TUPRINCIPALUESTIMATE}
	|T|_{\newg} 
		& \underbrace{\lesssim \epsilon \sum_{|\vec{I}| = M} 
		\Big| \partial_{\vec{I}} \newu \Big|_{\newg}}_{\mbox{borderline, principal order}}.
\end{align}
In the remaining cases of \eqref{E:TUREPFIRST}, we have $|\vec{I}_2| \leq M - 1.$ In these cases, we allow
the loss $B = c \sqrt{\epsilon}$ in \eqref{E:REPPOINTWISEBOUND}, and the desired bound follows.

\emph{Step 2: Bound for the commutator term from \eqref{AE:UJICOMMUTEDERRORBORDERLINE}.} In this step, we have to bound the 
$|\cdot|_{\newg}$ norm of
\begin{align} \label{E:TUREPTHIRD}
	T^j & = \Big(\partial_{\vec{I_1}} (\newg^{-1})^{jc}\Big) 
		\Big(\partial_{\vec{I_2}} \partial_c \adjustednewp \Big)
		\times \partial_{\vec{I_3}} \left(\frac{1}{2\big[1 + t^{4/3} \newg_{ab} \newu^a \newu^b \big]^{1/2} 
			\Big[\adjustednewp + \frac{1}{3} \Big]} \right), 
\end{align}
where $1 \leq |\vec{I}_1| + |\vec{I}_2| + |\vec{I}_3| \leq M,$ and $|\vec{I_2}| \leq M - 1.$ In all of these cases, we simply allow the loss 
$B = c \sqrt{\epsilon}$ in \eqref{E:REPPOINTWISEBOUND}, and the desired bound follows. 


\ \\

\noindent \textbf{Proof of \eqref{E:MIPOINTWISEGNORM}:} 
We only discuss the estimates for $\left| \leftexp{(\vec{I});(Border)}{\mathfrak{M}} \right|_{\newg};$ the estimates for 
$\left| \leftexp{(\vec{I});(Border)}{\widetilde{\mathfrak{M}}} \right|_{\newg}$ are nearly identical.
We have to pointwise bound the $|\cdot|_{\newg}$ norm of the term $\leftexp{(\vec{I});(Border)}{\mathfrak{M}}_i$ from \eqref{AE:MOMENTUMCONSTRAINTCOMMUTEDERRORBORDERLINE}.

\emph{Step 1: Bound for $\partial_{\vec{I}} \leftexp{(Border)}{\mathfrak{M}}_i.$}
We must bound the $|\cdot|_{\newg}$ norm of the products that arise when $\partial_{\vec{I}}$ is applied to the right-hand side of \eqref{AE:MOMENTUMERRORBORDERLINE}. The first product can be bounded in the same way that we bound the second product. For the second product, we have to pointwise bound the $|\cdot|_{\newg}$ norm of
\begin{align} \label{E:TMREPSECOND}
	T_i & = \Big(\partial_{\vec{I_1}} \left(\upgamma_{a \ i}^{\ b} + \upgamma_{i \ a}^{\ b} 
			- (\newg^{-1})^{bl} \newg_{ma} \upgamma_{l \ i}^{\ m} \right) \Big) 
		\Big(\partial_{\vec{I_2}} \freenewsec_{\ b}^a \Big),
\end{align}
where $|\vec{I}_1| + |\vec{I}_2| \leq M.$ Principal order borderline terms are generated when 
$|\vec{I}_1| = M,$ $|\vec{I}_2| =  0,$ and all derivatives fall on $\upgamma.$
In these cases, the strong estimate \eqref{E:SECONDFUNDUPGRADEPOINTWISEGNORM} implies that
\begin{align} \label{E:TMPRINCIPALUESTIMATEII}
	|T|_{\newg} & \lesssim 
			\underbrace{\epsilon \sum_{|\vec{I}| = M} \Big| \partial_{\vec{I}} \upgamma \Big|_{\newg}}_{\mbox{borderline, principal order}}.
\end{align}
Clearly, the right-hand side of \eqref{E:TMPRINCIPALUESTIMATEII} is $\lesssim$ the right-hand side of \eqref{E:MIPOINTWISEGNORM} as desired. 
In all of the remaining cases of \eqref{E:TMREPSECOND}, we allow the loss $B = c \sqrt{\epsilon}$ in \eqref{E:REPPOINTWISEBOUND}, 
and the desired bound follows.

For the third product of terms right-hand side of \eqref{AE:MOMENTUMERRORBORDERLINE}, we have to
pointwise bound the $|\cdot|_{\newg}$ norm of
\begin{align} \label{E:TMREPTHIRD}
	T_i & = \Big(\partial_{\vec{I_1}} \adjustednewp \Big)
		\Big(\partial_{\vec{I_2}} \newg_{ia} \Big)
		\Big(\partial_{\vec{I_3}} \newu^a \Big), 
\end{align}
where $|\vec{I}_1| + |\vec{I}_2| + |\vec{I}_3| \leq M.$ 
Principal order borderline terms are generated when 
$|\vec{I}_1| = |\vec{I}_2| = 0$ and $|\vec{I}_3| = M.$ In these cases, 
the estimate \eqref{E:PCOMMUTEDSTRONGPOINTWISE} implies that 
\begin{align} \label{E:TMPRINCIPALUESTIMATEIII}
	|T|_{\newg} & \lesssim 
		\underbrace{\epsilon \sum_{|\vec{I}| = M} \Big| \partial_{\vec{I}}  \newu \Big|_{\newg}}_{\mbox{borderline, principal order}},
	\end{align}
which is clearly $\lesssim$ the right-hand side of \eqref{E:UIPOINTWISEGNORM} as desired.
In the remaining cases of \eqref{E:TMREPTHIRD}, we have $|\vec{I}_3| \leq M - 1.$ In these cases, we allow
the loss $B = c \sqrt{\epsilon}$ in \eqref{E:REPPOINTWISEBOUND}, and the desired bound follows.

\emph{Step 2: Bound for $\Big[\partial_{\vec{I}} , \newg_{ia} \Big] \newu^a.$}
We must bound the $|\cdot|_{\newg}$ norm of
\begin{align}
	T_i & = \Big(\partial_{\vec{I}_1} \newg_{ia} \Big) \Big(\partial_{\vec{I}_2} \newu^a \Big),
\end{align}
where $1 \leq |\vec{I}_1| + |\vec{I}_2| \leq M$ and $|\vec{I}_2| \leq M - 1.$  In all of these cases, we 
simply allow the loss $B = c \sqrt{\epsilon}$ in \eqref{E:REPPOINTWISEBOUND}, and the desired bound follows.



\end{proof}

\section{Sobolev Bounds for the Lapse and for the Inhomogeneous Terms} \label{S:LAPSEANDINHOMEINTEGRALESTIMATES}

In this section, we use elliptic estimates to bound $\| \newlap\|_{H^M}$ and $ \| \dlap \|_{H_{\newg}^M}$ 
in terms of the total energies $\totalenergy{\smallparameter_*}{M}$
and $ \sum_{1 \leq |\vec{I}| \leq M }\| |\partial_{\vec{I}} \newg |_{\newg} \|_{L^2} + \| |\partial_{\vec{I}} \newg^{-1} |_{\newg} \|_{L^2}.$
We also derive similar bounds for the $L^2$ norms of the inhomogeneous terms 
$\leftexp{(\vec{I});(Junk)}{\mathfrak{G}},$ 
$\leftexp{(\vec{I});(Junk)}{\widetilde{\mathfrak{G}}},$ 
$\leftexp{(\vec{I});(Junk)}{\mathfrak{K}},$ 
etc., that appear in the PDEs verified by the differentiated quantities
$\partial_{\vec{I}} \newg,$ 
$\partial_{\vec{I}} \newg^{-1},$
$\partial_{\vec{I}} \freenewsec,$ 
$\partial_{\vec{I}} \upgamma,$ 
$\partial_{\vec{I}} \newlap,$
$\partial_{\vec{I}} \dlap,$
$\partial_{\vec{I}} \newp,$ 
and $\partial_{\vec{I}} \newu.$ 
In particular, these $L^2$ bounds will allow us to control
the \emph{spatial} integrals involving the inhomogeneous terms $\leftexp{(\vec{I});(Junk)}{\mathfrak{K}},$ etc.,
appearing in the fundamental energy integral inequality \eqref{E:FUNDAMENTALENERGYINEQUALITY}. 
All of the estimates will follow easily thanks to the pointwise bounds of Prop.~\ref{P:POINTWISEESTIMATES}.

\subsection{Sobolev bounds for the lapse variables and the corresponding inhomogeneous terms} \label{SS:SOBOLEVBOUNDSFORLAPSE}

In the next proposition, we provide the aforementioned Sobolev bounds for the lapse variables and the corresponding
inhomogeneous terms appearing in the $\partial_{\vec{I}}-$commuted lapse equations.

\begin{proposition} [\textbf{Sobolev bounds for the lapse in terms of the total energies and the metric norms}]
\label{P:LAPSEINTERMSOFENERGY}
Assume that the hypotheses and conclusions of Prop.~\ref{P:STRONGPOINTWISE} hold on the spacetime slab $(T,1] \times \mathbb{T}^3.$
Then there exist a small constant $\upsigma_N > 0$ and a large constant $c > 0$  
such that if $\epsilon \leq \upsigma \leq \upsigma_N,$
then the following $L^2$ estimates for the inhomogeneous terms hold on $(T,1]:$
	\begin{subequations}
	\begin{align}
	\sum_{|\vec{I}| \leq M}	\left\| \leftexp{(\vec{I});(Border)}{\mathfrak{N}} \right\|_{L^2}
		& \lesssim \sqrt{\epsilon} \totalenergy{\smallparameter_*}{M} + \sqrt{\epsilon} t^{- c \sqrt{\epsilon}} \totalenergy{\smallparameter_*}{M-1}, && (M \leq N),
		 \label{E:ICOMMUTEDINHOMOGENEOUSLAPSEBORDER} \\
	\sum_{|\vec{I}| \leq M} \left\| \leftexp{(\vec{I});(Junk)}{\mathfrak{N}} \right\|_{L^2} 
		& \lesssim \sqrt{\epsilon} t^{-2/3 - c \sqrt{\epsilon}} \totalenergy{\smallparameter_*}{M} &&
			\label{E:ICOMMUTEDINHOMOGENEOUSLAPSEJUNK}  
			\\
		& \ \ + \sqrt{\epsilon} \| \newlap \|_{H_{\newg}^{M-1}}
			+ \sqrt{\epsilon} \| \dlap \|_{H_{\newg}^M} &&  
			\notag \\
		& \ \ + \underbrace{\sqrt{\epsilon} t^{- c \sqrt{\epsilon}} \sum_{1 \leq |\vec{I}| \leq M} 
			\left\| \Big| \partial_{\vec{I}} \newg \Big|_{\newg} \right\|_{L^2}}_{\mbox{absent if $M=0$}} 
			+ \underbrace{\sqrt{\epsilon} t^{- c \sqrt{\epsilon}} \sum_{1 \leq |\vec{I}| \leq M} 
				\left\| \Big|\partial_{\vec{I}} \newg^{-1} \Big|_{\newg} \right\|_{L^2}}_{\mbox{absent if $M=0$}}, && (M \leq N),  
			\notag \\
	\sum_{|\vec{I}| \leq M}	\left\| \leftexp{(\vec{I});(Border)}{\widetilde{\mathfrak{N}}} \right\|_{L^2} 
	 	& \lesssim t^{-2/3} \totalenergy{\smallparameter_*}{M+1}
	 		+ \sqrt{\epsilon} t^{-2/3 - c \sqrt{\epsilon}} \totalenergy{\smallparameter_*}{M}, && (M \leq N - 1),
	 		\label{E:ICOMMUTEDALTERNATEINHOMOGENEOUSLAPSEBORDER2} 
	 		\\
	\sum_{|\vec{I}| \leq N-1}	 \left\| \leftexp{(\vec{I});(Junk)}{\widetilde{\mathfrak{N}}} \right\|_{L^2}
		& \lesssim \sqrt{\epsilon} t^{-2/3 - c \sqrt{\epsilon}} \totalenergy{\smallparameter_*}{M}
			+ \sqrt{\epsilon} t^{- c \sqrt{\epsilon}} \| \dlap \|_{H_{\newg}^M}, && (M \leq N - 1). 
		\label{E:ICOMMUTEDALTERNATEINHOMOGENEOUSLAPSEJUNK2}
	\end{align}
	\end{subequations}
	
	Furthermore, the following Sobolev estimates for $\newlap,$ $\dlap,$ and $\partial \dlap$ also hold on $(T,1]:$
	\begin{subequations}
	\begin{align}
		\| \newlap \|_{H^M} + \| \dlap \|_{H_{\newg}^M} + t^{2/3} \| \partial \dlap \|_{H_{\newg}^M}
		& \lesssim t^{-4/3} \totalenergy{\smallparameter_*}{M} 
			+ \underbrace{\sqrt{\epsilon} t^{-4/3 - c \sqrt{\epsilon}} \totalenergy{\smallparameter_*}{M-1}}_{\mbox{absent if $M=0$}}
			&&  
			\label{E:TOPLAPSESOBOLEVBOUND} \\ 
		& \ \ + \underbrace{\sqrt{\epsilon} t^{- c \sqrt{\epsilon}} \sum_{1 \leq |\vec{I}| \leq M} 
				\left\| \Big|\partial_{\vec{I}} \newg \Big|_{\newg} \right\|_{L^2}}_{\mbox{absent if $M=0$}} 
			+ \underbrace{\sqrt{\epsilon} t^{- c \sqrt{\epsilon}} \sum_{1 \leq |\vec{I}| \leq M} 
				\left\| \Big|\partial_{\vec{I}} \newg^{-1} \Big|_{\newg} \right\|_{L^2}}_{\mbox{absent if $M=0$}}, && (M \leq N), 
				\notag \\
		\| \newlap \|_{H^M} + \| \dlap \|_{H_{\newg}^M}
		& \lesssim  t^{-2/3} \totalenergy{\smallparameter_*}{M+1} 
				+ \sqrt{\epsilon} t^{-2/3 - c \sqrt{\epsilon}} \totalenergy{\smallparameter_*}{M} && 
			\label{E:LOWERLAPSEGOODSOBOLEVBOUND}  \\
		& \ \ + \underbrace{\sqrt{\epsilon} t^{- c \sqrt{\epsilon}} \sum_{1 \leq |\vec{I}| \leq M} 
			\left\| \Big|\partial_{\vec{I}} \newg \Big|_{\newg} \right\|_{L^2}}_{\mbox{absent if $M=0$}} 
			+ \underbrace{\sqrt{\epsilon} t^{- c \sqrt{\epsilon}} \sum_{1 \leq |\vec{I}| \leq M} 
				\left\| \Big|\partial_{\vec{I}} \newg^{-1} \Big|_{\newg} \right\|_{L^2}}_{\mbox{absent if $M=0$}}, && (M \leq N - 1).
			\notag
	\end{align}
	\end{subequations}
	
\end{proposition}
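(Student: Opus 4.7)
The plan is to obtain all four inhomogeneous-term Sobolev bounds by $L^2$-integrating the pointwise estimates from Prop.~\ref{P:POINTWISEESTIMATES} (specifically \eqref{E:NIJUNK}, \eqref{E:NIBORDER}, \eqref{E:WIDETILDENJUNKIPOINTWISE}, \eqref{E:WIDETILDENIPOINTWISEBORDER}), and then repackaging the resulting $L^2$ quantities in terms of $\totalenergy{\smallparameter_*}{M}$ via the coercivity estimates of Lemma~\ref{L:COERCIVITYOFTHEENERGIES}. Once these four estimates are in hand, I would feed them into Corollary~\ref{C:PRELIMINARYLAPSEBOUND} to pass from the $\leftexp{(\vec{I});(\cdot)}{\mathfrak{N}}$ and $\leftexp{(\vec{I});(\cdot)}{\widetilde{\mathfrak{N}}}$ control to the Sobolev bounds for $\newlap$, $\dlap$, and $t^{2/3}\partial\dlap$.

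First I would prove \eqref{E:ICOMMUTEDINHOMOGENEOUSLAPSEBORDER}: take the $L^2(\mathbb{T}^3)$ norm of \eqref{E:NIBORDER}. The principal-order term $\sqrt{\epsilon}\,|\partial_{\vec{I}} \freenewsec|_{\newg}$ yields $\sqrt{\epsilon}\,\|\freenewsec\|_{H_{\newg}^M}$, which is bounded by $\sqrt{\epsilon}\,\totalenergy{\smallparameter_*}{M}$ by Lemma~\ref{L:COERCIVITYOFTHEENERGIES} and the definition of the total energy. The below-principal-order piece $\sqrt{\epsilon}\,t^{-c\sqrt{\epsilon}}\sum_{|\vec{J}|\le M-1}|\partial_{\vec{J}}\freenewsec|_{\newg}$ similarly becomes $\sqrt{\epsilon}\,t^{-c\sqrt{\epsilon}}\totalenergy{\smallparameter_*}{M-1}$. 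Next, for \eqref{E:ICOMMUTEDINHOMOGENEOUSLAPSEJUNK} I would integrate \eqref{E:NIJUNK} in $L^2$ and account for the two distinct weights in $\totalenergy{\smallparameter_*}{M}$: recall that $\upgamma$ and $\newu$ enter the energy with the weight $t^{2/3}$, so for their unweighted $L^2$ contributions one picks up a factor $t^{-2/3}$, which is the source of the $t^{-2/3-c\sqrt{\epsilon}}\totalenergy{\smallparameter_*}{M}$ term on the right-hand side; the $\freenewsec$ and $\adjustednewp$ contributions have no such loss. The two $\newlap$, $\dlap$ terms appearing on the right-hand side of \eqref{E:NIJUNK} are kept explicit (they will be absorbed later via an elliptic bootstrap), and the bare $\partial_{\vec{I}}\newg$, $\partial_{\vec{I}}\newg^{-1}$ pieces are carried to the output in the same form.

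The analogous arguments, applied to \eqref{E:WIDETILDENJUNKIPOINTWISE} and \eqref{E:WIDETILDENIPOINTWISEBORDER}, give \eqref{E:ICOMMUTEDALTERNATEINHOMOGENEOUSLAPSEBORDER2} and \eqref{E:ICOMMUTEDALTERNATEINHOMOGENEOUSLAPSEJUNK2}. The only new ingredient is the appearance of the $|\vec{H}|=M+1$ term $|\partial_{\vec{H}}\upgamma|_{\newg}$ on the right-hand side of \eqref{E:WIDETILDENIPOINTWISEBORDER}, whose $L^2$ norm is $\|\upgamma\|_{H_{\newg}^{M+1}}\lesssim t^{-2/3}\totalenergy{\smallparameter_*}{M+1}$ (again using the $t^{2/3}$ weighting of $\upgamma$ inside the energy); this accounts for the leading $t^{-2/3}\totalenergy{\smallparameter_*}{M+1}$ on the right-hand side of \eqref{E:ICOMMUTEDALTERNATEINHOMOGENEOUSLAPSEBORDER2}. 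The restriction $M \le N-1$ is exactly what allows $|\vec{H}| \le N$, keeping us within the range where $\totalenergy{\smallparameter_*}{N}$ controls the necessary norms.

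With the four inhomogeneous-term bounds established, \eqref{E:TOPLAPSESOBOLEVBOUND} follows from \eqref{E:LAPSEICOMMUTEDINITIALELLIPTICESTIMATE} by summing over $|\vec{I}|\le M$: the term $t^{-4/3}\|\partial_{\vec{I}}\adjustednewp\|_{L^2}$ is dominated by $t^{-4/3}\fluidenergy{M}\le t^{-4/3}\totalenergy{\smallparameter_*}{M}$ via Lemma~\ref{L:COERCIVITYOFTHEENERGIES}, the $t^{-4/3}\leftexp{(\vec{I});(Border)}{\mathfrak{N}}$ piece produces $\sqrt{\epsilon}\,t^{-4/3}\totalenergy{\smallparameter_*}{M}+\sqrt{\epsilon}\,t^{-4/3-c\sqrt{\epsilon}}\totalenergy{\smallparameter_*}{M-1}$ (the first of which is absorbed into the leading $t^{-4/3}\totalenergy{\smallparameter_*}{M}$ by choosing $\epsilon$ small), and the $\leftexp{(\vec{I});(Junk)}{\mathfrak{N}}$ piece is handled similarly. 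The only delicate point here is that \eqref{E:ICOMMUTEDINHOMOGENEOUSLAPSEJUNK} itself contains $\|\newlap\|_{H^{M-1}}$ and $\|\dlap\|_{H_{\newg}^M}$ on its right-hand side; I would close this by an induction on $M$: the $M=0$ case has neither of these terms, and at level $M$ the appearance of $\|\newlap\|_{H^{M-1}}$ (coming with the factor $\sqrt{\epsilon}$) is a lower-order quantity already controlled by the $M-1$ case, while $\|\dlap\|_{H_{\newg}^M}$ is bounded by the left-hand side and hence absorbed by choosing $\epsilon$ small. Finally, \eqref{E:LOWERLAPSEGOODSOBOLEVBOUND} is derived in the same way from \eqref{E:ALTERNATELAPSEICOMMUTEDINITIALELLIPTICESTIMATE} together with \eqref{E:ICOMMUTEDALTERNATEINHOMOGENEOUSLAPSEBORDER2}--\eqref{E:ICOMMUTEDALTERNATEINHOMOGENEOUSLAPSEJUNK2}; the gain of one power of $t^{2/3}$ (i.e.\ $t^{-2/3}$ instead of $t^{-4/3}$) reflects the improved scaling of the alternative lapse equation \eqref{E:LAPSELOWERDERIVATIVES}, at the cost of losing one derivative (i.e.\ $\totalenergy{\smallparameter_*}{M+1}$ appears on the right). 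The main obstacle is purely bookkeeping: keeping straight the different $t$-weightings that the various variables carry inside $\totalenergy{\smallparameter_*}{M}$, tracking the range restrictions on $M$ (so that $M+1\le N$ when needed), and carrying out the inductive absorption of the $\newlap$, $\dlap$ terms that the junk bound reintroduces.
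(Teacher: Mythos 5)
Your proposal follows the same architecture as the paper's proof: $L^2$-integrate the pointwise bounds \eqref{E:NIJUNK}--\eqref{E:WIDETILDENIPOINTWISEBORDER}, convert to energies via Lemma~\ref{L:COERCIVITYOFTHEENERGIES} (paying the $t^{-2/3}$ for $\upgamma$, $\newu$ and their $t^{2/3}$ weight in $\metricenergy{M}$, $\fluidenergy{M}$), and then feed the resulting bounds into \eqref{E:LAPSEICOMMUTEDINITIALELLIPTICESTIMATE}--\eqref{E:ALTERNATELAPSEICOMMUTEDINITIALELLIPTICESTIMATE}. That is exactly what the paper does, so your estimates are correct.

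One place where you overcomplicate: to dispose of the $\sqrt{\epsilon}\,\|\newlap\|_{H^{M-1}}$ term coming from \eqref{E:ICOMMUTEDINHOMOGENEOUSLAPSEJUNK}, no induction on $M$ is needed. Since $\|\newlap\|_{H^{M-1}} \leq \|\newlap\|_{H^M}$ and the latter sits on the left-hand side of \eqref{E:TOPLAPSESOBOLEVBOUND}, both the $\sqrt{\epsilon}\,\|\newlap\|_{H^{M-1}}$ and $\sqrt{\epsilon}\,\|\dlap\|_{H_{\newg}^M}$ terms can be absorbed directly into the left-hand side once $\epsilon$ is small, which is all the paper does. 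Also, your inductive base case claim is slightly wrong as stated: at $M=0$ the term $\sqrt{\epsilon}\,\|\dlap\|_{H_{\newg}^0}$ from \eqref{E:NIJUNK} \emph{is} present (it corresponds to $\sqrt{\epsilon}\,|\dlap|_{\newg}$), so it is not true that ``neither term'' appears at $M=0$; the uniform direct absorption bypasses this issue in any case.
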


\begin{remark}
	Some of the above powers of $\epsilon$ stated above are non-optimal. 
\end{remark}

\begin{proof}
	We first note that the estimates \eqref{E:PCOMMUTEDSTRONGPOINTWISE} and \eqref{E:UCOMMUTEDSTRONGPOINTWISE}
	imply that the hypotheses of Lemma~\ref{L:COERCIVITYOFTHEENERGIES} are verified when $\epsilon$ is sufficiently small.
	We now square both sides of the inequalities \eqref{E:NIJUNK}-\eqref{E:WIDETILDENIPOINTWISEBORDER}
	and then integrate over $\mathbb{T}^3.$	With the help of Lemma~\ref{L:COERCIVITYOFTHEENERGIES}, we 
	bound the integrals corresponding to $\leftexp{(\vec{I});(Border)}{\mathfrak{N}}$
	(i.e., the integrals corresponding to the terms on the right-hand side of \eqref{E:NIBORDER}) 
	by
	\begin{align} \label{E:NIBORDERINTEGRALS}
		\sum_{|\vec{I}| \leq M} \left\| \leftexp{(\vec{I});(Border)}{\mathfrak{N}} \right\|_{L^2}^2 
		& \lesssim \sum_{|\vec{I}| \leq M} 
			\epsilon \int_{\Sigma_t} \left|\partial_{\vec{I}} \freenewsec \right|_{\newg}^2 \, dx
		+ \sum_{|\vec{J}| \leq M - 1} 
			\epsilon t^{-c \epsilon} 
			\int_{\Sigma_t} \left|\partial_{\vec{J}} \freenewsec \right|_{\newg}^2 \, dx  \\
		& \lesssim \epsilon \totalenergy{\smallparameter_*}{M}^2 + \epsilon t^{-c \epsilon} \totalenergy{\smallparameter_*}{M-1}^2, \notag
	\end{align}
	which implies the desired estimate \eqref{E:ICOMMUTEDINHOMOGENEOUSLAPSEBORDER}. 
	Inequalities \eqref{E:ICOMMUTEDINHOMOGENEOUSLAPSEJUNK}-\eqref{E:LOWERLAPSEGOODSOBOLEVBOUND}
	can be proved in a similar fashion; we omit the straightforward details.

	Inequalities \eqref{E:TOPLAPSESOBOLEVBOUND}-\eqref{E:LOWERLAPSEGOODSOBOLEVBOUND}
	then follow from Lemma~\ref{L:COERCIVITYOFTHEENERGIES},
	the estimates \eqref{E:LAPSEICOMMUTEDINITIALELLIPTICESTIMATE}-\eqref{E:ALTERNATELAPSEICOMMUTEDINITIALELLIPTICESTIMATE}, and
	the estimates \eqref{E:ICOMMUTEDINHOMOGENEOUSLAPSEBORDER}-\eqref{E:ICOMMUTEDALTERNATEINHOMOGENEOUSLAPSEJUNK2}.
	We remark that in deriving these bounds,
	we soaked the $\sqrt{\epsilon} \| \newlap \|_{H^{M-1}}$ and $\sqrt{\epsilon} \| \dlap \|_{H_{\newg}^M}$ terms on the right-hand sides of 	
	\eqref{E:ICOMMUTEDINHOMOGENEOUSLAPSEJUNK} and \eqref{E:ICOMMUTEDALTERNATEINHOMOGENEOUSLAPSEJUNK2}
	into the left-hand sides of \eqref{E:TOPLAPSESOBOLEVBOUND}-\eqref{E:LOWERLAPSEGOODSOBOLEVBOUND} 
	(this is possible when $\epsilon$ is sufficiently small).
	
\end{proof}

\subsection{Sobolev bounds for the metric and fluid inhomogeneous terms}

In the next proposition, we extend the analysis of
Prop.~\ref{P:LAPSEINTERMSOFENERGY} to the non-lapse variables. 
Specifically, we bound the $L^2$ norms of the inhomogeneous terms 
$\leftexp{(\vec{I});(Junk)}{\mathfrak{G}},$ 
$\leftexp{(\vec{I});(Junk)}{\widetilde{\mathfrak{G}}},$ etc.,
that appear in the PDEs verified by the differentiated quantities
$\partial_{\vec{I}} \newg,$ 
$\partial_{\vec{I}} \newg^{-1},$
$\partial_{\vec{I}} \freenewsec,$ 
$\partial_{\vec{I}} \upgamma,$ 
$\partial_{\vec{I}} \newp,$ 
and $\partial_{\vec{I}} \newu.$ 
As in Prop.~\ref{P:LAPSEINTERMSOFENERGY}, the right-hand sides of the bounds feature 
the total energies $\totalenergy{\smallparameter_*}{M}$
and $ \sum_{1 \leq |\vec{I}| \leq M }\| |\partial_{\vec{I}} \newg |_{\newg} \|_{L^2} + \| |\partial_{\vec{I}} \newg^{-1} |_{\newg} \|_{L^2}.$

\begin{proposition} \label{P:INHOMOGENEOUSTERMSOBOLEVESTIMATES}
	Assume that the hypotheses and conclusions of Prop.~\ref{P:STRONGPOINTWISE} hold on the spacetime slab $(T,1] \times \mathbb{T}^3.$ 
	Let $\leftexp{(\vec{I});(Junk)}{\mathfrak{M}},$ $\leftexp{(\vec{I});(Border)}{\mathfrak{M}},$ etc.
	be the inhomogeneous terms appearing in the $\partial_{\vec{I}}-$commuted equations
	\eqref{AE:MOMENTUMCONSTRAINTCOMMUTED}-\eqref{AE:RAISEDMOMENTUMCONSTRAINTCOMMUTED},
	\eqref{AE:GEVOLUTIONCOMMUTED}-\eqref{AE:GINVERSEEVOLUTIONCOMMUTED}, 
	\eqref{AE:METRICGAMMACOMMUTED}-\eqref{AE:SECONDFUNDCOMMUTED},
	and \eqref{AE:PARTIALTPCOMMUTED}-\eqref{AE:PARTIALTUJCOMMUTED}.	
	Then there exist a small constant $\upsigma_N > 0$ and a large constant $c > 0$
	such that if $\epsilon \leq \upsigma \leq \upsigma_N,$
	then the following $L^2$ estimates hold on the interval $(T,1]$ for $1 \leq M \leq N:$
	\begin{subequations}
	\begin{align}
	\sum_{1 \leq |\vec{I}| \leq M} \left\| \Big| \leftexp{(\vec{I});(Junk)}{\mathfrak{G}} \Big|_{\newg} \right \|_{L^2}^2
	& \lesssim t^{- 4/3} \totalenergy{\smallparameter_*}{M}^2
		+ \epsilon t^{- 4/3 - c \sqrt{\epsilon}} \totalenergy{\smallparameter_*}{M-1}^2
		+ \epsilon t^{- c \sqrt{\epsilon}} \sum_{1 \leq |\vec{I}| \leq M} 
			\left\| \Big|\partial_{\vec{I}} \newg \Big|_{\newg} \right\|_{L^2}^2,
			\label{E:JUNKGSOBOLEV}  \\
	\sum_{1 \leq |\vec{I}| \leq M} \left\| \Big| \leftexp{(\vec{I});(Junk)}{\widetilde{\mathfrak{G}}} \Big|_{\newg} \right\|_{L^2}^2 
		&\lesssim t^{- 4/3} \totalenergy{\smallparameter_*}{M}^2
			+ \epsilon t^{- 4/3 - c \sqrt{\epsilon}} \totalenergy{\smallparameter_*}{M-1}^2
			+ \epsilon t^{- c \sqrt{\epsilon}} \sum_{1 \leq |\vec{I}| \leq M} 
				\left\| \Big|\partial_{\vec{I}} \newg^{-1} \Big|_{\newg} \right\|_{L^2}^2,
			\label{E:JUNKGINVERSESOBOLEV}
	\end{align}
	\end{subequations}
	\begin{subequations}
	\begin{align}
	\sum_{1 \leq |\vec{I}| \leq M} \left\| \Big| \leftexp{(\vec{I});(Border)}{\mathfrak{G}} \Big|_{\newg} \right \|_{L^2}^2
	& \lesssim	\totalenergy{\smallparameter_*}{M}^2
		+ \epsilon t^{- c \sqrt{\epsilon}} \totalenergy{\smallparameter_*}{M-1}^2
		\label{E:BORDERGSOBOLEV} \\
	& \ \ + \epsilon \sum_{|\vec{I}| = M} \left\| \Big|\partial_{\vec{I}} \newg \Big|_{\newg} \right\|_{L^2}^2
		+ \underbrace{\epsilon t^{- c \sqrt{\epsilon}} \sum_{1 \leq |\vec{J}| \leq M - 1} \left\| \Big|\partial_{\vec{I}} \newg \Big|_{\newg} 
			\right\|_{L^2}^2}_{\mbox{absent if $M=1$}},
		\notag \\
	\sum_{1 \leq |\vec{I}| \leq M} \left\| \Big| \leftexp{(\vec{I});(Border)}{\widetilde{\mathfrak{G}}} \Big|_{\newg} \right\|_{L^2}^2 
		& \lesssim \totalenergy{\smallparameter_*}{M}^2
		+ \epsilon t^{- c \sqrt{\epsilon}} \totalenergy{\smallparameter_*}{M-1}^2
		\label{E:BORDERGINVERSESOBOLEV}  \\
	& \ \ + \epsilon \sum_{|\vec{I}| = M} \left\| \Big|\partial_{\vec{I}} \newg^{-1} \Big|_{\newg} \right\|_{L^2}^2
		+ \underbrace{\epsilon t^{- c \sqrt{\epsilon}}  \sum_{1 \leq |\vec{J}| \leq M-1} \left\| \Big|\partial_{\vec{J}} \newg^{-1} \Big|_{\newg} 
			\right\|_{L^2}^2}_{\mbox{absent if $M=1$}}.
		\notag
		\end{align}
	\end{subequations}
		
	In addition, the following $L^2$ estimates hold on the interval $(T,1]$ for $0 \leq M \leq N:$	
	\begin{align}
	& \sum_{|\vec{I}| \leq M} \left \| \Big|\leftexp{(\vec{I});(Junk)}{\mathfrak{M}} \Big|_{\newg} \right \|_{L^2}^2
		+ \left \| \Big|\leftexp{(\vec{I});(Junk)}{\widetilde{\mathfrak{M}}} \Big|_{\newg} \right \|_{L^2}^2 
		+ \left \| \Big| \leftexp{(\vec{I});(Junk)}{\mathfrak{K}} \Big|_{\newg} \right \|_{L^2}^2	&&
			\label{E:JUNKINHOMSOBOLEV}  \\
	& \ \ + \sum_{|\vec{I}| \leq M} \left \| \Big| \leftexp{(\vec{I});(Junk)}{\mathfrak{g}} \Big|_{\newg} \right \|_{L^2}^2
		+ \left\| \leftexp{(\vec{I});(Junk)}{\mathfrak{P}} \right\|_{L^2}^2
		+ \left\| \Big|\leftexp{(\vec{I});(Junk)}{\mathfrak{U}} \Big|_{\newg} \right\|_{L^2}^2 &&
		\notag \\
	& \lesssim \epsilon t^{-4/3 - c \sqrt{\epsilon}} \totalenergy{\smallparameter_*}{M}^2
			+ \underbrace{\epsilon t^{- c \sqrt{\epsilon}} \sum_{1 \leq |\vec{I}| \leq M} 
				\left\| \Big|\partial_{\vec{I}} \newg \Big|_{\newg} \right\|_{L^2}^2}_{\mbox{absent if $M=0$}} 
			+ \underbrace{\epsilon t^{- c \sqrt{\epsilon}} \sum_{1 \leq |\vec{I}| \leq M} 
				\left\| \Big|\partial_{\vec{I}} \newg^{-1} \Big|_{\newg} \right\|_{L^2}^2}_{\mbox{absent if $M=0$}},
		\notag
	\end{align}
	\begin{align} \label{E:BORDERLINESOBOLEVESTIMATES}
		& \sum_{|\vec{I}| \leq M} \left \| \Big|\leftexp{(\vec{I});(Border)}{\mathfrak{M}} \Big|_{\newg} \right \|_{L^2}^2
			+ \left \| \Big|\leftexp{(\vec{I});(Border)}{\widetilde{\mathfrak{M}}} \Big|_{\newg} \right \|_{L^2}^2
			+ \left\| \Big| \leftexp{(\vec{I});(Border)}{\mathfrak{g}} \Big|_{\newg} \right\|_{L^2}^2
			+ \left\| \Big| \leftexp{(\vec{I});(Border)}{\mathfrak{U}} \Big|_{\newg} \right\|_{L^2}^2
			\\
		& \ \ \lesssim \epsilon t^{-4/3} \totalenergy{\smallparameter_*}{M}^2 
			+ \underbrace{\epsilon t^{-4/3 - c \sqrt{\epsilon}} \totalenergy{\smallparameter_*}{M-1}^2}_{\mbox{absent if $M=0$}}
			\notag \\
		& \ \ \ \ + \underbrace{\epsilon t^{- c \sqrt{\epsilon}} \sum_{1 \leq |\vec{I}| \leq M} 
			\left\| \Big|\partial_{\vec{I}} \newg \Big|_{\newg} \right\|_{L^2}^2}_{\mbox{absent if $M=0$}} 
			+ \underbrace{\epsilon t^{- c \sqrt{\epsilon}} \sum_{1 \leq |\vec{I}| \leq M} 
			\left\| \Big|\partial_{\vec{I}} \newg^{-1} \Big|_{\newg} \right\|_{L^2}^2}_{\mbox{absent if $M=0$}}. 
				\notag
	\end{align}


	
	
	
\end{proposition}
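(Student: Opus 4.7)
The plan is to feed the pointwise bounds of Prop.~\ref{P:POINTWISEESTIMATES} into an $L^2$ machine: for each inhomogeneous term, I will square the relevant pointwise inequality, integrate over $\mathbb{T}^3$, and then use Lemma~\ref{L:COERCIVITYOFTHEENERGIES} to convert $L^2$ norms of $|\partial_{\vec{I}}\freenewsec|_{\newg}$, $|\partial_{\vec{I}}\upgamma|_{\newg}$, $|\partial_{\vec{I}}\adjustednewp|$, and $|\partial_{\vec{I}} \newu|_{\newg}$ into $\metricenergy{M}$ and $\fluidenergy{M}$, and thus into $\totalenergy{\smallparameter_*}{M}$. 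The hypotheses of Lemma~\ref{L:COERCIVITYOFTHEENERGIES} are automatic since the strong estimates \eqref{E:PCOMMUTEDSTRONGPOINTWISE} and \eqref{E:UCOMMUTEDSTRONGPOINTWISE} yield $\| \adjustednewp \|_{C^0} + t^{2/3} \| \newu \|_{C_{\newg}^0} \lesssim \epsilon$ for $\epsilon$ sufficiently small. This mirrors exactly the procedure used in the proof of Prop.~\ref{P:LAPSEINTERMSOFENERGY}, just applied to the remaining junk and borderline terms.

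First, for \eqref{E:JUNKGSOBOLEV}--\eqref{E:BORDERGINVERSESOBOLEV} I would square \eqref{E:GJUNKNORM}--\eqref{E:GINVERSEBORDERNORM} and integrate. The only nontrivial contribution is the lapse piece $\sum_{|\vec{I}|=M}|\partial_{\vec{I}} \newlap|$ and its lower-order companion appearing in the junk bounds; for these, I will invoke the top-order elliptic bound \eqref{E:TOPLAPSESOBOLEVBOUND}, which produces exactly the factor $t^{-4/3}\totalenergy{\smallparameter_*}{M}$ together with the $\sum_{1\le|\vec{I}|\le M}\||\partial_{\vec{I}}\newg|_{\newg}\|_{L^2}$ and $\||\partial_{\vec{I}}\newg^{-1}|_{\newg}\|_{L^2}$ terms on the right-hand sides of \eqref{E:JUNKGSOBOLEV}--\eqref{E:JUNKGINVERSESOBOLEV}. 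The borderline pieces \eqref{E:BORDERGSOBOLEV}--\eqref{E:BORDERGINVERSESOBOLEV} are easier: their pointwise bounds do not involve any lapse, so Lemma~\ref{L:COERCIVITYOFTHEENERGIES} immediately converts everything to $\totalenergy{\smallparameter_*}{M}$ and the $|\partial_{\vec{I}}\newg|_{\newg}$ / $|\partial_{\vec{I}}\newg^{-1}|_{\newg}$ terms, with the principal-order pieces carrying the clean coefficient $\epsilon$.

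Next, for the collective junk estimate \eqref{E:JUNKINHOMSOBOLEV}, I would square \eqref{E:POINTWISEJUNKNORMS} and integrate. The pointwise right-hand side is $\sqrt{\epsilon}t^{-c\sqrt{\epsilon}}$ times $|\partial_{\vec{I}}|$ applied to the solution variables (including $\newlap$ and $\dlap$), so after squaring and integrating one gets $\epsilon t^{-c\sqrt{\epsilon}}$ times the sum of $L^2$ norms. For the non-lapse variables, Lemma~\ref{L:COERCIVITYOFTHEENERGIES} directly produces $\totalenergy{\smallparameter_*}{M}^2$ up to a factor $t^{-4/3}$ coming from the $t^{2/3}$-weights in the definitions of $\metricenergy{M}$ and $\fluidenergy{M}$. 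For the lapse contributions, inserting \eqref{E:TOPLAPSESOBOLEVBOUND} generates again $t^{-4/3}\totalenergy{\smallparameter_*}{M}$ plus the $|\partial_{\vec{I}}\newg|_{\newg}$ and $|\partial_{\vec{I}}\newg^{-1}|_{\newg}$ sums, all with a clean $\epsilon$ prefactor since the lapse entered \eqref{E:POINTWISEJUNKNORMS} with a $\sqrt{\epsilon}$ already. Collecting produces \eqref{E:JUNKINHOMSOBOLEV}. The borderline estimate \eqref{E:BORDERLINESOBOLEVESTIMATES} is handled the same way, squaring \eqref{E:GAMMAIPOINTWISEGNORMBORDER}--\eqref{E:MIPOINTWISEGNORM}; the principal-order pieces $\sqrt{\epsilon}\sum_{|\vec I|=M}|\partial_{\vec{I}}\upgamma|_{\newg}$, $\sqrt{\epsilon}\sum_{|\vec I|=M}|\partial_{\vec{I}}\newu|_{\newg}$, and $\sqrt{\epsilon}t^{2/3}\sum_{|\vec I|=M}|\partial_{\vec{I}}\dlap|_{\newg}$ all square and integrate into $\epsilon\totalenergy{\smallparameter_*}{M}^2$ (the dlap piece through \eqref{E:TOPLAPSESOBOLEVBOUND}, where the extra $t^{4/3}$ combines with the stated weight to leave only $\epsilon t^{-4/3}\totalenergy{\smallparameter_*}{M}^2$).

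The main obstacle, as in Prop.~\ref{P:LAPSEINTERMSOFENERGY}, is bookkeeping the $t$-powers carefully so that the lapse contributions do not pollute the borderline coefficients; in particular I must be sure to use \eqref{E:TOPLAPSESOBOLEVBOUND} (not the more favorable but lower-order \eqref{E:LOWERLAPSEGOODSOBOLEVBOUND}) whenever the lapse enters at top order, absorbing the resulting $t^{-4/3}\totalenergy{\smallparameter_*}{M}$ into the stated right-hand sides without creating a spurious $\totalenergy{\smallparameter_*}{M+1}$ term. Once this is arranged, the remaining work is routine substitution and collection of like terms.
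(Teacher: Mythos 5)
Your proposal is correct and follows essentially the same route as the paper's own (very brief) proof: square the pointwise inequalities of Prop.~\ref{P:POINTWISEESTIMATES}, integrate over $\mathbb{T}^3$, and convert the resulting $L^2$ quantities into energies via Lemma~\ref{L:COERCIVITYOFTHEENERGIES} and the elliptic lapse bounds of Prop.~\ref{P:LAPSEINTERMSOFENERGY}. Your extra remarks about invoking \eqref{E:TOPLAPSESOBOLEVBOUND} rather than \eqref{E:LOWERLAPSEGOODSOBOLEVBOUND} at top order, and about the $t^{2/3}$ weight on $\dlap$ cancelling the $t^{-4/3}$ in the squared lapse bound, are exactly the same bookkeeping points the paper's proof gestures at.
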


\begin{proof}
The proof is very similar to the proof of Prop.~\ref{P:LAPSEINTERMSOFENERGY}: we square
the inequalities in Prop.~\ref{P:POINTWISEESTIMATES} and integrate over $\mathbb{T}^3.$
For example, to derive the estimates for $\left\| \Big| \leftexp{(\vec{I});(Border)}{\mathfrak{g}} \Big|_{\newg} \right\|_{L^2}^2$
in \eqref{E:BORDERLINESOBOLEVESTIMATES}, we square both sides of inequality \eqref{E:GAMMAIPOINTWISEGNORMBORDER} and integrate over 
$\mathbb{T}^3.$ The integrals involving $\partial_{\vec{I}} \dlap$ 
are bounded with the help of Prop.~\ref{P:LAPSEINTERMSOFENERGY}, while the remaining integrals are controlled by the 
terms $\| |\partial_{\vec{I}} \newg|_{\newg} \|_{L^2}^2$ and 
$\| |\partial_{\vec{I}} \newg^{-1}|_{\newg} \|_{L^2}^2$ on the right-hand side of \eqref{E:BORDERLINESOBOLEVESTIMATES}, 
the definition of the energies $\totalenergy{\smallparameter_*}{M},$ and Lemma~\ref{L:COERCIVITYOFTHEENERGIES}. The remaining estimates in the proposition can be proved in a similar fashion.

\end{proof}

\section{A Comparison of the Sobolev Norms and the Energies} \label{S:COMPARISON}

Our bootstrap assumptions involve the solution norm $\highnorm{N},$ while  
the fundamental energy integral inequality of Sect.~\ref{S:FUNDAMENATLENERGYINEQUALITIES}
will allow us to derive a priori estimates for the quantities $\totalenergy{\smallparameter_*}{M}$
(see the proof of Corollary \ref{C:ENERGYINTEGRALINEQUALITIES}).
These a priori estimates will also involve the quantities
$\sum_{1 \leq |\vec{I}| \leq M} \| | \partial_{\vec{I}} \newg |_{\newg} \|_{L^2}^2$ 
and $\sum_{1 \leq |\vec{I}| \leq M} \| | \partial_{\vec{I}} \newg^{-1} |_{\newg} \|_{L^2}^2.$
In this short section, we establish comparison estimates between these quantities, 
which will ultimately allow us to derive an a priori estimate for $\highnorm{N}.$

\begin{proposition} [\textbf{Comparison of norms and energies}] \label{P:COMPARISON}
	Assume that the hypotheses and conclusions of Prop.~\ref{P:STRONGPOINTWISE} hold on the spacetime slab $(T,1] \times \mathbb{T}^3.$
	Let $\highnorm{M}(t)$ be the solution norm defined in \eqref{E:HIGHNORM} and let
	$\totalenergy{\smallparameter_*}{M}$ be the total solution energy defined by \eqref{E:TOTALENERGY}, where the choice of 
	the constant $\smallparameter_* > 0$ was made in Prop.~\ref{P:FUNDAMENTALENERGYINEQUALITY}.
	Then there exist a small constant $\upsigma_N > 0$ 
	and large constants $C_M, c_M > 0$
	such that if $\epsilon \leq \upsigma \leq \upsigma_N,$
	then the following comparison estimates hold for $t \in (T,1]:$
	\begin{subequations}
	\begin{align}
		\totalenergy{\smallparameter_*}{M}(t) & \leq C_M t^{- c_M \sqrt{\epsilon}} \highnorm{M}(t), && (0 \leq M \leq N),
			\label{E:ENERGYBOUNDEDBYNORM} \\
		\highnorm{M}(t) & \leq 
			\underbrace{C_M t^{- c_M \sqrt{\epsilon}} \sum_{1 \leq |\vec{I}| \leq M} \left\| \left| \partial_{\vec{I}} \newg \right|_{\newg} \right\|_{L^2}(t)
				}_{\mbox{absent if $M=0$}} 
			+ \underbrace{C_M t^{- c_M \sqrt{\epsilon}} \sum_{1 \leq |\vec{I}| \leq M}+ \left\| \left| \partial_{\vec{I}} \newg^{-1} \right|_{\newg}\right\|_{
				L^2}(t)}_{\mbox{absent if $M=0$}} &&
				\label{E:NORMBOUNDEDBYENERGY} \\
		& \ \ + C_M t^{- c_M \sqrt{\epsilon}} \totalenergy{\smallparameter_*}{M}(t), && (0 \leq M \leq N).
			\notag 			
	\end{align}
	\end{subequations}
\end{proposition}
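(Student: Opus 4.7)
\medskip

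My plan is to reduce both inequalities to elementary comparisons between the $|\cdot|_{\newg}$ and $|\cdot|_{Frame}$ pointwise norms, combined with the coerciveness result of Lemma~\ref{L:COERCIVITYOFTHEENERGIES} and the elliptic bounds of Proposition~\ref{P:LAPSEINTERMSOFENERGY}. The key preliminary observation is that the strong estimates \eqref{E:GCOMMUTEDSTRONGPOINTWISE}--\eqref{E:GINVERSECOMMUTEDSTRONGPOINTWISE} yield $\|\newg\|_{C_{Frame}^0} + \|\newg^{-1}\|_{C_{Frame}^0} \leq C(1 + \epsilon t^{-c\epsilon})$, so that for any $\Sigma_t$-tangent tensorfield $T$ we have the equivalence
\begin{equation*}
	C^{-1} t^{c\sqrt{\epsilon}} |T|_{Frame} \leq |T|_{\newg} \leq C\, t^{-c\sqrt{\epsilon}} |T|_{Frame},
\end{equation*}
after raising and lowering indices with $\newg,\newg^{-1}$. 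This provides the $t^{-c_M\sqrt{\epsilon}}$ losses appearing in the statement.

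To prove \eqref{E:ENERGYBOUNDEDBYNORM} I would first note that the hypotheses of Lemma~\ref{L:COERCIVITYOFTHEENERGIES} are verified by \eqref{E:PCOMMUTEDSTRONGPOINTWISE} and \eqref{E:UCOMMUTEDSTRONGPOINTWISE} once $\epsilon$ is small. Hence
\begin{equation*}
	\totalenergy{\smallparameter_*}{M} \lesssim \|\freenewsec\|_{H_{\newg}^M} + t^{2/3}\|\upgamma\|_{H_{\newg}^M} + \|\adjustednewp\|_{H^M} + t^{2/3}\|\newu\|_{H_{\newg}^M}.
\end{equation*}
Using the Frame/$\newg$ equivalence above on each of the four terms, each $H_{\newg}^M$ norm is bounded by $t^{-c\sqrt{\epsilon}}$ times the corresponding $H_{Frame}^M$ norm, and all four of these frame norms appear as summands in $\highnorm{M}$ with exactly the $t$-weights shown; this gives \eqref{E:ENERGYBOUNDEDBYNORM}.

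For \eqref{E:NORMBOUNDEDBYENERGY} I would split $\highnorm{M}$ into three groups and bound each group separately. The contributions from $\freenewsec$, $\upgamma$, $\adjustednewp$, $\newu$ are handled by reversing the argument for \eqref{E:ENERGYBOUNDEDBYNORM}: the Frame norms are bounded by $t^{-c\sqrt{\epsilon}}$ times the $\newg$ norms, which by Lemma~\ref{L:COERCIVITYOFTHEENERGIES} are $\lesssim \totalenergy{\smallparameter_*}{M}$. The contributions from $\partial_{\vec{I}} \newg$ and $\partial_{\vec{I}}\newg^{-1}$ (with $1 \leq |\vec{I}| \leq M$) are bounded by $t^{-c\sqrt{\epsilon}}$ times the $\newg$-norms that appear explicitly on the right-hand side of \eqref{E:NORMBOUNDEDBYENERGY}. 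The hardest group is the collection of lapse terms; the challenge here is that $\highnorm{M}$ contains seven distinct lapse contributions at Sobolev orders ranging from $M-2$ up to $M+2$, with weights $t^{2/3}, 1, t^{4/3}, t^{2/3}, t^2, t^{4/3}, t^{8/3}, t^2$. For each of them I would apply either \eqref{E:TOPLAPSESOBOLEVBOUND} (applied at order $M+2$, $M+1$, or $M$) or \eqref{E:LOWERLAPSEGOODSOBOLEVBOUND} (applied at order $M+1$, $M$, or lower), checking in each case that the declared $t$-power in $\highnorm{M}$ exceeds the singular factor $t^{-4/3}$ or $t^{-2/3}$ produced by the elliptic estimate by at least $t^{c\sqrt{\epsilon}}$, so that the resulting bound is of the stated form.

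The main obstacle will be verifying in each of the seven lapse cases that the prefactor in $\highnorm{M}$ absorbs the $t^{-4/3}$ (respectively $t^{-2/3}$) loss coming from \eqref{E:TOPLAPSESOBOLEVBOUND} (respectively \eqref{E:LOWERLAPSEGOODSOBOLEVBOUND}), and that one is always permitted to substitute in \eqref{E:LOWERLAPSEGOODSOBOLEVBOUND} at the relevant order $M'+1 \leq N$; for the top term $t^{8/3}\|\newlap\|_{H^{M+2}}$ in $\highnorm{M}$, only \eqref{E:TOPLAPSESOBOLEVBOUND} at order $M+2 \leq N$ is permissible, which forces the implicit constraint $M \leq N-2$ for those summands, matching the structure of $\highnorm{M}$. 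Once this case-check is completed, assembling the three groups and choosing $c_M$ large enough to dominate each individual exponent completes the proof.
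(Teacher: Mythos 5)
Your overall strategy — reduce to the Frame/$\newg$ norm equivalence, invoke Lemma~\ref{L:COERCIVITYOFTHEENERGIES} for the non-lapse contributions, and the elliptic estimates \eqref{E:TOPLAPSESOBOLEVBOUND}--\eqref{E:LOWERLAPSEGOODSOBOLEVBOUND} for the lapse — is the paper's approach, and your treatment of \eqref{E:ENERGYBOUNDEDBYNORM} and of the non-lapse parts of \eqref{E:NORMBOUNDEDBYENERGY} is correct. However, your treatment of the top-order lapse terms contains a genuine gap.

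You propose to bound $t^{8/3}\|\newlap\|_{H^{M+2}}$ by applying \eqref{E:TOPLAPSESOBOLEVBOUND} \emph{at Sobolev order $M+2$}, and you conclude that this ``forces the implicit constraint $M\leq N-2$ for those summands.'' Both parts of this are wrong. First, there is no such implicit constraint: the definition \eqref{E:HIGHNORM} of $\highnorm{M}$ only truncates Sobolev orders when they are negative, and the proposition is asserted for all $0\leq M\leq N$, so $\|\newlap\|_{H^{N+2}}$ genuinely appears when $M=N$, and your argument cannot reach it since \eqref{E:TOPLAPSESOBOLEVBOUND} is not available at order $N+2$. Second, even for $M\leq N-2$, applying the elliptic estimate at order $M+2$ produces the right-hand side $t^{-4/3}\totalenergy{\smallparameter_*}{M+2}$, so after multiplying by $t^{8/3}$ you obtain a bound in terms of $\totalenergy{\smallparameter_*}{M+2}$ — not $\totalenergy{\smallparameter_*}{M}$, which is what \eqref{E:NORMBOUNDEDBYENERGY} asserts. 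The analogous issue arises for $t^2\|\newlap\|_{H^{M+1}}$, $t^{4/3}\|\dlap\|_{H_{Frame}^M}$, and $t^2\|\dlap\|_{H_{Frame}^{M+1}}$.

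The repair is to apply \eqref{E:TOPLAPSESOBOLEVBOUND} at order $M$ itself and exploit the full left-hand side of that estimate, not merely the $\|\newlap\|_{H^M}$ piece. Since $\dlap=t^{2/3}\partial\newlap$ and hence $\partial\dlap=t^{2/3}\partial^2\newlap$, the terms $\|\dlap\|_{H_\newg^M}$ and $t^{2/3}\|\partial\dlap\|_{H_\newg^M}$ appearing on the left of \eqref{E:TOPLAPSESOBOLEVBOUND} control (after the Frame/$\newg$ conversion) the spatial derivatives of $\newlap$ at orders up to $M+1$ and $M+2$ respectively, with exactly the additional $t^{2/3}$ and $t^{4/3}$ weights needed so that multiplying through by $t^{4/3}$ from the elliptic loss reproduces the $t^2$ and $t^{8/3}$ weights in $\highnorm{M}$. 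Since the application is at order $M$, the resulting bound is in terms of $\totalenergy{\smallparameter_*}{M}$ (and $\totalenergy{\smallparameter_*}{M-1}\leq\totalenergy{\smallparameter_*}{M}$) as required, and it is valid for all $M\leq N$. The lower-order lapse terms ($t^{2/3}\|\newlap\|_{H^{M-1}}$ and $\|\dlap\|_{H_{Frame}^{M-2}}$) are then handled with \eqref{E:LOWERLAPSEGOODSOBOLEVBOUND} at order $M-1$, again keeping the energy index at $M$.
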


\begin{proof}
	Inequality \eqref{E:ENERGYBOUNDEDBYNORM} follows from 
	Lemma~\ref{L:COERCIVITYOFTHEENERGIES}, the strong estimates
	\eqref{E:GCOMMUTEDSTRONGPOINTWISE}, \eqref{E:GINVERSECOMMUTEDSTRONGPOINTWISE},
	\eqref{E:PCOMMUTEDSTRONGPOINTWISE}, and \eqref{E:UCOMMUTEDSTRONGPOINTWISE},
	and the $\newg-$Cauchy-Schwarz inequality. Inequality \eqref{E:NORMBOUNDEDBYENERGY} 
	follows similarly with the help of the estimates \eqref{E:TOPLAPSESOBOLEVBOUND}-\eqref{E:LOWERLAPSEGOODSOBOLEVBOUND}.
\end{proof}

\section{The Fundamental A Priori Estimates for the Solution Norms} \label{S:FUNDAMENTALAPRIORI}
In this section, we use all of the results obtained thus far in order to
derive a priori estimates for the solution norms
$\highnorm{N},$ $\lowkinnorm{N-3},$ and $\lowpotnorm{N-4}$ ($N \geq 8$) from Def.~\ref{D:NORMS}.
This is the main step in our proof of stable singularity formation.
Our estimates will require the assumption that the data are near-FLRW.
As preliminary steps, in the first two propositions of this section, we derive
hierarchies of time-integral inequalities for the quantities 
$\left\| \left| \partial_{\vec{I}} \newg \right|_{\newg} \right\|_{L^2}$
and $\left\| \left| \partial_{\vec{I}} \newg^{-1} \right|_{\newg} \right\|_{L^2}$
and for the total energies $\totalenergy{\smallparameter_*}{M}$ 
from Def.~\ref{D:TOTALENERGY}.
The important point is that these hierarchies are amenable to analysis via Gronwall's inequality.
We then use these hierarchies to derive the main a priori norm estimates in Corollary \ref{C:ENERGYINTEGRALINEQUALITIES}.

In the next proposition, we derive the first hierarchy.

\begin{proposition} [\textbf{A hierarchy of integral inequalities for $\left\| \left| \partial_{\vec{I}} \newg \right|_{\newg} \right\|_{L^2}(t)$ and $\left\| \left| \partial_{\vec{I}} \newg^{-1} \right|_{\newg} \right\|_{L^2}(t)$}] \label{P:SOBFORG}
Assume that the hypotheses and conclusions of Prop.~\ref{P:STRONGPOINTWISE} hold on the spacetime slab $(T,1] \times \mathbb{T}^3.$
Then there exist a small constant $\upsigma_* > 0$
and large constants $C_N, c_N > 0$ 
such that if $\epsilon \leq \upsigma \leq \upsigma_*,$
then the following hierarchy of integral inequalities is verified for $t \in (T,1]$ whenever 
$1 \leq M \leq N:$
	\begin{align} \label{E:GCOMMUTEDSOB}
		\sum_{1 \leq |\vec{I}| \leq M} \left\| \left| \partial_{\vec{I}} \newg \right|_{\newg} \right\|_{L^2}^2(t)
		& \leq \sum_{1 \leq |\vec{I}| \leq M} \left\| \left| \partial_{\vec{I}} \newg \right|_{\newg} \right\|_{L^2}^2(1)
			\\
		& \ \ + C_N \sqrt{\epsilon} \sum_{1 \leq |\vec{I}| \leq M} \int_{s=t}^1 s^{-1} 
				\left\| \left| \partial_{\vec{I}} \newg \right|_{\newg} \right\|_{L^2}^2(s) \, ds
				\notag \\
		& \ \ + \underbrace{C_N \sqrt{\epsilon}  \sum_{1 \leq |\vec{J}| \leq M - 1} \int_{s=t}^1 s^{-1 - c_N \sqrt{\epsilon}} 
				\left\| \left| \partial_{\vec{J}} \newg \right|_{\newg} \right\|_{L^2}^2(s) \, ds}_{\mbox{absent if $M=1$}}	
			\notag \\
		& \ \ + \frac{C_N}{\sqrt{\epsilon}} \int_{s=t}^1 s^{-1} \totalenergy{\smallparameter_*}{M}^2(s) \, ds
			+ C_N \sqrt{\epsilon} \int_{s=t}^1 s^{-1 - c_N \sqrt{\epsilon}} \totalenergy{\smallparameter_*}{M-1}^2(s) \, ds.
			\notag
	\end{align}
	Furthermore, the same estimates hold if we replace $\left| \partial_{\vec{I}} \newg \right|_{\newg}$ and $\left| 
	\partial_{\vec{J}} \newg \right|_{\newg}$ respectively with
	$\left| \partial_{\vec{I}} \newg^{-1} \right|_{\newg}$ and $\left| \partial_{\vec{J}} \newg^{-1} \right|_{\newg}$
	in \eqref{E:GCOMMUTEDSOB}.		
		
\end{proposition}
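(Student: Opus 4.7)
The plan is to derive the inequality by differentiating $|\partial_{\vec I}\newg|_{\newg}^2$ in time, integrating over $\Sigma_s$ to obtain a differential inequality for $\bigl\| |\partial_{\vec I}\newg|_{\newg} \bigr\|_{L^2}^2$, and then integrating in time from $s = t$ to $s = 1$. The starting point is the $\partial_{\vec I}$-commuted metric evolution equation \eqref{AE:GEVOLUTIONCOMMUTED}, which is schematically
\begin{align*}
\partial_t \partial_{\vec I}\newg_{ij} = -2 t^{-1}\newg_{ia}\partial_{\vec I}\freenewsec_{\ j}^{a}
    + \leftexp{(\vec I);(Border)}{\mathfrak{G}}_{ij} + \leftexp{(\vec I);(Junk)}{\mathfrak{G}}_{ij},
\end{align*}
together with \eqref{E:GINVERSEEVOLUTION}, which controls the time dependence of the two $\newg^{-1}$ factors in the inner product.

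First I would compute $\partial_t |\partial_{\vec I}\newg|_{\newg}^2$ via the product rule. There are two contributions: (i) the derivative hitting the $(\newg^{-1})(\newg^{-1})$ factors in the inner product, which after substituting \eqref{E:GINVERSEEVOLUTION} produces a principal linear piece of the schematic form $t^{-1}\freenewsec \cdot |\partial_{\vec I}\newg|_{\newg}^2$ together with a subleading $s^{1/3}$-size junk contribution; and (ii) twice the pairing $\langle \partial_t\partial_{\vec I}\newg,\, \partial_{\vec I}\newg \rangle_{\newg}$. For (i), the strong $C_{\newg}^0$ estimate \eqref{E:SECONDFUNDUPGRADEPOINTWISEGNORM} at $M=0$ gives $|\freenewsec|_{\newg} \lesssim \epsilon$, so the (i)-contribution is pointwise bounded by $C\sqrt{\epsilon}\, t^{-1}|\partial_{\vec I}\newg|_{\newg}^2$, which integrates to the $C_N\sqrt{\epsilon}\int_t^1 s^{-1}\bigl\| |\partial_{\vec I}\newg|_{\newg} \bigr\|_{L^2}^2\, ds$ term on the right-hand side of \eqref{E:GCOMMUTEDSOB}.

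For the contribution (ii), the principal linear term $-2 t^{-1}\newg_{ia}\partial_{\vec I}\freenewsec_{\ j}^a$ is paired against $\partial_{\vec I}\newg$ and bounded via the $\newg$-Cauchy--Schwarz inequality with weight $\sqrt{\epsilon}$:
\begin{align*}
t^{-1}\bigl|\langle \newg\cdot \partial_{\vec I}\freenewsec,\, \partial_{\vec I}\newg\rangle_{\newg}\bigr|
    \;\leq\; \tfrac{C}{\sqrt{\epsilon}}\, t^{-1} |\partial_{\vec I}\freenewsec|_{\newg}^2
    + C\sqrt{\epsilon}\, t^{-1} |\partial_{\vec I}\newg|_{\newg}^2.
\end{align*}
After integrating over $\Sigma_s$ and in time, the first piece is absorbed into the total energy via Lemma~\ref{L:COERCIVITYOFTHEENERGIES} (which yields $\|\partial_{\vec I}\freenewsec\|_{L^2_{\newg}}^2 \lesssim \totalenergy{\smallparameter_*}{M}^2$), producing the $\frac{C_N}{\sqrt{\epsilon}}\int_t^1 s^{-1}\totalenergy{\smallparameter_*}{M}^2\,ds$ term, while the second reinforces the principal $s^{-1}$ bootstrap term. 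The borderline inhomogeneity $\leftexp{(\vec I);(Border)}{\mathfrak{G}}$, paired with $\partial_{\vec I}\newg$ and estimated by Cauchy--Schwarz together with the $L^2$-bound \eqref{E:BORDERGSOBOLEV} (via Prop.~\ref{P:INHOMOGENEOUSTERMSOBOLEVESTIMATES}), contributes at worst a principal-order piece $\| |\partial_{\vec I}\newg|_{\newg}\|_{L^2}^2$ plus sub-principal contributions weighted by $\sqrt{\epsilon}\, t^{-c\sqrt{\epsilon}}$; then using the trivial bound $s^{-1} \geq 1$ on $(0,1]$ to insert the decay factor and match the hierarchy, I obtain the lower-order bootstrap terms $C_N\sqrt{\epsilon}\int_t^1 s^{-1-c_N\sqrt{\epsilon}} \bigl\| |\partial_{\vec J}\newg|_{\newg} \bigr\|_{L^2}^2\, ds$ and $C_N\sqrt{\epsilon}\int_t^1 s^{-1-c_N\sqrt{\epsilon}}\totalenergy{\smallparameter_*}{M-1}^2\, ds$. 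The junk term $\leftexp{(\vec I);(Junk)}{\mathfrak{G}}$ carries an additional favorable $s^{1/3}$ weight and via \eqref{E:JUNKGSOBOLEV} is absorbed into the already-written terms.

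Integrating the resulting pointwise-in-time inequality from $s = t$ to $s = 1$ and summing over $1 \leq |\vec I| \leq M$ yields \eqref{E:GCOMMUTEDSOB}. The analogous inequality for $|\partial_{\vec I}\newg^{-1}|_{\newg}^2$ is proved identically, using \eqref{E:GEVOLUTION} in place of \eqref{E:GINVERSEEVOLUTION} to handle the $\newg\newg$ factors in the inner product, together with the commuted version of \eqref{E:GINVERSEEVOLUTION} and the bounds \eqref{E:GINVERSEBORDERNORM}, \eqref{E:JUNKGINVERSESOBOLEV}, \eqref{E:BORDERGINVERSESOBOLEV}. The main obstacle is purely bookkeeping: carefully tracking which contributions from the commuted equation are principal-order borderline (generating the $s^{-1}$ coefficient), versus sub-principal borderline (tolerating the extra $s^{-c_N\sqrt{\epsilon}}$ loss), versus junk, and verifying that every term produced by the weighted Cauchy--Schwarz splitting lands in exactly one of the five terms on the right-hand side of \eqref{E:GCOMMUTEDSOB}. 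All the genuine analytic work has already been isolated in the pointwise estimates of Prop.~\ref{P:POINTWISEESTIMATES} and their $L^2$ consequences in Prop.~\ref{P:INHOMOGENEOUSTERMSOBOLEVESTIMATES}, so the argument from here reduces to careful accounting.
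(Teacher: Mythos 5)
Your proposal is correct and follows essentially the same route as the paper's proof: differentiate $|\partial_{\vec I}\newg|_{\newg}^2$ in time, control the contributions from $\partial_t\newg^{-1}$ via the strong pointwise estimates \eqref{E:SECONDFUNDUPGRADEPOINTWISEGNORM} and \eqref{E:LAPSECOMMUTEDSTRONGPOINTWISE}, apply a Cauchy--Schwarz split with weight $\sqrt{\epsilon}$ to the pairing involving $\partial_t\partial_{\vec I}\newg$, substitute the commuted evolution equation \eqref{AE:GEVOLUTIONCOMMUTED}, and invoke \eqref{E:BORDERGSOBOLEV}--\eqref{E:JUNKGSOBOLEV} together with Lemma~\ref{L:COERCIVITYOFTHEENERGIES}. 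The only cosmetic difference is that the paper applies the $\sqrt{\epsilon}$-weighted Cauchy--Schwarz directly to the product $|\partial_t\partial_{\vec I}\newg|_{\newg}|\partial_{\vec I}\newg|_{\newg}$ (inequality~\eqref{E:ANNOYINGCAUCHYSCHWARZ}) and then substitutes the evolution equation into the resulting square, whereas you substitute first and split afterward; these yield the same terms.
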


\begin{proof}
	We first use the evolution equation \eqref{E:GINVERSEEVOLUTION} for $\partial_t (\newg^{-1})^{ij}$
	and the $\newg-$Cauchy-Schwarz inequality to deduce the estimate
	\begin{align} \label{E:PARTIALTPARTIALIGNORMINITIALPOINTWISE}
		\left|\partial_t \left(|\partial_{\vec{I}} \newg|_{\newg}^2 \right) \right|
		& \leq 4 t^{-1} |\freenewsec|_{\newg} |\partial_{\vec{I}} \newg|_{\newg}^2
			+ 4 t^{1/3} |\newlap| |\freenewsec|_{\newg} |\partial_{\vec{I}} \newg|_{\newg}^2
			+ \frac{4}{3} t^{1/3} |\newlap| |\partial_{\vec{I}} \newg|_{\newg}^2
			+ 2 |\partial_t \partial_{\vec{I}} \newg|_{\newg} |\partial_{\vec{I}} \newg|_{\newg}.
	\end{align}
	We next integrate the quantity $\partial_t \left(|\partial_{\vec{I}} \newg|_{\newg}^2 \right)$ 
	over the spacetime slab $[t,1] \times \mathbb{T}^3$ 
	and sum over $1 \leq |\vec{I}| \leq M,$
	which yields the difference 
	$\sum_{1 \leq |\vec{I}| \leq M} \left\| \left| \partial_{\vec{I}} \newg \right|_{\newg} \right\|_{L^2}^2(t) - 
	\sum_{1 \leq |\vec{I}| \leq M} \left\| \left| \partial_{\vec{I}} \newg \right|_{\newg} \right\|_{L^2}^2(1).$
	
	To complete the proof of \eqref{E:GCOMMUTEDSOB}, we have to bound the integral of the right-hand
	side of \eqref{E:PARTIALTPARTIALIGNORMINITIALPOINTWISE} over the spacetime slab $[t,1] \times \mathbb{T}^3$
	by the integrals on the right-hand side of \eqref{E:GCOMMUTEDSOB}. 
	To this end, we first pointwise bound the coefficients of $|\partial_{\vec{I}} \newg|_{\newg}^2$ 
	in the first three products on the right-hand side of \eqref{E:PARTIALTPARTIALIGNORMINITIALPOINTWISE}
	by using the strong estimates \eqref{E:SECONDFUNDUPGRADEPOINTWISEGNORM} and \eqref{E:LAPSECOMMUTEDSTRONGPOINTWISE}.
	We then integrate the first three products on the right-hand side of \eqref{E:PARTIALTPARTIALIGNORMINITIALPOINTWISE} over $[t,1] \times \mathbb{T}^3.$ 
	We have thus bounded these spacetime integrals by the right-hand side of \eqref{E:GCOMMUTEDSOB}
	as desired. 
	
	It remains to bound the spacetime integral of the product $2 |\partial_t \partial_{\vec{I}} \newg|_{\newg} |\partial_{\vec{I}} \newg|_{\newg}$
	from the right-hand side of \eqref{E:PARTIALTPARTIALIGNORMINITIALPOINTWISE}
	by the right-hand side of \eqref{E:GCOMMUTEDSOB}. To derive such a bound, we first 
	use the following simple inequality:
	\begin{align} \label{E:ANNOYINGCAUCHYSCHWARZ}
		2 |\partial_t \partial_{\vec{I}} \newg|_{\newg} |\partial_{\vec{I}} \newg|_{\newg}
		& \leq \frac{1}{\sqrt{\epsilon}} t |\partial_t \partial_{\vec{I}} \newg|_{\newg}^2 
			+ \sqrt{\epsilon} t^{-1} |\partial_{\vec{I}} \newg|_{\newg}^2.
	\end{align}
	The spacetime integral of the second term on the right-hand side of \eqref{E:ANNOYINGCAUCHYSCHWARZ} is 
	manifestly bounded by the right-hand side of \eqref{E:GCOMMUTEDSOB}. To bound the spacetime integral of the
	first term on the right-hand side of \eqref{E:ANNOYINGCAUCHYSCHWARZ}, we use the evolution equation
	\eqref{AE:GEVOLUTIONCOMMUTED} to replace $\partial_t \partial_{\vec{I}} \newg$ with inhomogeneous terms 
	and then bound the integrals of the inhomogeneous terms by using the estimates \eqref{E:JUNKGSOBOLEV} and \eqref{E:BORDERGSOBOLEV}.
	
 	With the help of the evolution equation \eqref{AE:GINVERSEEVOLUTIONCOMMUTED} for $\partial_{\vec{I}} \newg^{-1},$
	we can use a similar argument to derive analogous estimates with $\newg^{-1}$ in place of $\newg.$
\end{proof}

We now derive the hierarchy of integral inequalities for the total energies.

\begin{proposition} [\textbf{A hierarchy of integral inequalities for $\totalenergy{\smallparameter_*}{M}(t)$}] \label{P:ENERGYINTEGRALINEQUALITIES}
Assume that the hypotheses and conclusions of Prop.~\ref{P:STRONGPOINTWISE} hold on the spacetime slab $(T,1] \times \mathbb{T}^3.$
Let $\totalenergy{\smallparameter_*}{M}$ be the total solution energy defined by \eqref{E:TOTALENERGY}, 
where the choice of the constant $\smallparameter_* > 0$ was made in Prop.~\ref{P:FUNDAMENTALENERGYINEQUALITY}.
Then there exist a small constant $\upsigma_* > 0$ 
and large constants $C_N, c_N > 0$ depending on $N$ 
such that if $\epsilon \leq \upsigma \leq \upsigma_*,$ 
then the following hierarchy of integral inequalities is verified for $t \in (T,1]$ whenever $0 \leq M \leq N:$
	\begin{align} \label{E:ENERGYINTEGRALINEQUALITIES}
		\totalenergy{\smallparameter_*}{M}^2(t) 
		& \leq C_N \highnorm{M}^2(1)  
			+ \underbrace{C_N \epsilon \sum_{1 \leq |\vec{I}| \leq M} \int_{s=t}^1 s^{1/3 - c_N \sqrt{\epsilon}} 
				\left\| \left| \partial_{\vec{I}} \newg \right|_{\newg} \right\|_{L^2}^2(s) \, ds}_{\mbox{absent if $M=0$}}
				\\
		& \ \ + \underbrace{C_N \epsilon \sum_{1 \leq |\vec{I}| \leq M} \int_{s=t}^1 s^{1/3 - c_N \sqrt{\epsilon}} 
				\left\| \left| \partial_{\vec{I}} \newg^{-1} \right|_{\newg} \right\|_{L^2}^2(s) \, ds}_{\mbox{absent if $M=0$}}
			\notag \\
		& \ \ + C_N \int_{s=t}^{s=1} s^{-1/3} \totalenergy{\smallparameter_*}{M}^2(s) \, ds 
			\notag \\
		& \ \ + C_N \epsilon \int_{s=t}^{s=1} s^{-1} \totalenergy{\smallparameter_*}{M}^2(s) \, ds  
	 		+ \underbrace{C_N \epsilon \int_{s=t}^{s=1} s^{-1 - c_N \sqrt{\epsilon}} \totalenergy{\smallparameter_*}{M-1}^2(s) \, 
	 		ds}_{\mbox{absent if $M = 0$}}.
	 		\notag 
	 	\end{align}
\end{proposition}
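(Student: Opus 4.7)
The starting point is the fundamental integral inequality \eqref{E:FUNDAMENTALENERGYINEQUALITY} from Prop.~\ref{P:FUNDAMENTALENERGYINEQUALITY}, which already gives a clean upper bound for $\totalenergy{\smallparameter_*}{M}^2(t)$ in terms of $\totalenergy{\smallparameter_*}{M}^2(1)$, certain integrals of $\|\freenewsec\|_{H_{\newg}^M}^2$ and $\|\adjustednewp\|_{H^M}^2$, and integrals of the $L^2$ norms of the inhomogeneous terms $\leftexp{(\vec{I});(Junk)}{\mathfrak{N}}$, $\leftexp{(\vec{I});(Border)}{\mathfrak{N}}$, $\leftexp{(\vec{I});(Junk/Border)}{\mathfrak{g}}$, $\leftexp{(\vec{I});(Junk/Border)}{\mathfrak{M}}$, $\leftexp{(\vec{I});(Junk/Border)}{\widetilde{\mathfrak{M}}}$, $\leftexp{(\vec{I});(Junk)}{\mathfrak{K}}$, $\leftexp{(\vec{I});(Junk)}{\mathfrak{P}}$, and $\leftexp{(\vec{I});(Junk/Border)}{\mathfrak{U}}$. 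The entire task is therefore to re-express each term on the right of \eqref{E:FUNDAMENTALENERGYINEQUALITY} in the form appearing on the right of \eqref{E:ENERGYINTEGRALINEQUALITIES}, with matching time weights.

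First, the initial-data term $\totalenergy{\smallparameter_*}{M}^2(1)$ is bounded by $C_N\highnorm{M}^2(1)$ via the comparison estimate \eqref{E:ENERGYBOUNDEDBYNORM} evaluated at $t=1$. Second, Lemma~\ref{L:COERCIVITYOFTHEENERGIES} (whose hypotheses hold by \eqref{E:PCOMMUTEDSTRONGPOINTWISE} and \eqref{E:UCOMMUTEDSTRONGPOINTWISE}) immediately gives $\|\freenewsec\|_{H_{\newg}^M}^2 \lesssim \smallparameter_*^{-1}\totalenergy{\smallparameter_*}{M}^2$ and $\|\adjustednewp\|_{H^M}^2 \lesssim \totalenergy{\smallparameter_*}{M}^2$, so the two $s^{-1/3}$ integrals and the $\epsilon s^{-1}$ integral involving $\freenewsec$ from \eqref{E:FUNDAMENTALENERGYINEQUALITY} directly produce, respectively, the $C_N\int s^{-1/3}\totalenergy{\smallparameter_*}{M}^2$ and $C_N\epsilon\int s^{-1}\totalenergy{\smallparameter_*}{M}^2$ integrals appearing on the right of \eqref{E:ENERGYINTEGRALINEQUALITIES}.

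Third, the remaining spacetime integrals involving the inhomogeneous terms are estimated by multiplying the $L^2$ bounds of Propositions~\ref{P:LAPSEINTERMSOFENERGY} and \ref{P:INHOMOGENEOUSTERMSOBOLEVESTIMATES} by the relevant power of $s$ from \eqref{E:FUNDAMENTALENERGYINEQUALITY}. The crucial observation is that the powers of $s$ are exactly tuned so that each contribution lands in one of the three allowed buckets: the $C_N\epsilon s^{-1}\totalenergy{\smallparameter_*}{M}^2$ integral, the $C_N\epsilon s^{-1-c_N\sqrt{\epsilon}}\totalenergy{\smallparameter_*}{M-1}^2$ integral, or a harmless $\epsilon s^{1/3-c_N\sqrt{\epsilon}}$-weighted integral of $\sum\||\partial_{\vec{I}}\newg|_{\newg}\|_{L^2}^2 + \sum\||\partial_{\vec{I}}\newg^{-1}|_{\newg}\|_{L^2}^2$. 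For instance, combining the $s^{1/3}$ weight with estimate \eqref{E:BORDERLINESOBOLEVESTIMATES} applied to $\leftexp{(\vec{I});(Border)}{\mathfrak{M}}$ yields exactly the triple $\epsilon s^{-1}\totalenergy{\smallparameter_*}{M}^2$, $\epsilon s^{-1-c_N\sqrt{\epsilon}}\totalenergy{\smallparameter_*}{M-1}^2$, and $\epsilon s^{1/3-c_N\sqrt{\epsilon}}\sum\||\partial_{\vec{I}}\newg|_{\newg}\|_{L^2}^2$; the $s^{-1}$-weighted $\leftexp{(\vec{I});(Border)}{\mathfrak{N}}$ integral is handled analogously via \eqref{E:ICOMMUTEDINHOMOGENEOUSLAPSEBORDER}; and all the junk integrals, which carry the large weights $s,s^{5/3},s^3$, absorb the $\epsilon s^{-4/3-c_N\sqrt{\epsilon}}\totalenergy{\smallparameter_*}{M}^2$ factor from \eqref{E:JUNKINHOMSOBOLEV} and \eqref{E:ICOMMUTEDINHOMOGENEOUSLAPSEJUNK} into contributions bounded by $C_N\int s^{-1/3}\totalenergy{\smallparameter_*}{M}^2$ (plus small $\newg$-metric norm contributions of the allowed form). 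Collecting all pieces and absorbing any residual small factors of $\epsilon$ into the coefficients produces \eqref{E:ENERGYINTEGRALINEQUALITIES}.

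The main obstacle, such as it is, is bookkeeping: one must verify that every single $s$-weight coming from \eqref{E:FUNDAMENTALENERGYINEQUALITY}, when combined with the $s$-weight on the right of the relevant bound from Prop.~\ref{P:LAPSEINTERMSOFENERGY} or Prop.~\ref{P:INHOMOGENEOUSTERMSOBOLEVESTIMATES}, produces exactly a power no worse than $s^{-1-c_N\sqrt{\epsilon}}$ against $\totalenergy{\smallparameter_*}{\cdot}^2$ and no worse than $s^{1/3-c_N\sqrt{\epsilon}}$ against the $\newg,\newg^{-1}$ norm quantities. Any slippage here — for example, a borderline term that produced $s^{-1-c_N\epsilon^{1/4}}$ instead — would either fail to give a Gronwall-closable hierarchy or would force an energy blowup rate too severe to recover the bootstrap. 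Thankfully, the decomposition of each inhomogeneous term into ``junk'' (large-$s$-weight-tolerant) and ``border'' (sharply-$s$-weight-sensitive) parts is precisely engineered in Props.~\ref{P:LAPSEINTERMSOFENERGY}--\ref{P:INHOMOGENEOUSTERMSOBOLEVESTIMATES} so that the accounting balances. The smallness condition on $\upsigma$ and $\epsilon$ is used only to ensure that $c_N\sqrt{\epsilon}$ and related exponents stay below the threshold needed to keep the weights integrable where integrability is required.
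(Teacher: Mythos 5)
Your proof follows the same route as the paper's: start from the fundamental integral inequality \eqref{E:FUNDAMENTALENERGYINEQUALITY}, use the comparison estimate from Prop.~\ref{P:COMPARISON} for the data term, invoke Lemma~\ref{L:COERCIVITYOFTHEENERGIES} for the $\| \freenewsec \|_{H_{\newg}^M}$ and $\| \adjustednewp \|_{H^M}$ terms, and feed the inhomogeneous-term integrals through the $L^2$ estimates of Props.~\ref{P:LAPSEINTERMSOFENERGY} and \ref{P:INHOMOGENEOUSTERMSOBOLEVESTIMATES}. The bookkeeping logic is correct and the argument closes. One small imprecision: for the junk integrals that carry the smallest weight $s^1$ (namely $\leftexp{(\vec{I});(Junk)}{\mathfrak{K}}$ and $\leftexp{(\vec{I});(Junk)}{\mathfrak{P}}$), combining with the $\epsilon s^{-4/3 - c\sqrt{\epsilon}} \totalenergy{\smallparameter_*}{M}^2$ term from \eqref{E:JUNKINHOMSOBOLEV} yields $\epsilon s^{-1/3 - c\sqrt{\epsilon}} \totalenergy{\smallparameter_*}{M}^2$, which is not bounded by $C s^{-1/3}\totalenergy{\smallparameter_*}{M}^2$ as you assert, but it is bounded by $\epsilon s^{-1}\totalenergy{\smallparameter_*}{M}^2$ once $c\sqrt{\epsilon} \leq 2/3$, so it lands in the $C_N \epsilon \int s^{-1}\totalenergy{\smallparameter_*}{M}^2\,ds$ bucket rather than the one you name; the conclusion is unaffected.
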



\begin{proof}
	To derive inequality \eqref{E:ENERGYINTEGRALINEQUALITIES}, we have to bound the integrals on the right-hand side of
	inequality \eqref{E:FUNDAMENTALENERGYINEQUALITY} by the right-hand side of \eqref{E:ENERGYINTEGRALINEQUALITIES}.
	To this end, we first use the estimates of Prop.~\ref{P:LAPSEINTERMSOFENERGY}
	and Prop.~\ref{P:INHOMOGENEOUSTERMSOBOLEVESTIMATES} to bound the inhomogeneous term
	integrals such as $\int_{s = t}^{s = 1} s^{5/3}  \left\| \leftexp{(\vec{I});(Junk)}{\mathfrak{N}} \right\|_{L^2}^2 \, ds,$ 
	$\int_{s = t}^{s = 1} s^{-1} \left\| \leftexp{(\vec{I});(Border)}{\mathfrak{N}} \right\|_{L^2}^2 \, ds,$ etc.
	on the right-hand side of inequality \eqref{E:FUNDAMENTALENERGYINEQUALITY} and make use of the bound
	$\totalenergy{\smallparameter_*}{M}^2(1) \leq C_M \highnorm{M}^2(1)$ implied by Prop.~\ref{P:COMPARISON}. 
	Some of these bounds result in the presence of some spacetime integrals with 
	the integrands $s^{- c \sqrt{\epsilon}} \left\| \left| \partial_{\vec{I}} \newg \right|_{\newg} \right\|_{L^2}^2$ or
	$s^{- c \sqrt{\epsilon}} \left\| \left| \partial_{\vec{I}} \newg^{-1} \right|_{\newg} \right\|_{L^2}^2,$
	where the loss of $s^{- c \sqrt{\epsilon}}$ comes from the right-hand sides of
	the estimates of Prop.~\ref{P:LAPSEINTERMSOFENERGY}
	and Prop.~\ref{P:INHOMOGENEOUSTERMSOBOLEVESTIMATES}.
	By inspection of the right-hand side of inequality \eqref{E:FUNDAMENTALENERGYINEQUALITY}, 
	we see that every time such an integrand appears, it is 
	multiplied by an extra factor of $s^{1/3}$ or better
	(this is clearly true for all integrands except for
	$s^{-1} \left\| \leftexp{(\vec{I});(Border)}{\mathfrak{N}} \right\|_{L^2}^2,$
	which by \eqref{E:ICOMMUTEDINHOMOGENEOUSLAPSEBORDER} can be bounded independently of
	$\left\| \left| \partial_{\vec{I}} \newg \right|_{\newg} \right\|_{L^2}^2$ and
	$\left\| \left| \partial_{\vec{I}} \newg^{-1} \right|_{\newg} \right\|_{L^2}^2$).
	Hence, such spacetime integrals can be bounded by the first two integrals 
	on the right-hand side of \eqref{E:ENERGYINTEGRALINEQUALITIES}.
	
	The remaining spacetime integrals on the right-hand side of inequality \eqref{E:FUNDAMENTALENERGYINEQUALITY} can be bounded 
	by the integrals $C_N \int_{s=t}^{s=1} s^{-1/3} \totalenergy{\smallparameter_*}{M}^2(s) \, ds$
	and $C_N \epsilon \int_{s=t}^{s=1} s^{-1} \totalenergy{\smallparameter_*}{M}^2(s) \, ds$
	on the right-hand side of \eqref{E:ENERGYINTEGRALINEQUALITIES} 
	with the help of Lemma~\ref{L:COERCIVITYOFTHEENERGIES}.
\end{proof}

We now use Prop.~\ref{P:SOBFORG} and Prop.~\ref{P:ENERGYINTEGRALINEQUALITIES} 
to derive our main a priori estimates for the solution norms. We will use
the following Gronwall lemma to estimate solutions to the hierarchies of integral inequalities
that appear in the propositions.

\begin{lemma} [\textbf{A Gronwall estimate}] \label{L:GRONWALLSYSTEM}
Let $y(t) \geq 0, z(t) \geq 0$ be continuous functions.
Suppose that there exist constants $C,c>0$ such that the following inequalities hold for $t \in (T,1]:$	
\begin{subequations}	
\begin{align} 
	y(t) & \leq C \epsilon^3 t^{- c \sqrt{\epsilon}}
		+ c \sqrt{\epsilon} \int_{s=t}^1 s^{-1} y(s) \, ds
		+ \frac{c}{\sqrt{\epsilon}} \int_{s=t}^1 s^{-1} z(s) \, ds, 
			\label{E:YINTEGRALINEQUALITY} \\
	z(t) & \leq C \epsilon^4 t^{- c \sqrt{\epsilon}}
			+ C \epsilon \int_{s=t}^1 s^{1/3 - c \sqrt{\epsilon}} y(s) \, ds
		\label{E:ZINTEGRALINEQUALITY} 
				\\
		& \ \ + C \int_{s=t}^1 s^{-1/3} z(s) \, ds
			+ c \epsilon \int_{s=t}^1 s^{-1} z(s) \, ds. \notag
\end{align}
\end{subequations}	
Then there exist constants $C', c' > 0$ such that the following inequalities hold for
$t \in (T,1]:$	
\begin{subequations}
\begin{align}
	y(t) & \leq C' \epsilon^3 t^{- c' \sqrt{\epsilon}}, 
		\label{E:YGRONWALL} \\
	z(t) & \leq C' \epsilon^4 t^{- c' \sqrt{\epsilon}}.
		\label{E:ZGRONWALL}
\end{align}
\end{subequations}

\end{lemma}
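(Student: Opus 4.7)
The plan is to reduce the coupled system for $y$ and $z$ to a single scalar Gronwall inequality via a judicious weighting. The asymmetric coefficient $c/\sqrt{\epsilon}$ appearing in \eqref{E:YINTEGRALINEQUALITY} is the only feature that could potentially produce a blow-up rate worse than $t^{-c'\sqrt{\epsilon}}$, but it is perfectly paired with the extra factor of $\epsilon$ in the bound on $z$ to give a tamable combination. I will therefore define the auxiliary quantity
\begin{align*}
	W(t) := y(t) + \epsilon^{-1} z(t),
\end{align*}
and observe that the data terms $C\epsilon^3 t^{-c\sqrt{\epsilon}}$ and $\epsilon^{-1} \cdot C\epsilon^4 t^{-c\sqrt{\epsilon}} = C\epsilon^3 t^{-c\sqrt{\epsilon}}$ contribute equally to a bound on $W$, while the dangerous term in \eqref{E:YINTEGRALINEQUALITY} satisfies $(c/\sqrt{\epsilon}) z = c\sqrt{\epsilon}(z/\epsilon) \leq c\sqrt{\epsilon} \, W$.

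The next step will be to add $y$ and $\epsilon^{-1} z$, substituting \eqref{E:YINTEGRALINEQUALITY} and $\epsilon^{-1}$ times \eqref{E:ZINTEGRALINEQUALITY}, and then using the pointwise bounds $y \leq W$, $z/\epsilon \leq W$, together with $s^{1/3-c\sqrt{\epsilon}} \leq s^{-1/3}$ on $(0,1]$ (valid once $c\sqrt{\epsilon} \leq 2/3$), to produce a single integral inequality of the form
\begin{align*}
	W(t) \leq 2C\epsilon^3 t^{-c\sqrt{\epsilon}} + C \int_{s=t}^1 s^{-1/3} W(s)\, ds + (c\sqrt{\epsilon} + c\epsilon) \int_{s=t}^1 s^{-1} W(s)\, ds.
\end{align*}
A standard backward-in-time Gronwall argument, applied to this scalar inequality on the interval $(T,1]$, then yields
\begin{align*}
	W(t) \leq 2C\epsilon^3 t^{-c\sqrt{\epsilon}} \exp\!\left( \int_{s=t}^1 \big[ Cs^{-1/3} + (c\sqrt{\epsilon} + c\epsilon) s^{-1} \big] ds \right).
\end{align*}
The first exponential factor is uniformly bounded by $\exp(3C/2)$ since $s^{-1/3}$ is integrable on $(0,1]$, whereas the second factor equals $t^{-(c\sqrt{\epsilon} + c\epsilon)}$. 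Combining these and using $\epsilon \leq \sqrt{\epsilon}$ produces an estimate of the form $W(t) \leq C' \epsilon^3 t^{-c' \sqrt{\epsilon}}$, from which \eqref{E:YGRONWALL} and \eqref{E:ZGRONWALL} are immediate by the inequalities $y \leq W$ and $z \leq \epsilon W$.

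The main subtlety, and the reason a naive Gronwall applied separately to $y$ and $z$ does not close, is that the mixing coefficient $c/\sqrt{\epsilon}$ in \eqref{E:YINTEGRALINEQUALITY} and the integrable but constant-coefficient term $C\int s^{-1/3} z$ in \eqref{E:ZINTEGRALINEQUALITY} both need to be absorbed without generating any factor of $\epsilon^{-1/2}$ or worse in the blow-up exponent. The weighting $W = y + \epsilon^{-1} z$ converts the first into a benign $c\sqrt{\epsilon}$ coefficient and, simultaneously, converts the second into $C\int s^{-1/3} W$ with a constant coefficient — but this poses no difficulty because $s^{-1/3}$ integrates to a finite quantity on $(0,1]$ and therefore produces only a bounded multiplicative constant rather than a power of $t$. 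No symmetry considerations force a different weight, but any rescaling that gives the two data terms comparable sizes would work; the specific choice $W = y + \epsilon^{-1} z$ is the cleanest.
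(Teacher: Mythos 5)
Your proof is correct and takes essentially the same approach as the paper: the paper forms the weighted quantity $Q(t) := \epsilon y(t) + z(t)$ and derives a scalar Gronwall inequality for $Q$, which is just $\epsilon W(t)$ in your notation, so the two arguments differ only by an overall rescaling by $\epsilon$.
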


\begin{proof}
	We define
	\begin{align} \label{E:QISYPLUSZ}
		Q(t) := \epsilon y(t) + z(t).
	\end{align}
	With the help of \eqref{E:YINTEGRALINEQUALITY}-\eqref{E:ZINTEGRALINEQUALITY}, we see that
	there exist constants $\widetilde{C}, \widetilde{c} > 0$  such that $Q$ verifies the following integral inequality
	for $t \in (T,1]:$
	\begin{align} \label{E:QGRONWALLREADY}
	Q(t) & \leq \widetilde{C} \epsilon^4 t^{- \widetilde{c} \sqrt{\epsilon}}
		+ \widetilde{C} \int_{s=t}^1 s^{-1/3} Q(s) \, ds
		+ \widetilde{c} \sqrt{\epsilon} \int_{s=t}^1 s^{-1} Q(s) \, ds.
\end{align}		
Applying Gronwall's inequality to \eqref{E:QGRONWALLREADY}, we deduce that
there exist constants $C', c' > 0$ such that
\begin{align} \label{E:QGRONWALLED}
	Q(t) & \leq C' \epsilon^4 t^{- c' \sqrt{\epsilon}}, && t \in (T,1].
\end{align}
The desired inequalities \eqref{E:YGRONWALL}-\eqref{E:ZGRONWALL} now follow from \eqref{E:QGRONWALLED}.

\end{proof}

We derive our main a priori norm estimates in the next corollary. These estimates are the backbone of our
main stable singularity formation theorem.

\begin{corollary} [\textbf{The main a priori estimate for the norms when $N \geq 8$}] \label{C:ENERGYINTEGRALINEQUALITIES}
	Let $\highnorm{N},$ $\lowkinnorm{N-3},$ and $\lowpotnorm{N-4}$
	be the solution norms from Def.~\ref{D:NORMS}.
	Assume that the hypotheses and conclusions of Prop.~\ref{P:SOBFORG} 
	and Prop.~\ref{P:ENERGYINTEGRALINEQUALITIES} hold on the spacetime slab 
	$(T,1] \times \mathbb{T}^3.$ In particular, assume that for some integer $N \geq 8,$ the solution norm
	bootstrap assumptions \eqref{E:HIGHBOOT}-\eqref{E:POTBOOT} hold for $t \in (T,1].$
	Assume in addition that the initial renormalized metric verifies the
	stronger near-Euclidean condition $\| \newg - \Euc \|_{C_{Frame}^0}(1) \leq \epsilon^2$
	in place of the previous assumption $\| \newg - \Euc \|_{C_{Frame}^0}(1) \leq \epsilon.$
	Then there exist a small constant $\epsilon_N > 0$ and a large constant $c_N > 0$
	such that if $\epsilon \leq \epsilon_N$ and $\highnorm{N}(1) \leq \epsilon^2,$ 
	then the following norm inequalities are verified for $t \in (T,1]:$
	\begin{subequations}
	\begin{align} 
		\highnorm{N}(t) & \leq \frac{1}{2} \epsilon t^{- c_N \sqrt{\epsilon}}, 
			\label{E:MAINHIGHNORMESTIMATE} \\
		\lowkinnorm{N-3}(t) & \leq \frac{1}{2} \epsilon, 
			\label{E:MAINLOWKINNORMESTIMATE} \\
		\lowpotnorm{N-4}(t) & \leq \frac{1}{2} \epsilon t^{- c_N \sqrt{\epsilon}}.
			\label{E:MAINLOWPOTNORMESTIMATE}
	\end{align}
	\end{subequations}
	\end{corollary}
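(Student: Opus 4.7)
The plan is to proceed by induction on the order $M$ (for $0 \leq M \leq N$), using Prop.~\ref{P:SOBFORG} and Prop.~\ref{P:ENERGYINTEGRALINEQUALITIES} as coupled integral inequalities and Lemma~\ref{L:GRONWALLSYSTEM} as the Gronwall engine that closes each level. At level $M$, define
\begin{align*}
y_M(t) & := \sum_{1 \leq |\vec{I}| \leq M} \left\| \left| \partial_{\vec{I}} \newg \right|_{\newg} \right\|_{L^2}^2(t)
+ \sum_{1 \leq |\vec{I}| \leq M} \left\| \left| \partial_{\vec{I}} \newg^{-1} \right|_{\newg} \right\|_{L^2}^2(t),
\\
z_M(t) & := \totalenergy{\smallparameter_*}{M}^2(t).
\end{align*}
The small-data hypothesis $\highnorm{N}(1) \leq \epsilon^2$, combined with Prop.~\ref{P:COMPARISON}, gives $y_M(1) \leq C \epsilon^4$ and $z_M(1) \leq C \epsilon^4$. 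The inductive goal at level $M$ is to prove estimates of the form $y_M(t) \leq C'_M \epsilon^3 t^{-c'_M \sqrt{\epsilon}}$ and $z_M(t) \leq C'_M \epsilon^4 t^{-c'_M \sqrt{\epsilon}}$.

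For the base case $M = 0$, $y_0 \equiv 0$ (the sum is empty), and Prop.~\ref{P:ENERGYINTEGRALINEQUALITIES} reduces to an inequality of the form $z_0(t) \leq C_N \epsilon^4 + C_N \int_t^1 s^{-1/3} z_0(s)\, ds + C_N \epsilon \int_t^1 s^{-1} z_0(s)\, ds$, which Gronwall handles directly. For the inductive step $M \to M+1$ (with $1 \leq M+1 \leq N$), I feed the inductive bound on $y_M$ and $z_M$ into the lower-order integrals on the right-hand sides of \eqref{E:GCOMMUTEDSOB} and \eqref{E:ENERGYINTEGRALINEQUALITIES}; each such integral is estimated by $\int_t^1 s^{-1 - c_N \sqrt{\epsilon}} \cdot \epsilon^{a} s^{-c'_M \sqrt{\epsilon}} \, ds \lesssim \epsilon^{a-1/2} t^{-(c_N+c'_M)\sqrt{\epsilon}}$ after using $\int_t^1 s^{-1-b\sqrt{\epsilon}}\,ds \lesssim (b\sqrt{\epsilon})^{-1} t^{-b\sqrt{\epsilon}}$. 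The remaining terms form precisely the structure required by Lemma~\ref{L:GRONWALLSYSTEM}: inequality \eqref{E:GCOMMUTEDSOB} yields the $y_{M+1}$ inequality (with the coupling $\frac{c}{\sqrt{\epsilon}} \int s^{-1} z_{M+1}(s)\, ds$ arising from the next-to-last line of \eqref{E:GCOMMUTEDSOB}), and \eqref{E:ENERGYINTEGRALINEQUALITIES} yields the $z_{M+1}$ inequality. Applying Lemma~\ref{L:GRONWALLSYSTEM} promotes $y_{M+1}$ and $z_{M+1}$ to the desired form, closing the induction.

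Once the inductive argument terminates at $M = N$, I combine $y_N(t) \leq C_N \epsilon^3 t^{-c_N \sqrt{\epsilon}}$ and $z_N(t) \leq C_N \epsilon^4 t^{-c_N \sqrt{\epsilon}}$ with the second comparison estimate \eqref{E:NORMBOUNDEDBYENERGY} in Prop.~\ref{P:COMPARISON} to bound
\begin{align*}
\highnorm{N}(t)
& \leq C_N t^{-c_N\sqrt{\epsilon}} \bigl( y_N(t)^{1/2} + \totalenergy{\smallparameter_*}{N}(t) \bigr)
\leq C'_N \epsilon^{3/2} t^{-c'_N \sqrt{\epsilon}},
\end{align*}
and choosing $\epsilon_N$ so small that $C'_N \epsilon^{1/2} \leq 1/2$ gives the desired \eqref{E:MAINHIGHNORMESTIMATE}. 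The norm estimates \eqref{E:MAINLOWKINNORMESTIMATE} and \eqref{E:MAINLOWPOTNORMESTIMATE} do not come from the energy argument at all: they follow directly from the strong pointwise estimates in Prop.~\ref{P:STRONGPOINTWISE} (estimates \eqref{E:GCOMMUTEDSTRONGPOINTWISE}, \eqref{E:GINVERSECOMMUTEDSTRONGPOINTWISE}, \eqref{E:SECONDFUNDUPGRADEPOINTWISE}, \eqref{E:LITTLEGAMMACOMMUTEDSTRONGPOINTWISE}, \eqref{E:PCOMMUTEDSTRONGPOINTWISE}, \eqref{E:UCOMMUTEDSTRONGPOINTWISE}, \eqref{E:LAPSECOMMUTEDSTRONGPOINTWISE}, and \eqref{E:GNORMLAPSECOMMUTEDSTRONGPOINTWISE}), all of which produce right-hand sides of size $\epsilon$ or $\epsilon t^{-c\sqrt{\epsilon}}$, so shrinking $\epsilon_N$ further secures the $\frac{1}{2}\epsilon$ bounds.

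The principal obstacle will be the bookkeeping in the inductive step: tracking how the constants $c_N'$ and $C'_M$ inflate as $M$ increases, and verifying that at every application of Lemma~\ref{L:GRONWALLSYSTEM} the hypothesized coefficient structure is exactly matched by what comes out of Prop.~\ref{P:SOBFORG} and Prop.~\ref{P:ENERGYINTEGRALINEQUALITIES} after the lower-order integrals have been swallowed into the $C\epsilon^3 t^{-c\sqrt{\epsilon}}$ and $C\epsilon^4 t^{-c\sqrt{\epsilon}}$ forcing terms. In particular, the $\frac{c}{\sqrt\epsilon}\int s^{-1} z(s)\,ds$ term in \eqref{E:YINTEGRALINEQUALITY} of Lemma~\ref{L:GRONWALLSYSTEM} must be precisely the one appearing in the last line of \eqref{E:GCOMMUTEDSOB} (this is the coupling that makes the lemma's auxiliary combination $Q = \epsilon y + z$ work); the fact that this specific $\epsilon^{-1/2}$ weight appears on a pure $z$ term, rather than on a lower-order one, is what allows the induction to close with the clean powers $\epsilon^3$ and $\epsilon^4$ rather than degenerating.
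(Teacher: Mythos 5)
Your inductive scheme for \eqref{E:MAINHIGHNORMESTIMATE} matches the paper's proof in all essentials: the same auxiliary quantities $y_M$ and $z_M$, the same feeding of the inductive hypothesis into the lower-order integrals of \eqref{E:GCOMMUTEDSOB} and \eqref{E:ENERGYINTEGRALINEQUALITIES}, the same invocation of Lemma~\ref{L:GRONWALLSYSTEM}, and the same final passage through Prop.~\ref{P:COMPARISON} to convert $y_N^{1/2}+\totalenergy{\smallparameter_*}{N}\lesssim\epsilon^{3/2}t^{-c\sqrt{\epsilon}}$ into $\highnorm{N}(t)\leq\frac{1}{2}\epsilon t^{-c_N\sqrt{\epsilon}}$ once $\epsilon$ is small. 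That part is correct.

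There is, however, a genuine gap in your treatment of \eqref{E:MAINLOWKINNORMESTIMATE} and \eqref{E:MAINLOWPOTNORMESTIMATE}. You claim they follow "directly" from the strong pointwise estimates of Prop.~\ref{P:STRONGPOINTWISE}, whose right-hand sides you describe as "$\epsilon$ or $\epsilon t^{-c\sqrt{\epsilon}}$," and conclude that "shrinking $\epsilon_N$" secures the factor $\frac{1}{2}\epsilon$. This does not work for two reasons. First, those estimates read $\lesssim\epsilon$, i.e. $\leq C\epsilon$ with a possibly large implicit constant $C$ depending on $N$; making $\epsilon$ smaller does not make $C\epsilon\leq\frac{1}{2}\epsilon$. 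You need an extra positive power of $\epsilon$ before smallness of $\epsilon_N$ can help. Second, the strong estimate for $\newu$, \eqref{E:UCOMMUTEDSTRONGPOINTWISE}, actually only yields $\lowpotnorm{N-4}(t)\lesssim\sqrt{\epsilon}\,t^{-c\sqrt{\epsilon}}$ (note the $\sqrt{\epsilon}$, not $\epsilon$), which is strictly weaker than the claimed $\frac{1}{2}\epsilon t^{-c_N\sqrt{\epsilon}}$ and cannot be repaired by shrinking $\epsilon_N$ at all. The paper's proof resolves both issues by \emph{re-running} the arguments of Prop.~\ref{P:STRONGPOINTWISE} with improved inputs: the stronger near-Euclidean data condition $\|\newg-\Euc\|_{C_{Frame}^0}(1)\leq\epsilon^2$ and, crucially, the now-established high-norm bound $\highnorm{N}(t)\lesssim\epsilon^{3/2}t^{-c\sqrt{\epsilon}}$ in place of the original bootstrap assumption $\highnorm{N}(t)\leq\epsilon t^{-\upsigma}$. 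Since every source term in those ODE-style arguments is linear in either the data size or $\highnorm{N}$, this gains a uniform extra factor of $\epsilon^{1/2}$, producing bounds of size $C\epsilon^{3/2}$ (respectively $C\epsilon^{3/2}t^{-c\sqrt{\epsilon}}$), at which point choosing $\epsilon_N$ so that $C\epsilon_N^{1/2}\leq\frac{1}{2}$ finally closes \eqref{E:MAINLOWKINNORMESTIMATE}-\eqref{E:MAINLOWPOTNORMESTIMATE}. Without this re-derivation step, the last part of your proof is incomplete.
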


\begin{remark}
	The powers of $\epsilon$ stated in the estimates \eqref{E:MAINHIGHNORMESTIMATE}-\eqref{E:MAINLOWPOTNORMESTIMATE}
	are non-optimal and could be improved with additional effort.
\end{remark}

\begin{proof}
	We will prove by induction in $M$ that
	there exist constants $C_M', c_M' > 0$
	such that if $\epsilon$ is sufficiently small, 
	then the following inequalities hold for $t \in (T,1]:$ 
	\begin{align} 
			\sum_{1 \leq |\vec{I}| \leq M} \left\lbrace
				\left\| \left| \partial_{\vec{I}} \newg \right|_{\newg} \right\|_{L^2}^2(t)
			+ \left\| \left| \partial_{\vec{I}} \newg^{-1} \right|_{\newg} \right\|_{L^2}^2(t)
			\right\rbrace
		& \leq C_M' \epsilon^3 t^{- c_M' \sqrt{\epsilon}}, && (1 \leq M \leq N),
	 		\label{E:GCOMMUTEDSOBGRONWALLED} \\
		\totalenergy{\smallparameter_*}{M}^2(t) & \leq C_M' \epsilon^4 t^{- c_M' \sqrt{\epsilon}}, && (0 \leq M \leq N).
			\label{E:MAINTOTALENERGYESTIMATEM} 
	\end{align}
	Then from Prop.~\ref{P:COMPARISON},  
	\eqref{E:GCOMMUTEDSOBGRONWALLED},
	and \eqref{E:MAINTOTALENERGYESTIMATEM},
	we conclude that there exist constants 
	$\widetilde{C}_M, \widetilde{c}_M > 0$ such that
	\begin{align} \label{E:MAINHIGHNORMESTIMATEM}
		\highnorm{M}^2(t) & \leq \widetilde{C}_M \epsilon^3 t^{- \widetilde{c}_M \sqrt{\epsilon}},
	\end{align}
	and the desired estimate \eqref{E:MAINHIGHNORMESTIMATE} thus follows whenever 
	$\epsilon$ is sufficiently small. The estimates \eqref{E:MAINLOWKINNORMESTIMATE} and \eqref{E:MAINLOWPOTNORMESTIMATE} 
	then follow from revisiting the estimates proved in Prop.~\ref{P:STRONGPOINTWISE},
	where the assumption $\| \newg - \Euc \|_{C_{Frame}^0}(1) \leq \epsilon^2$ and the
	estimate \eqref{E:MAINHIGHNORMESTIMATEM} (with $M=N$) are 
	now taken as hypotheses in these propositions in place of the original assumption
	$\| \newg - \Euc \|_{C_{Frame}^0}(1) \leq \epsilon$ and the
	original bootstrap assumption $\highnorm{N}(t) \leq \epsilon t^{- \upsigma}.$

	It remains for us to prove \eqref{E:GCOMMUTEDSOBGRONWALLED}-\eqref{E:MAINTOTALENERGYESTIMATEM}. 
	To this end, we first inductively assume that the case $M - 1$ has been shown;
	we remark that our argument will also apply to the base case $M=0.$ We now define
	\begin{align}
		y(t) & := \left\lbrace \begin{array}{ll} 
		0, & (M = 0), \\
	  \sum_{1 \leq |\vec{I}| \leq M} 
			\left\| \left| \partial_{\vec{I}} \newg \right|_{\newg} \right\|_{L^2}^2(t)
			+ \sum_{1 \leq |\vec{I}| \leq M} \left\| \left| \partial_{\vec{I}} \newg^{-1} \right|_{\newg} \right\|_{L^2}^2(t), 
			& (1 \leq M),
		\end{array} \right.
		\\
		z(t) & := \totalenergy{\smallparameter_*}{M}^2(t).
	\end{align}
	Inserting the induction assumption estimates into inequalities \eqref{E:GCOMMUTEDSOB} and \eqref{E:ENERGYINTEGRALINEQUALITIES}
	(including the analogous but unwritten version of \eqref{E:GCOMMUTEDSOB} that holds for $\partial_{\vec{I}} \newg^{-1}$)
	and carrying out straightforward computations,
	we deduce that there exist constants $C, c > 0$
	such that hypothesis inequalities \eqref{E:YINTEGRALINEQUALITY}-\eqref{E:ZINTEGRALINEQUALITY} hold.
	The inequalities \eqref{E:YGRONWALL}-\eqref{E:ZGRONWALL} 
	immediately imply the desired estimates \eqref{E:GCOMMUTEDSOBGRONWALLED}-\eqref{E:MAINTOTALENERGYESTIMATEM}.
	This closes the induction.

\end{proof}

\section{Local Well-Posedness, the Existence of a CMC Hypersurface, and Continuation Criteria} \label{S:LWPANDCMC}

Before we can prove our main stable singularity formation theorem, we first need to address 
several aspects of the local-in-time theory. Hence, in this section, we briefly discuss local-in-time theory for the 
Einstein-stiff fluid system in two different gauges. 
We have two main goals. The first is to show that near-FLRW data launch a local solution that contains a spacelike hypersurface with constant mean curvature equal to $-1/3.$ 
We need the existence of such a hypersurface because
most of our previous analysis was carried out relative to CMC-transported spatial coordinates. In service of this goal, we 
sketch a proof of local well-posedness in a harmonic map gauge. This result (see Prop.~\ref{P:HARMONIC}) 
provides us with estimates that are sufficient to apply methods similar to those used by Bartnik \cite{rB1984}, which will guarantee the existence of the desired CMC hypersurface (see Prop.~\ref{P:CMCEXISTS}). To simplify our proofs, we have chosen to adopt near-FLRW hypotheses in both Prop.~\ref{P:HARMONIC} and Prop.~\ref{P:CMCEXISTS}; these hypotheses could be relaxed with additional effort. Our second goal is to sketch a proof of local well-posedness and continuation criteria in CMC-transported spatial coordinates (see Theorem~\ref{T:LOCAL}).

\subsection{Near-FLRW local well-posedness in harmonic map gauge}

In this section, we sketch a proof of local well-posedness for the Einstein-stiff fluid system in harmonic map gauge.

\begin{proposition}[\textbf{Near-FLRW local well-posedness in harmonic map gauge}] \label{P:HARMONIC}
Let $N \geq 4$ be an integer. Let $\big(\mathring{g}, \mathring{k}, \mathring{p}, \mathring{u} \big)$
be initial data (at time $1$) on the manifold $\Sigma_1 \simeq \mathbb{T}^3$ for the Einstein-stiff fluid system 
\eqref{E:EINSTEININTRO}-\eqref{E:EOS}, 
$\speed = 1$ (which by definition verify the constraints \eqref{E:GAUSSINTRO}-\eqref{E:CODAZZIINTRO}). Assume that 
\begin{align}
	\inf_{x \in \mathbb{T}^3} \mathring{p}(x) > 0,
\end{align}
and that relative to the standard coordinates on $\mathbb{T}^3$
(see Def.~\ref{D:STANDARDATLAS}), the components of the data verify the following estimates 
for $i,j = 1,2,3:$
\begin{align}
		\| \mathring{g}_{ij} - \Euc_{ij} \|_{H^{N+1}}
		+ \left\| \mathring{k}_{\ j}^i + \frac{1}{3} \ID_{\ j}^i \right\|_{H^N}
		+ \left\| \mathring{p} - \frac{1}{3} \right\|_{H^N}
		+ \| \mathring{u}^i \|_{H^N} & \leq \epsilon < \infty.
	\end{align}
Above, $\Euc_{ij} = \mbox{diag}(1,1,1)$ denotes the standard Euclidean metric on $\mathbb{T}^3$ and
$\ID_{\ j}^i = \mbox{diag}(1,1,1)$ denotes the identity. Let $\big(\mathbf{M}, (\gfour, p, \ufour) \big)$ be the maximal globally hyperbolic development of the data (see the discussion in Sect.~\ref{S:INTRO}). In particular, the variables $(\gfour, p, \ufour)$ verify the Einstein-stiff fluid equations \eqref{E:EINSTEININTRO}-\eqref{E:EOS}, $\speed = 1.$ Then if $\epsilon$ is sufficiently small, there exists a collection of coordinates $(t,x^1,x^2,x^3)$ covering a subset $\mathbf{V} := \cup_{t \in [1/2, 3/2]} \Sigma_t \simeq [1/2, 3/2] \times \mathbb{T}^3$ of $\mathbf{M}.$ The coordinate $t$ is a time function defined on $\mathbf{V},$ the $(x^1,x^2,x^3)$ are (spatially locally defined)
standard local coordinates on $\mathbb{T}^3$ (see Def.~\ref{D:STANDARDATLAS}), and each $\Sigma_s := \lbrace q \in \mathbf{V} \ | \ t(q) = s \rbrace \simeq \mathbb{T}^3$ is a Cauchy hypersurface. Relative to these coordinates, the FLRW solution can be expressed as
	\begin{align} \label{E:FLRWSTANDARDCOORDS}
		\widetilde{\gfour} & = - dt^2 + t^{2/3} \sum_{i=1}^3 (dx^i)^2, 
			& \widetilde{p} & = \frac{1}{3} t^{-2}, 
			& \widetilde{\ufour}^{\mu} & = \delta_0^{\mu}.
	\end{align}
	Above, $\delta_{\nu}^{\mu}$ ($\mu, \nu = 0,1,2,3$) is the standard Kronecker delta. 
	Furthermore, relative to these coordinates, the perturbed solution's components verify the 
	following estimates for $\mu, \nu = 0,1,2,3:$
	\begin{subequations}
	\begin{align}
		\sup_{t \in [1/2, 3/2]} \left\| \partial_t^M (\gfour_{\mu \nu} - \widetilde{\gfour}_{\mu \nu} \big) \right\|_{H^{N+1 - M}} & \leq C 
			\epsilon, && (0 \leq M \leq N + 1), \\
		\sup_{t \in [1/2, 3/2]} \left\| \partial_t^M \big(p - \widetilde{p} \big) \right\|_{H^{N-M}} & \leq C \epsilon, 
			&& (0 \leq M \leq N), \\
		\sup_{t \in [1/2, 3/2]} \left\| \partial_t^M \big(\ufour^{\mu} - \widetilde{\ufour}^{\mu} \big) \right\|_{H^{N-M}} 
			& \leq C \epsilon, && (0 \leq M \leq N).
	\end{align}
	\end{subequations}
	In addition, the harmonic map condition $(\gfour^{-1})^{\alpha \beta} \big(\Chfour_{\alpha \ \beta}^{\ \mu} - 
	\widetilde{\Chfour}_{\alpha \ \beta}^{\ \mu} \big) = 0$ is verified on $\mathbf{V}.$ 
	Here, $\Chfour_{\alpha \ \beta}^{\ \mu}$ is a Christoffel symbol of $\gfour$
	and $\widetilde{\Chfour}_{\alpha \ \beta}^{\ \mu}$ is a Christoffel symbol of $\widetilde{\gfour}.$ 
	Finally, the map from the initial data to the 
	solution is continuous. By ``continuous,'' we mean continuous relative to the norms on the data and the norms on the 
	solution that are stated in the hypotheses and above conclusions of this proposition.
	
\end{proposition}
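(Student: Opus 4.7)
The plan is to follow the classical Choquet-Bruhat strategy adapted to the stiff fluid matter model and to the FLRW background, which provides the target of the harmonic map. Because the proposition is explicitly labeled a standard local well-posedness result, I will organize the argument around the familiar four steps---reduction, existence for the reduced system, propagation of the gauge, and recovery of the constraints---emphasizing only the points that require care.

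First I would set up the reduced system. Introduce the vectorfield $V^\mu := (\gfour^{-1})^{\alpha\beta}(\Chfour_{\alpha\ \beta}^{\ \mu} - \widetilde{\Chfour}_{\alpha\ \beta}^{\ \mu})$ and observe the identity $\Ricfour_{\mu\nu} = -\tfrac{1}{2}(\gfour^{-1})^{\alpha\beta}\partial_\alpha\partial_\beta \gfour_{\mu\nu} + \tfrac{1}{2}(\gfour_{\mu\alpha}\partial_\nu V^\alpha + \gfour_{\nu\alpha}\partial_\mu V^\alpha) + F_{\mu\nu}(\gfour,\partial\gfour;\widetilde{\gfour},\partial\widetilde{\gfour})$. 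Setting $V^\mu = 0$ gives the reduced Einstein equations, which are a quasilinear system of wave equations for the components $h_{\mu\nu} := \gfour_{\mu\nu} - \widetilde{\gfour}_{\mu\nu}$ when written as perturbations of FLRW. Couple this to the stiff fluid equations \eqref{E:DTIS0INTRO}--\eqref{E:EOS}; under $p = \rho$ and the assumption $\inf \mathring{p} > 0$ (which is stable under perturbation), these constitute a first-order symmetric hyperbolic system in $(p,\ufour^\mu)$ after projecting \eqref{E:DTIS0INTRO} along and perpendicular to $\ufour$. The full system in $(h_{\mu\nu}, p, \ufour^\mu)$ is then quasilinear symmetric hyperbolic with coefficients depending smoothly on the solution and on the FLRW background.

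Second I would construct Cauchy data for the reduced system. The spatial components $\gfour_{ij}(1,x) = \mathring{g}_{ij}$ and the fluid data are prescribed directly. The lapse-shift components $\gfour_{0\mu}(1,x)$ are chosen so that the initial embedding has unit normal $\Nml$ and reproduces the given $\mathring{k}$ via \eqref{E:SECONDFUNDDEF}, e.g. $\gfour_{00} = -1$, $\gfour_{0i} = 0$, which is close to the FLRW values. The time derivatives $\partial_t \gfour_{\mu\nu}(1,x)$ are determined by combining the second fundamental form identity $\partial_t g_{ij} = -2n g_{ia}\SecondFund_{\ j}^a$ with the gauge condition $V^\mu(1,\cdot) = 0$, which algebraically fixes the remaining normal derivatives. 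The smallness of $\epsilon$, together with the constraints \eqref{E:GAUSSINTRO}--\eqref{E:CODAZZIINTRO} holding on $\Sigma_1$, guarantees that the resulting reduced-system data are of size $O(\epsilon)$ in the Sobolev norm $H^{N+1} \times H^N \times H^N$, measured relative to the FLRW initial data. Standard local existence for quasilinear symmetric hyperbolic systems (Kato--Hughes--Marsden) then yields a unique solution on some time interval. The continuity of the solution with respect to the data in these norms is part of this standard result. Because the FLRW background is smooth on $[1/2,3/2]$, the usual continuation criterion---that the solution persists as long as a low-order norm stays bounded---combined with a Gronwall argument for the perturbation gives that, for $\epsilon$ sufficiently small, the solution exists on the entire slab $\mathbf{V} = [1/2,3/2]\times \mathbb{T}^3$ with norms $O(\epsilon)$.

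Third I would propagate the harmonic map gauge. The contracted second Bianchi identity applied to the reduced Einstein equations, combined with the fluid equations of motion \eqref{E:DTIS0INTRO}, yields a homogeneous linear wave equation of the schematic form $(\gfour^{-1})^{\alpha\beta}\partial_\alpha\partial_\beta V^\mu + A^\mu_{\ \alpha\beta} \partial_\alpha V^\beta + B^\mu_{\ \alpha} V^\alpha = 0$, where the coefficients depend on the solution. By construction $V^\mu(1,\cdot) = 0$, and the Hamiltonian and momentum constraints on $\Sigma_1$ together with the reduced Einstein equations force $\partial_t V^\mu(1,\cdot) = 0$ as well (this is the standard computation). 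Uniqueness for the homogeneous wave equation then gives $V^\mu \equiv 0$ on $\mathbf{V}$, so the solution of the reduced system solves the full Einstein-stiff fluid system, and the Hamiltonian and momentum constraints are preserved on every $\Sigma_t$.

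The main obstacle is the bookkeeping in the second step: one must verify that the reduced system genuinely falls within the scope of standard quasilinear symmetric hyperbolic theory despite the coupling of the geometric and fluid sectors, and that the existence time is in fact uniform on $[1/2,3/2]$ rather than shrinking as $\epsilon \to 0$. This uniformity relies on the fact that the perturbation variables satisfy a linear energy estimate against the FLRW background whose coefficients are bounded on $[1/2,3/2]$, so a standard continuation argument closes on the fixed slab once $\epsilon$ is small enough. All other steps---symmetric-hyperbolic existence, propagation of $V^\mu$, and continuous dependence---are textbook once this framework is set up.
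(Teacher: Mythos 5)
Your proposal follows essentially the same route as the paper: harmonic map gauge reduction relative to the FLRW background (Friedrich--Rendall style), standard local well-posedness for the resulting reduced quasilinear hyperbolic system, propagation of the gauge vector $V^\mu$ via a homogeneous wave equation together with the initial constraints, and Cauchy stability to obtain existence on the full slab $[1/2,3/2]\times\mathbb{T}^3$ for small $\epsilon$. The only cosmetic difference is that you describe the reduced system as symmetric hyperbolic, whereas the paper more precisely treats the metric sector as second-order quasilinear wave equations coupled to a first-order hyperbolic fluid system; this does not affect the argument.
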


\begin{proof}
	We use a hyperbolic reduction of Einstein's equations 
	described in \cite{hFaR2000}. This reduction is an extension of the fundamental ideas developed by
	Choquet-Bruhat \cite{CB1952}. More precisely, as discussed in \cite[Section 5]{hFaR2000}, there exists a 
	\emph{harmonic map gauge} for the Einstein equations in which
	\begin{align} \label{E:HARMONICGAUGE}
		(\gfour^{-1})^{\alpha \beta} \big(\Chfour_{\alpha \ \beta}^{\ \mu} - \widetilde{\Chfour}_{\alpha \ \beta}^{\ \mu} \big)
		& \equiv 0.
		\end{align}
 	Since $\Chfour_{\alpha \ \beta}^{\ \mu} - \widetilde{\Chfour}_{\alpha \ \beta}^{\ \mu}$
	is a tensor, the condition \eqref{E:HARMONICGAUGE} has a coordinate invariant meaning. It also allows us to work
	globally in space, even though $\mathbb{T}^3$ cannot be covered by a single coordinate chart.
	Relative to the standard local coordinates on $\mathbb{T}^3,$ the non-zero FLRW Christoffel symbols are
	\begin{align}
		\widetilde{\Chfour}_{j \ k}^{\ 0} 
		& = \frac{1}{3} t^{-1/3} \delta_{jk}, 
		& & 
		\widetilde{\Chfour}_{0 \ k}^{\ j}	= \frac{1}{3} t^{-1} \delta_{\ k}^j, 
		\label{E:NONZEROFLRWCHRISTOFFEL}
	\end{align}
	where $\delta_{jk} = \mbox{diag}(1,1,1)$ and
	$\delta_{\ k}^j = \mbox{diag}(1,1,1)$ are standard Kronecker deltas.
	In these coordinates, \eqref{E:HARMONICGAUGE} is equivalent to (for $i = 1,2,3$)
	\begin{align}
		\Chfour^0 
		& = \frac{1}{3} t^{-1/3} (\gfour^{-1})^{ab} \delta_{ab}, 
		&&
		\Chfour^i = \frac{2}{3} t^{-1/3} (\gfour^{-1})^{0i}.
	\end{align}
	Above, $\Chfour^{\mu}:=(\gfour^{-1})^{\alpha \beta} \Chfour_{\alpha \ \beta}^{\ \mu}$ is a
	contracted Christoffel symbol of $\gfour.$ To be compatible with both the Einstein initial data and 
	\eqref{E:HARMONICGAUGE}, $\gfour_{\mu \nu}$ and $\partial_t \gfour_{\mu \nu}$ are equipped with 
	the following initial data (given at $t = 1$) relative to the standard coordinates on 
	$\mathbb{T}^3:$
	\begin{align}
		\gfour_{00} & = - 1, & \gfour_{0i} & = 0, & \gfour_{ij} & = \mathring{g}_{ij}, \\
		\partial_t \gfour_{00} & = 2 \left(\frac{1}{3} \mathring{g}^{ab} \delta_{ab} + \mathring{\SecondFund}_{\ a}^a \right), 
			& \partial_t \gfour_{0i} & = \mathring{g}^{ab}\left(\partial_a \mathring{g}_{bi} 
				- \frac{1}{2} \partial_i \mathring{g}_{ab} \right), & \partial_t \gfour_{ij} & = - 2 \mathring{\SecondFund}_{ij}.
	\end{align}
	The above data enforce the harmonic mapping condition \eqref{E:HARMONICGAUGE} at $t = 1.$
	The data for the pressure and the four velocity are
	\begin{align}
		p & = \mathring{p}, 
		&& \ufour^0 = \sqrt{1 + \mathring{g}_{ab}\mathring{u}^a\mathring{u}^b}, 
		& & \ufour^i = \mathring{u}^i.
	\end{align}
	
	In this gauge, the Einstein-stiff fluid equations \eqref{E:EINSTEININTRO}-\eqref{E:EOS}, $\speed = 1$ 
	are equivalent to a reduced system comprising constraint equations,
	quasilinear wave equations for the spacetime metric 
	components, and first order hyperbolic equations for the fluid variables. Local well-posedness in the Sobolev spaces stated in 
	the proposition, the estimates stated in the proposition, 
	the continuous dependence of the solution on the data, and 
	the preservation of the constraints and the harmonic mapping condition \eqref{E:HARMONICGAUGE} are all standard results; see e.g. the 
	discussion in 
	\cite{hR2008}, which addresses all aspects of the theorem except continuous dependence on the data and how to estimate the fluid. 
	The article \cite{jS2012} provides details
	concerning the derivation of energy/Sobolev estimates for the fluid using energy currents and the divergence theorem; similar
	analysis was carried out in Sect.~\ref{S:FUNDAMENATLENERGYINEQUALITIES}. For the main ideas behind continuous dependence 
	on the data, see e.g. \cite{jS2008a}.
	
\end{proof}

\subsection{The existence of a CMC hypersurface}

We would now like to show that the spacetimes launched by Prop.~\ref{P:HARMONIC} contain a spacelike hypersurface graph of 
constant mean curvature precisely equal to $-\frac{1}{3}.$ In the next lemma, we recall the quasilinear elliptic PDE verified by such graphs.

\begin{lemma}[\textbf{Prescribed mean curvature PDE}]
	Let $\gfour$ be the spacetime metric solution from the conclusions of Prop.~\ref{P:HARMONIC}, and
	let $(t,x^1,x^2,x^3)$ be the corresponding coordinates (the $x^i$ are spatially locally defined). Let 
	$\Sigma:= \lbrace (t, x) \in \mathbb{R} \times \mathbb{T}^3 \ | \ t = \varphi(x) \rbrace$ be a hypersurface
	graph. Let $\mathscr{M}_{\varphi}(x)$ be $3 \times$ the mean curvature of $\Sigma$ (relative to $\gfour$) 
	evaluated at the spacetime point $(\varphi(x),x).$
	Let $\iota_{\varphi}: \mathbb{T}^3 \rightarrow \mathbb{R} \times \mathbb{T}^3$ be the function 
	defined by
	\begin{align} \label{E:IOTAVARPHIDEF}
		\iota_{\varphi}(x) := (\varphi(x),x).
	\end{align}
	Let $T_+ [1/2, 3/2] \times \mathbb{T}^3$ denote the bundle of future-directed unit timelike vectors over 
	$[1/2, 3/2] \times \mathbb{T}^3,$ 
	and let $F(t,x;\mathbf{X})$ be a function on $T_+ [1/2, 3/2] \times \mathbb{T}^3.$ 
	Let $\Chfour_{\alpha \ \beta}^{\ \mu}$ be the Christoffel symbols of $\gfour$ relative 
	to the above coordinates,
	let $\hfour^{\mu \nu}$ be the reciprocal first fundamental form of $\Sigma$ relative to $\gfour_{\mu \nu},$  
	and let $\mathbf{H}^{\mu \nu}$ be a rescaled version of $\hfour^{\mu \nu},$
	which are defined as follows:
	\begin{subequations}
	\begin{align} 
		\hfour^{\mu \nu}[\gfour \circ \iota_{\varphi}, \partial \varphi] 
			& := (\gfour^{-1})^{\mu \nu} \circ \iota_{\varphi} 
				+ \Nml^{\mu}[\gfour \circ \iota_{\varphi}, \partial \varphi] 
				\Nml^{\nu}[\gfour \circ \iota_{\varphi}, \partial \varphi],
			\label{E:FIRSTFUNDPHI} \\
		\mathbf{H}^{\mu \nu}[\gfour \circ \iota_{\varphi}, \partial \varphi] 
		& := |\longNml|_{\gfour}^{-1}[\gfour \circ \iota_{\varphi}, \partial \varphi] 
			\hfour^{\mu \nu}[\gfour \circ \iota_{\varphi}, \partial \varphi].
			\label{E:FIRSTFUNDPHIRESCALED}
	\end{align}
	\end{subequations}
	Above, $\Nml^{\mu}$ is the future-directed normal to $\Sigma,$ which is defined by
	\begin{subequations}
	\begin{align}
		\Nml^{\mu}[\gfour \circ \iota_{\varphi}, \partial \varphi] 
		& := |\longNml|_{\gfour}^{-1}[\gfour \circ \iota_{\varphi}, \partial \varphi]  (\gfour^{-1})^{\mu \nu} \circ \iota_{\varphi}
			\longNml_{\nu}[\partial \varphi], \\
		\longNml_{\mu}[\partial \varphi] & := \Dfour_{\mu} (- t + \varphi), 
			\label{E:LONGNORMALCOVECTOR} \\
		|\longNml|_{\gfour}[\gfour \circ \iota_{\varphi}, \partial \varphi] 
			& := \Big[- (\gfour^{-1})^{00} \circ \iota_{\varphi}
			+ 2 (\gfour^{-1})^{0a} \circ \iota_{\varphi} \partial_a \varphi  
			- (\gfour^{-1})^{ab} \circ \iota_{\varphi} (\partial_a \varphi)(\partial_b \varphi) \Big]^{1/2}.
			\label{E:NORMAL}
	\end{align}
	\end{subequations}
	Note that \eqref{E:NORMAL} slightly contradicts \eqref{E:TSPACETIMEGFOURNORM}
	in the sense that in \eqref{E:NORMAL}, we require $|\longNml|_{\gfour}$ to be non-negative.
	
	Then $\mathscr{M}_{\varphi}$ can be expressed as follows:
	\begin{align} \label{E:MEANCURVATUREEXPRESSION}
		\mathscr{M}_{\varphi} 
		& =  - \mathbf{H}^{ab}[\gfour \circ \iota_{\varphi}, \partial \varphi] \partial_a \partial_b \varphi
			- \mathbf{H}^{\alpha \beta}[\gfour \circ \iota_{\varphi}, \partial \varphi]  
				\Chfour_{\alpha \ \beta}^{\ 0}[\gfour \circ \iota_{\varphi}, ((\partial_t, \partial) \gfour) \circ \iota_{\varphi}] \\
		& \ \ + \mathbf{H}^{\alpha \beta}[\gfour \circ \iota_{\varphi}, \partial \varphi]  
				\Chfour_{\alpha \ \beta}^{\ a}[\gfour \circ \iota_{\varphi}, ((\partial_t, \partial) \gfour)\circ \iota_{\varphi}] 
				\partial_a \varphi.
				\notag
	\end{align}	

	Therefore, $\Sigma$ is a hypersurface of prescribed mean curvature 
	$\frac{1}{3} F(\varphi(x),x;\Nml[\gfour \circ \iota_{\varphi}, \partial \varphi])$ 
	if and only if $\varphi$ verifies the quasilinear elliptic PDE
		\begin{align} \label{E:PMCPDE}
			\mathbf{H}^{ab}[\gfour \circ \iota_{\varphi}, \partial \varphi] \partial_a \partial_b \varphi 
			& = - F(\iota_{\varphi};\Nml[\gfour \circ \iota_{\varphi}, \partial \varphi]) 
				- \mathbf{H}^{\alpha \beta}[\gfour \circ \iota_{\varphi}, \partial \varphi] 
					\Chfour_{\alpha \ \beta}^{\ 0}[\gfour \circ \iota_{\varphi}, ((\partial_t, \partial) \gfour)\circ \iota_{\varphi}] 
					\\
			& \ \ + \mathbf{H}^{\alpha \beta}[\gfour \circ \iota_{\varphi}, \partial \varphi] 
					\Chfour_{\alpha \ \beta}^{\ a}[\gfour \circ \iota_{\varphi}, ((\partial_t, \partial) \gfour)\circ \iota_{\varphi}] 
					\partial_a \varphi.
				\notag
	\end{align}
	
\end{lemma}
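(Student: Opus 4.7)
The plan is to derive the identity \eqref{E:MEANCURVATUREEXPRESSION} from the standard geometric formula for the trace of the second fundamental form, and then obtain \eqref{E:PMCPDE} simply by equating $\mathscr{M}_\varphi$ with the prescribed value $F$. First I would recall that if $\SecondFund_{\mu\nu}=-\hfour_\mu^{\ \alpha}\hfour_\nu^{\ \beta}\Dfour_\alpha \Nml_\beta$ is the second fundamental form of $\Sigma$ with respect to its future-directed unit normal, then a short computation using $\Nml_\alpha\Nml^\alpha=-1$ gives $\SecondFund^a_{\ a}=-\Dfour_\mu \Nml^\mu$, so that $\mathscr{M}_\varphi=-\Dfour_\mu \Nml^\mu$.

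The key structural observation is that although $\Nml_\nu=|\longNml|_{\gfour}^{-1}\longNml_\nu$ and $\longNml_\nu$ differ by the scalar factor $|\longNml|_{\gfour}^{-1}$, the projected quantity
\begin{align*}
\mathscr{M}_\varphi \;=\; -\hfour^{\mu\nu}\Dfour_\mu \Nml_\nu
\end{align*}
can be rewritten using the fact that $\longNml_\nu$ is proportional to $\Nml_\nu$, so $\hfour^{\mu\nu}\longNml_\nu=0$; the resulting derivative of the scalar factor therefore drops out and one obtains
\begin{align*}
\mathscr{M}_\varphi \;=\; -|\longNml|_{\gfour}^{-1}\hfour^{\mu\nu}\Dfour_\mu\longNml_\nu \;=\; -\mathbf{H}^{\mu\nu}[\gfour\circ\iota_\varphi,\partial\varphi]\,\Dfour_\mu\longNml_\nu.
\end{align*}
This is the step where the rescaled first fundamental form $\mathbf{H}^{\mu\nu}$ from \eqref{E:FIRSTFUNDPHIRESCALED} makes its natural appearance.

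The next step is to expand $\Dfour_\mu\longNml_\nu=\Dfour_\mu\Dfour_\nu(-t+\varphi)$ in coordinates. Since $\partial_0(-t+\varphi)=-1$ and $\partial_i(-t+\varphi)=\partial_i\varphi$, the ordinary partials reduce to $\partial_i\partial_j\varphi$ (nonzero only for purely spatial pairs), while the Christoffel piece contributes $\Chfour_{\mu\ \nu}^{\ 0}-\Chfour_{\mu\ \nu}^{\ a}\partial_a\varphi$. Substituting these into the expression for $\mathscr{M}_\varphi$, the $\partial_i\partial_j\varphi$ term contracts only against the spatial components $\mathbf{H}^{ab}$, yielding exactly the right-hand side of \eqref{E:MEANCURVATUREEXPRESSION} once every coefficient is composed with $\iota_\varphi$ so that $\gfour$ and its first derivatives are evaluated along the graph.

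Finally, the equivalence in the second half of the lemma is immediate: the condition that $\Sigma$ have mean curvature $\tfrac{1}{3}F(\iota_\varphi;\Nml[\gfour\circ\iota_\varphi,\partial\varphi])$ reads $\mathscr{M}_\varphi=F(\iota_\varphi;\Nml)$, and substituting \eqref{E:MEANCURVATUREEXPRESSION} and solving algebraically for the principal part $\mathbf{H}^{ab}\partial_a\partial_b\varphi$ produces precisely \eqref{E:PMCPDE}. There is no real obstacle in this proof; the only thing one must be careful about is keeping track of signs (particularly in the convention $\longNml_\mu=\Dfour_\mu(-t+\varphi)$, which is consistent with $\longNml^0>0$ in the FLRW background so that $\Nml^\mu$ is future-directed), and remembering that every tensor appearing on the right-hand side of \eqref{E:MEANCURVATUREEXPRESSION} is to be read as a function of $x$ via the composition with $\iota_\varphi$ defined in \eqref{E:IOTAVARPHIDEF}.
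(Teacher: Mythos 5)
Your proof is correct and follows essentially the same route as the paper's: identify $\mathscr{M}_{\varphi}$ with $-\Dfour_{\alpha}\Nml^{\alpha}$, pull out the scalar factor $|\longNml|_{\gfour}^{-1}$ to rewrite this as $-|\longNml|_{\gfour}^{-1}\hfour^{\alpha\beta}\Dfour_{\alpha}\Dfour_{\beta}\Phi$ (the paper states this identity directly; your observation that $\hfour^{\mu\nu}\longNml_{\nu}=0$ is precisely the justification), then expand the second covariant derivative of $\Phi=-t+\varphi$ in Christoffel symbols. The only difference is cosmetic: you start from $-\hfour^{\mu\nu}\Dfour_{\mu}\Nml_{\nu}$ and note it equals $-\Dfour_{\mu}\Nml^{\mu}$, whereas the paper works with the latter form throughout.
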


\begin{proof}
	The hypersurface $\Sigma:= \lbrace (t, x) \in \mathbb{R} \times \mathbb{T}^3 \ | \ t = \varphi(x) \rbrace$
	is the $0$ level set of the function
	\begin{align} \label{E:LEVELSETFUNCN}
		\Phi(t,x) & := - t + \varphi(x).
	\end{align}	
	The spacetime one-form that is normal to the level sets of $\Phi$ is
	\begin{align}
		\longNml_{\mu} & = \Dfour_{\mu} \Phi.
	\end{align}
	The corresponding future-directed $\gfour-$unit normal vector is
	\begin{align}
		\Nml^{\mu} & = \frac{1}{|\longNml|_{\gfour}} (\gfour^{-1})^{\mu \alpha} \Dfour_{\alpha} \Phi,
	\end{align}
	where 
	\begin{align}
		 |\longNml|_{\gfour}:= |(\gfour^{-1})^{\alpha \beta} \Dfour_{\alpha} \Phi \Dfour_{\beta} \Phi|^{1/2}.
	\end{align}
	
	The reciprocal first fundamental form of $\Sigma$ is
	\begin{align} \label{E:FIRSTFUNDOFSIGMA}
		\hfour^{\mu \nu} := (\gfour^{-1})^{\mu \nu} + \Nml^{\mu} \Nml^{\nu},
	\end{align}
	while the second fundamental form is
	\begin{align} \label{E:SECONDFUNDOFSIGMA}
		\kfour_{\ \nu}^{\mu} := - \gfour_{\nu \kappa} \gfour_{\beta \lambda} \hfour^{\mu \alpha} \hfour^{\kappa \lambda} \Dfour_{\alpha} 
		\Nml^{\beta}.
	\end{align}
	The trace of the second fundamental form, which is defined to be $3$ times the mean curvature of $\Sigma,$
	is therefore given by
	\begin{align} \label{E:DIVNORMAL}
		- (\Dfour_{\alpha} \Nml^{\alpha})|_{t = \varphi(x)}
	 		= - \left( \frac{1}{|\longNml|_{\gfour}} \hfour^{\alpha \beta} \Dfour_{\alpha} \Dfour_{\beta} \Phi \right)
	 		\circ \iota_{\varphi}.
	\end{align}	
	
	Setting \eqref{E:DIVNORMAL} equal to $F,$ 
	expanding covariant derivatives in terms of the 
	Christoffel symbols of $\gfour,$ and carrying out straightforward computations,
	we thus deduce equation \eqref{E:PMCPDE} from \eqref{E:DIVNORMAL}.
	
\end{proof}

We now provide a proof of the existence of a hypersurface of constant mean curvature precisely equal to $-1/3$ 
in the spacetimes of interest. Our proof makes use of the estimates of Prop.~\ref{P:HARMONIC}
as well as the following fundamental Leray-Schauder fixed point result from degree theory.

\begin{proposition} \cite[Theorem 4.4.3]{nL1978} \label{P:LLOYDDEGREE}
 Let $\mathfrak{K}$ be a closed, bounded subset of the Banach space $\mathfrak{B}.$ Assume that
 $0 \in \mbox{interior}(\mathfrak{K}).$ Let $\mathscr{T}: \mathfrak{K} \rightarrow \mathfrak{B}$
 be a compact map. Assume that for all real numbers $\uplambda \in (0,1),$ 
 there is no element $k \in \partial \mathfrak{K}$ such that $\uplambda \mathscr{T}(k) = k.$
 Then there exists an element $k_* \in \mathfrak{K}$ such that $\mathscr{T}(k_*) = k_*.$
\end{proposition}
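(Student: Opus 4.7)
The plan is to prove this Leray--Schauder type fixed point theorem using topological degree theory for compact perturbations of the identity. Specifically, I would introduce the homotopy $\mathscr{F}_{\uplambda}: \overline{\mathfrak{K}} \to \mathfrak{B}$ defined by
\begin{align}
  \mathscr{F}_{\uplambda}(k) := k - \uplambda \mathscr{T}(k), \qquad \uplambda \in [0,1],
\end{align}
and analyze the Leray--Schauder degree $\deg(\mathscr{F}_{\uplambda}, \mathfrak{K}, 0)$ as $\uplambda$ varies. Since $\mathscr{T}$ is compact and $\mathfrak{K}$ is a closed bounded subset of the Banach space $\mathfrak{B}$, each $\mathscr{F}_{\uplambda}$ is a compact perturbation of the identity, so the Leray--Schauder degree is well-defined whenever $0 \notin \mathscr{F}_{\uplambda}(\partial \mathfrak{K})$.

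First I would dispose of the trivial case: if there exists $k_* \in \partial \mathfrak{K}$ with $\mathscr{T}(k_*) = k_*$, then this element is the desired fixed point and we are done. So assume $\mathscr{F}_1(k) \neq 0$ for all $k \in \partial \mathfrak{K}$. By the standing hypothesis, $\mathscr{F}_{\uplambda}(k) \neq 0$ on $\partial \mathfrak{K}$ for every $\uplambda \in (0,1)$ as well. At the endpoint $\uplambda = 0$ we have $\mathscr{F}_0(k) = k$, which vanishes only at $k = 0$; since $0 \in \mbox{interior}(\mathfrak{K})$ by assumption, $0 \notin \partial \mathfrak{K}$, so $\mathscr{F}_0$ is also non-vanishing on $\partial \mathfrak{K}$. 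Therefore the homotopy $\uplambda \mapsto \mathscr{F}_{\uplambda}$ avoids $0$ on $\partial \mathfrak{K}$ uniformly in $\uplambda \in [0,1]$.

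Next I would invoke the two fundamental properties of the Leray--Schauder degree: the normalization $\deg(\mbox{Id}, \mathfrak{K}, 0) = 1$ (valid because $0 \in \mbox{interior}(\mathfrak{K})$), and homotopy invariance along compact homotopies that do not vanish on the boundary. Applying homotopy invariance along $\mathscr{F}_{\uplambda}$, one obtains
\begin{align}
  \deg(\mathscr{F}_1, \mathfrak{K}, 0) = \deg(\mathscr{F}_0, \mathfrak{K}, 0) = \deg(\mbox{Id}, \mathfrak{K}, 0) = 1 \neq 0.
\end{align}
By the existence property of the degree (nonzero degree implies a zero in the interior), there exists $k_* \in \mbox{interior}(\mathfrak{K}) \subset \mathfrak{K}$ with $\mathscr{F}_1(k_*) = 0$, that is, $\mathscr{T}(k_*) = k_*$, which is the conclusion.

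The main obstacle in this approach is technical rather than conceptual: one must verify that the standing hypothesis — which only excludes fixed points of $\uplambda \mathscr{T}$ on $\partial \mathfrak{K}$ for $\uplambda$ in the \emph{open} interval $(0,1)$ — is strong enough to carry the homotopy through both endpoints. The endpoint $\uplambda = 0$ is handled by the interior assumption $0 \in \mbox{interior}(\mathfrak{K})$, and the endpoint $\uplambda = 1$ is handled by a preliminary dichotomy (either $\mathscr{T}$ already has a boundary fixed point, giving the conclusion immediately, or the homotopy remains admissible up to and including $\uplambda = 1$). A secondary (purely bookkeeping) subtlety is to ensure that $\mathscr{F}_{\uplambda}$ is defined on the whole closed set $\overline{\mathfrak{K}}$ rather than just $\mathfrak{K}$, which is harmless because $\mathscr{T}$ is continuous on the compact set $\mathfrak{K}$ and can be used directly; alternatively, one can first extend $\mathscr{T}$ to all of $\mathfrak{B}$ via the Dugundji extension theorem so that Leray--Schauder degree machinery applies in its standard formulation.
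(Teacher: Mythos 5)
Your argument is correct and is exactly the standard Leray--Schauder degree proof of this result; the paper itself does not reproduce a proof but cites Lloyd's book, where the theorem is established by precisely this homotopy-invariance argument (normalize at $\uplambda = 0$ using $0 \in \mbox{interior}(\mathfrak{K})$, dispose of the $\uplambda = 1$ endpoint by the dichotomy, and carry the degree through). Two trivial slips worth flagging for tidiness: $\overline{\mathfrak{K}} = \mathfrak{K}$ since $\mathfrak{K}$ is already closed, and $\mathfrak{K}$ need not be compact (it is merely closed and bounded in a Banach space); what you actually need, and have, is that $\mathscr{T}$ is a compact \emph{map}, and that the degree $\deg(\mathscr{F}_{\uplambda}, \mbox{int}(\mathfrak{K}), 0)$ is admissible because $\partial(\mbox{int}(\mathfrak{K})) \subseteq \partial\mathfrak{K}$.
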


\hfill $\qed$

Our proof of existence is simpler than Bartnik's similar proof \cite[Theorem 4.1]{rB1984}
because ours takes advantage of the smallness of $\epsilon.$ One key difference 
is that Bartnik's proof only addressed the case in which $\gfour_{0i} \equiv 0,$ ($i=1,2,3$), while in our harmonic
map gauge, we generally have $\gfour_{0i} \neq 0.$

\begin{proposition} [\textbf{Existence of a CMC slice}] \cite[c.f. Lemma 6.2]{cG1983}\cite[c.f. Theorem 4.1]{rB1984}  \label{P:CMCEXISTS}
Assume the hypotheses and conclusions of Prop.~\ref{P:HARMONIC} (in particular assume that $\epsilon$ is sufficiently small).
Assume in addition that $N \geq 5.$ Let 
$\big(\gfour_{\mu \nu}, p, \ufour^{\mu} \big)$ be the solution on $[1/2,3/2] \times \mathbb{T}^3$ guaranteed by the proposition. Let $(t,x^1,x^2,x^3)$ be the corresponding coordinates (the $x^i$ are spatially locally defined). Let $\Sigma_t := \lbrace t \rbrace \times \mathbb{T}^3$ denote a constant-time hypersurface and let $\mathscr{M}_0(t,x)$ denote $3 \times$ its corresponding mean curvature (with respect to $\gfour$) 
evaluated at the spacetime point $(t,x).$
Then there exists a constant $C_* > 0$ such that if $\epsilon$ is sufficiently small, then there exists a pair of times
\begin{align}
	t_{\pm} & = 1 \pm C_* \epsilon, 
	\label{E:TIMES}
\end{align}
such that
\begin{align} \label{E:CMCTRAPPING}
	\sup_{x \in \mathbb{T}^3} - \mathscr{M}_0(t_+,x) < 1 < \inf_{x \in \mathbb{T}^3} - \mathscr{M}_0(t_-,x).
\end{align}
Furthermore, there exists a function $\varphi$ on $\mathbb{T}^3$ such that 
\begin{subequations}
\begin{align} 
	\varphi & \in H^{N+2}(\mathbb{T}^3), \\
	t_- & < \inf_{x \in \mathbb{T}^3} \varphi(x) \leq \sup_{x \in \mathbb{T}^3} \varphi(x) < t_+, \label{E:VARPHITRAPPED}
\end{align}
\end{subequations}
and such that $\Sigma := \lbrace (\varphi(x), x) \ | \ x \in \mathbb{T}^3 \rbrace 
\subset (t_-, t_+) \times \mathbb{T}^3$ is a spacelike hypersurface with mean curvature 
(relative to $\gfour$) constantly equal to $-\frac{1}{3}$ (i.e., the PDE \eqref{E:PMCPDE} is verified with $F(\iota_{\varphi};\Nml) := - 1$). 

Furthermore, there exists a constant $C >0$ such that if $\epsilon $ is sufficiently small, then the following estimates are verified by $\varphi$ 
and the components of $\hfour^{\mu \nu}$ relative to the above coordinates: 
\begin{align} \label{E:PHIFULLYUPGRADEDESTIMATES}
	\left \| \varphi - 1 \right \|_{H^{N+2}} & \leq C \epsilon,
\end{align}
\begin{subequations}
\begin{align}
	 \|\hfour^{00}[\gfour \circ \iota_{\varphi}, \partial \varphi] 
		+ 1 \|_{H^{N+1}} & \leq C \epsilon, & 
		\label{E:FIRSTFUND00} \\
	\|\hfour^{0i}[\gfour \circ \iota_{\varphi}, \partial \varphi] 	
			\|_{H^{N+1}} & \leq C \epsilon, & & (i = 1,2,3), 
		\label{E:FIRSTFUND0I} \\
	\| \hfour^{ij}[\gfour \circ \iota_{\varphi}, \partial \varphi] - (\Euc^{-1})^{ij} \|_{H^{N+1}} & \leq C \epsilon, & & (i,j = 1,2,3).
		\label{E:FIRSTFUNDIJ}
\end{align}
\end{subequations}
In the above estimates, $\iota_{\varphi}$ is defined in \eqref{E:IOTAVARPHIDEF}, 
$\hfour^{\mu \nu}[\gfour \circ \iota_{\varphi}, \partial \varphi]$ is defined in \eqref{E:FIRSTFUNDPHI}, and 
$(\Euc^{-1})^{ij} = \mbox{diag}(1,1,1)$ is the standard inverse Euclidean metric on $\mathbb{T}^3.$

Finally, relative to the above coordinates, the solution's components have the following regularity properties
for $\mu, \nu = 0,1,2,3:$
\begin{subequations}
	\begin{align}
		 \left\| \left\lbrace \partial_t^M (\gfour_{\mu \nu} - \widetilde{\gfour}_{\mu \nu} \big) \right\rbrace
			 \circ \iota_{\varphi} 
			\right\|_{H^{N+1 - M}} & \leq C 
			\epsilon, && (0 \leq M \leq N + 1), \label{E:GCOMPOSEDWITHPHIUPGRADED} \\
		\left\| \left\lbrace \partial_t^M \big(p - \widetilde{p} \big) \right\rbrace
			\circ \iota_{\varphi}\right\|_{H^{N-M}} & \leq C \epsilon, 
			&& (0 \leq M \leq N), \label{E:PCOMPOSEDWITHPHIUPGRADED} \\
		\left\| \left\lbrace \partial_t^M \big(\ufour^{\mu} - \widetilde{\ufour}^{\mu} \big) \right\rbrace
			\circ \iota_{\varphi} \right\|_{H^{N-M}} 
			& \leq C \epsilon, && (0 \leq M \leq N), \label{E:UCOMPOSEDWITHPHIUPGRADED}
	\end{align}
	\end{subequations}
	where the components $ \widetilde{\gfour}_{\mu \nu},$ $\widetilde{p},$ $\widetilde{\ufour}^{\mu}$ 
	of the FLRW solution are given by \eqref{E:FLRWSTANDARDCOORDS}.

\end{proposition}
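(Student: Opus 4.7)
My plan is to combine the near-FLRW estimates of Prop.~\ref{P:HARMONIC} with the Leray--Schauder degree principle of Prop.~\ref{P:LLOYDDEGREE}, following the general strategy of Bartnik \cite{rB1984} and Gerhardt \cite{cG1983}. The trapping condition \eqref{E:CMCTRAPPING} is obtained first by direct computation: using the FLRW Christoffel symbols \eqref{E:NONZEROFLRWCHRISTOFFEL}, one sees that on the background the flat slice $\Sigma_t = \{s = t\}$ has $\widetilde{\mathscr{M}}_0(t,\cdot) = -1/t$, and the estimates of Prop.~\ref{P:HARMONIC} yield $\mathscr{M}_0(t,x) = -1/t + O(\epsilon)$ uniformly in $x \in \mathbb{T}^3$ for the perturbed spacetime. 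Choosing $C_*$ larger than the implicit constant and setting $t_\pm = 1 \pm C_*\epsilon$ produces \eqref{E:CMCTRAPPING} once $\epsilon$ is small.

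\textbf{Existence via degree theory.} Set $\psi := \varphi - 1$, so that \eqref{E:PMCPDE} with $F \equiv -1$ becomes a quasilinear elliptic equation $Q[\psi] = 0$ on $\mathbb{T}^3$. Take $\mathfrak{B} := C^{2,\alpha}(\mathbb{T}^3)$ and let $\mathfrak{K}$ be the closed ball $\{\|\psi\|_{C^{2,\alpha}} \leq C'\epsilon\}$, with $C' \gg C_*$ to be chosen below. Define $\mathscr{T}: \mathfrak{K} \to \mathfrak{B}$ by $\mathscr{T}(\psi) := \tilde\psi$, where $\tilde\psi$ solves the linear elliptic problem $(\mathbf{H}^{ab}[\gfour \circ \iota_{1+\psi}, \partial\psi] \partial_a\partial_b - 1)\tilde\psi = R[\psi] - \psi$, with $R[\psi]$ collecting the Christoffel-symbol source terms on the right of \eqref{E:PMCPDE}. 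The $-1$ lower-order term is harmless for ellipticity and removes the constant mode, so standard Schauder theory furnishes a bounded inverse with a regularity gain, making $\mathscr{T}$ compact; fixed points of $\mathscr{T}$ coincide with solutions of $Q[\psi] = 0$. To apply Prop.~\ref{P:LLOYDDEGREE} one must rule out $\uplambda \mathscr{T}(\psi) = \psi$ on $\partial\mathfrak{K}$ for $\uplambda \in (0,1)$. Any such $\psi$ solves a homotopic version of the CMC equation; at an interior maximum $x_{\max}$, ellipticity together with \eqref{E:MEANCURVATUREEXPRESSION} and the trapping \eqref{E:CMCTRAPPING} force $\psi(x_{\max}) < C_*\epsilon$, and a symmetric argument at the minimum gives a matching lower bound. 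Schauder estimates then upgrade this $C^0$ control to $\|\psi\|_{C^{2,\alpha}} \leq C\epsilon$; taking $C' > 2C$ keeps $\psi$ strictly in the interior of $\mathfrak{K}$, so Prop.~\ref{P:LLOYDDEGREE} delivers a fixed point $\varphi \in C^{2,\alpha}$ with $\|\varphi - 1\|_{C^{2,\alpha}} \lesssim \epsilon$ and satisfying \eqref{E:VARPHITRAPPED}.

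\textbf{Regularity and composition estimates.} Elliptic bootstrap in Sobolev spaces raises the regularity of $\varphi$ from $C^{2,\alpha}$ to $H^{N+2}$: the coefficients of $Q$ are algebraic functions of $\gfour \circ \iota_\varphi$, $\partial\gfour \circ \iota_\varphi$, and $\partial\varphi$, all of which have $H^{N+1}$ regularity by Prop.~\ref{P:HARMONIC} combined with Moser composition, and difference-quotient arguments iteratively promote $\varphi$ to the stated regularity. This yields \eqref{E:PHIFULLYUPGRADEDESTIMATES}. The remaining bounds \eqref{E:FIRSTFUND00}--\eqref{E:UCOMPOSEDWITHPHIUPGRADED} follow algebraically from the defining formulas \eqref{E:FIRSTFUNDPHI}--\eqref{E:NORMAL} together with the estimates of Prop.~\ref{P:HARMONIC}, using Moser-type composition estimates to control precomposition with $\iota_\varphi$ (which is valid thanks to \eqref{E:VARPHITRAPPED}, ensuring $\iota_\varphi(\mathbb{T}^3) \subset [1/2, 3/2] \times \mathbb{T}^3$).

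The main obstacle I anticipate is the uniform-in-$\uplambda$ $C^0$ bound in the second step. One must precisely compare $R[\psi](x_{\max}) - \psi(x_{\max})$ to the true mean curvature $\mathscr{M}_0(1+\psi(x_{\max}), x_{\max})$ and verify that \eqref{E:CMCTRAPPING} genuinely prevents $\psi$ from reaching $\pm C_*\epsilon$ throughout the entire homotopy. Getting the signs and the uniform constants right across all $\uplambda \in (0,1)$ is the analytic crux, in direct analogy with Bartnik's height estimates \cite{rB1984}; once this is in hand, the remaining steps are essentially routine applications of Schauder theory, Sobolev embedding, and Moser composition.
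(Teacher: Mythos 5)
Your overall strategy---trapping via the near-FLRW estimates, Leray--Schauder degree theory for the prescribed-mean-curvature equation, a maximum-principle height bound to exclude quasi-fixed points on the boundary of the fixed-point set, and elliptic bootstrap---matches the paper's proof, and your ``$-1$ regularization'' in the definition of $\mathscr{T}$ is a reasonable variant of the paper's $\updelta$-regularization + limit $\updelta \downarrow 0$ (with the advantage that you avoid an extra limit). Working in $C^{2,\alpha}$ rather than $H^{N-1}$ is a stylistic choice that does not affect the substance of Steps 1--2.

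The genuine gap is in your final step, where you claim the composition estimates \eqref{E:GCOMPOSEDWITHPHIUPGRADED}--\eqref{E:UCOMPOSEDWITHPHIUPGRADED} ``follow algebraically from the defining formulas together with the estimates of Prop.~\ref{P:HARMONIC}, using Moser-type composition estimates.'' This does not work. Prop.~\ref{P:HARMONIC} gives $\partial_t^M(\gfour_{\mu\nu}-\widetilde{\gfour}_{\mu\nu}) \in C_t^0 H^{N+1-M}_x$; treating this as spacetime Sobolev regularity and restricting to the graph $\Sigma = \{t = \varphi(x)\}$ via a trace theorem would generically lose at least half a derivative, and Moser composition only buys you Sobolev embedding (roughly $C^{N-1}$ for the metric), which loses about two derivatives. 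Neither suffices to conclude $\gfour\circ\iota_\varphi \in H^{N+1}(\mathbb{T}^3)$, and hence your proposed elliptic bootstrap stalls well short of $\varphi \in H^{N+2}$. The paper resolves this with a separate argument: it re-derives the Sobolev/energy estimates of Prop.~\ref{P:HARMONIC} directly along the hypersurface $\Sigma$ (rather than along the flat slices $\Sigma_t$) by applying the divergence theorem, with the flat Euclidean spacetime metric, to the region bounded by $\Sigma_{t_-}$ and $\Sigma$ -- thereby getting $L^2$-control of up to $N$ mixed derivatives of the solution fields precomposed with $\iota_\varphi$ with no trace loss. Without an argument of this kind, the regularity statements \eqref{E:PHIFULLYUPGRADEDESTIMATES} and \eqref{E:GCOMPOSEDWITHPHIUPGRADED}--\eqref{E:UCOMPOSEDWITHPHIUPGRADED} cannot be obtained, and these in turn feed into the Sobolev bounds \eqref{E:FIRSTFUND00}--\eqref{E:FIRSTFUNDIJ} and into Corollary~\ref{C:NEARFLRWFIELDSONCMC}, which is needed to launch the main bootstrap argument.
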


\begin{proof}
	Note that the function
	$F$ from \eqref{E:PMCPDE} is constantly $-1$ in the present context.
	Throughout this proof, we will repeatedly use the assumption that $\epsilon$ is sufficiently small without
	mentioning it every time. The main idea is to derive a priori estimates for a modified version of the elliptic PDE 
	\eqref{E:PMCPDE}. The modification depends on a small positive real number $\updelta.$
	The a priori estimates will allow us to apply Prop.~\ref{P:LLOYDDEGREE} to deduce an existence result for the modified 
	PDE. We then take a limit as $\updelta$ goes to $0$ in order to deduce existence for the actual PDE of interest. The final 
	step is to improve the Sobolev regularity of both $(\gfour, p, \ufour) \circ \iota_{\varphi}$ and $\varphi(x).$
	The former estimates require us to revisit the estimates of Prop.~\ref{P:HARMONIC}; instead of using the divergence 
	theorem/energy estimates to derive $L^2$ estimates along the hypersurfaces $\Sigma_t,$ as was done in the proposition,
	we instead derive analogous $L^2$ estimates along $\Sigma.$ 

\textbf{Step 1 - Linearize, modify, and change variables via $\varphi = 1 + \chi$}: 
Let $\mathscr{M}_0(t,x)$ be $3$ times the mean curvature (relative to $\gfour$) of the constant-time hypersurface $\Sigma_t$ evaluated at the spacetime point $(t,x).$ We now note that a slightly modified version of the PDE \eqref{E:PMCPDE} can be written in the following form:
\begin{align} \label{E:PMCPDERESCALED}
			\mathbf{H}^{ab}[\gfour \circ \iota_{\varphi}, \partial \varphi] \partial_a \partial_b \varphi 
			& = \overbrace{- F(\iota_{\varphi}; \Nml[\gfour \circ \iota_{\varphi}, \partial \varphi])}^1
			 		+ \mathscr{M}_0 \circ \iota_{\varphi} \\
			& \ \ + Y^a[\gfour \circ \iota_{\varphi}, ((\partial_t, \partial) \gfour)\circ \iota_{\varphi}, \partial \varphi] \partial_a \varphi
			 		+ \updelta (\varphi - 1). \notag
	\end{align}
Above, $\updelta \in (0,1]$ is a small positive constant and the term $\updelta (\varphi - 1)$ was artificially added to the equation for reasons to be explained. We will let $\updelta \downarrow 0$ later in the proof. Furthermore, $Y^a[\cdots]$ is a smooth function of its arguments as long as $\gfour$ is invertible and $|\longNml|_{\gfour} > 0.$ The $\mathscr{M}_0 \circ \iota_{\varphi}$ term is derived by isolating and subtracting off this term from the
$\mathbf{H}^{\alpha \beta}[\gfour \circ \iota_{\varphi},\partial \varphi]  
\Chfour_{\alpha \ \beta}^{\ 0} [\gfour \circ \iota_{\varphi}, ((\partial_t, \partial) \gfour)\circ \iota_{\varphi}]$
term on the right-hand side of \eqref{E:PMCPDE}. More precisely, we view
$-\mathbf{H}^{\alpha \beta}[\gfour \circ \iota_{\varphi}, \partial \varphi]  
\Chfour_{\alpha \ \beta}^{\ 0} [\gfour \circ \iota_{\varphi}, ((\partial_t, \partial) \gfour)\circ \iota_{\varphi}]$
$:= f[\gfour \circ \iota_{\varphi}, ((\partial_t, \partial) \gfour)\circ \iota_{\varphi}, \partial \varphi],$
where $f$ is a smooth function of its arguments. Then 
$\mathscr{M}_0(t,x) = f[\gfour(t,x), (\partial \gfour)(t,x), 0],$ 
$\mathscr{M}_0 \circ \iota_{\varphi} = f[\gfour \circ \iota_{\varphi}, (\partial \gfour) \circ \iota_{\varphi}, 0],$
while the difference
$f[\gfour \circ \iota_{\varphi}, ((\partial_t, \partial) \gfour)\circ \iota_{\varphi}, \partial \varphi] 
- f[\gfour \circ \iota_{\varphi}, ((\partial_t, \partial) \gfour)\circ \iota_{\varphi}, 0]$ 
is (by Taylor expansion) incorporated into the $Y^a[\gfour \circ \iota_{\varphi}, ((\partial_t, \partial) \gfour)\circ \iota_{\varphi}, \partial \varphi] \partial_a \varphi$ term on the right-hand side of \eqref{E:PMCPDERESCALED}. We also make the change of variables $\chi = \varphi - 1,$ where we think of $\chi$ as small. We then consider a linearized version of \eqref{E:PMCPDERESCALED}. More precisely, given a known function 
$\varrho \in H^{N-1},$ we consider the linear (in $\chi$) PDE
\begin{align} \label{E:PMCPDERESCALEDLINEARIZED}
			\mathbf{H}^{ab}[\gfour \circ \iota_{1 + \varrho}, \partial \varrho] 
				\partial_a \partial_b \chi 
			& = 1 + \mathscr{M}_0 \circ \iota_{1 + \varrho}
			 		+ Y^a[\gfour \circ \iota_{1 + \varrho}, ((\partial_t, \partial) \gfour)\circ \iota_{1 + \varrho}, \partial \varrho] 
			 		\partial_a \chi
					+ \updelta \chi.
	\end{align}
The reason for adding the term $\updelta (\varphi - 1) = \updelta \chi$ is the following: by the maximum principle for classical solutions, this $0th$ order term 
ensures that the linearized PDE has unique (in the class $C^2$) solutions.

\textbf{Step 2 - Identifying a fixed point space}: We choose to first prove existence in a space of relatively high regularity. 
The estimates of Prop.~\ref{P:HARMONIC} imply that there exists a constant $C_* > 0$ such that 
\eqref{E:TIMES} and \eqref{E:CMCTRAPPING} hold. We will work with closed, bounded subsets of $H^{N-1}$ of the form
\begin{align} \label{E:SMALLERSETDEF}
	\mathfrak{K}_{\epsilon} := \left\lbrace \varrho  \ | \  \| \varrho \|_{H^{N-1}} \leq 1, 
	\left \| |\longNml|_{\gfour}^{-1}[\gfour \circ \iota_{1 + \varrho}, \partial \varrho] \right \|_{L^{\infty}} \leq 2, \| \varrho \|_{L^{\infty}} \leq 
	C_* \epsilon \right\rbrace.
\end{align}	
Note that the function $\varrho \equiv 0$ belongs to the interior of $\mathfrak{K}_{\epsilon}.$
This fact is an important hypothesis in Prop.~\ref{P:LLOYDDEGREE}. The reason that we assume $N \geq 5$ is that
Sobolev embedding implies the existence of a constant $C > 0$ such that 
\begin{align} \label{E:PARTIALHC0ESTIMATE}
	\left \| \partial (\mathbf{H}^{ij}[\gfour \circ \iota_{1 + \varrho}, \partial \varrho] ) \right \|_{C^0} \leq C 
\end{align}
whenever $\epsilon$ is sufficiently small and $\varrho \in \mathfrak{K}_{\epsilon};$ the availability of this estimate simplifies our analysis.

\textbf{Step 3 - The linearized map}: For each $\updelta > 0,$ let $\mathscr{T}_{\updelta}$ be the map that sends
elements $\varrho \in \mathfrak{K}_{\epsilon}$ to the corresponding solution $\chi$ to the linear PDE \eqref{E:PMCPDERESCALEDLINEARIZED},
i.e., $\mathscr{T}_{\updelta} \varrho = \chi.$ 
Note that for $\varrho \in \mathfrak{K}_{\epsilon},$ the modified linear PDE, which is of the form 
$a^{ij}(x)\partial_i \partial_j \chi + b^i(x) \partial_i \chi + c \chi = d(x),$ is uniformly elliptic with $a^{ij}(x)$ positive definite, $c < 0,$ 
and $a^{ij}, b^i, c, d \in H^{N-2}.$ Standard elliptic theory 
and the Rellich-Kondrachov theorem ensure that $\mathscr{T}_{\updelta}: \mathfrak{K}_{\epsilon} \rightarrow H^N \hookrightarrow H^{N-1}$ is a well-defined \emph{compact} (in $H^{N-1}$) map. We are using in particular the maximum principle and the fact that $c < 0$
to conclude that the map $\chi \to a^{ij}(x)\partial_i \partial_j \chi + b^i(x) \partial_i \chi + c \chi$
has a trivial kernel. We are also using the fact that the components $\gfour_{\mu \nu}$
are elements of $C^{N-1}([1/2, 3/2] \times \mathbb{T}^3)$ and the Christoffel symbols $\Chfour_{\alpha \ \beta}^{\ \mu}$
are elements of $C^{N-2}([1/2, 3/2] \times \mathbb{T}^3).$ This ensures, for example, that 
$\gfour_{\mu \nu} \circ \iota_{1 + \varrho} \in H^{N-1},$ 
$\Chfour_{\alpha \ \beta}^{\ \mu}[\gfour \circ \iota_{1 + \varrho}, ((\partial_t, \partial) \gfour)\circ \iota_{1 + \varrho}] 
\in H^{N-2},$ and $\mathbf{H}^{ij}[\gfour \circ \iota_{1 + \varrho}, \partial \varrho] \in H^{N-2}.$

\textbf{Step 4 - A priori estimates and fixed points}: In order to apply Prop.~\ref{P:LLOYDDEGREE}, we will show that the following fact holds: for each real number $\uplambda \in [0,1],$ any $\uplambda-$quasi-fixed point $\varrho$ of $\mathscr{T}_{\updelta}$ that belongs to $\mathfrak{K}_{\epsilon}$ must lie in the interior of $\mathfrak{K}_{\epsilon}.$ By definition, 
a $\uplambda-$quasi-fixed point is a solution to the equation
\begin{align} \label{E:QUASIFIXED}
	\uplambda \mathscr{T}_{\updelta} \varrho = \varrho,
\end{align}
which can equivalently be expressed as
\begin{align} \label{E:PMCPDEQUASIFIXED}
			\mathbf{H}^{ab}[\gfour \circ \iota_{1 + \varrho}, \partial \varrho] 
				\partial_a \partial_b \varrho 
			& = \uplambda \left\lbrace 1 + \mathscr{M}_0 \circ \iota_{1 + \varrho} \right\rbrace
			 		+ Y^a[\gfour \circ \iota_{1 + \varrho}, ((\partial_t, \partial) \gfour)\circ \iota_{1 + \varrho}, \partial \varrho] 
			 		\partial_a \varrho
					+ \updelta \varrho.
\end{align}
After we have shown the aforementioned fact, Prop.~\ref{P:LLOYDDEGREE} will immediately imply the desired existence result for the modified nonlinear PDE. To prove the fact, we will derive the following two a priori estimates: \textbf{i)} if $\uplambda \in [0,1],$ 
$\epsilon$ is sufficiently small, and $\varrho \in \mathfrak{K}_{\epsilon}$ verifies \eqref{E:QUASIFIXED}, then $\| \varrho \|_{H^N} \leq C \epsilon;$ \textbf{ii)} if $\uplambda \in [0,1],$ $\epsilon$ is sufficiently small,
and $\varrho \in \mathfrak{K}_{\epsilon}$ solves \eqref{E:QUASIFIXED}, then $\| \varrho \|_{C^0} < C_* \epsilon,$ where $C_*$ is the constant 
appearing in definition \eqref{E:SMALLERSETDEF}. \textbf{i)} and \textbf{ii)} together imply that when $\uplambda \in [0,1]$
and $\epsilon$ is sufficiently small, there are no 
$\uplambda-$quasi-fixed points $\varrho$ of $\mathscr{T}_{\updelta}$ with
$\varrho \in \partial \mathfrak{K}_{\epsilon}.$ Our proofs of \textbf{i)} and \textbf{ii)} are based on standard $L^2-$type Sobolev estimates and the maximum principle. We first prove \textbf{i)} via Sobolev estimates. Specifically, we multiply \eqref{E:PMCPDEQUASIFIXED} by $\varrho,$ integrate by parts, use the bounds implied by \eqref{E:SMALLERSETDEF}, use the uniform positive definiteness of $\mathbf{H}^{ij}$ in the regime we are considering, use the estimate \eqref{E:PARTIALHC0ESTIMATE}, use the standard Sobolev calculus, and apply Cauchy-Schwarz (for the inverse metric $\mathbf{H}^{ij}$) to deduce that there exists a constant $C > 0$ (independent of $\updelta \in (0,1]$) such that 
\begin{align} \label{E:CMCFIRSTENERGY}
	\int_{\mathbb{T}^3} \mathbf{H}^{ab}[\gfour \circ \iota_{1 + \varrho}, \partial \varrho] 
		(\partial_a \varrho)(\partial_b \varrho)
		\, dx
	\leq C \uplambda^2 \int_{\mathbb{T}^3} \left|1 + \mathscr{M}_0 \circ \iota_{1 + \varrho} \right|^2 \, dx
		+ C \int_{\mathbb{T}^3} \varrho^2 \, dx.
\end{align}
The estimates of Prop.~\ref{P:HARMONIC} imply that for $\varrho \in \mathfrak{K}_{\epsilon},$ we have
\begin{align}
	\| 1 + \mathscr{M}_0 \circ \iota_{1 + \varrho} \|_{C^0} \leq C \epsilon.
\end{align}
Furthermore, the definition of $\mathfrak{K}_{\epsilon}$ implies that $\| \varrho \|_{C^0} \leq C \epsilon.$ Since
$\mathbf{H}^{ij}$ is uniformly comparable to $(\Euc^{-1})^{ij}$ in the regime we are considering, we 
therefore conclude from \eqref{E:CMCFIRSTENERGY} that
\begin{align}
	\int_{\mathbb{T}^3} (\Euc^{-1})^{ab} (\partial_a \varrho)(\partial_b \varrho) \, dx & \leq C \epsilon^2.
\end{align} 
 
Similarly, we commute equation \eqref{E:PMCPDEQUASIFIXED} with $\partial_{\vec{I}},$ $|\vec{I}| \leq N-2$ and repeat the argument to
inductively derive
\begin{align}
	\int_{\mathbb{T}^3} (\Euc^{-1})^{ab} (\partial_a \partial_{\vec{I}} \varrho)(\partial_b \partial_{\vec{I}} \varrho) \, dx 
	& \leq C \epsilon^2, & |\vec{I}| & \leq N-2.
\end{align} 
To estimate the $Nth$ order derivatives of $\varrho,$ we first note that for $|\vec{I}| = N-2,$ we have 
\begin{align}
	\partial_a \big(\mathbf{H}^{ab}[\gfour \circ \iota_{1 + \varrho}, \partial \varrho] 
	\partial_b \partial_{\vec{I}} \varrho \big) = f_{\vec{I}},
\end{align}
where $\| f_{\vec{I}} \|_{L^2} \leq C \epsilon.$ It follows that $\partial_{\vec{I}} \varrho$ is a weak solution to a 
uniformly elliptic divergence form PDE with $\mathbf{H}^{ab}[\gfour \circ \iota_{1 + \varrho}, \partial \varrho] \in H^{N-2} \hookrightarrow C^{N-4}.$ From standard elliptic theory (c.f. \cite[Theorem 1 of Section 6.3.1]{lE1998}), we conclude that $\partial_{\vec{I}} \varrho \in H^2$ and
\begin{align}
	\| \partial_{\vec{I}} \varrho \|_{H^2} \leq C (\| f_{\vec{I}} \|_{L^2} + \| \partial_{\vec{I}} \varrho \|_{H^1}) \leq C \epsilon
\end{align}
as desired. In summary, we have shown that
\begin{align} \label{E:PHIHNBOUND}
	\| \varphi - 1 \|_{H^N} & \leq C \epsilon,
\end{align}
where $C$ does not depend on $\updelta \in (0,1].$

To prove $\textbf{ii)},$ we show that when $\uplambda \in [0,1],$ any $\uplambda-$quasi fixed point $\varrho \in \mathfrak{K}_{\epsilon}$ must avoid the two obstacles $\pm C_* \epsilon.$ We argue by contradiction using the maximum principle. If $\varrho(x_*) = C_* \epsilon,$ then $x_*$ must be a maximum. Thus, $\partial_i \varrho(x_*) = 0,$ $(i=1,2,3).$ Therefore, it follows from \eqref{E:CMCTRAPPING} and \eqref{E:PMCPDEQUASIFIXED} that
\begin{align}
	\mathbf{H}^{ab}[\gfour(1 + C_* \epsilon, x_*), 0]
		\partial_a \partial_b \varrho(x_*) 
		& = \uplambda \left\lbrace 1 + \mathscr{M}_0 (1 + C_* \epsilon, x_*) \right\rbrace
		+ C_* \epsilon \updelta \\
		& > 0. \notag
\end{align}
Since $\mathbf{H}^{ij}[\gfour(1 + C_* \epsilon, \cdot), 0]$ is a uniformly positive definite $3 \times 3$ matrix, this 
contradicts the maximum principle for the operator 
$\mathbf{H}^{ab}[\gfour(1 + C_* \epsilon, x_*), 0] \partial_a \partial_b$ and shows that $\varrho(x_*) = C_* \epsilon$ is impossible. Similarly, 
it is impossible for $\varrho$ to touch $-C_* \epsilon.$ It follows that 
\begin{align}
	\| \varrho \|_{C^0} < C_* \epsilon.
\end{align}
We can now apply Prop.~\ref{P:LLOYDDEGREE} to conclude that the PDE \eqref{E:PMCPDERESCALED}
has a solution $\varphi = 1 + \varrho,$ where $\varrho \in \mathfrak{K}_{\epsilon}.$

\textbf{Step 5 - Limit as $\updelta \downarrow 0$}: We have now produced a family of solutions
$\varphi_{\updelta}$ to the modified nonlinear PDE \eqref{E:PMCPDERESCALED} that verify the bound
\eqref{E:PHIHNBOUND}. By weak compactness and the Rellich-Kondrachov theorem, 
there exists a sequence of numbers $\updelta_n \overset{n \to \infty}{\downarrow} 0$
such that the sequence $\varphi_{\updelta_n}$ converges
(weakly in $H^N$ and strongly in $H^{N-1}$) to the desired solution $\varphi \in H^N \cap \mathfrak{K}_{\epsilon}$ of the nonlinear PDE \eqref{E:PMCPDERESCALED} with $\updelta = 0.$ The solution $\varphi$ also verifies the bound \eqref{E:PHIHNBOUND}.

\textbf{Step 6 - Upgraded regularity of the solution to the Einstein-stiff fluid equations}:
	The above arguments have produced a $C^{N-2}$ hypersurface $\Sigma = \lbrace (\varphi(x),x) \ | \ x \in \mathbb{T}^3 \rbrace$ 
	of mean curvature $-1/3$ that is sandwiched in between the hypersurfaces $\Sigma_{t_-}$ and $\Sigma_{t_+},$ where $t$ is the time 
	coordinate from Prop.~\ref{P:HARMONIC} and $t_- = 1 - C_* \epsilon,$ $t_+ = 1 + C_* \epsilon.$
	Let $\mathscr{Z} := (\gfour_{\mu \nu}, \partial \gfour_{\mu \nu}, p, \ufour^{\mu})_{0 \leq \mu, \nu \leq 3}$
	denote the array of solution components appearing in the conclusions of Prop.~\ref{P:HARMONIC}, and let
	$\widetilde{\mathscr{Z}}$ denote the array of FLRW components [see equation \eqref{E:FLRWSTANDARDCOORDS}]. 
	The main difficulty that remains to be resolved is that even though Prop.~\ref{P:HARMONIC} guarantees that
	$\partial_t^M(\mathscr{Z} - \widetilde{\mathscr{Z}}) (t,\cdot) \in H^{N-M}$ for $0 \leq M \leq N,$ it does not
	automatically follow that, for example,  
	$\big[\partial_{\vec{I}}(\mathscr{Z} - \widetilde{\mathscr{Z}}) \big] \circ \iota_{\varphi} \in L^2$
	when $|\vec{I}| = N;$ this difficulty would be present even if $\varphi$ were known to be $C^{\infty}.$ The standard trace theorems
	allow for the possibility that the spacetime function 
	$\partial_t^M(\mathscr{Z} - \widetilde{\mathscr{Z}})$ loses some Sobolev 
	differentiability when restricted to the hypersurface $\Sigma.$ To avoid any loss, we will revisit the   
	estimates proved in Prop.~\ref{P:HARMONIC} along the hypersurfaces $\Sigma_t$ and deduce analogous
	estimates along $\Sigma.$ Specifically, we claim that one can derive the following Sobolev estimate for the solution
	along $\Sigma$ whenever $M + |\vec{I}| \leq N:$
	\begin{align} \label{E:DIVTHMFORSIGMA}
		\int_{\mathbb{T}^3} \left|\partial_t^M \partial_{\vec{I}}(\mathscr{Z} - \widetilde{\mathscr{Z}})\right|^2 \circ \iota_{\varphi} \, d 
		\mu
		& \leq C \int_{\mathbb{T}^3} \left|\partial_t^M \partial_{\vec{I}}(\mathscr{Z} - \widetilde{\mathscr{Z}})\right|^2(t_-,x) \, dx 
			+ C \int_{s = t_-}^{t_+} \left|\partial_t^M \partial_{\vec{I}}(\mathscr{Z} - \widetilde{\mathscr{Z}})\right|^2(s,x)  \, dx \, ds \\
		& \leq C \epsilon. \notag
	\end{align}
	Above, $d \mu = \upsilon \, dx,$
	where $\upsilon:= \sqrt{\mbox{det}(E^{\Sigma})}$ 
	is the volume form factor corresponding to $E_{ij}^{\Sigma} := \iota_{\varphi}^* \Eucfour^{\Sigma}_{ij},$ 
	and $\iota_{\varphi}^*$ denotes pullback by $\iota_{\varphi}.$
	Here, $\Eucfour^{\Sigma}$ is the first fundamental form of $\Sigma$ relative to $\Eucfour,$ 
	where $\Eucfour_{\mu \nu} := \mbox{diag}(1,1,1,1)$ is the standard Euclidean metric on $[1/2, 3/2] \times \mathbb{T}^3.$ 
	In components, we have
	\begin{align}
		\Eucfour^{\Sigma}_{\mu \nu} = \Eucfour_{\mu \nu} - \frac{\longNml_{\mu} \longNml_{\nu}}{|\longNml|_{\Eucfour}^2},
	\end{align}
	where $\longNml_{\mu}$ is defined in \eqref{E:LONGNORMALCOVECTOR} and 
	$|\longNml|_{\Eucfour}^2 := (\Eucfour^{-1})^{\alpha \beta} \longNml_{\alpha} \longNml_{\beta}.$
	The estimate \eqref{E:PHIHNBOUND} implies that
	\begin{align}
		|\upsilon - 1| \leq C \epsilon. 
	\end{align}
	The estimate \eqref{E:DIVTHMFORSIGMA} can be derived by 
	commuting the Einstein-stiff fluid equations in harmonic map gauge
	with the operators $\partial_t^M \partial_{\vec{I}},$ by
	applying the divergence theorem (with the metric $\Eucfour$)
	to the region in between $\Sigma_{t_-}$ and $\Sigma,$ and by overestimating the 
	corresponding spacetime integral by the spacetime integral in \eqref{E:DIVTHMFORSIGMA}. 
	The ``$\epsilon$'' on the right-hand side of \eqref{E:DIVTHMFORSIGMA} is guaranteed by the 
	estimates of Prop.~\ref{P:HARMONIC}
	and the estimate \eqref{E:PHIHNBOUND}. The vectorfield used in the divergence theorem is 
	the same vectorfield that is used to derive the energy estimates in the proof of Prop.~\ref{P:HARMONIC}. 

\textbf{Step 7 - The $H^{N+2}$ regularity of $\varphi$}:
Now that we have the estimates \eqref{E:GCOMPOSEDWITHPHIUPGRADED}-\eqref{E:UCOMPOSEDWITHPHIUPGRADED},
we can return to the PDE \eqref{E:PMCPDEQUASIFIXED} (with $\updelta = 0$ now) and derive the additional Sobolev
regularity/estimates for up to order $N+2$ derivatives of $\varphi,$ as in the proof of \eqref{E:PHIHNBOUND}. The desired estimate \eqref{E:PHIFULLYUPGRADEDESTIMATES} thus follows.

\textbf{Step 8 - Sobolev estimates for $\hfour^{\mu \nu}[\gfour \circ \iota_{\varphi}, \partial \varphi]$}:
The estimates \eqref{E:FIRSTFUND00}-\eqref{E:FIRSTFUNDIJ} follow from definition \eqref{E:FIRSTFUNDPHI},
the estimates \eqref{E:PHIFULLYUPGRADEDESTIMATES} and \eqref{E:GCOMPOSEDWITHPHIUPGRADED}-\eqref{E:UCOMPOSEDWITHPHIUPGRADED},
and the standard Sobolev calculus.
\end{proof}

In the next corollary, we show that the fields induced on the CMC hypersurface $\Sigma$ are near-FLRW. 
This implies that they can be used as the ``data'' in our main stable singularity formation theorem.

\begin{corollary}[\textbf{Near-FLRW fields on the CMC hypersurface}] \label{C:NEARFLRWFIELDSONCMC}
Assume the hypotheses and conclusions of Prop.~\ref{P:CMCEXISTS}, and let $\Sigma$
be the CMC hypersurface provided by the proposition. Let 
$\hfour_{\mu \nu} := \gfour_{\mu \alpha} \gfour_{\nu \beta} \hfour^{\alpha \beta}$ denote the first fundamental 
form of $\Sigma$ [see \eqref{E:FIRSTFUNDPHI}],
let $\mathbf{v}_{\mu} = \hfour_{\mu \alpha} \ufour^{\alpha}$ denote the one-form that is dual
to the $\gfour-$orthogonal projection of $\ufour$ onto $\Sigma,$ and let 
$\kfour_{\mu \nu} := \gfour_{\mu \alpha} \kfour_{\ \nu}^{\alpha}$
denote the second fundamental form of $\Sigma$ [see \eqref{E:SECONDFUNDOFSIGMA}].
Let $\iota_{\varphi}$ denote the embedding 
$\iota_{\varphi}: \mathbb{T}^3 \hookrightarrow \Sigma,$ $\iota_{\varphi}(x) = \big(\varphi(x), x \big),$ 
and let $\iota_{\varphi}^*$ denote pullback\footnote{For example, $\iota_{\varphi}^* \hfour_{ij} = (\partial_i \iota_{\varphi}^{\mu})(\partial_j \iota_{\varphi}^{\nu}) 
\gfour_{\mu \alpha} \circ \iota_{\varphi} \gfour_{\nu \beta} \circ \iota_{\varphi}
\hfour^{\alpha \beta}[\gfour \circ \iota_{\varphi}, \partial \varphi].$} 
by $\iota_{\varphi}.$ Then there exists a constant $C > 0$ such that if $\epsilon$ is sufficiently small,
then the following estimates for components 
hold relative to the coordinates of Prop.~\ref{P:CMCEXISTS} (for $i,j = 1,2,3$):
\begin{align} \label{E:INDUCEDDATASMALL}
	\| \iota_{\varphi}^* \hfour_{ij} - \Euc_{ij} \|_{H^{N+1}}
	+ \left\| \iota_{\varphi}^* \kfour_{ij} + \frac{1}{3} \Euc_{i j} \right\|_{H^N}
	+ \left\| \iota_{\varphi}^* p - \frac{1}{3} \right\|_{H^N}
	+ \| \iota_{\varphi}^* \mathbf{v}_i \|_{H^N} & \leq C \epsilon,
\end{align}	
where $\Euc_{ij} = \mbox{diag}(1,1,1)$ denotes the standard Euclidean metric on $\mathbb{T}^3$
(see Def.~\ref{D:STANDARDATLAS}).

Furthermore, the fields $\iota_{\varphi}^* \hfour_{ij},$
$\iota_{\varphi}^* \kfour_{ij},$
$\iota_{\varphi}^* p,$
and $\iota_{\varphi}^* \mathbf{v}_i$
verify the Einstein constraints
\eqref{E:GAUSSINTRO}-\eqref{E:CODAZZIINTRO}.

\end{corollary}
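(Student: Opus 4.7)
My approach is to reduce each pullback in \eqref{E:INDUCEDDATASMALL} to spacetime quantities that are already controlled by Prop.~\ref{P:CMCEXISTS}, and then to subtract off the corresponding pulled-back FLRW quantity. The FLRW pullbacks turn out to be explicit elementary functions of $\varphi$ alone (namely $\varphi^{2/3}\Euc_{ij}$, $-\frac{1}{3}\varphi^{-1/3}\Euc_{ij}$, $\frac{1}{3}\varphi^{-2}$, and $0$), so every $\varphi$-dependent error is controlled by $\|\varphi - 1\|_{H^{N+2}} \leq C\epsilon$ from \eqref{E:PHIFULLYUPGRADEDESTIMATES} together with standard Moser composition estimates. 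The bulk of the work is purely algebraic: produce explicit coordinate formulas for each pullback in terms of $(\gfour,\partial\gfour,\ufour)\circ\iota_\varphi$ and up to two derivatives of $\varphi$, then apply Sobolev product/composition estimates.

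For the first fundamental form I exploit that $\Nml$ is $\gfour$-normal to $\Sigma,$ so the $\Nml_\mu \Nml_\nu$ piece of $\hfour_{\mu\nu}$ is annihilated by $\iota_\varphi^*$ and $\iota_\varphi^*\hfour = \iota_\varphi^*\gfour$, giving
\begin{equation*}
  \iota_\varphi^* \hfour_{ij}
  = \gfour_{ij}\circ\iota_\varphi
  + \partial_i\varphi\,\gfour_{0j}\circ\iota_\varphi
  + \partial_j\varphi\,\gfour_{0i}\circ\iota_\varphi
  + \partial_i\varphi\,\partial_j\varphi\,\gfour_{00}\circ\iota_\varphi.
\end{equation*}
Subtracting the analogous expression with $\widetilde{\gfour}$ and invoking \eqref{E:GCOMPOSEDWITHPHIUPGRADED} with $M=0$ gives the desired $H^{N+1}$ bound modulo the explicit FLRW piece $\iota_\varphi^* \widetilde{\hfour}_{ij} - \Euc_{ij} = (\varphi^{2/3}-1)\Euc_{ij}$, which is small in $H^{N+1}$ by Moser. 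The same template handles the pressure via \eqref{E:PCOMPOSEDWITHPHIUPGRADED} and $\iota_\varphi^*\widetilde{p} = \frac{1}{3}\varphi^{-2}$. For $\mathbf{v}_i$ and $\kfour_{ij}$ I use \eqref{E:NORMAL} to express $\Nml$ as an algebraic function of $\gfour\circ\iota_\varphi$ and $\partial\varphi$, expand $\mathbf{v}_\mu = \gfour_{\mu\alpha}\ufour^\alpha + (\gfour_{\alpha\beta}\Nml^\alpha\ufour^\beta)\Nml_\mu$, and read $\kfour_{\ \nu}^\mu$ off \eqref{E:SECONDFUNDOFSIGMA}. Because $\widetilde{\ufour}$ is parallel to $\partial_t,$ which is normal to the FLRW time slices, the FLRW projection vanishes identically, so $\iota_\varphi^*\widetilde{\mathbf{v}}_i \equiv 0,$ while a brief computation shows $\iota_\varphi^*\widetilde{\kfour}_{ij} = -\frac{1}{3}\varphi^{-1/3}\Euc_{ij}$.

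The one technical subtlety is the derivative count for $\kfour_{ij}$: its expression involves one spacetime derivative of $\gfour,$ and we need $(\partial_\mu\gfour)\circ\iota_\varphi$ in $H^N(\mathbb{T}^3).$ Rather than appealing to a trace theorem, I use the chain rule
\begin{equation*}
  (\partial_k \gfour_{\mu\nu})\circ \iota_\varphi
  = \partial_k\bigl(\gfour_{\mu\nu}\circ\iota_\varphi\bigr)
    - \partial_k\varphi\cdot(\partial_t \gfour_{\mu\nu})\circ\iota_\varphi,
\end{equation*}
and observe that both terms on the right are in $H^N$ thanks to \eqref{E:GCOMPOSEDWITHPHIUPGRADED} applied with $M=0$ and $M=1$, together with $\partial_k\varphi \in H^{N+1}$. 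This matches the regularity budget of Prop.~\ref{P:CMCEXISTS} exactly, and is the main place in the proof where the $H^{N+2}$-regularity of $\varphi$ is truly needed. Finally, the constraints \eqref{E:GAUSSINTRO}-\eqref{E:CODAZZIINTRO} require no separate proof: they are the Gauss and Codazzi identities on the spacelike hypersurface $\Sigma$ of the globally hyperbolic spacetime constructed in Prop.~\ref{P:HARMONIC}, combined with the $\Nml\Nml$ and $\Nml\partial_i$ contractions of the Einstein equation $\Ricfour_{\mu\nu} - \frac{1}{2}\Rfour\gfour_{\mu\nu} = \Tfour_{\mu\nu}$. That $\Sigma$ is indeed spacelike (so that these identities yield a Riemannian induced metric and constraints of the stated form) is encoded in the bound $\||\longNml|_\gfour^{-1}\|_{L^\infty}\leq 2$ that Prop.~\ref{P:CMCEXISTS} enforces on members of $\mathfrak{K}_\epsilon$.
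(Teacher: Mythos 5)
Your overall strategy is the same as the paper's (extremely terse) proof: express each pullback in coordinates, reduce to the composed quantities controlled by Prop.~\ref{P:CMCEXISTS}, apply Sobolev/Moser calculus, and invoke Gauss--Codazzi geometry plus the Einstein equation for the constraints. The chain-rule identity $(\partial_k\gfour_{\mu\nu})\circ\iota_\varphi=\partial_k(\gfour_{\mu\nu}\circ\iota_\varphi)-\partial_k\varphi\,(\partial_t\gfour_{\mu\nu})\circ\iota_\varphi$ correctly matches the regularity budget of Prop.~\ref{P:CMCEXISTS}, and your observation that $\iota_\varphi^*\hfour=\iota_\varphi^*\gfour$ because $\Nml_\mu(\partial_i\iota_\varphi^\mu)=0$ is exactly the right simplification.

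However, your explicit formulas for the FLRW pullbacks are wrong, and the justification offered for one of them is an error of principle rather than a bookkeeping slip. The hypersurface $\Sigma=\{t=\varphi(x)\}$ is \emph{not} a constant-$t$ slice unless $\varphi$ is constant, so the $\widetilde\gfour$-normal to $\Sigma$ is \emph{not} proportional to $\partial_t$; it is proportional to $\partial_t + \varphi^{-2/3}\Euc^{ab}(\partial_a\varphi)\partial_b$. Thus $\iota_\varphi^*\widetilde{\mathbf v}_i$ does \emph{not} vanish identically -- it is linear in $\partial\varphi$ to leading order -- and your reasoning that ``$\widetilde\ufour$ is parallel to $\partial_t$, which is normal to the FLRW time slices'' conflates the FLRW time slices with $\Sigma$. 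Similarly, your formulas for $\iota_\varphi^*\widetilde\hfour_{ij}$ and $\iota_\varphi^*\widetilde\kfour_{ij}$ omit $\partial\varphi$-dependent corrections: by your own coordinate expansion one gets
\begin{align*}
\iota_\varphi^*\widetilde\hfour_{ij} = \varphi^{2/3}\Euc_{ij} - \partial_i\varphi\,\partial_j\varphi,
\end{align*}
not $\varphi^{2/3}\Euc_{ij}$, and $\iota_\varphi^*\widetilde\kfour_{ij}$ likewise acquires $\partial\varphi$ terms from the slanting of $\Sigma$. None of this breaks the corollary -- every omitted term is at least linear in $\partial\varphi$, hence $O(\epsilon)$ in $H^{N+1}$ by \eqref{E:PHIFULLYUPGRADEDESTIMATES}, so \eqref{E:INDUCEDDATASMALL} still holds once the formulas are corrected and the extra terms are carried through the Sobolev product estimates. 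But as written, the exact identities and the justification for the vanishing of $\iota_\varphi^*\widetilde{\mathbf v}_i$ are false and should be replaced by ``reduces to a smooth algebraic expression in $\varphi$ and $\partial\varphi$ equal to the FLRW value at $\varphi\equiv 1$, hence $O(\epsilon)$ in $H^{N+1}$.''
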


\begin{proof}
	The estimates in \eqref{E:INDUCEDDATASMALL} follow from the estimates of Prop.~\ref{P:CMCEXISTS}, the relation 
	\eqref{E:SECONDFUNDOFSIGMA}, and the standard Sobolev calculus. The fact that the fields verify the Einstein constraints
	is a consequence of the diffeomorphism invariance of the Einstein-stiff fluid equations.
	
\end{proof}


\subsection{Local well-posedness and continuation criteria relative to CMC-transported spatial coordinates}

By Corollary \ref{C:NEARFLRWFIELDSONCMC}, we can now assume that the perturbed spacetime contains a 
spacelike Cauchy hypersurface $\Sigma$ equipped with near-FLRW fields verifying the Einstein constraints
and with mean curvature constantly equal to $-\frac{1}{3}.$ We now discuss 
local well-posedness and continuation criteria for the Einstein-stiff fluid system relative to CMC-transported 
spatial coordinates.

\begin{theorem}[\textbf{Local well-posedness relative to CMC-transported spatial coordinates. Continuation criteria}]
\label{T:LOCAL}
Let $N \geq 4$ be an integer. Let $\big(\mathring{g}, \mathring{k}, \mathring{p}, \mathring{u} \big)$
be initial data on the manifold $\Sigma_1 \simeq \lbrace 1 \rbrace \times \mathbb{T}^3$ for the 
Einstein-stiff fluid system \eqref{E:EINSTEININTRO}-\eqref{E:EOS}, $\speed = 1$ (which by definition verify the constraints
\eqref{E:GAUSSINTRO}-\eqref{E:CODAZZIINTRO}). Assume that $\mathring{k}_{\ a}^a = - 1.$ Assume 
that relative to standard coordinates on $\Sigma_1 = \mathbb{T}^3$ (see Def.~\ref{D:STANDARDATLAS}),
the eigenvalues of the $3 \times 3$ matrix $\mathring{g}_{ij}$ are uniformly bounded from above by a positive constant 
and strictly from below by $0,$ that
\begin{align}
	\inf_{x \in \mathbb{T}^3} \mathring{p}(x) > 0,
\end{align}
and that the components of the data verify the following estimates for $i,j = 1,2,3:$
\begin{align}
		\| \mathring{g}_{ij} - \Euc_{ij} \|_{H^{N+1}}
		+ \left\| \mathring{k}_{\ j}^i + \frac{1}{3} \ID_{\ j}^i \right\|_{H^N}
		+ \left\| \mathring{p} - \frac{1}{3} \right\|_{H^N}
		+ \| \mathring{u}^i \|_{H^N} & < \infty.
	\end{align}	
Then these data launch a unique classical solution $\big(g_{ij}, \SecondFund_{\ j}^i, n, p, u^i \big)$ to the Einstein-stiff fluid CMC-transported spatial coordinates equations \eqref{E:HAMILTONIAN}-\eqref{E:MOMENTUM},
\eqref{E:PARTIALTGCMC}-\eqref{E:PARTIALTKCMC}, \eqref{E:EULERPCMC}-\eqref{E:EULERUCMC}, \eqref{E:LAPSE}. The solution exists on a non-trivial spacetime slab $(T,1] \times \mathbb{T}^3$ upon which the CMC condition 
$k_{\ a}^a = - t^{-1}$ holds and upon which its components have the following properties:
\begin{align}
	p(t,x) & > 0, & n & > 0, 
\end{align}
\begin{align}
	g_{ij} & \in C^{N-1}((T,1] \times \mathbb{T}^3); & \SecondFund_{\ j}^i & \in C^{N-2}((T,1] \times \mathbb{T}^3); \\
	n & \in C^N((T,1] \times \mathbb{T}^3); & p, u^i & \in C^{N-2}((T,1] \times \mathbb{T}^3).
\end{align}
The quantities $(\gfour_{\mu \nu}, p, \ufour^{\mu})$ verify the Einstein-stiff fluid system 
\eqref{E:EINSTEININTRO}-\eqref{E:EOS}, $\speed = 1.$ Here, $\gfour := - n^2 dt^2 + g_{ab} dx^a dx^b,$ and
$\ufour$ is the future-directed vectorfield such that $\ufour^i = u^i$ and $\gfour(\ufour, \ufour) = - 1.$
Moreover, on $(T,1] \times \mathbb{T}^3,$ the eigenvalues of the $3 \times 3$ matrix $g_{ij}$ are 
uniformly bounded from above by a positive constant
and strictly from below by $0.$ Furthermore, $\gfour_{\mu \nu}$ is a Lorentzian matrix on $(T,1] \times \mathbb{T}^3$ and
for $t \in (T,1],$ the sets $\lbrace t \rbrace \times \mathbb{T}^3$ are Cauchy hypersurfaces in the Lorentzian manifold-with-boundary
$\big((T,1] \times \mathbb{T}^3, \gfour \big).$  

The solution's components have the following Sobolev regularity:
\begin{align}
	g_{ij} - \widetilde{g}_{ij} & \in C^0((T,1],H^{N+1}); & k_{\ j}^i - \widetilde{k}_{\ j}^i & \in C^0((T,1],H^N); 
	\\
	n - \widetilde{n} & \in C^0((T,1],H^{N+2}); & p - \widetilde{p}, u^i - \widetilde{u}^i & \in C^0((T,1],H^N).
	\notag
\end{align}
Above,
\begin{align}
	\widetilde{g}_{ij} & = t^{2/3} \Euc_{ij}, & \widetilde{k}_{\ j}^i & = - \frac{1}{3} t^{-1} \ID_{\ j}^i, 
	& \widetilde{n} & = 1, & \widetilde{p} & = \frac{1}{3} t^{-2}, & \widetilde{u}^i & = 0
\end{align}
are the components of the FLRW solution. 
	
	In addition, there exists an open neighborhood $\mathcal{O}$ of
	$\big(\mathring{g}_{ij}, \mathring{k}_{\ j}^i, \mathring{p}, \mathring{u}^i \big)$
	such that all data belonging to $\mathcal{O}$ launch solutions that also exist on the slab 
	$(T, 1] \times \mathbb{T}^3$ and that have the 
	same regularity properties as $(g_{ij}, \SecondFund_{\ j}^i, n, p, u^i).$ 
	Furthermore, on $\mathcal{O},$ the map from the initial data to the 
	solution is continuous; by ``continuous,'' we mean continuous relative to the norms for the data and the 
	norms for the solution that are stated in the hypotheses and above conclusions of this theorem.

Finally, if $T_{min}$ denotes the $\inf$ over all times $T$ such that the solution exists classically and has the above 
properties, then either $T_{min} = 0,$ or one of the following \textbf{breakdown scenarios} must occur:
\begin{enumerate}
	\item There exists a sequence $\left \lbrace (t_m ,x_m) \right \rbrace_{m=1}^{\infty} \subset (T_{min},1] \times \mathbb{T}^3$ such that
		the minimum eigenvalue of the $3 \times 3$ matrix $g_{ij}(t_m,x_m)$ converges to $0$ as $m \to \infty.$
	\item There exists a sequence $\left \lbrace (t_m ,x_m) \right \rbrace_{m=1}^{\infty} \subset (T_{min},1] \times \mathbb{T}^3$ such that
		$n(t_m,x_m)$ converges to $0$ as $m \to \infty.$   
	\item There exists a sequence $\left \lbrace (t_m ,x_m) \right \rbrace_{m=1}^{\infty} \subset (T_{min},1] \times \mathbb{T}^3$ such that
		$p(t_m,x_m)$ converges to $0$ as $m \to \infty.$
	\item $\lim{t \downarrow T_{min}} \sup_{t \leq s \leq 1} \| g(s) \|_{C_{Frame}^2} + \| k(s) \|_{C_{Frame}^1} 
		+ \| n(s) \|_{C^2} + \| p(s) \|_{C^1} + \| u(s) \|_{C_{Frame}^1} = \infty.$

\end{enumerate}

\end{theorem}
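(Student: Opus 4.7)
The plan is to deduce Theorem~\ref{T:LOCAL} by combining the harmonic-map-gauge existence result Prop.~\ref{P:HARMONIC} with the CMC slice construction Prop.~\ref{P:CMCEXISTS}, and then switching to CMC-transported spatial coordinates. First I would use Corollary~\ref{C:NEARFLRWFIELDSONCMC} to produce, for any data belonging to a neighborhood of the FLRW data, a spacetime development in which there exists a CMC hypersurface $\Sigma \simeq \mathbb{T}^3$ equipped with near-FLRW fields. Near the initial CMC slice, solve the prescribed mean curvature PDE \eqref{E:PMCPDE} with $F \equiv - t^{-1}$ for $t$ in an interval about $1$ to obtain a foliation by CMC hypersurfaces $\Sigma_t,$ mimicking the argument of Prop.~\ref{P:CMCEXISTS} but with a $t$-parameter; the implicit function theorem applied around the FLRW CMC foliation delivers this foliation with Sobolev regularity inherited from the harmonic-gauge solution. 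Transported spatial coordinates are then defined by $- \Nml x^i = 0,$ and the resulting system is exactly \eqref{E:HAMILTONIAN}--\eqref{E:LAPSE} with $\SecondFund_{\ a}^a = -t^{-1}.$ Once one has existence of a CMC foliation, the regularity assertions for $g_{ij}, \SecondFund_{\ j}^i, n, p, u^i$ follow by combining the harmonic-gauge Sobolev estimates of Prop.~\ref{P:HARMONIC} with standard elliptic regularity for \eqref{E:LAPSE} (note $n$ gains two derivatives over the source terms).

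For uniqueness and continuous dependence, the main obstacle is that the evolution equation \eqref{E:PARTIALTKCMC} for $\SecondFund_{\ j}^i$ involves the Ricci tensor $R_{\ j}^i,$ which in transported spatial coordinates contains a term of the form $\frac{1}{2}(\partial_i \Gamma_j + \partial_j \Gamma_i)$ with $\Gamma_j = \Gamma_j(g, \partial g);$ naively this appears to lose a derivative in an $L^2$ energy estimate for $(\SecondFund, \partial g).$ As explained in the introduction of the paper (and realized concretely via the currents $\dot{\mathbf{J}}_{(Metric)}^{\mu}$ and $\dot{\mathbf{J}}_{(Fluid)}^{\mu}$ introduced in Sect.~\ref{S:NORMSANDCURRENTS}), one circumvents this by integrating by parts: the resulting top-order term $\int g^{ij} (\partial_a \SecondFund_{\ i}^a) \Gamma_j \, dx$ is then eliminated using the momentum constraint \eqref{E:MOMENTUM}, which trades $\partial_a \SecondFund_{\ i}^a$ for lower-order fluid terms. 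For local well-posedness of a quasilinear system this requires a standard workaround because linearization destroys the constraint; I would either (i) perform the linearization about a constraint-satisfying background and carry the constraint-violation as an auxiliary variable whose evolution is controlled, or (ii) simply transfer existence and uniqueness from the harmonic-map gauge via the diffeomorphism provided by the foliation-and-transport construction of the previous paragraph, which is the cleanest route and is the one I would take.

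The continuation criteria follow from a standard breakdown argument. Assume $T_{min} > 0$ and that none of (1)--(4) occur. Then $g_{ij}, g^{ij}, n, n^{-1}, p, p^{-1}$ are uniformly controlled on $(T_{min}, 1] \times \mathbb{T}^3,$ and the $C^2$ / $C^1$ bounds of (4) give uniform control of all terms appearing on the right-hand sides of \eqref{E:PARTIALTGCMC}--\eqref{E:PARTIALTKCMC} and \eqref{E:EULERPCMC}--\eqref{E:EULERUCMC}. A bootstrap Gronwall argument applied to the Sobolev norms of $(g - \widetilde{g}, \SecondFund - \widetilde{\SecondFund}, p - \widetilde{p}, u)$ --- using energy estimates analogous to those of Sect.~\ref{S:FUNDAMENATLENERGYINEQUALITIES} but on a \emph{finite} time interval where the coefficients are uniformly bounded --- shows these norms remain finite as $t \downarrow T_{min}.$ Elliptic estimates for \eqref{E:LAPSE} then control $n$ in $H^{N+2}.$ By strong continuity in $t,$ the solution extends continuously to $\Sigma_{T_{min}}$ with the regularity assumed at $t=1,$ and local well-posedness starting from $\Sigma_{T_{min}}$ contradicts minimality of $T_{min}.$ The hard point is showing that the top-order Sobolev norms do not blow up under only the low-regularity hypothesis (4); this is handled by commuting $\partial_{\vec{I}}$ through the system, using the integration-by-parts trick above to handle the $R_{\ j}^i$ term at the top order, and applying Gronwall.
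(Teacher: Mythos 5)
Your proposal diverges from the paper's approach, and there are two concrete problems with it.

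First, the reduction to Prop.~\ref{P:HARMONIC} and Prop.~\ref{P:CMCEXISTS} fails for the generality claimed. Theorem~\ref{T:LOCAL} is a \emph{general} local well-posedness theorem: its hypotheses only require that the eigenvalues of $\mathring{g}_{ij}$ be bounded above and strictly away from $0$, that $\mathring{p} > 0$, and that certain Sobolev norms be finite. There is no near-FLRW smallness hypothesis. By contrast, Prop.~\ref{P:HARMONIC} and Prop.~\ref{P:CMCEXISTS} are both explicitly stated (and proved) only under a near-FLRW assumption with a small parameter $\epsilon$, and your plan to ``apply the implicit function theorem around the FLRW CMC foliation'' therefore has nothing to anchor to in the general setting. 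A transfer argument from harmonic gauge for general CMC data would require a general harmonic-gauge LWP statement and a general CMC-foliation-construction statement that are not available in this paper.

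Second, your option (i) — linearizing about a constraint-satisfying background and using the integration-by-parts/momentum-constraint trick at top order — is precisely the approach that the paper flags as not directly workable. As noted in Sect.~\ref{SSS:GLOBALSOLUTIONSINGR}, replacing $\int_{\Sigma_t} g^{ij}\SecondFund_{\ i}^a \partial_a \Gamma_j\,dx$ with $-\int_{\Sigma_t} g^{ij}(\partial_a \SecondFund_{\ i}^a)\Gamma_j\,dx$ and then substituting via the momentum constraint closes the \emph{nonlinear} energy estimate, but a full local well-posedness proof requires estimates for a \emph{linearized} system, and the linearized iterates do not satisfy the constraints, so this substitution is not available. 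The paper instead circumvents the issue by commuting \eqref{E:PARTIALTKCMC} with $\partial_t$ to obtain a genuine wave equation (with respect to the $\gfour$-wave operator) for $\SecondFund_{ij}$, demoting the original first-order evolution equation to an extra constraint, and then invoking standard theory for mixed elliptic-hyperbolic systems (in the spirit of \cite{lAvM2003} and \cite[Theorem 10.2.2]{dCsK1993}), together with a constraint-propagation argument. If you keep the CMC-transported gauge, you need an argument of this kind rather than the direct linearized energy estimate. Your Gronwall argument for the continuation criteria is fine in spirit once such a well-posedness framework is in place, but it cannot be divorced from it, since the criteria (1)--(4) are precisely the breakdown criteria for the CMC-gauge system, not for the harmonic one.
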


\begin{remark}
	Conditions $(1)-(4)$ are known as \emph{continuation criteria}.
	Conditions $(1)$ and $(2)$ correspond to a breakdown in the Lorentzian nature of $\gfour.$ Condition
	$(3)$ is connected to the fact that the Euler equations can degenerate when the pressure vanishes.
\end{remark}

\begin{proof}[Sketch of a proof] 
Theorem~\ref{T:LOCAL} can be proved using the ideas of \cite[Theorem 6.2]{aS2010}, which is based on the proof of 
\cite[Theorem 10.2.2]{dCsK1993}. The main difficulty is that $R_{ij}$ cannot generally be viewed as an elliptic operator
acting on the components of the Riemannian $3-$metric $g.$ Hence, equations \eqref{E:PARTIALTGCMC}-\eqref{E:PARTIALTKCMC} do not immediately imply
that the components $g_{ij}$ verify wave equations corresponding to the wave operator of the spacetime metric $\gfour.$
The main idea behind circumventing this difficulty 
is to replace the evolution equation \eqref{E:PARTIALTKCMC} for $\SecondFund_{ij}$ with a wave equation
corresponding to the wave operator of the spacetime metric $\gfour.$
The wave equation is obtained by commuting \eqref{E:PARTIALTKCMC} with $\partial_t,$ 
and \eqref{E:PARTIALTKCMC} is treated as an additional constraint. 
The other equations are used to substitute for the terms that appear 
on the right-hand side of the wave equation for $\SecondFund_{ij}.$ If this procedure is properly implemented,
then the resulting ``modified'' system is mixed elliptic-hyperbolic 
(the elliptic part comes from the lapse equation).
Local well-posedness for such systems, including the solution properties stated 
in the conclusions of the theorem, can be
derived using the standard methods described in \cite{lAvM2003}. It is important to prove
that if a solution to the modified system initially verifies the constraints, then the 
constraints remain verified throughout the evolution. To this end, one shows that
for a solution to the modified system, the constraint quantities 
verify a homogeneous system of evolution equations for which energy methods imply the uniqueness 
of the $0$ solution; the conclusion is that the constraint quantities remain $0$ if they start out $0.$
The proof that the sets $\lbrace t \rbrace \times \mathbb{T}^3$ are Cauchy hypersurfaces
can be found in \cite[Proposition 1]{hR2008}. Finally, the continuation criteria (1) - (4) are quite standard; see e.g.
the proof of \cite[Theorem 6.4.11]{lH1997} for the main ideas.
\end{proof}

%
%
	

\section{Statement and Proof of the Stable Singularity Formation Theorem} \label{S:STABLESINGULARITY}

In this section, we prove our main theorem demonstrating the global nonlinear past stability of the FLRW Big Bang spacetime. 
By Prop.~\ref{P:CMCEXISTS} and Corollary \ref{C:NEARFLRWFIELDSONCMC}, we may assume that the perturbed spacetime
contains a spacelike hypersurface $\Sigma_1$ with constant mean curvature equal to $-\frac{1}{3}$ and equipped with near-FLRW field ``data'' that verify the Einstein constraints. 
Specifically, the data are the fields $\iota_{\varphi}^* \hfour_{ij}, \iota_{\varphi}^* \kfour_{ij}, 
\iota_{\varphi}^* p, \iota_{\varphi}^* \mathbf{v}_i$ from the conclusions of Corollary \ref{C:NEARFLRWFIELDSONCMC}. 
In this section, we denote these fields by $\mathring{g}_{ij},$ $\mathring{k}_{ij},$ $\mathring{p},$ $\mathring{u}_i.$
Furthermore, we renormalize the time coordinate so that $t = 1$ along $\Sigma_1.$ There are two kinds of statements
presented in the conclusions of the theorem: \textbf{i)} existence on the entire spacetime slab $(0,1] \times \mathbb{T}^3,$ and
\textbf{ii)} sharp asymptotics/convergence estimates as $t \downarrow 0.$ As in our proof of the strong estimates of 
Prop.~\ref{P:STRONGPOINTWISE}, 
our proofs of \textbf{ii)} incur a loss in derivatives. The reason is that in deriving these estimates, we freeze the spatial point $x$ and treat the Einstein-stiff fluid equations as ODEs with small sources. The sources depend on higher-order spatial derivatives, which leads to the loss.
The main ingredients in the proof of the theorem are the a priori norm estimates of Corollary \ref{C:ENERGYINTEGRALINEQUALITIES}.

\begin{theorem}[\textbf{Main Theorem: Stable Big Bang Formation}] \label{T:BIGBANG}
Let $\big(\mathring{g}, \mathring{\SecondFund}, \mathring{p}, \mathring{u} \big)$ 
be initial data on the manifold $\Sigma_1 = \mathbb{T}^3$
for the Einstein-stiff fluid system \eqref{E:EINSTEININTRO}-\eqref{E:EOS}, $\speed = 1$ (which by definition verify the constraints
\eqref{E:GAUSSINTRO}-\eqref{E:CODAZZIINTRO}). Assume that the data verify the CMC condition 
\begin{align}
	\mathring{k}_{\ a}^a = -1.
\end{align}
Assume further that the components of the data verify the following near-FLRW condition 
relative to the standard coordinates on $\mathbb{T}^3$ (see Def.~\ref{D:STANDARDATLAS})
for some integer $N \geq 8$ and $i,j = 1,2,3:$
\begin{align} \label{E:GEOMETRICDATAARESMALL}
	\| \mathring{g}_{ij} - \Euc_{ij} \|_{H^{N+1}}
	+ \left\| \mathring{k}_{\ j}^i + \frac{1}{3} \ID_{\ j}^i \right\|_{H^N}
	+ \left\| \mathring{p} - \frac{1}{3} \right\|_{H^N}
	+ \| \mathring{u}^i \|_{H^N} & \leq \epsilon^2.
\end{align}	
Above, $\Euc_{ij} = \mbox{diag}(1,1,1)$ denotes the standard Euclidean metric on $\mathbb{T}^3$
and $\ID_{\ j}^i = \mbox{diag}(1,1,1)$ denotes the identity. Let $\big(\mathbf{M}, (\gfour, p, \ufour) \big)$ be the maximal globally hyperbolic development of the data (see the discussion in Sect.~\ref{S:INTRO}). Then there exist a small constant $\epsilon_* > 0$ and large constants $C, c > 0,$
where the constants depend on $N,$ 
such that if $\epsilon \leq \epsilon_*$ and \eqref{E:GEOMETRICDATAARESMALL} holds, then the following conclusions hold. 
\begin{itemize}
\item The field variables $(\gfour, p, \ufour)$ are classical solutions to the Einstein-stiff fluid equations \eqref{E:EINSTEININTRO}-\eqref{E:EOS},
	$\speed = 1.$
\item There exists a collection of CMC-transported spatial coordinates $(t,x^1,x^2,x^3)$ 
	covering $\mathbf{V} := \cup_{t \in (0, 1]} \Sigma_t \simeq (0, 1] \times \mathbb{T}^3,$ 
	where $\mathbf{V}$ is the past of $\Sigma_1$ in $\mathbf{M}.$ The coordinate $t$ is a time function 
	defined on $\mathbf{V},$ the $(x^1,x^2,x^3)$ are (spatially locally defined) transported spatial coordinates 
	(see Def.~\ref{D:STANDARDATLAS}), and each 
	$\Sigma_s := \lbrace q \in \mathbf{V} \ | \ t(q) = s \rbrace \simeq \mathbb{T}^3$ is a Cauchy hypersurface. The CMC condition 
	$\SecondFund_{\ a}^a = - t^{-1}$ holds along each $\Sigma_t.$ Relative to these coordinates, the FLRW solution can be expressed as
\begin{align} \label{E:MAINPROOFFLRWSTANDARDCOORDS}
	\widetilde{\gfour} & = - dt^2 + t^{2/3} \sum_{i=1}^3 (dx^i)^2, 
		& \widetilde{p} & = \frac{1}{3} t^{-2}, 
		& \widetilde{\ufour}^{\mu} & = \delta_0^{\mu},
\end{align}
where $\delta_{\nu}^{\mu}$ ($\mu, \nu = 0,1,2,3$) is the standard Kronecker delta.
\item Let $(g_{ij}, \SecondFund_{\ j}^i, n, p, u^i)_{1 \leq i,j \leq 3}$ denote the components of the perturbed solution relative to the 
CMC-transported spatial coordinates, where
\begin{subequations}
\begin{align}
	\gfour & = - n^2 dt^2 + g_{ab} dx^a dx^b, \\
	\ufour & = (1 + g_{ab} u^a u^b)^{1/2} \Nml + u^a \partial_a
\end{align}
\end{subequations}
are respectively the spacetime metric 
and the fluid four-velocity, $\Nml = n^{-1} \partial_t,$ 
$\SecondFund_{\ j}^i = - \frac{1}{2} n^{-1} g^{ia} \partial_t g_{aj}$ 
are the components of the mixed second fundamental form of $\Sigma_t,$
$n$ is the lapse, and
$p$ is the fluid pressure.
The following norm estimates (see Def.~\ref{D:NORMS})
are verified by the renormalized solution variables' frame components
(see Def.~\ref{D:RESCALEDVAR}) 
for $t \in (0,1]:$ 
\begin{subequations}
	\begin{align} 
		\highnorm{N}(t) & \leq \epsilon t^{- c \sqrt{\epsilon}}, 
			\label{E:MAINTHEOREMNORMESTIMATE} \\
		\lowkinnorm{N-3}(t) & \leq \epsilon, \\
 		\lowpotnorm{N-4}(t) & \leq \epsilon t^{- c \sqrt{\epsilon}}.
		\label{E:MAINTHEOREMPOTNORMESTIMATE}
	\end{align}
	\end{subequations}

\end{itemize}

In addition, the solution has the following properties.
\ \\

\noindent{\textbf{Causal disconnectedness:}} Let $\pmb{\zeta}(s)$ be a past-directed causal curve in $\big((0,1] \times \mathbb{T}^3, \gfour \big)$ with domain $s \in [s_1, s_{Max})$ such that $\pmb{\zeta}(s_1) \in \Sigma_t.$ Let $\pmb{\zeta}^{\mu}$ denote the coordinates of this curve in the universal covering space of the spacetime (i.e., $(0,1] \times \mathbb{R}^3).$ The length 
$\ell[\pmb{\zeta}] := \int_{s_1}^{s_{Max}} \sqrt{(\Euc_{ab} \circ \pmb{\zeta}) \dot{\zeta}^a \dot{\zeta}^b} \, ds$ of the spatial part of the curve as measured by the Euclidean metric is bounded from above by
\begin{align} \label{E:LENGTHEST}
	\ell[\pmb{\zeta}]
	\leq 
		\left(\frac{3}{2} + C \epsilon \right) t^{2/3 - c \epsilon}.
\end{align}
The constant $C$ \textbf{can be chosen to be independent of} the curve $\pmb{\zeta}.$ Thus, if 
$q,r \in \Sigma_t$ are separated by a Euclidean distance greater than 
$2\left(\frac{3}{2} + C \epsilon \right) t^{2/3 - c \epsilon},$
then the past of $q$ does not intersect the past of $r.$ 
\ \\

\noindent{\textbf{Geodesic incompleteness:}}
Every past-directed causal geodesic $\pmb{\zeta}$ that emanates from $\Sigma_1$ crashes into the singular hypersurface $\Sigma_0$ in finite affine parameter time
	\begin{align} \label{E:AFFINTEBLOWUPTIME}
	\mathscr{A}(0) \leq \left(\frac{3}{2} + C \epsilon \right) \left|\mathscr{A}^{'}(1)\right|,
\end{align}	
where $\mathscr{A}(t)$ is the affine parameter along $\pmb{\zeta}$ viewed as a function of $t$ along $\pmb{\zeta}$ 
(normalized by $\mathscr{A}(1) = 0$).
\ \\

\noindent{\textbf{Convergence of time-rescaled variables:}}	
	There exist functions $\upsilon_{Bang}, \newp_{Bang} \in C^{N-3}(\mathbb{T}^3)$ 
	and a type $\binom{1}{1}$ tensorfield $(\newsec_{Bang})_{\ j}^i \in C^{N-3}(\mathbb{T}^3)$ 
	such that the lapse, time-rescaled volume form factor, 
	time-rescaled mixed second fundamental form, time-rescaled pressure,
	three-velocity, and four-velocity $\Sigma_t-$normal component
	$\gfour(\ufour, \Nml) = - (1 + g_{ab} u^a u^b)^{1/2}$
	verify the following convergence estimates for $t \in (0,1]:$
\begin{subequations}
\begin{align}
	\left \| n - 1  \right \|_{C^M} 
	& \leq \left\lbrace \begin{array}{ll} 
		C \epsilon t^{4/3 - c \sqrt{\epsilon}}, & (M \leq N-5), \\
	  C \epsilon t^{2/3 - c \sqrt{\epsilon}}, & (M \leq N-3),
		\end{array} \right.
		\label{E:LAPSELIMIT} \\
	\left \| t^{-1} \sqrt{\gdet} - \upsilon_{Bang} \right \|_{C_{Frame}^M} 
	& \leq \left\lbrace \begin{array}{ll} 
		C \epsilon t^{4/3 - c \sqrt{\epsilon}}, & (M \leq N-5), \\
	  C \epsilon t^{2/3 - c \sqrt{\epsilon}}, & (M \leq N-3),
		\end{array} \right.
		\label{E:VOLFORMLIMIT} \\
	\left \| t \SecondFund - \newsec_{Bang} \right \|_{C_{Frame}^M} 
	& \leq \left\lbrace \begin{array}{ll} 
		C \epsilon t^{4/3 - c \sqrt{\epsilon}}, & (M \leq N-5), \\
	  C \epsilon t^{2/3 - c \sqrt{\epsilon}}, & (M \leq N-3),
		\end{array} \right.
		\label{E:KLIMIT} \\
	\left \| t^2 p - \newp_{Bang} \right \|_{C^M} & \leq \left\lbrace \begin{array}{ll} 
		C \epsilon t^{4/3 - c \sqrt{\epsilon}}, & (M \leq N-5), \\
	  C \epsilon t^{2/3 - c \sqrt{\epsilon}}, & (M \leq N-3),
		\end{array} \right.
		\label{E:PLIMIT}  \\
	\left \| u \right \|_{C_{Frame}^{N-4}} & \leq C \epsilon t^{1/3 - c \sqrt{\epsilon}},
		\label{E:ULIMIT} \\
	\left \| \gfour(\ufour, \Nml) + 1 \right \|_{C^{N-4}} & \leq C \epsilon t^{4/3 - c \sqrt{\epsilon}}.
		\label{E:UNORMALLIMIT}
\end{align}
\end{subequations}
In the above estimates, the frame norms $\| \cdot \|_{C_{Frame}^M}$ are defined in Def.~\ref{D:CMNORMS}.
Furthermore, the limiting fields are close to the corresponding time-rescaled FLRW fields in the following sense:
\begin{subequations}
\begin{align}
\left \| \upsilon_{Bang} - 1 \right \|_{C^{N-3}} & \leq C \epsilon, 
	\label{E:VOLFORMLIMITCLOSETOONE} \\
\left \| \newsec_{Bang} + \frac{1}{3} \ID  \right\|_{C_{Frame}^{N-3}} & \leq C \epsilon, 
	\label{E:KLIMITCLOSETOFLRW} \\
\left \| \newp_{Bang} - \frac{1}{3} \right\|_{C^{N-3}} & \leq C \epsilon.
	\label{E:PLIMITCLOSETOFLRW}
\end{align}
\end{subequations}

In addition, the limiting fields verify the following relations:
\begin{subequations}
\begin{align} 
	(\newsec_{Bang})_{\ a}^a & = - 1, 
		\label{E:LIMITINGKTRACE} \\
	2 \newp_{Bang} + (\newsec_{Bang})_{\ b}^a (\newsec_{Bang})_{\ a}^b & = 1.
		\label{E:LIMITINGFIELDCONSTRAINT}
\end{align}
\end{subequations}

\ \\

\noindent{\textbf{Behavior of the spatial metric:}}
There exists a type $\binom{0}{2}$ tensorfield $M_{ij}^{Bang} \in C^{N-3}$ on $\mathbb{T}^3$
such that
\begin{align} \label{E:METRICENDSTATE}
	\left \| M^{Bang} - \Euc \right \|_{C_{Frame}^{N-3}} & \leq C \epsilon 
\end{align}
and such that the following convergence estimates hold for $t \in (0,1]:$
\begin{align} \label{E:LIMITINGMETRICBEHAVIOR}
	\left\| g * \mbox{exp} \left(2 \ln t \newsec_{Bang} \right) 
		- M^{Bang} \right\|_{C_{Frame}^M} 
	& \leq \left\lbrace \begin{array}{ll} 
		C \epsilon t^{4/3 - c \sqrt{\epsilon}}, & (M \leq N-5), \\
	  C \epsilon t^{2/3 - c \sqrt{\epsilon}}, & (M \leq N-3),
		\end{array} \right.
\end{align}
where $g * \cdot$ denotes left multiplication of the type $\binom{1}{1}$ matrix $\cdot$
by the type $\binom{0}{2}$ matrix $g_{ij}.$

\ \\

\noindent{\textbf{Quantities that blow up:}} The $|\cdot|_g$ norm of the 
second fundamental form $\SecondFund$ of $\Sigma_t$ verifies the estimate
\begin{align} \label{E:SECFUNDFORMBLOWUP}
	\left \| t |\SecondFund|_g - |(\newsec_{Bang})_{\ b}^a (\newsec_{Bang})_{\ a}^b|^{1/2} \right \|_{C^0} 
		& \leq C \epsilon t^{4/3 - c \sqrt{\epsilon}},
\end{align}
which shows that $|\SecondFund|_g$ blows up like $t^{-1}$ as $t \downarrow 0.$ 

The $|\cdot|_g$ norm of the Riemann curvature of $g$ verifies the estimate
\begin{align} \label{E:3RIEMANNBLOWUP}
	\left \| |\Riemann|_g \right \|_{C^0} & \leq C \epsilon t^{-2/3 - c \epsilon}.
\end{align}



The spacetime Ricci curvature invariant $|\Ricfour|_{\gfour}^2$
verifies the estimate
\begin{align} \label{E:SPACETIMERICCIINVARIANTBLOWUP}
	\left \| t^4 |\Ricfour|_{\gfour}^2
		- 4 \newp_{Bang}^2 \right \|_{C^0} & \leq C \epsilon t^{4/3 - c \sqrt{\epsilon}},
\end{align}
which shows that $|\Ricfour|_{\gfour}^2$
blows up like $t^{-4}$ as $t \downarrow 0.$

The $|\cdot|_{\gfour}^2$ norm of the spacetime Riemann curvature tensor verifies the estimates
\begin{subequations}
\begin{align} \label{E:SPACETIMEKRETSCHMANNBLOWUP}
	\left \|t^4 |\Riemfour|_{\gfour}^2 - F_{Bang}  \right \|_{C^0} & \leq C \epsilon t^{4/3 - c \sqrt{\epsilon}}, \\
	F_{Bang} & := \Big\lbrace 2 (\newsec_{\ b}^a \newsec_{\ a}^b)^2 + 4 \newsec_{\ b}^a \newsec_{\ a}^b
		+ 2 \newsec_{\ b}^a \newsec_{\ c}^b \newsec_{\ d}^c \newsec_{\ a}^d
		+ 8 \newsec_{\ b}^a \newsec_{\ c}^b \newsec_{\ a}^c \Big\rbrace|_{\newsec = \newsec_{Bang}}, \\
	\left\| F_{Bang} - \frac{20}{27} \right\|_{C^0} & \leq C \epsilon,
	\label{E:SPACETIMEKRETSCHMANNLIMITNEARLYCONSTANT}
\end{align}
\end{subequations}
which shows that $|\Riemfour|_{\gfour}^2$ blows up like $t^{-4}$ as $t \downarrow 0.$

The $|\cdot|_{\gfour}^2$ norm of the spacetime tensor 
$\mathbf{P}_{\alpha \beta \mu \nu} = \Riemfour_{\alpha \beta \mu \nu} - \Wfour_{\alpha \beta \mu \nu}$ 
verifies the estimate
\begin{align} \label{E:SPACETIMEPTENSORBLOWUP}
	\left\|t^4 |\mathbf{P}|_{\gfour}^2 - \frac{20}{3} \newp_{Bang}^2 \right\|_{C^0} 
		& \leq C \epsilon t^{4/3 - c \sqrt{\epsilon}}, 
\end{align}
which shows that $|\mathbf{P}|_{\gfour}^2$ blows up like $t^{-4}$ as $t \downarrow 0.$
Above, $\Wfour_{\alpha \beta \mu \nu}$ is the spacetime Weyl curvature tensor.

The $|\cdot|_{\gfour}^2$ norm of $\Wfour_{\alpha \beta \mu \nu}$ verifies the estimates
\begin{subequations}
\begin{align} \label{E:SPACETIMEWEYLBLOWUP}
	\left\|t^4 |\Wfour|_{\gfour}^2 - (F_{Bang} - \frac{20}{3} \newp_{Bang}^2) \right\|_{C^0} 
		& \leq C \epsilon t^{4/3 - c \sqrt{\epsilon}}, \\
	\left\| F_{Bang} - \frac{20}{3} \newp_{Bang}^2 \right\|_{C^0} & \leq C \epsilon,
	\label{E:FBANGVSBANG}
\end{align}
\end{subequations}
which, when combined with the relation $|\Riemfour|_{\gfour}^2 = |\mathbf{P}|_{\gfour}^2 + |\Wfour|_{\gfour}^2$
and the estimates \eqref{E:SPACETIMEKRETSCHMANNBLOWUP} and \eqref{E:SPACETIMEPTENSORBLOWUP}, 
shows that the dominant contribution to the $|\Riemfour|_{\gfour}^2$ singularity at $t = 0$
comes from the $\Ricfour$ components of $\Riemfour.$
\end{theorem}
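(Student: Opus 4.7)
My plan is to organize the proof around a large bootstrap argument that extends the solution down to $t=0$, followed by a sequence of ODE-type arguments to extract the asymptotic information. First, I would feed the near-FLRW data on $\Sigma_1$ (with $\mathring{\SecondFund}_{\ a}^a \equiv -1$, renormalized so the initial CMC slice corresponds to $t=1$) into the local well-posedness result of Theorem \ref{T:LOCAL}, producing a classical solution in CMC-transported spatial coordinates on some maximal slab $(T_{min},1] \times \mathbb{T}^3$. On this slab I would set up the bootstrap assumptions \eqref{E:HIGHBOOT}--\eqref{E:POTBOOT} with parameter $\upsigma$ chosen small enough to apply Corollary \ref{C:ENERGYINTEGRALINEQUALITIES}; since $\highnorm{N}(1) \lesssim \epsilon^2$ by \eqref{E:GEOMETRICDATAARESMALL} and the definitions of the renormalized variables, the corollary strictly improves all three bootstrap assumptions, giving \eqref{E:MAINTHEOREMNORMESTIMATE}--\eqref{E:MAINTHEOREMPOTNORMESTIMATE} on the entire slab of existence. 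A standard continuity argument then promotes these bounds into the open set of times where they hold, and the strong estimates of Prop.~\ref{P:STRONGPOINTWISE} (together with the lapse estimates of Prop.~\ref{P:LAPSEINTERMSOFENERGY}) rule out every breakdown scenario (1)--(4) of Theorem \ref{T:LOCAL} on any closed subinterval $[t,1] \subset (0,1]$: eigenvalues of $g_{ij}$ remain comparable to $t^{2/3}\Euc_{ij}$, $n$ stays close to $1$, $p$ stays close to $t^{-2}/3$, and the $C^2$ norms of the field variables grow at most polynomially in $t^{-1}$. This forces $T_{min}=0$, giving the first three bullets.

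For the causal and geodesic estimates, I would work on the universal cover and use the decomposition $\gfour = -n^2 dt^2 + g_{ab}dx^a dx^b$ together with the convergence $t^{-2/3}g_{ij} \to \newg_{ij}$ and the lapse bound $|n-1| \lesssim \epsilon t^{4/3 - c\sqrt{\epsilon}}$ to show that along any past-directed causal curve $\pmb{\zeta}(s)$ one has $\Euc_{ab}\dot{\zeta}^a\dot{\zeta}^b \leq (1+C\epsilon) t^{-2/3-c\epsilon}|\dot t|^2$; integrating $\int_0^t s^{-1/3-c\epsilon/2}\, ds$ yields \eqref{E:LENGTHEST}. The affine parameter estimate \eqref{E:AFFINTEBLOWUPTIME} follows the same way after deriving the ODE $d\mathscr{A}/dt = n (-\gfour(\ufour_{geo},\partial_t))^{-1}$ for the affine parameter, observing that $\gfour(\ufour_{geo},\partial_t)$ is comparable to $-1$ by the strong estimates, and integrating; finite affine length then precludes extension past $t=0$.

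The convergence statements \eqref{E:LAPSELIMIT}--\eqref{E:UNORMALLIMIT} form the technical heart of the asymptotic analysis. My strategy is to revisit the ODE-type proof of Prop.~\ref{P:STRONGPOINTWISE} but with the improved norm bound \eqref{E:MAINTHEOREMNORMESTIMATE} now available globally on $(0,1]$. For each variable I would isolate its $\partial_t$ equation and show the source is integrable in time with rate $t^{-1+\alpha - c\sqrt{\epsilon}}$ for some $\alpha > 0$; Cauchy-completeness in $C^M$ as $t \downarrow 0$ then produces the limit field. For $t\SecondFund_{\ j}^i$ I would use \eqref{E:PARTIALTSECONDFUNDPOINTWISE}, which gives $\|\partial_t \freenewsec\|_{C_{Frame}^M} \lesssim \epsilon t^{-1/3 - Z_N \upsigma}$ so that $\freenewsec(t) = \freenewsec(1) - \int_t^1 \partial_s \freenewsec\, ds$ converges, defining $\newsec_{Bang} := -\frac{1}{3}\ID + \lim_{t\downarrow 0}\freenewsec(t)$, and the rate follows from $\int_0^t s^{-1/3 - Z_N\upsigma}ds \lesssim t^{2/3 - Z_N\upsigma}$. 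Analogous arguments using \eqref{E:LOGVOLFORMEVOLUTION} give $\upsilon_{Bang}$, using \eqref{E:PARTIALTPCOMMUTEDPOINTWISE} give $\newp_{Bang}$, and using \eqref{E:PARTIALTLAPSESTRONGPOINTWISE} give \eqref{E:LAPSELIMIT}. The velocity estimate \eqref{E:ULIMIT} comes directly from $u^i = t^{1/3}\newu^i$ and \eqref{E:UCOMMUTEDSTRONGPOINTWISE}. The limiting constraints \eqref{E:LIMITINGKTRACE}--\eqref{E:LIMITINGFIELDCONSTRAINT} follow by taking $t\downarrow 0$ in the CMC condition $\freenewsec_{\ a}^a = 0$ and the Hamiltonian constraint \eqref{E:RINTERMSOFKPANDU} after multiplication by $t^2$; the scalar curvature term $t^2 R \to 0$ by the strong estimates on $\upgamma$. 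For \eqref{E:LIMITINGMETRICBEHAVIOR}, I would compute $\partial_t\!\left(g_{ia}[\exp(2\ln t\,\newsec_{Bang})]_{\ j}^a\right)$ using $\partial_t g_{ij} = -2n g_{ia}\SecondFund_{\ j}^a = -2t^{-1}g_{ia}(\newsec_{Bang})_{\ j}^a + \text{error}$ with error $\lesssim \epsilon t^{-1/3-c\sqrt{\epsilon}}g_{ij}$ (using the convergence of $t\SecondFund$ to $\newsec_{Bang}$ and $n \to 1$); the leading term is annihilated by the matrix exponential factor, leaving an integrable source.

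The blow-up estimates for curvature are then purely algebraic consequences of the asymptotics above. I would write $\Ricfour_{\mu\nu}$ in terms of the energy-momentum tensor via Einstein's equation, giving $|\Ricfour|_{\gfour}^2 = 4p^2 + (\text{terms involving }u)$ which is dominated by $4p^2 \sim 4\newp_{Bang}^2 t^{-4}$; this yields \eqref{E:SPACETIMERICCIINVARIANTBLOWUP}. The spacetime Kretschmann scalar is computed via the Gauss-Codazzi-Mainardi decomposition of $\Riemfour$ into $R_{ijkl}$, $\SecondFund_{ij}$, and $\Nml$-normal pieces, together with the evolution equation for $\SecondFund$; after expressing everything via the limit fields using \eqref{E:KLIMIT}, the purely kinetic Kasner-type terms dominate (the Ricci curvature $R_{ij}$ of $g$ gives subleading contributions by \eqref{E:3RIEMANNBLOWUP}), giving $t^4 |\Riemfour|_{\gfour}^2 \to F_{Bang}$. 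The Weyl estimate \eqref{E:SPACETIMEWEYLBLOWUP} then follows from the algebraic identity $|\Riemfour|_{\gfour}^2 = |\mathbf{P}|_{\gfour}^2 + |\Wfour|_{\gfour}^2$ combined with the direct computation \eqref{E:SPACETIMEPTENSORBLOWUP} expressing $\mathbf{P}$ in terms of $\Ricfour$ and $\Rfour$ (which are known in terms of the fluid variables via Einstein's equation), and for the FLRW closeness estimates \eqref{E:VOLFORMLIMITCLOSETOONE}--\eqref{E:PLIMITCLOSETOFLRW} and \eqref{E:SPACETIMEKRETSCHMANNLIMITNEARLYCONSTANT} one evaluates the limit identities at the FLRW values $\upsilon = 1$, $\newsec = -\frac{1}{3}\ID$, $\newp = \frac{1}{3}$ and uses triangle inequality with \eqref{E:MAINTHEOREMNORMESTIMATE}. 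The main obstacle in the entire proof is managing the loss in derivatives that forces the lower-$M$ convergence rates in \eqref{E:LAPSELIMIT}--\eqref{E:LIMITINGMETRICBEHAVIOR}: at the highest order $M \leq N-3$, only the weaker $t^{2/3-c\sqrt{\epsilon}}$ rate is available because the ODE source for that order depends on derivatives that are controlled only by the top-order Sobolev norm $\highnorm{N}$, which itself is permitted to blow up like $t^{-c\sqrt{\epsilon}}$.
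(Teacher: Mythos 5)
The proposal follows the paper's architecture closely: bootstrap on $(T,1]$ via Theorem~\ref{T:LOCAL} and Corollary~\ref{C:ENERGYINTEGRALINEQUALITIES}, ruling out breakdown scenarios to push $T\downarrow 0$; causal-curve estimates via the eigenvalue bounds on $\newg$ and the lapse estimates; ODE-type arguments in $t$ for the convergence statements with the matrix integrating factor $\exp(2\ln t\,\newsec_{Bang})$; and purely algebraic consequences for the curvature blow-up rates. These parts match the paper.

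The gap is in the geodesic incompleteness argument. You claim that ``$\gfour(\ufour_{geo},\partial_t)$ is comparable to $-1$ by the strong estimates,'' and propose to integrate $d\mathscr{A}/dt$ directly. This is incorrect on two levels. First, $\gfour(\dot{\pmb{\zeta}},\partial_t) = -n^2\dot{\pmb{\zeta}}^0$ is a function of the geodesic's velocity, not of the field variables, so the ``strong estimates'' of Prop.~\ref{P:STRONGPOINTWISE} say nothing about it. Second, even in exact FLRW this quantity is not comparable to $-1$: for a timelike geodesic with nonzero conserved spatial momentum $p_i$, one has $\dot{\pmb{\zeta}}^0 = -\sqrt{1+t^{-2/3}|p|^2}$, which blows up like $t^{-1/3}$ as $t\downarrow 0$; for null geodesics the affine normalization is free, so an absolute bound on $\gfour(\dot{\pmb{\zeta}},\partial_t)$ has no meaning and the bound \eqref{E:AFFINTEBLOWUPTIME} must be relative to $|\mathscr{A}'(1)|$. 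What the paper actually does is derive the ODE for the time-component of the geodesic from \eqref{E:GEO0}, convert it via the chain-rule identity $\ddot{\pmb{\zeta}}^0 = -(\mathscr{A}')^{-3}\mathscr{A}''$, bound the coefficients by the strong field estimates (the dominant contribution coming from the trace part of $\SecondFund$, giving $|\mathscr{A}''| \leq t^{-1}(1/3+c\epsilon)|\mathscr{A}'|$), and apply Gronwall to get $|\mathscr{A}'(t)| \leq |\mathscr{A}'(1)| t^{-(1/3+c\epsilon)}$, whose time-integral over $(0,1]$ yields \eqref{E:AFFINTEBLOWUPTIME}. Your approach would need exactly this Gronwall analysis to control $\dot{\pmb{\zeta}}^0$ from below; the unjustified shortcut leaves a real hole in the proof.
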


\begin{remark}
 Some of the powers of $\epsilon$ stated in the above estimates are non-optimal and could be improved with
 additional effort.
\end{remark}

\begin{proof}[Proof of Theorem~\ref{S:STABLESINGULARITY}]
	Let $\upsigma_*$ be the positive constant from Prop.~\ref{P:SOBFORG},
	Prop.~\ref{P:ENERGYINTEGRALINEQUALITIES},
	and Corollary \ref{C:ENERGYINTEGRALINEQUALITIES}. By local well-posedness (Theorem~\ref{T:LOCAL}), if \eqref{E:GEOMETRICDATAARESMALL} holds and 
	$\epsilon_*$ is sufficiently small, then the initial data launch a unique classical renormalized solution\footnote{Recall that the renormalized equations
	are equivalent to the original equations.} 
	to the renormalized constraints \eqref{E:RINTERMSOFKPANDU}-\eqref{E:MOMENTUMCONSTRAINT}, the renormalized
	lapse equations \eqref{E:LAPSERESCALEDELLIPTIC}-\eqref{E:LAPSELOWERDERIVATIVES}, and the 
	renormalized evolution equations 
	\eqref{E:LOGVOLFORMEVOLUTION}, \eqref{E:GEVOLUTION}-\eqref{E:GINVERSEEVOLUTION}, 
	\eqref{E:LITTLEGAMMAEVOLUTIONDETGFORM}-\eqref{E:RESCALEDKEVOLUTION},
	and \eqref{E:RESCALEDPEVOLUTION}-\eqref{E:RESCALEDUEVOLUTION}.
	The renormalized solution exists on a maximal slab $(T,1] \times \mathbb{T}^3$ upon which the 
	following bootstrap assumptions hold for the renormalized solution variable norms:
	 \begin{align} 
		\highnorm{N}(t) & \leq \epsilon t^{-\upsigma_*}, 
			\label{E:HIGHBOOTPROOF} \\
		\lowkinnorm{N-3}(t) & \leq 1, 
			\label{E:LOWKINBOOTPROOF} \\
		\lowpotnorm{N-4}(t) & \leq t^{-\upsigma_*}. 
		\label{E:LOWPOTBOOTPROOF}
	\end{align}
	In deriving \eqref{E:HIGHBOOTPROOF}-\eqref{E:LOWPOTBOOTPROOF}, 
	we have used the standard Sobolev calculus to deduce the smallness of the renormalized variable norms
	\eqref{E:HIGHNORM}-\eqref{E:LOWPOTNORM} from the conclusions 
	of Theorem~\ref{T:LOCAL}, which are stated in terms of the original variables. 
	In fact, by combining the estimates of Theorem~\ref{T:LOCAL} with standard
	elliptic estimates for $n,$ we deduce 
	that the perturbed renormalized solution remains $C \epsilon^2$ close to the 
	renormalized FLRW solution for times $t$ near $1$.
	
	It follows that if $\epsilon_*$ is sufficiently small, then all of the assumptions of
	Corollary \ref{C:ENERGYINTEGRALINEQUALITIES} hold.
	Thus, we conclude from the corollary that there exists a constant $c > 0$ such that 
	the following estimates necessarily hold for $t \in (T,1]:$
	\begin{align} 
		\highnorm{N}(t) & \leq \frac{1}{2} \epsilon t^{- c \sqrt{\epsilon}}, 
			\label{E:HIGHBOOTIMPROVED} \\
		\lowkinnorm{N-3}(t) & \leq \frac{1}{2} \epsilon, 
			\label{E:LOWKINBOOTIMPROVED} \\
		\lowpotnorm{N-4}(t) & \leq \frac{1}{2} \epsilon t^{c \sqrt{\epsilon}}. 
		\label{E:LOWPOTBOOTIMPROVED}
	\end{align} 
	By further shrinking $\epsilon_*$ if necessary, we may assume that
	$c \sqrt{\epsilon} < \upsigma^*.$ 
	We now show that $(T,1] \times \mathbb{T}^3$ must be equal to $(0,1] \times \mathbb{T}^3$
	and that \eqref{E:MAINTHEOREMNORMESTIMATE}-\eqref{E:MAINTHEOREMPOTNORMESTIMATE} hold for $t \in (0,1].$
	Suppose for the sake of contradiction that $T \in (0,1).$ 
	We then observe that none of the four breakdown scenarios from Theorem~\ref{T:LOCAL} can occur on $(T,1] \times \mathbb{T}^3:$
	 \begin{itemize}
	 	\item (1) is ruled out by \eqref{E:GINVERSECOMMUTEDSTRONGPOINTWISE}, which shows that the eigenvalues of the $3 \times 3$ 
	 		matrix 
	 		$t^{2/3} g^{ij} := (\newg^{-1})^{ij}$ are bounded from above in magnitude by $1 + C \epsilon t^{- c \epsilon};$
			hence the eigenvalues of $t^{-2/3} g_{ij} := \newg_{ij}$ are bounded from below in magnitude by 
			$(1 + C \epsilon t^{- c \epsilon})^{-1}$ and can never turn negative since they were positive at $t = 1.$
		\item (2) is ruled out by \eqref{E:LAPSECOMMUTEDSTRONGPOINTWISE} and the relation $n := 1 + t^{4/3} \newlap.$
		\item (3) is ruled out by \eqref{E:PCOMMUTEDSTRONGPOINTWISE} and the relation $t^2 p := \adjustednewp + \frac{1}{3}.$
	 	\item (4) is ruled out by the estimates \eqref{E:HIGHBOOTIMPROVED}-\eqref{E:LOWPOTBOOTIMPROVED} and Sobolev embedding.
	 \end{itemize}
	 
		It therefore follows from the time-continuity of the solution norms 
		that the solution can be extended to exist on a strictly larger slab $(T - \Delta,1] \times \mathbb{T}^3$ 
		(for some number $\Delta > 0$)
		such that on $(T - \Delta,1],$ the estimates \eqref{E:HIGHBOOTIMPROVED}-\eqref{E:LOWPOTBOOTIMPROVED}
		hold with $\frac{1}{2}$ replaced by $\frac{3}{4}.$ We have therefore derived 
		strict improvements of \eqref{E:HIGHBOOTPROOF}-\eqref{E:LOWPOTBOOTPROOF}] on a time interval strictly larger than $(T,1].$ 
		In total, we have contradicted the maximality of the slab $(T,1] \times \mathbb{T}^3.$ 
		We conclude that the solution exists on $(0,1] \times \mathbb{T}^3$
		and that the estimates \eqref{E:MAINTHEOREMNORMESTIMATE}-\eqref{E:MAINTHEOREMPOTNORMESTIMATE} hold for $t \in (0,1].$
	
\ \\
	
\noindent \textbf{Proof of \eqref{E:LENGTHEST}:} Let $\pmb{\zeta} = \pmb{\zeta}(s)$ be a curve verifying the hypotheses of the theorem. Here we use the notation $\dot{\pmb{\zeta}} := \frac{d}{ds}\pmb{\zeta}(s).$ Note that the component
$\pmb{\zeta}^0$ can be identified with the CMC time coordinate. For any causal curve,
we have $\gfour(\dot{\pmb{\zeta}},\dot{\pmb{\zeta}}) \leq 0,$ which implies that
\begin{align} \label{E:GEODESICSPATIALVELOCITYBOUNDEDBY0VELOCITY}
	g_{ab}\dot{\zeta}^a\dot{\zeta}^b \leq n^2 (\dot{\pmb{\zeta}}^0)^2. 
\end{align}
Using the previously established fact that the eigenvalues of $\newg_{ij}(t,x)$ are bounded from below by
$(1 + C \epsilon t^{- c \epsilon})^{-1},$ the estimate \eqref{E:LAPSECOMMUTEDSTRONGPOINTWISE}, 
and the fact that $- \dot{\pmb{\zeta}}^0 > 0$ for past-directed causal curves, we deduce 
from \eqref{E:GEODESICSPATIALVELOCITYBOUNDEDBY0VELOCITY} that
\begin{align} \label{E:EUCLIDEANVELOCITYESTIMATE}
	(\Euc_{ab} \circ \pmb{\zeta}) \dot{\zeta}^a \dot{\zeta}^b 
	& \leq ([(1 + C \epsilon t^{- c \epsilon}) \newg_{ab}] \circ \pmb{\zeta}) 
		\dot{\zeta}^a \dot{\zeta}^b \\
	& \leq (1 + C \epsilon) (\pmb{\zeta}^0)^{-2/3 - c \epsilon} 
	(\dot{\pmb{\zeta}}^0)^2. \notag
\end{align} 
Integrating the square root of \eqref{E:EUCLIDEANVELOCITYESTIMATE}
and using the assumption $\pmb{\zeta}^0(s_1) = t,$ we arrive at the desired
estimate \eqref{E:LENGTHEST}:
\begin{align}
	\int_{s_1}^{s_{Max}} \sqrt{(\Euc_{ab} \circ \pmb{\zeta}) \dot{\zeta}^a \dot{\zeta}^b} \, ds 
		& \leq (1 + C \epsilon) \int_{s_1}^{s_{Max}} (\pmb{\zeta}^0)^{-1/3 - c \epsilon} (-\dot{\pmb{\zeta}}^0) \, ds
		\\
		& = \frac{-(1 + C \epsilon)}{2/3 - c \epsilon} 
			\int_{s_1}^{s_{Max}} \frac{d}{ds} \big( (\pmb{\zeta}^0)^{2/3 - c \epsilon} \big) \, ds
		\notag
		\\
		& = \frac{(1 + C \epsilon)}{2/3 - c \epsilon} 
				\Big\lbrace t^{2/3 - c \epsilon} - s_{Max}^{2/3 - c \epsilon} \Big\rbrace
			\notag \\
		& \leq \left(\frac{3}{2} + C \epsilon \right) t^{2/3 - c \epsilon}.
		\notag
\end{align}
\ \\

\noindent \textbf{Proof of \eqref{E:AFFINTEBLOWUPTIME}:} Let $\pmb{\zeta}(\mathscr{A})$ be a past-directed affinely parametrized geodesic verifying the hypotheses of the theorem. We can view the affine parameter $\mathscr{A}$ as a function of $t = \pmb{\zeta}^0$ along $\pmb{\zeta}.$ We normalize $\mathscr{A}(t)$ by setting $\mathscr{A}(1) = 0.$ We also define $\dot{\pmb{\zeta}}^{\mu} := \frac{d}{d \mathscr{A}} \pmb{\zeta}^{\mu}$ and $\mathscr{A}' = \frac{d}{dt} \mathscr{A}.$ 
By the chain rule, we have 
\begin{align} 
	\mathscr{A}^{'} 
	& = \frac{1}{\dot{\pmb{\zeta}}^0}, 
	&&
	\ddot{\pmb{\zeta}}^0 
	= \dot{\pmb{\zeta}}^0 \frac{d}{dt} \dot{\pmb{\zeta}}^0 = - (\mathscr{A}^{'})^{-3} \mathscr{A}^{''}.
		\label{E:GEODESICDOTANDDDOTCHAINRULE}
\end{align}
Using the geodesic equation \eqref{E:GEO0}, the $g-$Cauchy-Schwarz inequality,
\eqref{E:GEODESICSPATIALVELOCITYBOUNDEDBY0VELOCITY},
and
\eqref{E:GEODESICDOTANDDDOTCHAINRULE}, we deduce
\begin{align} \label{E:ADOUBLEPRIMEFIRSTEQUATION}
	\left|\mathscr{A}^{''} \right| & \leq 
			\big(n^{-1} |\partial_t n| + 2 |\partial n|_g \big) |\mathscr{A}^{'}|
			+ n |\SecondFund|_g |\mathscr{A}^{'}|.
\end{align}
Inserting the estimates \eqref{E:SECONDFUNDUPGRADEPOINTWISEGNORM} (in the case $M=0$),
\eqref{E:LAPSECOMMUTEDSTRONGPOINTWISE}, \eqref{E:GNORMLAPSECOMMUTEDSTRONGPOINTWISE} and \eqref{E:PARTIALTLAPSESTRONGPOINTWISE} into \eqref{E:ADOUBLEPRIMEFIRSTEQUATION}, we deduce 
\begin{align} \label{E:DDTAFFINEGRONWALLREADY}
	\left|\mathscr{A}^{''} \right| & \leq t^{-1} \left(\frac{1}{3} + c \epsilon \right) |\mathscr{A}^{'}|,
	& t & \in (0,1]. 
\end{align}
Applying Gronwall's inequality to \eqref{E:DDTAFFINEGRONWALLREADY}, we deduce
\begin{align} \label{E:DDTAFFINEESTIMATED}
	|\mathscr{A}^{'}(t)| & \leq |\mathscr{A}^{'}(1)| t^{-(1/3 + c \epsilon)}, & t & \in (0,1].
\end{align}
Integrating \eqref{E:DDTAFFINEESTIMATED} from $t = 1$ and using $\mathscr{A}(1)=0,$ we deduce
\begin{align} \label{E:AFFINEESTIMATE}
	\mathscr{A}(t) & \leq \frac{|\mathscr{A}^{'}(1)|}{2/3 - c \epsilon}(1 - t^{2/3 - c \epsilon}), & t  & \in (0,1],
\end{align}
from which the desired estimate \eqref{E:AFFINTEBLOWUPTIME} follows.

\ \\

\noindent \textbf{Proof of \eqref{E:LAPSELIMIT}-\eqref{E:UNORMALLIMIT}, \eqref{E:VOLFORMLIMITCLOSETOONE}-\eqref{E:PLIMITCLOSETOFLRW},
and \eqref{E:LIMITINGKTRACE}-\eqref{E:LIMITINGFIELDCONSTRAINT}:} 
The estimate \eqref{E:ULIMIT} follows directly from \eqref{E:MAINTHEOREMPOTNORMESTIMATE}. 
The estimate \eqref{E:UNORMALLIMIT} then easily follows from inequality \eqref{E:GCOMMUTEDSTRONGPOINTWISE}
and the estimate \eqref{E:ULIMIT}.

Inequality \eqref{E:LAPSELIMIT} follows from \eqref{E:LAPSECOMMUTEDSTRONGPOINTWISE}
(for $M \leq N - 5$) and \eqref{E:MAINTHEOREMNORMESTIMATE} plus the 
the Sobolev embedding estimate $\| n - 1 \|_{C^{N-3}} \lesssim t^{2/3} \highnorm{N}$ 
(for $M = N - 4, N - 3$).

To prove \eqref{E:VOLFORMLIMIT}, we 
use equation \eqref{AE:LOGVOLFORMEVOLUTIONCOMMUTED} and inequality \eqref{E:LAPSELIMIT}
to deduce that
\begin{align}
		\left\| \partial_t \ln \left(\sqrt{\gbigdet} \right) \right\|_{C^M} 
		& \lesssim \left\lbrace \begin{array}{ll} 
		\epsilon t^{1/3 - c \sqrt{\epsilon}}, & (M \leq N-5), \\
	  \epsilon t^{-1/3 - c \sqrt{\epsilon}}, & (M \leq N-3).
		\end{array} \right.
		\label{E:LIMITPROOFVOLFORMLOWERORDERTIMEDERIVATIVES}
	\end{align}
	If $0 < t_- \leq t_+ \leq 1,$ then it follows from integrating \eqref{E:LIMITPROOFVOLFORMLOWERORDERTIMEDERIVATIVES}
	in time that
	\begin{align} \label{E:VOLFORMCAUCHYSEQUENCEESTIMATE}
			\left\| \ln \left(\sqrt{\gbigdet} \right)(t_-) - \ln \left(\sqrt{\gbigdet} \right)(t_+) \right \|_{C^M} 
			& \lesssim \left\lbrace \begin{array}{ll} 
			\epsilon  \int_{t = 0}^{t_+} t^{1/3 - c \sqrt{\epsilon}} \, dt, & (M \leq N-5), \\
	  	\epsilon  \int_{t = 0}^{t_+} t^{-1/3 - c \sqrt{\epsilon}} \, dt, & (M \leq N-3)
			\end{array} \right. \\
			& \lesssim \left\lbrace \begin{array}{ll} 
				\epsilon t_+^{4/3 - c \sqrt{\epsilon}}, & (M \leq N-5), \\
	  		\epsilon t_+^{2/3 - c \sqrt{\epsilon}}, & (M \leq N-3).
				\end{array} \right. 
				\notag
	\end{align}
	Hence, if $\lbrace t_m \rbrace_{m=1}^{\infty}$ is a sequence of positive times 
	such that $t_m \downarrow 0,$ then \eqref{E:VOLFORMCAUCHYSEQUENCEESTIMATE} implies that 
	the sequence $\ln \left(\sqrt{\gbigdet} \right)(t_m,\cdot) \in C^M$ is Cauchy in the norm 
	$\| \cdot \|_{C^M}.$ The existence of a limiting function $\upsilon_{Bang} \in C^{N-3}$ and the desired estimate 
	\eqref{E:VOLFORMLIMIT} thus follow. The estimate \eqref{E:VOLFORMLIMITCLOSETOONE} then follows from the small-data estimate
	$\left\| \sqrt{\gbigdet}(1) - 1 \right \|_{C^{N-3}} \lesssim \epsilon.$  The estimates 
	\eqref{E:KLIMIT}-\eqref{E:PLIMIT} and \eqref{E:KLIMITCLOSETOFLRW}-\eqref{E:PLIMITCLOSETOFLRW} follow similarly from
	the evolution equations \eqref{AE:SECONDFUNDCOMMUTED} and \eqref{AE:PARTIALTPCOMMUTED},
	the strong estimates of Prop.~\ref{P:STRONGPOINTWISE}, and the Sobolev norm bound 
	\eqref{E:MAINTHEOREMNORMESTIMATE}; we omit the tedious but straightforward details.
	
	The relation \eqref{E:LIMITINGKTRACE} is a trivial consequence of the CMC condition $\SecondFund_{\ a}^a = - t^{-1}$
	and \eqref{E:KLIMIT}.
	
	To prove \eqref{E:LIMITINGFIELDCONSTRAINT}, we multiply the
	renormalized Hamiltonian constraint \eqref{E:RINTERMSOFKPANDU} by $t^2$
	and let $t \downarrow 0.$ Lemma~\ref{L:RICCIDECOMP} and the estimates
	\eqref{E:GCOMMUTEDSTRONGPOINTWISE}, \eqref{E:GINVERSECOMMUTEDSTRONGPOINTWISE}, and \eqref{E:LITTLEGAMMACOMMUTEDSTRONGPOINTWISE} 
	imply that $t^2|R| \to 0,$ where $R$ denotes the scalar curvature of $g_{ij}.$ 
	Similarly, the estimates \eqref{E:GCOMMUTEDSTRONGPOINTWISE},
	\eqref{E:PCOMMUTEDSTRONGPOINTWISE}, and \eqref{E:UCOMMUTEDSTRONGPOINTWISE} imply that
	$t^{4/3} |\leftexp{(Junk)}{\mathfrak{H}}| \to 0.$ Also using the 
	estimates \eqref{E:KLIMIT} and \eqref{E:PLIMIT}, we arrive at the desired result \eqref{E:LIMITINGFIELDCONSTRAINT}.
	
	
\ \\

\noindent \textbf{Proof of \eqref{E:METRICENDSTATE} and \eqref{E:LIMITINGMETRICBEHAVIOR}:}	
From equation \eqref{E:PARTIALTGCMC} and the estimates \eqref{E:LAPSELIMIT}, \eqref{E:KLIMIT}, and \eqref{E:KLIMITCLOSETOFLRW}, 
it follows that	
\begin{align} \label{E:LITTLEGEVOUSEDINPROOF}
	\partial_t g_{ij} & = - 2 t^{-1} g_{ia} (\newsec_{Bang})_{\ j}^a + g_{ia} \Delta_{\ j}^a, 
\end{align}	 
where the following estimate for the components $\Delta_{\ j}^i$ holds:
\begin{align}	\label{E:GERRORTERM}
	\| \Delta_{\ j}^i \|_{C^M} 
	& \lesssim \left\lbrace \begin{array}{ll} 
		\epsilon t^{1/3 - c \sqrt{\epsilon}}, & (M \leq N-5), \\
	  \epsilon t^{-1/3 - c \sqrt{\epsilon}}, & (M \leq N-3).
		\end{array} \right. 
\end{align}
Introducing the matrix integrating factor 
$\mbox{exp} \left(2 \ln t \newsec_{Bang}\right) = \mbox{exp} \left(2 \ln t \newsec_{Bang}(x) \right)$
and using the fact that the type $\binom{1}{1}$ matrix 
$\partial_t \left(2 \ln t \newsec_{Bang}\right)$
commutes with the type $\binom{1}{1}$ matrix $\mbox{exp} \left(2 \ln t \newsec_{Bang}\right),$
we deduce the following consequence of \eqref{E:LITTLEGEVOUSEDINPROOF}:
\begin{align} \label{E:PARTIALTGTIMESINTEGRATINGFACTOR}
	\partial_t \left\lbrace g_{ia} \left[\mbox{exp} \left(2 \ln t \newsec_{Bang} \right) \right]_{\ j}^a \right\rbrace
	& = g_{ia} \Delta_{\ b}^a \left[\mbox{exp} \left(2 \ln t \newsec_{Bang} \right) \right]_{\ j}^b.
\end{align}
Using the estimates \eqref{E:GCOMMUTEDSTRONGPOINTWISE}, \eqref{E:KLIMITCLOSETOFLRW},
and \eqref{E:GERRORTERM}, we bound the right-hand side of \eqref{E:PARTIALTGTIMESINTEGRATINGFACTOR} as follows:
\begin{align} \label{E:GLIMITERRORTERM}
	\left\| g_{ia} \left[\mbox{exp} \left(2 \ln t \newsec_{Bang} \right) \right]_{\ b}^a \Delta_{\ j}^b \right \|_{C^M}
	 & \lesssim \left\lbrace \begin{array}{ll} 
		\epsilon t^{1/3 - c \sqrt{\epsilon}}, & (M \leq N-5), \\
	  \epsilon t^{-1/3 - c \sqrt{\epsilon}}, & (M \leq N-3).
		\end{array} \right. 
\end{align} 
From the differential equation \eqref{E:PARTIALTGTIMESINTEGRATINGFACTOR}, the estimate \eqref{E:GLIMITERRORTERM},
and the small-data estimate $\| g_{ij}(1,\cdot) - \Euc_{ij} \|_{C^{N-3}} \lesssim \epsilon,$ 
we argue as in our proof of \eqref{E:VOLFORMLIMIT} to deduce that there exists a field
$M_{ij}^{Bang}(x)$ on $\mathbb{T}^3$ such that the desired estimates 
\eqref{E:METRICENDSTATE} and \eqref{E:LIMITINGMETRICBEHAVIOR} hold.

\ \\

\noindent \textbf{Proof of \eqref{E:SECFUNDFORMBLOWUP}:} To prove \eqref{E:SECFUNDFORMBLOWUP}, we use the 
identity $t|\SecondFund|_g = t |\SecondFund_{\ b}^a \SecondFund_{\ a}^b|^{1/2}$
(which follows from the symmetry property $\SecondFund_{ij} = \SecondFund_{ji}$)
and the estimates \eqref{E:KLIMIT} and \eqref{E:KLIMITCLOSETOFLRW} to deduce the desired inequality:
\begin{align}
		\Big| t |\SecondFund|_g - |(\newsec_{Bang})_{\ b}^a (\newsec_{Bang})_{\ a}^b|^{1/2} \Big|
		& \lesssim \Big| t \SecondFund - \newsec_{Bang} \Big|_{Frame}
		\lesssim \epsilon t^{4/3 - c \sqrt{\epsilon}}. 
	\end{align}
	
\ \\

\noindent \textbf{Proof of \eqref{E:3RIEMANNBLOWUP}:} To prove \eqref{E:3RIEMANNBLOWUP}, we first note that
	\begin{align} \label{E:RIEMAN3NORMINTERMSOFCOMPONENTS}
		|\Riemann|_g^2 = \Riemann_{ab}^{\ \ cd} \Riemann_{cd}^{\ \ ab}.
	\end{align}
	We now claim that the following estimate for the components $\Riemann_{ab}^{\ \ cd}$ holds:
	\begin{align} \label{E:RIEMANN3COMPONENTSESTIMATE}
		\| \Riemann_{ab}^{\ \ cd} \|_{C^0} \lesssim \epsilon t^{- 2/3 - c \epsilon}.
	\end{align}
	The desired estimate \eqref{E:3RIEMANNBLOWUP} will then follow from 
	\eqref{E:RIEMAN3NORMINTERMSOFCOMPONENTS} and \eqref{E:RIEMANN3COMPONENTSESTIMATE}.
	To prove \eqref{E:RIEMANN3COMPONENTSESTIMATE}, we first use 
	the strong estimates \eqref{E:GCOMMUTEDSTRONGPOINTWISE}, \eqref{E:GINVERSECOMMUTEDSTRONGPOINTWISE},
	and \eqref{E:LITTLEGAMMACOMMUTEDSTRONGPOINTWISE} and
	the relation \eqref{E:RICCIDECOMP} to deduce the following
	estimate for the components $\Ric_{\ j}^i:$ 
	\begin{align} \label{E:RIC3COMPONENTSESTIMATE}
		\| \Ric_{\ j}^i \|_{C^0} & \lesssim \epsilon t^{-2/3 - c \epsilon}.
	\end{align}
	The estimate \eqref{E:RIEMANN3COMPONENTSESTIMATE} now follows from
	\eqref{E:RIC3COMPONENTSESTIMATE} and the identities 
	\eqref{E:THREESCHOUTENINTERMSOFTHREERICCI} and \eqref{E:RIEM3INTERMSOFSCHOUT}.

	

\ \\

\noindent \textbf{Proof of \eqref{E:SPACETIMERICCIINVARIANTBLOWUP}-\eqref{E:FBANGVSBANG}:}	
	To derive \eqref{E:SPACETIMERICCIINVARIANTBLOWUP}, we simply insert the estimates \eqref{E:PLIMIT} and 
	\eqref{E:PLIMITCLOSETOFLRW} into the identity \eqref{E:SPACETIMERICCIGFOURNROMINTERMSOFMATTER}.
	
	To derive \eqref{E:SPACETIMEKRETSCHMANNBLOWUP}-\eqref{E:SPACETIMEKRETSCHMANNLIMITNEARLYCONSTANT},
	we first observe the relation 
	\begin{align} \label{E:KRETSCHMANNNORMDECOMP}
		t^4 |\Riemfour|_{\gfour}^2 
			& = t^4 \Riemfour_{ab}^{\ \ cd} \Riemfour_{cd}^{\ \ ab}
				+ 4 t^4 \Riemfour_{a0}^{\ \ c 0} \Riemfour_{c0}^{\ \ a0} 
				\\
		& \ \ - 4 n^{-2} t^{4} g_{cc'} g_{dd'} g^{bb'} \Riemfour_{0b}^{\ \ cd} \Riemfour_{0b'}^{\ \ c'd'}.
			\notag
\end{align}
	We then claim that the following estimates for components hold:
	\begin{align}
		\| t^2 \Riemfour_{ab}^{\ \ cd} 
			- (t^2 \SecondFund_{\ a}^c \SecondFund_{\ b}^d
			- t^2 \SecondFund_{\ a}^d \SecondFund_{\ b}^c) \|_{C^0} 
			& \lesssim \epsilon t^{2/3 - c \sqrt{\epsilon}}, 
			\label{E:4RIEMANNSPATIALBLOWUP} \\
		\| t^2 \Riemfour_{a0}^{\ \ c 0} 
			- (t \SecondFund_{\ a}^c
			+ t^2 \SecondFund_{\ e}^c \SecondFund_{\ a}^e) \|_{C^0} 
			& \lesssim \epsilon t^{2/3 - c \sqrt{\epsilon}}, 
			\label{E:4RIEMANN0SPATIAL0SPATIALBLOWUP} \\
		\| t^2 \Riemfour_{0b}^{\ \ cd} \|_{C^0} 
			& \lesssim t^{1/3 - c \sqrt{\epsilon}}.
			\label{E:4RIEMANN0SPATIALSPATIALSPATIALBLOWUP}
	\end{align}
	Let us accept \eqref{E:4RIEMANNSPATIALBLOWUP}-\eqref{E:4RIEMANN0SPATIALSPATIALSPATIALBLOWUP} for the moment.
	The desired estimates \eqref{E:SPACETIMEKRETSCHMANNBLOWUP}-\eqref{E:SPACETIMEKRETSCHMANNLIMITNEARLYCONSTANT}
	then follow from \eqref{E:KRETSCHMANNNORMDECOMP}, \eqref{E:4RIEMANNSPATIALBLOWUP},
	\eqref{E:4RIEMANN0SPATIAL0SPATIALBLOWUP}, 
	\eqref{E:4RIEMANN0SPATIALSPATIALSPATIALBLOWUP},
	\eqref{E:KLIMIT}, and \eqref{E:KLIMITCLOSETOFLRW}.
	
	To derive \eqref{E:4RIEMANNSPATIALBLOWUP}-\eqref{E:4RIEMANN0SPATIALSPATIALSPATIALBLOWUP},
	we insert the previously derived estimates into the 
	curvature expressions \eqref{E:RFOURALLSPATIALDECOMP}-\eqref{E:RFOURONE0DECOMP}.
	For example, to derive \eqref{E:4RIEMANN0SPATIAL0SPATIALBLOWUP}, 
	we bound the error term $\pmb{\triangle}_{a0}^{\ \ c 0}$
	defined in \eqref{E:RFOURTWOERROR} by
	\begin{align} \label{E:RFOURTWOERRORBOUND}
		\| \pmb{\triangle}_{a0}^{\ \ c 0} \|_{C^0} & \lesssim \epsilon t^{-4/3 - c \sqrt{\epsilon}}.
	\end{align}
	More precisely, the estimate \eqref{E:RFOURTWOERRORBOUND} follows from inserting the strong estimates
	of Prop.~\ref{P:STRONGPOINTWISE} into the expression \eqref{E:RFOURTWOERROR}.
	The time derivative term $\partial_t(t \SecondFund_{\ a}^c)$ is estimated by using 
	the evolution equation \eqref{E:PARTIALTKCMC} to express it in terms of spatial
	derivatives. We then multiply the expression \eqref{E:RFOURTWOERROR} 
	by $t^2$ and use the estimate \eqref{E:RFOURTWOERRORBOUND},
	thereby arriving at \eqref{E:4RIEMANN0SPATIAL0SPATIALBLOWUP}. The estimates
	\eqref{E:4RIEMANNSPATIALBLOWUP} and \eqref{E:4RIEMANN0SPATIALSPATIALSPATIALBLOWUP}
	can be derived similarly.
	 
	To derive \eqref{E:SPACETIMEPTENSORBLOWUP}, we simply insert 
	the estimates \eqref{E:PLIMIT} and \eqref{E:PLIMITCLOSETOFLRW} 
	into the identity \eqref{E:PTENSORNORMINTERMSOFPRESSURE}.
		
	Finally, the estimates \eqref{E:SPACETIMEWEYLBLOWUP}-\eqref{E:FBANGVSBANG} follow from	
	inserting the estimates \eqref{E:PLIMITCLOSETOFLRW},
	\eqref{E:SPACETIMEKRETSCHMANNBLOWUP}, \eqref{E:SPACETIMEKRETSCHMANNLIMITNEARLYCONSTANT}
	and \eqref{E:SPACETIMEPTENSORBLOWUP} into the identity \eqref{E:KRETSCHMANNINTERMSOFWEYLANDPTENSOR}.
\end{proof}

\section*{Acknowledgments}
The authors thank Mihalis Dafermos for 
offering enlightening comments that helped them improve the exposition.
They also thank David Jerison for providing insights that 
aided their proof of Prop.~\ref{P:CMCEXISTS}. 
IR gratefully acknowledges support from NSF grant \# DMS-1001500.
JS gratefully acknowledges support from NSF grant \# DMS-1162211
from a Solomon Buchsbaum grant administered by the Massachusetts Institute of Technology.

\appendix

\section{Calculations and Identities for the Metrics and Curvatures} \label{A:METRICANDCURVATUREID}
   
For convenience, we have gathered some standard metric and curvature relations in this appendix. 
We state many of the relations without proof. For additional background, 
readers can consult \cite{rW1984}. Throughout, 
$\gfour = - n^2 dt^2 + g_{ab} dx^a dx^b$ denotes a Lorentzian metric defined on a four
dimensional manifold.

We adopt the following sign convention for the Riemann curvature $\Riemfour_{\alpha \beta \mu \nu}:$
\begin{align}
	\Dfour_{\alpha} \Dfour_{\beta} X_{\mu} 
		- \Dfour_{\beta} \Dfour_{\alpha} X_{\mu} 	
	& = (\gfour^{-1})^{\nu \nu'} \Riemfour_{\alpha \beta \mu \nu'} X_{\nu}.
\end{align}

We have the relation (see e.g. \cite[pg. 48]{rW1984})
\begin{align} \label{E:RFOURCHRISTOFFELIDENTITY}
	(\gfour^{-1})^{\nu \nu'} \Riemfour_{\alpha \beta \mu \nu'} 
	= \partial_{\beta }\Chfour_{\alpha \ \mu}^{\ \nu} 
		-  \partial_{\alpha} \Chfour_{\beta \ \mu}^{\ \nu} 
		+ \Chfour_{\beta \ \lambda}^{\ \nu} \Chfour_{\alpha \ \mu}^{\ \lambda}
		- \Chfour_{\alpha \ \lambda}^{\ \nu} \Chfour_{\beta \ \mu}^{\ \lambda},
\end{align}
where the Christoffel symbols $\Chfour_{\mu \ \nu}^{\ \lambda}$ of $\gfour_{\mu \nu}$ are defined in \eqref{E:FOURCHRISTOFFEL}.

The spacetime Schouten tensor $\Schfour_{\mu \nu},$ its trace $\Sfour,$  
and the tensor $\mathbf{P}_{\alpha \beta \mu \nu}$ corresponding to $\gfour_{\mu \nu}$
are defined by
\begin{align}
	\Schfour_{\mu \nu} & := \frac{1}{2} \Ricfour_{\mu \nu} - \frac{1}{12} \Rfour \gfour_{\mu \nu}, 
		\label{E:SCHOUTFOUR} \\
	\Sfour & := (\gfour^{-1})^{\alpha \beta} \Schfour_{\alpha \beta},
		\label{E:SCHOUTENTRACE} \\
	\mathbf{P}_{\alpha \beta \mu \nu} 
		& := \Schfour_{\mu \alpha} \gfour_{\nu \beta}
			- \Schfour_{\nu \alpha} \gfour_{\mu \beta}
			+ \Schfour_{\nu \beta} \gfour_{\mu \alpha}
			- \Schfour_{\mu \beta} \gfour_{\nu \alpha}, 
			\label{E:PTENSOR}
\end{align}	
where 
\begin{align}
	\Ricfour_{\mu \nu} &:= (\gfour^{-1})^{\alpha \beta} \Riemfour_{\alpha \mu \beta \nu}, 
	\label{E:RICFOURDEF} \\
	\Rfour &:= (\gfour^{-1})^{\alpha \beta} (\gfour^{-1})^{\mu \nu} \Riemfour_{\alpha \mu \beta \nu}
	\label{E:RFOURDEF}
\end{align}
are the Ricci curvature and scalar curvature of $\gfour_{\mu \nu}.$

The Schouten tensor $\Sch_{\ j}^i$ corresponding to $g_{ij}$ is defined to be
\begin{align} \label{E:THREESCHOUTENINTERMSOFTHREERICCI}
	\Sch_{\ j}^i & := \Ric_{\ j}^i - \frac{1}{4} R \ID_{\ j}^i,
\end{align}
where 
\begin{align}
	\Ric_{\ j}^i & := \Riemann_{a j}^{\ \ a i}, \\
		R & := \Ric_{\ a}^a
\end{align}
are the Ricci curvature and scalar curvature of $g_{ij}.$

The Weyl tensor $\Wfour_{\alpha \beta \mu \nu}$ of $\gfour_{\mu \nu}$ can be expressed
in terms of $\Riemfour_{\alpha \beta \mu \nu}$ and $\mathbf{P}_{\alpha \beta \mu \nu}$
as follows:
\begin{align} \label{E:WEYLDEF}
	\Wfour_{\alpha \beta \mu \nu} 
		& = \Riemfour_{\alpha \beta \mu \nu}
			- \mathbf{P}_{\alpha \beta \mu \nu} \\
		& = \Riemfour_{\alpha \beta \mu \nu}
				- \frac{1}{2} (\gfour_{\alpha \mu} \Ricfour_{\nu \beta} 
				- \gfour_{\alpha \nu} \Ricfour_{\mu \beta}
				+ \gfour_{\beta \nu} \Ricfour_{\mu \alpha}
				- \gfour_{\beta \mu} \Ricfour_{\nu \alpha}) 
				\notag \\
		& \ \ + \frac{1}{6} \Rfour (\gfour_{\alpha \mu} \gfour_{\nu \beta} 
				- \gfour_{\alpha \nu} \gfour_{\mu \beta}).
				\notag
\end{align}

Because the Weyl tensor of the $3-$metric $g_{ij}$ vanishes, the Riemann tensor $\Riemann_{ij}^{\ \ kl}$ of $g_{ij}$ can be expressed as follows
in terms of its Schouten tensor:
\begin{align} \label{E:RIEM3INTERMSOFSCHOUT}
	\Riemann_{ij}^{\ \ kl} & = \Sch_{\ i}^{k} \ID_{\ j}^{l} 
		- \Sch_{\ j}^{k} \ID_{\ i}^{l} 
		+ \Sch_{\ j}^{l} \ID_{\ i}^{k} 
		- \Sch_{\ i}^{l} \ID_{\ j}^{k} .
\end{align}
	
The following properties are verified by $\Riemfour_{\alpha \beta \mu \nu},$ 
$\Wfour_{\alpha \beta \mu \nu},$ and $\mathbf{P}_{\alpha \beta \mu \nu}:$
\begin{align}
	\Riemfour_{\alpha \beta \mu \nu} & 
		= - \Riemfour_{\beta \alpha \mu \nu} 
		= - \Riemfour_{\alpha \beta \nu \mu}
		= \Riemfour_{\mu \nu \alpha \beta}, \\
	\Wfour_{\alpha \beta \mu \nu} & 
		= - \Wfour_{\beta \alpha \mu \nu} 
		= - \Wfour_{\alpha \beta \nu \mu}
		= \Wfour_{\mu \nu \alpha \beta}, \\
	(\gfour^{-1})^{\alpha \beta }\Wfour_{\alpha \beta \mu \nu}	&
		= (\gfour^{-1})^{\alpha \mu }\Wfour_{\alpha \beta \mu \nu}
		= 0, \\
	|\mathbf{P}|_{\gfour}^2 & = 8 |\Schfour|_{\gfour}^2 + 4 \Sfour^2, 
		\label{E:PGNORMINTERMSOFSCHOUTENGNORM} \\
	|\Riemfour|_{\gfour}^2 & = |\Wfour|_{\gfour}^2 + |\mathbf{P}|_{\gfour}^2.
	\label{E:KRETSCHMANNINTERMSOFWEYLANDPTENSOR}
\end{align}

\begin{lemma} [\textbf{Christoffel symbol calculations}] \label{L:CHRIST}
Let
\begin{align} \label{E:GFOURTRANSPORTED}
	\gfour & = - n^2 dt^2 + g_{ab} dx^a dx^b
\end{align}
be a Lorentzian metric on $(T,1] \times \mathbb{T}^3.$ Let  
\begin{subequations}
\begin{align}
	\Chfour_{\mu \ \nu}^{\ \lambda} & := 
		\frac{1}{2} (\gfour^{-1})^{\lambda \sigma} \big(\partial_{\mu} \gfour_{\sigma \nu} 
			+ \partial_{\nu} \gfour_{\mu \sigma} 
			- \partial_{\sigma} \gfour_{\mu \nu} \big), 
			\label{E:FOURCHRISTOFFEL} \\
	\Gamma_{j \ k}^{\ i} & := 
		\frac{1}{2}g^{ia}(\partial_j g_{ak} + \partial_k g_{ja} - \partial_a g_{jk})
		\label{E:THREECHRISTOFFEL}
\end{align}
\end{subequations}
respectively denote the Christoffel symbols of $\gfour_{\mu \nu}$ and $g_{ij}$ 
relative to the coordinates in \eqref{E:GFOURTRANSPORTED}.
Then for $i,j,k = 1,2,3,$ we have (recall that $\partial_t g_{ij} = - 2n g_{ia} \SecondFund_{\ j}^a$)
\begin{subequations}
\begin{align}
	\Chfour_{0 \ 0}^{\ 0} & = \partial_t \ln n, 
	&& 
	\Chfour_{j \ 0}^{\ 0} = \partial_j \ln n, 
	&& \Chfour_{i \ j}^{\ 0} = - n^{-1} g_{ia} \SecondFund_{\ j}^a, 
		\\	
	\Chfour_{0 \ 0}^{\ j} & =  n g^{ja} \partial_a n, 
	&& \Chfour_{j \ 0}^{\ i} = - n \SecondFund_{\ j}^i, 
	&& \Chfour_{j \ k}^{\ i} = \Gamma_{j \ k}^{\ i}. 
\end{align}
\end{subequations}

\end{lemma}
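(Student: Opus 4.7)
The plan is to verify each of the six identities by unpacking the definition \eqref{E:FOURCHRISTOFFEL} of $\Chfour_{\mu\ \nu}^{\ \lambda}$ and exploiting the block-diagonal form of $\gfour$ induced by the vanishing of the shift vector in \eqref{E:GFOURTRANSPORTED}. First, I would record the components of the inverse metric, namely
\begin{align*}
(\gfour^{-1})^{00} = - n^{-2}, \qquad (\gfour^{-1})^{0i} = 0, \qquad (\gfour^{-1})^{ij} = g^{ij},
\end{align*}
which are immediate from \eqref{E:GINVERSEFOURDECOMP}. This is the key structural input, since it means that in any sum $(\gfour^{-1})^{\lambda\sigma}(\cdots)_\sigma$ arising in \eqref{E:FOURCHRISTOFFEL}, only $\sigma=0$ contributes when $\lambda=0,$ and only $\sigma$ spatial contributes when $\lambda$ is spatial. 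I would also note the two ``time-derivative'' inputs that are needed: $\partial_t \gfour_{00} = -2n\,\partial_t n$ (from \eqref{E:GFOURTRANSPORTED}) and the evolution equation $\partial_t g_{ij} = -2n\, g_{ia}\SecondFund_{\ j}^a$ stated in the hypothesis (which is \eqref{E:PARTIALTGCMC}).

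Next I would carry out the six substitutions in turn. The three identities with upper index $0$ use only $(\gfour^{-1})^{00} = -n^{-2}$ and $\gfour_{00} = -n^2$: for $\Chfour_{0\ 0}^{\ 0}$ only the $\partial_t n^2$ combination survives and yields $\partial_t\ln n$; for $\Chfour_{j\ 0}^{\ 0}$ the only surviving term is $\frac{1}{2}(\gfour^{-1})^{00}\partial_j \gfour_{00}$, producing $\partial_j\ln n$; and for $\Chfour_{i\ j}^{\ 0}$ the off-diagonal components $\gfour_{0i}$ vanish, so the formula collapses to $-\frac{1}{2}(\gfour^{-1})^{00}\partial_t g_{ij}$, into which I substitute $\partial_t g_{ij} = -2n\,g_{ia}\SecondFund_{\ j}^a$. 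The three identities with upper spatial index use $(\gfour^{-1})^{i\sigma} = g^{ia}\delta_\sigma^a$: for $\Chfour_{0\ 0}^{\ j}$ only $-\partial_a \gfour_{00}$ survives, giving $n g^{ja}\partial_a n$; for $\Chfour_{j\ 0}^{\ i}$ only $\partial_0 g_{ja}$ survives; and for $\Chfour_{j\ k}^{\ i}$ the $\sigma=0$ piece vanishes by $(\gfour^{-1})^{i0}=0$, leaving exactly the intrinsic spatial Christoffel symbol \eqref{E:THREECHRISTOFFEL}.

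The only nontrivial bookkeeping — and essentially the lone ``obstacle'' — occurs in the identity for $\Chfour_{j\ 0}^{\ i}$, where the substitution yields $-n\, g^{ia} g_{jb}\SecondFund_{\ a}^b$; to rewrite this as $-n\,\SecondFund_{\ j}^i$ one must pass through the $(0,2)$ form of $\SecondFund$ and invoke its symmetry $\SecondFund_{ab}=\SecondFund_{ba}$ (equivalently, $g_{jb}\SecondFund_{\ a}^b = g_{ab}\SecondFund_{\ j}^b$). Once this symmetry is recognised, the raising of the first index is immediate. No further estimates or analytical input are required, so the proof is a direct verification; I would present the six cases in the order above, with the symmetry step highlighted as the one place where the structure of $\SecondFund$ (rather than just the form of $\gfour$) enters.
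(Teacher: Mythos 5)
Your proposal is correct, and it is precisely the "straightforward computations" the paper invokes: unpack \eqref{E:FOURCHRISTOFFEL} using the block-diagonal form of $\gfour$ and $\gfour^{-1}$, substitute $\partial_t g_{ij} = -2n\,g_{ia}\SecondFund_{\ j}^a$ where needed, and use the symmetry $\SecondFund_{ab}=\SecondFund_{ba}$ to convert $g^{ia}\SecondFund_{ja}$ into $\SecondFund_{\ j}^i$ in the $\Chfour_{j\ 0}^{\ i}$ case. You have correctly isolated that symmetry step as the one place requiring more than the shift-free form of the metric, so nothing is missing.
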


	 
\begin{proof}
	The lemma follows from straightforward computations.
\end{proof}

\begin{lemma}[\textbf{Christoffel symbol identities}] \label{L:CHRISTOFFELIDENTITIES}
The Christoffel symbols $	\Gamma_{j \ k}^{\ i}$ and the contracted Christoffel symbols
$\Gamma_i$ of $g_{ij},$ which are defined by \eqref{E:THREECHRISTOFFEL} and the equations
\begin{align}
	\Gamma^i & := g^{ab} \Gamma_{a \ b}^{\ i}, 
	&& 
	\Gamma_i := g_{ij} g^{ab} \Gamma_{a \ b}^{\ j},
\end{align}
satisfy the following identities (recall that $\upgamma_{j \ k}^{\ i} = g^{ia} \partial_j g_{ak}$):
\begin{subequations}
\begin{align}
	\Gamma_{j \ k}^{\ i} & = \frac{1}{2}(\upgamma_{j \ k}^{\ i} + \upgamma_{k \ j}^{\ i} 
		- g^{ia} g_{b j} \upgamma_{a \ k}^{\ b}), 
		\label{E:BIGGAMMALITTLEGAMMARELATION} \\
	\Gamma_{a \ i}^{\ a} & = \Gamma_{i \ a}^{\ a} = \frac{1}{2} g^{ab} \partial_i g_{ab} = \partial_i \ln \sqrt{\gdet}, 
		\label{E:GAMMACONTRACTEDDETRELATILON} \\
	\Gamma^i & = g^{jk} \upgamma_{j \ k}^{\ i} - \frac{1}{2} g^{ia} \upgamma_{a \ b}^{\ b}, \\
	\Gamma_i & = g_{ia} g^{jk} \upgamma_{j \ k}^{\ a} - \frac{1}{2} \upgamma_{i \ a}^{\ a} 
	= g^{ab} \partial_a g_{bi} - \frac{1}{2} g^{ab} \partial_i g_{ab}.
		\label{E:BIGGAMMACONTRACTEDLOWEREDLITTLEGAMMARELATION} 
\end{align}
\end{subequations}

Furthermore, we have
\begin{subequations}
\begin{align}
	\upgamma_{i \ a}^{\ a} & = 2 \partial_i \ln \sqrt{\gdet}, \\
	\upgamma_{a \ i}^{\ a} & = \Gamma_i + \partial_i 	\ln \sqrt{\gdet},
\end{align}
\end{subequations}
and
\begin{align}
	\partial_a g^{ia} & = - g^{i c} g^{a b} \partial_a g_{c b}
		= - g^{ab} \upgamma_{a \ b}^{\ i} = - g^{i b} \upgamma_{a \ b}^{\ a} = 
		- \Gamma^i - g^{ia} \partial_a \ln \sqrt{\gdet}.
		\label{E:PARTIALGINVERSEFORMULA}
\end{align}
\end{lemma}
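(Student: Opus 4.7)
The plan is to verify each identity in turn by straightforward substitution from the definitions, leveraging the symmetry of $g_{ij}$ and $g^{ij}$ and Jacobi's formula for the derivative of a determinant. Throughout, I will freely use $\upgamma_{j \ k}^{\ i} := g^{ia}\partial_j g_{ak}$ (which, via $0 = \partial_j(g^{ia}g_{ak})$, is equivalent to $-(\partial_j g^{ia})g_{ak}$), the symmetry of $g_{ij}$ in its two indices, and the analogous symmetry of $g^{ij}$.

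First, for identity \eqref{E:BIGGAMMALITTLEGAMMARELATION}, I would start with the definition \eqref{E:THREECHRISTOFFEL} and substitute $g^{ia}\partial_j g_{ak} = \upgamma_{j \ k}^{\ i}$ and $g^{ia}\partial_k g_{ja} = \upgamma_{k \ j}^{\ i}$ directly; the remaining $-\frac{1}{2}g^{ia}\partial_a g_{jk}$ term becomes $-\frac{1}{2}g^{ia}g_{bj}\upgamma_{a \ k}^{\ b}$ after inserting an identity matrix $\delta_b^c = g_{bj}g^{jc}$ (really one uses $\partial_a g_{jk} = g_{bj} \cdot g^{bc}\partial_a g_{ck} = g_{bj}\upgamma_{a \ k}^{\ b}$). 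Next, for \eqref{E:GAMMACONTRACTEDDETRELATILON}, I would contract $i=a$ in the definition of $\Gamma_{a \ i}^{\ b}$; the first and third terms on the right-hand side of \eqref{E:THREECHRISTOFFEL} cancel by the $a\leftrightarrow b$ symmetry of $g^{ab}$, leaving $\frac{1}{2}g^{ab}\partial_i g_{ab}$. The logarithm identity is then Jacobi's formula $\partial_i \det g = (\det g)\, g^{ab}\partial_i g_{ab}$, rewritten as $\partial_i \ln\sqrt{\gdet} = \frac{1}{2}g^{ab}\partial_i g_{ab}$.

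For the expressions involving $\Gamma^i$ and $\Gamma_i$, I would contract \eqref{E:BIGGAMMALITTLEGAMMARELATION} with $g^{jk}$ to obtain \eqref{E:BIGGAMMACONTRACTEDLOWEREDLITTLEGAMMARELATION} (using $g^{jk}\upgamma_{j \ k}^{\ i} = g^{jk}\upgamma_{k \ j}^{\ i}$ by relabeling and the symmetry $\upgamma_{j \ k}^{\ i} = \upgamma_{k \ j}^{\ i}$ that actually follows from $\partial_j g_{ak} = \partial_k g_{aj}$—wait, this symmetry is not automatic; rather $\upgamma_{j \ k}^{\ i} = g^{ia}\partial_j g_{ak}$ and $g_{ak}=g_{ka}$, so swapping $j$ and $k$ simply exchanges the first two terms in the definition). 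Then $\Gamma_i = g_{ij}\Gamma^j$ gives the form on the right-hand side of \eqref{E:BIGGAMMACONTRACTEDLOWEREDLITTLEGAMMARELATION}, and direct substitution $g_{ij}g^{jk}\upgamma_{j \ k}^{\ a}\mapsto g^{ab}\partial_a g_{bi}$ and $\upgamma_{i \ a}^{\ a} \mapsto g^{ab}\partial_i g_{ab}$ yields the second expression for $\Gamma_i$.

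Identities \eqref{E:PARTIALGINVERSEFORMULA} and the two contracted $\upgamma$ formulas are now immediate: $\upgamma_{i \ a}^{\ a} = g^{ab}\partial_i g_{ab} = 2\partial_i \ln\sqrt{\gdet}$ by step two above; $\upgamma_{a \ i}^{\ a} = g^{ab}\partial_a g_{bi}$ equals $\Gamma_i + \frac{1}{2}g^{ab}\partial_i g_{ab} = \Gamma_i + \partial_i\ln\sqrt{\gdet}$ by the alternate expression for $\Gamma_i$; and for $\partial_a g^{ia}$, differentiating $\delta_c^i = g^{ib}g_{bc}$ gives $\partial_a g^{ib} = -g^{ic}g^{bd}\partial_a g_{cd}$, from which the string of equalities in \eqref{E:PARTIALGINVERSEFORMULA} follows by the $\upgamma$ definition and the two contracted formulas just proved. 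The entire lemma is thus a sequence of bookkeeping manipulations; there is no genuine obstacle, and the only point requiring the slightest care is maintaining the index-positioning conventions consistently when passing between $\upgamma_{j \ k}^{\ i}$ and $\partial g$.
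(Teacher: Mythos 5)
Your proof is correct and follows the same route as the paper, which disposes of the lemma with a citation to Wald (for the determinant identity) and "straightforward computations." The only cosmetic issue is the phrase "inserting an identity matrix $\delta_b^c = g_{bj}g^{jc}$" where the dummy index $j$ clashes with the free index of $\Gamma_{j \ k}^{\ i}$ — but your parenthetical $\partial_a g_{jk} = g_{bj}g^{bc}\partial_a g_{ck}$ already states the correct manipulation, and your explicit check of each identity (including Jacobi's formula, the $a\leftrightarrow b$ cancellation in the contraction, and the derivative of $g^{ib}g_{bc}=\delta_c^i$) is sound.
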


\begin{proof}
	The relation \eqref{E:GAMMACONTRACTEDDETRELATILON} is proved in \cite[Equation (3.4.9)]{rW1984}.
	The remaining identities in the lemma follow from straightforward computations.
\end{proof}

\begin{lemma}[\textbf{Geodesic equation}]
Let
\begin{align} \label{E:GFOURGEO}
	\gfour & = - n^2 dt^2 + g_{ab} dx^a dx^b
\end{align}
be a Lorentzian metric on $(T,1] \times \mathbb{T}^3,$ and let $\pmb{\zeta}:[0,A) \rightarrow (T,1] \times \mathbb{T}^3$ 
be an affinely parameterized geodesic. Then relative to the coordinates in \eqref{E:GFOURGEO},
the components of $\pmb{\zeta}$ verify the following system of ODEs:
\begin{subequations}	
	\begin{align}
		\ddot{\pmb{\zeta}}^0 
			+ (\partial_t \ln n)|_{\pmb{\zeta}} (\dot{\pmb{\zeta}}^0)^2 
			+ 2 (\partial_a \ln n)|_{\pmb{\zeta}} \dot{\pmb{\zeta}}^a \dot{\pmb{\zeta}}^0 
			- (n^{-1} g_{ac} \SecondFund_{\ b}^c)|_{\pmb{\zeta}} \dot{\pmb{\zeta}}^a \dot{\pmb{\zeta}}^b 
			& = 0, \label{E:GEO0} \\
		\ddot{\pmb{\zeta}}^j + (n g^{ja} \partial_a n)|_{\pmb{\zeta}} (\dot{\pmb{\zeta}}^0)^2
			- 2 (n \SecondFund_{\ a}^j)|_{\pmb{\zeta}} \dot{\pmb{\zeta}}^a \dot{\pmb{\zeta}}^0
			+ \Gamma_{a \ b}^{\ j}|_{\pmb{\zeta}} \dot{\pmb{\zeta}}^a \dot{\pmb{\zeta}}^b
		& = 0, \label{E:GEOJ}
	\end{align}
\end{subequations}
where the $\Gamma_{j \ k}^{\ i}$ are the Christoffel symbols of $g_{ij},$
$\dot{\pmb{\zeta}}^{\mu} : = \frac{d}{d \mathscr{A}} \pmb{\zeta}^{\mu}(\mathscr{A}),$ and
$\mathscr{A}$ denotes the affine parameter.
\end{lemma}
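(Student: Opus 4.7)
The plan is to apply the standard geodesic equation in local coordinates and substitute the Christoffel symbols computed in Lemma~\ref{L:CHRIST}. Recall that for an affinely parametrized geodesic $\pmb{\zeta}$ with respect to $\gfour,$ the components verify
\begin{align*}
\ddot{\pmb{\zeta}}^{\mu} + \Chfour_{\alpha \ \beta}^{\ \mu}\big|_{\pmb{\zeta}} \dot{\pmb{\zeta}}^{\alpha} \dot{\pmb{\zeta}}^{\beta} = 0, \qquad \mu = 0,1,2,3,
\end{align*}
which is a direct consequence of the vanishing of $\Dfour_{\dot{\pmb{\zeta}}} \dot{\pmb{\zeta}}$ expressed in the coordinate frame $\lbrace \partial_0, \partial_1, \partial_2, \partial_3 \rbrace.$

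First I would expand this sum by separating the time index $0$ from the spatial indices. Using the symmetry $\Chfour_{\alpha \ \beta}^{\ \mu} = \Chfour_{\beta \ \alpha}^{\ \mu},$ the sum decomposes as
\begin{align*}
\Chfour_{\alpha \ \beta}^{\ \mu} \dot{\pmb{\zeta}}^{\alpha} \dot{\pmb{\zeta}}^{\beta}
= \Chfour_{0 \ 0}^{\ \mu} (\dot{\pmb{\zeta}}^0)^2 + 2 \Chfour_{a \ 0}^{\ \mu} \dot{\pmb{\zeta}}^a \dot{\pmb{\zeta}}^0 + \Chfour_{a \ b}^{\ \mu} \dot{\pmb{\zeta}}^a \dot{\pmb{\zeta}}^b.
\end{align*}

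Next I would specialize to $\mu = 0$ and substitute the values from Lemma~\ref{L:CHRIST}, namely $\Chfour_{0 \ 0}^{\ 0} = \partial_t \ln n,$ $\Chfour_{j \ 0}^{\ 0} = \partial_j \ln n,$ and $\Chfour_{i \ j}^{\ 0} = -n^{-1} g_{ia} \SecondFund_{\ j}^a.$ This immediately yields \eqref{E:GEO0}. Similarly, for $\mu = j,$ substituting $\Chfour_{0 \ 0}^{\ j} = n g^{ja} \partial_a n,$ $\Chfour_{k \ 0}^{\ j} = -n \SecondFund_{\ k}^j,$ and $\Chfour_{i \ k}^{\ j} = \Gamma_{i \ k}^{\ j}$ produces \eqref{E:GEOJ}.

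There is no real obstacle: the proof is purely algebraic substitution, and Lemma~\ref{L:CHRIST} has already done the nontrivial work of computing the Christoffel symbols of $\gfour$ in terms of $n,$ $g,$ and $\SecondFund.$ The only point worth remarking on is the factor of $2$ in the mixed spacetime-spatial terms, which arises from the symmetrization $\Chfour_{a \ 0}^{\ \mu} + \Chfour_{0 \ a}^{\ \mu} = 2\Chfour_{a \ 0}^{\ \mu}.$
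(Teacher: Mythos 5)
Your proof is correct and follows exactly the same route as the paper: write the standard coordinate geodesic equation $\ddot{\pmb{\zeta}}^{\mu} + \Chfour_{\alpha \ \beta}^{\ \mu}|_{\pmb{\zeta}} \dot{\pmb{\zeta}}^{\alpha} \dot{\pmb{\zeta}}^{\beta} = 0$ and substitute the Christoffel symbols from Lemma~\ref{L:CHRIST}, splitting the sum by index type. You simply make the bookkeeping more explicit than the paper, which states the substitution in one line.
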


\begin{proof}
	The geodesic equation is (see e.g. \cite[Equation (3.3.5)]{rW1984}) 
	$\ddot{\pmb{\zeta}}^{\mu} + \Chfour_{\alpha \ \beta}^{\ \mu}|_{\pmb{\zeta}} \dot{\pmb{\zeta}}^{\alpha} \dot{\pmb{\zeta}}^{\beta} = 
	0.$ Equations \eqref{E:GEO0}-\eqref{E:GEOJ} thus follow from Lemma~\ref{L:CHRIST}.
\end{proof}

\begin{lemma}[\textbf{Curvature tensors in terms of the matter}]
	For a solution to the Einstein-stiff fluid 
	system \eqref{E:EINSTEININTRO}-\eqref{E:EOS}, $\speed = 1,$ the following relations
	hold:
	\begin{subequations}
	\begin{align}
		\Rfour & = - 2 p, 
			\label{E:SPACETIMESCALARCURVATUREINTERMSOFMATTER} \\
		\Ricfour_{\mu \nu} & = 2 p \gfour_{\mu \alpha} \gfour_{\nu \beta} \ufour^{\alpha} \ufour^{\beta}, 	
			\label{E:SPACETIMERICCIINTERMSOFMATTER} \\
		|\Ricfour|_{\gfour}^2 & = 4 p^2, 
			\label{E:SPACETIMERICCIGFOURNROMINTERMSOFMATTER} \\
		\Schfour_{\mu \nu} & = p \gfour_{\mu \alpha} \gfour_{\nu \beta} \ufour^{\alpha} \ufour^{\beta} 
			+ \frac{1}{6} p \gfour_{\mu \nu}, 
			\label{E:SPACETIMESCHOUTENINTERMSOFMATTER} \\
		\Sfour & = - \frac{1}{3}p, 
			\label{E:SCHOUTENFOURTRACEINTERMSOFMATTER} \\
		|\Schfour|_{\gfour}^2 &
			= \frac{7}{9} p^2, \label{E:SCHOUTENFOURNORMINTERMSOFPRESSURE}
			\\
		|\mathbf{P}|_{\gfour}^2 &
			= \frac{20}{3} p^2. \label{E:PTENSORNORMINTERMSOFPRESSURE}
		\end{align}
	\end{subequations}
	
	


\end{lemma}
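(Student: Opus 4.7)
Every identity is a direct algebraic consequence of the Einstein equation \eqref{E:EINSTEININTRO}, the stiff-fluid energy-momentum tensor \eqref{E:TFOURFLUID} with $p = \rho$, and the normalization $\gfour_{\alpha\beta}\ufour^\alpha\ufour^\beta = -1$; no PDE analysis is required, and the whole proof is bookkeeping of traces. First I would contract \eqref{E:EINSTEININTRO} with $(\gfour^{-1})^{\mu\nu}$, which gives $-\Rfour = (\gfour^{-1})^{\alpha\beta}\Tfour_{\alpha\beta}$. Using the explicit form of $\Tfour$ together with $\ufour^\alpha\ufour_\alpha = -1$, the right-hand side evaluates to $2p(-1) + 4p = 2p$, proving \eqref{E:SPACETIMESCALARCURVATUREINTERMSOFMATTER}. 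Feeding this back into \eqref{E:EINSTEININTRO} rewritten as $\Ricfour_{\mu\nu} = \Tfour_{\mu\nu} + \tfrac{1}{2}\Rfour\,\gfour_{\mu\nu}$, the two terms proportional to $\gfour_{\mu\nu}$ cancel and one is left with \eqref{E:SPACETIMERICCIINTERMSOFMATTER}.

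\textbf{Schouten and its trace.} The Ricci-norm identity \eqref{E:SPACETIMERICCIGFOURNROMINTERMSOFMATTER} then follows at once from \eqref{E:SPACETIMERICCIINTERMSOFMATTER} and \eqref{E:TSPACETIMEGFOURNORM} by contracting twice with $\gfour$ and applying $\ufour^\alpha\ufour_\alpha = -1$. The Schouten identity \eqref{E:SPACETIMESCHOUTENINTERMSOFMATTER} is immediate upon substituting the expressions for $\Ricfour_{\mu\nu}$ and $\Rfour$ into the definition \eqref{E:SCHOUTFOUR}; tracing the result with $(\gfour^{-1})^{\mu\nu}$ gives $\Sfour = p\,\ufour^\alpha\ufour_\alpha + \tfrac{1}{6}p\cdot 4 = -\tfrac{1}{3}p$, which is \eqref{E:SCHOUTENFOURTRACEINTERMSOFMATTER}.

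\textbf{The two norm identities.} For $|\Schfour|^2_\gfour$ I would expand
\begin{equation*}
  |\Schfour|^2_\gfour \;=\; \bigl(p\,\ufour^\mu\ufour^\nu + \tfrac{1}{6}p\,(\gfour^{-1})^{\mu\nu}\bigr)\bigl(p\,\ufour_\mu\ufour_\nu + \tfrac{1}{6}p\,\gfour_{\mu\nu}\bigr)
\end{equation*}
and reduce it using $\ufour^\alpha\ufour_\alpha = -1$ and $(\gfour^{-1})^{\alpha\beta}\gfour_{\alpha\beta} = 4$ to $p^2 - \tfrac{1}{3}p^2 + \tfrac{1}{9}p^2 = \tfrac{7}{9}p^2$, confirming \eqref{E:SCHOUTENFOURNORMINTERMSOFPRESSURE}. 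Finally, \eqref{E:PTENSORNORMINTERMSOFPRESSURE} follows by plugging \eqref{E:SCHOUTENFOURNORMINTERMSOFPRESSURE} and \eqref{E:SCHOUTENFOURTRACEINTERMSOFMATTER} into the already-stated algebraic identity \eqref{E:PGNORMINTERMSOFSCHOUTENGNORM}: $|\mathbf{P}|^2_\gfour = 8\cdot\tfrac{7}{9}p^2 + 4\cdot\tfrac{1}{9}p^2 = \tfrac{20}{3}p^2$. There is no genuine obstacle; the only point that demands care is consistently distinguishing $\ufour^\mu$ from its metric dual $\ufour_\mu = \gfour_{\mu\alpha}\ufour^\alpha$ so that each application of the normalization condition produces the correct sign.
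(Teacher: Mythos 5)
Your proposal is correct and follows exactly the same chain of algebraic deductions as the paper's proof: contract Einstein's equations to get $\Rfour$, substitute back for $\Ricfour_{\mu\nu}$, then derive the Schouten tensor, its trace, and the norms in sequence, finishing with the identity $|\mathbf{P}|_{\gfour}^2 = 8|\Schfour|_{\gfour}^2 + 4\Sfour^2$. The numerical bookkeeping ($2p(-1)+4p = 2p$, $p^2 - \tfrac{1}{3}p^2 + \tfrac{1}{9}p^2 = \tfrac{7}{9}p^2$, $\tfrac{56}{9}p^2 + \tfrac{4}{9}p^2 = \tfrac{20}{3}p^2$) all checks out.
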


\begin{proof}
	Contracting \eqref{E:EINSTEININTRO} against $(\gfour^{-1})^{\mu \nu},$ we deduce that
	$\Rfour = - \Tfour = -2p.$ This proves \eqref{E:SPACETIMESCALARCURVATUREINTERMSOFMATTER}.
	The relation \eqref{E:SPACETIMERICCIINTERMSOFMATTER} then follows from 
	\eqref{E:EINSTEININTRO} and \eqref{E:SPACETIMESCALARCURVATUREINTERMSOFMATTER}.
	\eqref{E:SPACETIMERICCIGFOURNROMINTERMSOFMATTER} then follows from
	\eqref{E:SPACETIMERICCIINTERMSOFMATTER} and \eqref{E:UNORMALIZED}.
	\eqref{E:SPACETIMESCHOUTENINTERMSOFMATTER} then follows from \eqref{E:SCHOUTFOUR},
	\eqref{E:SPACETIMESCALARCURVATUREINTERMSOFMATTER}, and \eqref{E:SPACETIMERICCIINTERMSOFMATTER}.
	\eqref{E:SCHOUTENFOURTRACEINTERMSOFMATTER} then follows from
	\eqref{E:UNORMALIZED}, \eqref{E:SCHOUTENTRACE}, and \eqref{E:SPACETIMESCHOUTENINTERMSOFMATTER}.
	\eqref{E:SCHOUTENFOURNORMINTERMSOFPRESSURE} follows from 
	\eqref{E:SPACETIMESCHOUTENINTERMSOFMATTER} and \eqref{E:UNORMALIZED}.
	\eqref{E:PTENSORNORMINTERMSOFPRESSURE} then follows from
	\eqref{E:PGNORMINTERMSOFSCHOUTENGNORM}, \eqref{E:SCHOUTENFOURTRACEINTERMSOFMATTER},
	and \eqref{E:SCHOUTENFOURNORMINTERMSOFPRESSURE}.
\end{proof}







\begin{lemma}[\textbf{Decomposition of $\Riemfour_{\alpha \beta \mu \nu},$ 
$\Ricfour_{\mu \nu},$ $\Rfour,$
and $\Wfour_{\alpha \beta \mu \nu}$ into principal terms and error terms}] \label{L:RIEMANNANDWEYLDECOMPOSITIONS}
Let
\begin{align} \label{E:GFOURRIEMDECOMP}
	\gfour & = - n^2 dt^2 + g_{ab} dx^a dx^b
\end{align}
be a Lorentzian metric on $(T,1] \times \mathbb{T}^3.$ 
Assume that $(\gfour, p, \ufour)$ verify the 
Einstein-stiff fluid equations \eqref{E:EINSTEININTRO}-\eqref{E:EOS}, $\speed = 1$
and that the CMC condition $\SecondFund_{\ a}^a = -t^{-1}$ holds.
Then relative to the coordinates in \eqref{E:GFOURRIEMDECOMP},
the components of the spacetime Riemann tensor 
$\Riemfour_{\alpha \beta}^{\ \ \ \mu \nu} 
	:=(\gfour^{-1})^{\mu \mu'}(\gfour^{-1})^{\nu \nu'}\Riemfour_{\alpha \beta \mu' \nu'}$
can be decomposed into principal terms and error terms as follows:	
	\begin{subequations}
	\begin{align}
		\Riemfour_{ab}^{\ \ cd} & =
			\SecondFund_{\ a}^c \SecondFund_{\ b}^d
			- \SecondFund_{\ a}^d \SecondFund_{\ b}^c
			+ \pmb{\triangle}_{ab}^{\ \ cd},
			\label{E:RFOURALLSPATIALDECOMP} \\
		\Riemfour_{a0}^{\ \ c 0} & = t^{-1} \SecondFund_{\ a}^c
			+ \SecondFund_{\ e}^c \SecondFund_{\ a}^e 
			+ \pmb{\triangle}_{a0}^{\ \ c 0}, 
			\label{E:RFOURTWO0DECOMP} \\
		\Riemfour_{0b}^{\ \ cd} & = \pmb{\triangle}_{0b}^{\ \ cd},
		\label{E:RFOURONE0DECOMP}
	\end{align}
	\end{subequations}
	where the error terms are
	\begin{subequations}
	\begin{align}
		\pmb{\triangle}_{ab}^{\ \ cd} & := \Riemann_{ab}^{\ \ cd},
		 \label{E:RFOURFOURERROR} \\
		\pmb{\triangle}_{a0}^{\ \ c 0} & := - t^{-1} n^{-1} \partial_t (t \SecondFund_{\ a}^c)
			+ t^{-1} (n^{-1} - 1)\SecondFund_{\ a}^c 
			\label{E:RFOURTWOERROR} \\
		& \ \ - n^{-1} g^{ec} \partial_a \partial_e n
			+ n^{-1} g^{ec} \Gamma_{a \ e}^{\ f} \partial_f n, 
			 \notag \\
		\pmb{\triangle}_{0b}^{\ \ cd} & := 
					n g^{ce} \partial_e (\SecondFund_{\ b}^d)
				- n g^{de} \partial_e (\SecondFund_{\ b}^c) 
				\label{E:RFOURTHREEERROR} \\
		& \ \ + n g^{ce} \Gamma_{e \ f}^{\ d} \SecondFund_{\ b}^f
			- n g^{ce} \Gamma_{e \ b}^{\ f} \SecondFund_{\ f}^d
			- n g^{de} \Gamma_{e \ f}^{\ c} \SecondFund_{\ b}^f
			+ n g^{de} \Gamma_{e \ b}^{\ f} \SecondFund_{\ f}^c. 
			\notag
	\end{align}
\end{subequations}
Above, $\Riemann_{ab}^{\ \ cd} := g^{cc'}g^{dd'}\Riemann_{abc'd'}$ denotes a component of the Riemann tensor of $g_{ij}.$
	
The components of the spacetime Ricci tensor 
$\Ricfour_{\ \nu}^{\mu} :=(\gfour^{-1})^{\mu \mu'}\Ricfour_{\mu' \nu}$
can be decomposed into principal terms and error terms as follows:	
	\begin{subequations}
		\begin{align}
			\Ricfour_{\ 0}^0 & = - t^{-2} 
				+ \SecondFund_{\ b}^a \SecondFund_{\ a}^b
				+ \pmb{\triangle}_{\ 0}^0,
					\label{E:RICFOUR00DECOMP} \\
		\Ricfour_{\ 0}^i & = \pmb{\triangle}_{\ 0}^i,
			\\
		\Ricfour_{\ j}^i & = \pmb{\triangle}_{\ j}^i,
		\end{align}
	\end{subequations}
	where the error terms are
	\begin{subequations}
	\begin{align}
		\pmb{\triangle}_{\ 0}^0 & := 
				t^{-2}(1 - n^{-1})
				- n^{-1} g^{ab} \partial_a \partial_b n
				+ n^{-1} g^{ab} \Gamma_{a \ b}^{\ c} \partial_c n, \\
		\pmb{\triangle}_{\ 0}^i & := 
			- n g^{ab} \partial_a(\SecondFund_{\ b}^i)
			- n g^{ab} \Gamma_{a \ c}^{\ i} \SecondFund_{\ b}^c
			+ n g^{ab} \Gamma_{a \ b}^{\ c} \SecondFund_{\ c}^i,
			\\
		\pmb{\triangle}_{\ j}^i & := 
			- n^{-1} t^{-1} \partial_t (t\SecondFund_{\ j}^i)
			+ t^{-1}(n^{-1} - 1) \SecondFund_{\ j}^i
			- n^{-1} g^{ai} \partial_j \partial_a n
			+ n^{-1} g^{ai} \Gamma_{j \ a}^{\ b} \partial_b n 
			+ \Ric_{\ j}^i. 
			\label{E:RICFOURTWOERROR}
	\end{align}
	\end{subequations}
	Above, $R_{\ j}^i := g^{ia}R_{aj}$ denotes a component of the Ricci tensor of $g_{ij}.$
	
	The spacetime scalar curvature $\Rfour$ can be decomposed into principal terms and error terms as follows:	
	\begin{align}
		\Rfour & = 
			- t^{-2}
			+ \SecondFund_{\ b}^a \SecondFund_{\ a}^b
			+ \pmb{\triangle},
			\label{E:SCALARFOURDECOMP}
	\end{align}
where the error term is
\begin{align}
	\pmb{\triangle} & :=  2 t^{-2}(1 - n^{-1}) 
			- 2n^{-1} g^{ab} \partial_a \partial_b n
			+ 2 n^{-1} g^{ab} \Gamma_{a \ b}^{\ c} \partial_c n
			+ R. \label{E:SCALARFOURERROR}
\end{align}	
Above, $R$ denotes the scalar curvature of $g_{ij}.$
	
The components of the spacetime Weyl tensor 
$\Wfour_{\alpha \beta}^{\ \ \ \mu \nu} 
	:=(\gfour^{-1})^{\mu \mu'}(\gfour^{-1})^{\nu \nu'}\Wfour_{\alpha \beta \mu' \nu'}$
can be decomposed into principal terms and error terms as follows:		
	\begin{subequations}
	\begin{align}
		\Wfour_{ab}^{\ \ cd} & = 
				\SecondFund_{\ a}^c \SecondFund_{\ b}^d - \SecondFund_{\ a}^d \SecondFund_{\ b}^c	
				+ \frac{1}{6} \underbrace{\Rfour}_{- 2p} \big(\ID_{\ a}^c \ID_{\ b}^d - \ID_{\ a}^d \ID_{\ b}^c \big)
			\label{E:WFOURALLSPATIALDECOMP} \\
		& \ \	+ \pmb{\triangle}_{ab}^{\ \ cd}
			 - \frac{1}{2} \left(\ID_{\ a}^c \pmb{\triangle}_{\ b}^d 
			- \ID_{\ a}^d \pmb{\triangle}_{\ b}^c
			+ \ID_{\ b}^d \pmb{\triangle}_{\ a}^c
			- \ID_{\ b}^c \pmb{\triangle}_{\ a}^d
			\right), \notag \\
		\Wfour_{0b}^{\ \ 0d} & = 
			t^{-1} \SecondFund_{\ b}^d
			+ \SecondFund_{\ e}^d \SecondFund_{\ b}^e 
			+ \frac{1}{6} \underbrace{\Rfour}_{-2p} \ID_{\ b}^d 
			\\
		& \ \ - \frac{1}{2} \ID_{\ b}^d \underbrace{(-t^{-2} + \SecondFund_{\ f}^e \SecondFund_{\ e}^f)}_{0}
			+ \pmb{\triangle}_{0b}^{\ \ 0d}
			- \frac{1}{2} \pmb{\triangle}_{\ b}^d
			- \frac{1}{2} \ID_{\ b}^d \pmb{\triangle}_{\ 0}^0,
			\notag \\
		\Wfour_{0b}^{\ \ cd} & = \pmb{\triangle}_{0b}^{\ \ c d}
			- \frac{1}{2}\left(\ID_{\ b}^d \pmb{\triangle}_{\ 0}^c 
				- \ID_{\ b}^c \pmb{\triangle}_{\ 0}^d \right).
				\label{E:WFOURTHREEERROR}
			\end{align}
		\end{subequations}
		Above, $\ID_{\ j}^i = \mbox{diag}(1,1,1)$ denotes the identity transformation.
				
\end{lemma}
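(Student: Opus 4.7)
\textbf{Proof proposal for Lemma \ref{L:RIEMANNANDWEYLDECOMPOSITIONS}.}
The plan is to compute all curvature components directly from the Christoffel symbol formula \eqref{E:RFOURCHRISTOFFELIDENTITY}, substituting in the explicit expressions from Lemma \ref{L:CHRIST}, and then to reorganize the resulting expressions into ``principal terms'' (those that survive in the FLRW limit) and ``error terms.''  At the level of organization, the three Riemann identities \eqref{E:RFOURALLSPATIALDECOMP}--\eqref{E:RFOURONE0DECOMP} are nothing but the Gauss, Codazzi, and Mainardi equations for the spacelike hypersurfaces $\Sigma_t$, adapted to the coordinates of \eqref{E:GFOURTRANSPORTED}; I would derive each by hand rather than quoting them. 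For \eqref{E:RFOURALLSPATIALDECOMP}: compute $\Riemfour_{abcd}$ from \eqref{E:RFOURCHRISTOFFELIDENTITY} with all four indices spatial. The terms involving $\Chfour_{i\ j}^{\ 0} = -n^{-1}g_{ia}\SecondFund_{\ j}^a$ and $\Chfour_{j\ 0}^{\ i} = -n\SecondFund_{\ j}^i$ combine to produce $\SecondFund_{ac}\SecondFund_{bd} - \SecondFund_{ad}\SecondFund_{bc}$ (the Gauss correction), while the purely spatial Christoffel contributions reproduce $\Riemann_{abcd}$. Raising the last two indices by $g^{-1}$ (which agrees with $\gfour^{-1}$ on spatial-spatial slots) yields \eqref{E:RFOURALLSPATIALDECOMP} with $\pmb{\triangle}_{ab}^{\ \ cd} = \Riemann_{ab}^{\ \ cd}$.

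Next, for \eqref{E:RFOURTWO0DECOMP} (the Mainardi/Ricci equation), I would compute $\Riemfour_{a0c0}$ from \eqref{E:RFOURCHRISTOFFELIDENTITY}, using that the time derivative $\partial_0 \Chfour_{0\ a}^{\ c}$ produces $n\partial_t \SecondFund_{\ a}^c$ plus lapse-derivative terms, and that $\Chfour_{0\ 0}^{\ j} = ng^{ja}\partial_a n$ produces the Hessian piece $n g^{ce}\partial_a\partial_e n$. After raising indices via $\Riemfour_{a0}^{\ \ c0} = -n^{-2}g^{cc'}\Riemfour_{a0c'0}$ and rearranging, one obtains the stated form, with the key CMC simplification $\partial_t(t\SecondFund_{\ a}^c) = t\partial_t\SecondFund_{\ a}^c + \SecondFund_{\ a}^c$ being assembled explicitly so that the ``principal'' piece $t^{-1}\SecondFund_{\ a}^c + \SecondFund_{\ e}^c\SecondFund_{\ a}^e$ is isolated and every remaining piece (Hessian of $n$, $n^{-1}-1$ corrections, Christoffel corrections) lands in $\pmb{\triangle}_{a0}^{\ \ c0}$. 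Identity \eqref{E:RFOURONE0DECOMP} is the Codazzi equation: compute $\Riemfour_{0bcd}$ from \eqref{E:RFOURCHRISTOFFELIDENTITY}; the resulting expression is pure divergence of $\SecondFund$ (i.e.\ $n\nabla_d \SecondFund_{bc} - n\nabla_c\SecondFund_{bd}$ up to index-raising), all of which is declared an error term $\pmb{\triangle}_{0b}^{\ \ cd}$ because it has no analog in the FLRW background.

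The Ricci, scalar, and Weyl decompositions follow by pure algebra once the Riemann identities are in hand. For the Ricci components I would exploit the block form $(\gfour^{-1})^{00} = -n^{-2}$, $(\gfour^{-1})^{0i}=0$, $(\gfour^{-1})^{ij} = g^{ij}$ to write $\Ricfour_{\ 0}^{0} = \Riemfour_{a0}^{\ \ a0}$ and insert \eqref{E:RFOURTWO0DECOMP}; the CMC condition $\SecondFund_{\ a}^a = -t^{-1}$ then forces $\partial_t(t\SecondFund_{\ a}^a) = 0$, producing the principal term $-t^{-2}+\SecondFund_{\ b}^a\SecondFund_{\ a}^b$. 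Analogous contractions give $\Ricfour_{\ 0}^i$ (where the principal term vanishes by Codazzi) and $\Ricfour_{\ j}^i$ (where the Gauss contribution cancels up to the $t$-weighted pieces shown). Taking the trace yields \eqref{E:SCALARFOURDECOMP}--\eqref{E:SCALARFOURERROR}. Finally, for the Weyl decomposition I would substitute the Einstein--stiff fluid identities \eqref{E:SPACETIMESCALARCURVATUREINTERMSOFMATTER}--\eqref{E:SPACETIMERICCIINTERMSOFMATTER} into the definition \eqref{E:WEYLDEF}, then substitute the Riemann decomposition derived above; the Ricci and scalar curvature terms in \eqref{E:WEYLDEF} cleanly reorganize into the expressions displayed in \eqref{E:WFOURALLSPATIALDECOMP}--\eqref{E:WFOURTHREEERROR}, with the bracketed cancellation $-t^{-2}+\SecondFund_{\ f}^e\SecondFund_{\ e}^f$ vanishing by the Hamiltonian constraint combined with CMC.

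The main obstacle will be pure bookkeeping: carefully tracking (i) the distinction between $\Riemfour_{\alpha\beta\mu\nu}$, $\Riemfour_{\alpha\beta}^{\ \ \mu\nu}$, and $\Riemfour_{\alpha\beta\mu}^{\phantom{\alpha\beta\mu}\nu}$ since raising by $\gfour^{-1}$ behaves differently on $0$-indices (factor $-n^{-2}$) versus spatial indices (factor $g^{ij}$), and (ii) the time-derivative pieces, where one must repeatedly use the CMC identity $\partial_t(t\SecondFund_{\ a}^a) = 0$ to eliminate trace contributions and thereby ensure that the ``principal'' terms match the background FLRW structure. There is no deep analytic content here; the substance is in the precise identification of which terms vanish on the background and which constitute the error. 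Once the Gauss--Codazzi--Mainardi calculation is executed and the CMC trace identity applied, all six displayed identities fall out mechanically.
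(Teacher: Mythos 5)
Your proposal follows the paper's own route exactly: compute $\Riemfour_{\alpha\beta}^{\ \ \mu\nu}$ by plugging the Christoffel symbols of Lemma~\ref{L:CHRIST} into \eqref{E:RFOURCHRISTOFFELIDENTITY}, contract to get $\Ricfour$ and $\Rfour$ via \eqref{E:RICFOURDEF} and \eqref{E:RFOURDEF}, then substitute into the Weyl definition \eqref{E:WEYLDEF}; the organization into Gauss/Codazzi/Mainardi blocks is the standard interpretation of the same computation. One small caution: your remark that ``the bracketed cancellation $-t^{-2}+\SecondFund_{\ f}^e\SecondFund_{\ e}^f$ vanishes by the Hamiltonian constraint combined with CMC'' is not actually correct — the Hamiltonian constraint with CMC gives $-t^{-2}+\SecondFund_{\ b}^a\SecondFund_{\ a}^b = R - 2p - 4p u_au^a$, which already equals $-\tfrac{2}{3}t^{-2}$ on the FLRW background — and none of the stated identities require it to vanish; the Weyl formulas follow purely algebraically from the Riemann and Ricci decompositions, so this assertion should simply be dropped.
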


\begin{proof}
	To derive \eqref{E:RFOURALLSPATIALDECOMP}-\eqref{E:RFOURTHREEERROR}, we use the
	relation \eqref{E:RFOURCHRISTOFFELIDENTITY} and Lemma~\ref{L:CHRIST}. 
	\eqref{E:RICFOUR00DECOMP}-\eqref{E:RICFOURTWOERROR} then follow from definition \eqref{E:RICFOURDEF}.
	\eqref{E:SCALARFOURDECOMP} and \eqref{E:SCALARFOURERROR} then follow from Def.~\ref{E:RFOURDEF}.
	To derive \eqref{E:WFOURALLSPATIALDECOMP}-\eqref{E:WFOURTHREEERROR},
	we simply substitute \eqref{E:RFOURALLSPATIALDECOMP}-\eqref{E:RFOURTHREEERROR}
	into the right-hand side of \eqref{E:WEYLDEF}.
\end{proof}

\section{Derivation of the Einstein-Stiff Fluid Equations in CMC-Transported Spatial Coordinates} \label{A:CMCDERIVATION}
  
   \setcounter{theorem}{0}
   \setcounter{definition}{0}
   \setcounter{remark}{0}
   \setcounter{proposition}{0}

In this appendix, we provide some additional details regarding the derivation of the equations of Sect.~\ref{S:EEINCMC}. 
We begin by recalling that relative to CMC-transported spatial coordinates, 
$\gfour$ can be decomposed into a lapse function $n$ and a Riemannian $3-$metric $g_{ij}$ 
induced on $\Sigma_t$ as follows:
\begin{align}
	\gfour & = - n^2 dt^2 + g_{ab} dx^a dx^b \label{AE:GFOURDECOMP}.
\end{align}
The future-directed unit normal to $\Sigma_t$ is
\begin{align} \label{AE:UNITNORMAL}
	\Nml & = n^{-1} \partial_t.
\end{align}
The second fundamental form $\SecondFund$ of $\Sigma_t$ is defined by
requiring that the following relation hold for all pairs of vectors
$X,Y$ tangent to the $\Sigma_t:$
\begin{align} \label{AE:SECONDFUNDDEF}
	\SecondFund(X,Y) = - \gfour(\Dfour_X \Nml, Y).
\end{align}
For such $X,Y,$ the action of the spacetime connection $\Dfour$ can be decomposed into
the action of $\nabla$ and $\SecondFund$ as follows:
\begin{align} \label{AE:DDECOMP}
	\Dfour_X Y = \nabla_X Y - \SecondFund(X,Y)\Nml.
\end{align}

The energy-momentum tensor of a perfect fluid is
\begin{align} \label{E:TFOURSTIFFFLUID}
	\Tfour^{\mu \nu} & = (\rho + p) \ufour^{\mu} \ufour^{\nu} + p(\gfour^{-1})^{\mu \nu}.
\end{align}
$\ufour$ is future-directed and normalized by 
\begin{align}
	\gfour_{\alpha \beta} \ufour^{\alpha} \ufour^{\beta} & = - 1.
	\label{AE:UNORMAL}
\end{align}
$\ufour$ can be decomposed as
\begin{align} \label{AE:UNORMALGEO}
	\ufour = (1 + u_a u^a)^{1/2} \Nml + u^a \partial_a,
\end{align}
where the factor $(1 + u_a u^a)^{1/2}$ enforces the normalization condition \eqref{AE:UNORMAL}. The energy-momentum tensor
\eqref{E:TFOURFLUID} of the perfect fluid can be decomposed (with the indices ``downstairs'') as
\begin{align}
	\Tfour = \Tfour(\Nml,\Nml) \Nml_{\flat} \otimes \Nml_{\flat} 
		- \Tfour(\Nml,\partial_a) \left(\Nml_{\flat} \otimes dx^a + dx^a \otimes \Nml_{\flat} \right) 
		+ T_{ab} dx^a \otimes dx^b,
\end{align}
where $(\Nml_{\flat})_{\mu} := \gfour_{\mu \alpha} \Nml^{\alpha}$ is the metric dual of $\Nml,$
\begin{subequations}
\begin{align}
	\Tfour(\Nml,\Nml) & = p + (\rho + p) (1 + u_a u^a), 
		\label{AE:TFOURNORMAL} \\
	\Tfour(\Nml,\partial_i) & = - (\rho + p) (1 + u_a u^a)^{1/2} u_i, 
		\label{AE:TFOURNORMALSPATIAL} \\
	T_{ij} & = (\rho + p) u_i u_j + p g_{ij}. \label{AE:TFOURSPATIAL}
\end{align}
\end{subequations}

The deformation tensor $^{(\Nml)}\pmb{\pi}$ of $\Nml$ will play an important role in our derivation.
It is defined by
\begin{align}
	^{(\Nml)}\pmb{\pi} := \mathcal{L}_{\Nml} \gfour,
\end{align}
where $\mathcal{L}$ denotes Lie differentiation. Relative to arbitrary coordinates, we have
\begin{align}
	^{(\Nml)}\pmb{\pi}_{\mu \nu} & = \Dfour_{\mu} \Nml_{\nu} + \Dfour_{\nu} \Nml_{\mu}.
\end{align}

If $X$ and $Y$ are any pair of vectorfields tangent to $\Sigma_t,$ then
\begin{subequations}
\begin{align}
	^{(\Nml)}\pmb{\pi}(\Nml,\Nml) & = 0, \\
	^{(\Nml)}\pmb{\pi}(X,Y) & = \gfour(\Dfour_X \Nml,Y) + \gfour(\Dfour_Y \Nml,X) = - 2 \SecondFund(X,Y), \\
	^{(\Nml)}\pmb{\pi}(\Nml,X) & = X \ln n.
\end{align}
\end{subequations}

If $\mathbf{X}$ is any spacetime vectorfield, then we can decompose $\mathbf{X}$ into its
normal and $\Sigma_t-$tangential components as follows:
\begin{align} \label{AE:XDECOMP}
	\mathbf{X} = - \gfour(\mathbf{X},\Nml) \Nml + X^a \partial_a.
\end{align}

We have the following divergence identity:
\begin{align} \label{AE:DIVXDECOMP}
	\Dfour_{\alpha} \mathbf{X}^{\alpha} 
		& = - \Dfour_{\Nml}[\gfour(\mathbf{X},\Nml)]
			+ \nabla_a X^a
			+ X^a \partial_a \ln n + \SecondFund_{\ a}^a \gfour(\mathbf{X},\Nml).
\end{align}

The following identity holds for any spacetime
vectorfield $\mathbf{X}$ and any vectorfield $Y$ tangent to $\Sigma_t:$
\begin{align} \label{AE:DXXY}
	\gfour(\Dfour_{\mathbf{X}} \mathbf{X},Y)
	& = - 2 \gfour(\mathbf{X},\Nml) \gfour([\Nml,X],Y) 
		+ 2 \gfour(\mathbf{X},\Nml) k_{\ b}^a X_a Y^b \\
	& \ \ + [\gfour(\mathbf{X},\Nml)]^2 Y^a \nabla_a \ln n
		+	X^a Y_b \nabla_a X^b. \notag
\end{align}
Here, $[\mathbf{X},\mathbf{Y}]$ denotes the Lie bracket of the vectorfields $\mathbf{X}$ and $\mathbf{Y}$
(see \eqref{E:LIEBRACKETCOORDINATES}).





\subsection{The constraint equations}

\begin{lemma} [\textbf{The constraint equations relative to a CMC foliation}] \label{L:CONSTRAINTS}
Consider a solution $(\gfour, p, \ufour)$ to the Einstein-stiff fluid system \eqref{E:EINSTEININTRO}-\eqref{E:EOS}, $\speed = 1.$ 
Assume that the CMC condition $\SecondFund_{\ a}^a = - t^{-1}$ holds.
Then the following constraint equations are verified along $\Sigma_t:$
\begin{subequations}
\begin{align}
	R - \SecondFund_{\ b}^a \SecondFund_{\ a}^b + \underbrace{(\SecondFund_{\ a}^a)^2}_{t^{-2}} & = 
		\overbrace{2 p + 4p u_a u^a}^{2 \Tfour(\Nml,\Nml)}, \label{AE:HAMILTONIAN} \\
	\nabla_a \SecondFund_{\ i}^a - \underbrace{\nabla_i \SecondFund_{\ a}^a}_{= 0} & = 
	\underbrace{2p (1 + u_a u^a)^{1/2} u_i}_{- \Tfour(\Nml,\partial_i)}. \label{AE:MOMENTUM}
\end{align}
\end{subequations}

\end{lemma}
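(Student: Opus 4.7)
The plan is to derive both constraints as the standard projections of the Einstein equations \eqref{E:EINSTEININTRO} onto the $\Sigma_t$ foliation, using the Gauss and Codazzi identities. I will first establish the two doubly- and singly-contracted Gauss--Codazzi identities in abstract form, then convert curvature into matter via the Einstein equations, and finally plug in the stiff fluid energy momentum tensor together with the decompositions \eqref{AE:UNORMALGEO}, \eqref{AE:TFOURNORMAL}, and \eqref{AE:TFOURNORMALSPATIAL}.

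For \eqref{AE:HAMILTONIAN}, I would start from the Gauss equation applied to the embedding $\Sigma_t \hookrightarrow \mathbf{M}$: for any vectorfields $X,Y,Z,W$ tangent to $\Sigma_t,$
\begin{align*}
R(X,Y,Z,W) = \Riemfour(X,Y,Z,W) + \SecondFund(X,Z)\SecondFund(Y,W) - \SecondFund(X,W)\SecondFund(Y,Z).
\end{align*}
Tracing twice with the inverse induced metric $g^{-1}$ and recalling that $(\gfour^{-1})^{\mu\nu} = -\Nml^{\mu}\Nml^{\nu} + g^{ab}\partial_a^{\mu}\partial_b^{\nu},$ one obtains
\begin{align*}
R + (\SecondFund_{\ a}^a)^2 - \SecondFund_{\ b}^a \SecondFund_{\ a}^b
= 2\Ricfour(\Nml,\Nml) + \Rfour
= 2\,\mathbf{G}(\Nml,\Nml),
\end{align*}
where $\mathbf{G}_{\mu\nu} := \Ricfour_{\mu\nu} - \tfrac12 \Rfour \gfour_{\mu\nu}$ is the Einstein tensor. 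Substituting the Einstein equation $\mathbf{G}_{\mu\nu} = \Tfour_{\mu\nu}$ produces $2\Tfour(\Nml,\Nml)$ on the right-hand side. I then evaluate $\Tfour(\Nml,\Nml)$ using \eqref{E:TFOURFLUID} with $\rho = p$ and the decomposition \eqref{AE:UNORMALGEO}: since $\gfour(\ufour,\Nml) = -(1+u_au^a)^{1/2}$ and $\gfour(\Nml,\Nml) = -1,$ we get $\Tfour(\Nml,\Nml) = 2p(1+u_au^a) - p = p + 2p\,u_au^a,$ which matches the claimed right-hand side of \eqref{AE:HAMILTONIAN}.

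For \eqref{AE:MOMENTUM}, I would use the Codazzi equation, which in contracted form reads
\begin{align*}
\nabla_a \SecondFund_{\ i}^a - \nabla_i \SecondFund_{\ a}^a
= - \Ricfour(\Nml,\partial_i) = - \mathbf{G}(\Nml,\partial_i),
\end{align*}
where the last equality follows from $\gfour(\Nml,\partial_i)=0$ so the $\Rfour\gfour$ piece of $\mathbf{G}$ drops out. Applying Einstein's equation again and computing
\begin{align*}
\Tfour(\Nml,\partial_i) = (\rho+p)\gfour(\ufour,\Nml)\,u_i = -2p(1+u_au^a)^{1/2} u_i
\end{align*}
yields $-\Tfour(\Nml,\partial_i) = 2p(1+u_au^a)^{1/2}u_i.$ The CMC condition $\SecondFund_{\ a}^a = -t^{-1}$ is spatially constant, so $\nabla_i \SecondFund_{\ a}^a = 0$ and the term is marked as vanishing in \eqref{AE:MOMENTUM}.

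The main work is really verifying the doubly-traced Gauss and contracted Codazzi identities as stated above; everything else is algebraic. The Gauss--Codazzi identities are standard and can be derived directly from \eqref{AE:DDECOMP} by expanding $\Riemfour(X,Y,Z,W)$ via the formula relating $\Dfour$ and $\nabla,$ or cited from \cite{rW1984}. The only mild subtlety is keeping track of signs, since $\Nml$ is timelike and $\SecondFund$ is defined in \eqref{AE:SECONDFUNDDEF} with a convention-dependent sign; I would fix conventions once at the outset and then verify that the doubly-traced Gauss identity produces $+2\mathbf{G}(\Nml,\Nml)$ rather than the opposite sign by a direct component check using \eqref{AE:DIVXDECOMP} applied to $\Nml.$ Once the two identities are in hand, conversion to matter via \eqref{E:EINSTEININTRO} and evaluation of $\Tfour(\Nml,\cdot)$ completes the proof.
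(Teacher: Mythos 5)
Your proof is correct and follows the standard Gauss--Codazzi derivation of the Hamiltonian and momentum constraints, which is precisely what the paper's proof invokes by citing \cite[Ch.~10]{rW1984} (with the remark that the paper's $\SecondFund$ carries the opposite sign from Wald's); your evaluation of $\Tfour(\Nml,\Nml)$ and $\Tfour(\Nml,\partial_i)$ via \eqref{AE:UNORMALGEO} agrees with \eqref{AE:TFOURNORMAL}--\eqref{AE:TFOURNORMALSPATIAL}. You have simply spelled out the argument that the paper compresses into a reference, so the approaches are essentially the same.
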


\begin{proof}
	See e.g. \cite[Ch. 10]{rW1984}, and note that our $\SecondFund$ has the opposite sign convention of the one in \cite{rW1984}.
\end{proof}


\subsection{The metric evolution equations}

\begin{lemma} [\textbf{The metric evolution equations in CMC-transported spatial coordinates}]
Consider a solution $(\gfour, p, \ufour)$ to the Euler-Einstein system \eqref{E:EINSTEININTRO}-\eqref{E:DTIS0INTRO}, \eqref{E:UNORMALIZED}. Then for a general perfect fluid matter model in CMC-transported spatial coordinates normalized by $\SecondFund_{\ a}^a = - t^{-1},$ the following evolution equations are verified by $g_{ij}$ and $\SecondFund_{\ j}^i:$
\begin{subequations}
\begin{align}
	\partial_t g_{ij} & = - 2 n g_{ia}\SecondFund_{\ j}^a, \label{AE:PARTIALTGCMC} \\
	\partial_t (\SecondFund_{\ j}^i) & = - g^{ia} \nabla_a \nabla_j n
		+ n \Big\lbrace R_{\ j}^i + \overbrace{\SecondFund_{\ a}^a}^{- t^{-1}} \SecondFund_{\ j}^i 
			\underbrace{- (\rho + p) u^i u_j + \frac{1}{2}\ID_{\ j}^i(p-\rho)}_{- T_{\ j}^i + (1/2)\ID_{\ j}^i{\Tfour}} 
			\Big\rbrace,  \label{AE:PARTIALTKCMC}
\end{align}
\end{subequations}
where $R_{\ j}^i$ denotes the Ricci curvature of $g_{ij}$ and $R$ denotes the scalar curvature of $g_{ij}.$

\end{lemma}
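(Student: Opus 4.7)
My plan is to derive both equations from the ADM formalism; since these are standard, I will emphasize the structural ingredients rather than grinding through index manipulations.

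For the metric evolution equation \eqref{AE:PARTIALTGCMC}, I will proceed as follows. Since the spatial coordinates are transported along $\Nml$, we have $\partial_t = n\Nml$ and $[\partial_t,\partial_i] = 0$, so $\partial_t g_{ij} = (\mathcal{L}_{\partial_t}\gfour)(\partial_i,\partial_j)$. Using the rescaling identity
$\mathcal{L}_{n\Nml}\gfour = n\,\mathcal{L}_{\Nml}\gfour + dn \otimes \Nml_{\flat} + \Nml_{\flat}\otimes dn$
together with $(\Nml_{\flat})_i = \gfour_{i0}n^{-1} = 0$ (no shift), the extra terms vanish when evaluated on $(\partial_i,\partial_j)$. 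Hence
$\partial_t g_{ij} = n\,{}^{(\Nml)}\pmb{\pi}(\partial_i,\partial_j) = -2n\SecondFund_{ij}$,
and writing $\SecondFund_{ij} = g_{ia}\SecondFund_{\ j}^a$ yields the stated form.

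For the evolution equation \eqref{AE:PARTIALTKCMC} for the mixed second fundamental form, I will first derive the evolution of $\SecondFund_{ij}$ and then raise an index. The starting point is the Riccati-type identity
$(\mathcal{L}_{\Nml}\SecondFund)_{ij} = -\SecondFund_{ia}\SecondFund_{\ j}^{a} - \gfour(\Riemfour(\Nml,\partial_i)\Nml,\partial_j) - n^{-1}\nabla_i\nabla_j n$,
which comes from commuting covariant derivatives in the defining relation $\SecondFund(X,Y) = -\gfour(\Dfour_X\Nml,Y)$ together with the decomposition $\Dfour_X\Nml = -n^{-1}(\nabla_X n)\Nml - g^{ab}\SecondFund(X,\partial_a)\partial_b$. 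Because $\SecondFund$ is $\Sigma_t$-tangent (so $\iota_{\Nml}\SecondFund = 0$), the Lie derivative rescaling yields $\partial_t \SecondFund_{ij} = n(\mathcal{L}_{\Nml}\SecondFund)_{ij}$ with no extra terms.

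The remaining steps eliminate $\Riemfour(\Nml,\partial_i,\Nml,\partial_j)$ and convert to mixed components. The Gauss equation expresses this Riemann component in terms of $R_{ij}$, $\Ricfour_{ij}$, and quadratic second-fundamental-form terms; substituting and using Einstein's equation $\Ricfour_{\mu\nu} = \Tfour_{\mu\nu} - \frac{1}{2}\Tfour \gfour_{\mu\nu}$ together with the perfect-fluid formula \eqref{AE:TFOURSPATIAL} produces the matter combination $(\rho+p)u_iu_j - \frac{1}{2}g_{ij}(p-\rho)$ (here $\Tfour = 3p - \rho$). Finally, raising an index via $\SecondFund_{\ j}^i = g^{ia}\SecondFund_{aj}$ and using $\partial_t g^{ia} = 2n g^{ab}\SecondFund_{\ b}^i$ (a direct consequence of \eqref{AE:PARTIALTGCMC}) gives
$\partial_t \SecondFund_{\ j}^i = 2n\SecondFund_{\ b}^i\SecondFund_{\ j}^b + g^{ia}\partial_t\SecondFund_{aj}$.
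The main bookkeeping obstacle, and the one I expect will consume the most care, is ensuring that the quadratic $\SecondFund\SecondFund$ terms produced by this raising step combine correctly with those generated by the Riccati and Gauss equations to leave only the single term $n\SecondFund_{\ a}^a \SecondFund_{\ j}^i$ claimed in \eqref{AE:PARTIALTKCMC}; the cancellation is a matter of sign conventions for the Lorentzian Gauss equation and must be tracked carefully. Note that no use of the CMC condition is made in the derivation itself; the label $\SecondFund_{\ a}^a = -t^{-1}$ under the brace merely records the value of that coefficient in our gauge for later use.
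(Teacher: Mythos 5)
Your outline is correct and reproduces the standard ADM derivation that the paper simply cites (Speck's earlier work and Taylor's PDE text) without reproducing. The key structural points you identify — that $\partial_t = n\Nml$ with vanishing shift makes the Lie-derivative rescaling clean because $\Nml_{\flat}$ and $\SecondFund$ are $\Sigma_t$-tangent, that the Gauss and Riccati identities combine with $\Ricfour = \Tfour - \frac{1}{2}\Tfour\gfour$ to produce the stated matter combination, and most importantly that the $-2n\SecondFund_{\ a}^i\SecondFund_{\ j}^a$ term surviving from the downstairs evolution equation cancels against $+2n\SecondFund_{\ a}^i\SecondFund_{\ j}^a$ coming from $\partial_t g^{ia}$ when raising the index, leaving only $n\SecondFund_{\ a}^a\SecondFund_{\ j}^i$ — are exactly why the paper works with the mixed components $\SecondFund_{\ j}^i$ rather than $\SecondFund_{ij}$ (cf.\ Remark~\ref{R:ALWAYSMIXED}). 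Your observation that the CMC condition plays no role in the derivation and only labels a coefficient is also correct.
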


\begin{proof}
	 These calculations are standard; see e.g. \cite[Section 6.2]{aS2010} or \cite[Section 10 of Chapter 18]{mT1997III}.
\end{proof}

\subsection{The lapse equation}

\begin{lemma} [\textbf{The lapse equation in CMC-transported spatial coordinates}] 
Consider a solution to the Euler-Einstein system \eqref{E:EINSTEININTRO}-\eqref{E:DTIS0INTRO}, \eqref{E:UNORMALIZED}. Then for a general perfect fluid matter model in CMC-transported spatial coordinates normalized by $\SecondFund_{\ a}^a = - t^{-1},$ the following elliptic PDE is verified by the lapse $n:$
\begin{align} \label{AE:LAPSE}
	g^{ab} \nabla_a \nabla_b (n - 1) 
		& = (n - 1) \Big\lbrace R - (\rho + p)u_a u^a + \underbrace{(\SecondFund_{\ a}^a)^2}_{t^{-2}} + \frac{3}{2} (p - \rho) \Big\rbrace \\
		& \ \ + R - (\rho + p)u_a u^a 
			+ \underbrace{(\SecondFund_{\ a}^a)^2 - \partial_t (\SecondFund_{\ a}^a)}_{= 0} + \frac{3}{2} (p - \rho). \notag
\end{align}
\end{lemma}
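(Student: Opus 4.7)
The plan is to derive the lapse equation \eqref{AE:LAPSE} by taking the trace of the previously established evolution equation \eqref{AE:PARTIALTKCMC} for the mixed second fundamental form $\SecondFund_{\ j}^i$, solving the resulting scalar identity for $g^{ab}\nabla_a \nabla_b n$, and then rewriting $n = (n-1) + 1$ to isolate the inhomogeneous piece. The CMC normalization $\SecondFund_{\ a}^a = -t^{-1}$ is not strictly needed to derive the PDE; it only enters to explain why the final bracket $(\SecondFund_{\ a}^a)^2 - \partial_t(\SecondFund_{\ a}^a)$ vanishes identically in the present setup.

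First I would contract the indices $i$ and $j$ in the evolution equation \eqref{AE:PARTIALTKCMC}. Using $\ID_{\ a}^a = 3$, $R_{\ a}^a = R$, and $g^{ia}\nabla_a \nabla_i n = g^{ab}\nabla_a \nabla_b n$, this yields the scalar identity
\begin{align*}
    \partial_t(\SecondFund_{\ a}^a)
    &= -\,g^{ab}\nabla_a \nabla_b n
    + n\Big\lbrace R + (\SecondFund_{\ a}^a)^2 - (\rho+p)u_a u^a + \tfrac{3}{2}(p-\rho)\Big\rbrace.
\end{align*}
Since $1$ is constant, $g^{ab}\nabla_a \nabla_b n = g^{ab}\nabla_a \nabla_b (n-1)$, and solving for the Laplacian of $(n-1)$ gives
\begin{align*}
    g^{ab}\nabla_a \nabla_b (n-1)
    &= n\Big\lbrace R + (\SecondFund_{\ a}^a)^2 - (\rho+p)u_a u^a + \tfrac{3}{2}(p-\rho)\Big\rbrace - \partial_t(\SecondFund_{\ a}^a).
\end{align*}

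Next I would split $n = (n-1) + 1$ inside the curly brackets. This produces exactly the two lines appearing on the right-hand side of \eqref{AE:LAPSE}: a factor $(n-1)$ times the bracketed scalar, plus the same bracketed scalar standing alone, and then $-\partial_t(\SecondFund_{\ a}^a)$, which combined with the $(\SecondFund_{\ a}^a)^2$ term from the standalone bracket gives the displayed combination $(\SecondFund_{\ a}^a)^2 - \partial_t(\SecondFund_{\ a}^a)$. Under the CMC normalization $\SecondFund_{\ a}^a = -t^{-1}$ we have $\partial_t(\SecondFund_{\ a}^a) = t^{-2} = (\SecondFund_{\ a}^a)^2$, so this last pair cancels (as indicated by the underbrace in the statement), but the algebraic identity itself holds prior to invoking this cancellation.

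There is no serious obstacle here; the argument is a one-line trace followed by a rearrangement. The only mildly delicate point is bookkeeping: one must correctly identify $g^{ia}\nabla_a \nabla_i n$ as $g^{ab}\nabla_a \nabla_b n$ (rather than, say, confusing it with a term involving the lapse gradient alone), and one must be careful that the contraction $\ID_{\ j}^i(p-\rho)$ contributes $3(p-\rho)$ so that the coefficient $\tfrac{1}{2}$ becomes $\tfrac{3}{2}$ in \eqref{AE:LAPSE}. Everything else is straightforward algebra.
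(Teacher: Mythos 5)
Your proof is correct and is exactly the paper's approach: the paper's own proof reads, in its entirety, "To derive \eqref{AE:LAPSE}, we take the trace of \eqref{AE:PARTIALTKCMC} and use the CMC condition $\SecondFund_{\ a}^a = -t^{-1}$." Your write-up simply fills in the trace bookkeeping and the $n = (n-1)+1$ rearrangement, and your side remark—that the raw algebraic identity holds before imposing CMC, while the CMC condition is what makes the bracket vanish and renders the equation a genuine elliptic PDE at fixed $t$—is accurate.
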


\begin{proof}
To derive \eqref{AE:LAPSE}, we take the trace of \eqref{AE:PARTIALTKCMC} and use the CMC condition $\SecondFund_{\ a}^a = - t^{-1}$. 
\end{proof}

\subsection{The stiff fluid equations}
We first discuss the stiff fluid equations relative to an arbitrary spacetime coordinate system.
The equations are \eqref{AE:UNORMAL}, 
the equation of state $p = \rho,$ and the divergence relation
\begin{align} 
	\Dfour_{\alpha} \Tfour^{\alpha \nu} & = 0.
		\label{AE:DTIS0} 
\end{align}

By projecting \eqref{AE:DTIS0} in the direction $\ufour^{\nu}$ and then onto the 
directions $\gfour_{\alpha \nu}\Pi^{\mu \alpha}$ that are $\gfour-$orthogonal to $\ufour,$  
it is straightforward to verify that the system \eqref{AE:UNORMAL}, $p = \rho,$ \eqref{AE:DTIS0} is equivalent 
to \eqref{AE:UNORMAL} together with the following system:
\begin{subequations}
\begin{align}
	\ufour^{\alpha} \Dfour_{\alpha} p + 2p \Dfour_{\alpha} \ufour^{\alpha} & = 0, \label{AE:EULERP} \\
	2p \ufour^{\alpha} \Dfour_{\alpha} \ufour^{\mu} + \Pi^{\mu \alpha} \Dfour_{\alpha} p & = 0, \label{AE:EULERU} \\
	\Pi^{\mu \nu} & := (\gfour^{-1})^{\mu \nu} + \ufour^{\mu} \ufour^{\nu}.  \label{AE:PIDEF}
\end{align}
\end{subequations}

We now derive the stiff fluid equations in CMC-transported spatial coordinates.

\begin{lemma} [\textbf{The stiff fluid equations in CMC-transported spatial coordinates}]
In CMC-transported spatial coordinates normalized by $\SecondFund_{\ a}^a = - t^{-1},$ the stiff fluid equations \eqref{AE:UNORMAL} + \eqref{AE:EULERP}-\eqref{AE:PIDEF} can be expressed as follows:
\begin{subequations}
\begin{align}
	(1 + u_a u^a)^{1/2} \partial_t p + n u^a \nabla_a p
		& + 2p \Big\lbrace (1 + u_a u^a)^{-1/2} u_b \partial_t u^b + n \nabla_a u^a \Big\rbrace  
			\label{AE:EULERPCMC} \\
		& = 2p \Big\lbrace -\frac{n}{t} (1 + u_a u^a)^{1/2}
			+ n (1 + u_a u^a)^{-1/2} \SecondFund_{\ f}^e u_e u^f
			- u^a \nabla_a n \Big\rbrace \notag, \\
		2p \Big\lbrace (1 + u_a u^a)^{1/2} \partial_t u^j + n u^a \nabla_a u^j \Big\rbrace
		& + (1 + u_a u^a)^{1/2} u^j \partial_t p + n(g^{ja} + u^j u^a)\nabla_a p 
			\label{AE:EULERUCMC} \\
		& = 4 n p (1 + u_a u^a)^{1/2} \SecondFund_{\ b}^j  u^b - 2p (1 + u_a u^a) g^{jb} \nabla_b n.
		\notag
\end{align}
\end{subequations}

\end{lemma}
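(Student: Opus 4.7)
The plan is to derive each of the two evolution equations by starting from the covariant Euler system \eqref{AE:EULERP}-\eqref{AE:PIDEF}, inserting the decomposition \eqref{AE:UNORMALGEO} of $\ufour,$ and expanding the spacetime covariant derivatives in terms of $\partial_t$, the spatial Levi-Civita connection $\nabla$, and the second fundamental form using Lemma~\ref{L:CHRIST}. The CMC condition $\SecondFund_{\ a}^a = -t^{-1}$ and the metric evolution equation \eqref{AE:PARTIALTGCMC} enter through the computations of $\partial_t \gfour(\ufour,\Nml)$ and $\partial_t(u_a u^a).$ The final cosmetic step is to multiply both equations by $n$ so that $\partial_t p$ and $\partial_t u^j$ appear with the coefficient $(1 + u_a u^a)^{1/2}$ rather than $(1 + u_a u^a)^{1/2} n^{-1}.$

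For the first equation \eqref{AE:EULERPCMC}, I would apply the divergence identity \eqref{AE:DIVXDECOMP} to $\mathbf{X} = \ufour,$ using $\gfour(\ufour,\Nml) = -(1 + u_a u^a)^{1/2}.$ The key intermediate computation is
\begin{align*}
   -\Dfour_{\Nml}[\gfour(\ufour,\Nml)]
   = n^{-1} \partial_t (1+u_au^a)^{1/2}
   = \frac{-n\,\SecondFund_{\ b}^c u^b u_c + u_a \partial_t u^a}{n(1+u_au^a)^{1/2}},
\end{align*}
where $\partial_t g_{ab}$ is replaced via \eqref{AE:PARTIALTGCMC}. Combining this with $\SecondFund_{\ a}^a \gfour(\ufour,\Nml) = t^{-1}(1+u_au^a)^{1/2}$ (from the CMC normalization) yields a formula for $\Dfour_\alpha \ufour^\alpha,$ and multiplying $2p \Dfour_\alpha \ufour^\alpha + \ufour^\alpha \Dfour_\alpha p$ by $n$ gives exactly \eqref{AE:EULERPCMC} after separating the terms containing $\partial_t u^a$ and $\nabla_a u^a$ on the left and the kinematical terms involving $t^{-1},$ $\SecondFund,$ and $\nabla n$ on the right.

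For the second equation \eqref{AE:EULERUCMC}, I would compute the spatial component $\ufour^\alpha \Dfour_\alpha \ufour^j$ directly using Christoffel symbols. With $\ufour^0 = n^{-1}(1+u_a u^a)^{1/2}$ and $\ufour^a = u^a,$ Lemma~\ref{L:CHRIST} gives the relations $\Chfour_{0\ 0}^{\ j} = n g^{ja}\partial_a n,$ $\Chfour_{0\ a}^{\ j} = -n\SecondFund_{\ a}^j,$ and $\Chfour_{a\ b}^{\ j} = \Gamma_{a\ b}^{\ j},$ so that expansion yields
\begin{align*}
   \ufour^\alpha \Dfour_\alpha \ufour^j
   = (1+u_au^a)^{1/2} n^{-1} \partial_t u^j
   + u^a \nabla_a u^j
   + n^{-1}(1+u_au^a) g^{ja}\partial_a n
   - 2 (1+u_au^a)^{1/2} \SecondFund_{\ a}^j u^a,
\end{align*}
where the sum $u^a \partial_a u^j + \Gamma_{a\ b}^{\ j} u^a u^b$ is repackaged as $u^a \nabla_a u^j.$ For the pressure piece, the block-diagonal form $\gfour^{-1} = -n^{-2}\partial_t\otimes\partial_t + g^{ab}\partial_a\otimes\partial_b$ immediately gives $\Pi^{j\alpha}\Dfour_\alpha p = g^{ja}\nabla_a p + u^j u^a \nabla_a p + n^{-1}(1+u_au^a)^{1/2} u^j \partial_t p.$ Adding $2p$ times the former to the latter, multiplying by $n,$ and rearranging reproduces \eqref{AE:EULERUCMC} precisely.

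The main obstacle is purely bookkeeping: the identification of $(1+u_a u^a)^{1/2}$ with $\gfour(-\ufour,\Nml)$ generates many nonlinear factors, and one must track factors of $n^{\pm 1}$ and $(1+u_au^a)^{\pm 1/2}$ carefully when replacing $\Dfour_{\Nml}$ by $n^{-1}\partial_t$ and when trading $\partial_t g_{ab}$ via the metric evolution equation. No geometric ingredient beyond Lemma~\ref{L:CHRIST}, the identities \eqref{AE:DIVXDECOMP} and \eqref{AE:DXXY}, and the CMC condition is needed, and there are no derivative losses or sign subtleties beyond what appears in the final formulas.
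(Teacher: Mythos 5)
Your proposal is correct and follows essentially the same route as the paper: both use the divergence identity \eqref{AE:DIVXDECOMP} with $\mathbf{X}=\ufour$ and the relation $\gfour(\ufour,\Nml)=-(1+u_au^a)^{1/2}$ for \eqref{AE:EULERPCMC}, and both project \eqref{AE:EULERU} onto the spatial slot for \eqref{AE:EULERUCMC}. The one small variation is that for the transport term $\ufour^{\alpha}\Dfour_{\alpha}\ufour^j$ you expand the Christoffel symbols of Lemma~\ref{L:CHRIST} directly, whereas the paper invokes the geometric identity \eqref{AE:DXXY}; your direct computation reproduces the paper's intermediate identity \eqref{AE:DUUPROJECTONTOJ} verbatim (after rewriting $n^{-1}g^{ja}\partial_a n$ as $g^{ja}\nabla_a\ln n$), so the two are interchangeable.
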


\begin{proof}
The normalization condition \eqref{AE:UNORMAL} implies that the following 
relation holds relative to CMC-transported spatial coordinates:
\begin{align}
	n u^0 & = (1 + u_a u^a)^{1/2}. \label{AE:U0INTERMSOFUA} 
\end{align}
The relation \eqref{AE:U0INTERMSOFUA} implies the identity
\begin{align} \label{AE:EULERPFIRSTTERM}
	\ufour^{\alpha} \Dfour_{\alpha} p = n^{-1}(1 + u_a u^a)^{1/2} \partial_t p + u^a \nabla_a p.
\end{align}

We also claim that the following identity holds: 
\begin{align} \label{AE:DIVUID}
	\Dfour_{\alpha} \ufour^{\alpha} & = (1 + u_a u^a)^{-1/2} \left\lbrace - \SecondFund_{\ f}^e u_e u^f 
			+ n^{-1} u_b \partial_t u^b \right\rbrace 
			+ \nabla_a u^a \\
		& \ \ + n^{-1} u^a \nabla_a n - \SecondFund_{\ a}^a (1 + u_b u^b)^{1/2}. \notag
\end{align}	
Let us momentarily take \eqref{AE:DIVUID} for granted. Then multiplying both sides of \eqref{AE:EULERP} by $n$ and using \eqref{AE:EULERPFIRSTTERM} and \eqref{AE:DIVUID}, we deduce \eqref{AE:EULERPCMC}.

To derive \eqref{AE:DIVUID}, we will apply 
\eqref{AE:DIVXDECOMP} with $\mathbf{X} = \ufour.$ We first note that \eqref{AE:UNORMALGEO} implies that
\begin{align} \label{AE:UN}
	\gfour(\ufour,\Nml) & = - (1 + u_a u^a)^{1/2}.
\end{align}
Then using \eqref{AE:UNITNORMAL}, \eqref{AE:PARTIALTGCMC}, and \eqref{AE:UN}, we deduce
\begin{align} \label{AE:DNUN}
	-  \Dfour_{\Nml}[\gfour(\ufour,\Nml)] 
		& = (1 + u_a u^a)^{-1/2} \left\lbrace \frac{1}{2} n^{-1}(\partial_t g_{ef}) u^e u^f 
			+ n^{-1} u_b \partial_t u^b \right\rbrace \\
		& = (1 + u_a u^a)^{-1/2} \left\lbrace - \SecondFund_{\ f}^e u_e u^f 
			+ n^{-1} u_b \partial_t u^b \right\rbrace. \notag
\end{align}
Finally, inserting \eqref{AE:UN} and \eqref{AE:DNUN} into \eqref{AE:DIVXDECOMP}, we arrive at \eqref{AE:DIVUID}.

To derive \eqref{AE:EULERUCMC}, we project \eqref{AE:EULERU} onto the spatial $j$ component 
[i.e., we set $\mu = j$ in \eqref{AE:EULERU}]
and make use of the following identities:
\begin{align} \label{AE:PIJPROJECTED}
	\Pi^{j \alpha} \Dfour_{\alpha} p	& = n^{-1} (1 + u_a u^a)^{1/2} u^j \partial_t p 
		+ g^{ja} \nabla_a p
		+ u^j u^a \nabla_a p,
\end{align}
\begin{align} \label{AE:DUUPROJECTONTOJ}
	\mathbf{u}^{\alpha} \Dfour_{\alpha} \ufour^j 
	& = n^{-1} (1 + u_a u^a)^{1/2} \partial_t u^j
		- 2 (1 + u_a u^a)^{1/2} u^b \SecondFund_{\ b}^j \\
	& \ \ + (1 + u_a u^a)  g^{jb} \nabla_b \ln n
		+ u^a \nabla_a u^j. \notag
\end{align}
\eqref{AE:PIJPROJECTED} follows easily from \eqref{AE:U0INTERMSOFUA}, while we
momentarily take \eqref{AE:DUUPROJECTONTOJ} for granted. Then multiplying both sides of
\eqref{AE:EULERU} by $n$ and using \eqref{AE:PIJPROJECTED}-\eqref{AE:DUUPROJECTONTOJ}, we deduce \eqref{AE:EULERUCMC}.

To derive \eqref{AE:DUUPROJECTONTOJ}, we will make use of the identity \eqref{AE:DXXY}.
Setting $Y$ to be the $g-$ dual of $dx^j$ (i.e., $\gfour(\mathbf{X},Y) = X^j$), $\mathbf{X} = \mathbf{u},$ 
and using \eqref{AE:UN} plus the relation $\gfour([\Nml,X],Y) = \Nml (X^j),$ 
we see that the identity \eqref{AE:DUUPROJECTONTOJ} follows
from \eqref{AE:DXXY}.

\end{proof}

\section{Derivation of The Renormalized Equations} \label{A:RESCALEDEQNS}

 \setcounter{theorem}{0}
   \setcounter{definition}{0}
   \setcounter{remark}{0}
   \setcounter{proposition}{0}

In this appendix, we derive the reformulation of the Einstein-stiff fluid equations 
that was presented in Sect.~\ref{S:RESCALEDVARIABLES}. We use the conventions
for lowering and raising indices that are described in Sect.~\ref{SS:INDICES}.

\begin{lemma} [\textbf{An expression for $R_{\ j}^i(g)$}] \label{L:RICCIDECOMP}
In terms of the renormalized variables of Def.~\ref{D:RESCALEDVAR}, 
the Ricci curvature of the Riemannian $3-$metric $g,$ which we denote by 
$R_{\ j}^i(g) := g^{ia}R_{aj}(g),$ can be expressed as follows:
\begin{align} \label{E:RICCIDECOMP}
	R_{\ j}^i(g) & = - \frac{1}{2} t^{-2/3} (\newg^{-1})^{ef} \partial_e \upgamma_{f \ j}^{\ i}  
		+ \frac{1}{2} t^{-2/3} (\newg^{-1})^{ic} (\partial_c \Gamma_j + \partial_j \Gamma_c) \\
	& \ \ + t^{-2/3} \leftexp{(Ricci)}{\triangle}_{\ j}^i, \notag
\end{align}
where 
\begin{align} \label{E:CONTRACTEDGAMMALOWEEXPRESSION}
	\Gamma_i & = \newg_{ia} (\newg^{-1})^{ef} \upgamma_{e \ f}^{\ a} - \frac{1}{2} \upgamma_{i \ a}^{\ a}, \\
	\partial_i \Gamma_j & = \newg_{ja} (\newg^{-1})^{ef} \partial_i \upgamma_{e \ f}^{\ a} 
		- \frac{1}{2} \partial_i \upgamma_{j \ a}^{\ a}
		+ \newg_{jc} (\newg^{-1})^{ef} \upgamma_{i \ a}^{\ c} \upgamma_{e \ f}^{\ a} 
		- \newg_{jc} (\newg^{-1})^{ef} \upgamma_{i \ e}^{\ a} \upgamma_{a \ f}^{\ c}, 
		\label{E:PARTIALCONTRACTEDGAMMALOWEEXPRESSION} \\
	\leftexp{(Ricci)}{\triangle}_{\ j}^i 
	& := - \frac{1}{2} (\newg^{-1})^{ic} \upgamma_{a \ b}^{\ a} \upgamma_{c \ j}^{\ b} 
		- \frac{1}{2} (\newg^{-1})^{ic} \upgamma_{a \ b}^{\ a} \upgamma_{j \ c}^{\ b}
		+ \frac{1}{2} (\newg^{-1})^{ef} \upgamma_{a \ e}^{\ a} \upgamma_{f \ j}^{\ i}
		\label{E:RICCIERROR} \\
	& \ \ + \frac{1}{4} (\newg^{-1})^{ic} \upgamma_{b \ a}^{\ a} \upgamma_{c \ j}^{\ b}
		+ \frac{1}{4} (\newg^{-1})^{ic} \upgamma_{b \ a}^{\ a} \upgamma_{j \ c}^{\ b}
		- \frac{1}{4} (\newg^{-1})^{ef} \upgamma_{e \ a}^{\ a} \upgamma_{f \ j}^{\ i}
		\notag \\
	& \ \ - \frac{1}{4} (\newg^{-1})^{ic} \upgamma_{c \ b}^{\ a} \upgamma_{a \ j}^{\ b}
		- \frac{1}{4} (\newg^{-1})^{ic} \upgamma_{a \ c}^{\ b} \upgamma_{j \ b}^{\ a} 
		- \frac{1}{4} (\newg^{-1})^{ic} \upgamma_{c \ b}^{\ a} \upgamma_{j \ a}^{\ b}
		\notag \\
	& \ \ + \frac{1}{4} (\newg^{-1})^{ic} \newg_{ab} (\newg^{-1})^{ef} \upgamma_{c \ e}^{\ a} \upgamma_{f \ j}^{\ b}
		+ \frac{1}{4} (\newg^{-1})^{ic} \newg_{ab} (\newg^{-1})^{ef} \upgamma_{e \ c}^{\ b} \upgamma_{f \ e}^{\ a} 
		\notag \\
	& \ \ - \frac{1}{2} (\newg^{-1})^{ic} \upgamma_{b \ c}^{\ a} \upgamma_{a \ j}^{\ b}.
		\notag 
\end{align}


\end{lemma}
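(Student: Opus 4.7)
My plan is to exploit the fact that $g_{ij} = t^{2/3} \newg_{ij}$ is a \emph{constant} conformal rescaling along each slice $\Sigma_t$, since the factor $t^{2/3}$ depends only on time. First I would observe that spatial Christoffel symbols are invariant under such a rescaling: computing
\[
\Gamma_{j\ k}^{\ i}(g) = \tfrac{1}{2} g^{ia}(\partial_j g_{ak} + \partial_k g_{ja} - \partial_a g_{jk}) = \tfrac{1}{2} \cdot t^{-2/3}\newg^{ia} \cdot t^{2/3}\left(\partial_j \newg_{ak} + \partial_k \newg_{ja} - \partial_a \newg_{jk}\right) = \Gamma_{j\ k}^{\ i}(\newg),
\]
where $\newg^{ia}$ denotes the inverse of $\newg_{ij}$. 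Since the Ricci tensor $R_{ij}$ is built from Christoffel symbols and their spatial derivatives, this yields the identity $R_{ij}(g) = R_{ij}(\newg)$, and consequently
\[
R_{\ j}^i(g) = g^{ia} R_{aj}(g) = t^{-2/3} (\newg^{-1})^{ia} R_{aj}(\newg) = t^{-2/3} R_{\ j}^i(\newg).
\]
This reduces the lemma to proving the corresponding identity for $R_{\ j}^i(\newg)$ without the $t^{-2/3}$ prefactor.

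Next I would derive the formula for $R_{\ j}^i(\newg)$ in terms of $\upgamma_{j\ k}^{\ i} = (\newg^{-1})^{ai}\partial_j \newg_{ak}$ and the contracted Christoffel symbol $\Gamma_i$. Starting from the standard expression $R_{ij}(\newg) = \partial_a \Gamma_{i\ j}^{\ a} - \partial_i \Gamma_{a\ j}^{\ a} + \Gamma_{a\ b}^{\ a}\Gamma_{i\ j}^{\ b} - \Gamma_{a\ i}^{\ b}\Gamma_{b\ j}^{\ a}$, I would substitute the identity \eqref{E:BIGGAMMALITTLEGAMMARELATION}, namely $\Gamma_{j\ k}^{\ i} = \tfrac{1}{2}(\upgamma_{j\ k}^{\ i} + \upgamma_{k\ j}^{\ i} - \newg^{ia}\newg_{bj}\upgamma_{a\ k}^{\ b})$, to rewrite the derivative terms. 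Contracting the result with $(\newg^{-1})^{ia}$ and using the Leibniz rule to pull the inverse metric inside the derivatives, one obtains
\[
(\newg^{-1})^{ia}\partial_b \newg_{aj} = \upgamma_{b\ j}^{\ i}, \qquad (\newg^{-1})^{ia}\partial_e\partial_f \newg_{aj} = \partial_e \upgamma_{f\ j}^{\ i} - \bigl(\partial_e (\newg^{-1})^{ia}\bigr)\partial_f \newg_{aj},
\]
with the correction term quadratic in $\upgamma$ after using $\partial_e (\newg^{-1})^{ia} = -(\newg^{-1})^{ic}\upgamma_{e\ c}^{\ a}$. The principal part of $-(1/2)(\newg^{-1})^{ef}(\newg^{-1})^{ia}\partial_e\partial_f\newg_{aj}$ produces the first term $-\tfrac12(\newg^{-1})^{ef}\partial_e \upgamma_{f\ j}^{\ i}$ in \eqref{E:RICCIDECOMP}, and the symmetrized remaining second-derivative terms combine via \eqref{E:PARTIALCONTRACTEDGAMMALOWEEXPRESSION} into $\tfrac12 (\newg^{-1})^{ic}(\partial_c \Gamma_j + \partial_j \Gamma_c)$.

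All remaining pieces are quadratic in $\upgamma$ and get packaged into $\leftexp{(Ricci)}{\triangle}_{\ j}^i$. The bookkeeping consists of grouping the quadratic contributions from three sources: (i) the commutator between $(\newg^{-1})^{ia}$ and the two spatial derivatives inside $-\tfrac12(\newg^{-1})^{ef}(\newg^{-1})^{ia}\partial_e\partial_f \newg_{aj}$; (ii) the quadratic terms appearing when $\partial_i \Gamma_j$ is expanded via \eqref{E:PARTIALCONTRACTEDGAMMALOWEEXPRESSION}; (iii) the explicit quadratic Christoffel products $\Gamma_{a\ b}^{\ a}\Gamma_{i\ j}^{\ b} - \Gamma_{a\ i}^{\ b}\Gamma_{b\ j}^{\ a}$. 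Each is expressed via \eqref{E:BIGGAMMALITTLEGAMMARELATION} purely in terms of $\upgamma$, $\newg$, and $(\newg^{-1})$.

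The main obstacle is purely combinatorial: verifying that, after all substitutions, the nine terms listed in \eqref{E:RICCIERROR} account exactly for every quadratic-in-$\upgamma$ contribution with the correct coefficients and index placements. This is a tedious but mechanical check involving careful symmetrization over index swaps like $(\upgamma_{j\ k}^{\ i} \leftrightarrow \upgamma_{k\ j}^{\ i})$ and repeated use of $\partial_e (\newg^{-1})^{ia} = -(\newg^{-1})^{ic}\upgamma_{e\ c}^{\ a}$. Multiplying the resulting identity by $t^{-2/3}$ yields \eqref{E:RICCIDECOMP}, while \eqref{E:CONTRACTEDGAMMALOWEEXPRESSION}--\eqref{E:PARTIALCONTRACTEDGAMMALOWEEXPRESSION} follow directly from the definition $\Gamma_i = \newg_{ia}(\newg^{-1})^{ef}\upgamma_{e\ f}^{\ a} - \tfrac12 \upgamma_{i\ a}^{\ a}$ (stated in \eqref{E:BIGGAMMACONTRACTEDLOWEREDLITTLEGAMMARELATION}) and differentiation together with the same formula for $\partial_i (\newg^{-1})^{ef}$.
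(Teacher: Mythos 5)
Your proposal is correct and follows essentially the same route as the paper: both start from the standard formula for $R_{\ j}^i$ in terms of Christoffel symbols and their spatial derivatives, substitute the identity $\Gamma_{j\ k}^{\ i}=\tfrac12(\upgamma_{j\ k}^{\ i}+\upgamma_{k\ j}^{\ i}-(\newg^{-1})^{ia}\newg_{bj}\upgamma_{a\ k}^{\ b})$ from Lemma~\ref{L:CHRISTOFFELIDENTITIES}, and reduce the verification to a mechanical quadratic bookkeeping in $\upgamma$. Your preliminary homothety observation that $\Gamma_{j\ k}^{\ i}(g)=\Gamma_{j\ k}^{\ i}(\newg)$ (so that the $t^{-2/3}$ factor comes out cleanly from $g^{ic}$) is a tidy way to organize the prefactor, but is implicitly the same step the paper performs when it contracts the Ricci formula with $g^{ic}$ and then works with $\newg$-variables.
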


\begin{proof}
The lemma follows from the identity 
\begin{align}
	\Ric_{\ j}^i(g) =
		g^{ic} \partial_a \Gamma_{c \ j}^{\ a}
	- g^{ic} \partial_c \Gamma_{j \ a}^{\ a}
  + g^{ic} \Gamma_{a \ b}^{\ a} \Gamma_{c \ j}^{\ b}
	- g^{ic} \Gamma_{c \ b}^{\ a} \Gamma_{a \ j}^{\ b}
\end{align}
(see e.g. \cite[Equation (3.4.4)]{rW1984}), the identities of Lemma~\ref{L:CHRISTOFFELIDENTITIES}, 
and tedious but straightforward computations.


\end{proof}

\subsection{The renormalized constraint equations}

In this section, we derive the constraint equations verified by the renormalized variables.

\begin{proposition}[\textbf{The renormalized constraints}] \label{AP:CCONSTRAINTSREFORM}
In terms of the renormalized variables of Def.~\ref{D:RESCALEDVAR}, the constraint equations 
\eqref{AE:HAMILTONIAN}-\eqref{AE:MOMENTUM} can be decomposed as follows:
\begin{subequations}
\begin{align}
	R & = 2 t^{-2} \adjustednewp 
			 + t^{-2} \leftexp{(Border)}{\mathfrak{H}} + t^{-2/3} \leftexp{(Junk)}{\mathfrak{H}}, 
		\label{AE:RINTERMSOFKPANDU}  \\
	\partial_a \freenewsec_{\ i}^a & = 
			\frac{2}{3} \newg_{ia} \newu^a 
			+ \leftexp{(Border)}{\mathfrak{M}}_i +  t^{4/3} \leftexp{(Junk)}{\mathfrak{M}}_i,
			\label{AE:MOMENTUMCONSTRAINT} 
\end{align}
\end{subequations}
where the error terms $\leftexp{(Border)}{\mathfrak{H}},$ 
$\leftexp{(Junk)}{\mathfrak{H}},$ 
$\leftexp{(Border)}{\mathfrak{M}}_i,$ 
and $\leftexp{(Junk)}{\mathfrak{M}}_i$
are defined by
\begin{subequations}
\begin{align}
		\leftexp{(Border)}{\mathfrak{H}} & := \freenewsec_{\ b}^a \freenewsec_{\ a}^b, \label{AE:HAMILTONIANERRORBORDER} \\
		\leftexp{(Junk)}{\mathfrak{H}} & := \frac{4}{3} \newg_{ab} \newu^a \newu^b 
			+ 4 \adjustednewp \newg_{ab} \newu^a \newu^b,
			 \label{AE:HAMILTONIANERRORJUNK} \\	
		\leftexp{(Border)}{\mathfrak{M}}_i & := - \frac{1}{2} \upgamma_{b \ a}^{\ a} \freenewsec_{\ i}^b
			+ \frac{1}{2} \left(\upgamma_{a \ i}^{\ b} + \upgamma_{i \ a}^{\ b} - (\newg^{-1})^{bl} \newg_{ma} \upgamma_{l \ i}^{\ m} \right) 
				\freenewsec_{\ b}^a  
			\label{AE:MOMENTUMERRORBORDERLINE}  \\
		& \ \ + 2 \adjustednewp \newg_{ia} \newu^a,
			\notag \\
		\leftexp{(Junk)}{\mathfrak{M}}_i & := 2 t^{-4/3} \adjustednewp \Big[\big(1 + t^{4/3} \newg_{ab} \newu^a \newu^b\big)^{1/2} - 1 \Big] 
				\newg_{ic}\newu^c \label{AE:MOMENTUMERRORJUNK} \\
		& \ \ + \frac{2}{3} t^{-4/3} \Big[\big(1 + t^{4/3} \newg_{ab} \newu^a \newu^b\big)^{1/2} - 1 \Big] 
			\newg_{ic} \newu^c.
			\notag 
\end{align}
\end{subequations}

Furthermore, the following equivalent version of \eqref{AE:MOMENTUMCONSTRAINT} holds:
\begin{subequations}
\begin{align} 
	(\newg^{-1})^{ia} \partial_a \freenewsec_{\ i}^j 
	& = \frac{2}{3} \newu^j 
		+ \leftexp{(Border)}{\widetilde{\mathfrak{M}}}^j + t^{4/3} \leftexp{(Junk)}{\widetilde{\mathfrak{M}}}^j, 
		\label{AE:MOMENTUMCONSTRAINTRAISED}
\end{align}
where the error terms and $\leftexp{(Border)}{\widetilde{\mathfrak{M}}}^j$ and $\leftexp{(Junk)}{\widetilde{\mathfrak{M}}}^j$
are defined by
\begin{align}
	\leftexp{(Border)}{\widetilde{\mathfrak{M}}}^j & := 
			- \frac{1}{2} (\newg^{-1})^{ia} \left(\upgamma_{a \ b}^{\ j} 
			+ \upgamma_{b \ a}^{\ j} 
			- (\newg^{-1})^{jl} \newg_{ma} \upgamma_{l \ b}^{\ m} \right) 
			\freenewsec_{\ i}^b 
		\label{AE:MOMENTUMALTERRORBORDERLINE} \\
	& \ \ + \frac{1}{2} (\newg^{-1})^{ia} \left(\upgamma_{a \ i}^{\ b} 
			+ \upgamma_{i \ a}^{\ b} 
			- (\newg^{-1})^{bl} \newg_{ma} \upgamma_{l \ i}^{\ m} \right) 
			\freenewsec_{\ b}^j
			\notag \\
	& \ \ + 2 \adjustednewp \newu^j, \notag \\
	\leftexp{(Junk)}{\widetilde{\mathfrak{M}}}^j & := 2 t^{-4/3} \adjustednewp 
		\Big[\big(1 + t^{4/3} \newg_{ab} \newu^a \newu^b \big)^{1/2} - 1 \Big] \newu^j \label{AE:MOMENTUMALTERRORJUNK} \\
	& \ \ + \frac{2}{3} t^{-4/3} \Big[\big(1 + t^{4/3} \newg_{ab} \newu^a \newu^b \big)^{1/2} - 1 \Big] \newu^j. \notag
\end{align}
\end{subequations}


\end{proposition}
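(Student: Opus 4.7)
The plan is to derive this proposition by direct algebraic substitution of the definitions in Def.~\ref{D:RESCALEDVAR} into the constraint equations \eqref{AE:HAMILTONIAN}--\eqref{AE:MOMENTUM}, and then partitioning the resulting terms according to their $t$-weight. There is no analytic content beyond bookkeeping; what makes the statement nontrivial is the precise identification of ``principal,'' ``borderline,'' and ``junk'' pieces, together with the crucial CMC cancellation $\freenewsec_{\ a}^a = 0$ that removes the most singular $t^{-2}$ contributions on the left-hand side of \eqref{AE:HAMILTONIAN}. I will now outline each of the three identities in turn.

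For \eqref{AE:RINTERMSOFKPANDU} (Hamiltonian constraint), I would first write $\SecondFund_{\ j}^i = t^{-1}(\freenewsec_{\ j}^i - \tfrac{1}{3}\ID_{\ j}^i)$ and compute $\SecondFund_{\ b}^a\SecondFund_{\ a}^b = t^{-2}\bigl(\freenewsec_{\ b}^a\freenewsec_{\ a}^b - \tfrac{2}{3}\freenewsec_{\ a}^a + \tfrac{1}{3}\bigr)$. The CMC condition $\freenewsec_{\ a}^a = 0$ reduces this to $t^{-2}(\freenewsec_{\ b}^a\freenewsec_{\ a}^b + \tfrac{1}{3})$, and together with $(\SecondFund_{\ a}^a)^2 = t^{-2}$ the left-hand side of \eqref{AE:HAMILTONIAN} becomes $R + \tfrac{2}{3}t^{-2} - t^{-2}\freenewsec_{\ b}^a\freenewsec_{\ a}^b$. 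On the right, I substitute $p = t^{-2}(\adjustednewp + \tfrac{1}{3})$ and $u_au^a = t^{4/3}\newg_{ab}\newu^a\newu^b$; the $\tfrac{2}{3}t^{-2}$ contributions cancel, leaving the principal term $2t^{-2}\adjustednewp$, the borderline term $t^{-2}\freenewsec_{\ b}^a\freenewsec_{\ a}^b$, and the junk cubic-in-$\newu$ expression shown in \eqref{AE:HAMILTONIANERRORJUNK} (the factor $t^{-2/3} = t^{-2}\cdot t^{4/3}$ accounts for the extra smallness).

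For \eqref{AE:MOMENTUMCONSTRAINT}, I would start from $\nabla_a\SecondFund_{\ i}^a = 2p(1+u_au^a)^{1/2}u_i$, using $\freenewsec_{\ a}^a = 0$ to drop $\nabla_i\SecondFund_{\ a}^a$. Since $\SecondFund_{\ j}^i = t^{-1}\freenewsec_{\ j}^i - \tfrac{1}{3}t^{-1}\ID_{\ j}^i$ and the identity piece is $\nabla$-parallel, $\nabla_a\SecondFund_{\ i}^a = t^{-1}\nabla_a\freenewsec_{\ i}^a$. Expanding the covariant divergence gives
\[
\partial_a\freenewsec_{\ i}^a = \nabla_a\freenewsec_{\ i}^a - \Gamma_{a\ b}^{\ a}\freenewsec_{\ i}^b + \Gamma_{a\ i}^{\ b}\freenewsec_{\ b}^a,
\]
and I invoke Lemma~\ref{L:CHRISTOFFELIDENTITIES} together with the scale-invariance of $\Gamma_{j\ k}^{\ i}$ and $\upgamma_{j\ k}^{\ i}$ under $g\mapsto t^{2/3}\newg$ (the conformal factor is purely temporal) to rewrite $\Gamma_{a\ b}^{\ a} = \tfrac{1}{2}\upgamma_{b\ a}^{\ a}$ and $\Gamma_{a\ i}^{\ b} = \tfrac{1}{2}(\upgamma_{a\ i}^{\ b} + \upgamma_{i\ a}^{\ b} - (\newg^{-1})^{bl}\newg_{ma}\upgamma_{l\ i}^{\ m})$. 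On the right, substituting the renormalized variables (and noting $u_i = g_{ia}u^a = t\newg_{ia}\newu^a$, $p = t^{-2}(\adjustednewp+\tfrac{1}{3})$) produces $t^{-1}\cdot\tfrac{2}{3}\newg_{ia}\newu^a$ as the principal term, $t^{-1}\cdot 2\adjustednewp\newg_{ia}\newu^a$ as borderline, and the remaining piece $2(\adjustednewp+\tfrac{1}{3})[(1+t^{4/3}\newg_{ab}\newu^a\newu^b)^{1/2} - 1]\newg_{ia}\newu^a$, which is $O(t^{4/3}\cdot\text{junk})$ by Taylor expansion in $t^{4/3}$. Multiplying through by $t$ yields \eqref{AE:MOMENTUMCONSTRAINT} with precisely the error terms \eqref{AE:MOMENTUMERRORBORDERLINE}--\eqref{AE:MOMENTUMERRORJUNK}.

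Finally, for \eqref{AE:MOMENTUMCONSTRAINTRAISED} I would repeat the same argument after raising the free index with $g^{ji}$ (equivalently, start from $\nabla_a\SecondFund^{aj} = 2p(1+u_au^a)^{1/2}u^j$) and then convert $g^{ji}\partial_a = t^{-2/3}(\newg^{-1})^{ji}\partial_a$ against the already-rescaled $\freenewsec$. The computation mirrors the lowered case, with the identity $\Gamma_{a\ b}^{\ j}$ raised instead of $\Gamma_{a\ i}^{\ b},$ producing the obvious analogues $\leftexp{(Border)}{\widetilde{\mathfrak{M}}}^j$ and $\leftexp{(Junk)}{\widetilde{\mathfrak{M}}}^j$. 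The only ``obstacle'' is keeping the bookkeeping clean: verifying that every cancellation uses exactly the CMC condition or the explicit powers of $t$ built into Def.~\ref{D:RESCALEDVAR}, so that the borderline terms acquire no hidden factors of $t^{-4/3}$ which would spoil their classification. Otherwise the proof is pure substitution, and I would present it as two display computations (one per constraint) with the CMC cancellation highlighted.
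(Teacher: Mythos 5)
Your proposal is correct and follows essentially the same route as the paper: the Hamiltonian identity hinges on the expansion of $\SecondFund_{\ b}^a\SecondFund_{\ a}^b$ together with the CMC cancellation $\freenewsec_{\ a}^a = 0$, and the momentum identities hinge on the divergence identity \eqref{E:MOMENTUMCONSTRAINTDIVID} (respectively \eqref{E:MOMENTUMCONSTRAINTDIVIDII}) with the Christoffel symbols re-expressed via $\upgamma$, followed by substitution of the renormalized fluid variables and a Taylor split $(1+t^{4/3}\newg_{ab}\newu^a\newu^b)^{1/2} = 1 + [(\cdot)^{1/2}-1]$ to separate borderline from junk. The only minor blemishes are notational: in the momentum discussion you dropped a factor of $t^{-1}$ from the ``remaining piece'' before you say ``multiply through by $t$,'' and for the raised version the total power by which one multiplies depends on whether one absorbs $g^{ia} = t^{-2/3}(\newg^{-1})^{ia}$ first, so the phrase ``multiply through by $t$'' conceals a $t^{5/3}$ if you start from $g^{ia}\nabla_a\SecondFund_{\ i}^j$; neither affects the substance of the argument.
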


\begin{proof}
To derive \eqref{AE:RINTERMSOFKPANDU}, we substitute the renormalized variables of Def.~\ref{D:RESCALEDVAR}
into equation \eqref{AE:HAMILTONIAN}, make use of the CMC condition $\SecondFund_{\ a}^a = -\frac{1}{t},$
and perform tedious algebraic computations. The only slightly subtle computation is the identity
\begin{align}
	\SecondFund_{\ b}^a \SecondFund_{\ a}^b - \overbrace{(\SecondFund_{\ a}^a)^2}^{t^{-2}} + 2 p 
	& = 2t^{-2} \adjustednewp
		+ t^{-2} \freenewsec_{\ b}^a \freenewsec_{\ a}^b,
\end{align}
the proof of which relies on the relation $\freenewsec_{\ a}^a = 0;$
this relation is a consequence of $\SecondFund_{\ a}^a = -\frac{1}{t}.$

To derive \eqref{AE:MOMENTUMCONSTRAINT}, we first note the identity
\begin{align} \label{E:MOMENTUMCONSTRAINTDIVID}
	t \nabla_a \SecondFund_{\ i}^a 
	& = \partial_a \freenewsec_{\ i}^a
		+ \frac{1}{2} \upgamma_{b \ a}^{\ a} \freenewsec_{\ i}^b
		- \frac{1}{2} \left(\upgamma_{a \ i}^{\ b} + \upgamma_{i \ a}^{\ b} - (\newg^{-1})^{bl} \newg_{ma} \upgamma_{l \ i}^{\ m} \right) 
			\freenewsec_{\ b}^a, 
\end{align}
which follows from \eqref{E:BIGGAMMALITTLEGAMMARELATION} and the fact that  $\nabla_a \ID_{\ i}^j = 0.$
We then multiply the momentum constraint equation \eqref{AE:MOMENTUM} by $t,$ use the identity
\eqref{E:MOMENTUMCONSTRAINTDIVID}, substitute the renormalized variables of Def.~\ref{D:RESCALEDVAR}, 
and perform tedious algebraic computations.

The proof of \eqref{AE:MOMENTUMCONSTRAINTRAISED} follows similarly with the help of the identity
\begin{align} \label{E:MOMENTUMCONSTRAINTDIVIDII}
	t (\newg^{-1})^{ia} \nabla_a \SecondFund_{\ i}^j
	& = (\newg^{-1})^{ia} \partial_a \freenewsec_{\ i}^j
		+ \frac{1}{2} (\newg^{-1})^{ia} \left(\upgamma_{a \ b}^{\ j} 
			+ \upgamma_{b \ a}^{\ j} 
			- (\newg^{-1})^{jl} \newg_{ma} \upgamma_{l \ b}^{\ m} \right) 
			\freenewsec_{\ i}^b 
			\\
	& \ \ - \frac{1}{2} (\newg^{-1})^{ia} \left(\upgamma_{a \ i}^{\ b} 
			+ \upgamma_{i \ a}^{\ b} 
			- (\newg^{-1})^{bl} \newg_{ma} \upgamma_{l \ i}^{\ m} \right) 
			\freenewsec_{\ b}^j.
			\notag
\end{align}

\end{proof}

\subsection{The renormalized lapse equations}

In this section, we derive the elliptic PDEs verified by the renormalized lapse variables.

\begin{proposition} [\textbf{The renormalized lapse equations}] \label{AP:LAPSERESCALED}
Let $\mathcal{L} = t^{4/3} (\newg^{-1})^{ab} \partial_a \partial_b - (1 + f) 
= t^{4/3} (\newg^{-1})^{ab} \partial_a \partial_b - (1 + \widetilde{f})$ be the elliptic operator from Definition \eqref{D:LAPSEOP}. 
Assume the stiff fluid equation of state $p = \rho.$
Then in terms of the renormalized variables of Def.~\ref{D:RESCALEDVAR}, the lapse equation \eqref{AE:LAPSE} can be expressed in the following two forms:
\begin{subequations}
\begin{align}
	\mathcal{L} \newlap & = 2 t^{-4/3} \adjustednewp 
		+ t^{-4/3} \leftexp{(Border)}{\mathfrak{N}} + \leftexp{(Junk)}{\mathfrak{N}}, 
		\label{AE:LAPSERESCALED} \\
	\mathcal{L} \newlap & = \leftexp{(Border)}{\widetilde{\mathfrak{N}}}  + t^{2/3} \leftexp{(Junk)}{\widetilde{\mathfrak{N}}},
	\label{AE:LAPSERESCALEDLOWER} 
\end{align}
\end{subequations}
where the error terms
$\leftexp{(Border)}{\mathfrak{N}}$
and $\leftexp{(Junk)}{\mathfrak{N}}$
are defined by
\begin{subequations}
\begin{align}
	\leftexp{(Border)}{\mathfrak{N}} & := \freenewsec_{\ b}^a \freenewsec_{\ a}^b, \label{AE:LAPSENBORDERLINE} \\
	\leftexp{(Junk)}{\mathfrak{N}} & := t^{2/3} \left((\newg^{-1})^{ef} \upgamma_{e \ f}^{\ b} 
			- \frac{1}{2} (\newg^{-1})^{ab} \upgamma_{a \ c}^{\ c} \right) \dlap_b  \label{AE:LAPSENJUNK} \\
	& \ \ + 2 \adjustednewp \newg_{ab} \newu^a \newu^b 
		+ \frac{2}{3} \newg_{ab} \newu^a \newu^b, \notag  
\end{align}
\end{subequations}
and the error terms
$\leftexp{(Border)}{\widetilde{\mathfrak{N}}}$
and $\leftexp{(Junk)}{\widetilde{\mathfrak{N}}}$
are defined by
\begin{subequations}
\begin{align}	
	\leftexp{(Border)}{\widetilde{\mathfrak{N}}} 
	& :=   
		- \frac{1}{2} (\newg^{-1})^{ef} \partial_e \upgamma_{f \ a}^{\ a}  
		+ (\newg^{-1})^{ab} \partial_a \Gamma_b
		+	\leftexp{(Ricci)}{\triangle}_{\ a}^a \label{AE:LAPSENTILDEBORDERLINE} \\
	& \ \ - 2 \adjustednewp \newg_{ab} \newu^a \newu^b 
			- \frac{2}{3} \newg_{ab} \newu^a \newu^b, \notag   \\
	\leftexp{(Junk)}{\widetilde{\mathfrak{N}}} 
	& := \left((\newg^{-1})^{ef} \upgamma_{e \ f}^{\ b} 
			- \frac{1}{2} (\newg^{-1})^{ab} \upgamma_{a \ c}^{\ c} \right) \dlap_b. \label{AE:LAPSENTILDEJUNK} 
\end{align}
\end{subequations}
The quantities 
$\partial_a \Gamma_b$
and
$\leftexp{(Ricci)}{\triangle}_{\ a}^a$ 
appearing in \eqref{AE:LAPSENTILDEBORDERLINE}
are respectively defined in 
\eqref{E:PARTIALCONTRACTEDGAMMALOWEEXPRESSION}
and
\eqref{E:RICCIERROR}.

\end{proposition}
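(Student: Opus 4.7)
The plan is to start from the unrescaled lapse equation \eqref{AE:LAPSE} specialized to the stiff fluid case $\rho = p$, where the right-hand side simplifies to $(n-1)\{R + (\SecondFund_{\ a}^a)^2 - 2pu_au^a\} + R - 2pu_au^a$ since $p - \rho = 0$ and the $(\SecondFund_{\ a}^a)^2 - \partial_t \SecondFund_{\ a}^a$ term vanishes by the CMC condition. I would substitute the renormalized variables from Def.~\ref{D:RESCALEDVAR}, multiply the whole equation by $t^{2/3}$, and reorganize; the two different forms \eqref{AE:LAPSERESCALED} and \eqref{AE:LAPSERESCALEDLOWER} will correspond to two different ways of expressing $R$ on the right-hand side—via the Hamiltonian constraint \eqref{AE:RINTERMSOFKPANDU} (first form) versus via the direct geometric identity \eqref{E:RICCIDECOMP} (second form).

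First, for the principal part of the LHS, I would use $n - 1 = t^{4/3}\newlap$, $g^{ab} = t^{-2/3}(\newg^{-1})^{ab}$, $\partial_i n = t^{2/3}\dlap_i$, and the fact that $\Gamma_{a\ b}^{\ c}(g) = \Gamma_{a\ b}^{\ c}(\newg)$ since $g$ and $\newg$ differ by a time-dependent constant conformal factor. Combined with \eqref{E:BIGGAMMACONTRACTEDLOWEEXPRESSION} to express the contracted Christoffel in terms of $\upgamma$, this yields
\begin{align*}
t^{2/3}g^{ab}\nabla_a\nabla_b(n-1) = t^{4/3}(\newg^{-1})^{ab}\partial_a\partial_b\newlap - t^{2/3}\Gamma^c(\newg)\dlap_c.
\end{align*}
For the RHS, plugging in $p = t^{-2}(\adjustednewp + 1/3)$ and $u_au^a = t^{4/3}\newg_{ab}\newu^a\newu^b$ gives
\begin{align*}
t^{2/3}[\text{RHS}] = t^2\newlap\{R + t^{-2} - 2pu_au^a\} + t^{2/3}R - 2(\adjustednewp + 1/3)\newg_{ab}\newu^a\newu^b.
\end{align*}

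To obtain \eqref{AE:LAPSERESCALED}, I would substitute \eqref{AE:RINTERMSOFKPANDU} for $t^2 R$ everywhere. A routine cancellation shows $t^2\newlap\{R + t^{-2} - 2pu_au^a\} = (1+f)\newlap$ exactly (with $f$ as in \eqref{E:ELLIPTICOPERATORJUNKTERM}), after which the remaining terms are $2t^{-4/3}\adjustednewp + t^{-4/3}\freenewsec_{\ b}^a\freenewsec_{\ a}^b$ plus $[\leftexp{(Junk)}{\mathfrak{H}} - 2(\adjustednewp+1/3)\newg_{ab}\newu^a\newu^b] + t^{2/3}\Gamma^c(\newg)\dlap_c$; identifying this bracketed expression as $\leftexp{(Junk)}{\mathfrak{N}}$ from \eqref{AE:LAPSENJUNK} completes the first form. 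To obtain \eqref{AE:LAPSERESCALEDLOWER}, I would instead use the traced version of \eqref{E:RICCIDECOMP} to write $t^{2/3}R = -\tfrac{1}{2}(\newg^{-1})^{ef}\partial_e\upgamma_{f\ a}^{\ a} + (\newg^{-1})^{ab}\partial_a\Gamma_b + \leftexp{(Ricci)}{\triangle}_{\ a}^a$; substituting this into the expression for $t^2[R + t^{-2} - 2pu_au^a]$ yields exactly $1 + \widetilde{f}$ (with $\widetilde{f}$ as in \eqref{E:TILDEELLIPTICOPERATORJUNKTERM}), so the leftover right-hand side becomes $\leftexp{(Border)}{\widetilde{\mathfrak{N}}} + t^{2/3}\Gamma^c(\newg)\dlap_c = \leftexp{(Border)}{\widetilde{\mathfrak{N}}} + t^{2/3}\leftexp{(Junk)}{\widetilde{\mathfrak{N}}}$, yielding the second form. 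Consistency of the two forms of $\mathcal{L}$ (i.e., $f = \widetilde{f}$) is itself a direct consequence of \eqref{AE:RINTERMSOFKPANDU}.

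The only real obstacle is the tedious bookkeeping required to verify that the many algebraic rearrangements match the stated definitions \eqref{AE:LAPSENBORDERLINE}--\eqref{AE:LAPSENTILDEJUNK} precisely. This is mechanical but error-prone: one must carefully track powers of $t$, factors of $\newg$ versus $g$, and the sign conventions implicit in the definitions of $f$, $\widetilde{f}$, $\leftexp{(Border)}{\widetilde{\mathfrak{N}}}$, and $\leftexp{(Junk)}{\widetilde{\mathfrak{N}}}$. There is no genuine conceptual difficulty beyond recognizing which geometric identity to invoke for each form, so the proof is essentially a direct computation once the setup is in place.
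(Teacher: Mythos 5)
Your approach is correct and matches the paper's own proof, which is stated more tersely: multiply \eqref{AE:LAPSE} by $t^{2/3}$, expand $g^{ab}\nabla_a\nabla_b(n-1)$ via the contracted-Christoffel identity, and substitute for $R$ using Lemma~\ref{L:RICCIDECOMP} for the second form and the Hamiltonian constraint \eqref{AE:RINTERMSOFKPANDU} for the first. The detailed bookkeeping you outline — the identifications $t^2\{R + t^{-2} - 2pu_au^a\} = 1+f$ (resp.\ $1+\widetilde f$), the observation that $\Gamma_{a\ b}^{\ c}(g)=\Gamma_{a\ b}^{\ c}(\newg)$ because the conformal factor is spatially constant, and the absorption of $t^{2/3}\Gamma^c(\newg)\dlap_c$ into the junk terms — checks out and is consistent with the stated definitions of $\leftexp{(Junk)}{\mathfrak{N}}$, $\leftexp{(Border)}{\widetilde{\mathfrak{N}}}$, and $\leftexp{(Junk)}{\widetilde{\mathfrak{N}}}$.
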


\begin{proof}
	To derive \eqref{AE:LAPSERESCALEDLOWER}, we multiply \eqref{AE:LAPSE} by $t^{2/3},$
	use the relation $\SecondFund_{\ a}^a = -\frac{1}{t},$ the identity
	$g^{ab} \nabla_a \nabla_b n = g^{ab} \partial_a \partial_b (n-1) - g^{ab} \Gamma_a \partial_b(n-1),$
	the relation \eqref{E:BIGGAMMACONTRACTEDLOWEREDLITTLEGAMMARELATION}, Lemma~\ref{L:RICCIDECOMP},
	Def.~\ref{D:RESCALEDVAR}, and perform straightforward algebraic computations.
	
	The proof of \eqref{AE:LAPSERESCALED} is similar. The only difference is that 
	we replace the two occurrences of the scalar curvature $R$ in \eqref{AE:LAPSE} 
	with the right-hand side of the Hamiltonian constraint equation \eqref{AE:RINTERMSOFKPANDU}.
	
\end{proof}

\subsection{The renormalized evolution equations}

In this section, we derive the evolution equations verified by the renormalized variables.

\begin{proposition} [\textbf{The renormalized volume form factor evolution equation}]  \label{AP:RESCALEDVOLFORMEVOLUTION}
In terms of the renormalized variables of Def.~\ref{D:RESCALEDVAR},
the renormalized volume form factor $\sqrt{\gbigdet}$ verifies the following evolution equation:
\begin{align} \label{AE:LOGVOLFORMEVOLUTION}
	\partial_t \ln \sqrt{\gbigdet} & = t^{1/3} \newlap.
\end{align}

\end{proposition}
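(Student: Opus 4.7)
The plan is to reduce the claim to a direct computation using the standard variational formula for the determinant together with the CMC condition.

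First I would recall the classical identity $\partial_t \ln \sqrt{\mathrm{det}\,g} = \tfrac{1}{2} g^{ab} \partial_t g_{ab}$, valid for any smooth family of Riemannian metrics. Applying the evolution equation \eqref{E:PARTIALTGCMC}, namely $\partial_t g_{ij} = -2n g_{ia} \SecondFund_{\ j}^a$, and contracting with $g^{ij}$, I get
\begin{align*}
\partial_t \ln \sqrt{\mathrm{det}\,g} = -n \SecondFund_{\ a}^a.
\end{align*}
The CMC normalization $\SecondFund_{\ a}^a = -t^{-1}$ then yields $\partial_t \ln \sqrt{\mathrm{det}\,g} = n/t$.

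Next I would convert to the renormalized volume factor. From Def.~\ref{D:RESCALEDVAR}, $\sqrt{\gbigdet} = t^{-1} \sqrt{\gdet}$, so
\begin{align*}
\partial_t \ln \sqrt{\gbigdet} = -\frac{1}{t} + \partial_t \ln \sqrt{\gdet} = \frac{n-1}{t}.
\end{align*}
Finally, the definition $\newlap := t^{-4/3}(n-1)$ gives $n - 1 = t^{4/3} \newlap$, and substituting produces the desired identity
\begin{align*}
\partial_t \ln \sqrt{\gbigdet} = t^{1/3} \newlap.
\end{align*}

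There is no real obstacle here: the proof is a one-line chain of substitutions once the CMC condition is invoked. The only point worth flagging is that the crucial cancellation of the $-1/t$ coming from the rescaling of the volume form against the $-1/t$ coming from the CMC trace of $\SecondFund$ is precisely what makes the right-hand side proportional to the lapse perturbation $\newlap$ rather than to the lapse itself; this is the structural reason why $\sqrt{\gbigdet}$ has the favorable analytic properties exploited elsewhere in the paper.
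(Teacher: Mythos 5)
Your proof is correct and is essentially the same argument as the paper's: you invoke the standard formula for $\partial_t \ln \sqrt{\gdet}$, substitute the evolution equation \eqref{E:PARTIALTGCMC} and the CMC condition $\SecondFund_{\ a}^a = -t^{-1}$ to get $\partial_t \ln\sqrt{\gdet} = n/t$, and then pass to the renormalized variables. Your closing remark about the cancellation of the two $-1/t$ contributions is a fair structural observation and consistent with the paper's reason for introducing the variable $\sqrt{\gbigdet}$ in the first place.
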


\begin{proof}
We first note the standard matrix identity
\begin{align} \label{AE:DERIVATIVEMATRIXDET}
	\partial_t \sqrt{\gdet} & = \frac{1}{2} \sqrt{\gdet} g^{ab}\partial_t g_{ab}.
\end{align}
Using \eqref{AE:PARTIALTGCMC} to substitute for $\partial_t g_{ab}$ in \eqref{AE:DERIVATIVEMATRIXDET}, 
and making use of the CMC condition $\SecondFund_{\ a}^a = - \frac{1}{t},$ we deduce
\begin{align} \label{AE:DERIVATIVEMATRIXDETSUBBED}
	\partial_t \sqrt{\gdet} & = \sqrt{\gdet} \frac{n}{t}.
\end{align}
The identity \eqref{AE:LOGVOLFORMEVOLUTION} now follows from substituting the renormalized variables of Def.~\ref{D:RESCALEDVAR}
into \eqref{AE:DERIVATIVEMATRIXDETSUBBED}.

\end{proof}

\begin{proposition} [\textbf{The renormalized metric evolution equations}] \label{AP:METRICEVOLUTIONREFORMULATED}
In terms of the renormalized variables of Def.~\ref{D:RESCALEDVAR},
the renormalized metric $\newg_{ij}$ and its inverse $(\newg^{-1})^{ij}$ verify the following evolution equations:
\begin{subequations}
\begin{align} \label{AE:GEVOLUTION}
	\partial_t \newg_{ij}
	& = - 2t^{-1} \newg_{ia} \freenewsec_{\ j}^a
		+ t^{1/3} \leftexp{(Junk)}{\mathfrak{G}}_{ij},
		\\
 \partial_t (\newg^{-1})^{ij} & = 2 t^{-1} (\newg^{-1})^{ia} \freenewsec_{\ a}^j 
 		+ t^{1/3} \leftexp{(Junk)}{\widetilde{\mathfrak{G}}}^{ij},
		\label{AE:GINVERSEEVOLUTION}
\end{align}
\end{subequations}
where the error terms $\leftexp{(Junk)}{\mathfrak{G}}_{ij}$ and 
$\leftexp{(Junk)}{\widetilde{\mathfrak{G}}}^{ij}$ are defined by
\begin{subequations}
\begin{align}
	\leftexp{(Junk)}{\mathfrak{G}}_{ij} & := - 2 \newlap \newg_{ia} \freenewsec_{\ j}^a
		+ \frac{2}{3} \newlap \newg_{ij}, 
			\label{E:METRICEVOLUTIONJUNK} \\
	\leftexp{(Junk)}{\widetilde{\mathfrak{G}}}^{ij} & := 2 \newlap (\newg^{-1})^{ia} \freenewsec_{\ a}^j
		- \frac{2}{3} \newlap (\newg^{-1})^{ij}.
		\label{E:INVERSEMETRICEVOLUTIONJUNK}
\end{align}
\end{subequations}

Furthermore, the quantities $\upgamma_{e \ i}^{\ b}$ and $\freenewsec_{\ j}^i$
verify the following evolution equations:
\begin{subequations}
\begin{align}
	\partial_t \upgamma_{e \ i}^{\ b} 
	& = - 2 t^{-1} \big[1 + t^{4/3} \newlap\big] \partial_e \freenewsec_{\ i}^b 
		+ \frac{2}{3}t^{-1/3} \dlap_e \ID_{\ i}^b 	
		\label{AE:LITTLEGAMMAEVOLUTIONDETGFORM} \\
	& \ \ + t^{-1} \leftexp{(Border)}{\mathfrak{g}}_{e \ i}^{\ b}
		+ t^{1/3} \leftexp{(Junk)}{\mathfrak{g}}_{e \ i}^{\ b}, \notag \\
	\partial_t \freenewsec_{\ j}^i 
	& = - \frac{1}{2} t^{1/3} 
		\big[1 + t^{4/3} \newlap\big] (\newg^{-1})^{ef} \partial_e \upgamma_{f \ j}^{\ i}
			\label{AE:KEVOLUTION} \\
	& \ \ + \frac{1}{2} t^{1/3} \big[1 + t^{4/3} \newlap\big] 
		 (\newg^{-1})^{ia} \partial_a \Big( \underbrace{\newg_{jb} (\newg^{-1})^{ef} \upgamma_{e \ f}^{\ b} 
				- \frac{1}{2} \upgamma_{j \ b}^{\ b}}_{\Gamma_j} \Big)  \notag \\
	& \ \  + \frac{1}{2} t^{1/3} \big[1 + t^{4/3} \newlap\big]  
		(\newg^{-1})^{ia}\partial_j \Big( \underbrace{\newg_{ab} (\newg^{-1})^{ef} \upgamma_{e \ f}^{\ b} 
				- \frac{1}{2} \upgamma_{a \ b}^{\ b}}_{\Gamma_a} \Big) \notag \\
	& \ \ - t (\newg^{-1})^{ia} \partial_a \dlap_j 
		+ \frac{1}{3} t^{1/3} \newlap \ID_{\ j}^i \notag \\
	& \ \ + t^{1/3} \leftexp{(Junk)}{\mathfrak{K}}_{\ j}^i, \notag
\end{align}
\end{subequations}
where the error terms 
$\leftexp{(Border)}{\mathfrak{g}}_{e \ i}^{\ b},$ 
$\leftexp{(Junk)}{\mathfrak{g}}_{e \ i}^{\ b},$
and $\leftexp{(Junk)}{\mathfrak{K}}_{\ j}^i$ are defined by
\begin{subequations}
\begin{align}		
	\leftexp{(Border)}{\mathfrak{g}}_{e \ i}^{\ b} 
	& := 2 \freenewsec_{\ a}^b \upgamma_{e \ i}^{\ a}  
			- 2 \freenewsec_{\ i}^a  \upgamma_{e \ a}^{\ b}  
			- 2 t^{2/3} \dlap_e \freenewsec_{\ i}^b,
		\label{AE:GAMMAEVOLUTIONERROR} \\
		\leftexp{(Junk)}{\mathfrak{g}}_{e \ i}^{\ b} 
	& := 2 \newlap \freenewsec_{\ a}^b \upgamma_{e \ i}^{\ a}  
			- 2 \newlap \freenewsec_{\ i}^a  \upgamma_{e \ a}^{\ b}, 
		\label{AE:JUNKGAMMAEVOLUTIONERROR} \\
 	\leftexp{(Junk)}{\mathfrak{K}}_{\ j}^i 
	& := - \Psi \freenewsec_{\ j}^i 
		  \label{AE:KEVOLUTIONERROR} \\
	& \ \ + \frac{1}{2} t^{2/3} (\newg^{-1})^{ia} 
			\left( \upgamma_{a \ j}^{\ b} + \upgamma_{j \ a}^{\ b} - (\newg^{-1})^{bl} \newg_{ma} \upgamma_{l \ j}^{\ m} \right)
			\dlap_b \notag \\
	& \ \ + \big[1 + t^{4/3} \newlap\big]  \leftexp{(Ricci)}{\triangle}_{\ j}^i 
			\notag \\	
	& \ \ - \frac{2}{3} \big[1 + t^{4/3} \newlap\big]  \newg_{ja} \newu^a \newu^i \notag \\
	& \ \ - 2 \big[1 + t^{4/3} \newlap\big] \adjustednewp \newg_{ja} \newu^a \newu^i,  \notag
\end{align}
and $\leftexp{(Ricci)}{\triangle}_{\ j}^i$ is defined in \eqref{E:RICCIERROR}.
\end{subequations}

\end{proposition}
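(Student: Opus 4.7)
The plan is to derive each of the four stated equations by direct substitution of the renormalizations in Def.~\ref{D:RESCALEDVAR} (together with $n = 1 + t^{4/3}\newlap$ and $\SecondFund_{\ j}^i = t^{-1}(\freenewsec_{\ j}^i - \tfrac{1}{3}\ID_{\ j}^i)$) into the CMC-transported equations of Sect.~\ref{S:EEINCMC}, and then sorting the resulting terms into the displayed ``principal'' pieces and the error tensors $\leftexp{(Junk)}{\mathfrak{G}},$ $\leftexp{(Junk)}{\widetilde{\mathfrak{G}}},$ $\leftexp{(Border)}{\mathfrak{g}},$ $\leftexp{(Junk)}{\mathfrak{g}},$ $\leftexp{(Junk)}{\mathfrak{K}}.$ The arguments are purely algebraic once the right substitutions have been made; the only conceptual input beyond bookkeeping is the use of the Hamiltonian constraint in Step 4 below.

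First I would prove \eqref{AE:GEVOLUTION}. Differentiating $\newg_{ij} = t^{-2/3}g_{ij}$ in time and substituting \eqref{AE:PARTIALTGCMC} gives
\begin{align*}
\partial_t \newg_{ij}
&= -\tfrac{2}{3}t^{-1}\newg_{ij} - 2t^{-1}\bigl[1 + t^{4/3}\newlap\bigr]\newg_{ia}\bigl(\freenewsec_{\ j}^a - \tfrac{1}{3}\ID_{\ j}^a\bigr).
\end{align*}
The $-\tfrac{2}{3}t^{-1}\newg_{ij}$ term cancels against the $+\tfrac{2}{3}t^{-1}\newg_{ij}$ produced by the trace-part contraction $\newg_{ia}\ID_{\ j}^a = \newg_{ij}$, leaving exactly the principal term $-2t^{-1}\newg_{ia}\freenewsec_{\ j}^a$ plus the $t^{1/3}$-weighted remainder $-2\newlap\newg_{ia}\freenewsec_{\ j}^a + \tfrac{2}{3}\newlap\newg_{ij}$, which matches \eqref{E:METRICEVOLUTIONJUNK}. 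The identity \eqref{AE:GINVERSEEVOLUTION} then follows immediately by differentiating $(\newg^{-1})^{ia}\newg_{aj} = \ID_{\ j}^i$ and solving algebraically for $\partial_t(\newg^{-1})^{ij}$ using the expression for $\partial_t \newg_{ij}$ just derived.

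Second, I would derive \eqref{AE:LITTLEGAMMAEVOLUTIONDETGFORM} by writing $\upgamma_{e\ i}^{\ b} = (\newg^{-1})^{ab}\partial_e \newg_{ai}$ and computing
\begin{align*}
\partial_t \upgamma_{e\ i}^{\ b}
= \bigl(\partial_t (\newg^{-1})^{ab}\bigr)\partial_e \newg_{ai}
+ (\newg^{-1})^{ab}\partial_e\bigl(\partial_t \newg_{ai}\bigr).
\end{align*}
Substituting \eqref{AE:GEVOLUTION}--\eqref{AE:GINVERSEEVOLUTION} and using $\partial_e \newlap = t^{-2/3}\dlap_e$ (which comes from $\dlap_i = t^{-2/3}\partial_i n$ and $n - 1 = t^{4/3}\newlap$), the principal structure is $-2t^{-1}[1 + t^{4/3}\newlap]\partial_e \freenewsec_{\ i}^b$ together with the crucial lapse-gradient term $\tfrac{2}{3}t^{-1/3}\dlap_e\ID_{\ i}^b$, which is generated exactly when $\partial_e$ falls on the $\newlap$ factor of $\tfrac{2}{3}t^{1/3}\newlap\newg_{ai}$ in $\leftexp{(Junk)}{\mathfrak{G}}_{ai}.$ The remaining terms, which are quadratic in $(\freenewsec,\upgamma)$ or involve $\dlap_e\freenewsec,$ collect themselves into $t^{-1}\leftexp{(Border)}{\mathfrak{g}}_{e\ i}^{\ b}$ and $t^{1/3}\leftexp{(Junk)}{\mathfrak{g}}_{e\ i}^{\ b}$ as defined in \eqref{AE:GAMMAEVOLUTIONERROR}--\eqref{AE:JUNKGAMMAEVOLUTIONERROR}.

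The hard part is \eqref{AE:KEVOLUTION}. Starting from $\freenewsec_{\ j}^i = t \SecondFund_{\ j}^i + \tfrac{1}{3}\ID_{\ j}^i,$ I would compute $\partial_t \freenewsec_{\ j}^i = \SecondFund_{\ j}^i + t \partial_t \SecondFund_{\ j}^i$ and substitute \eqref{AE:PARTIALTKCMC}, which after writing $n = 1 + t^{4/3}\newlap$ and using the CMC condition $\SecondFund_{\ a}^a \SecondFund_{\ j}^i = -t^{-1}\SecondFund_{\ j}^i$ yields
\begin{align*}
\partial_t \freenewsec_{\ j}^i
= - t g^{ia}\nabla_a \nabla_j n
+ t\bigl[1 + t^{4/3}\newlap\bigr]\bigl\{R_{\ j}^i - 2 p u^i u_j\bigr\} - \newlap \freenewsec_{\ j}^i + \tfrac{1}{3}t^{1/3}\newlap\,\ID_{\ j}^i \cdot \text{(to be produced)}.
\end{align*}
Into this I would insert: the Ricci decomposition \eqref{E:RICCIDECOMP} to produce the three $\partial\upgamma$ and $\partial\Gamma$ terms on the right-hand side of \eqref{AE:KEVOLUTION} modulo the tensor $t^{1/3}[1+t^{4/3}\newlap]\leftexp{(Ricci)}{\triangle}_{\ j}^i$; the expansion $g^{ia}\nabla_a\nabla_j n = g^{ia}\partial_a \partial_j n - g^{ia}\Gamma_{a\ j}^{\ b}\partial_b n,$ combined with $\partial_a n = t^{2/3}\dlap_a,$ to produce the $-t(\newg^{-1})^{ia}\partial_a \dlap_j$ term plus an $\upgamma\cdot\dlap$ cross term that belongs to $\leftexp{(Junk)}{\mathfrak{K}};$ and finally the definition of $\adjustednewp$ together with the rewriting $-2tpu^i u_j = -2t^{-1}[\adjustednewp + \tfrac{1}{3}]\newg_{ja}\newu^a \newu^i,$ which contributes the two matter pieces in $\leftexp{(Junk)}{\mathfrak{K}}.$ The main subtlety is the appearance of the trace-pure term $\tfrac{1}{3}t^{1/3}\newlap\ID_{\ j}^i$: to extract it, I would use the fact that the CMC condition forces the trace of $\partial_t \SecondFund_{\ j}^i$ (after contraction and multiplication by $t$) to equal $\partial_t(-1) = 0,$ so that the trace part of the right-hand side of \eqref{AE:PARTIALTKCMC} can be computed from $-g^{ab}\nabla_a\nabla_b n + n\{R + (\SecondFund_{\ a}^a)^2 - 2pu_a u^a\},$ and this expression is exactly $-\mathcal{L}\newlap$ up to terms already accounted for--here I invoke the Hamiltonian constraint \eqref{AE:RINTERMSOFKPANDU} to substitute for $R,$ which is precisely the argument used to derive \eqref{AE:LAPSERESCALED}. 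Matching the resulting trace isolates the $\tfrac{1}{3}t^{1/3}\newlap\ID_{\ j}^i$ term, and all remaining terms assemble into $t^{1/3}\leftexp{(Junk)}{\mathfrak{K}}_{\ j}^i$ as listed in \eqref{AE:KEVOLUTIONERROR}. The bookkeeping for this last step is the most error-prone part of the argument; I would perform it by first deriving the trace equation separately and only then collecting the trace-free residuals.
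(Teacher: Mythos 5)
Your treatment of \eqref{AE:GEVOLUTION}, \eqref{AE:GINVERSEEVOLUTION}, and \eqref{AE:LITTLEGAMMAEVOLUTIONDETGFORM} is correct and follows essentially the same route as the paper (direct substitution of the renormalizations into the CMC evolution equations, with the paper passing through an intermediate $g$-form identity \eqref{AE:PARTIALTGAMMAFIRSTREL} rather than your $\newg$-form, which is an immaterial difference). However, for \eqref{AE:KEVOLUTION} you introduce a detour that is both unnecessary and conceptually off-base.

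You claim that extracting the term $\frac{1}{3}t^{1/3}\newlap\ID_{\ j}^i$ is the ``main subtlety'' and requires a trace/trace-free decomposition together with an invocation of the Hamiltonian constraint \eqref{AE:RINTERMSOFKPANDU}. Neither is needed. When you multiply \eqref{AE:PARTIALTKCMC} by $t$ and use $\SecondFund_{\ a}^a \SecondFund_{\ j}^i = -t^{-1}\SecondFund_{\ j}^i,$ the left-hand side becomes $\partial_t[t\SecondFund_{\ j}^i + \tfrac{1}{3}\ID_{\ j}^i] = \partial_t\freenewsec_{\ j}^i$ and the residual $\SecondFund_{\ j}^i - n\SecondFund_{\ j}^i = (1-n)\SecondFund_{\ j}^i$ appears on the right. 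Writing $n - 1 = t^{4/3}\newlap$ and $\SecondFund_{\ j}^i = t^{-1}(\freenewsec_{\ j}^i - \tfrac{1}{3}\ID_{\ j}^i)$ gives
\begin{align*}
(1-n)\SecondFund_{\ j}^i = -t^{1/3}\newlap\,\freenewsec_{\ j}^i + \tfrac{1}{3}t^{1/3}\newlap\,\ID_{\ j}^i,
\end{align*}
which is exactly the displayed principal term $\tfrac{1}{3}t^{1/3}\newlap\ID_{\ j}^i$ together with the leading piece $-\Psi\freenewsec_{\ j}^i$ of $\leftexp{(Junk)}{\mathfrak{K}}_{\ j}^i.$ All remaining terms in \eqref{AE:KEVOLUTION} then come from substituting the Ricci decomposition \eqref{E:RICCIDECOMP} for $R_{\ j}^i$ in full and expanding $g^{ia}\nabla_a\nabla_j n$ via \eqref{E:BIGGAMMALITTLEGAMMARELATION}. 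The Hamiltonian constraint plays no role at all: it only enters the derivation of the lapse equation \eqref{AE:LAPSERESCALED} (version one, \eqref{E:LAPSERESCALEDELLIPTIC}), which is an independent equation. Proposition~\ref{AP:METRICEVOLUTIONREFORMULATED} is a purely algebraic reformulation of the evolution equation, and invoking the Hamiltonian constraint in its derivation introduces a hypothesis the statement does not require. Moreover, your proposed route is circular in spirit: the trace of \eqref{AE:KEVOLUTION} is automatically zero once the lapse equation (hence the Hamiltonian constraint) holds, so the trace computation is a consistency check rather than a device that isolates the $\propto\ID$ term. You would do well to drop the trace decomposition entirely and just collect terms as in the direct calculation above.
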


\begin{proof}
To derive \eqref{AE:GEVOLUTION}, we simply substitute the renormalized variables of Def.~\ref{D:RESCALEDVAR}
into \eqref{AE:PARTIALTGCMC}. Equation \eqref{AE:GINVERSEEVOLUTION} follows from \eqref{AE:GEVOLUTION}
and the matrix identity $\partial_t (\newg^{-1})^{ij} = - (\newg^{-1})^{ia} (\newg^{-1})^{jb} \partial_t \newg_{ab}.$

To derive \eqref{AE:LITTLEGAMMAEVOLUTIONDETGFORM}, we first use the definition
$\upgamma_{e \ i}^{\ b} = g^{ab} \partial_e g_{ai},$ the matrix identity $\partial_t g^{ab} = -g^{ae}g^{bf} \partial_t g_{ef},$
and equation \eqref{AE:PARTIALTGCMC} to deduce the identity
\begin{align} \label{AE:PARTIALTGAMMAFIRSTREL}
	\partial_t \upgamma_{e \ i}^{\ b} & =  2n \upgamma_{e \ i}^{\ a} \SecondFund_{\ a}^b
		- 2n \upgamma_{e \ a}^{\ b} \SecondFund_{\ i}^a
		- 2 (\partial_e n) \SecondFund_{\ i}^b
		- 2 n \partial_e (\SecondFund_{\ i}^b).
\end{align}
Equation \eqref{AE:LITTLEGAMMAEVOLUTIONDETGFORM} now follows from substituting the renormalized variables of Def.~\ref{D:RESCALEDVAR}
into \eqref{AE:PARTIALTGAMMAFIRSTREL}.

To derive \eqref{AE:KEVOLUTION}, we first multiply \eqref{AE:PARTIALTKCMC} by $t$ to deduce
\begin{align} \label{E:PARTIALTKFIRSTREL}
	\partial_t \left[t \SecondFund_{\ j}^i + \frac{1}{3} \ID_{\ j}^i \right] 
		& = - t g^{ia} \partial_a \partial_j n 
			+ t g^{ia} \Gamma_{a \ j}^{\ b} \partial_b n
			- \frac{(n-1)}{t} [t \SecondFund_{\ j}^i]
			+ n t \Big\lbrace R_{\ j}^i - 2p u_j u^i \Big\rbrace.
\end{align}
We then substitute the renormalized variables of Def.~\ref{D:RESCALEDVAR} into \eqref{E:PARTIALTKFIRSTREL}, use
the relation \eqref{E:BIGGAMMALITTLEGAMMARELATION}, and use \eqref{E:RICCIDECOMP} to substitute for $R_{\ j}^i.$ 
Tedious algebraic computations then lead to \eqref{AE:KEVOLUTION}.

\end{proof}

\begin{proposition}  [\textbf{The renormalized stiff fluid evolution equations}] \label{AP:EULERALT}
In terms of the renormalized variables of Def.~\ref{D:RESCALEDVAR}, the stiff fluid equations can be decomposed as follows:
\begin{subequations}
\begin{align}
	\partial_t \adjustednewp 
		& + 2 t^{1/3} \big[1 + t^{4/3} \newlap\big]\big[1 + t^{4/3} \newg_{ab} \newu^a \newu^b\big]^{1/2} 
			\big[\adjustednewp + \frac{1}{3} \big] 
			\partial_c \newu^c \label{AE:RESCALEDPEVOLUTION} \\
		& - 2 t^{5/3} \frac{\big[1 + t^{4/3} \newlap\big] \big[\adjustednewp + \frac{1}{3} \big]}{\big[1 + t^{4/3} \newg_{ab} \newu^a 
			\newu^b\big]^{1/2}} \newg_{ef} \newu^e \newu^c \partial_c \newu^f	\notag \\
		& = - \frac{2}{3} t^{1/3} \newlap 
			+ t^{1/3} \leftexp{(Junk)}{\mathfrak{P}}, \notag 
\end{align}
\begin{align}
	\partial_t \newu^j 
		& - t^{1/3} \big[1 + t^{4/3} \newlap\big] \big[1 + t^{4/3} \newg_{ab} \newu^a \newu^b\big]^{1/2} \newu^j \partial_c \newu^c 
			\label{AE:RESCALEDUEVOLUTION} \\
		& + t^{5/3} \frac{\big[1 + t^{4/3} \newlap\big]}{\big[1 + t^{4/3} \newg_{ab} \newu^a \newu^b\big]^{1/2}} 
			\newg_{ef} \newu^j \newu^e \newu^c \partial_c \newu^f \notag \\
	& + t^{1/3} \frac{\big[1 + t^{4/3} \newlap\big] \newu^c \partial_c \newu^j}{\big[1 + t^{4/3} \newg_{ab} \newu^a \newu^b\big]^{1/2}} 
		\notag \\
	& + t^{-1} \frac{\big[1 + t^{4/3} \newlap\big] \left\lbrace (\newg^{-1})^{jc} + t^{4/3} \newu^j \newu^c \right\rbrace \partial_c 
			\adjustednewp}{2\big[1 + t^{4/3} \newg_{ab} \newu^a \newu^b\big]^{1/2} 
			\big[\adjustednewp + \frac{1}{3} \big]} \notag \\
	& = - t^{-1/3} (\newg^{-1})^{j a} \dlap_a 
		+ t^{-1} \leftexp{(Border)}{\mathfrak{U}}^j + t^{1/3} \leftexp{(Junk)}{\mathfrak{U}}^j, \notag 
\end{align}
\end{subequations}
where the error terms $\leftexp{(Junk)}{\mathfrak{P}},$ $\leftexp{(Border)}{\mathfrak{U}}^j,$ and $\leftexp{(Junk)}{\mathfrak{U}}^j$ 
are defined by
\begin{subequations}
\begin{align}
	\leftexp{(Junk)}{\mathfrak{P}} & : = - 2 \Psi \adjustednewp 
		\label{AE:PERROR} \\
	& \ \ - \frac{4}{9} \big[1 + t^{4/3} \newlap\big] \newg_{ab} \newu^a \newu^b 
		 \notag \\
	& \ \ - \frac{4}{3} \big[1 + t^{4/3} \newlap\big] \adjustednewp \newg_{ab}\newu^a \newu^b
		\notag \\
	& \ \ - \frac{2}{3} \big[1 + t^{4/3} \newlap\big]
		 \freenewsec_{\ b}^a  \newg_{ac} \newu^b \newu^c 
		 \notag \\
	& \ \ - 2 \big[1 + t^{4/3} \newlap\big] \adjustednewp 
		\freenewsec_{\ b}^a \newg_{ac} \newu^b \newu^c 
		 \notag \\
	& \ \ - \frac{1}{3}  \big[1 + t^{4/3} \newlap\big] \big[1 + t^{4/3} \newg_{ab} \newu^a \newu^b\big]^{1/2} 
			\upgamma_{f \ e}^{\ e} \newu^f 
			\notag \\
	& \ \ - \big[1 + t^{4/3} \newlap\big] \big[1 + t^{4/3} \newg_{ab} \newu^a \newu^b\big]^{1/2} 
			\adjustednewp \upgamma_{f \ e}^{\ e} \newu^f \notag \\
	& \ \ + \frac{1}{3} t^{4/3} \frac{\big[1 + t^{4/3} \newlap\big]  \newg_{ij}
		\left(\upgamma_{e \ f}^{\ i} + \upgamma_{f \ e}^{\ i} - (\newg^{-1})^{ic} \newg_{d e} \upgamma_{c \ f}^{\ d}\right)
			\newu^e \newu^f \newu^j}{\big[1 + t^{4/3} \newg_{ab} \newu^a \newu^b\big]^{1/2}} 
			\notag \\
	& \ \ + t^{4/3} \frac{\big[1 + t^{4/3} \newlap\big] \adjustednewp \newg_{ij}
			\left(\upgamma_{e \ f}^{\ i} + \upgamma_{f \ e}^{\ i} - (\newg^{-1})^{ic} \newg_{d e} \upgamma_{c \ f}^{\ d}\right) 
				\newu^e \newu^f \newu^j}{\big[1 + t^{4/3} \newg_{ab} \newu^a \newu^b \big]^{1/2}},
			\notag  
	\end{align}
	\begin{align}
	\leftexp{(Border)}{\mathfrak{U}}^j 
	& := 2 \freenewsec_{\ a}^j \newu^a,  \label{AE:UJBORDERLINE}
\end{align}
\begin{align}
	\leftexp{(Junk)}{\mathfrak{U}}^j 
	& := \frac{1}{3} \newlap \newu^j 
		\label{AE:UJJUNK}  \\
	& \ \ + \frac{2}{3} \big[1 + t^{4/3} \newlap\big] \newg_{ab} \newu^a \newu^b \newu^j 
		\notag \\
	& \ \ + 2 \newlap \freenewsec_{\ a}^j \newu^a
		\notag \\
	& \ \ + t^{-2/3} \big\lbrace 1 - \big[1 + t^{4/3} \newg_{ab} \newu^a \newu^b\big]^{1/2} \big\rbrace (\newg^{-1})^{jc} \dlap_c
		\notag \\
	& \ \ + \big[1 + t^{4/3} \newlap\big] 
		\freenewsec_{\ a}^b  \newg_{bc} \newu^a \newu^c \newu^j \notag \\
	& \ \ - \frac{1}{2} \frac{\big[1 + t^{4/3} \newlap\big] \left(\upgamma_{e \ f}^{\ j} + \upgamma_{f \ e}^{\ j}
		 	- (\newg^{-1})^{jc} \newg_{d e} \upgamma_{c \ f}^{\ d}\right) \newu^e \newu^f}
		 {\big[1 + t^{4/3} \newg_{ab} \newu^a \newu^b\big]^{1/2}} \notag \\
	& \ \ + \frac{1}{2} \big[1 + t^{4/3} \newlap\big] \big[1 + t^{4/3} \newg_{ab} \newu^a \newu^b\big]^{1/2} 
		\upgamma_{f \ e}^{\ e} \newu^f \newu^j \notag \\
	& \ \ - \frac{1}{2} t^{4/3} \frac{\big[1 + t^{4/3} \newlap\big] \newg_{dc} 
		\left(\upgamma_{e \ f}^{\ d} + \upgamma_{f \ e}^{\ d} - (\newg^{-1})^{dl} \newg_{me} \upgamma_{l \ f}^{\ m} \right) 
			\newu^c \newu^e \newu^f \newu^j}
			{\big[1 + t^{4/3} \newg_{ab} \newu^a \newu^b\big]^{1/2}}. \notag 
\end{align}
\end{subequations}

\end{proposition}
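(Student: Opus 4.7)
The plan is to prove Proposition \ref{AP:EULERALT} by direct substitution of the renormalized variables of Definition \ref{D:RESCALEDVAR} into the stiff-fluid equations \eqref{AE:EULERPCMC}--\eqref{AE:EULERUCMC} and algebraic inversion of the resulting linear system in the time derivatives $(\partial_t p, \partial_t u^j),$ so that each equation takes the form ``$\partial_t$ of a normalized variable equals spatial derivatives plus lower-order terms.'' A convenient intermediate form for the pressure equation, which isolates $\partial_t p$ with coefficient $1,$ is
\begin{align*}
	\partial_t p = -\tfrac{2pn}{W}\Dfour_\alpha \ufour^\alpha - \tfrac{nu^a}{W}\partial_a p,
\end{align*}
where $W := (1+u_au^a)^{1/2},$ and which follows from \eqref{AE:EULERP} by rewriting $\ufour^\alpha\partial_\alpha p = (W/n)\partial_t p + u^a\partial_a p$ and dividing through by $W/n;$ here $\Dfour_\alpha\ufour^\alpha$ is given by \eqref{AE:DIVUID}. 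The isolated form for $\partial_t u^j$ is then obtained from \eqref{AE:EULERUCMC} by dividing by $2pW$ and using the just-derived form for $\partial_t p,$ along with the identities $u_j(g^{ja}+u^ju^a) = W^2 u^a$ and $u_j u^j = W^2 - 1.$

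The key substitutions are $u_i = t\newg_{ia}\newu^a,$ $u_au^a = t^{4/3}\newg_{ab}\newu^a\newu^b = W^2 - 1,$ $\partial_t p = -2t^{-3}(\adjustednewp + \tfrac{1}{3}) + t^{-2}\partial_t\adjustednewp,$ $\partial_t u^b = \tfrac{1}{3}t^{-2/3}\newu^b + t^{1/3}\partial_t\newu^b,$ $n = 1 + t^{4/3}\newlap,$ and $\SecondFund_{\ j}^i = t^{-1}\freenewsec_{\ j}^i - \tfrac{1}{3}t^{-1}\ID_{\ j}^i,$ together with the expansion $\nabla_a u^a = \partial_a u^a + \Gamma_{a\ b}^{\ a}u^b$ and Lemma \ref{L:CHRISTOFFELIDENTITIES} (in particular $\Gamma_{a\ b}^{\ a} = \tfrac{1}{2}\upgamma_{b\ c}^{\ c}$ and the decomposition of $\Gamma_{a\ b}^{\ j}$ via \eqref{E:BIGGAMMALITTLEGAMMARELATION}). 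For the pressure equation, after multiplying the isolated form for $\partial_t p$ by $t^2,$ the principal $-\tfrac{2}{3}t^{1/3}\newlap$ term on the right-hand side of \eqref{AE:RESCALEDPEVOLUTION} arises from the cancellation of the $-2t^{-3}(\adjustednewp + \tfrac{1}{3})$ pieces coming from $\partial_t p$ and from the $-\tfrac{2pn}{W}\cdot(-\SecondFund_{\ a}^a W) = -\tfrac{2pn}{t}$ contribution, leaving the residue $-2t^{-5/3}\newlap(\adjustednewp + \tfrac{1}{3});$ this splits as the principal $-\tfrac{2}{3}t^{1/3}\newlap$ plus $-2t^{1/3}\newlap\adjustednewp,$ matching the first term of $\leftexp{(Junk)}{\mathfrak{P}}.$ To convert the $\partial_t\newu^b$ factor appearing through $-\tfrac{2p}{W^2}u_b\partial_t u^b$ into spatial derivatives, I substitute the already-derived form for $\partial_t\newu^b$ from \eqref{AE:RESCALEDUEVOLUTION}; the three resulting spatial-derivative contributions combine with the direct $-\tfrac{2pn}{W}\nabla_a u^a$ piece so that, after applying $t^{4/3}\newg_{ab}\newu^a\newu^b = W^2 - 1$ to collapse $(W^2-1)+1 = W^2,$ the net divergence coefficient becomes exactly $2t^{1/3}[1+t^{4/3}\newlap]W(\adjustednewp + \tfrac{1}{3})$ and the cubic term takes the form $-2t^{5/3}\tfrac{[1+t^{4/3}\newlap](\adjustednewp + \tfrac{1}{3})}{W}\newg_{ef}\newu^e\newu^c\partial_c\newu^f,$ as required by \eqref{AE:RESCALEDPEVOLUTION}.

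For the velocity equation, the distinguished principal term $-t^{-1/3}(\newg^{-1})^{ja}\dlap_a$ on the right-hand side of \eqref{AE:RESCALEDUEVOLUTION} arises from the $-2p(1+u_au^a)g^{jb}\nabla_b n$ source using $g^{jb} = t^{-2/3}(\newg^{-1})^{jb}$ and $\partial_b n = t^{2/3}\dlap_b,$ after dividing by the coefficient $2pW$ of $\partial_t u^j$ and Taylor-expanding $W$ against the $\partial_a p$ terms. The border contribution $2\freenewsec_{\ a}^j\newu^a$ in $\leftexp{(Border)}{\mathfrak{U}}^j$ comes from the trace-free part of $\SecondFund_{\ b}^j$ in the source $4npW\SecondFund_{\ b}^j u^b;$ the trace part $-\tfrac{1}{3}\ID_{\ b}^j u^b$ combines with the $\tfrac{1}{3}t^{-2/3}\newu^j$ piece of $\partial_t u^j$ and with the $u^j n/t$ contribution from $-\tfrac{u^j\partial_t p}{2p}$ to precisely cancel the bare $t^{-1}\newu^j$ behavior, leaving the $\tfrac{1}{3}\newlap\newu^j$ term of $\leftexp{(Junk)}{\mathfrak{U}}^j.$ The remaining pieces of \eqref{AE:PERROR}, \eqref{AE:UJBORDERLINE}, \eqref{AE:UJJUNK} are populated by the subleading contributions in the various Taylor expansions.

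The main obstacle is purely combinatorial: each algebraic manipulation produces many distinct subterms of differing $t$-weights, and matching them against the explicit lists in \eqref{AE:PERROR}, \eqref{AE:UJBORDERLINE}, \eqref{AE:UJJUNK} is tedious. A systematic strategy is to write out each fully-substituted equation without simplification, peel off the principal terms one at a time, and define the border and junk terms as what remains; each factor of $W,$ $W^{-1},$ $n,$ or $(\adjustednewp + \tfrac{1}{3})^{\pm 1}$ must be expanded with its $O(t^{4/3})$ corrections carefully tracked to the correct junk slot, and the identity $t^{4/3}\newg_{ab}\newu^a\newu^b = W^2 - 1$ is used repeatedly to convert between polynomial-in-$\newg\newu\newu$ and $W$-rational forms.
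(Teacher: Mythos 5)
Your proposal and the paper both reduce to inverting the $2\times 2$ linear system in $(\partial_t p, \partial_t u^j)$, but you carry out the elimination in the opposite order, and the intermediate forms are genuinely different. The paper first contracts the renormalized Euler equation \eqref{AE:EULERUCMC} against $u_j$ to produce the auxiliary identity \eqref{AE:EULERCONTRACTEDID}, which expresses $(1+u_au^a)^{-1/2}u_b\partial_t u^b$ in terms of $\partial_t\ln p$ plus spatial derivatives; inserting this back into \eqref{AE:EULERPCMC} and multiplying by $(1+u_au^a)^{1/2}$ yields \eqref{AE:EULERFIRST}, a pure evolution equation for $\partial_t p$. The key cancellations --- the disappearance of $u^a\nabla_a p$ and of $u^a\nabla_a n$ from the pressure equation --- then happen transparently, one term at a time, already at \eqref{AE:EULERFIRST}, because the $-\tfrac{1}{2}nu^a\nabla_a\ln p$ and $-u^a\nabla_a n$ contributions from \eqref{AE:EULERCONTRACTEDID} are visible from the start. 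Only afterward does the paper eliminate $\partial_t p$ from \eqref{AE:EULERUCMC}. You instead begin from the covariant continuity equation \eqref{AE:EULERP} with \eqref{AE:DIVUID}, so that $\partial_t p$ is expressed in terms of $u_b\partial_t u^b$; this is precisely the reverse substitution.

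There are two points you leave implicit that should be made explicit. First, your ``isolated form for $\partial_t u^j$'' obtained by dividing \eqref{AE:EULERUCMC} by $2p(1+u_au^a)^{1/2}$ and inserting your $\partial_t p$ formula still contains $u_b\partial_t u^b$ through $\Dfour_\alpha\ufour^\alpha$; isolating $\partial_t u^j$ therefore still requires a contraction against $u_j$ and a back-substitution, which is the paper's step \eqref{AE:EULERCONTRACTEDID} in disguise. Second, your pressure-equation derivation back-substitutes the full renormalized $\partial_t\newu^b$ from \eqref{AE:RESCALEDUEVOLUTION} into $-\tfrac{2p}{W^2}u_b\partial_t u^b$ (where $W := (1+u_au^a)^{1/2}$), and the result must be shown to kill the residual $-\tfrac{nu^a\partial_a p}{W}$ and $-\tfrac{2pu^a\nabla_a n}{W}$ contributions; moreover, the $\partial_c\newu^c$ piece of the back-substitution must recombine with the direct $\nabla_a u^a$ contribution via $1 + (W^2 - 1) = W^2$ to produce the exact coefficient $2t^{1/3}\big[1+t^{4/3}\newlap\big]W\big[\adjustednewp+\tfrac{1}{3}\big]$ in \eqref{AE:RESCALEDPEVOLUTION}, and the various $\upgamma$-terms it introduces from $\leftexp{(Junk)}{\mathfrak{U}}^b$ must recombine with those from $\nabla_a u^a$ and $u^eu_f\nabla_e u^f$ into the listed form of $\leftexp{(Junk)}{\mathfrak{P}}$. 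These cancellations are forced by the underlying linear algebra and do work out, but you verify only the $\partial_c\adjustednewp$ one heuristically and do not track the $\dlap_c$ or $\upgamma$ recombination. Both routes reach the same destination; the paper's ordering just makes the decisive cancellations visible at the level of \eqref{AE:EULERFIRST} rather than burying them inside a circular back-substitution.
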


\begin{proof}
We first contract \eqref{AE:EULERUCMC} against $u_j$ and multiply by $p^{-1}(1 + u_a u^a)^{-1}$ to deduce
\begin{align} \label{AE:EULERCONTRACTEDID}
		 (1 + u_a u^a)^{-1/2} u_b \partial_t u^b   
		& = - n (1 + u_au^a)^{-1} u^e u_f \nabla_e u^f
			- \frac{1}{2} (1 + u_a u^a)^{-1/2} u_b u^b \partial_t \ln p
			- \frac{1}{2} n u^a \nabla_a \ln p  \\
		& \ \ + 2 n (1 + u_a u^a)^{-1/2} \SecondFund_{\ f}^e u_e u^f 
			- u^a \nabla_a n. \notag
\end{align}
We then substitute $2p \times$ the right-hand side of \eqref{AE:EULERCONTRACTEDID} for the first product in braces on the left-hand
side of \eqref{AE:EULERPCMC} and multiply the resulting equation by $(1 + u_a u^a)^{1/2}$ to deduce
\begin{align} \label{AE:EULERFIRST}
		\partial_t p 
		& - 2np (1 + u_a u^a)^{-1/2} u^e u_f \nabla_e u^f
			+ 2np (1 + u_a u^a)^{1/2} \nabla_b u^b  \\
		& = - 2 \frac{1}{t} p
			- 2 \frac{(n-1)}{t} p
			- 2 \frac{n}{t} p u_a u^a 
			- 2 n p \SecondFund_{\ b}^a u_a u^b. \notag
\end{align}

To eliminate the first term on the right-hand side of \eqref{AE:EULERFIRST}, we multiply both sides of
\eqref{AE:EULERFIRST} by $t^2$ and use the identity 
$t^2 \partial_t p = \partial_t\Big[t^2 p - \frac{1}{3} \Big] - 2tp,$ 
thereby arriving at the following equation:
\begin{align} \label{AE:EULERSECOND}
	\partial_t \Big[t^2 p - \frac{1}{3} \Big]
	& - 2n [t^2p] (1 + u_a u^a)^{-1/2} u^e u_f \nabla_e u^f
			+ 2n [t^2p] (1 + u_a u^a)^{1/2} \nabla_b u^b  \\
	& = - 2 \frac{(n-1)}{t} [t^2 p]
			- 2 \frac{n}{t} [t^2 p] u_a u^a
			- 2 n [t^2 p] \SecondFund_{\ b}^a u_a u^b. 
			\notag
\end{align}
Equation \eqref{AE:RESCALEDPEVOLUTION} now follows from expanding covariant derivatives in terms of partial derivatives and Christoffel symbols
in \eqref{AE:EULERSECOND}, substituting the renormalized variables of Def.~\ref{D:RESCALEDVAR} into
\eqref{AE:EULERSECOND}, using the relation \eqref{E:BIGGAMMALITTLEGAMMARELATION}, and carrying out tedious algebraic computations.

To derive \eqref{AE:RESCALEDUEVOLUTION}, we first use \eqref{AE:EULERFIRST} to substitute for $\partial_t p$ in \eqref{AE:EULERUCMC} and perform straightforward algebraic computations to deduce
 \begin{align} \label{AE:EULERUCMCFIRST}
	\partial_t u^j 
	& - n (1 + u_a u^a)^{1/2} u^j \nabla_b u^b
	+ n (1 + u_a u^a)^{-1/2} u^b \nabla_b u^j \\
	& \ \ + n (1 + u_a u^a)^{-1/2} u^j u^e u_f \nabla_e u^f
	+ \frac{1}{2} n (1 + u_a u^a)^{-1/2} (g^{ja} + u^j u^a) \nabla_a \ln p \notag \\
	& = \frac{1}{3} \frac{1}{t} u^j 
		+ \frac{1}{3} \frac{(n-1)}{t} u^j 
		+ \frac{2}{3} u^j \frac{n}{t} u_a u^a 
		\notag \\
	& \ \ + 2n \big[\SecondFund_{\ a}^j + \frac{1}{3} t^{-1} \ID_{\ a}^j \big] u^a
		+ n u^j \big[\SecondFund_{\ b}^a + \frac{1}{3} t^{-1} \ID_{\ b}^a \big] u_a u^b
		- (1 + u_a u^a)^{1/2} g^{jb} \nabla_b n. \notag
\end{align}
Multiplying \eqref{AE:EULERUCMCFIRST} by $t^{-1/3},$ using the identity 
$\partial_t \big[t^{-1/3} u^j \big] = t^{-1/3} \partial_t u^j - \frac{1}{3} t^{-1} \big[t^{-1/3} u^j \big]$
to eliminate the first term on the right-hand side of \eqref{AE:EULERUCMCFIRST}, and expanding covariant
derivatives in terms of coordinate derivatives and Christoffel symbols, we deduce
\begin{align} \label{AE:EULERUCMCSECOND}
	\partial_t \big[t^{-1/3} u^j \big] 
	& - n (1 + u_a u^a)^{1/2} \big[t^{-1/3} u^j \big] 
		\left( \partial_b u^b + \Gamma_{b \ c}^{\ b} u^c \right)
		 \\
	& \ \ + n (1 + u_a u^a)^{-1/2} u^b \left(\partial_b \big[t^{-1/3} u^j \big] + \Gamma_{b \ c}^{\ j} \big[t^{-1/3} u^c \big] \right)
		\notag \\
	& \ \ + n (1 + u_a u^a)^{-1/2} \big[t^{-1/3} u^j \big] u^e u_f  \left(\partial_e u^f + \Gamma_{e \ b}^{\ f} u^b \right) \notag \\
	& \ \ + \frac{1}{2} n (1 + u_a u^a)^{-1/2} \left(t^{-1/3} g^{ja} + \big[t^{-1/3} u^j \big] u^a \right) \partial_a \ln p \notag \\
	& = \frac{1}{3} \frac{(n-1)}{t} \big[t^{-1/3} u^j \big] 
		+ \frac{2}{3} \frac{n}{t} u_a u^a \big[t^{-1/3} u^j \big] \notag \\
	& \ \ + 2n \big[\SecondFund_{\ a}^j 
		+ \frac{1}{3} t^{-1} \ID_{\ a}^j \big] \big[t^{-1/3} u^a \big]
		+ n \big[\SecondFund_{\ b}^a + \frac{1}{3} t^{-1} \ID_{\ b}^a \big] u_a u^b \big[t^{-1/3} u^j \big]
		- t^{-1/3} (1 + u_a u^a)^{1/2} g^{jb} \partial_b n. \notag
\end{align}
Finally, we substitute the renormalized variables of Def.~\ref{D:RESCALEDVAR} into \eqref{AE:EULERUCMCSECOND}, use the relation \eqref{E:BIGGAMMALITTLEGAMMARELATION}, and perform tedious algebraic computations, thereby arriving at \eqref{AE:RESCALEDUEVOLUTION}.

\end{proof}

\section{The Commuted Renormalized Equations} \label{A:COMMUTED}

 \setcounter{theorem}{0}
   \setcounter{definition}{0}
   \setcounter{remark}{0}
   \setcounter{proposition}{0}
  
In this appendix we provide the full structure of the $\partial_{\vec{I}}-$commuted equations.
They can be derived in a straightforward
fashion by commuting the equations of Appendix \ref{A:RESCALEDEQNS} with the 
operator $\partial_{\vec{I}},$ and we therefore omit the proofs. Throughout we use
the commutator notation $[\cdot, \cdot]$ of Sect.~\ref{SS:COMMUTATORS}.

\subsection{The $\partial_{\vec{I}}-$commuted renormalized momentum constraint equations}
\label{SS:COMMUTEDMOMENTUMRENORM}
\begin{proposition} [\textbf{The $\partial_{\vec{I}}-$commuted renormalized momentum constraint equations}]
Given a solution to \eqref{AE:MOMENTUMCONSTRAINT} and \eqref{AE:MOMENTUMCONSTRAINTRAISED},
the corresponding $\partial_{\vec{I}}-$differentiated quantities
verify the following constraint equations:
\begin{subequations}
\begin{align}
	\partial_a \partial_{\vec{I}} \freenewsec_{\ i}^a 
	& = \frac{2}{3} \newg_{ia}\partial_{\vec{I}} \newu^a  
		+ \leftexp{(\vec{I});(Border)}{\mathfrak{M}}_i
		+ t^{4/3} \leftexp{(\vec{I});(Junk)}{\mathfrak{M}}_i, 
		\label{AE:MOMENTUMCONSTRAINTCOMMUTED} \\
	(\newg^{-1})^{ia} \partial_a \partial_{\vec{I}} \freenewsec_{\ i}^j 
	& = \frac{2}{3} \partial_{\vec{I}} \newu^j  
		+ \leftexp{(\vec{I});(Border)}{\widetilde{\mathfrak{M}}}^j
		+ t^{4/3} \leftexp{(\vec{I});(Junk)}{\widetilde{\mathfrak{M}}}^j, 
		\label{AE:RAISEDMOMENTUMCONSTRAINTCOMMUTED}
\end{align}
\end{subequations}
where the error terms
$\leftexp{(\vec{I});(Border)}{\mathfrak{M}}_i,$
$\leftexp{(\vec{I});(Junk)}{\mathfrak{M}}_i,$
$\leftexp{(\vec{I});(Border)}{\widetilde{\mathfrak{M}}}^j,$
and $\leftexp{(\vec{I});(Junk)}{\widetilde{\mathfrak{M}}}^j$
are defined by
\begin{subequations}
\begin{align}
	\leftexp{(\vec{I});(Border)}{\mathfrak{M}}_i & := \partial_{\vec{I}} \leftexp{(Border)}{\mathfrak{M}}_i
		+ \frac{2}{3} \Big[\partial_{\vec{I}} , \newg_{ia} \Big] \newu^a, 
		\label{AE:MOMENTUMCONSTRAINTCOMMUTEDERRORBORDERLINE} \\
	\leftexp{(\vec{I});(Junk)}{\mathfrak{M}}_i & := \partial_{\vec{I}} \leftexp{(Junk)}{\mathfrak{M}}_i,
		\label{AE:MOMENTUMCONSTRAINTCOMMUTEDERRORJUNK} \\
	\leftexp{(\vec{I});(Border)}{\widetilde{\mathfrak{M}}}^j
		& := \partial_{\vec{I}} \leftexp{(Border)}{\widetilde{\mathfrak{M}}}^j 
			- \Big[\partial_{\vec{I}}, (\newg^{-1})^{ia} \Big] 
			\partial_a \freenewsec_{\ i}^j, 	
		\label{AE:RAISEDMOMENTUMCONSTRAINTCOMMUTEDERRORBORDERLINE} \\
	\leftexp{(\vec{I});(Junk)}{\widetilde{\mathfrak{M}}}^j
	& := \partial_{\vec{I}} \leftexp{(Junk)}{\widetilde{\mathfrak{M}}}^j,
		\label{AE:RAISEDMOMENTUMCONSTRAINTCOMMUTEDERRORJUNK}
\end{align}
\end{subequations}
and $\leftexp{(Border)}{\mathfrak{M}}_i,$ $\leftexp{(Junk)}{\mathfrak{M}}_i,$ 
$\leftexp{(Border)}{\widetilde{\mathfrak{M}}}^j,$ and $\leftexp{(Junk)}{\widetilde{\mathfrak{M}}}^j$ are respectively defined in
\eqref{AE:MOMENTUMERRORBORDERLINE}, \eqref{AE:MOMENTUMERRORJUNK}, \eqref{AE:MOMENTUMALTERRORBORDERLINE},
and \eqref{AE:MOMENTUMALTERRORJUNK}.

\end{proposition}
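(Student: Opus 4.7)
The proof will be an essentially mechanical bookkeeping exercise, since both equations \eqref{AE:MOMENTUMCONSTRAINT} and \eqref{AE:MOMENTUMCONSTRAINTRAISED} are already in a form amenable to direct commutation with a pure spatial differential operator $\partial_{\vec{I}}.$ The plan is to apply $\partial_{\vec{I}}$ to each side of each equation, exploit the fact that the coordinate partial derivatives $\partial_{\vec{I}}$ and $\partial_a$ commute freely, and absorb the non-Leibniz contributions into the error terms via the definitions \eqref{AE:MOMENTUMCONSTRAINTCOMMUTEDERRORBORDERLINE}--\eqref{AE:RAISEDMOMENTUMCONSTRAINTCOMMUTEDERRORJUNK}.

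For the proof of \eqref{AE:MOMENTUMCONSTRAINTCOMMUTED}, I will first apply $\partial_{\vec{I}}$ to both sides of \eqref{AE:MOMENTUMCONSTRAINT}. On the left-hand side the identity $[\partial_{\vec{I}}, \partial_a] = 0$ immediately yields $\partial_a \partial_{\vec{I}} \freenewsec_{\ i}^a.$ On the right-hand side, I will rewrite the principal linear term using the Leibniz rule in the form $\partial_{\vec{I}}(\newg_{ia}\newu^a) = \newg_{ia} \partial_{\vec{I}} \newu^a + [\partial_{\vec{I}}, \newg_{ia}] \newu^a,$ which separates out the desired principal term $\frac{2}{3} \newg_{ia} \partial_{\vec{I}} \newu^a$ from the commutator contribution. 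Since $\partial_{\vec{I}}$ involves only spatial derivatives, it commutes with multiplication by the scalar $t^{4/3},$ so the remaining pieces $\partial_{\vec{I}} \leftexp{(Border)}{\mathfrak{M}}_i$ and $t^{4/3} \partial_{\vec{I}} \leftexp{(Junk)}{\mathfrak{M}}_i$ land in the right places. Regrouping the commutator $\frac{2}{3}[\partial_{\vec{I}}, \newg_{ia}]\newu^a$ with $\partial_{\vec{I}} \leftexp{(Border)}{\mathfrak{M}}_i$ then reproduces exactly the error term $\leftexp{(\vec{I});(Border)}{\mathfrak{M}}_i$ defined in \eqref{AE:MOMENTUMCONSTRAINTCOMMUTEDERRORBORDERLINE}.

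For the proof of \eqref{AE:RAISEDMOMENTUMCONSTRAINTCOMMUTED}, I will again apply $\partial_{\vec{I}}$ to both sides of \eqref{AE:MOMENTUMCONSTRAINTRAISED}, but now the structure of the left-hand side is different: the factor $(\newg^{-1})^{ia}$ sits outside the derivative $\partial_a \freenewsec_{\ i}^j.$ To arrive at the desired principal expression $(\newg^{-1})^{ia} \partial_a \partial_{\vec{I}} \freenewsec_{\ i}^j,$ I will pull $(\newg^{-1})^{ia}$ through $\partial_{\vec{I}}$ via
\begin{equation*}
\partial_{\vec{I}}\bigl[(\newg^{-1})^{ia} \partial_a \freenewsec_{\ i}^j\bigr]
= (\newg^{-1})^{ia} \partial_a \partial_{\vec{I}} \freenewsec_{\ i}^j
+ \bigl[\partial_{\vec{I}}, (\newg^{-1})^{ia}\bigr] \partial_a \freenewsec_{\ i}^j,
\end{equation*}
then move the commutator to the right-hand side (which accounts for the minus sign in the definition \eqref{AE:RAISEDMOMENTUMCONSTRAINTCOMMUTEDERRORBORDERLINE} of $\leftexp{(\vec{I});(Border)}{\widetilde{\mathfrak{M}}}^j$). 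On the right-hand side of \eqref{AE:MOMENTUMCONSTRAINTRAISED}, applying $\partial_{\vec{I}}$ directly produces $\frac{2}{3}\partial_{\vec{I}} \newu^j + \partial_{\vec{I}} \leftexp{(Border)}{\widetilde{\mathfrak{M}}}^j + t^{4/3} \partial_{\vec{I}} \leftexp{(Junk)}{\widetilde{\mathfrak{M}}}^j$ without any commutator corrections, since $\newu^j$ carries no metric factor to Leibniz against.

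There is no genuine analytical obstacle here; the entire content of the proposition is a definitional repackaging, and the verification amounts to matching two lines of commutator algebra against the formulas \eqref{AE:MOMENTUMCONSTRAINTCOMMUTEDERRORBORDERLINE}--\eqref{AE:RAISEDMOMENTUMCONSTRAINTCOMMUTEDERRORJUNK}. The only care required is to keep track of where the commutator $[\partial_{\vec{I}}, \newg_{ia}]\newu^a$ is added (the unraised case) versus where $[\partial_{\vec{I}}, (\newg^{-1})^{ia}]\partial_a \freenewsec_{\ i}^j$ is subtracted (the raised case), so that the signs match the stated definitions. This unified structure is exactly what later permits the pointwise $|\cdot|_{\newg}$ bounds of Proposition \ref{P:POINTWISEESTIMATES} to treat both border terms on the same footing.
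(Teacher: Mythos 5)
Your argument is correct and is precisely the straightforward commutator bookkeeping that the paper intends (the paper omits the proof as immediate, marking it $\qed$). The Leibniz splits $\partial_{\vec{I}}(\newg_{ia}\newu^a)=\newg_{ia}\partial_{\vec{I}}\newu^a+[\partial_{\vec{I}},\newg_{ia}]\newu^a$ and $\partial_{\vec{I}}\bigl[(\newg^{-1})^{ia}\partial_a\freenewsec_{\ i}^j\bigr]=(\newg^{-1})^{ia}\partial_a\partial_{\vec{I}}\freenewsec_{\ i}^j+[\partial_{\vec{I}},(\newg^{-1})^{ia}]\partial_a\freenewsec_{\ i}^j$, together with $[\partial_{\vec{I}},\partial_a]=0$ and the time-independence of $\partial_{\vec{I}}$, reproduce exactly the stated definitions, including the sign on the raised-index commutator.
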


\hfill $\qed$

\subsection{The $\partial_{\vec{I}}-$commuted renormalized lapse equations}
\label{SS:COMMUTEDLAPSERENORM}
\begin{proposition} [\textbf{The $\partial_{\vec{I}}-$commuted renormalized lapse equations}]
Given a solution to \eqref{AE:LAPSERESCALED} and \eqref{AE:LAPSERESCALEDLOWER},
the corresponding differentiated quantity $\partial_{\vec{I}} \newlap$ verifies both of the following elliptic PDEs:
\begin{subequations}
\begin{align} 
	\mathcal{L} \partial_{\vec{I}} \newlap
	& = 2 t^{-4/3} \partial_{\vec{I}} \adjustednewp 
		+ t^{-4/3} \leftexp{(\vec{I});(Border)}{\mathfrak{N}} 
		+ \leftexp{(\vec{I});(Junk)}{\mathfrak{N}}, 
		\label{AE:LAPSEICOMMUTED} \\
	\mathcal{L} \partial_{\vec{I}} \newlap
		& = \leftexp{(\vec{I});(Border)}{\widetilde{\mathfrak{N}}} 
			+ t^{2/3} \leftexp{(\vec{I});(Junk)}{\widetilde{\mathfrak{N}}}, 
		\label{AE:ALTERNATELAPSEICOMMUTEDLOWER}
\end{align}
\end{subequations}
where $\mathcal{L}$ is the elliptic operator from Def.~\ref{D:LAPSEOP},
the error terms
$\leftexp{(\vec{I});(Border)}{\mathfrak{N}}$
and $\leftexp{(\vec{I});(Junk)}{\mathfrak{N}}$
are defined by
\begin{subequations}
\begin{align} 
	\leftexp{(\vec{I});(Border)}{\mathfrak{N}} 
	& := \partial_{\vec{I}} \leftexp{(Border)}{\mathfrak{N}},
		\label{AE:LAPSEICOMMUTEDINHOMOGENEOUSTERMBORDERLINE} \\
	\leftexp{(\vec{I});(Junk)}{\mathfrak{N}} 
	& := \partial_{\vec{I}} \leftexp{(Junk)}{\mathfrak{N}}
		- t^{2/3} \left[ \partial_{\vec{I}} , (\newg^{-1})^{ab} \right] \partial_a \dlap_b
		\label{AE:LAPSEICOMMUTEDINHOMOGENEOUSTERMJUNK}  \\
	& \ \ + 2 \left[ \partial_{\vec{I}} , \adjustednewp  \right] \newlap \notag \\
	& \ \ + \left[ \partial_{\vec{I}} , \freenewsec_{\ b}^a \freenewsec_{\ a}^b \right] \newlap 
		\notag \\
	& \ \ + 2 t^{4/3} \left[ \partial_{\vec{I}} , 
			\adjustednewp \newg_{ab} \newu^a \newu^b \right] \newlap
		+ \frac{2}{3} t^{4/3} \left[ \partial_{\vec{I}} , \newg_{ab} \newu^a \newu^b \right] \newlap,
		\notag
\end{align}
the error terms $\leftexp{(\vec{I});(Border)}{\widetilde{\mathfrak{N}}}$
and $\leftexp{(\vec{I});(Junk)}{\widetilde{\mathfrak{N}}}$
are defined by
\begin{align} 
	\leftexp{(\vec{I});(Border)}{\widetilde{\mathfrak{N}}} 
	& := \partial_{\vec{I}} \leftexp{(Border)}{\widetilde{\mathfrak{N}}},
		\label{AE:LAPSETILDEICOMMUTEDINHOMOGENEOUSTERMBORDERLINE} \\
	\leftexp{(\vec{I});(Junk)}{\widetilde{\mathfrak{N}}} 
	& := \partial_{\vec{I}} \leftexp{(Junk)}{\widetilde{\mathfrak{N}}}
			- \left[ \partial_{\vec{I}} , (\newg^{-1})^{ab} \right] \partial_a \dlap_b
		 	\label{AE:LAPSETILDEICOMMUTEDINHOMOGENEOUSTERMJUNK}  \\
		& \ \ - \frac{1}{2} t^{2/3} \left[ \partial_{\vec{I}} , 
				(\newg^{-1})^{ef} \partial_e \upgamma_{f \ a}^{\ a} \right] \newlap
		+ t^{2/3} \left[ \partial_{\vec{I}} , (\newg^{-1})^{ab} \partial_a \Gamma_b \right] \newlap
			\notag \\
		& \ \ + t^{2/3} \left[ \partial_{\vec{I}} , \leftexp{(Ricci)}{\triangle}_{\ a}^a \right] \newlap 
			- 2 t^{2/3} \left[ \partial_{\vec{I}} , \adjustednewp \newg_{ab} \newu^a \newu^b  \right] 
			\newlap \notag \\
		& \ \ - \frac{2}{3} t^{2/3} \left[ \partial_{\vec{I}}, \newg_{ab} \newu^a \newu^b \right] \newlap, \notag
\end{align}
\end{subequations}
$\leftexp{(Border)}{\mathfrak{N}},$ $\leftexp{(Junk)}{\mathfrak{N}},$ $\leftexp{(Border)}{\widetilde{\mathfrak{N}}},$
and $\leftexp{(Junk)}{\widetilde{\mathfrak{N}}}$ are
respectively defined in \eqref{AE:LAPSENBORDERLINE}, \eqref{AE:LAPSENJUNK},
\eqref{AE:LAPSENTILDEBORDERLINE}, and \eqref{AE:LAPSENTILDEJUNK}, and the Ricci error term
$\leftexp{(Ricci)}{\triangle}_{\ a}^a$ is defined in \eqref{E:RICCIERROR}.

\end{proposition}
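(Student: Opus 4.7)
\medskip

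The proposition to be proved is essentially a bookkeeping exercise: one must commute $\partial_{\vec I}$ through the two forms of the lapse PDE \eqref{AE:LAPSERESCALED}--\eqref{AE:LAPSERESCALEDLOWER} and verify that the resulting inhomogeneous terms, when arranged in the format of \eqref{AE:LAPSEICOMMUTEDINHOMOGENEOUSTERMBORDERLINE}--\eqref{AE:LAPSETILDEICOMMUTEDINHOMOGENEOUSTERMJUNK}, reproduce the claimed identity. The strategy is to apply $\partial_{\vec I}$ to both sides of each equation and then use the basic commutator identity
\begin{align*}
\partial_{\vec I}(\mathcal{L}\newlap) \;=\; \mathcal{L}(\partial_{\vec I}\newlap) \;+\; [\partial_{\vec I},\mathcal{L}]\newlap
\end{align*}
to move the leading elliptic operator past the spatial multi-derivative.

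For \eqref{AE:LAPSEICOMMUTED}, I would use the form \eqref{E:LDEF} of $\mathcal{L}$, namely $\mathcal{L} = t^{4/3}(\newg^{-1})^{ab}\partial_a\partial_b - (1+f)$ with $f$ defined in \eqref{E:ELLIPTICOPERATORJUNKTERM}. Since $\partial_{\vec I}$ commutes with the constant-coefficient $\partial_a\partial_b$, the only nontrivial commutator contributions come from $[\partial_{\vec I},(\newg^{-1})^{ab}]\partial_a\partial_b$ and from $[\partial_{\vec I},f]$. Converting $t^{4/3}\partial_a\partial_b\newlap = t^{2/3}\partial_a\dlap_b$ via the definition $\dlap_b = t^{2/3}\partial_b\newlap$ accounts for the term $-t^{2/3}\left[\partial_{\vec I},(\newg^{-1})^{ab}\right]\partial_a\dlap_b$ in \eqref{AE:LAPSEICOMMUTEDINHOMOGENEOUSTERMJUNK}. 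Expanding $[\partial_{\vec I},f]\newlap$ term by term using the four summands of $f$ then reproduces the last four lines of \eqref{AE:LAPSEICOMMUTEDINHOMOGENEOUSTERMJUNK}, while the borderline term $t^{-4/3}\partial_{\vec I}\leftexp{(Border)}{\mathfrak{N}}$ and the principal term $2t^{-4/3}\partial_{\vec I}\adjustednewp$ simply come from applying $\partial_{\vec I}$ to the right-hand side of \eqref{AE:LAPSERESCALED}. The definition \eqref{AE:LAPSEICOMMUTEDINHOMOGENEOUSTERMBORDERLINE} of $\leftexp{(\vec I);(Border)}{\mathfrak{N}}$ is then automatic.

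The proof of \eqref{AE:ALTERNATELAPSEICOMMUTEDLOWER} proceeds identically but with $\mathcal{L}$ written in the alternative form \eqref{E:LWITHFTILDEDEF}: $\mathcal{L} = t^{4/3}(\newg^{-1})^{ab}\partial_a\partial_b - (1+\widetilde f)$, where $\widetilde f$ is given by \eqref{E:TILDEELLIPTICOPERATORJUNKTERM}. Commuting $\partial_{\vec I}$ past this representation produces the commutator $-\left[\partial_{\vec I},(\newg^{-1})^{ab}\right]\partial_a\dlap_b$ (from the second-order piece, after converting the factor of $t^{4/3}$ through $\dlap$) together with $[\partial_{\vec I},\widetilde f]\newlap$, whose four constituent commutators match the remaining terms in \eqref{AE:LAPSETILDEICOMMUTEDINHOMOGENEOUSTERMJUNK}. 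The factor $t^{2/3}$ that multiplies the whole $\leftexp{(\vec I);(Junk)}{\widetilde{\mathfrak{N}}}$ in \eqref{AE:ALTERNATELAPSEICOMMUTEDLOWER} is inherited from the scaling of $\leftexp{(Junk)}{\widetilde{\mathfrak{N}}}$ in \eqref{AE:LAPSERESCALEDLOWER} together with the explicit $t^{2/3}$ that appears when the $t^{4/3}$ from $\mathcal{L}$ is split against the commutator.

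There is no substantive obstacle: every step is a direct application of the Leibniz rule to polynomial expressions in the renormalized variables, and the only care required is in bookkeeping the signs and the $t$-weights so that the commutator remainders are absorbed into $\leftexp{(\vec I);(Junk)}{\mathfrak{N}}$ and $\leftexp{(\vec I);(Junk)}{\widetilde{\mathfrak{N}}}$ exactly as written. For this reason I would present the proof as a one-sentence remark of the form ``the identities \eqref{AE:LAPSEICOMMUTED} and \eqref{AE:ALTERNATELAPSEICOMMUTEDLOWER} follow from applying $\partial_{\vec I}$ to \eqref{AE:LAPSERESCALED} and \eqref{AE:LAPSERESCALEDLOWER} respectively, and from the commutator identity $\partial_{\vec I}\circ\mathcal{L} = \mathcal{L}\circ\partial_{\vec I} + [\partial_{\vec I},\mathcal{L}]$ applied with $\mathcal{L}$ in the forms \eqref{E:LDEF} and \eqref{E:LWITHFTILDEDEF}, respectively,'' and defer the bookkeeping to the appendix, exactly as the authors have done by stating the identities without proof.
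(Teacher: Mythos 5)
Your proposal is correct and follows exactly the route the paper intends (the paper itself omits the proof, noting at the start of Appendix~\ref{A:COMMUTED} that the commuted equations "can be derived in a straightforward fashion by commuting with $\partial_{\vec I}$"): apply $\partial_{\vec I}$ to \eqref{AE:LAPSERESCALED} and \eqref{AE:LAPSERESCALEDLOWER}, expand $[\partial_{\vec I},\mathcal{L}]$ with $\mathcal{L}$ in the form \eqref{E:LDEF} and \eqref{E:LWITHFTILDEDEF} respectively, convert $t^{4/3}\partial_a\partial_b \newlap$ to $t^{2/3}\partial_a\dlap_b$, and collect the commutators of $f$ (resp.\ $\widetilde f$) with $\partial_{\vec I}$. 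The only slip is a harmless miscount — $\widetilde f$ has five summands, not four, so there are five commutators $[\partial_{\vec I},\cdot]\newlap$ in \eqref{AE:LAPSETILDEICOMMUTEDINHOMOGENEOUSTERMJUNK}, and the four $f$-commutators in \eqref{AE:LAPSEICOMMUTEDINHOMOGENEOUSTERMJUNK} span three displayed lines rather than four — but the derivation itself is sound.
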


\hfill $\qed$

\subsection{The $\partial_{\vec{I}}-$commuted renormalized evolution equations}
\label{SS:COMMUTEDEVOLUTIONRENORM}
\subsubsection{The $\partial_{\vec{I}}-$commuted volume form factor evolution equation}
\label{SSS:COMMUTEDVOLFORMRENORM}
\begin{lemma} [\textbf{The $\partial_{\vec{I}}-$commuted renormalized volume form factor equation}]
Given a solution to \eqref{AE:LOGVOLFORMEVOLUTION}, the corresponding differentiated quantity $\partial_{\vec{I}} \ln \left(\sqrt{\gbigdet}\right)$ verifies the following evolution equation:
\begin{align} \label{AE:LOGVOLFORMEVOLUTIONCOMMUTED}
	\partial_t \partial_{\vec{I}} \ln \left(\sqrt{\gbigdet}\right)  
		& =	t^{1/3} \partial_{\vec{I}} \newlap.
\end{align}

\end{lemma}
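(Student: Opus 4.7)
The plan is to derive \eqref{AE:LOGVOLFORMEVOLUTIONCOMMUTED} by a direct application of the spatial multi-index operator $\partial_{\vec{I}}$ to the uncommuted volume form evolution equation \eqref{AE:LOGVOLFORMEVOLUTION}. This is the simplest of all the commuted equations because the right-hand side of \eqref{AE:LOGVOLFORMEVOLUTION} is linear in $\newlap$ with a coefficient $t^{1/3}$ that depends only on the time coordinate. Thus no commutator error terms are generated, which is why the statement has no inhomogeneous $\leftexp{(\vec{I});(Junk)}{\cdot}$ or $\leftexp{(\vec{I});(Border)}{\cdot}$ pieces analogous to those appearing in the other commuted equations.

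Specifically, I would start from the identity $\partial_t \ln \sqrt{\gbigdet} = t^{1/3} \newlap$ given in \eqref{AE:LOGVOLFORMEVOLUTION} and apply $\partial_{\vec{I}}$ to both sides. Since $\partial_{\vec{I}}$ is a product of spatial coordinate partial derivatives $\partial_1^{n_1} \partial_2^{n_2} \partial_3^{n_3}$ (see the notation introduced in Sect.~\ref{SS:COORDINATES}) and $\partial_t = \partial_0$ is a partial derivative with respect to an independent coordinate, the standard equality of mixed partials yields $[\partial_{\vec{I}}, \partial_t] = 0$ acting on any sufficiently smooth scalar function. Hence the left-hand side becomes $\partial_t \partial_{\vec{I}} \ln \sqrt{\gbigdet}$.

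For the right-hand side, since $t^{1/3}$ is a function of $t$ alone and $\partial_{\vec{I}}$ differentiates only in the spatial coordinates $x^1, x^2, x^3$, the factor $t^{1/3}$ commutes out of $\partial_{\vec{I}}$ without generating any commutator. This produces $t^{1/3} \partial_{\vec{I}} \newlap$, yielding the desired equation \eqref{AE:LOGVOLFORMEVOLUTIONCOMMUTED}. The regularity required to justify these commutations is supplied by the local well-posedness theorem (Theorem~\ref{T:LOCAL}), which guarantees that the solution components, and hence $\sqrt{\gbigdet}$ and $\newlap$, are sufficiently smooth for the manipulations above.

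There is essentially no obstacle in this proof; the lemma is merely recording the obvious fact that commuting with $\partial_{\vec{I}}$ is trivial when the evolution equation has a spatially constant coefficient and is linear in the variable being differentiated. The statement is included for completeness and to maintain a uniform structure with the other commuted equations in Appendix \ref{A:COMMUTED}, where genuine commutator error terms must be tracked.
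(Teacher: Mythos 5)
Your proof is correct and is precisely the argument the paper has in mind; since the coefficient $t^{1/3}$ is spatially constant and the equation is linear in $\newlap$, applying $\partial_{\vec{I}}$ commutes through trivially, which is why the paper states this lemma without proof (the $\qed$ with no argument). No further comment is needed.
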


\hfill $\qed$

\subsubsection{The $\partial_{\vec{I}}-$commuted renormalized metric evolution equations}
\label{SSS:COMMUTEDMETRICRENORM}
\begin{proposition} [\textbf{The $\partial_{\vec{I}}-$commuted renormalized metric evolution equations}]
Given a solution to \eqref{AE:GEVOLUTION}-\eqref{AE:GINVERSEEVOLUTION},
the corresponding differentiated quantities $\partial_{\vec{I}} \newg_{ij}$ and $\partial_{\vec{I}} (\newg^{-1})^{ij}$ 
verify the following evolution equations:
\begin{subequations}
\begin{align} 
	\partial_t \partial_{\vec{I}} \newg_{ij}
	& =  t^{-1} \leftexp{(\vec{I});(Border)}{\mathfrak{G}}_{ij}
		+ t^{1/3} \leftexp{(\vec{I});(Junk)}{\mathfrak{G}}_{ij},
		\label{AE:GEVOLUTIONCOMMUTED} \\
	\partial_t \partial_{\vec{I}} (\newg^{-1})^{ij}
	& = t^{-1} \leftexp{(\vec{I});(Border)}{\widetilde{\mathfrak{G}}}^{ij}
		+ t^{1/3} \leftexp{(\vec{I});(Junk)}{\widetilde{\mathfrak{G}}}^{ij},
		\label{AE:GINVERSEEVOLUTIONCOMMUTED}
\end{align}
\end{subequations}
where the error terms $\leftexp{(\vec{I});(Border)}{\mathfrak{G}}_{ij},$ $\leftexp{(\vec{I});(Junk)}{\mathfrak{G}}_{ij}$
$\leftexp{(\vec{I});(Border)}{\widetilde{\mathfrak{G}}}^{ij},$ and $\leftexp{(\vec{I});(Junk)}{\widetilde{\mathfrak{G}}}^{ij}$
are defined by
\begin{subequations}
\begin{align}
	\leftexp{(\vec{I});(Border)}{\mathfrak{G}}_{ij} 
	& := - 2 \partial_{\vec{I}} \left\lbrace \newg_{ia} \freenewsec_{\ j}^a \right\rbrace, 
		\label{E:METRICEVOLUTIONBORDERCOMMUTED} \\
	\leftexp{(\vec{I});(Junk)}{\mathfrak{G}}_{ij} 
	& :=  \partial_{\vec{I}} \leftexp{(Junk)}{\mathfrak{G}}_{ij},
		\label{E:METRICEVOLUTIONJUNKCOMMUTED} \\
	\leftexp{(\vec{I});(Border)}{\widetilde{\mathfrak{G}}}^{ij} 
	& := 2 \partial_{\vec{I}} \left\lbrace(\newg^{-1})^{ia} \freenewsec_{\ a}^j \right\rbrace,
		\label{E:INVERSEMETRICEVOLUTIONBORDERCOMMUTED} \\
	\leftexp{(\vec{I});(Junk)}{\widetilde{\mathfrak{G}}}^{ij} 
	& := \partial_{\vec{I}}\leftexp{(Junk)}{\widetilde{\mathfrak{G}}}^{ij},
		\label{E:INVERSEMETRICEVOLUTIONJUNKCOMMUTED}
\end{align}
\end{subequations}
and the error terms $\leftexp{(Junk)}{\mathfrak{G}}_{ij}$ and $\leftexp{(Junk)}{\widetilde{\mathfrak{G}}}^{ij}$ are 
respectively defined in \eqref{E:METRICEVOLUTIONJUNK} and \eqref{E:INVERSEMETRICEVOLUTIONJUNK}.

Furthermore, given a solution to \eqref{AE:LITTLEGAMMAEVOLUTIONDETGFORM}-\eqref{AE:KEVOLUTION},
the corresponding differentiated quantities $\partial_{\vec{I}} \upgamma_{e \ i}^{\ b}$ and 
$\partial_{\vec{I}} \freenewsec_{\ j}^i$ verify the following evolution equations:
\begin{subequations}
\begin{align}
	\partial_t \partial_{\vec{I}} \upgamma_{e \ i}^{\ b} & = 
		- 2 t^{-1} \big[1 + t^{4/3} \newlap\big]  
			\partial_e \partial_{\vec{I}} \freenewsec_{\ i}^b 
			\label{AE:METRICGAMMACOMMUTED} \\
	& \ \ + \frac{2}{3} t^{-1/3} \partial_{\vec{I}} \dlap_e \ID_{\ i}^b 
		\notag \\
	& \ \ +  t^{-1} \leftexp{(\vec{I});(Border)}{\mathfrak{g}_{e \ i}^{\ b}}
		+ t^{1/3} \leftexp{(\vec{I});(Junk)}{\mathfrak{g}_{e \ i}^{\ b}}, 
		\notag \\
	\partial_t \partial_{\vec{I}} \freenewsec_{\ j}^i
	& =  - \frac{1}{2} t^{1/3} \big[1 + t^{4/3} \newlap\big] (\newg^{-1})^{ef} \partial_e 
		\partial_{\vec{I}} \upgamma_{f \ j}^{\ i}
			\label{AE:SECONDFUNDCOMMUTED} \\
	& \ \ + \frac{1}{2} t^{1/3} \big[1 + t^{4/3} \newlap\big] 
		 (\newg^{-1})^{ia} \partial_a \leftexp{(\vec{I})}{\Gamma_j}  \notag \\
	& \ \  + \frac{1}{2} t^{1/3} \big[1 + t^{4/3} \newlap \big]  
		(\newg^{-1})^{ia}\partial_j \leftexp{(\vec{I})}{\Gamma_a} \notag \\
	& \ \ - t (\newg^{-1})^{ia} \partial_a \partial_{\vec{I}} \dlap_j
		+ \frac{1}{3}t^{1/3}  \partial_{\vec{I}} \newlap \ID_{\ j}^i 
		\notag \\
	& \ \ + t^{1/3} \leftexp{(\vec{I});(Junk)}{\mathfrak{K}}_{\ j}^i,
		\notag
\end{align}
\end{subequations}
where the error terms
$\leftexp{(\vec{I});(Border)}{\mathfrak{g}_{e \ i}^{\ b}},$
$\leftexp{(\vec{I});(Junk)}{\mathfrak{g}_{e \ i}^{\ b}},$
and $\leftexp{(\vec{I});(Junk)}{\mathfrak{K}}_{\ j}^i$
are defined by
\begin{subequations}
\begin{align}
	\leftexp{(\vec{I});(Border)}{\mathfrak{g}_{e \ i}^{\ b}} 
	& := \partial_{\vec{I}} \leftexp{(Border)}{\mathfrak{g}}_{e \ i}^{\ b},
		\label{AE:METRICGAMMACOMMUTEDERRORTERMS} \\
	\leftexp{(\vec{I});(Junk)}{\mathfrak{g}_{e \ i}^{\ b}} 
	& := \partial_{\vec{I}} \leftexp{(Junk)}{\mathfrak{g}}_{e \ i}^{\ b}
		- 2 \left[\partial_{\vec{I}}, \newlap \right] \partial_e \freenewsec_{\ i}^b,
		\label{AE:JUNKMETRICGAMMACOMMUTEDERRORTERMS} \\
	\leftexp{(\vec{I});(Junk)}{\mathfrak{K}}_{\ j}^i 
	& :=  \partial_{\vec{I}} \leftexp{(Junk)}{\mathfrak{K}}_{\ j}^i
		\label{AE:SECONDFUNDCOMMUTEDERRORTERMS} \\
	& \ \ - \frac{1}{2} \left[\partial_{\vec{I}}, \big[1 + t^{4/3} \newlap \big] (\newg^{-1})^{ef}  \right]  
		 \partial_e \upgamma_{f \ j}^{\ i} 
		  \notag \\
		& \ \ + \frac{1}{2} \big[1 + t^{4/3} \newlap \big] (\newg^{-1})^{ia}
			\left[\partial_{\vec{I}}, \newg_{jc} (\newg^{-1})^{ef} \upgamma_{a \ b}^{\ c} 
			- \newg_{jb} (\newg^{-1})^{fc} \upgamma_{a \ c}^{\ e} \right] \upgamma_{e \ f}^{\ b}
			\notag \\
		& \ \ + \frac{1}{2} \big[1 + t^{4/3} \newlap \big] (\newg^{-1})^{ia}
			\left[\partial_{\vec{I}}, \newg_{jb} (\newg^{-1})^{ef} \right] \partial_a \upgamma_{e \ f}^{\ b}
			\notag \\
		& \ \ + \frac{1}{2} \left[\partial_{\vec{I}}, \big[1 + t^{4/3} \newlap \big](\newg^{-1})^{ia} \right]
			\partial_a \Gamma_j \notag \\
		& \ \ + \frac{1}{2} \big[1 + t^{4/3} \newlap \big] (\newg^{-1})^{ia}
			\left[\partial_{\vec{I}}, \newg_{ac} (\newg^{-1})^{ef} \upgamma_{j \ b}^{\ c} 
			- \newg_{ab} (\newg^{-1})^{fc} \upgamma_{j \ c}^{\ e} \right] \upgamma_{e \ f}^{\ b}
			\notag \\
		& \ \ + \frac{1}{2} \big[1 + t^{4/3} \newlap \big] (\newg^{-1})^{ia}
			\left[\partial_{\vec{I}}, \newg_{ab} (\newg^{-1})^{ef} \right] \partial_j \upgamma_{e \ f}^{\ b}
			\notag \\
		& \ \ + \frac{1}{2} \left[\partial_{\vec{I}}, \big[1 + t^{4/3} \newlap \big] (\newg^{-1})^{ia} \right]
			\partial_j \Gamma_a \notag \\
		& \ \ - t^{2/3} \left[\partial_{\vec{I}}, (\newg^{-1})^{ia} \right] \partial_a \dlap_j, 
		\notag
\end{align}
\end{subequations}
\begin{align}
		\leftexp{(\vec{I})}{\Gamma_j} & := \newg_{jb} (\newg^{-1})^{ef} \partial_{\vec{I}} \upgamma_{e \ f}^{\ b} 
	- \frac{1}{2} \partial_{\vec{I}} \upgamma_{j \ b}^{\ b}, 
	\label{E:ICONTRACTEDGAMMA} \\
	\partial_a \Gamma_j & = \newg_{jb} (\newg^{-1})^{ef} \partial_a \upgamma_{e \ f}^{\ b} 
			- \frac{1}{2} \partial_a \upgamma_{j \ b}^{\ b}
			+ \newg_{jc} (\newg^{-1})^{ef} \upgamma_{a \ b}^{\ c} \upgamma_{e \ f}^{\ b} 
			- \newg_{jb} (\newg^{-1})^{fc} \upgamma_{a \ c}^{\ e} \upgamma_{e \ f}^{\ b},
\end{align}
and $\leftexp{(Border)}{\mathfrak{g}}_{e \ i}^{\ b},$
$\leftexp{(Junk)}{\mathfrak{g}}_{e \ i}^{\ b},$ 
and $\leftexp{(Junk)}{\mathfrak{K}}_{\ j}^i$ are respectively defined in 
\eqref{AE:GAMMAEVOLUTIONERROR}, \eqref{AE:JUNKGAMMAEVOLUTIONERROR}, and \eqref{AE:KEVOLUTIONERROR}.

\end{proposition}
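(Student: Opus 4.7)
The plan is to derive each of \eqref{AE:GEVOLUTIONCOMMUTED}--\eqref{AE:SECONDFUNDCOMMUTED} by applying $\partial_{\vec{I}}$ directly to the corresponding uncommuted evolution equation stated in Prop.~\ref{AP:METRICEVOLUTIONREFORMULATED}, using the fact that coordinate partials commute so $\partial_t \partial_{\vec{I}} = \partial_{\vec{I}} \partial_t$. For each term on the right-hand side containing a spatial-derivative factor, I will use the identity $\partial_{\vec{I}}(fX) = f \partial_{\vec{I}} X + [\partial_{\vec{I}},f] X$ to split off a ``principal'' piece (where $\partial_{\vec{I}}$ lands on the highest-derivative factor) from a commutator piece, and then verify that the commutators assemble into exactly the stated $\leftexp{(\vec{I});(\cdots)}{\mathfrak{g}},$ $\leftexp{(\vec{I});(\cdots)}{\mathfrak{K}}$, $\leftexp{(\vec{I});(\cdots)}{\mathfrak{G}}$ expressions. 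No analytic content enters; this is pure bookkeeping.

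For \eqref{AE:GEVOLUTIONCOMMUTED} and \eqref{AE:GINVERSEEVOLUTIONCOMMUTED}, the proof is essentially immediate: the right-hand sides of \eqref{AE:GEVOLUTION} and \eqref{AE:GINVERSEEVOLUTION} contain no spatial derivatives acting on the solution variables, so $\partial_{\vec{I}}$ distributes through without generating any new commutators. Setting $\leftexp{(\vec{I});(Border)}{\mathfrak{G}}_{ij} := -2\partial_{\vec{I}}\{\newg_{ia}\freenewsec_{\ j}^a\}$ and $\leftexp{(\vec{I});(Junk)}{\mathfrak{G}}_{ij} := \partial_{\vec{I}}\leftexp{(Junk)}{\mathfrak{G}}_{ij}$ (and analogously for the inverse metric) yields the stated formulas verbatim.

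For \eqref{AE:METRICGAMMACOMMUTED}, the only nontrivial step is the first term on the right of \eqref{AE:LITTLEGAMMAEVOLUTIONDETGFORM}: writing
\[
\partial_{\vec{I}}\Big(-2 t^{-1}\big[1 + t^{4/3}\newlap\big] \partial_e \freenewsec_{\ i}^b\Big)
= -2 t^{-1}\big[1+t^{4/3}\newlap\big]\partial_e \partial_{\vec{I}} \freenewsec_{\ i}^b
 + t^{1/3}\Big(-2\big[\partial_{\vec{I}},\newlap\big] \partial_e \freenewsec_{\ i}^b\Big),
\]
where I used $[\partial_{\vec{I}}, 1+t^{4/3}\newlap] = t^{4/3}[\partial_{\vec{I}},\newlap]$ and recombined the $t$-weights. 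The second piece is exactly the commutator appearing in \eqref{AE:JUNKMETRICGAMMACOMMUTEDERRORTERMS}. The linear term $\frac{2}{3}t^{-1/3}\dlap_e \ID_{\ i}^b$ commutes trivially with $\partial_{\vec{I}}$ since $\ID$ is a constant type-$\binom{1}{1}$ tensor, and the remaining $t^{-1}\leftexp{(Border)}{\mathfrak{g}}$ and $t^{1/3}\leftexp{(Junk)}{\mathfrak{g}}$ pieces pass through trivially, so the identification \eqref{AE:METRICGAMMACOMMUTEDERRORTERMS}--\eqref{AE:JUNKMETRICGAMMACOMMUTEDERRORTERMS} is forced.

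The one place that requires care is \eqref{AE:SECONDFUNDCOMMUTED}, because the right-hand side of \eqref{AE:KEVOLUTION} contains \emph{five} different terms carrying spatial derivatives, and each generates its own commutator. The strategy is to define $\leftexp{(\vec{I})}{\Gamma_j}$ by \eqref{E:ICONTRACTEDGAMMA} so that $\partial_{\vec{I}}\Gamma_j = \leftexp{(\vec{I})}{\Gamma_j} + [\partial_{\vec{I}}, \newg_{jb}(\newg^{-1})^{ef}]\upgamma_{e\ f}^{\ b} - \tfrac{1}{2}[\partial_{\vec{I}},1]\upgamma_{j\ b}^{\ b}$, and then process each of the five principal terms of \eqref{AE:KEVOLUTION} in turn:
\begin{enumerate}
\item The $-\tfrac{1}{2}t^{1/3}[1+t^{4/3}\newlap](\newg^{-1})^{ef}\partial_e \upgamma_{f\ j}^{\ i}$ term contributes its principal part together with the commutator $-\tfrac{1}{2}[\partial_{\vec{I}},[1+t^{4/3}\newlap](\newg^{-1})^{ef}]\partial_e \upgamma_{f\ j}^{\ i}$, matching the second line of \eqref{AE:SECONDFUNDCOMMUTEDERRORTERMS}.
\item The two $\Gamma$-terms each produce a principal part $(\newg^{-1})^{ia}\partial_a \leftexp{(\vec{I})}{\Gamma_j}$ (resp.\ $\partial_j$) plus a commutator with the coefficient $[1+t^{4/3}\newlap](\newg^{-1})^{ia}$ and a commutator $[\partial_{\vec{I}}, \newg_{jb}(\newg^{-1})^{ef}]$ acting inside the definition of $\Gamma_j$; these are exactly lines three through seven of \eqref{AE:SECONDFUNDCOMMUTEDERRORTERMS}.
\item The $-t(\newg^{-1})^{ia}\partial_a \dlap_j$ term contributes the principal part in \eqref{AE:SECONDFUNDCOMMUTED} together with $-t^{2/3}[\partial_{\vec{I}}, (\newg^{-1})^{ia}]\partial_a \dlap_j$ (after factoring out $t^{1/3}$), which is the final line of \eqref{AE:SECONDFUNDCOMMUTEDERRORTERMS}.
\item The $\tfrac{1}{3}t^{1/3}\newlap \ID_{\ j}^i$ term commutes trivially through $\partial_{\vec{I}}$.
\item The residual $t^{1/3}\leftexp{(Junk)}{\mathfrak{K}}_{\ j}^i$ contributes $t^{1/3}\partial_{\vec{I}}\leftexp{(Junk)}{\mathfrak{K}}_{\ j}^i$, the first term of \eqref{AE:SECONDFUNDCOMMUTEDERRORTERMS}.
\end{enumerate}
The expected main obstacle is not conceptual but purely organizational: one must recognize that the awkward commutators appearing in items (2)--(3) can only be packaged into a clean expression once one has made the deliberate choice to define $\leftexp{(\vec{I})}{\Gamma_j}$ by \eqref{E:ICONTRACTEDGAMMA} (rather than, e.g., by a divergence-form rearrangement). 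With that definition in place, each commutator has a unique natural home among the lines of \eqref{AE:SECONDFUNDCOMMUTEDERRORTERMS}, and the identification is complete.
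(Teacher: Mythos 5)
Your proposal is correct and matches the approach the paper intends: the paper explicitly declares these commuted equations ``straightforward to derive but lengthy to state'' (Sect.~\ref{SS:COMMUTEDRENORMALIZEDBRIEF}) and states the proposition without proof, and your write-out is exactly the $\partial_{\vec{I}}(fX)=f\partial_{\vec{I}}X+[\partial_{\vec{I}},f]X$ bookkeeping the paper relies on, including the crucial observation that the definition \eqref{E:ICONTRACTEDGAMMA} of $\leftexp{(\vec{I})}{\Gamma_j}$ is chosen so that $\partial_a \leftexp{(\vec{I})}{\Gamma_j}$ reproduces the principal part of $\partial_{\vec{I}}\partial_a\Gamma_j$ up to the displayed commutators (since $\partial_a$ acting on the coefficients $\newg_{jb}(\newg^{-1})^{ef}$ regenerates precisely the quadratic-in-$\upgamma$ terms in the expansion of $\partial_a\Gamma_j$). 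The only blemish is trivial: the two $\Gamma$-terms account for lines three through \emph{eight} of \eqref{AE:SECONDFUNDCOMMUTEDERRORTERMS}, not ``three through seven.''
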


\hfill $\qed$

\subsubsection{The $\partial_{\vec{I}}-$commuted renormalized stiff fluid evolution equations}
\label{SSS:COMMUTEDFLUIDRENORM}
\begin{proposition} [\textbf{The $\partial_{\vec{I}}-$commuted renormalized stiff fluid evolution equations}]
Given a solution to \eqref{AE:RESCALEDPEVOLUTION}-\eqref{AE:RESCALEDUEVOLUTION},
the corresponding differentiated quantities  
$\partial_{\vec{I}} \adjustednewp$ and $\partial_{\vec{I}} \newu^j$
verify the following evolution equations:
\begin{subequations}
\begin{align}
	\partial_t \partial_{\vec{I}} \adjustednewp 
		& + 2t^{1/3} \big[1 + t^{4/3} \newlap\big]\big[1 + t^{4/3} \newg_{ab} \newu^a \newu^b\big]^{1/2} 
			\big[\adjustednewp + \frac{1}{3} \big] 
		\partial_c \partial_{\vec{I}} \newu^c  \label{AE:PARTIALTPCOMMUTED} \\
		& - 2 t^{5/3} \frac{\big[1 + t^{4/3} \newlap\big] \big[\adjustednewp + \frac{1}{3} \big]}
			{\big[1 + t^{4/3} \newg_{ab} \newu^a \newu^b \big]^{1/2}} 
			\newg_{ef} \newu^e \newu^c \partial_c \partial_{\vec{I}} \newu^f	\notag \\
		& =  - \frac{2}{3} t^{1/3} \partial_{\vec{I}} \newlap 
			+ t^{1/3} \leftexp{(\vec{I});(Junk)}{\mathfrak{P}}, \notag
	\end{align}
	\begin{align}  \label{AE:PARTIALTUJCOMMUTED}
	\partial_t \partial_{\vec{I}} \newu^j 	
	& - t^{1/3} \big[1 + t^{4/3} \newlap\big] \big[1 + t^{4/3} \newg_{ab} \newu^a \newu^b\big]^{1/2} \newu^j 
		\partial_c \partial_{\vec{I}} \newu^c  \\
	& + t^{5/3} \frac{\big[1 + t^{4/3} \newlap\big]}{\big[1 + t^{4/3} \newg_{ab} \newu^a \newu^b\big]^{1/2}} 
		\newg_{ef}\newu^e \newu^j \newu^c  \partial_c \partial_{\vec{I}} \newu^f \notag \\
	& \ \ + t^{1/3} \frac{\big[1 + t^{4/3} \newlap \big] \newu^c \partial_c \partial_{\vec{I}} \newu^j}
		{\big[1 + t^{4/3} \newg_{ab} \newu^a \newu^b\big]^{1/2}} \notag \\
	& \ \ + t^{-1} \frac{\big[1 + t^{4/3} \newlap\big] 
			\left\lbrace (\newg^{-1})^{jc} + t^{4/3} \newu^j \newu^c \right\rbrace \partial_c \partial_{\vec{I}}
			\adjustednewp}{2\big[1 + t^{4/3} \newg_{ab} \newu^a \newu^b\big]^{1/2} 
			\big[\adjustednewp + \frac{1}{3} \big]} \notag \\
		& = - t^{-1/3}(\newg^{-1})^{j a} \partial_{\vec{I}} \dlap_a 
			+ t^{-1} \leftexp{(\vec{I});(Border)}{\mathfrak{U}}^j 
			+ t^{1/3} \leftexp{(\vec{I});(Junk)}{\mathfrak{U}}^j, 
		\notag 
\end{align}
\end{subequations}
where the error terms 
$\leftexp{(\vec{I});(Junk)}{\mathfrak{P}},$
$\leftexp{(\vec{I});(Border)}{\mathfrak{U}}^j,$ 
and $\leftexp{(\vec{I});(Junk)}{\mathfrak{U}}^j,$
are defined by
\begin{subequations}
\begin{align} 
\leftexp{(\vec{I});(Junk)}{\mathfrak{P}} 
& := 
		\partial_{\vec{I}} \leftexp{(Junk)}{\mathfrak{P}} 
		\label{AE:PICOMMUTEDJUNK} \\
	& \ \ - 2 \left[ \partial_{\vec{I}} , \big[1 + t^{4/3} \newlap\big] \big[1 + t^{4/3} \newg_{ab} \newu^a \newu^b\big]^{1/2} 
		\big[\adjustednewp + \frac{1}{3} \big] \right] \partial_c \newu^c 
		\notag \\
	& \ \ + 2 t^{4/3} \left[ \partial_{\vec{I}} , \frac{\big[1 + t^{4/3} \newlap \big] 
		\big[\adjustednewp + \frac{1}{3} \big]}
			{\big[1 + t^{4/3} \newg_{ab} \newu^a \newu^b\big]^{1/2}} \newg_{ef} \newu^e \newu^c \right] \partial_c \newu^f, \notag
\end{align}
\begin{align}
	\leftexp{(\vec{I});(Border)}{\mathfrak{U}}^j 
	& := \partial_{\vec{I}} \leftexp{(Border)}{\mathfrak{U}}^j
		\label{AE:UJICOMMUTEDERRORBORDERLINE} \\
	 & \ \ - \left[ \partial_{\vec{I}} , 
	 		\frac{(\newg^{-1})^{jc} }{2\big[1 + t^{4/3} \newg_{ab} \newu^a 
			\newu^b\big]^{1/2} \big[\adjustednewp + \frac{1}{3} \big]} \right] 
			\partial_c \adjustednewp, \notag 
\end{align}
\begin{align}
	\leftexp{(\vec{I});(Junk)}{\mathfrak{U}}^j 
	& := \partial_{\vec{I}} \leftexp{(Junk)}{\mathfrak{U}}^j
		\label{AE:UJICOMMUTEDERRORJUNK}  \\
	& \ \ - \left[\partial_{\vec{I}} ,(\newg^{-1})^{ja} \right] \partial_a \newlap 
			\notag \\
	& \ \ + \left[ \partial_{\vec{I}} ,
			\big[1 + t^{4/3} \newlap\big]\big[1 + t^{4/3} \newg_{ab} \newu^a \newu^b\big]^{1/2} \newu^j 
			\right] \partial_c \newu^c \notag \\
	& \ \ - t^{4/3} \left[ \partial_{\vec{I}} ,
			\frac{\big[1 + t^{4/3} \newlap\big]}{\big[1 + t^{4/3} \newg_{ab} \newu^a \newu^b\big]^{1/2}}
			\newg_{ef} \newu^j \newu^e \newu^c \right] \partial_c \newu^f \notag \\
	& \ \ - \left[ \partial_{\vec{I}} ,
			\frac{\big[1 + t^{4/3} \newlap\big] \newu^c }{\big[1 + t^{4/3} \newg_{ab} \newu^a \newu^b\big]^{1/2}} \right]
			\partial_c \newu^j 
			\notag \\
	& \ \ - \left[ \partial_{\vec{I}} ,
			\frac{\big[1 + t^{4/3} \newlap\big] \newu^j \newu^c}{2\big[1 + t^{4/3} \newg_{ab} 
			\newu^a \newu^b\big]^{1/2} \big[\adjustednewp + \frac{1}{3} \big]} \right] 
			\partial_c \adjustednewp 
			\notag \\
	& \ \ - \left[ \partial_{\vec{I}} , \frac{\newlap (\newg^{-1})^{jc} }{2\big[1 + t^{4/3} \newg_{ab} \newu^a 
			\newu^b\big]^{1/2} \big[\adjustednewp + \frac{1}{3} \big]} \right] 
			\partial_c \adjustednewp, \notag
\end{align}
\end{subequations}
and $\leftexp{(Junk)}{\mathfrak{P}},$ $\leftexp{(Border)}{\mathfrak{U}}^j,$ and $\leftexp{(Junk)}{\mathfrak{U}}^j$
are respectively defined in \eqref{AE:PERROR}, \eqref{AE:UJBORDERLINE}, and \eqref{AE:UJJUNK}.

\end{proposition}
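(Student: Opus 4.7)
The statement is a direct calculational consequence of applying the multi-index operator $\partial_{\vec{I}}$ to the renormalized stiff fluid evolution equations \eqref{AE:RESCALEDPEVOLUTION}--\eqref{AE:RESCALEDUEVOLUTION} and rearranging. My plan is therefore to carry out this commutation systematically, using the product rule to separate each differentiated term into a ``principal'' piece (the original coefficient multiplying $\partial_c \partial_{\vec{I}}$ of an unknown) and a ``commutator'' piece, and then to sort the commutator pieces into the declared error buckets.

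First, I would write both equations in the schematic form $\partial_t \Phi + \sum_\alpha A_\alpha[\cdot] \partial_c \Phi_\alpha = B[\cdot]$, where $\Phi \in \{\adjustednewp, \newu^j\}$, the coefficients $A_\alpha$ and $B$ are smooth functions of the renormalized variables (with explicit $t$ weights), and the argument $\Phi_\alpha$ is one of $\adjustednewp$ or $\newu^e$. Applying $\partial_{\vec{I}}$ and using $[\partial_{\vec{I}},\partial_t]=0$ and $[\partial_{\vec{I}},\partial_c]=0$, I get on the left-hand side $\partial_t \partial_{\vec{I}}\Phi + \sum_\alpha A_\alpha \partial_c \partial_{\vec{I}}\Phi_\alpha + \sum_\alpha [\partial_{\vec{I}}, A_\alpha]\partial_c \Phi_\alpha$, while on the right-hand side I get $\partial_{\vec{I}} B$. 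Moving the commutator terms to the right and collecting, I obtain exactly the structure of \eqref{AE:PARTIALTPCOMMUTED} and \eqref{AE:PARTIALTUJCOMMUTED}, with the principal advection/pressure-gradient terms and the linear $\partial_{\vec{I}}\newlap$, $\partial_{\vec{I}}\dlap_a$ terms surviving intact because their coefficients ($-\frac{2}{3}t^{1/3}$ and $-t^{-1/3}(\newg^{-1})^{ja}\dots$) enter either as constants or, in the velocity case, are handled by splitting off $\leftexp{(\vec{I});(Junk)}{\mathfrak{U}}^j$'s commutator with $(\newg^{-1})^{ja}$.

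The next step is identification of the error terms. For the pressure equation, $\partial_{\vec{I}}$ applied to $t^{1/3}\leftexp{(Junk)}{\mathfrak{P}}$ yields the first term of \eqref{AE:PICOMMUTEDJUNK}, while the two surviving commutators (with the speed-of-sound-type coefficient and with the $\newu\newu$-quadratic coefficient) are exactly the second and third terms. For the velocity equation, the same procedure applied to $t^{-1}\leftexp{(Border)}{\mathfrak{U}}^j + t^{1/3}\leftexp{(Junk)}{\mathfrak{U}}^j$ produces the leading $\partial_{\vec{I}}$ pieces in \eqref{AE:UJICOMMUTEDERRORBORDERLINE}--\eqref{AE:UJICOMMUTEDERRORJUNK}, and the various commutators with $(\newg^{-1})^{jc}$, $\newlap$, the $\newu^j\newu^c$-weight, and the two $\newu$-cubic coefficients account for the remaining six commutator expressions. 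The borderline/junk classification is determined by whether the coefficient being commuted through is $O(1)$ or carries a $t^{4/3}$ suppression; the one subtle point is that $[\partial_{\vec{I}},(\newg^{-1})^{jc}]\partial_c \adjustednewp$ must be split off as borderline because it has no smallness factor and because, when later estimated in Prop.~\ref{P:POINTWISEESTIMATES}, it is bounded by the principal $\partial_{\vec{I}}\adjustednewp$ times $\sqrt{\epsilon}$.

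The main ``obstacle'' is not analytical but organizational: one must be disciplined about which commutators are folded into which inhomogeneous term so that the resulting grouping matches the scheme used in Prop.~\ref{P:POINTWISEESTIMATES} and Prop.~\ref{P:INHOMOGENEOUSTERMSOBOLEVESTIMATES}. In particular, the constant coefficients in front of $\partial_{\vec{I}}\newlap$ and $\partial_{\vec{I}}\dlap_a$ must be preserved exactly, because these are precisely the linear lapse terms whose cancellations are exploited in Prop.~\ref{P:KEYSIGNED}. Once the bookkeeping is complete, no estimates are needed; the proposition is an algebraic identity and the proof terminates.
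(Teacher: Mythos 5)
Your proposal is correct and is precisely the calculation the paper has in mind; the paper itself offers no written proof for these appendix propositions, stating in Sect.~4.4 that the commuted equations are ``straightforward to derive but lengthy to state,'' and the $\qed$ after the proposition indicates they are asserted as algebraic identities. Your scheme—apply $\partial_{\vec{I}}$ to \eqref{AE:RESCALEDPEVOLUTION}--\eqref{AE:RESCALEDUEVOLUTION}, use $[\partial_{\vec{I}},\partial_t]=[\partial_{\vec{I}},\partial_c]=0$, then Leibniz to split each $\partial_{\vec{I}}(A_\alpha\,\partial_c\Phi_\alpha)$ into a principal piece $A_\alpha\,\partial_c\partial_{\vec{I}}\Phi_\alpha$ and a commutator $[\partial_{\vec{I}},A_\alpha]\partial_c\Phi_\alpha$—is exactly what is needed, and your classification rule (commutators inheriting an explicit $t^{4/3}$ factor, whether from the $t^{4/3}\newu^j\newu^c$ piece of the pressure-gradient coefficient or from the $t^{4/3}\newlap$ piece of $[1+t^{4/3}\newlap]$, go in the junk bucket $t^{1/3}\leftexp{(\vec{I});(Junk)}{\mathfrak{U}}^j$, while the $O(1)$ commutator $[\partial_{\vec{I}},(\newg^{-1})^{jc}/\ldots]$ stays at weight $t^{-1}$ and must be borderline) is correct and accounts for all the terms listed in \eqref{AE:PICOMMUTEDJUNK}--\eqref{AE:UJICOMMUTEDERRORJUNK}. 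The one additional point worth noting explicitly is the lapse-gradient term: one should use $\dlap_a = t^{2/3}\partial_a\newlap$ to rewrite $-t^{-1/3}(\newg^{-1})^{ja}\dlap_a = -t^{1/3}(\newg^{-1})^{ja}\partial_a\newlap$, so that $\partial_{\vec{I}}$ produces the principal $-t^{-1/3}(\newg^{-1})^{ja}\partial_{\vec{I}}\dlap_a$ plus the commutator $-t^{1/3}[\partial_{\vec{I}},(\newg^{-1})^{ja}]\partial_a\newlap$, which is exactly the second line of \eqref{AE:UJICOMMUTEDERRORJUNK}; your proposal gestures at this but does not quite spell out the $\dlap$--$\partial\newlap$ rewrite that makes the commutator land cleanly in the junk bucket.
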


\hfill $\qed$

\bibliographystyle{amsalpha}
\bibliography{JBib}

\end{document}